\setlist[enumerate]{label=\textup{(\alph*)}}
\colorlet{lightcyan}{cyan!40!white}
\renewcommand{\AnswerName}{Exercise}
\renewcommand{\AnswerHeader}{\ifthenelse{\boolean{firstanswerofthechapter}}%
    {\bigskip\noindent\textcolor{cyan}{\textbf{CHAPTER \thechapter}}\newline\newline%
        \noindent\bfseries{\AnswerName\ \ExerciseHeaderNB}\smallskip}
    {\noindent\bfseries{\AnswerName\ \ExerciseHeaderNB}\smallskip}}
\newtheoremstyle{standard}
{16pt} 
{16pt} 
{} 
{} 
{\bfseries}
{} 
{ } 
{{\thmname{#1~}}{\thmnumber{#2.}}\thmnote{~(#3)}} 
\newtheoremstyle{kursiv}
{16pt} 
{16pt} 
{\itshape} 
{} 
{\bfseries}
{} 
{ } 
{{\thmname{#1~}}{\thmnumber{#2.}}\thmnote{~(#3)}} 
\theoremstyle{standard}
\newtheorem{defn} [subsection]{Definition}
\newtheorem{ex} [subsection]{Example}
\newtheorem{rem} [subsection]{Remark}
\newtheorem{setup} [subsection]{}
\theoremstyle{kursiv}
\newtheorem{thm}[subsection]{Theorem}
\newtheorem{prop} [subsection]{Proposition}
\newtheorem{cor} [subsection]{Corollary}
\newtheorem{lem} [subsection]{Lemma}
\newcommand{\cA}{\ensuremath{\mathcal{A}}}
\newcommand{\cG}{\ensuremath{\mathcal{G}}}
\newcommand{\cU}{\ensuremath{\mathcal{U}}}
\newcommand{\Lf}{\ensuremath{\mathbf{L}}}
\newcommand{\bX}{\ensuremath{\mathbf{X}}}
\newcommand{\bY}{\ensuremath{\mathbf{Y}}}
\newcommand{\LB}[1][\cdot \hspace{1pt} , \cdot]{[\hspace{1pt} #1 \hspace{1pt} ]}
\newcommand{\SSS}{\ensuremath{\mathbb{S}}}
\newcommand{\bC}{\ensuremath{\mathbb{C}}}
\newcommand{\R}{\ensuremath{\mathbb{R}}}
\newcommand{\N}{\ensuremath{\mathbb{N}}}
\newcommand{\naive}{na{\"i}ve} 
\DeclareMathOperator{\id}{id}
\DeclareMathOperator{\Diff}{Diff}
\DeclareMathOperator{\Imm}{Imm}
\DeclareMathOperator{\Sub}{Sub}
\DeclareMathOperator{\Emb}{Emb}
\DeclareMathOperator{\Comp}{Comp}
\DeclareMathOperator{\one}{\mathbf{1}}
\DeclareMathOperator{\ev}{\mathrm{ev}}
\DeclareMathOperator{\Evol}{\mathrm{Evol}}
\DeclareMathOperator{\evol}{\mathrm{evol}}
\DeclareMathOperator{\dist}{dist}
\DeclareMathOperator{\src}{\mathbf{s}}
\DeclareMathOperator{\trg}{\mathbf{t}}
\DeclareMathOperator{\divr}{div}
\DeclareMathOperator{\grad}{grad}
\DeclareMathOperator{\pr}{pr}
\DeclareMathOperator{\Ad}{Ad}
\newcommand\opn{\ensuremath{\mathrel{\mathpalette\opncls\circ}}}
\newcommand{\opncls}[2]{%
  \ooalign{$#1\subseteq$\cr
  \hidewidth\raisefix{#1}\hbox{$#1{\stylefix{#1}#2}\mkern2mu$}\cr}}
\def\raisefix#1{%
  \ifx#1\displaystyle
    \raise.39ex
  \else
    \ifx#1\textstyle
      \raise.39ex
    \else
      \ifx#1\scriptstyle
        \raise.275ex
      \else
        \raise.150ex
      \fi
    \fi
  \fi
}
\def\stylefix#1{%
  \ifx#1\displaystyle
    \scriptstyle
  \else
    \ifx#1\textstyle
      \scriptstyle
    \else
      \ifx#1\scriptstyle
        \scriptscriptstyle
      \else
        \scriptscriptstyle
      \fi
    \fi
  \fi
}
\newcommand{\Frechet}{Fr\'{e}chet }
\newcommand{\coloneq}{\colonequals}
\newcommand{\toto}{\ensuremath{\nobreak\rightrightarrows\nobreak}}
\DeclareMathOperator{\Bis}{\ensuremath{Bis}}
\newcommand{\vBis}{\ensuremath{\operatorname{vBis}}}
\providecommand*{\shuffle}{%
  \mathbin{\mathpalette\shuffle@{}}%
}
\newcommand*{\shuffle@}[2]{%
  \sbox0{$#1\vcenter{}$}%
  \kern .15\ht0 
  \rlap{\vrule height .25\ht0 depth 0pt width 2.5\ht0}%
  \raise.1\ht0\hbox to 2.5\ht0{%
    \vrule height 1.75\ht0 depth -.1\ht0 width .17\ht0 %
    \hfill
    \vrule height 1.75\ht0 depth -.1\ht0 width .17\ht0 %
    \hfill
    \vrule height 1.75\ht0 depth -.1\ht0 width .17\ht0 %
  }%
  \kern .15\ht0 
}
\newsavebox{\tempbox}
\newcommand\copyrighttext{%
  \footnotesize This material will be published by Cambridge University Press as \emph{An Introduction to Infinite-Dimensional Differential Geometry} by Alexander Schmeding. 
This pre-publication version is free to view and download for personal use only. Not for re-distribution, re-sale or use in derivative works. \textregistered Alexander Schmeding 2021}
\newcommand\copyrightnotice{%
\begin{tikzpicture}[remember picture,overlay]
\node[anchor=south,yshift=10pt] at (current page.south) {\fbox{\parbox{\dimexpr\textwidth-\fboxsep-\fboxrule\relax}{\copyrighttext}}};
\end{tikzpicture}%
}  
  \title{An Introduction to Infinite-Dimensional Differential Geometry}
  \author{Alexander Schmeding}
\begin{document}

\begin{titlepage}
    \vspace*{7em}
    \begin{center}
        {\Huge An introduction to infinite-dimensional differential geometry}
            \\[2em]
        {\scriptsize A. Schmeding}
    \end{center}\vfill
\begin{tcolorbox}[colback=white,colframe=blue!75!black,title=Book \textbf{Draft} Version 1.01]
The present document is the draft version of an introduction to infinite-dimensional differential geometry. It is based on the notes for the course \begin{center}   MAT331: Topics in Analysis (infinite-dimensional geometry)\end{center} at the University of Bergen in Fall 2020.

If you find any errors please send a message to \begin{center}                                       
\url{alexander.schmeding@nord.no}
                                                \end{center}
\copyrighttext
\end{tcolorbox}
    \end{titlepage}
\pagestyle{empty}
\pagenumbering{gobble}

\tableofcontents
\newpage
\pagestyle{scrheadings}
\pagenumbering{arabic}
\setcounter{page}{1}

\section*{Preface}\addcontentsline{toc}{chapter}{Preface} \copyrightnotice
Analysis and geometry on infinite-dimensional spaces is an active research field with many applications in mathematics and physics. Examples for applications arise naturally even when one is interested in problems which on first sight seem genuinely finite-dimensional. You might have heard that it is impossible to accurately predict the weather over a long time. It turns out that this can be explained by studying the curvature of certain infinite-dimensional manifolds, \cite{MR202082}. This example shows that everyday phenomena are intricately linked to geometric objects residing on infinite-dimensional manifolds. In recent years the list of novel applications for infinite-dimensional (differential) geometry has broadened considerably. Among the more surprising novelties are applications in stochastic and rough analysis (rough path theory a la T.~Lyons leads to spaces of paths in infinite-dimensional groups, see \cite{FaH20}) and renormalisation of stochastic partial differential equations via Hairer's regularity structures (see \cite{BaS18}). 

The aim of this book is to give an introduction to infinite-dimensional (differential) geometry. Differential geometry in infinite dimensions comes in many flavours, such as Riemannian and symplectic geometry. One can study Lie groups and their actions as well as K\"{a}hler manifolds, or Finsler geometry. As should already be apparent from this very incomplete list, it is simply not possible to cover a sizeable portion of the diverse topics subsumed under the label (infinite-dimensional) differential geometry. Hence the present book will focus on two main areas: Riemannian geometry and Lie groups. These topics are arguably the most prominent and well-studied of the above list. Moreover, certain basic but important examples in both topics can be approached based on basic results on manifolds of (differentiable) mappings. However, it is important to stress that the focus of this book is introductory in nature. We will usually refrain from discussing results in their most general form if this allows us to avoid lengthy technical discussions. Before we outline the program of the book further, let us highlight two applications of infinite-dimensional geometric structures:

\textbf{Shape analysis}
 Shapes are unparametrised curves in a vector space/on a manifold. Mathematically these are modelled by considering spaces of differentiable functions and quotienting out an appropriate action of a diffeomorphism group (modelling the reparametrisation). Spaces of differentiable functions can only be modelled using infinitely many parameters, whence they are prime examples of infinite-dimensional spaces. Notice that when talking about spaces of differentiable functions, the functions themselves are thought of as points in the infinite-dimensional space. This is a subtle point as a path between two points (aka functions) in this space will be a curve of curves. The aim of shape analysis is now to compare and transform shapes. For the comparison task one is interested to compute shortest distances between shapes. In the language of Riemannian geometry one thus wants to compute geodesics (curves which are locally of shortest length) in these spaces. From the geometric formulation, a plethora of applications ranging from medical imaging to computer graphics can be dealt with. As a sample application we would like to highlight the processing of motion capturing data. \begin{figure}
     \centering
     \begin{subfigure}[h]{0.45\textwidth}
         \centering
         \includegraphics[width=7cm]{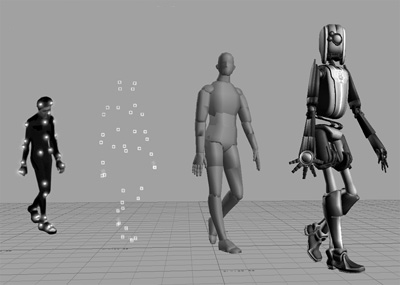}
         \caption{Motion capturing using active markers to animate a figure.}
         \label{mocap1}
     \end{subfigure}
     \hfill
     \begin{subfigure}[h]{0.45\textwidth}
         \centering
         \includegraphics[width=6.5cm]{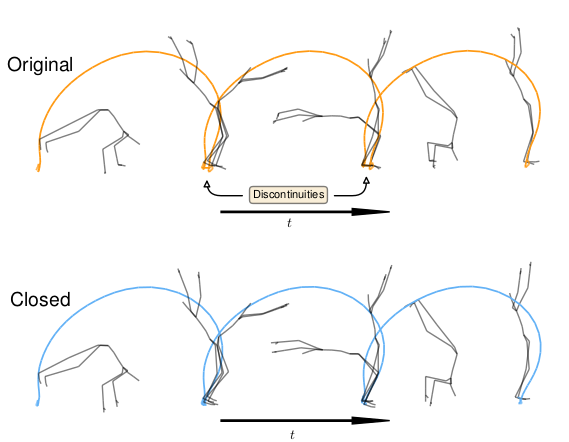}
         \caption{Removing discontinuities from animations via geometric methods}
        \label{fig:mocap}
     \end{subfigure}
     \caption{{\tiny (a) Based on the Wikimedia commons picture Activemarker2, public domain, see \url{https://commons.wikimedia.org/wiki/File:Activemarker2.PNG}, accessed: 07.12.21, (b) reprinted from \cite{CaEaS16} with permission of AIMS.}} 
\end{figure}
The data from the motion capturing process leads to a skeletal wireframe. Since motion capturing is in general expensive, one would like to create algorithms which interpolate between different movements or remove discontinuities occurring in the looping of motions (see \Cref{fig:mocap} for a graphical example of the problem and its (numerical) solution). 

\textbf{Current groups and diffeomorphism groups}
Groups arising from problems related to differential geometry, fluid dynamics and the symmetry of evolution equations are often naturally infinite-dimensional manifolds with smooth group operations. 
Prime examples are the diffeomorphism groups $\Diff (K)$ for $K$ a smooth and compact manifold. We encountered them already as reparametrisation groups in the shape analysis example. However, they are also of independent interest, for example in fluid dynamics: If $K$ is a three dimensional torus, the motion of a particle in a fluid corresponds, under periodic boundary conditions, to a curve in $\Diff(K)$, see \cite{EM70}. This observation led Arnold to his discovery of a general method by which (certain) partial differential equations (PDE) can be lifted to ordinary differential equations on diffeomorphism groups. Typically these PDE arise in the context of hydrodynamics. Many of the applications of this technique, nowadays known as \textbf{Euler-Arnold theory for PDE}, come from geometric hydrodynamics (see \cite{MR2456522} or \cite{modin2019geometric} for an introduction). The already mentioned relation of weather forecasts to infinite-dimensional manifolds arises in a similar fashion. 
To deal with these examples, one frequently needs to leave the theory of differential geometry on Banach manifolds \cite{Lang}, since diffeomorphism groups can not be modelled as Lie groups on Banach spaces. To understand this, consider the canonical action\index{diffeomorphism group!canonical action} of the diffeomorphisms $\Diff(K)$ on a connected compact manifold $K$, i.e. 
$$\alpha\colon \Diff (K) \times K \rightarrow K , \quad (\varphi,k) \mapsto \varphi(k).$$
This action is effective (i.e.\, if $\varphi(k)=\psi(k)$ for all $k$, then $\varphi=\psi$), and with some work one can show that the action $\alpha$ is transitive (i.e., for every pair of points there is a diffeomorphism mapping the first to the second, see \cite{MR1319441}). We would expect that any canonical Lie group structure turns $\alpha$ into a smooth map. However Omori established the following \cite{MR0579603}:
\begin{tcolorbox}[colback=white,colframe=white!50!black,title=Theorem (Omori '78)] \index{Omori's theorem}
If a (connected) Banach-Lie group $G$ acts effectively, transitively and smoothly on a compact manifold, then $G$ must be a finite-dimensional Lie group.
\end{tcolorbox}
Thus, since there is no way for the diffeomorphism group being described only by finitely many parameters, the diffeomorphism group $\Diff (K)$ can not be a Lie group. To treat these examples one thus has to look beyond the realm of Banach spaces.

Another example are the so called current groups $C^\infty (K,G)$ of smooth mappings from a compact manifold to a Lie group $G$. The group structure is here given by pointwise multiplication of functions. In physics current groups describe symmetries in Yang-Mills theories. For example in the theory of electrodynamics, the gauge transformations for Maxwell's equations form a (Lie) subgroup of a current group. Note that the manifold $K$ in physically relevant theories typically models space-time and is non-compact. However, we will restrict ourselves in this book to current groups (and function spaces) on compact manifolds $K$. This allows us to avoid an overly technical discussion while special properties of the main examples remain accessible. As an example, we mention the ``lifting'' of geometric features from finite-dimensional target Lie groups to the infinite-dimensional current groups. This procedure works particularly well in the special case where $K = \SSS^1$ is the unit circle. Then the current group is better known under the name \emph{loop group} $LG \coloneq C^\infty (\SSS^1,G)$, cf.\ \cite{MR900587}. Loop groups and current groups are great examples for a general property of spaces of smooth functions which we shall encounter often: Under suitable assumptions the infinite-dimensional geometry arises from ``lifting the finite-dimensional geometry'' of the target spaces. 

A common topic of the applications and mathematical topics mentioned above is an intimate connection between finite and infinite-dimensional geometry. Indeed while infinite-dimensional differential geometry might seem like an arcane topic from the perspective of the finite-dimensional geometer, there are many intimate and (sometimes) surprising connections between the realms of finite and infinite-dimensional geometry. We already mentioned Euler-Arnold theory and Arnold's insights relating curvature on infinite-dimensional manifolds to weather forecasts. Another example is Duistermaat and Kolk's proof of the third Lie theorem (``\emph{every finite-dimensional Lie algebra is the Lie algebra of a Lie group}'', \cite[Section 1.14]{DaK00}) or Klingenberg's investigation of closed geodesics (cf.\ \cite{MR1330918}). In both cases infinite-dimensional techniques on path spaces were leveraged to solve the finite-dimensional problems. While many of the examples just mentioned can be studied staying in the realm of finite-dimensional geometry, the link to infinite-dimensional geometry should not be disregarded as a purely academic exercise. A case in point might be the theory of rough paths discussed in \Cref{sect:rough}. Here it generally suffices to consider truncated and thus finite-dimensional geometric settings. However, the infinite-dimensional limiting objects hint at deeper geometric insights hidden in the finite-dimensional perspective. It is my view that the infinite-dimensional perspective provides not only a convenient framework for these examples but exhibits important underlying structures and principles. These are worth exploring both for their own sake and for the connected applications.

In the context of the present book we will explore the connection between the finite and infinite-dimensional realm in the context of manifolds of differentiable mappings. These manifolds allow the lifting of geometric structures such as Lie groups and Riemannian metrics to interesting infinite-dimensional structures. For finite orders of differentiability manifolds of differentiable mappings can be modelled on Banach spaces (see \cite{Pal68}). Thus they are within the grasp of differential geometry on Banach spaces, \cite{Lang}. However, since certain constructions (such as the exponential law) become much more technical for finite orders of differentiability, we shall in the present book study exclusively manifolds and spaces of smooth (i.e.~infinitely often differentiable) mappings. This places us immediately outside of the realm of Banach manifolds but enables important constructions such as the Lie group structure for diffeomorphism groups.

Finally, I would like to draw the readers attention to the fact that besides many links and connections between finite and infinite-dimensional geometry, there are quite severe differences between both settings of differential geometry. Many of the strong structural statements from finite-dimensional geometry simply cease to be valid when passing to manifolds modelled on infinite-dimensional spaces (many statements already break down in the more familiar Banach setting). These pitfalls are well known to experts but often surprise beginners in the field. In the spirit of providing an introduction to infinite-dimensional geometry I have taken care to emphasise the differences and illustrate them with examples where possible. For example, the following problems are frequently encountered when passing to the infinite-dimensional setting:
\begin{itemize}
 \item Infinite-dimensional spaces can \textbf{not be locally compact} and they do not support a unique vector topology. Thus arguments building on compactness are not available. (cf.\, \Cref{AppA})
 \item Smooth bump functions do not need to exist (see \Cref{smooth:bumpfun}). Hence the usual local to global arguments become unavailable.
 \item Beyond Banach spaces there is \textbf{no general solution theory for ordinary differential equations} and no general inverse function theorem (\Cref{invfunction}).
 \item Dual spaces become more difficult to handle. As a consequence it is impossible to define differential forms as sections in dual bundles in general (cf.\, \Cref{rem:dualTM}).
 \item Equivalent definitions from finite-dimensional differential geometry are often not equivalent in the infinite-dimensional setting. This happens for example for submersions and immersions, compare \Cref{sect:subm}.  
\end{itemize}
 The structure of the book is as follows. First of all, we shall provide the necessary foundational material for differential geometry in the general infinite-dimensional setting in the first two chapters. Here we emphasise 
\begin{enumerate}
 \item calculus and manifolds on infinite-dimensional spaces beyond the Banach setting, and
 \item manifolds of differentiable functions. 
\end{enumerate}
Beyond Banach spaces there are several choice as to how one can generalise the concept of differentiability. We adopted the so called Bastiani calculus based on iterated directional derivatives. This choice is different from the popular ``convenient calculus''. We shall compare the calculi in later chapters.
Furthermore, we discuss several foundational topics such as locally convex spaces in a series of appendices. The material covered there should allow the reader to grasp most of the basics needed to follow the main part of the book. 
Moreover, the material in the appendices gives some insight into the common problems arising in the passage from finite to infinite-dimensional settings already mentioned. 
While the setting in which we will be working is quite general, we will often not exhibit the most general definitions, results and settings which can possibly be treated in the framework. For example, only manifolds of mappings on a compact manifold are considered here. While the non-compact case is interesting and deserving of attention, the associated theory is much more involved and technical. Our philosophy here is that if the simplified case already admits applications and exhibits the character and problems of the infinite-dimensional setting, we will restrict our attention to the simple case. However, we shall comment on the more general case and provide pointers to the literature. Armed with the knowledge provided by the present book, the reader should be able to quickly learn the more general case should she so desire.

Having dealt with the general setting, we shall then study the main objects of interest in this book:
\begin{enumerate}
 \item (infinite-dimensional) Lie groups and Lie algebras, \Cref{Chap:Liegp} and 
 \item (weak) Riemannian geometry, \Cref{RiemGeo}.
\end{enumerate} 
Based on these building blocks we shall explore several applications of infinite-dimensional geometry in the next chapters. These range from shape analysis to connections with higher geometry (in the guise of Lie groupoids) to Euler-Arnold theory for PDE and the geometry of rough path spaces. These later chapters can be read mostly independently from each other. I have selected the topics for the advanced chapters with a view towards developments within the field over the last years. The broad selection of topics and the introductory nature of the present book prevent an in-depth discussion of these advanced topics. Therefore these chapters gracefully omit many of the more technical and subtle points. However, the reader will be able to gain an impression of the role infinite-dimensional differential geometry plays in these applications. Moreover, there are ample references to the literature which should enable the interested reader to follow up on a topic after perusing the respective chapter.

Before we begin, let us set some conventions which will be in effect for all that is to come. 
\begin{tcolorbox}[colback=white,colframe=blue!75!black,title=Conventions]
 \begin{itemize}
  \item Write $\N \coloneq \{1,2,3,\ldots\}$ for the natural numbers and $\N_0 \coloneq \N \cup \{0\}$.
  \item All topological spaces are assumed to be Hausdorff, i.e.~for every pair of two (non equal) points in the space there are disjoint open neighborhoods of these points.
  \item If nothing else is said, products of topological spaces will always carry the product topology.
 \item 
 If $U \subseteq X$ is an open subset of a topological space, we also write shorter $U \opn X$.
 \end{itemize}
 We shall exclusively work over the real numbers $\R$, all vector spaces are to be understood over the real numbers. However, many results carry over to complex vector spaces. For example it is no problem to define the notion of complex differentiability (see e.g.\, \cite{MR1911979}).
\end{tcolorbox}

\textbf{Acknowledgements}
 We acknowledge support by Nord University and the Trond Mohn foundation in the ``Pure Mathematics in Norway'' program, cf.\ \url{https://www.puremath.no/} for more information.
 In particular I am thankful for Nord Universities commitment to make the book available in open access format.
 
 I have profited from an early draft of the upcoming \cite{GNprep} by H.~Gl\"ockner and K.-H.~Neeb on infinite-dimensional Lie groups. Their lucid presentation of calculus directly influenced \Cref{sect:calculus}. Material from the thesis by A.~Mykleblust (``Limits of Banach spaces'', UiB 2020, advised by A.~Schmeding) was included in \Cref{AppA}.
 
 I am indebted to Martin Bauer, Rafael Dahmen, Gard O.~Helle, Cy Maor, Klas Modin, Torstein Nilssen and David M.~Roberts for many useful suggestions and discussions while I was writing this book. Further thanks go to Rafael Dahmen, Klas Modin, Torstein Nilssen, David Prinz and Nikolas Tapia for reading the manuscript and suggesting many improvements.
 Finally, I thank the participants of the course MAT331 and Roberto Rubio who brought many errors in previous versions to my attention. 
 
\chapter*{Recommended further reading}\addcontentsline{toc}{chapter}{Recommended further reading}
 As mentioned in the introduction, infinite-dimensional differential geometry is a vast topic which I could not hope to cover in this book. Thus the book concentrates on an introduction to infinite-dimensional Lie groups, (weak) Riemannian metrics and their interplay. As further reading on the topics of this book I would like to recommend the following presentations, which have also influenced the presentation in the present book: 
 \begin{itemize}
  \item \textbf{(infinite-dimensional) Lie theory} \cite{NeeMon} lecture notes, \cite{Neeb06} extensive survey, and once it becomes available \cite{GNprep},
  \item \textbf{Manifolds of mappings} \cite{WocInf}, on non-compact domains \cite{Mic},
  \item \textbf{(weak) Riemannian geometry}  \cite{BruL2,BruSC}.
 \end{itemize}
 Furthermore, there was a host of topics which could not be covered in this book. Albeit there is again no hope that I may do the vast body of work on these topics justice, I would like to mention a few works which are either introductory or exhibit new phenomena which are genuinely infinite-dimensional in nature:
 \begin{itemize}
  \item \textbf{Symplectic geometry} \cite{MR960687} (Banach setting) and \cite[Section 48]{KM97}.
  \item \textbf{Sub-Riemannian geometry} \cite{zbMATH06551456,zbMATH05649883}. 
  \item \textbf{K\"{a}hler geometry} \cite{Serg20}.
  \item \textbf{Poisson geometry} \cite{BaGaT18}.
  \item \textbf{Finsler geometry} \cite{Laro19} on diffeomorphism groups, \cite{EaP2} in the context of bounded Fr\'{e}chet geometry.
 \end{itemize}
\chapter{Calculus in locally convex spaces}\label{sect:calculus} \copyrightnotice

It is well known that \textquotedblleft multidimensional calculus\textquotedblright, aka \textquotedblleft\Frechet calculus\textquotedblright, carries over to the realm of Banach spaces and Banach manifolds (see e.g.\ \cite{Lang}). 
As we have seen in the introduction, Banach spaces are often not sufficient for our purposes. To generalise derivatives we will, as a minimum, need vector spaces with an amenable topology (which need not be induced by a norm).

\begin{defn}\label{defn:tvs}
 Consider a vector space $E$. A topology $\mathcal{T}$ on $E$ making addition $+\colon E \times E \rightarrow E$ and scalar multiplication $\cdot \colon \R \times E \rightarrow E$ continuous is called a \emph{vector topology}\index{vector topology} (where $\R$ carries the usual norm topology). We then say that $(E,\mathcal{T})$ (or $E$ for short) is a \emph{topological vector space} (or \emph{TVS} for short)\index{space!topological vector space}. 
\end{defn}

\begin{ex}\label{ex:TVS}
\begin{enumerate}
 \item Every normed space and in particular every finite dimensional vector space is a topological vector space. 
 \item For a more interesting example, fix $0 < p < 1$. Two measurable functions $\gamma,\eta \colon [0,1] \rightarrow \R$ are equivalent $\gamma \sim \eta$ if and only if $\int_0^1 |\gamma(s)-\eta(s)| \mathrm{d}s =0$. Denote by $L^p [0,1]$ the vector space of all equivalence classes $[\gamma]$ of functions such that $\int_0^1 |\gamma(s)|^p \mathrm{d}s < \infty$. Topologise $L^p[0,1]$ via the metric topology induced by 
 $$d([\gamma],[\eta]) \coloneq \int_0^1|\gamma(s)-\eta(s)|^p\mathrm{d} s.$$
 In a metric space, we can test continuity of the vector space operations using sequences. For this, pick $\lambda_n \rightarrow \lambda \in \R$ and $[\gamma_n] \rightarrow [\gamma], [\eta_n] \rightarrow [\eta]$ (with respect to $d$) and use the triangle inequality to obtain: 
 $$d(\lambda_n[\gamma_n]+[\eta_n], \lambda[\gamma]+[\eta]) \leq |\lambda_n-\lambda|^p d([\gamma_n], [0]) + |\lambda|^p d([\gamma_n],[\gamma])+ d([\eta_n],[\eta]).$$
 This shows that the vector space operations are continuous, i.e.\ $L^p[0,1]$ is a TVS.
\end{enumerate}
\end{ex}

In topological vector spaces differentiable curves can be defined as follows.

\begin{defn}\label{defn: smoothcurve}
 Let $E$ be a topological vector space. A continuous mapping $\gamma \colon I \rightarrow E$ from a non-degenerate interval\footnote{i.e.\ $I$ has more then one point. In the following we will always assume this when talking about intervals.} $I \subseteq \R$, is called $C^0$-curve. A $C^0$-curve is called $C^1$-\emph{curve} if the limit 
 $$\gamma' (s) \coloneq \lim_{t \rightarrow 0} \frac{1}{t} (\gamma(s+t)-\gamma(s))$$
 exists for all $s \in I^\circ$ (interior of $I$) and extends to a continuous map $\frac{\mathrm{d}}{\mathrm{d}t}\gamma\coloneq \gamma' \colon I \rightarrow E, s \mapsto \gamma' (s)$. Recursively for $k \in \N$, we call $\gamma$ a $C^k$-\emph{curve}\index{curve!smooth} if $\gamma$ is a $C^{k-1}$-curve and $\frac{\mathrm{d}^{k-1}}{\mathrm{d}t^{k-1}}\gamma$ is a $C^1$-curve. Then $\frac{\mathrm{d}^{k}}{\mathrm{d}t^{k}}\gamma \coloneq \left(\frac{\mathrm{d}^{k-1}}{\mathrm{d}t^{k-1}}\gamma\right)'$.   
 If $\gamma$ is a $C^k$-curve for every $k \in \N_0$, we also say that $\gamma$ is \emph{smooth} or of class $C^\infty$.
\end{defn}

Unfortunately, calculus on topological vector spaces is in general ill behaved. The next exercise shows that derivatives may fail to give us meaningful information:

\begin{Exercise}\label{Ex:count0}
 Given $0 < p < 1$ we let $L^p [0,1]$ be the topological vector space from \Cref{ex:TVS} (b). Recall that the topology on $L^p[0,1]$ is induced by the metric $d([\gamma],[\eta]) \coloneq \int_0^1|\gamma(s)-\eta(s)|^p\mathrm{d} s$. For a set $A \subseteq [0,1]$ write $\mathbf{1}_A$ for the characteristic function and define 
 $$\beta \colon [0,1] \rightarrow L^p [0,1] , \beta (t):=[\mathbf{1}_{[0,t[}].$$
 Show that $\beta$ is an injective $C^1$-curve with $\beta'(t) = 0, \forall t \in [0,1]$.
\end{Exercise}

\setboolean{firstanswerofthechapter}{true}
\begin{Answer}[number={\ref{Ex:count0}}] 
 \emph{Let $0<p<1$. We prove that the curve $\beta \colon [0,1] \rightarrow L^p [0,1] , \beta (t):=[\mathbf{1}_{[0,t[}].$ is an injective $C^1$-curve with $\beta'(t) = 0, \forall t \in [0,1]$. }
 \\[.15em]
 
 Let us show that for every $x \in ]0,1[$ the derivative of $\beta$ vanishes. Then the claim follows by continuity also for the boundary points. Consider for $h$ small the differential quotient $[h^{-1} (\beta (x+h)-\beta(x))] = h^{-1}[\mathbf{1}_{[x,x+h[}]$ converges to $0$ with respect to the $L^p$-metric: 
 $$d([h^{-1} (\beta (x+h)-\beta(x))],[0]) = \int_0^1|h^{-1}\mathbf{1}_{[x,x+h[}(s)|^p\mathrm{d} s = |h|^{-p}\int_0^1\mathbf{1}_{[x,x+h[}(s)\mathrm{d} s = |h|^{1-p}.$$
 Taking the limit $h \rightarrow 0$, we see that the derivative must be $0$ and in particular $\beta$ is a $C^1$-function.  Let now $x <y$, then $\beta(y)-\beta(x) = [\mathbf{1}_{[x,y[}\neq [0]$, so $\beta$ is injective.
\end{Answer}
\setboolean{firstanswerofthechapter}{false}

Obviously we would like to avoid this defect, whence we have to strengthen the assumptions on our vector spaces.

\section{Curves in locally convex spaces}

Calculus in topological vector spaces exhibits pathologies which can be avoided by strengthening the requirements on the underlying space. This leads to locally convex spaces, whose topology is induced by so called seminorms. See also \Cref{AppA} for more information on locally convex spaces.
\begin{defn}
 Let $E$ be a vector space. A map $p \colon E \rightarrow [0,\infty[$ is called a \emph{seminorm}\index{seminorm} if it satisfies the following \begin{enumerate}
  \item $p(\lambda x) = |\lambda |p(x), \forall \lambda \in \R , x \in E$                                                                                                                                \item $p(x+y) \leq p(x)+p(y)$
 \end{enumerate}
\end{defn}
 Note that contrary to the definition of a norm, we did not require that $p(x)=0$ if and only if $x=0$. The next definition uses the notion of an initial topology, which we recall for the readers convenience in \Cref{app:topo}.

\begin{defn}\label{defn:lcvx}
 A topological vector space $(E,\mathcal{T})$ is called \emph{locally convex space}\index{space!locally convex} if there is a family $\{p_i \colon E \rightarrow [0,\infty[ \mid i \in I\}$ of continuous seminorms for some index set $I$, such that 
     \begin{enumerate}
     \item $\mathcal{T}$ is the initial topology with respect to the canonical projections \\ $\{q_i \colon E \rightarrow E/p_i^{-1}(0)\}_{i \in I}$ onto the normed spaces $E/p_i^{-1}(0)$.
     \item if $x \in E$ with $p_i (x) = 0$ for all $i \in I$, then $x = 0$. Thus the seminorms separate the points, i.e.\ $\mathcal{T}$ has the Hausdorff property.\footnote{Some authors do not require separation of points, whence our locally convex spaces are Hausdorff locally convex spaces in their terminology.}
    \end{enumerate}
  We then say that the topology $\mathcal{T}$ is \emph{generated by the family of seminorms}\index{vector topology!generated by seminorms} $\{p_i\}_{i \in I}$ and call this family a \emph{generating family of seminorms}.\index{seminorm!generating family of}
  Usually we suppress $\mathcal{T}$ and write $(E, \{p_i\}_{i\in I})$ or simply $E$ instead of $(E,\mathcal{T})$. 
  \\
  \textbf{Alternative to (a)}: We will see in \Cref{AppA} that equivalent to (a), we can define $\mathcal{T}$ to be the unique vector topology determined by the basis of $0$-neighbourhoods given by (finite) intersections of the balls $B_{i,\varepsilon} (0) =\{x \in E \mid p_i(x)< \varepsilon\}$, where $p_i$ runs through a generating family of seminorms. These balls are all convex, thus justifying the name ``locally convex space''. 
  
   A locally convex space $(E, \{p_i\}_{i\in \N})$ with a countable systems of seminorms is \emph{metrisable}\index{space!metrisable} (i.e.\ its topology is induced by a metric, see Exercise \ref{EX11} 1.) and if $E$ is complete, it is called \emph{Fr\'{e}chet space}.\index{Fr\'{e}chet space}
\end{defn}

\begin{ex}\label{example:lcs}
 \begin{enumerate}
  \item Every normed space $(E,\lVert \cdot \rVert)$ is a locally convex space, where the family of seminorms consists only of the norm $\lVert \cdot \rVert$. 
  \item Consider the space $C^\infty ([0,1], \R)$ of all smooth functions from the interval $[0,1]$ to $\R$ (with pointwise addition and scalar multiplication). This space is not naturally a normed space.\footnote{For any normed topology, the differential operator $D \colon C^\infty ([0,1],\R) \rightarrow C^\infty ([0,1],\R), D(f)=f'$ must be discontinuous (which is certainly undesirable). To see this, recall that a continuous linear map on a normed space has bounded spectrum, but $D$ has arbitrarily large eigenvalues (consider $f_n(t)\coloneq \exp (n t), n\in \N$).} We define a family of seminorms on it via 
  $$\lVert f \rVert_n \coloneq \sup_{0\leq k \leq n} \left\lVert \frac{\mathrm{d}^k}{\mathrm{d}t^k}f\right\rVert_\infty = \sup_{0\leq k \leq n} \sup_{t \in [0,1]} \left|\frac{\mathrm{d}^k}{\mathrm{d}t^k}f(t)\right|, n\in \N_0.$$
  The topology generated by the seminorms is called the \emph{compact-open $C^\infty$-topology}\index{compact open $C^\infty$-topology} and turns $C^\infty([0,1],\R)$ into a locally convex space, which is even a \Frechet space (Exercise \ref{EX11} 2.). 
 \end{enumerate}
\end{ex}

Locally convex spaces have many good properties, for example they admit enough continuous linear functions to separate the points, i.e.\ the following holds. 

\begin{thm}[{Hahn-Banach, \cite[Proposition 22.12]{MaV97}}] \label{thm:HahnBanach} \index{Hahn-Banach theorem}
For a locally convex space $E$ the continuous linear functionals separate the points, i.e.\ for each pair $x,y \in E$ there exists a continuous linear $\lambda \colon E \rightarrow \R$ such that $\lambda(x) \neq \lambda (y)$. 
\end{thm}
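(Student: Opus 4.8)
The plan is to prove the Hahn--Banach separation statement by reducing it to the classical analytic Hahn--Banach extension theorem, which guarantees that a continuous linear functional on a subspace extends to a continuous linear functional on all of $E$ dominated by a prescribed seminorm. First I would observe that it suffices to show the following: for every nonzero $z \in E$ there is a continuous linear $\lambda \colon E \rightarrow \R$ with $\lambda(z) \neq 0$. Indeed, given a pair $x \neq y$, one applies this to $z \coloneq x-y$ and obtains $\lambda(x) - \lambda(y) = \lambda(z) \neq 0$ by linearity, which is exactly the desired separation.

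To produce such a $\lambda$ for a fixed $z \neq 0$, I would use the Hausdorff property from \Cref{defn:lcvx}(b): since the generating seminorms separate the points, there exists an index $i \in I$ with $p_i(z) > 0$. The idea is then to define a linear functional on the one-dimensional subspace $\R z$ by $\mu(t z) \coloneq t\, p_i(z)$ and check that it is dominated by $p_i$, i.e.\ $\mu(tz) \leq p_i(tz)$ for all $t \in \R$. For $t \geq 0$ this is an equality by homogeneity of $p_i$, and for $t < 0$ the left side is negative while the right side is nonnegative, so the bound holds. Here the key input is the analytic Hahn--Banach theorem (the dominated-extension form), which extends $\mu$ to a linear functional $\lambda \colon E \rightarrow \R$ satisfying $\lambda(x) \leq p_i(x)$ for all $x \in E$.

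It remains to verify that this extension $\lambda$ is continuous and genuinely separates, and this is where the locally convex structure does the real work. Applying the bound to both $x$ and $-x$ gives $|\lambda(x)| \leq p_i(x)$ for all $x$, so $\lambda$ is bounded by a single continuous seminorm; since $p_i$ is continuous by assumption, $\lambda$ is continuous at $0$, and a linear map on a topological vector space that is continuous at $0$ is continuous everywhere. Finally $\lambda(z) = \mu(z) = p_i(z) > 0$, so $\lambda(z) \neq \lambda(0) = 0$, completing the argument.

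The main obstacle, or rather the point where all the hypotheses are genuinely used, is the twofold role of local convexity: it is needed both to have a generating family of seminorms separating points (so that some $p_i(z) > 0$ exists) and to obtain a convex dominating function against which the analytic Hahn--Banach theorem can be invoked. On a general topological vector space neither ingredient is available, which is precisely why the statement is phrased for locally convex spaces; the technical heart is therefore not a calculation but correctly citing the dominated-extension form of Hahn--Banach and confirming that domination by a continuous seminorm upgrades to continuity of the extended functional.
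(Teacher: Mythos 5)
Your argument is correct. Note that the paper does not actually prove this statement: it imports it wholesale from the cited reference (Meise--Vogt, Proposition 22.12), so there is no in-text proof to compare against. Your reduction to separating a nonzero $z$ from $0$, the choice of a generating seminorm $p_i$ with $p_i(z)>0$ via the Hausdorff condition in the definition of a locally convex space, the dominated extension of $\mu(tz)=t\,p_i(z)$ by the analytic Hahn--Banach theorem, and the upgrade from $|\lambda|\leq p_i$ to continuity at $0$ (hence everywhere, by linearity) is exactly the standard derivation given in that reference, and every step checks out.
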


\begin{defn}
 Let $E$ be a locally convex space, then we denote by $E'=L(E,\R)$ the continuous linear maps from $E$ to $\R$. The space $E'$ is the so called \emph{dual space} of $E$,\index{dual space}. There are several ways to turn $E'$ into a locally convex space, \cite[p.63 f.]{Rudin}, but in general we will not need a topology beyond the special case if $E$ is a Banach space and $E'$ carries the operator norm topology. 
\end{defn}

With the help of the Hahn-Banach theorem we can avoid the pathologies observed for topological vector spaces. To this end we need the notion of a weak integral

\begin{defn}
 Let $\gamma \colon I \rightarrow E$ be a $C^0$-curve in a locally convex space $E$ and $a,b \in I$.  If there exists $z \in E$ such that 
 $$\lambda (z) = \int_a^b \lambda(\gamma(t)) \mathrm{d}t, \forall \lambda \in E',$$
 then $z \in E$ is called the \emph{weak integral of}\index{weak integral} $\gamma$ from $a$ to $b$ and denoted $\int_a^b \gamma(t) \mathrm{d}t \coloneq z$.
\end{defn}

Note that weak integrals (if they exist) are uniquely determined due to the Hahn-Banach theorem. 

\begin{prop}[First part of the fundamental theorem of calculus]\label{prop:funthm} \index{Fundamental theorem of calculus}
 Let $\gamma \colon I \rightarrow E$ be a $C^1$-curve in a locally convex space $E$ and $a,b \in I$ then 
 $$\gamma(b) - \gamma(a) = \int_a^b \gamma'(t) \mathrm{d}t.$$
\end{prop}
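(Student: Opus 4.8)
The plan is to reduce the vector-valued identity to the classical scalar fundamental theorem of calculus by testing against continuous linear functionals, which is exactly what the Hahn--Banach theorem (\Cref{thm:HahnBanach}) makes available. Concretely, I would set $z \coloneq \gamma(b)-\gamma(a)$ and show that this particular element satisfies the defining property of the weak integral $\int_a^b \gamma'(t)\dd t$. Note that this simultaneously establishes \emph{existence} of the weak integral (which is not assumed a priori) and pins down its value; uniqueness is then automatic, since by Hahn--Banach the continuous linear functionals separate points.

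First I would fix an arbitrary $\lambda \in E'$ and consider the scalar function $f \coloneq \lambda \circ \gamma \colon I \to \R$. The crucial observation is that $f$ is a $C^1$-curve in $\R$ with $f' = \lambda \circ \gamma'$. This holds because $\lambda$ is linear and continuous, so it commutes with both the difference quotient and the limit defining the derivative:
$$
f'(s) = \lim_{t\to 0}\frac{1}{t}\bigl(\lambda(\gamma(s+t))-\lambda(\gamma(s))\bigr) = \lim_{t\to 0}\lambda\!\left(\frac{1}{t}(\gamma(s+t)-\gamma(s))\right) = \lambda(\gamma'(s)).
$$
Since $\gamma'$ is continuous on all of $I$ (by \Cref{defn: smoothcurve}) and $\lambda$ is continuous, the map $t \mapsto \lambda(\gamma'(t)) = f'(t)$ is continuous on $I$, so $f$ is genuinely of class $C^1$ and the ordinary Riemann integral of $f'$ over $[a,b]$ is well defined.

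Next I would apply the usual one-dimensional fundamental theorem of calculus to $f$, obtaining $\lambda(\gamma(b))-\lambda(\gamma(a)) = f(b)-f(a) = \int_a^b f'(t)\dd t = \int_a^b \lambda(\gamma'(t))\dd t$. Using linearity of $\lambda$ on the left-hand side, this reads $\lambda(z) = \int_a^b \lambda(\gamma'(t))\dd t$, which is precisely the condition for $z=\gamma(b)-\gamma(a)$ to be the weak integral of $\gamma'$ from $a$ to $b$. As $\lambda \in E'$ was arbitrary, the claimed formula follows.

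I do not expect a genuine obstacle here; essentially all the content lies in the reduction to the scalar case. The one point that warrants care is the interchange of $\lambda$ with the limit in the difference quotient: this uses both linearity \emph{and} continuity of $\lambda$, and it is what forces the locally convex setting, so that Hahn--Banach supplies a supply of such functionals rich enough to separate points. A secondary bookkeeping point is that the derivative in \Cref{defn: smoothcurve} is a priori defined only on $I^\circ$ but extends continuously to $I$; this continuity is exactly what guarantees that the scalar integrand $t \mapsto \lambda(\gamma'(t))$ is Riemann integrable on the closed interval $[a,b]$.
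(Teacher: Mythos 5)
Your proof is correct and follows exactly the paper's argument: test against an arbitrary $\lambda \in E'$, use that $\lambda\circ\gamma$ is $C^1$ with $(\lambda\circ\gamma)'=\lambda\circ\gamma'$, apply the scalar fundamental theorem of calculus, and conclude that $\gamma(b)-\gamma(a)$ satisfies the defining property of the weak integral. The extra care you take with the limit interchange and the continuity of $\gamma'$ up to the boundary is just a fuller writing-out of what the paper calls ``easy to see.''
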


\begin{proof}
 Let $\lambda \in E'$. It is easy to see that $\lambda \circ \gamma \colon I \rightarrow \R$ is a $C^1$-curve with $(\lambda \circ \gamma)' = \lambda \circ (\gamma')$. The standard fundamental theorem of Calculus yields
 $$\lambda(\gamma(b)-\gamma(a))=\lambda (\gamma(b))-\lambda(\gamma(a)) = \int_a^b (\lambda\circ \gamma)'(s) \mathrm{d}s = \int_a^b\lambda(\gamma'(s))\mathrm{d}s.$$
 Hence $z = \gamma(b)-\gamma(a)$ satisfies the defining property of the weak integral.
\end{proof}

Note that \Cref{prop:funthm} implies that $L^p[0,1]$ can not be a locally convex space for $0<p<1$, cf.\ \cite[1.47]{Rudin} for an elementary proof of this fact.
\begin{rem}
 Also the second part of the fundamental theorem of calculus is true in our setting. Thus if $\gamma \colon I \rightarrow E$ is a $C^0$-curve, $a \in I$ and the weak integral
 $$\eta(t) \coloneq \int_a^t \gamma(s) \mathrm{d}s$$
 exists for all $t \in I$. Then $\eta \colon I \rightarrow E$ is a $C^1$-curve in $E$, and $\eta'=\gamma$.
 
 The proof however needs more techniques based on convex sets which we do not wish to go into (cf.\ \cite{GNprep}).
\end{rem}

The reader may wonder now, when do weak integrals of curves exist? One can prove that weak integrals of continuous curves always exist \emph{in the completion of a locally convex space}. The key point is that the integrals can be defined using Riemann sums, but these do not necessarily converge in the space itself, \cite[Lemma 2.5]{KM97}. Thus weak integrals exist for suitably complete spaces. To avoid getting bogged down with the discussion of completeness properties, we define:

\begin{defn}\label{Mackeycomplete}
 A locally convex space $E$ is \emph{Mackey complete}\index{Mackey complete} if for each smooth curve $\gamma \colon [0,1] \rightarrow E$ there exists a smooth curve $\eta\colon [0,1] \rightarrow E$ with $\eta' = \gamma$.
\end{defn}

Due to the fundamental theorem of calculus this implies that $\eta(s) -\eta(0) = \int_0^s \gamma(t) \mathrm{d}t$. Thus the weak integral of smooth curves exists in Mackey complete spaces. 

\begin{rem}\label{Mackey-remark}
 Mackey completeness is a very weak completeness condition, in particular, sequential completeness (i.e. Cauchy sequences converge in the space) implies Mackey completeness. This is evident from the alternative characterisation of Mackey completeness using sequences, see \Cref{def:TVS}. 
 Note however that it is not entirely trivial to find examples of Mackey complete but not sequentially complete spaces. We mention here that the space $\mathcal{K}(E,F)$ of compact operators between two (infinite-dimensional) Banach spaces $E,F$ with the strong operator topology is not sequentially complete but Mackey complete (see \cite{Voi92}).
 
 However, in metrisable locally convex spaces (e.g.\ in normed spaces) Mackey completeness is equivalent to completeness, cf.\ \cite[10.1.4]{Jar81}. We refer to  \cite[I.2]{KM97} for more information on Mackey completeness. In particular, \cite[Theorem 2.14]{KM97} shows that integrals exist for $C^1$-curves in Mackey complete spaces. 
\end{rem}

So far we have defined differentiable curves with values in locally convex spaces. The next step is to consider differentiable mappings between locally convex spaces. Here a different notion of calculus is needed. It turns out that (even on \Frechet spaces) there are many generalisations of \Frechet calculus (see \cite{MR0440592}) without a uniquely preferable choice. In the next section we present a simple and versatile notion called Bastiani calculus. Another popular approach to calculus in locally convex spaces, the so called convenient calculus, is discussed in \Cref{convenient:calculus}.

\begin{Exercise}\label{EX11}
        \vspace{-\baselineskip}
        \Question Let $(E,\{p_n\}_{n \in \N})$ be a locally convex space whose topology is generated by a countable set of seminorms. Prove that $d(x,y) := \sum_{n\in \N} 2^{-n} \tfrac{p_n (x-y)}{p_n(x-y) +1}$ is a metric on $E$ and the metric topology coincides with the locally convex topology.  
        \Question Consider $C^\infty ([0,1],\R)$ with the compact open $C^\infty$-topology (see \Cref{example:lcs}). 
        \subQuestion Show that a sequence $(f_k)_{k\in \N}$ converges to $f$ in this topology if and only if for all $\ell \in \N_0$ $(\tfrac{\mathrm{d}^\ell}{\mathrm{d}t^\ell} f_k)_k$ converges uniformly to $\tfrac{\mathrm{d}^\ell}{\mathrm{d}t^\ell} f$.\\
        {\tiny \textbf{Hint:} The uniform limit of a sequence of continuous functions is continuous. If a function sequence and the sequence of (first) derivatives converges, the limit of the sequence is differentiable.}
        \subQuestion Deduce that every Cauchy-sequence in the compact open $C^\infty$-topology converges to a smooth function. As $C^\infty ([0,1],\R)$ is a metric space by Exercise \ref{EX11} 1., this implies that the space is complete, i.e.\ a \Frechet space.
        \subQuestion Show that the differential operator $D \colon C^\infty ([0,1],\R) \rightarrow C^\infty ([0,1],\R), f\mapsto f'$ is continuous linear. {\tiny \textbf{Hint:} \Cref{lem:linmapzero}.}
        \Question Let $(E,\{p_i\}_I)$ be a locally convex space whose topology is generated by a \emph{finite} set of seminorms. Show that $p(x)=\max_{i\in I} p_i(x)$ defines a norm on $E$ which induces the same topology as the family $\{p_i\}$. In this case we call $E$ normable.
        \Question Establish the following properties of weak integrals
        \begin{enumerate}
         \item If the weak integrals of $\gamma, \eta \colon I \rightarrow E$ from $a$ to $b$ exist and $s \in \R$, then also the weak integral of $\gamma + s \eta$ exists and $\int_a^b (\gamma(t)+s\eta(t))\mathrm{d}t = \int_a^b\gamma(t)\mathrm{d}t + s \int_a^b \eta(t) \mathrm{d}t$
         \item If $\gamma \colon I \rightarrow E$ is constant, $\gamma(t) \equiv K$, then $\int^b_a \gamma(t) \mathrm{d}t$ exists and equals $(b-a)K$.
         \item $\int_a^c \gamma(t) \mathrm{d}t = \int_a^b \gamma(t)\mathrm{d}t + \int_b^c\gamma(t)\mathrm{d}t$ (if the integrals exist)
        \end{enumerate}
        \Question Let $\gamma \colon I \rightarrow E$ be a $C^k$-curve ($k \in \N$) and $\lambda \colon E \rightarrow F$ be continuous linear for $E,F$ locally convex. Show that $\lambda \circ \gamma$ is $C^k$ such that $\frac{\mathrm{d}^\ell}{\mathrm{d}t^\ell} (\lambda \circ \gamma) = \lambda \circ \left(\frac{\mathrm{d}^\ell}{\mathrm{d}t^\ell} \gamma \right),\ 1 \leq \ell \leq k$.
        \Question Endow a vector space $E$ with a topology $\mathcal{T}$ generated by seminorms as in \Cref{defn:lcvx}. Show that $(E,\mathcal{T})$ is a topological vector space (whence requiring that locally convex spaces are topological vector spaces was superfluous).
\end{Exercise}

\begin{Answer}[number={\ref{EX11} 2 c)}] 
 \emph{We will show that $D \colon C^\infty ([0,1],\R) \rightarrow C^\infty ([0,1],\R), c \mapsto c^\prime$ is continuous linear.}
 \\[.15em]
 
 Clearly the differential operator is linear with respect to pointwise addition of functions. Let now $0$ be the constant $0$-function. Then linearity of $D$ together with \Cref{lem:linmapzero} implies that $D$ will be continuous if the preimage of every $0$-neighbourhood $U \subseteq C^\infty ([0,1],\R)$ is a $0$-neighbourhood. Thus we pick an open $0$-neighbourhood $U$. Shrinking $U$ we may assume that $U$ is a ball $B_r^n (0)$ of radius $r>0$ for the seminorms $\lVert \cdot \rVert_n$ where we have chosen suitable $n\in \N_0$ (and have exploited that these seminorms form a fundamental system by \Cref{ex:co_seminorm}). 
 Now as the $k$th derivative of a function coincides with the $k-1$st derivative of its derivative we observe that$D(B_{n+1}^r (0)) \subseteq B_n^r(0)$. In other words $B_{n+1}^r(0) \subseteq D^{-1}(U)$ and the preimage is a $0$-neighbourhood. We deduce that $D$ is continuous.
\end{Answer}

\section{Bastiani calculus}

Bastiani calculus\index{Bastiani calculus} (also called Keller's $C^k_c$-theory, \cite{MR0440592}), introduced in \cite{bastiani}, builds a calculus around directional derivatives and their continuity. It is the basis of our investigation as this calculus works in locally convex spaces beyond the Banach setting.

\begin{defn}\label{defn: der:Bast} Let $E, F$ be locally convex spaces, $U \opn E$,
$f \colon U \rightarrow F$ a map and $r \in \N_{0} \cup \{\infty\}$. If it
exists, we define for $(x,h) \in U \times E$ the \emph{directional derivative}\index{derivative!directional}
$$df(x;h) \coloneq D_h f(x) \coloneq \lim_{\R \setminus \{0\} \ni t\rightarrow 0} t^{-1} \big(f(x+th) -f(x)\big).$$
We say that $f$ is $C^r$ if the \emph{iterated directional derivatives}\index{derivative!iterated directional}
\begin{displaymath}
d^{k}f (x;y_1,\ldots , y_k) \coloneq (D_{y_k} D_{y_{k-1}} \cdots D_{y_1}
f) (x)
\end{displaymath}
exist for all $k \in \N_0$ such that $k \leq r$, $x \in U$ and
$y_1,\ldots , y_k \in E$ and define continuous maps
$d^{k} f \colon U \times E^k \rightarrow F$ (where $f^{0}\coloneq f$). If $f$ is $C^k$ for all $k\in \N_0$ we say that $f$ is \emph{smooth} or $C^\infty$. Note that $df = d^{1} f$ and for curves $c \colon I \rightarrow E$ we have $c^\prime (t) = dc(t;1)$.
\end{defn}

\begin{rem}\label{rem:easyproof}
 Note that the iterated directional derivatives are only taken with respect to the first variable (i.e.\ of the map $x \mapsto df(x;v)$, where $v$ is supposed to be fixed). One can alternatively define iterated differentials to derivate with respect to all variables, but this leads to the same differentiability concept (see \cite{MR1911979} for a detailed explanation).
 The following observations are easily proved from the definitions:
 \begin{enumerate}
  \item $d^{2}f(x;v,w) = \lim_{t\rightarrow 0} t^{-1}(df(x+tw;v)-df(x;v))$
  \item $d^{k}f(x;v_1,\ldots , v_k)= \left.\frac{\mathrm{d}}{\mathrm{d}t}\right|_{t=0} d^{k-1}f(x+tv_k;v_1,\ldots, v_{k-1})$
  \item $f$ is $C^k$ if and only if $f$ is $C^{k-1}$ and $d^{k-1}f$ is $C^1$. Then $d^kf=d(d^{k-1}f)$.
 \end{enumerate}
 Finally, there is a version of \emph{Schwarz' theorem}\index{Schwarz' theorem} which states that the order of directions $v_1,\ldots,v_k$ in  $d^kf(x;v_1,\ldots ,v_k)$ is irrelevant (see Exercise \ref{Ex:Bastiani} 3.).
\end{rem}

\begin{ex}
 Let $A \colon E \rightarrow F$ be a continuous linear map between locally convex spaces. Then $A$ is $C^1$, as we can exploit
 $$dA (x;v) = \lim_{t\rightarrow 0} t^{-1}(A(x+tv)-A(x)) = \lim_{t \rightarrow 0} A(v) = A(v).$$
 In particular since $A$ is continuous, so is the first derivative and we see that $A$ is a $C^1$-map.
 Computing the second derivative we use that the first derivative is constant in $x$ (but not in $v$!) to
 obtain 
 \begin{align*}d^2 A(x;v,w) =& D_w(dA(x;v))= \lim_{t\rightarrow 0} t^{-1}(dA(x+tw;v) - dA(x;v)) \\
   =& \lim_{t\rightarrow 0} t^{-1}(A(v)-A(v)) =0.
 \end{align*}
 In conclusion $A$ is a $C^2$-map (obviously even a $C^\infty$-map) whose higher derivatives vanish.
\end{ex}

\begin{lem}\label{lem:homogeneity}
 Let $f \colon E \supseteq U \rightarrow F$ be a $C^1$-map. Then $df(x;\cdot)$ is homogeneous, i.e.\ $df(x;sv)=sdf(x;v)$ for all $x \in U, v \in E$ and $s \in \R$.
\end{lem}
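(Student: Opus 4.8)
The plan is to prove homogeneity directly from the limit definition of the directional derivative by reparametrising the scalar parameter $t$, after first splitting off the degenerate case $s=0$. Since $f$ is $C^1$, the derivative $df(x;\cdot)$ is defined on all of $E$, so there is no question about the existence of the limits involved; the task is purely to relate the difference quotient for the direction $sv$ to the one for $v$.

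First I would dispose of $s=0$. As $0\cdot v = 0$, the difference quotient $t^{-1}(f(x+t(0\cdot v))-f(x)) = t^{-1}(f(x)-f(x))$ vanishes identically, so $df(x;0v)=df(x;0)=0=0\cdot df(x;v)$, as required.

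For $s\neq 0$ I would start from
$$df(x;sv)=\lim_{t\to 0} t^{-1}\big(f(x+t s v)-f(x)\big)$$
and substitute $\tau \coloneq ts$. Since $s\neq 0$, the map $t \mapsto ts$ is a homeomorphism of $\R\setminus\{0\}$ fixing $0$, so $t \to 0$ if and only if $\tau \to 0$, and the limit is unchanged under this reparametrisation. Rewriting $t^{-1}=s\,\tau^{-1}$ then gives
$$df(x;sv)=\lim_{\tau\to 0} s\,\tau^{-1}\big(f(x+\tau v)-f(x)\big).$$
Because scalar multiplication by the fixed constant $s$ is continuous on the locally convex space $F$, it commutes with the limit, yielding $df(x;sv)=s\lim_{\tau\to 0}\tau^{-1}(f(x+\tau v)-f(x))=s\,df(x;v)$.

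The only point requiring care — and the nearest thing to an obstacle — is justifying that scalar multiplication may be pulled out of the limit and that the substitution $\tau=ts$ genuinely preserves it; both rest on the topological vector space structure of $F$ (continuity of scalar multiplication) together with $s\neq 0$ making $t\mapsto ts$ a homeomorphism near $0$. No appeal to the $C^1$-property beyond the mere existence of $df$ is needed.
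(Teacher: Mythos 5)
Your proof is correct and follows essentially the same route as the paper: dispose of $s=0$ by noting the difference quotient vanishes, then for $s\neq 0$ write $t^{-1}=s\,(st)^{-1}$ and reparametrise the limit. The extra remarks on continuity of scalar multiplication and the substitution being a homeomorphism are sound justifications of steps the paper leaves implicit.
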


\begin{proof}
 As $df(x;0v)=df(x;0)=0=0df(x;v)$ we may assume that $s\neq 0$ and thus $df(x;sv)= \displaystyle \lim_{t\rightarrow 0} t^{-1} (f(x+tsv)-f(x)) = s\lim_{t\rightarrow 0} (st)^{-1}(f(x+tsv)-f(x))=sdf(x;v)$
\end{proof}

\begin{prop}[Mean value theorem on locally convex spaces]\label{prop:mean_value}
Let $E,F$ be locally convex spaces, $f \colon U \rightarrow F$ a $C^1$-map on $U \opn E$.\index{mean value theorem}
Then 
\begin{align}\label{eq:meanvalue}
f(y)-f(x) = \int_0^1 df(x+t(y-x);y-x)\mathrm{d}t
\end{align}
for all $x,y\in U$ such that $U$ contains the line segment $\overline{xy}:=\{tx+(1-t)y\mid t\in [0,1]\}$.
\end{prop}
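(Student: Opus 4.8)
The plan is to reduce the statement to the one-variable fundamental theorem of calculus already established in \Cref{prop:funthm}. The natural device is to restrict $f$ to the line segment joining $x$ and $y$ and to study the resulting curve in $F$.

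First I would introduce the auxiliary curve
$$c \colon [0,1] \rightarrow F, \qquad c(t) \coloneq f(x + t(y-x)),$$
which is well defined precisely because $U$ contains $\overline{xy}$, and which is continuous since $f$ is continuous and $t \mapsto x + t(y-x)$ is an affine (hence continuous) map into $U$. The endpoints are $c(0) = f(x)$ and $c(1) = f(y)$, so the left-hand side of \eqref{eq:meanvalue} is exactly $c(1) - c(0)$.

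The key step is to show that $c$ is a $C^1$-curve and to identify its derivative. Fixing $s \in [0,1]$ and writing $z \coloneq x + s(y-x) \in U$ and $v \coloneq y - x$, I would compute directly from the definition of the derivative of a curve:
$$c'(s) = \lim_{t \rightarrow 0} \frac{1}{t}\big(c(s+t) - c(s)\big) = \lim_{t \rightarrow 0} \frac{1}{t}\big(f(z + tv) - f(z)\big) = df(z; v),$$
where the last equality is the definition of the directional derivative from \Cref{defn: der:Bast}; this limit exists because $f$ is $C^1$. Thus $c'(s) = df(x + s(y-x); y-x)$. To confirm that $c$ is genuinely a $C^1$-curve I would observe that $s \mapsto c'(s)$ is the composition of the continuous map $s \mapsto (x + s(y-x), y-x)$ into $U \times E$ with the map $df \colon U \times E \rightarrow F$, which is continuous exactly because $f$ is of class $C^1$. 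Hence $c'$ is continuous on $[0,1]$.

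Finally, applying \Cref{prop:funthm} to the $C^1$-curve $c$ yields
$$f(y) - f(x) = c(1) - c(0) = \int_0^1 c'(t)\,\mathrm{d}t = \int_0^1 df(x + t(y-x); y-x)\,\mathrm{d}t,$$
which is precisely \eqref{eq:meanvalue}; the weak integral here exists automatically, since the proof of \Cref{prop:funthm} exhibits $c(1)-c(0)$ as satisfying the defining property of the weak integral, so no completeness hypothesis on $F$ is required. The only genuinely delicate point is the computation of $c'(s)$: one must recognise that the difference quotient of $c$ at $s$ is \emph{literally} the difference quotient defining the directional derivative of $f$ at $z$ in direction $v$, so that no separate chain rule is needed and the continuity of $c'$ follows directly from the joint continuity of $df$ built into the definition of a $C^1$-map.
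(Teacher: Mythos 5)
Your proof is correct and follows exactly the paper's own argument: restrict $f$ to the segment, identify the difference quotient of the resulting curve with the difference quotient defining $df$, conclude continuity of the derivative from joint continuity of $df$, and apply \Cref{prop:funthm}. No further comment is needed.
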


\begin{proof}
 Note that the curve $\gamma \colon [0,1] \rightarrow F , \gamma(t)\coloneq f(x+t(y-x))$ is differentiable at each $t\in [0,1]$.
 Its derivative is 
 \begin{align*}
  \gamma'(t) = \lim_{s\rightarrow 0} s^{-1}(\gamma(t+s)-\gamma(t)) = df(x+t(y-x),y-x),
 \end{align*}
 whence $\gamma'$ is continuous (as $df$ is) and thus a $C^1$-curve. Apply now the Fundamental theorem \ref{prop:funthm} to $\gamma'$ to obtain \eqref{eq:meanvalue}.
\end{proof}

On a locally convex space, every point has arbitrarily small convex neighborhoods. Convex neighborhoods contain all line segments between points in the neighborhood, whence \Cref{prop:mean_value} is available on these neighborhoods. As a consequence we obtain:

\begin{cor}\label{cor:loc_const}
 If $f\colon U \rightarrow F$ is a $C^1$-map with $df\equiv 0$, then $f$ is locally constant.
\end{cor}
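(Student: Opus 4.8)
The plan is to reduce the statement to a direct application of the mean value theorem (\Cref{prop:mean_value}) on convex neighbourhoods. First I would fix a point $x \in U$ and, invoking local convexity of $E$, choose a \emph{convex} open neighbourhood $V$ of $x$ with $V \opn U$. The reason for insisting that $V$ be convex is precisely that the line segment $\overline{xy}$ between $x$ and any $y \in V$ then lies entirely inside $V \subseteq U$, so that the line-segment hypothesis of \Cref{prop:mean_value} is met for every such pair $(x,y)$.

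Next I would apply the mean value theorem to $x$ and an arbitrary $y \in V$, obtaining
\[
f(y) - f(x) = \int_0^1 df\big(x+t(y-x);\, y-x\big)\,\mathrm{d}t.
\]
Since $df \equiv 0$ by assumption, the integrand is the constant curve $t \mapsto 0 \in F$, whose weak integral is $0$ (this is the elementary property of weak integrals recorded in Exercise \ref{EX11} 4.(b), taken with $K=0$). Hence $f(y) = f(x)$ for all $y \in V$, that is, $f$ is constant on the neighbourhood $V$ of $x$. As $x \in U$ was arbitrary, $f$ is locally constant, which is the claim.

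There is essentially no hard step here: the corollary is a formal consequence of the mean value theorem together with the existence of arbitrarily small convex neighbourhoods in a locally convex space (as already noted in the paragraph preceding the statement). The only point I would be careful to spell out — and where the hypothesis of local convexity genuinely enters — is the verification that the line segment $\overline{xy}$ stays inside $U$, which is exactly what convexity of $V$ guarantees. A secondary, purely bookkeeping, observation is that the weak integral of the zero curve vanishes; flagging this keeps the argument elementary and avoids invoking any deeper integration theory.
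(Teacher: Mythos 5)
Your proposal is correct and follows exactly the paper's own argument: choose a convex neighbourhood $V$ of $x$ inside $U$, note that convexity gives the line-segment hypothesis of \Cref{prop:mean_value}, and conclude $f(y)=f(x)$ from the vanishing of the integrand. The extra remark that the weak integral of the zero curve is zero is a harmless (and correct) bit of bookkeeping the paper leaves implicit.
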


\begin{proof}
 For $x \in U$ choose a convex neighbourhood $x \in V \subseteq U$ (cf.\ Appendix \ref{AppA}). For each $y \in V$ the line segment connecting $x$ and $y$ is contained in $V$, whence the vanishing of the derivative with \eqref{eq:meanvalue} implies $f(x)=f(y)$ and $f$ is constant on $V$. 
\end{proof}

\begin{prop}[Rule on partial differentials]\label{prop:rpd}
 Let $E_1,E_2,F$ be locally convex spaces, $U \opn E_1\times E_2$ and $f \colon U \rightarrow F$ continuous. Then $f$ is $C^1$ if and only if the limits \index{rule on partial differentials}
 \begin{align*}
  d_1 f(x,y;v_1) &\coloneq \lim_{t \rightarrow 0} t^{-1}(f(x+tv_1,y)-f(x,y)),\\
  d_2 f(x,y;v_2) &\coloneq \lim_{t \rightarrow 0} t^{-1}(f(x,y+tv_2)-f(x,y))
 \end{align*}
exist for all $(x,y) \in U$ and $(v_1,v_2) \in E_1\times E_2$ and extend to continuous mappings $d_i f \colon U \times E_i \rightarrow F, i=1,2$. In this case 
\begin{align}\label{eq:partialderivatives}
 df(x,y;v_1,v_2) = d_1 f(x,y;v_1) + d_2 f(x,y;v_2), \quad \forall (x,y) \in U, (v_1,v_2) \in E_1 \times E_2.
\end{align}
\end{prop}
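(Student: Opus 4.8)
The plan is to prove the two implications separately, with the bulk of the work going into the ``if'' direction; the ``only if'' direction is then almost immediate. Throughout, the guiding idea is the telescoping decomposition
\[
f(x+tv_1,y+tv_2)-f(x,y) = \big(f(x+tv_1,y+tv_2)-f(x,y+tv_2)\big) + \big(f(x,y+tv_2)-f(x,y)\big),
\]
which splits the increment into a piece moving only in the first slot and a piece moving only in the second.

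Assume first that $d_1f$ and $d_2f$ exist and are continuous. Dividing the decomposition by $t$, the second bracket tends to $d_2 f(x,y;v_2)$ by the definition of $d_2f$. For the first bracket I would fix a small $t\neq 0$ (small enough that the whole segment $\{(x+stv_1,y+tv_2)\mid s\in[0,1]\}$ lies in $U$, which is possible since $U$ is open and these points converge to $(x,y)$ uniformly in $s$ as $t\to 0$) and consider the curve $g(s):=f(x+stv_1,y+tv_2)$. One checks directly from the definition of the partial derivative that $g$ is a $C^1$-curve with $g'(s)=d_1f(x+stv_1,y+tv_2;tv_1)$, continuous in $s$ because $d_1f$ is continuous. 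Then \Cref{prop:funthm} together with the homogeneity from \Cref{lem:homogeneity} gives
\[
t^{-1}\big(f(x+tv_1,y+tv_2)-f(x,y+tv_2)\big) = \int_0^1 d_1 f(x+stv_1,y+tv_2;v_1)\,\mathrm{d}s =: h(t),
\]
with $h(0)=d_1 f(x,y;v_1)$. Note the weak integral exists for free here, being the endpoint difference of a $C^1$-curve, so no completeness hypothesis is needed.

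The crux is to show $h(t)\to h(0)$ as $t\to 0$, i.e.\ that one may pass the limit inside the integral; this is the main obstacle. I would observe that $(s,t)\mapsto d_1 f(x+stv_1,y+tv_2;v_1)$ is continuous on $[0,1]\times J$ for a small compact interval $J\ni 0$, so by compactness of $[0,1]\times\{0\}$, for every continuous seminorm $p$ on $F$ and every $\varepsilon>0$ there is $\delta>0$ with $p\big(d_1 f(x+stv_1,y+tv_2;v_1)-d_1 f(x,y;v_1)\big)<\varepsilon$ for all $s\in[0,1]$ and $|t|<\delta$. Applying the standard estimate $p\!\left(\int_0^1\gamma(s)\,\mathrm{d}s\right)\le\int_0^1 p(\gamma(s))\,\mathrm{d}s$ for weak integrals (a consequence of the dominated form of the Hahn--Banach theorem, cf.\ \Cref{thm:HahnBanach}) to $\gamma(s)=d_1 f(x+stv_1,y+tv_2;v_1)-d_1 f(x,y;v_1)$ then yields $p(h(t)-h(0))\le\varepsilon$ for $|t|<\delta$. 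Since $p$ and $\varepsilon$ are arbitrary, $h(t)\to d_1 f(x,y;v_1)$, and combining both brackets shows that $df(x,y;v_1,v_2)$ exists and equals $d_1 f(x,y;v_1)+d_2 f(x,y;v_2)$, which is \eqref{eq:partialderivatives}.

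It remains to record continuity of $df$: by the formula just obtained it is the sum of $d_1 f$ and $d_2 f$ precomposed with the continuous coordinate projections, and addition in $F$ is continuous; together with continuity of $f$ this shows $f$ is $C^1$. For the converse, suppose $f$ is $C^1$. Since $f(x+tv_1,y)=f((x,y)+t(v_1,0))$, the limit defining $d_1 f(x,y;v_1)$ is exactly $df(x,y;(v_1,0))$, which exists and is continuous in $(x,y,v_1)$ as the composite of $df$ with the continuous linear inclusion $v_1\mapsto(v_1,0)$; symmetrically for $d_2 f$. Thus the partials exist and are continuous, and the sum formula \eqref{eq:partialderivatives} follows from the computation of the ``if'' direction, which derived it from precisely these hypotheses. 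This closes the equivalence.
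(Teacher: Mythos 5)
Your proof is correct and follows the same basic strategy as the paper: split the increment telescopically and convert the one-variable pieces into parameter-dependent weak integrals via the mean value theorem. Two differences are worth recording. First, your decomposition moves through $(x,y+tv_2)$ rather than $(x+tv_1,y)$, so the second bracket converges to $d_2f(x,y;v_2)$ directly from the definition and you only need the integral representation for one bracket; the paper writes both brackets as integrals. This is cosmetic. Second, and more substantively, the paper justifies passing to the limit $t\to 0$ inside the integral by appealing to ``continuous dependence of weak integrals on parameters'' and concedes in a footnote that this has not been established in the book, deferring to an external reference. You instead prove exactly what is needed: a tube-lemma argument gives, for each continuous seminorm $p$, a uniform bound $p\bigl(d_1f(x+stv_1,y+tv_2;v_1)-d_1f(x,y;v_1)\bigr)<\varepsilon$ for all $s\in[0,1]$ and small $t$, and the estimate $p\bigl(\int_0^1\gamma(s)\,\mathrm{d}s\bigr)\le\int_0^1 p(\gamma(s))\,\mathrm{d}s$ finishes the job. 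That estimate is standard and correct, but note that it rests on the dominated-extension form of Hahn--Banach, whereas \Cref{thm:HahnBanach} as stated in the text is only the point-separating version; you should either cite the extension theorem properly or include the one-line derivation (choose $\lambda$ with $\lambda(z)=p(z)$ and $|\lambda|\le p$, then apply the defining property of the weak integral). Your observation that the weak integrals here exist without any completeness hypothesis, being endpoint differences of $C^1$-curves, is also correct and worth keeping. The converse direction matches the paper's.
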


\begin{proof}
 If $f$ is $C^1$ the mappings $d_if$ clearly exist and are continuous. Conversely, let us assume that the mappings $d_if$ exist and are continuous. For $(x,y) \in U$, $(v_1,v_2) \in E\times F$ we fix $\varepsilon >0$ such that $(x,y)+t (v_1,v_2) \in U$, whenever $|t|<\varepsilon$. Now if we fix the $i$th component of $f$ we obtain a $C^1$-mapping (by hypothesis, the derivative is $d_if$). Therefore \Cref{prop:mean_value} with \Cref{lem:homogeneity} yields 
 \begin{align}
  &\frac{f((x,y)+t(v_1,v_2))-f(x,y)}{t} \notag \\ =& \frac{f(x+tv_1,y+tv_2)-f(x+tv_1,y)}{t}+\frac{f(x+tv_1,y)-f(x,y)}{t} \notag\\
  =& \int_0^1 d_2f(x+tv_1,y+stv_2;v_2)\mathrm{d}s+\int_0^1d_1f(x+stv_1,y;v_1)\mathrm{d}s \label{integrals}
 \end{align}
 The integrals \eqref{integrals} make sense also for $t=0$, whence they define maps $I_i\colon ]-\varepsilon,\varepsilon[ \rightarrow H$. Due to continuous dependence on the parameter $t$,\footnote{We are cheating here, the continuous dependence of weak integrals on parameters has not been established in this book. See \cite[I Theorem 2.1.5]{MR656198} for a proof which carries over to our setting.} the right hand side of \eqref{integrals} converges for $t\rightarrow 0$. We deduce that the limit $df$ exist and satisfies \eqref{eq:partialderivatives} which is continuous, whence $f$ is $C^1$. 
\end{proof}

The following alternative characterisation of $C^1$-maps will turn the proof of the chain rule into a triviality. However, we shall only sketch the proof to avoid discussing convergence issues of the weak integral involved.

\begin{lem}\label{lem:altchar}
 A map $f \colon E\supseteq U \rightarrow F$ is of class $C^1$ if and only if there exists a continuous mapping, the difference quotient map,
 $$f^{[1]} \colon U^{[1]}\coloneq \{(x,v,s) \in U \times E \times \R \mid x+sv \in U\} \rightarrow F$$
 such that $f(x+sv)-f(x)=sf^{[1]}(x,v,s)$ for all $(x,v,s) \in U^{[1]}$.
\end{lem}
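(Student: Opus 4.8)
The plan is to prove the two implications separately, treating the reverse direction (constructing $f^{[1]}$ from a $C^1$-map) as the substantial part. For the forward direction, suppose a continuous $f^{[1]}$ satisfying the identity is given. For $s\neq 0$ the identity forces $f^{[1]}(x,v,s)=s^{-1}(f(x+sv)-f(x))$, so letting $s\to 0$ and using continuity of $f^{[1]}$ yields $df(x;v)=\lim_{s\to 0}s^{-1}(f(x+sv)-f(x))=f^{[1]}(x,v,0)$. Hence $df$ exists and coincides with the restriction of $f^{[1]}$ to the slice $s=0$, which is continuous as a map $U\times E\to F$. It remains to check that $f$ itself is continuous (as $C^1$ requires $d^0f=f$ continuous): taking $s=1$, the identity reads $f(x)=f(x_0)+f^{[1]}(x_0,x-x_0,1)$ for $x$ near $x_0$, and since $f^{[1]}(x_0,0,1)=0$ (again from the identity), continuity of $f^{[1]}$ gives $f(x)\to f(x_0)$. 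Thus $f$ is $C^1$.

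For the converse, assume $f$ is $C^1$ and define $f^{[1]}(x,v,s)\coloneq s^{-1}(f(x+sv)-f(x))$ for $s\neq 0$ and $f^{[1]}(x,v,0)\coloneq df(x;v)$; the defining identity then holds by construction. Continuity must be verified in two regimes. On the open set $\{s\neq 0\}\cap U^{[1]}$ it is immediate, being a composition of the continuous maps $f$, $(x,v,s)\mapsto x+sv$, and $s\mapsto s^{-1}$. The delicate points are those of the form $(x_0,v_0,0)$. Here I would exploit local convexity: choose a convex neighbourhood $V\subseteq U$ of $x_0$ and a neighbourhood $W$ of $(x_0,v_0,0)$ in $U^{[1]}$ small enough that for all $(x,v,s)\in W$ the segment $\{x+\tau s v\mid \tau\in[0,1]\}$ lies in $V$. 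This convexity step is what repairs the gap that $U^{[1]}$ only puts the \emph{endpoints} $x,x+sv$ in $U$, not the connecting segment, so that \Cref{prop:mean_value} does not apply globally.

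On $W$ the mean value theorem \Cref{prop:mean_value} gives $f(x+sv)-f(x)=\int_0^1 df(x+\tau s v;sv)\,\dd\tau$, and pulling the scalar out by homogeneity (\Cref{lem:homogeneity}) produces the unified formula
\[
f^{[1]}(x,v,s)=\int_0^1 df(x+\tau s v;v)\,\dd\tau,
\]
now valid also for $s=0$, where the integrand is the constant $df(x;v)$. Note this weak integral genuinely exists with no completeness hypothesis on $F$, since by \Cref{prop:funthm} it is computed by an explicit antiderivative.

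The crux — and the step I expect to be the main obstacle — is deducing continuity of $f^{[1]}$ at $(x_0,v_0,0)$ from this formula, i.e.\ the continuous dependence of the parameter integral on $(x,v,s)$. As already flagged in the proof of \Cref{prop:rpd}, this property is not developed in the text. I would either invoke it as a black box (cf.\ \cite[I Theorem 2.1.5]{MR656198}, whose proof carries over to the present setting), or argue directly: fixing a continuous seminorm $q$ on $F$, compactness of $[0,1]$ together with continuity of the integrand $(\tau,x,v,s)\mapsto df(x+\tau s v;v)$ yields, by a tube-lemma argument, a subneighbourhood on which $q\bigl(df(x+\tau s v;v)-df(x_0+\tau s_0 v_0;v_0)\bigr)$ is uniformly small in $\tau$; combined with the elementary estimate $q\bigl(\int_0^1 h\,\dd\tau\bigr)\le\int_0^1 q(h)\,\dd\tau$ for weak integrals (a consequence of \Cref{thm:HahnBanach}), this bounds $q\bigl(f^{[1]}(x,v,s)-f^{[1]}(x_0,v_0,0)\bigr)$ and gives continuity. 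This is precisely the convergence issue the statement warns it will only sketch.
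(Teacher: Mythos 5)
Your proof is correct and follows the same route as the paper: the easy direction recovers $df(x;v)=f^{[1]}(x,v,0)$ as a partial map, and the converse defines $f^{[1]}$ piecewise and reduces the only delicate point — continuity at $s=0$ — to continuous parameter dependence of the weak integral $\int_0^1 df(x+\tau s v;v)\,\mathrm{d}\tau$. The paper simply cites \cite[Proposition 7.4]{MR2069671} for that last step, whereas you supply the mean value/convexity reduction and a seminorm estimate sketch explicitly; this is exactly the content of the cited result, so the two arguments coincide in substance.
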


\begin{proof}
Let us assume first that $f^{[1]}$ exists and is continuous. Note that $U^{[1]} \opn U \times E \times \R$. Then $df(x;v) = f^{[1]}(x,v,0)$ exists and is continuous as a partial map of $f^{[1]}$. So $f$ is $C^1$. Conversely, if $f$ is $C^1$, the map 
$$f^{[1]}(x,v,s) \coloneq \begin{cases}
                                                     s^{-1}(f(x+sv)-f(x)) & (x,v,s) \in U^{[1]}, s \neq 0 \\
                                                     df (x;v) & (x,v,s) \in U^{[1]}, s=0
                                                    \end{cases}$$
 is continuous on the open set $U^{[1]} \setminus \{(x,v,s) \in U^{[1]} \mid s = 0\}$. That $f^{[1]}$ extends to a continuous map on all of $U^{[1]}$ follows from continuity of parameter dependent weak integrals, see \cite[Proposition 7.4]{MR2069671} for details. 
\end{proof}

\begin{lem}
 If $f \colon E \supseteq U \rightarrow F$ is $C^1$, then $df(x;\cdot) \colon E \rightarrow F$ is a continuous linear map for each $x \in U$.
\end{lem}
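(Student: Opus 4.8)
The plan is to prove three things about $\varphi \coloneq df(x;\cdot)$: that it is continuous, positively homogeneous, and additive; together these give that $\varphi$ is continuous linear. Continuity is immediate: since $f$ is $C^1$, the map $df\colon U \times E \to F$ is continuous by the very definition of the class $C^1$, and $\varphi$ is the composite of the continuous map $E \to U\times E$, $v \mapsto (x,v)$, with $df$. Homogeneity, $df(x;sv) = s\,df(x;v)$ for $s \in \R$, is exactly \Cref{lem:homogeneity}. Thus the only substantial point is additivity, $df(x;v+w) = df(x;v) + df(x;w)$ for all $v,w \in E$.

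To establish additivity I would fix $x \in U$ and $v,w\in E$, choose $r$ small enough that all relevant line segments lie in $U$ (possible since $U$ is open), and split the difference telescopically:
\begin{align*}
f(x + r(v+w)) - f(x) = \big(f(x+rv+rw) - f(x+rw)\big) + \big(f(x+rw) - f(x)\big).
\end{align*}
To each bracket I apply the mean value theorem \Cref{prop:mean_value}: the first with endpoints $x+rw$ and $x+rv+rw$ (difference $rv$), the second with endpoints $x$ and $x+rw$ (difference $rw$). Using homogeneity to pull the scalar $r$ out of $df(\cdot\,;rv)$ and $df(\cdot\,;rw)$, this yields
\begin{align*}
f(x + r(v+w)) - f(x) = r\left(\int_0^1 df(x+trv+rw;v)\,\mathrm{d}t + \int_0^1 df(x+trw;w)\,\mathrm{d}t\right).
\end{align*}

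Dividing by $r$ and letting $r \to 0$, the left-hand quotient converges to $df(x;v+w)$ by the definition of the directional derivative. On the right I would pass to the limit inside the two weak integrals: the integrands $df(x+trv+rw;v)$ and $df(x+trw;w)$ converge, uniformly in $t \in [0,1]$, to the constants $df(x;v)$ and $df(x;w)$ by continuity of $df$, so the integrals converge to $df(x;v)$ and $df(x;w)$. Comparing the two limits gives $df(x;v+w) = df(x;v) + df(x;w)$, which is additivity; together with homogeneity and continuity this completes the proof.

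The one delicate step is the interchange of the limit $r\to 0$ with the weak integrals: this is not a formal manipulation but relies on the continuous dependence of parameter-dependent weak integrals on the parameter — precisely the fact already invoked (and flagged) in the proof of \Cref{prop:rpd}, for which \cite{MR656198} provides an argument valid in our setting. I expect this to be the main obstacle to a fully rigorous write-up; everything else is routine once homogeneity and the mean value theorem are in hand. An alternative that packages the same analytic content differently is to set $\Phi(s,t) \coloneq f(x+sv+tw)$ on a neighbourhood of $(0,0)$ in $\R^2$, use the rule on partial differentials (\Cref{prop:rpd}) to identify its partial derivatives as $a\,df(x+sv+tw;v)$ and $b\,df(x+sv+tw;w)$, and then evaluate the derivative of $r\mapsto \Phi(r,r)$ at $r=0$ in two ways; but this route merely relocates the same integral-continuity issue into \Cref{prop:rpd}.
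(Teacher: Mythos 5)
Your proof is correct, and the telescoping decomposition $f(x+r(v+w))-f(x) = \bigl(f(x+rv+rw)-f(x+rw)\bigr)+\bigl(f(x+rw)-f(x)\bigr)$ is exactly the one the paper uses; the difference lies in how each of you passes to the limit. The paper feeds the two brackets into the difference quotient map $f^{[1]}$ of \Cref{lem:altchar}, writing the quotient as $f^{[1]}(x+tv,w,t)+f^{[1]}(x,v,t)$ and simply evaluating the continuous map $f^{[1]}$ at $t=0$; you instead invoke the mean value theorem and then interchange $r\to 0$ with the two weak integrals. These are two packagings of the same analytic content — the continuity of $f^{[1]}$ at $t=0$ is itself established (in the proof of \Cref{lem:altchar}) by appeal to continuity of parameter-dependent weak integrals, the very fact you flag as the delicate step — so neither route is more elementary than the other; the paper's version just hides the issue inside a lemma it has already stated. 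One remark that makes your version cheaper than you suggest: you do not actually need convergence of the weak integrals in the topology of $F$. The left-hand quotient already converges in $F$ to $df(x;v+w)$, so the right-hand side converges to the same limit $L$, and to identify $L$ it suffices to apply an arbitrary $\lambda \in F'$; this commutes with the weak integrals by definition, reduces the interchange to scalar integrals of uniformly convergent continuous integrands, and yields $\lambda(L)=\lambda(df(x;v))+\lambda(df(x;w))$ for all $\lambda$, whence $L=df(x;v)+df(x;w)$ by Hahn--Banach (\Cref{thm:HahnBanach}). With that observation your argument is fully rigorous using only tools already proved in the text.
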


\begin{proof}
 Fix $x \in U$ and note that $df(x;\cdot)$ is continuous as a partial map of the continuous $df$. We have already seen in \Cref{lem:homogeneity} that $df(x;sv)=sdf(x;v)$ for all $s \in \R$, whence we only have to prove additivity.
 Choosing $v,w \in E$ we compute
 \begin{align*}
  t^{-1}(f(x+t(v+w)-f(x))&=t^{-1}(f(x+t(v+w)-f(x+tv))+t^{-1}(f(x+tv)-f(x))\\ &= f^{[1]}(x+tv,w,t)+f^{[1]}(x,v,t)
 \end{align*}
 The right hand side makes also sense for $t=0$ and is continuous, whence passing to the limit we get $df(x;v+w)=df(x;v)+df(x;w)$.
\end{proof}

\begin{prop}[Chain rule]\label{chainrule}\index{chain rule}
Let $f \colon E \supseteq U \rightarrow F$ and $g \colon F\supseteq W \rightarrow K$ be $C^1$-maps with $f(U)\subseteq W$. Then $g\circ f$ is a $C^1$-map with derivative given by
$$d(g\circ f) (x;v) =dg(f(x);df(x,v)) \quad (\text{i.e.} (g\circ f)'(x)=g'(f(x))\circ f'(x).$$
\end{prop}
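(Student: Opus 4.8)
The plan is to prove the chain rule by using the alternative characterisation of $C^1$-maps from \Cref{lem:altchar}, which reduces everything to composing continuous difference-quotient maps. The key observation is that once we have the continuous maps $f^{[1]}$ and $g^{[1]}$, we can write the difference quotient of $g\circ f$ as a composition of these, and continuity will follow automatically from continuity of $f$, $f^{[1]}$, $g^{[1]}$.

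First I would set up the difference quotient. For $(x,v,s)\in U^{[1]}$ with $s\neq 0$, I compute
\begin{align*}
s^{-1}\big(g(f(x+sv))-g(f(x))\big) &= s^{-1}\big(g(f(x)+ s f^{[1]}(x,v,s))-g(f(x))\big)\\
&= g^{[1]}\big(f(x),\, f^{[1]}(x,v,s),\, s\big),
\end{align*}
where the first equality uses the defining relation $f(x+sv)-f(x)=s f^{[1]}(x,v,s)$ from \Cref{lem:altchar}, and the second uses the defining relation for $g^{[1]}$ with increment $w=f^{[1]}(x,v,s)$ and parameter $s$. The point is that this identity is valid for \emph{all} $s$ (including $s=0$) on the appropriate domain, since $f^{[1]}$ and $g^{[1]}$ are defined there.

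Next I would verify that the right-hand side is a continuous map on the relevant domain and that it is the correct candidate for $(g\circ f)^{[1]}$. The map $(x,v,s)\mapsto\big(f(x),f^{[1]}(x,v,s),s\big)$ is continuous (as $f$ and $f^{[1]}$ are continuous), and it lands in the domain $W^{[1]}$ of $g^{[1]}$; one checks the condition $f(x)+s\,f^{[1]}(x,v,s)=f(x+sv)\in W$ holds precisely because $f(U)\subseteq W$. Composing with the continuous $g^{[1]}$ shows the candidate
$$(g\circ f)^{[1]}(x,v,s)\coloneq g^{[1]}\big(f(x),f^{[1]}(x,v,s),s\big)$$
is continuous and satisfies $(g\circ f)(x+sv)-(g\circ f)(x)=s\,(g\circ f)^{[1]}(x,v,s)$. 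By \Cref{lem:altchar} this proves $g\circ f$ is $C^1$. Finally, evaluating at $s=0$ gives the formula: $d(g\circ f)(x;v)=(g\circ f)^{[1]}(x,v,0)=g^{[1]}(f(x),df(x;v),0)=dg(f(x);df(x;v))$, using $f^{[1]}(x,v,0)=df(x;v)$ and likewise for $g$.

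The main obstacle is bookkeeping with the domains $U^{[1]}$ and $W^{[1]}$: one must check that the composed map actually lands in $W^{[1]}$, i.e.\ that the triple $\big(f(x),f^{[1]}(x,v,s),s\big)$ satisfies the membership condition. This is exactly where the hypothesis $f(U)\subseteq W$ is used, and it is the only genuinely nonobvious point; the continuity and the final evaluation at $s=0$ are then immediate. I expect no analytic difficulty, since \Cref{lem:altchar} has already absorbed the hard convergence issues into the existence of the continuous maps $f^{[1]}$ and $g^{[1]}$.
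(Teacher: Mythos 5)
Your proposal is correct and follows essentially the same route as the paper: both use \Cref{lem:altchar} to write the difference quotient of $g\circ f$ as $g^{[1]}\big(f(x),f^{[1]}(x,v,s),s\big)$, deduce continuity from that of $f$, $f^{[1]}$, $g^{[1]}$, and read off the derivative at $s=0$. Your explicit check that the triple lands in $W^{[1]}$ via $f(x)+s\,f^{[1]}(x,v,s)=f(x+sv)\in W$ is a detail the paper leaves implicit, but it is the same argument.
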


\begin{proof}
 We use the notation from Lemma \ref{lem:altchar} and write for $(x,y,t) \in U^{[1]}$ with $t\neq 0$:
 \begin{equation} \label{uffda}\begin{aligned}
  t^{-1}(g(f(x+ty))-g(f(x))) &=t^{-1}\left(g\left(f(x)+t\frac{f(x+ty)-f(x)}{t}\right)-g(f(x))\right)\\
  &= g^{[1]}(f(x)f^{[1]}(x,y,t),t) 
 \end{aligned}
 \end{equation}
The function $h\colon U^{[1]} \rightarrow K, h(x,y,t)\coloneq g^{[1]}(f(x)f^{[1]}(x,y,t),t)$ is continuous and extends the right hand side of \eqref{uffda}. Hence Lemma \ref{lem:altchar} shows that $g\circ f$ is $C^1$, with 
$$(g\circ f)^{[1]} (x,y,t)=g^{[1]}(f(x)f^{[1]}(x,y,t),t) \text{ for all } (x,y,t) \in U^{[1]}.$$
Thus $d(g\circ f)(x;y) = (g\circ f)^{[1]} (x,y,0) = g^{[1]}(f(x)f^{[1]}(x,y,0),0) =dg(f(x);df(x;y))$.
\end{proof}

The chain rule, is the basis to transport concepts from differential geometry such as manifolds, tangent spaces etc.\ to our setting. Later chapters will define these objects. 

\begin{lem}\label{lem:prodspace}
 Let $E, (F_i)_{i \in I}$ be locally convex spaces and $f \colon E \supseteq U \rightarrow \prod_{i \in I} F_i$ a map on an open subset. Let $k \in \N_0 \cup \{\infty\}$ and set $f_i \colon \text{pr}_i \circ f \colon U \rightarrow F_j$ (where $\text{pr}_i$ is the $j$th \emph{coordinate projection}\index{coordinate projection}). Then $f$ is $C^k$ if and only if every $f_i, i\in I$ is $C^k$ and 
 \begin{equation}\label{productformula}
  df(x;v)=(df_i(x;v))_{i\in I},\quad x \in U, v\in E
 \end{equation}
\end{lem}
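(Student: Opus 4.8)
The plan is to reduce every assertion about $f$ to the corresponding assertion about its components $f_i = \pr_i \circ f$, exploiting two elementary features of the product topology on $\prod_{i \in I} F_i$ (which is itself a locally convex space, generated by the seminorms $p \circ \pr_i$ for $p$ a continuous seminorm on some $F_i$). First, each projection $\pr_i$ is continuous and linear, hence smooth with $d\pr_i(y;w)=\pr_i(w)$. Second --- and this is the fact that does all the work --- the product topology is the initial topology with respect to the $\pr_i$, so a net converges in $\prod_i F_i$ if and only if each of its $i$-th components converges, and a map into $\prod_i F_i$ is continuous if and only if every $\pr_i \circ (\cdot)$ is continuous.

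First I would prove the componentwise rule for a single directional derivative. For fixed $x \in U$ and $v \in E$, linearity of $\pr_i$ gives $\pr_i\big(t^{-1}(f(x+tv)-f(x))\big)=t^{-1}(f_i(x+tv)-f_i(x))$. By componentwise convergence, the limit $df(x;v)=\lim_{t\to 0}t^{-1}(f(x+tv)-f(x))$ exists if and only if each $df_i(x;v)$ exists, in which case $df(x;v)=(df_i(x;v))_{i\in I}$. This is exactly the product formula \eqref{productformula}.

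Next I would upgrade this to all orders by induction on $j$, showing that $d^j f(x;y_1,\dots,y_j)$ exists if and only if every $d^j f_i(x;y_1,\dots,y_j)$ exists, and then equals $(d^j f_i(x;y_1,\dots,y_j))_{i\in I}$. The base case $j=0$ is just $f=(f_i)_{i\in I}$. For the inductive step I would use the recursion recorded in \Cref{rem:easyproof}(b), namely that $d^j f$ is the directional derivative with respect to $y_j$ of the map $x\mapsto d^{j-1}f(x;y_1,\dots,y_{j-1})$; applying the single-derivative computation above to this map --- whose components are the $d^{j-1}f_i$ by the inductive hypothesis --- yields the claim at order $j$.

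Finally I would assemble the equivalence. By \Cref{defn: der:Bast}, $f$ is $C^k$ exactly when for each $j\le k$ the map $d^j f\colon U\times E^j\to \prod_i F_i$ exists and is continuous. By the componentwise rule just proved, $d^j f$ exists if and only if every $d^j f_i$ exists, and by the universal property its continuity is equivalent to continuity of all the components $d^j f_i$. Hence $f$ is $C^k$ if and only if every $f_i$ is $C^k$, and specialising the componentwise rule to $j=1$ records the product formula. I expect the only genuine subtlety to be the componentwise convergence of the difference quotients: this is precisely where the hypothesis that $\prod_i F_i$ carries the product (initial) topology is essential, and it is what lets the argument run uniformly for an arbitrary, possibly infinite, index set $I$, since no factor ever has to be controlled simultaneously with the others.
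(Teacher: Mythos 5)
Your proposal is correct and follows essentially the same route as the paper: componentwise convergence of difference quotients in the product topology gives existence and the formula $d^jf=(d^jf_i)_{i\in I}$, and the universal property of the initial topology gives the equivalence of continuity. The only cosmetic difference is that the paper inducts on $k$ via the recursion ``$f$ is $C^k$ iff $df$ is $C^{k-1}$'' while you induct directly on the order of the iterated directional derivative; both arguments are interchangeable here.
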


\begin{proof}
 If $f$ is $C^k$, we note that every $f_i=\text{pr}_i \circ f$ is $C^k$ by the chain rule as the projections are continuous linear, whence smooth. Further, $df_i = \text{pr}_i df$ which establishes \eqref{productformula}.

 For the converse, note that it suffices to assume that $k \in \N$. We argue by induction starting with $k=1$.
 Then $t^{-1} f(x+tv)-f(x))= (t^{-1}(f_i(x+tv)-f_i(x))_{i\in I}.$ Now the components converge to $df_i(x;v)$ for $t\rightarrow 0$, whence the difference quotient converges to the limit in \eqref{productformula}. Since every $df_i$ is continuous, also $df=(df_i)_{i\in I} \colon U \times E \rightarrow \prod_{i\in I}F_i$ is continuous and $f$ is $C^1$. For the induction step we notice that by induction $\text{pr}_i \circ df =df_i$ is $C^{k-1}$. By induction $df$ is $C^{k-1}$ and thus $f$ is $C^k$.  
\end{proof}

A subset $Y \subseteq X$ of a topological space $X$ is called \emph{sequentially closed}\index{space!sequentially closed} if $\lim_{n \rightarrow \infty} x_n \in A$ for each sequence $(x_n)_\N \subseteq A$ which converges in $X$. The following lemma will be useful in the discussion of submanifolds in Section \ref{sect:mfd}. 

\begin{lem}\label{lem:seq-closed}
Let $f \colon E \supseteq U \rightarrow F$ be a continuous map from an open subset of a locally convex space to a locally convex space, $F_0 \subseteq F$ a sequentially closed vector subspace such that $f(U)\subseteq F_0$. Let $k \in \N \cup \{\infty\}$, then $f$ is $C^k$ if and only if the corestriction $f|^{F_0} \colon U \rightarrow F_0$ is $C^k$. 
\end{lem}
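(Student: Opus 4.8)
The plan is to treat the two implications separately: the forward implication is immediate, while the converse is where the sequential-closedness hypothesis does the real work. Throughout I write $\iota\colon F_0 \hookrightarrow F$ for the inclusion, which is continuous and linear, hence $C^\infty$, and I note that $f = \iota \circ (f|^{F_0})$.

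For the easy direction, suppose $f|^{F_0}$ is $C^k$. Since $\iota$ is continuous linear, a one-line computation with the difference quotient (using continuity and linearity of $\iota$ to pull it through the limit) gives $d^j f(x;v_1,\dots,v_j) = \iota\big(d^j(f|^{F_0})(x;v_1,\dots,v_j)\big)$ for all $j \le k$; each such map exists and is continuous into $F$, being $\iota$ composed with a continuous map. Equivalently, this is the chain rule \Cref{chainrule} applied iteratively. Hence $f$ is $C^k$, and no hypothesis on $F_0$ beyond its being a subspace is used here.

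The substance is the converse, and I would first settle the base case $k=1$. Assuming $f$ is $C^1$, fix $(x,v)\in U\times E$ and consider the difference quotients $w(t) = t^{-1}(f(x+tv)-f(x))$ for small $t\neq 0$. Because $f(U)\subseteq F_0$ and $F_0$ is a vector subspace, every $w(t)$ lies in $F_0$, and by hypothesis $w(t)\to df(x;v)$ in $F$. Evaluating along any sequence $t_n\to 0$ and invoking that $F_0$ is sequentially closed forces $df(x;v)\in F_0$; thus $df$ corestricts to a map $df|^{F_0}\colon U\times E\to F_0$, which is continuous by the universal property of the subspace topology (a continuous map with image in $F_0$ corestricts continuously). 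Since for a net that stays in $F_0$ and has its limit in $F_0$, convergence in $F$ coincides with convergence in the subspace $F_0$, the directional derivative of $f|^{F_0}$ exists in $F_0$ and equals $df|^{F_0}$; hence $f|^{F_0}$ is $C^1$ with $d(f|^{F_0}) = df|^{F_0}$. This step is the main obstacle: it is exactly here that sequential closedness enters, and one must be careful that the Bastiani derivative is an $\R\setminus\{0\}\ni t\to 0$ (net) limit while closedness is only a sequential condition. The reconciliation is that a single sequence suffices to pin the limit point inside $F_0$, after which the full net limit automatically lives in the subspace.

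The higher-order case then follows by induction on $k$, carrying along the identity $d^j(f|^{F_0}) = (d^j f)|^{F_0}$ (with $d^j f$ taking values in $F_0$) and using observation (c) of \Cref{rem:easyproof}. Assuming the claim through order $k-1$, suppose $f$ is $C^k$; then $f$ is $C^{k-1}$ and $d^{k-1}f$ is $C^1$, so the induction hypothesis yields that $f|^{F_0}$ is $C^{k-1}$ with $d^{k-1}(f|^{F_0}) = (d^{k-1}f)|^{F_0}$ and $d^{k-1}f$ mapping into $F_0$. Applying the already-established base case to the $C^1$-map $d^{k-1}f$ (whose image lies in $F_0$) shows that $(d^{k-1}f)|^{F_0} = d^{k-1}(f|^{F_0})$ is $C^1$; by \Cref{rem:easyproof}(c) this means $f|^{F_0}$ is $C^k$. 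The case $k=\infty$ is immediate, since $C^\infty$ means $C^k$ for all finite $k$. I expect no difficulty in the induction beyond the careful bookkeeping of the identities $d^j(f|^{F_0}) = (d^j f)|^{F_0}$.
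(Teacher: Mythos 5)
Your proof is correct and follows essentially the same route as the paper's: the forward direction via $f=\iota\circ f|^{F_0}$ with $\iota$ continuous linear, and the converse by observing that the difference quotients lie in $F_0$, so sequential closedness places $df(x;v)$ in $F_0$, followed by induction on $k$. Your extra care about reconciling the net limit defining the Bastiani derivative with the merely sequential closedness hypothesis is a point the paper glosses over, but the argument is the same.
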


\begin{proof}
 If $f|^{F_0}$ is $C^k$, so is $f = \iota \circ f|^{F_0}$, where $\iota \colon F_0 \rightarrow F$ is the continuous linear (whence smooth) inclusion. 
 
 Conversely, we argue by induction and assume first that $f$ is $C^1$. For $x\in U$ and $v \in E$ pick a sequence $t_n \rightarrow 0$ such that $x+t_nv \in U$ for each $n\in \N$. Then $df(x;y) = \lim_{n \rightarrow \infty} (f(x+t_nv)-f(x)) \in F_0$ by sequential closedness. Hence the limit exists in $F_0$. Further, as a map $U \times E \rightarrow F_0 , (x,v) \mapsto df(x;v)$ is continuous. We conclude that $f|^{F_0}$ is $C^1$. If $f$ is $C^k$, $d(f|^{F_0})=(df)|^{f_0}$ is $C^{k-1}$ by induction and hence $f$ is $C^k$.
\end{proof}

In \Cref{Dahmensexample} we will see that in \Cref{lem:seq-closed} sequential closedness is a necessary assumption for the validity of the statement.

\begin{Exercise}\label{Ex:Bastiani}
\vspace{-\baselineskip}%
 \Question Check the details for \Cref{rem:easyproof} (for Schwarz' theorem see below).
 \Question Let $E_1,E_2, F$ be locally convex spaces and $\beta \colon E_1 \times E_2 \rightarrow F$ be a continuous bilinear map. Show that $\beta$ is $C^1$ with first derivative $d\beta(x_1,x_2;y_1,y_2) = \beta(x_1,y_2)+\beta(y_1,x_2)$. Compute all higher derivatives of $\beta$ and show that $\beta$ is smooth.
 \Question \textbf{Schwarz' Theorem}: If $f \colon E \supseteq U \rightarrow F$ is a $C^k$-map, and $x \in U$, prove that $d^{r}f(x;\cdot) \colon E^r \rightarrow F$ is symmetric for all $2\leq r\leq k$ (i.e.\ the order of arguments is irrelevant to the function value).\index{Schwarz' theorem}
 \\
 {\tiny \textbf{Hint:} By the Hahn-Banach theorem it suffices to consider $\lambda \circ d^{r}f(x;\cdot) = d^{r}\lambda\circ f(x;\cdot)$, whence without loss of generality $F=\R$. Now for fixed $v_1,\ldots, v_r$ the property can be checked in the finite dimensional subspace generated by $v_1,\ldots,v_r$.}
 \Question The continuous functions $C([0,1],\R)$ form a Banach space with respect to the supremum norm $\lVert \cdot \rVert_\infty$ (the resulting topology is the compact open topology.)\index{compact open topology}.
 \begin{enumerate}
     \item[(a)] Let $f\colon \R \rightarrow \R$ be continuous and $(\gamma_n)_{n \in \N} \subseteq C([0,1],\R)$ be a uniformly convergent sequence of functions with limit $\gamma$. Exploit that $f$ is uniformly continuous on each ball and show that $f \circ \gamma_n \rightarrow f \circ \gamma$ uniformly.
     Deduce that the pushforward\index{pushforward} $f_* \colon C([0,1],\R) \rightarrow C([0,1],\R ), \eta \mapsto f \circ \eta$ is continuous                                                                                           \end{enumerate}
 Assume that $f \in C^1(\R,\R)$. Our aim will be to see that $f_*$ is then $C^1$.
 \begin{enumerate}
  \item[(b)] Assume that the limit 
  \begin{equation}\label{eq:pfwd}
   df_* (\gamma;\eta) \coloneq \lim_{t \rightarrow 0} t^{-1} (f_*(\gamma + t\eta)-f_*(\gamma))
  \end{equation}
 exists. The point evaluation $\ev_x \colon C([0,1],\R) \rightarrow \R, \eta \mapsto \eta(x)$ is continuous linear for each $x \in [0,1]$. Apply $\ev_x$ to both sides of \eqref{eq:pfwd} and find the only possible candidate $\psi(\gamma,\eta)$ for $df_*(\gamma;\eta)$,
 \item[(c)] Use point evaluations to verify that $t^{-1}(f_*(\gamma + t\eta)-f_*(\gamma)) = \int_0^1 \psi (\gamma +st\eta,\eta) \mathrm{d}s$
 \item[(d)] Show that $\psi(\gamma,\eta)$ from (b) is indeed the directional derivative $df_*(\gamma;\eta)$.
 \item[(e)] Verify that $df_*$ is continuous, whence $f_*$ is $C^1$. 
 \end{enumerate}
 \Question Let $E,F,H$ be locally convex spaces, $U\opn E, V\opn H$, $f\colon U \rightarrow F$ a $C^2$-map and $g\colon V \rightarrow U$ and $h \colon V \rightarrow E$ be $C^1$. Prove that the differential of the $C^1$-map $\phi \coloneq df \circ (g,h) \colon V\rightarrow F, \phi(x)=df(g(x);h(x))$ is given by
 \begin{align}\label{ident:doublederiv}
  d\phi (x;y) =d^2f(g(x);h(x),dg(x;y))+df(g(x);dh(x;y)), \forall x\in V, y \in H
 \end{align}
\end{Exercise}

\begin{Answer}[number={\ref{Ex:Bastiani} 3. Schwarz' theorem}] 
 \emph{We will show that for a $C^k$-map $f \colon E \supseteq U \rightarrow F$ and $x \in U$ the map $d^{r}f(x;\cdot) \colon E^r \rightarrow F$ is symmetric for every $2\leq r \leq k$ (and $r<\infty$).}\\
 {\tiny \textbf{Remark:} There are several possibilities to prove this, for example, it suffices to show that the directional derivatives $D_{v_1}$ and $D_{v_2}$ commute for all $v_1,v_2 \in E$ (then the general case follows from $d^r f(x;v_1,\ldots,v_r) = D_{v_r} \cdots D_{v_1}f(x)$).
 However, we will reduce the problem to the well known finite-dimensional case.}
 \\[.15em]
 
 Use the Hahn-Banach theorem: It suffices to prove that $d^r (\lambda \circ f)(x;\cdot) = \lambda (d^rf (x;\cdot))$ is symmetric for every continuous linear functional $\lambda$. Hence without loss of generality $F=\R$.
 Now pick $2 \leq r \leq k$ $v_1,\ldots , v_r \in E$. Then there is $\varepsilon >0$ such that $x+\sum_{i=1}^r t_i v_i$ makes sense for all $|t_i|<\varepsilon$. Thus we can define the auxiliary function
 $$h \colon \R^r \supseteq ]-\varepsilon,\varepsilon[^r \rightarrow \R,\quad (t_1,\ldots,t_r) \mapsto f(x+t_1v_1+\cdots t_rv_r).$$
 By the chain rule $h$ is $C^k$. Note that by the finite-dimensional version of Schwarz' theorem the partial derivatives of $h$ commute. Now the statement for $f$ follows from the chain rule and the observation (cf.\ \Cref{rem:easyproof}) that 
 $$\left.\frac{\partial^r}{\partial_{t_1} \cdots \partial_{t_r}}\right|_{t_1,\ldots,t_r=0} h(t_1,\ldots,t_r) = d^r f(x;v_1,\ldots,v_r).$$
\end{Answer}

\begin{Answer}[number={\ref{Ex:Bastiani} 5.}]
\emph{For a $C^2$ map $f$ and $C^1$ maps $g,h$ we derive a formula for the derivative of $\phi \coloneq df \circ (g,h)$.}\\[.15em]
 
 This is an exercise in applying the chain rule and the rule on partial differentials (\Cref{prop:rpd}):
 \begin{align*}
  d\phi (x;y) &= d(df \circ (g,h))(x;y) = (d_1 df) (g(x),h(x);dg(x;y)) + (d_2df) (g(x),h(x);dh(x;y))\\
  &= d^2 f (g(x);h(x),dg(x;y))+df(g(x);dh(x;y))
 \end{align*}
 Here we have used that the derivative of $df$ with respect to the first component is just $d^2f$ and that $df(g(x);\cdot)$ is continuous linear.
\end{Answer}
 
\subsubsection*{Bastiani vs. \Frechet calculus on Banach spaces}

On Banach spaces one usually defines differentiability in terms of the so called \Frechet derivative. We briefly recall the definitions which should be familiar from basic courses on calculus. While Bastiani calculus is somewhat weaker then \Frechet calculus, the gap between those two can be quantified. We collect these results in the present section.

\begin{defn}\label{defn:Frechet}
A map $f \colon E \supseteq U \rightarrow F$ from an open subset of a normed space $(E,\lVert \cdot \rVert_E)$ to a normed space $(F,\lVert \cdot\rVert_F)$ is \emph{continuous \Frechet differentiable} (or $FC^1$) if for each $x \in U$ there exists a continuous linear map $A_x \in L(E,F)$ such that $$f(x+h)-f(x)=A_x.h +R_x(h) \text{ with }\lim_{h \rightarrow 0}\lVert R_x(h)\rVert_F /\lVert h\rVert_E=0$$ and the mapping $Df \colon U \rightarrow L(E,F), x \mapsto A_x$ is continuous (where the right hand side carries the operator norm). Inductively we define for $k \in \N$ that $f$ is a \emph{$k$-times continuous \Frechet differentiable} map (or $FC^k$-map) if it is $FC^1$ and $Df$ is $FC^{k-1}$. Moreover, $f$ is (\Frechet-)smooth or $FC^\infty$ if $f$ is an $FC^k$ map for every $k \in \N$.\index{Fr\'{e}chet calculus}
\end{defn}

The reader may wonder why the notion of \Frechet differentiability can not be generalised beyond the setting of normed spaces. The reason for this is, that the continuity of the derivative can not be formulated as there is no suitable topology on the spaces $L(E,F)$. Indeed, there is no locally convex topology making evaluation and composition on the spaces $L(E,F)$ continuous. We refer to \Cref{prop:dual_norm} for an example of this pathology in the context of dual spaces. 

From the definition it is apparent (Exercise \ref{Ex:BFrecht} 2.) that if $f$ is $FC^1$, then $Df (x)(h) = df(x;h)$ and thus every $FC^1$-map is automatically $C^1$ in the Bastiani sense. However, one learns in basic calculus courses, existence of directional derivatives is weaker than the existence of derivatives in the \Frechet sense. The next example exhibits this.

\begin{ex}[Bastiani $C^1$ is weaker than \Frechet $C^1$, Milnor '82]\label{ex:Frechetbast}
 Consider the Banach space $\ell^1 = \{(x_n)_{n \in \N} \mid x_n \in \R, \forall n \in \N, \lVert (x_n)\rVert_{\ell^1} = \sum_{n\in \N}|x_n| < \infty \}$. Let $\varphi (u) \coloneq \log(1+u^2)$ and $\psi(u) \coloneq \frac{\mathrm{d}}{\mathrm{d}u} \varphi(u) = \frac{2u}{1+u^2}$. We observe that $|\psi(u)|\leq 1$, whence $|\varphi(u)|\leq |u|$ and we obtain a well-defined map
 $$f \colon \ell^1 \rightarrow \R, f((x_n))\coloneq \sum_{n \in \N} \frac{\varphi(nx_n)}{n}.$$
 Observe that $|f((x_n))|\leq \lVert (x_n)\rVert_{\ell^1}$. In Exercise \ref{Ex:BFrecht} 3. we will show that $f$ is $C^1$ with differential $df((x_n);(v_n)) = \sum_{n\in \N} v_n \psi(nx_n)$ but not \Frechet differentiable.
\end{ex}

\begin{setup}[Bastiani vs.\ \Frechet calculus] While Bastiani $C^k$ is weaker than $FC^k$, \cite[Appendix A.3]{MR2952176} shows that there is only a mild loss of differentiability. In particular, $FC^\infty =C^\infty$. The proofs are somewhat technical induction arguments involving the operator norm. Hence we only summarise the relation between the calculi \textbf{on Banach spaces} in the following diagram (arrows denote implications between conditions):
\begin{displaymath}
\begin{tikzcd}
C^{k+1} \arrow[rr, Rightarrow]
& & FC^k \arrow[rr, Rightarrow] & &  C^{k} \ar[ll, bend left=25, Rightarrow, "\text{dim}\, E < \infty"{below}]
\end{tikzcd} 
\end{displaymath} 
\end{setup}

\begin{Exercise}\label{Ex:BFrecht}  \vspace{-\baselineskip}
\Question Let $(E,\lVert \cdot \rVert_E), (F,\lVert \cdot \rVert_F)$ be normed spaces and the space $L(E,F)$ of continuous linear maps be endowed with the operator norm $\lVert A \rVert_{\text{op}} = \sup_{x \in E\setminus \{0\}} \frac{\lVert A(x)\rVert_F}{\lVert x\rVert_E}$. Show that the evaluation map $\varepsilon \colon L(E,F) \times E \rightarrow F, \varepsilon (A,x)=A(x)$ is continuous.
\Question Let $f\colon U \rightarrow F$ be an $FC^1$-map on $U\opn E$, where $E,F$ are Banach spaces.
\subQuestion Show that the \Frechet derivative satisfies $Df(x)(h) =df(x;h)$ for every $x\in U$, $h\in E$ and deduce that $f$ is $C^1$ in the Bastiani sense.
\subQuestion Use induction to prove that every $FC^k$-map is already $C^k$ by showing that the $k$th-\Frechet derivative gives rise to the $k$th derivative in the Bastiani sense.
 \Question We fill in the details for \Cref{ex:Frechetbast}. Notation is as in the example. Prove that
 \subQuestion $|\psi(u)|\leq 1, \forall u \in \R$.
 \subQuestion $df((x_n);(v_n)) = \sum_{n \in \N} v_n \psi(nx_n)$, whence continuous and thus $f$ is $C^1$.
 \subQuestion $\lVert df((x_n);\cdot)\rVert_{\text{op}}$ equals $0$ if $x=0$ but is $\geq 1$ if $x_n=1/n$ for some $n\in \N$. 
 \subQuestion for $\delta_n \coloneq \begin{cases} 0 & \text{if } m \neq n \\ 1/n & \text{if } m=n\end{cases}$ the expression $\lVert df(\delta_n;\cdot)\rVert_{\text{op}}$ does not converge to $0$. Deduce that $f$ is not $FC^1$.
\end{Exercise}

\section{Infinite-dimensional manifolds}\label{sect:mfd}

In this section we recall the basic notions of manifolds modelled on locally convex spaces. Most of these definitions should be very familiar from the finite-dimensional setting.

\begin{setup}
 In this section we will write $g\circ f$ as a shorthand for $g\circ f|_{f^{-1}(A)}\colon f^{-1}(A)\rightarrow B$ if it helps to avoid clumsy notation.
\end{setup}

\begin{defn}[Charts and atlas]
 Let $M$ be a Hausdorff topological space and $E$ be a locally convex space. A \emph{chart}\index{manifold chart} for $M$ is a homeomorphism $\varphi \colon U_\varphi \rightarrow V_\varphi$ from $U_\varphi \opn M$ onto $V_\varphi \opn E_\varphi$, where $E_\varphi$ is a locally convex space. Let $r \in \N_0 \cup \{\infty\}$. A $C^r$\emph{-atlas}\index{manifold atlas} for $M$ is a set $\mathcal{A}$ of charts for $M$ satisfying the following
 \begin{enumerate}
  \item $M = \bigcup_{\varphi \in \mathcal{A}} U_\varphi$
  \item for all $\varphi, \psi \in \mathcal{A}$ the \emph{change of charts}\index{change of charts} $\varphi\circ \psi^{-1}$ (which are mappings between open subsets of locally convex spaces) are $C^r$.\footnote{Formally, if the charts do not intersect, the change of charts is the empty map $\emptyset \rightarrow \emptyset$ which is $C^r$.}
 \end{enumerate}
 Two $C^r$-atlases $\mathcal{A},\mathcal{A}'$ for $M$ are \emph{equivalent} if their union $\mathcal{A}\cup \mathcal{A}'$ is a $C^r$-atlas for $M$. This is an equivalence relation.
\end{defn}
 
 \begin{defn}
  A $C^r$ manifold $(M,\mathcal{A})$ is a Hausdorff topological space with an equivalence class of $C^r$-atlases $\mathcal{A}$. (If the equivalence class $\mathcal{A}$ is clear we simply write $M$.)
 \end{defn}

 \begin{rem}
  Contrary to the finite-dimensional case we do not require manifolds to be paracompact or second countable (as topological spaces).
  
In general, the manifolds we are interested in will not be modelled on a single locally convex space. For a $C^1$-atlas, the locally convex spaces in which charts take their image are necessarily isomorphic on each connected component. However, some examples we will encounter later on have a huge number of connected components. For each of these connected components the locally convex model spaces will in general not be isomorphic.
\end{rem}

\begin{ex}
Every locally convex space $E$ is a manifold with global chart given by the identity $\id_E$. Similarly, every $U \opn E$ is a manifold with global chart given by the inclusion $U \rightarrow E$.
\end{ex}

\begin{ex}[Hilbert sphere]\label{ex: HSphere} \index{Hilbert sphere}
  For a Hilbert space $(H,\langle \cdot , \cdot \rangle)$ the unit sphere $S_H \coloneq \{x \in H \mid \langle x,x\rangle =1\}$ is a $C^\infty$-manifold. To construct charts, we define the Hilbert space $H_{x_0} = \{y \in H \mid \langle y,x_0\rangle=0\} \subseteq H$ for $x_0 \in S_H$. (If dim $H < \infty$, $H_{x_0}$ is a proper subspace of $H$, but if $H$ is infinite-dimensional, it is isomorphic to $H$, cf.~\cite{Dob95}.) Then define the sets $U_{x_0} \coloneq \{x \in S_H \mid \langle x,x_0\rangle >0\}$ and $V_{x_0} \coloneq \{y\in H_{x_0} \mid \langle y ,y\rangle < 1\}$. 
  We can now define a chart
  $$\varphi_{x_0} \colon U_{x_0} \rightarrow V_{x_0},\quad x \mapsto x-\langle x,x_0\rangle x_0.$$
  (its inverse is given by the formula $\varphi_{x_0}^{-1} (y)=y+\sqrt{1-\langle y,y\rangle} x_0$). Applying these formulae, we see that the change of charts map for $x_0,z_0 \in S_H$ is a smooth map between open (possibly empty) subsets of Hilbert spaces:
  $$\varphi_{z_0} \circ \varphi_{x_0}^{-1} (y) =(y-\langle y,z_0\rangle z_0)+\sqrt{1-\langle y,y \rangle} (x_0-\langle x_0,z_0\rangle z_0 ).$$
\end{ex}

\begin{defn}\label{defn:submanifold}
 Let $M$ be a $C^r$-manifold and together with a sequentially closed vector subspace $F_\varphi \subseteq E_\varphi$ for each chart $\varphi \colon U_\varphi \rightarrow V_\varphi \opn E_\varphi$. A \emph{($C^r$-)submanifold of $M$} is a subset $N \subseteq M$ such that for each $x \in N$, there exists a chart $\phi \colon U_\phi \rightarrow V_\phi$ of $M$ around $x$ such that $\phi(U_\phi \cap N) = V_\phi \cap F_\varphi$. Then $\phi_N \coloneq \phi|_{U_\phi\cap N}^{V_\phi \cap F_\varphi}$ is a chart for $N$, called a \emph{submanifold chart}. \index{submanifold}
 Due to \Cref{lem:seq-closed}, the submanifold charts form a $C^r$-atlas for $N$.
 
If $N$ is a submanifold of $M$ such that all the sequentially closed subspaces $F_\phi$ are complemented subspaces of $E_\phi$ (see \Cref{sect:subm}), we call $N$ a \emph{split submanifold}\index{submanifold!split} of $M$.
\end{defn} 

\begin{defn}\label{defn:productmfd}
 Let $(M,\mathcal{A})$ and $(N,\mathcal{B})$ be $C^r$ manifolds. Then the product $M\times N$ becomes a $C^r$-manifold using the atlas $\mathcal{C} \coloneq \{\phi \times \psi \mid \phi \in \mathcal{A} , \psi \in \mathcal{B}\}$. We call the resulting $C^r$ manifold the \emph{(direct) product of $M$ and $N$}.\index{product manifold}
\end{defn}

\begin{defn}
 Let $r \in \N_0 \cup \{\infty\}$ and $M,N$ be $C^r$ manifolds. A map $f \colon M \rightarrow N$ is called $C^r$ if $f$ is continuous and, for every pair of charts $\phi, \psi$, the map 
 $$\psi \circ f \circ \phi^{-1} \colon E_\phi \supseteq \phi(f^{-1}(U_\psi)\cap U_\phi) \rightarrow F_\psi$$
 is a $C^r$ map. We write $C^r (M,N)$ for the set of all $C^r$-maps from $M$ to $N$.
\end{defn}

\begin{rem}\label{rem:insertchart}
 Let $f\colon M \rightarrow N$ be a continuous map between $C^r$-manifolds. Assume that for some charts $(U_\varphi,\varphi)$ and $(U_\psi,\psi)$ the composition $\varphi \circ f \circ \psi$ is $C^r, r \in \N \cup \{\infty\}$. Then for any other pair of charts $(U_\kappa, \kappa)$ and $(U_\lambda,\lambda)$ with $f(U_\lambda \cap U_\psi) \subseteq U_\varphi \cap  U_\kappa$ we have on $U_\lambda \cap U_\psi$ that 
 $$ \kappa \circ f \circ \lambda^{-1}|_{\lambda (U_\lambda \cap U_\psi} = (\kappa \circ \varphi^{-1}) \circ (\varphi \circ f \circ \psi^{-1}) \circ (\psi \circ \lambda^{-1}|_{\lambda (U_\lambda \cap U_\psi})$$
 where the mapping in the middle is $C^r$ by assumption and the other mappings are change of charts (which are $C^r$ by $M,N$ being $C^r$-manifolds). Hence $\kappa \circ f \circ \lambda^{-1}|_{\lambda (U_\lambda \cap U_\psi)}$ is also $C^r$ by the chain rule \Cref{chainrule}. This argument is called "insertion of charts" and we leave it from now on to the reader. With the insertion of charts argument, it is easy to see that:
 \begin{enumerate}
  \item it suffices to test the $C^r$-property with respect to any atlas of $M$ and $N$. 
  \item if $f \colon M \rightarrow N$ and $g \colon N \rightarrow L$ are $C^r$-maps, so is $g\circ f \colon M \rightarrow L$.
  \item using \Cref{lem:prodspace}: If $M, N_1,N_2$ are $C^r$-manifolds and $f_i \colon M \rightarrow N_i, i=1,2$ are mappings. Then $f \coloneq (f_1 , f_2) \colon M \rightarrow N_1 \times N_2$ is $C^r$ if and only if $f_1,f_2$ are $C^r$
 \end{enumerate} 
\end{rem}

\begin{lem}\label{lem:submfd:initial}
 Let $N$ be a ($C^r$-)submanifold of the $C^r$-manifold $M$. Then the inclusion $\iota\colon N \rightarrow M$ is $C^r$. Further, $f \colon P \rightarrow N$ is $C^r$ if and only if $\iota \circ f$ is $C^r$.
\end{lem}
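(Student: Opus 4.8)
The plan is to verify both claims by computing in submanifold charts and reducing everything to the corestriction criterion of \Cref{lem:seq-closed}. Throughout, fix a submanifold chart $\phi_N = \phi|_{U_\phi \cap N}^{V_\phi \cap F_\varphi}$ of $N$ together with the ambient chart $\phi \colon U_\phi \to V_\phi \opn E_\varphi$ of $M$, and write $\iota_F \colon F_\varphi \hookrightarrow E_\varphi$ for the inclusion of the sequentially closed subspace. The key observation, immediate from $\phi_N$ being the corestriction of $\phi$, is the identity $\phi \circ \iota = \iota_F \circ \phi_N$ on $U_\phi \cap N$.

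For the first claim, I first note that $N$ carries the subspace topology (the submanifold charts are restrictions of ambient charts), so $\iota$ is a continuous topological embedding. Its local representation in the charts $\phi_N$ and $\phi$ is $\phi \circ \iota \circ \phi_N^{-1} = \iota_F|_{V_\phi \cap F_\varphi}$, i.e.\ the restriction to the open set $V_\phi \cap F_\varphi$ of the continuous linear inclusion $\iota_F$. Since continuous linear maps are $C^\infty$ and restrictions of $C^\infty$-maps to open sets remain $C^\infty$, this representation is $C^r$; by \Cref{rem:insertchart}(a) the inclusion $\iota$ is $C^r$.

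For the forward implication of the second claim, if $f$ is $C^r$ then $\iota \circ f$ is $C^r$ as a composite of $C^r$-maps (\Cref{rem:insertchart}(b)), using the first part. For the converse, assume $\iota \circ f$ is $C^r$. As $\iota$ is a topological embedding, continuity of $\iota \circ f$ forces continuity of $f$. Fix $p \in P$, set $x = f(p)$, choose a submanifold chart $\phi_N$ around $x$ as above, and, using continuity of $f$, a chart $\chi$ of $P$ about $p$ with $f(U_\chi) \subseteq U_\phi \cap N$. The local representation of $\iota \circ f$ is then $h \coloneq \phi \circ (\iota \circ f) \circ \chi^{-1} = \iota_F \circ (\phi_N \circ f \circ \chi^{-1})$, a continuous map into $E_\varphi$ whose image lies in the sequentially closed subspace $F_\varphi$ and whose corestriction to $F_\varphi$ is precisely the local representation $g \coloneq \phi_N \circ f \circ \chi^{-1}$ of $f$. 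Since $h$ is $C^r$, \Cref{lem:seq-closed} yields that $g = h|^{F_\varphi}$ is $C^r$; as this holds in a chart around every point, \Cref{rem:insertchart}(a) shows $f$ is $C^r$.

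The only genuinely delicate point is the converse direction, where one must transfer the $C^r$-property from the ambient local representation $h$ down to its corestriction $g$. This is exactly what \Cref{lem:seq-closed} supplies, and it is the reason sequential closedness of $F_\varphi$ was built into the definition of a submanifold; everything else is routine bookkeeping with the insertion-of-charts argument.
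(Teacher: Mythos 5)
Your proof is correct and follows essentially the same route as the paper: the inclusion is handled by representing it in submanifold charts as the restriction of the continuous linear inclusion $F_\varphi \hookrightarrow E_\varphi$, and the converse direction is reduced to the corestriction criterion of \Cref{lem:seq-closed}. The only addition is your explicit remark that continuity of $f$ follows from $\iota$ being a topological embedding, which the paper leaves implicit.
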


\begin{proof}
 Due to \Cref{rem:insertchart} if suffices to check the $C^r$-property of $\iota$ in charts which cover $N$. Thus we may choose charts $(\phi,U_\phi)$ of $M$ which induce submanifold charts $\phi_N$ as in \Cref{defn:submanifold}. But then $\phi \circ \iota \circ \phi_N^{-1}$ is the inclusion map $V_\phi \colon F \rightarrow V$ which is $C^r$ (as restriction of a continuous linear map). 
 
 If $f$ is $C^r$, so is $\iota \circ f$ by \Cref{rem:insertchart}. Conversely, let $\iota \circ f$ be a $C^r$-map and $\phi$, $\phi_N$ as before and $\psi$ a chart for $P$. Then $\phi \circ \iota \circ f \circ \psi^{-1} \colon \psi(U_\psi \cap f^{-1}(U_\phi) \rightarrow E$ is $C^r$ with values in the sequentially closed subspace $F$ and thus $(\phi\circ \iota \circ f \circ \psi)|^F=\phi_N \circ f\circ \psi^{-1}$ is $C^r$ by \Cref{lem:seq-closed}. We conclude that $f$ is $C^r$.
\end{proof}

\begin{Exercise}\label{Ex:graph} \vspace{-\baselineskip}%
 \Question Let $f\colon M \rightarrow N$ be a $C^r$-map between $C^r$-manifolds. show that the graph $\text{graph} (f) \coloneq \{(m,f(m)) \mid m \in M\}$ is a split submanifold of $M\times N$.\\
 {\tiny \textbf{Hint:} Use the description of the graph to construct submanifold charts by hand.}
 \Question Verify the details of \Cref{ex: HSphere}: Check that the charts make sense as mappings from $U_{x_0}$ to $V_{x_0}$. Show that the change of charts $\varphi_{x_0} \circ \varphi_{z_0}^{-1}$ is smooth for all $x_0,z_0 \in S_H$ such that $U_{x_0} \cap U_{z_0} \neq \emptyset$. 
  \Question Let $M$ be a manifold and $U \opn M$. Let $\mathcal{A}$ be an atlas of $M$. Endow $U$ with the subspace topology and show that $\mathcal{A}_U \coloneq \{ \phi|_{U \cap U_\phi}\mid (\phi,U_\phi) \in \mathcal{A}\}$ is a manifold atlas for $U$ turning it into a submanifold of $M$. 
 \Question Check that the set $\mathcal{C}$ in \Cref{defn:productmfd} is a $C^r$ atlas for the product manifold.
 \Question Show that a compact manifold must be modelled on a finite-dimensional space.\\ {\tiny \textbf{Hint:} \Cref{prop:compfindim}}
\end{Exercise}

\begin{Answer}[number= {\ref{Ex:graph} 1.}]
 \emph{We construct charts turning }
 $\text{graph} (f) \coloneq \{(m,f(m)) \mid m \in M\}$ \emph{into a split $C^r$ submanifold of $M\times N$ for a $C^r$-function $f$}.
\\[.15em]

It suffices to construct submanifold charts for every point $(m,f(m))$. To this end pick $(U_\varphi,\varphi)$ a chart of $M$ and $(U_\psi,\psi)$ a chart of $N$ with $m \in U_\varphi$, $f(m) \in U_\psi$. Assume that $\varphi \times \psi$ is a mapping into the locally convex space $E \times F$. We will now construct a chart around $(m,f(m))$ mapping all elements in $\text{graph}(f) \cap U_\varphi \times U_\psi$ to the (complemented) subspace $E \times \{0\} \subseteq E\times F$.
Since the vector space operations are continuous (bi)linear, they are smooth in the Bastiani sense and we obtain a $C^r$-map for $M\times N$ via
$$\kappa \colon U_\varphi \times U_\psi \rightarrow E\times F,\quad (m,n) \mapsto (\varphi(m),\psi(n)-\psi(f(m))).$$
This mapping is a $C^r$-diffeomorphism as its inverse is given by the formula $\kappa^{-1} (x,y) \coloneq (\varphi^{-1}(x),\psi^{-1}(y+\psi (f(\varphi^{-1}(x)))))$ (we leave it as an exercise to show that this mapping is well defined on an open subset of $E \times F$). Thus $\kappa$ is a chart for $M \times N$. By construction $\kappa(m,f(m))= (\varphi(m),\psi(f(m))-\psi(f(m)))=(\varphi(m),0)$. Thus $\kappa$ is a submanifold chart for the graph. 
\end{Answer}
\begin{Answer}[number={\ref{Ex:graph} 4.}]
 \emph{We prove that a locally compact manifold $M$ is necessarily finite dimensional} (note that the exercise asks for compact manifolds, but the argument only requires local compactness).\\[.15em]
 
 Let $\varphi \colon U_\varphi \rightarrow V_\varphi \subseteq E$ be a chart for $M$. Since $M$ is locally compact, there exists a compact neighbourhood $C$ of $x \in U_\varphi$ such that $C \subseteq U_\varphi$. Then $\varphi(C) \subseteq E$ is a compact neighbourhood of $\varphi(x)$. Since translations are homeomorphisms in $E$, the translated set $\varphi(C)-\varphi(x) =\{y = m-\varphi(x) \mid m \in \varphi(C)\}$ is a compact $0$-neighbourhood in $E$. Thus $E$ is finite dimensional by \Cref{prop:compfindim}. Thus $M$ is a finite dimensional manifold.
\end{Answer}

\subsection*{Tangent spaces and the tangent bundle}

\begin{defn}
 Let $M$ be a $C^r$-manifold ($r\geq 1$) and $p \in M$. We say that a $C^1$-curve $\gamma$ \emph{passes through}\index{curve!passing through a point} $p$ if $\gamma(0) = p$. For two such curves $\gamma, \eta$ we define the relation 
 \begin{equation}\label{eq:equivalencerel}
 \gamma \sim \eta \Leftrightarrow (\phi \circ \gamma)'(0)=(\phi \circ \eta)'(0)
 \end{equation} 
 for some chart $\phi$ of $M$ around $p$. By the chain rule \eqref{eq:equivalencerel} holds for every chart around $p$ and defines an equivalence relation on the set of all curves passing through $p$.
 The equivalence class $[\gamma]$ is called \emph{(geometric) tangent vector}\index{tangent vector (geometric)} (of $M$ at $p$). Define the \emph{(geometric) tangent space of $M$ at $p$}\index{tangent space} as the set $T_p M$ of all geometric tangent vectors at $p$. 
\end{defn}

We recall now that the tangent space at a point can be turned into a locally convex space isomorphic to the modelling space at that point.
\begin{lem}\label{lem:tangentident}
 \begin{enumerate}
  \item Let $\phi$ be a chart of $M$ around $p$, set $p_\phi \coloneq \phi(p)$. Then 
  $$h_\phi \colon E_\phi \rightarrow T_pM,\quad h_\phi (y) \coloneq [t \mapsto \phi^{-1}(p_\phi+ty)]$$
  is a bijection with inverse $h_\phi^{-1} \colon T_pM \rightarrow E_\phi,\quad [\gamma] \mapsto (\phi\circ \gamma)'(0)$
  \item For all charts $\phi,\psi$ around $p$, we have $h_\psi^{-1}\circ h_\phi = d(\psi\circ \phi^{-1})(p_\phi; \cdot)$ which is an automorphism of the topological vector space $E$.
  \item $T_pM$ admits a unique locally convex space structure such that $h_\phi$ is an isomorphism of locally convex spaces for some (and hence all) charts $\phi$ of $M$ around $p$.
 \end{enumerate} 
\end{lem}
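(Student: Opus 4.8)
The plan is to treat the three parts in order: (a) is a direct computation of the two composites, (b) is a single application of the chain rule, and (c) is a transport-of-structure argument whose only real content is chart-independence, which feeds off (b).

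For part (a), I would first record that $h_\phi^{-1}$ is well defined, i.e.\ that $[\gamma]\mapsto (\phi\circ\gamma)'(0)$ is independent of the chosen representative; this is immediate from the definition \eqref{eq:equivalencerel} of the equivalence relation, since $\gamma\sim\eta$ means precisely $(\phi\circ\gamma)'(0)=(\phi\circ\eta)'(0)$. I would then verify both composites are identities. Given $y\in E_\phi$, the curve $\gamma(t)=\phi^{-1}(p_\phi+ty)$ satisfies $\phi\circ\gamma(t)=p_\phi+ty$, so $(\phi\circ\gamma)'(0)=y$ and hence $h_\phi^{-1}(h_\phi(y))=y$. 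Conversely, given $[\gamma]$ with $y\coloneq(\phi\circ\gamma)'(0)$, the curve $\eta(t)=\phi^{-1}(p_\phi+ty)$ representing $h_\phi(y)$ has $(\phi\circ\eta)'(0)=y=(\phi\circ\gamma)'(0)$, so $\eta\sim\gamma$ and $h_\phi(h_\phi^{-1}[\gamma])=[\gamma]$. This gives the bijection together with the claimed inverse.

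For part (b), I would feed a vector $y\in E_\phi$ through $h_\psi^{-1}\circ h_\phi$. The image $h_\phi(y)$ is represented by $\gamma(t)=\phi^{-1}(p_\phi+ty)$, so $h_\psi^{-1}$ returns $(\psi\circ\gamma)'(0)$. Writing $\psi\circ\gamma=(\psi\circ\phi^{-1})\circ c$ with the affine curve $c(t)=p_\phi+ty$, whose derivative is the constant vector $y$, the chain rule \Cref{chainrule} yields $(\psi\circ\gamma)'(0)=d(\psi\circ\phi^{-1})(p_\phi;y)$, which is the asserted identity. To see this map is an isomorphism of topological vector spaces, I would use that the changes of charts $\psi\circ\phi^{-1}$ and $\phi\circ\psi^{-1}$ are mutually inverse $C^r$-diffeomorphisms; differentiating the identities $(\phi\circ\psi^{-1})\circ(\psi\circ\phi^{-1})=\id$ and its twin via the chain rule shows that $d(\psi\circ\phi^{-1})(p_\phi;\cdot)$ and $d(\phi\circ\psi^{-1})(p_\psi;\cdot)$ are mutually inverse. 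Each is continuous and linear by the earlier lemma guaranteeing that $df(x;\cdot)$ is continuous linear for a $C^1$-map, so together they form a linear homeomorphism (an automorphism once the model spaces of the two charts are identified, as they are isomorphic on the connected component of $p$).

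For part (c), I would transport the locally convex structure from $E_\phi$ along the bijection $h_\phi$: define addition, scalar multiplication and the topology on $T_pM$ as the $h_\phi$-images of those on $E_\phi$. This makes $h_\phi$ an isomorphism of locally convex spaces by construction, and it is visibly the only such structure for the fixed chart $\phi$, giving uniqueness relative to a chart. The substance is independence of the chart: for another chart $\psi$, part (b) gives $h_\phi^{-1}\circ h_\psi=\bigl(h_\psi^{-1}\circ h_\phi\bigr)^{-1}=d(\phi\circ\psi^{-1})(p_\psi;\cdot)$, a linear homeomorphism $E_\psi\to E_\phi$. Composing with the isomorphism $h_\phi\colon E_\phi\to T_pM$ shows $h_\psi=h_\phi\circ(h_\phi^{-1}\circ h_\psi)$ is itself an isomorphism of locally convex spaces onto $T_pM$ equipped with the structure transported from $\phi$. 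Hence this single structure makes every $h_\psi$ an isomorphism, yielding both existence and the ``some (and hence all)'' uniqueness statement.

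The main obstacle is precisely this chart-independence in part (c): everything reduces to the fact established in (b) that the transition map $h_\psi^{-1}\circ h_\phi$ is a topological-vector-space isomorphism, which is where the linearity of Bastiani differentials and the chain rule do the real work. The remaining steps are bookkeeping once those two ingredients are in place.
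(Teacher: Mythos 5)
Your proposal is correct and follows essentially the same route as the paper's own (much terser) proof: verify the composites directly for (a), apply the chain rule to the affine curve $t\mapsto p_\phi+ty$ for (b), and transport the structure along $h_\phi$ for (c), with chart-independence supplied by (b). The only cosmetic difference is that you check both composites in (a) where the paper checks one and invokes injectivity of $h_\phi^{-1}$; the substance is identical.
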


\begin{proof}
  \begin{enumerate}
   \item Note that $h_\phi$ and $h_\phi^{-1}$ are well defined and $h_\phi^{-1}$ is injective. For $y \in E_\phi$, $h^{-1}_\phi \circ h_\phi (y)) = \left.\frac{\mathrm{d}}{\mathrm{d}t}\right|_{t=0} \phi(\phi^{-1}(p_\phi + ty))=y$. Thus $h^{-1}_{\phi}$ is surjective and the inverse of $h_\phi$.
   \item[(b)-(c)] Compute for $y \in E$: $h_\psi \circ h_\phi^{-1}(y)=  \left.\frac{\mathrm{d}}{\mathrm{d}t}\right|_{t=0} \psi(\phi^{-1}(p_\phi + ty))= d(\psi\circ \phi^{-1}) (p_\phi; \cdot)$. Now (c) follows directly from the definition of the vector space structure.\qedhere
  \end{enumerate}
 \end{proof}

\begin{setup}\label{defn:tangent:open}
 Let $U \opn E$, $E$ a locally convex space and $f \colon U \rightarrow F$ a $C^1$-map. We define the mapping 
 $$Tf \colon U \times E \rightarrow F\times F,\quad (x,v) \mapsto (f(x), df(x;v)),$$
 and call this mapping the \emph{tangent map} of $f$. Note that the chain rule \Cref{chainrule} can now be written as $T(f\circ g) = Tf \circ Tg$.\index{chain rule}
\end{setup}

 \begin{defn}[tangent bundle]\label{defn:tangbun}
  Let $(M,\mathcal{A})$ be a $C^r$-manifold with $r \geq 1$. We call $TM \coloneq \bigcup_{p \in M} T_p M$ the \emph{tangent bundle}\index{tangent bundle} of $M$. Then $\pi_{M} \colon TM \rightarrow M, T_pM \ni v \mapsto p$ is called the \emph{bundle projection}. We equip $TM$ with the final topology with respect to the family $(T\phi^{-1})_{\phi \in \mathcal{A}}$ of mappings
  $$T\phi^{-1} \colon V_\phi \times E_\phi \rightarrow TM, \quad (x,v) \mapsto [t \mapsto \phi^{-1}(x+ty)] \in T_{\phi^{-1}(x)} M.$$
  Note that $TU_\phi = \pi_{M}^{-1}(U_\phi)$ is open in $TU$ for all $\phi \in \mathcal{A}$. and 
  $$T\phi \coloneq (T\phi^{-1})^{-1} \colon TU_\phi \rightarrow V_\phi \times E$$
  is a homeomorphism. Moreover, $\mathcal{B} \coloneq \{T\phi \mid \phi \in \mathcal{A}\}$ is a $C^{r-1}$-atlas for $TM$. Thus $TM$ becomes a $C^{r-1}$ manifold and $\pi_{M} \colon TM \rightarrow M$ a $C^{r-1}$-map.
\end{defn}

\begin{lem}
We check the details in \Cref{defn:tangbun}. Let $\varphi,\psi \in \mathcal{A}$:
 \begin{enumerate}
  \item We have $T(\psi \circ \phi^{-1})\circ T\phi = T\psi$,
  \item $TU_\varphi$ is open in $TM$ and $T\varphi \colon TU_\varphi \rightarrow V_\varphi \times E_\varphi$ is a homeomorphism.
  \item $\mathcal{B} = \{T\phi \mid \phi \in \mathcal{A}\}$ is a $C^{r-1}$-atlas, if $M$ is Hausdorff, so is $TM$,
  \item $\pi_{M}$ is a $C^{r-1}$-map,
  \item $TM$ induces on each tangent space $T_pM$ its natural topology
 \end{enumerate}
\end{lem}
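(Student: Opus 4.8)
The plan is to establish part (a) first, since the cocycle identity it expresses is the engine driving all the remaining items. For a tangent vector $[\gamma]\in T_pM$ with $p\in U_\phi\cap U_\psi$, unwinding the definition of $T\phi=(T\phi^{-1})^{-1}$ together with \Cref{lem:tangentident} gives $T\phi([\gamma])=(\phi(p),(\phi\circ\gamma)'(0))$. Applying $T(\psi\circ\phi^{-1})$ as defined in \Cref{defn:tangent:open} and using the chain rule \Cref{chainrule} on the curve $\psi\circ\gamma=(\psi\circ\phi^{-1})\circ(\phi\circ\gamma)$ to identify $d(\psi\circ\phi^{-1})(\phi(p);(\phi\circ\gamma)'(0))=(\psi\circ\gamma)'(0)$, one reads off $T(\psi\circ\phi^{-1})(T\phi([\gamma]))=(\psi(p),(\psi\circ\gamma)'(0))=T\psi([\gamma])$. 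This is really just \Cref{lem:tangentident}(b) packaged with the base-point bookkeeping. Rewritten as $T\psi\circ T\phi^{-1}=T(\psi\circ\phi^{-1})$ on overlaps, it says the chart changes of $\mathcal{B}$ are tangent maps of the chart changes of $\mathcal{A}$.

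For (b) I would first show $TU_\varphi$ is open. Since $TM$ carries the final topology with respect to the maps $T\psi^{-1}$, it suffices to check that $(T\psi^{-1})^{-1}(TU_\varphi)$ is open for each $\psi$; but $T\psi^{-1}(x,v)$ sits over $\psi^{-1}(x)$, so this preimage is exactly $\psi(U_\psi\cap U_\varphi)\times E_\psi$, which is open. That $T\varphi^{-1}$ is a continuous bijection onto $TU_\varphi$ is immediate: bijectivity follows from $h_\varphi$ being a bijection at each base point (\Cref{lem:tangentident}(a)) as $p$ ranges over $U_\varphi$, and continuity holds because $T\varphi^{-1}$ is among the defining maps of the final topology (corestricted to the open set $TU_\varphi$). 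The delicate point, and what I expect to be the \textbf{main obstacle}, is continuity of $T\varphi$ itself, i.e.\ of a map out of a final-topology space. Here I would invoke the elementary fact that the final topology restricts to an open subset as the final topology with respect to the corestricted maps; hence $T\varphi$ is continuous iff each $T\varphi\circ T\psi^{-1}$ is continuous on $\psi(U_\psi\cap U_\varphi)\times E_\psi$, and by part (a) this composite equals the tangent map $T(\varphi\circ\psi^{-1})$, which is continuous because $\varphi\circ\psi^{-1}$ is $C^r$ (its directional derivative is continuous). Thus $T\varphi$ is a homeomorphism.

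Parts (c)--(e) are then harvested from (a) and (b). For the atlas claim, the $TU_\phi$ cover $TM$ because the $U_\phi$ cover $M$, the $T\phi$ are homeomorphisms onto the open subsets $V_\phi\times E_\phi$ of $E_\phi\times E_\phi$ by (b), and the chart changes $T\psi\circ T\phi^{-1}=T(\psi\circ\phi^{-1})$ are $C^{r-1}$; the latter rests on the sub-lemma that the tangent map $Tf(x,v)=(f(x),df(x;v))$ of a $C^r$-map $f$ is $C^{r-1}$, proved by combining \Cref{lem:prodspace} with the fact that $f\circ\pr_1$ is $C^r$ and $df$ is $C^{r-1}$ (cf.\ \Cref{rem:easyproof}). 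For the Hausdorff property, given distinct $v,w\in TM$: if they have distinct base points I would separate the base points in $M$ by disjoint opens, shrink to chart domains $U_\phi,U_\psi$ with $U_\phi\cap U_\psi=\emptyset$, and use the disjoint open sets $TU_\phi\ni v$, $TU_\psi\ni w$; if they share a base point they both lie in one $TU_\phi$ and are separated by pulling back disjoint opens through the homeomorphism $T\phi$ onto the Hausdorff space $V_\phi\times E_\phi$. For (d), $\pi_M$ is continuous since $\pi_M^{-1}(U)\cap TU_\phi=T\phi^{-1}(\phi(U\cap U_\phi)\times E_\phi)$ is open in each chart domain, and in charts $\phi\circ\pi_M\circ T\phi^{-1}=\pr_1$, the restriction of a continuous linear map and hence smooth, so $\pi_M$ is $C^{r-1}$. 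Finally for (e), fixing a chart $\phi$ around $p$, the homeomorphism $T\phi$ restricts to a homeomorphism $T_pM\to\{\phi(p)\}\times E_\phi$ for the subspace topologies, and post-composing with the projection to $E_\phi$ yields exactly $h_\phi^{-1}$; hence the subspace topology on $T_pM\subseteq TM$ is precisely the one making $h_\phi$ an isomorphism, which is the natural locally convex topology of \Cref{lem:tangentident}(c).
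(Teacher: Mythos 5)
Your proof is correct and follows essentially the same route as the paper's: part (a) via \Cref{lem:tangentident}(b) and the chain rule, openness and continuity of $T\varphi$ via the final-topology characterisation and the identity $T\varphi\circ T\psi^{-1}=T(\varphi\circ\psi^{-1})$, and (c)--(e) harvested from these. You additionally supply details the paper defers to the reader or to Exercise \ref{Ex:tangentmaps} (the explicit verification of (a) and the two-case Hausdorff argument), and these are carried out correctly.
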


\begin{proof}
 \begin{enumerate}
  \item This follows from \Cref{lem:tangentident} (b), we leave the details to the reader.
  \item If $\varphi, \psi \in \mathcal{A}$, we have $(T\psi^{-1})^{-1}(TU\varphi)=T\psi (TU_\varphi \cap TU_\psi)=\psi(U_\varphi\cap U_\psi) \times E_\psi \opn V_\psi \times E_\psi$. By the definition of the final topology $TU_\varphi$ is open in $TM$.
  
  By definition of the final topology $T\varphi^{-1}$ is continuous, whence $T\varphi$ is open for every $\varphi \in \mathcal{A}$. For continuity, pick $U \opn V_\varphi \times E_\varphi$ and let $\psi \in \mathcal{A}$. Now $W \coloneq U \cap \varphi(U_\varphi \cap U_\psi)\times E_\varphi) \opn V_\varphi \times E_\varphi$, whence a quick computation shows
  \begin{align*}
   (T\psi^{-1})^{-1}((T\varphi)^{-1}(U))=T(\psi\circ \varphi^{-1})(W) \opn V_\psi \times E_\psi
  \end{align*}
  as $T(\psi \circ \varphi^{-1})$ is a homeomorphism between open subsets of $V_\varphi \times E_\varphi$ and $V_\psi \times E_\psi$. We deduce that $(T\varphi)^{-1}(U)$ is open in $TM$, whence $T\varphi$ is continuous. 
  \item By (b) each $T\phi$ is a homeomorphism from an open subset of $TM$ onto an open subset of $E_\phi \times E_\phi$, whence it is a chart. Clearly the $TU_\varphi$ cover $TM$ and by (a) the transition maps are $T(\psi \circ \varphi^{-1})=(\psi \circ \varphi^{-1},d(\psi \circ \varphi^{-1}))$ whence $C^{r-1}$. The Hausdorff property is left as Exercise \ref{Ex:tangentmaps}.
  \item In every chart we have $\varphi \circ \pi_{M} = \text{pr}_1 \circ T\varphi$, where $\text{pr}_1\colon V_\varphi \times E_\varphi \rightarrow V_\varphi$ is the canonical projection. As the charts conjugate $\pi_{M}$ to a smooth map, it is of class $C^{r-1}$.
  \item By the definition of the vector topology in \Cref{lem:tangentident} (c) this follows from (b)  \qedhere
  \end{enumerate}
\end{proof}

\begin{rem}\label{rem:dualTM}
 In finite dimensional differential geometry (and on Banach manifolds), one often introduces the \emph{dual bundle}\index{vector bundle!dual bundle} or \emph{cotangent bundle}\index{cotangent bundle}
 $$T^*M \coloneq L(TM,\R) \coloneq \bigcup_{x \in M} (T_xM)' = \bigcup_{x \in M} L(T_xM,\R),$$
 where $(T_xM)' \coloneq L(T_xM,\R)$ is the space of continuous linear mappings $T_xM \rightarrow \R$ (with the operator norm topology if $T_xM$ is Banach, see \cite{MR1330918} or \cite[III.1]{Lang} for the bundle structure). In our setting, there is \textbf{in general} no canonical manifold structure which turns $T^*M$ into a vector bundle.
 
 This poses a problem for theories employing the dual bundle, e.g.\ symplectic geometry, and also differential forms can not be defined as sections of the dual bundle. However, in the case of differential forms, one can circumvent this problem and obtain a theory similar to finite-dimensional differential forms without the dual bundle. We briefly discuss this in \Cref{app:diff_form}.
\end{rem}

We will now introduce tangent mappings of differentiable mappings between manifolds (see \Cref{defn:tangent:open} for the case of an open subset of a locally convex space).

\begin{defn}[Tangent maps]\label{defn:tangentmap}
Let $f \colon M \rightarrow N$ be a $C^r$-map between $C^{r}$-manifolds. Then we define the mappings
$$T_pf \colon T_pM \rightarrow T_{f(p)}N,\quad [\gamma] \mapsto [f\circ \gamma],\quad p \in M$$
Then we define the \emph{tangent map}\index{tangent map} $Tf \colon TM \rightarrow TN, T_pM \ni [\gamma] \mapsto T_pf([\gamma])$.
Note that by construction $\pi_{N} \circ Tf = f \circ \pi_{M}$. Moreover, for each pair of charts $\psi$ of $N$ and $\phi$ of $M$ such that $f(U_\phi) \subseteq U_\psi$ the following diagram is commutative
\begin{displaymath}
 \begin{tikzcd}
  TU_\phi \ar[d, "T\phi"]  \ar[rr , "Tf|_{TU_\phi}^{TU_\psi}"] & & TU_\psi \ar[d, "T\psi"] \\
  V_\phi \times E_\phi \ar[rr, "T(\psi \circ f \circ \phi^{-1})"]  && V_\psi \times F_\psi 
 \end{tikzcd}
\end{displaymath}
Hence the tangent map $Tf$ is a $C^{r-1}$-map if $f$ is a $C^r$-map. 
\end{defn}

\begin{lem}[Chain rule on manifolds]\label{lem:chainrulemfd} \index{chain rule}
 Let $M,N,L$ be $C^r$-manifolds and $f \colon M \rightarrow N$, $g \colon N \rightarrow L$ be $C^r$ maps. Then $T(g\circ f) = Tg\circ Tf$.
\end{lem}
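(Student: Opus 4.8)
The plan is to verify the identity pointwise on geometric tangent vectors, using the curve-based definition of the tangent map from \Cref{defn:tangentmap}. Since both $T(g\circ f)$ and $Tg\circ Tf$ are maps $TM \to TL$, it suffices to check that they agree on every $[\gamma] \in T_pM$, for each $p \in M$, where $\gamma$ is a $C^1$-curve with $\gamma(0)=p$.

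Before comparing the two sides I would record that all the objects involved are well defined. The composite $g\circ f \colon M \to L$ is $C^r$ by \Cref{rem:insertchart}, so $T(g\circ f)$ makes sense. Moreover, since $f$ and $g$ are in particular $C^1$, the composites $f\circ\gamma$ and $g\circ(f\circ\gamma)$ are again $C^1$-curves (by the chain rule, after insertion of charts), so that $Tf([\gamma]) = [f\circ\gamma] \in T_{f(p)}N$ and $Tg([f\circ\gamma]) = [g\circ(f\circ\gamma)] \in T_{g(f(p))}L$ are legitimate tangent vectors, with $f\circ\gamma$ passing through $f(p)$ and $g\circ(f\circ\gamma)$ through $g(f(p))$.

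The computation is then immediate. Unwinding the definition of the tangent map on the left gives
\[
 T(g\circ f)([\gamma]) = [(g\circ f)\circ \gamma],
\]
while applying the definition twice on the right gives
\[
 (Tg \circ Tf)([\gamma]) = Tg\big(Tf([\gamma])\big) = Tg([f\circ \gamma]) = [g\circ(f\circ \gamma)].
\]
Since composition of maps is associative, $(g\circ f)\circ \gamma = g\circ(f\circ \gamma)$ as curves, and hence the two tangent vectors coincide for every $[\gamma]$, which yields $T(g\circ f) = Tg\circ Tf$.

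There is essentially no hard step here: the entire content is packaged into the geometric definition of $Tf$ via post-composition with curves, and the identity reduces to associativity of function composition. The only points deserving care — which I would state but not belabour — are the well-definedness observations above, namely that $Tf$ and $Tg$ send equivalence classes of curves to equivalence classes (already established in \Cref{defn:tangentmap}) and that the relevant composites remain $C^1$. An alternative, chart-based proof would instead use the commutative square in \Cref{defn:tangentmap} together with the local chain rule $T(\alpha\circ\beta)=T\alpha\circ T\beta$ for maps between open subsets of locally convex spaces (\Cref{defn:tangent:open}), but this is strictly more laborious than the direct curve argument.
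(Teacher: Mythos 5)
Your proof is correct and is exactly the argument the paper intends: with $T_pf([\gamma])=[f\circ\gamma]$ as in \Cref{defn:tangentmap}, the identity reduces to associativity of composition, plus the well-definedness points you flag (the paper itself leaves this verification to Exercise \ref{Ex:tangentmaps} 1. rather than writing it out). Nothing is missing.
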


Note that we can of course iterate the tangent construction and form the higher tangent manifolds $T^kM \coloneq \underbrace{(T(T(\cdots(T}_{k \text{ times}}M)\cdots)$ if $M$ is a $C^\ell$-manifold and $k \leq \ell$. Similarly one defines higher tangent maps $T^kf\coloneq \underbrace{(T(T(\cdots(T}_{k \text{ times}}f)\cdots)$.

For later use we set a notation for derivatives of manifold valued curves.

\begin{defn}
 Let $M$ be a manifold and $c \colon J \rightarrow M$ be a $C^1$-map from some interval $J \subseteq \R$. Then we identify $T_t J = \R$ and define the mapping
 $$\dot{c} \colon J \rightarrow TM , \quad t \mapsto T c (t,1).$$
 Note that $\dot{c}$ is the manifold version of the curve differential $c^\prime$, in particular, if $(U,\varphi)$ is a chart of $M$, we have for each $t \in c^{-1}(U)$ the relation 
 $T\varphi \left( \dot{c}(t)\right) = (\varphi \circ c, (\varphi \circ c)^\prime)$.\index{derivative!of a manifold valued curve}
\end{defn}

\begin{Exercise}\label{Ex:tangentmaps} \vspace{-\baselineskip}%
 \Question Verify the details in \Cref{defn:tangentmap} and \Cref{lem:chainrulemfd}.  Show in particular that 
  \subQuestion $Tf$ is well defined and defines a $C^{r-1}$-map with the claimed properties. 
  \subQuestion Check that for $M = U$ an open subset of a locally convex space, both definitions of tangent mappings coincide.
  \subQuestion the manifold $TM$ is a Hausdorff topological space. \\ {\tiny \textbf{Hint:} Consider two cases for $v,w \in TM$: $\pi_{M} (v)=\pi_{M}(w)$ and $\pi_{M} (v)\neq \pi_{M}(w)$.}
 \Question Show inductively, that 
 \subQuestion If $E$ is a locally convex space and $U \opn E$, then $T^k U \cong U \times E^{2^k-1}$ 
 \subQuestion for $U \opn E, V \opn F$ in locally convex spaces, we have 
 $$d^kf(x;v_1,\ldots,v_k) = \text{pr}_{2^k}\left (T^kf(x,w_1,\ldots, w_{2^k-1}\right),$$
 where $\text{pr}_{2^k}$ is the projection onto the $2^k$th component and $w_{2^i+1}=v_{i+1}$ for $0\leq i \leq k-1$ and $w_i=0$ else.
 \Question Establish a manifold version of the rule on partial differentials \Cref{prop:rpd},\index{rule on partial differentials} i.e.\ show that: If $f \colon M_1 \times M_2 \rightarrow N$ is a $C^1$-map (between $C^1$-manifolds) and $p_i \colon M_1 \times M_2 \rightarrow M_i$ are the canonical projections, then for $p=(x,y) \in M_1\times M_2$,
 $$T_p f (v) = T_x f(\cdot, y)(Tp_1 (v)) + T_y f(x,\cdot) (Tp_2 (v)).$$
 Identifying $T(M_1\times M_2)$ with $TM_1\times TM_2$ and $v=(v_x,v_y)$ this formula becomes
 $$T_p f  = T_x f(\cdot, y)(v_x) + T_y f(x,\cdot) (v_y).$$
\end{Exercise}

\section{Elements of differential geometry: submersions and immersions}\label{sect:subm}

 Immersions and submersions are among the first tools students encounter in courses on differential geometry when asked to construct (sub-)manifolds. They can still serve this purpose in infinite-dimensions if one chooses ones definitions carefully. In this section we follow \cite[Section 4.4]{MR656198}\footnote{The smoothness assumption conveniently shortens the exposition but can of course be replaced by finite orders of differentiability, cf.\ \cite{Glofun}.} and the discussion requires the concept of complemented subspaces of a locally convex space (see Appendix \ref{App:complemented}).
 
 \begin{defn}\label{defn: imm/subm}
 Let $M, N$ be smooth manifolds modelled on locally convex spaces $E$ and $F$, respectively, and $\phi \colon M \rightarrow N$ smooth. We say that $\phi$ is 
 \begin{enumerate}
 \item an \emph{immersion}\index{immersion} if for every $x \in M$ there are manifold charts around $x$ and $\phi(x)$ such that $F\cong E \times H$ (as locally convex spaces, see \Cref{App:complemented}), the local representative of $\phi$ in these charts is the inclusion $E \rightarrow E \times H \cong F$,
 \item an \emph{embedding}\index{embedding} if $f$ is an immersion and a topological embedding (i.e.~a homeomorphism onto its image),
 \item a \emph{submersion}\index{submersion} if for every $x \in M$ there are manifold charts around $x$ and $\phi(x)$ such that $E \cong F \times H$  and the local representative of $\phi$ in these charts is the projection $E\cong F \times  H \rightarrow F$
 \end{enumerate}
 \end{defn} 

 \begin{lem}\label{lem:imm_loc_emb}
 If $f\colon M \rightarrow N$ is an immersion, then for every $x\in M$ there is an open neighbourhood $W_x$ such that $f|_{W_x}$ is an embedding.
\end{lem}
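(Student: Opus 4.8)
The plan is to exploit the local normal form of an immersion directly: in suitable charts $f$ looks like the inclusion $E \rightarrow E \times H$, and this inclusion is manifestly a topological embedding, so after shrinking the domain to the set where both charts are simultaneously available we can conjugate back and conclude.

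First I would fix $x \in M$ and invoke \Cref{defn: imm/subm}\,(a) to obtain a chart $\varphi \colon U_\varphi \rightarrow V_\varphi \opn E$ around $x$ and a chart $\psi \colon U_\psi \rightarrow V_\psi \opn F$ around $f(x)$, together with an isomorphism $F \cong E \times H$, such that the local representative $\psi \circ f \circ \varphi^{-1}$ is the inclusion $\iota \colon E \rightarrow E\times H, e \mapsto (e,0)$ (restricted to its natural domain). I would then set $W_x \coloneq U_\varphi \cap f^{-1}(U_\psi)$, which is open by continuity of $f$ and contains $x$, and note $f(W_x) \subseteq U_\psi$. The restriction $f|_{W_x}$ is again an immersion, since the defining property is local and $W_x$ is open, so it only remains to show that $f|_{W_x}$ is a topological embedding.

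Next I would observe that $\iota$ is a topological embedding: it is continuous and injective, and the continuous projection $\pr_1 \colon E\times H \rightarrow E$ satisfies $\pr_1 \circ \iota = \id_E$, providing a continuous inverse to $\iota$ on its image $E \times \{0\}$. Hence $\iota$ is a homeomorphism onto $E \times \{0\}$, and the same holds for its restriction to the open set $\varphi(W_x)$. Crucially, this argument uses only the continuity of $\pr_1$, so the locally convex (rather than Banach) nature of $E$ and $H$ causes no difficulty.

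Finally, since $f(W_x) \subseteq U_\psi$ with $U_\psi$ open in $N$, the subspace topology that $N$ induces on $f(W_x)$ agrees with the one induced by $U_\psi$, and $\psi$ carries it to the subspace topology on $\psi(f(W_x)) = \varphi(W_x) \times \{0\}$. Writing $f|_{W_x} = \psi^{-1} \circ \iota \circ \varphi|_{W_x}$ and recalling that $\varphi|_{W_x}$ and $\psi$ are homeomorphisms onto their images while $\iota$ is an embedding, I conclude that $f|_{W_x}$ is a homeomorphism onto its image. Being both an immersion and a topological embedding, $f|_{W_x}$ is an embedding in the sense of \Cref{defn: imm/subm}\,(b). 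The only point requiring genuine care — and the main obstacle — is the bookkeeping of domains ensuring that $f(W_x)$ really lands inside $U_\psi$; once this is arranged everything reduces to the elementary embedding property of $\iota$.
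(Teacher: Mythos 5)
Your proof is correct and follows essentially the same route as the paper: pick immersion charts, observe that the linear inclusion $E \rightarrow E\times H$ is a topological embedding (the paper simply asserts this where you justify it via the continuous projection $\mathrm{pr}_1$), and conjugate back through the chart homeomorphisms. The extra bookkeeping you do with $W_x = U_\varphi \cap f^{-1}(U_\psi)$ is a harmless refinement of the paper's direct use of the immersion chart domain.
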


\begin{proof}
 Pick immersion charts around $x$ and $f(x)$, i.e.~charts $(U,\varphi), x \in U$ and $(W,\psi)$ such that $f(x)\in W$ and $\psi \circ f\circ \varphi^{-1} = j$, where $j \colon E_\varphi \rightarrow E_\psi \cong E_\varphi \times F$ is the inclusion of the complemented subspace $E_\varphi$ of $E_\psi$. Note that $j$ is a topological embedding onto its image, whence $f|_{U_\varphi} = \psi^{-1} \circ j \circ \varphi$ is a topological embedding (and thus an embedding).
\end{proof}
 
There are several alternative characterisations of submersions and immersions known in the finite dimensional setting. Many of these turn out to be weaker in our setting. 

\begin{lem}\label{alt:subm1}
 A smooth map $f \colon M \rightarrow N$ is a submersion if and only if for each $x \in M$ there are $(U,\varphi)$ of $M$ and $(V,\psi)$ of $N$ with $x\in U$ and $f(U)\subseteq V$ such that $\psi \circ f \circ \varphi^{-1} = \pi|_{\varphi(U)}$ for a continuous linear map $\pi$ with continuous linear right inverse $\sigma$ (i.e.~$\pi\circ\sigma =\id$).
\end{lem}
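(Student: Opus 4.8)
The plan is to prove the two implications separately; the ``only if'' direction is immediate from the definition and the ``if'' direction carries all the content.

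For the forward direction, assume $f$ is a submersion. By \Cref{defn: imm/subm} I choose charts $(U,\varphi)$, $(V,\psi)$ around $x$ and $f(x)$ in which $\psi \circ f \circ \varphi^{-1}$ is the projection $\pr \colon F \times H \to F$ under an isomorphism $E \cong F \times H$. This $\pr$ is continuous linear, and it admits the continuous linear right inverse $\sigma \colon F \to F \times H$, $y \mapsto (y,0)$, since $\pr \circ \sigma = \id$. Taking $\pi = \pr$ (precomposed with the chosen isomorphism) gives the required form.

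For the reverse direction, the key observation is that a continuous linear $\pi \colon E \to F$ with continuous linear right inverse $\sigma$ forces $E$ to split as a product with first factor $F$. First I would form $P \coloneq \sigma \circ \pi \colon E \to E$ and check it is a continuous linear idempotent, using $P^2 = \sigma (\pi \sigma) \pi = \sigma \pi = P$. Setting $H \coloneq \ker \pi$, I claim $\Phi \colon F \times H \to E$, $\Phi(y,h) = \sigma(y) + h$, is an isomorphism of locally convex spaces: it is continuous linear, and its candidate inverse $z \mapsto (\pi(z),\, z - \sigma\pi(z))$ is again continuous linear (note $z - \sigma\pi(z) \in \ker \pi$ because $\pi\sigma = \id$), with the two maps mutually inverse by a short computation. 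Thus $E \cong F \times H$ with $H$ a complemented subspace, exactly the split decomposition of \Cref{App:complemented}. Moreover $\pi \circ \Phi(y,h) = \pi\sigma(y) + \pi(h) = y$, so under this identification $\pi$ becomes the projection $\pr \colon F \times H \to F$.

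It then remains to absorb $\Phi$ into the chart. Since $\Phi$ is a linear isomorphism it is a smooth diffeomorphism, so $\tilde\varphi \coloneq \Phi^{-1} \circ \varphi$ is again a chart of $M$ around $x$ (the change of charts $\Phi^{-1}$ is $C^\infty$). Computing the local representative in the charts $\tilde\varphi$ and $\psi$ yields $\psi \circ f \circ \tilde\varphi^{-1} = (\psi \circ f \circ \varphi^{-1}) \circ \Phi = \pi \circ \Phi = \pr$, restricted to $\tilde\varphi(U)$, which is precisely the projection $F \times H \to F$ demanded by \Cref{defn: imm/subm}; hence $f$ is a submersion. I expect the only delicate point to be verifying that $\Phi$ is genuinely an \emph{isomorphism of locally convex spaces} (continuity of $\Phi^{-1}$ and that the two factors are complemented), as opposed to a mere algebraic bijection; the rest is bookkeeping with the chain rule \Cref{chainrule} and the fact that continuous linear maps are smooth.
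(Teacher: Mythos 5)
Your proof is correct and follows essentially the same route as the paper's: both directions hinge on the idempotent $\sigma\circ\pi$ and the resulting splitting $E\cong F\times\ker\pi$ from \Cref{lem:complemented}. The only (immaterial) difference is that you absorb the splitting isomorphism $\Phi$ into the chart on $M$, whereas the paper instead replaces $\psi$ by $\nu=\sigma\circ\psi$ on $N$ and identifies $E\cong\sigma(F)\times\ker\pi$; if you want the local representative in the exact product form $A\times B\to A$ used for submersion charts, just shrink $U$ and $V$ at the end as the paper does.
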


\begin{proof}
 If $f$ is a submersion and $x\in M$ we can pick submersion charts around $x$, i.e.~charts $\varphi \colon U_\varphi \rightarrow U_F \times U_H \subseteq F\times H \cong E$ with $U_F \opn F$ and $U_H \opn E$ and $\psi \colon V_\psi \rightarrow U_F$ such that $\psi \circ f \circ \varphi^{-1} (x,y) = x, \ \forall (x,y) \in U_F\times U_H$. Obviously, the projection is continuous linear with right inverse given by the inclusion $F \rightarrow F \times H \cong E$.
 
 Let us conversely assume that around $x \in M$ there are charts such that $\psi \circ f \circ \varphi^{-1} = \pi|_{\varphi(U)}$ holds for a continuous linear map $\pi \colon E \rightarrow F$ with continuous right inverse $\sigma \colon F \rightarrow E$. then $\pi|_{\sigma(F)} \sigma (F) \rightarrow F$ is an isomorphism of locally convex spaces with inverse $\sigma$. We obtain a new chart $\nu \coloneq \sigma \circ \psi$ of $N$. Now $\kappa \coloneq \sigma \circ \pi \colon E \rightarrow E$ is continuous linear and satisfies $\kappa \circ \kappa =\kappa$. We deduce from \Cref{lem:complemented} that $\sigma(F)$ is a complemented subspace of $E$ and can identify $E \cong \sigma(F) \times \text{ker} (\pi)$ such that $\kappa$ becomes the projection onto $\sigma (F)$. Shrinking $U$ and $V$ if necessary we may assume $\varphi (U) = A \times B$ and $\nu (V) = A$ for open sets $A \opn \sigma(F)$ and $B \opn \text{ker} \pi$. Then
 $$\nu \circ f \circ \varphi^{-1} = \sigma \circ \psi \circ f \circ \varphi^{-1}= \sigma \circ \pi|_{A \times B} = \kappa|_{A\times B}.\qedhere$$
\end{proof}

\begin{rem}\label{rem:altsubm}
The alternative characterisation of submersions from \Cref{alt:subm1} implies (see Exercise \ref{Ex:submersion} 5.) that a submersion admits local smooth sections. If the manifolds $M$ and $N$ are Banach manifolds\index{Banach manifold} (i.e.~modelled on Banach spaces), the inverse function theorem implies that the existence of local smooth sections is even equivalent to being a submersion (cf.~\cite[Proposition 4.1.13]{MR1173211}).

Finally, we mention that \textbf{if the submersion} $\varphi$ \textbf{is surjective} we obtain: \emph{A map $f \colon N \rightarrow L$ is $C^r$ if and only if $f \circ \varphi$ is a $C^r$-map} for $r \in \N_{0} \cup \{\infty\}$ (see Exercise \ref{Ex:submersion} 6.).
\end{rem}

In finite-dimensional differential geometry, the above definitions are usually not the definitions of submersions/immersions but one deduces them from "easier conditions" involving the tangent mappings such as the following:

\begin{defn}
Let $M, N$ be smooth manifolds and $\phi \colon M \rightarrow N$ smooth. We say that $\phi$ is 
 \begin{enumerate}
 \item \emph{infinitesimally injective (or surjective)}\index{infinitesimally injective (or surjective)} if the tangent map $T_x \phi \colon T_x M \rightarrow T_{\phi (x)} N$ is injective (or surjective, respectively) for every $x \in M$,
 \item a \emph{\naive\ immersion}\index{immersion!\naive} if for every $x \in M$ the tangent map $T_x \phi \colon T_x M \rightarrow T_{\phi (x)} N$ is a topological embedding onto a complemented subspace of $T_{\phi (x)} N$,
 \item a \emph{\naive\ submersion}\index{submersion!\naive} if for every $x\in M$ the map $T_x \phi \colon T_x M \rightarrow T_{\phi (x)}N$ has a continuous linear right inverse. 
 \end{enumerate}
 \end{defn} 

 \setbox\tempbox=\hbox{\begin{tikzcd} 0 \ar[r]& A \ar[r, "i"] & B \ar[r,"q"] & C \ar[r] & 0\end{tikzcd} }
In infinite-dimensions, none of the \naive , infinitesimal versions and the properties from Definition \ref{defn: imm/subm} are equivalent as the following (counter-)examples show:
\begin{setup}[Infinitesimal properties are weaker then \naive\ properties]
Consider the Banach space $c_0$ of all (real) sequences converging to $0$ as a subspace of the Banach space $\ell^\infty$ of all bounded real sequences. We obtain a \emph{short exact sequence}\index{space!short exact sequence}\footnote{In the category of locally convex spaces, a sequence 
\begin{displaymath}
\box\tempbox
\end{displaymath}
of continuous linear maps is \emph{exact} if it satisfies both of the following conditions
\begin{enumerate}
\item \emph{algebraically exact}, i.e.\ images of maps coincide with kernels of the next map,
\item \emph{topologically exact}, i.e.\ $i$ and $q$ are open mappings onto their images. 
\end{enumerate}
If $A$, $B$ and $C$ are \Frechet (or Banach) spaces topological exactness follows from algebraic exactness by virtue of the open mapping theorem \cite[I. 2.11]{Rudin}; for general locally convex spaces this is not the case.} of Banach spaces 
\begin{equation} \label{sq: non-split}
\begin{tikzcd}
0 \arrow[rr] & &  c_0 \arrow[rr, hookrightarrow, "i"] & & \ell^\infty  \arrow[rr, twoheadrightarrow, "q"] && \ell^\infty/c_0 \arrow[rr] && 0.
\end{tikzcd}
\end{equation}
As $i$ and $q$ are continuous linear we have $T_x i (y) = di(x,y) = i(y)$ and $T_x q (y) = q(y)$, whence $i$ is infinitesimally injective and $q$ is infinitesimally surjective. Since $c_0$ is not complemented, \Cref{ex: notcomplemented}, $i$ is not a \naive\ immersion. This implies that \eqref{sq: non-split} does not split, i.e.\ $q$ does not admit a continuous linear right inverse and thus can not be a \naive\ submersion. 
\end{setup}

A submersion as in Definition \ref{defn: imm/subm} turns out to be an open map \cite[Lemma 1.7]{Glofun}. In finite-dimensional differential geometry this is a consequence of the inverse function theorem (when applied to a \naive\ submersion). Going beyond Banach manifolds the submersion property is stronger then the \naive\ notion: 

\begin{setup}[{\naive\ submersions need not be submersions}]\label{setup:naiveimmersion}
Consider the space $A\coloneq C(\R,\R)$ of continuous functions from the reals to the reals. The pointwise operations and the compact open topology (see Appendix \ref{sect:cptopntop}) turn $A$ into a locally convex space (by \Cref{lem:co_vs_uniform}, one can even show that it is a \Frechet space). Then the map $\exp_A \colon A \rightarrow A, f \mapsto e^f$ is continuous by \Cref{lem:pushforward}. Recall that the point evaluations $\ev_x (f) \coloneq f(x)$ are continuous linear on $A$, whence we can use them to find a candidate for the derivative of $\exp_A$. Then  the finite-dimensional chain rule yields the only candidate for the derivative of $\exp_A$ to be $d\exp_A (f;g)(x) = g(x)\cdot \exp_A(f)(x)$. Computing in the seminorms one can show that indeed $d\exp_A(f;g) = g \cdot \exp_A(f)$ and by induction $\exp_A$ is smooth. 
Moreover, at the constant zero function $\mathbf{0}$ we have $d\exp_A (\mathbf{0};\cdot)= \id_A$, so $\exp_A$ is a \naive\ submersion. However, $\exp_A$ takes values in $C(\R,]0,\infty[)$ which does not contain a neighbourhood of $\exp_A(\mathbf{0})=\mathbf{1}$. Thus $\exp_A$ can not be a submersion (as all submersions are open mappings by \cite[Lemma 1.7]{Glofun}.

This example also shows that the Inverse Function Theorem fails in this case \cite{Eel66} (cf.\, also \Cref{invfunction}).
\end{setup}

However, as \cite{Glofun} shows, the following relations do hold:

\begin{setup}[Submersions, immersions vs. the \naive\ and infinitesimal concepts]\label{rel:subm:imm}
Consider a map $\phi \colon M \rightarrow N$ between manifolds modelled on locally convex spaces, then:
\begin{displaymath}
\begin{tikzcd}
\text{Immersion} \arrow[rr, Rightarrow]
& & \text{\naive\ immersion} \arrow[rr, Rightarrow] \ar[ll, bend left=25, Rightarrow, "M \text{ Banach manifold}"{below}]& & \text{infinitesimally injective} \ar[ll, bend left=25, Rightarrow, "\text{dim}\, M < \infty"{below}]
\\ \\ \\
\text{Submersion} \arrow[rr, Rightarrow]
& & \text{\naive\ submersion} \arrow[rr, Rightarrow]  \ar[ll, bend left=25, Rightarrow, "N \text{ Banach manifold}"{below}]& & \text{infinitesimally surjective} \ar[ll, bend left=25, Rightarrow, "\text{dim}\, N < \infty"{below}] 
\ar[ll, bend right=25, Rightarrow, "M \text{ Hilbert manifold and } N \text{ Banach manifold}"{above}]
\end{tikzcd} 
\end{displaymath}
\end{setup}

The reason one really would like the strong notions of submersions and immersions is that these notions are strong enough to carry over the usual statements on submersions and immersions to the setting of infinite dimensional manifolds. For example one can prove several useful statements on split submanifolds. Again we refer to \cite{Glofun} for more general results on submersions and immersions in infinite dimensions.

\begin{defn}
 Let $f \colon M \rightarrow N$ be smooth and $S \subseteq N$ be a split submanifold. Then $f$ is \emph{transversal over }$S$\index{transversal} if for each $m \in f^{-1}(S)$ and submanifold chart $\psi \colon V \rightarrow V_1 \times V_2$ with $\psi (f(m))=(0,0)$ and $\psi (S) \subseteq V_1 \times \{0\}$ there exists an open $m$-neighbourhood $U$ with $f(U)\subseteq V$ and 
 \begin{equation}\label{eq:transversal}
  \begin{tikzcd}
  U \ar[r,"f"] & V \ar[r,"\psi"] & V_1 \times V_2 \ar[r,"\text{pr}_2"] & V_2 
  \end{tikzcd}
 \end{equation}
is a submersion.
\end{defn}
Note that due to the fact that compositions of submersions are again submersion (cf.\ Exercise \ref{Ex:submersion} 1.), whence if $f$ is a submersion \eqref{eq:transversal} always is a submersion.

\begin{prop}
Let $\phi \colon M \rightarrow N$ be a smooth map.
If $S \subseteq N$ is a split submanifold\footnote{A more involved proof works for every submanifold (not only for split ones), see \cite[Theorem C]{Glofun}.} of $N$ such that $f$ is transversal over $S$, then $\phi^{-1} (S)$ is a submanifold of $M$. 
\end{prop}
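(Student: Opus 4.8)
The plan is to establish the submanifold property locally: by \Cref{defn:submanifold} it suffices to produce, around each $m \in \phi^{-1}(S)$, a single chart of $M$ in which $\phi^{-1}(S)$ is cut out by a sequentially closed subspace. So I would fix $m \in \phi^{-1}(S)$. Because $S$ is a \emph{split} submanifold, its model space $F$ splits as $F \cong F_1 \times F_2$ with $F_1$ the model of $S$, and I can choose a submanifold chart $\psi \colon V \rightarrow V_1 \times V_2 \opn F_1 \times F_2$ with $\phi(m) \in V$, normalised so that $\psi(\phi(m)) = (0,0)$ and $\psi(V \cap S) = V_1 \times \{0\}$. Transversality of $\phi$ over $S$ then hands me an open neighbourhood $U$ of $m$ with $\phi(U) \subseteq V$ such that
$$g \coloneq \pr_2 \circ \psi \circ \phi|_U \colon U \rightarrow V_2$$
is a submersion.

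The key observation is the local identity $\phi^{-1}(S) \cap U = g^{-1}(0)$. Indeed, for $x \in U$ we have $\phi(x) \in \phi(U) \subseteq V$, so $x \in \phi^{-1}(S)$ holds exactly when $\phi(x) \in V \cap S$, which by the defining property of the submanifold chart $\psi$ is equivalent to $\psi(\phi(x)) \in V_1 \times \{0\}$, that is, to $g(x) = \pr_2(\psi(\phi(x))) = 0$. Thus near $m$ the set $\phi^{-1}(S)$ is precisely the zero fibre of the submersion $g$, and it remains to realise such a zero fibre as a submanifold. For this I would invoke the normal form built into the definition of a submersion (\Cref{defn: imm/subm}): after shrinking, there is a chart $\chi \colon U_\chi \rightarrow A \times B \opn F_2 \times H$ of $M$ around $m$ and a chart of $V_2$ around $g(m) = 0$ (arranged to send $0$ to $0$) in which $g$ is represented by the projection $F_2 \times H \rightarrow F_2$. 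The zero fibre of this projection is $\{0\} \times H$, so $\chi(U_\chi \cap g^{-1}(0)) = \chi(U_\chi) \cap (\{0\} \times H)$. Since $\{0\} \times H$ is a closed complemented (hence sequentially closed) subspace of $F_2 \times H$, the chart $\chi$ is a submanifold chart for $\phi^{-1}(S)$ around $m$, and in fact exhibits $\phi^{-1}(S)$ as a split submanifold.

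Carrying this out for every $m \in \phi^{-1}(S)$ produces submanifold charts covering $\phi^{-1}(S)$, which assemble into a $C^\infty$-atlas by \Cref{lem:seq-closed}; hence $\phi^{-1}(S)$ is a submanifold of $M$. I expect the only genuinely delicate point to be the bookkeeping behind the identity $\phi^{-1}(S) \cap U = g^{-1}(0)$: one must use simultaneously that $\psi$ is a submanifold chart, so that membership in $S$ is detected exactly by the vanishing of the $V_2$-component, and that $\phi(U) \subseteq V$, so that $\phi(x)$ always lands in the region where this detection is valid. Everything downstream is a direct application of the submersion normal form, and crucially no inverse function theorem is required, since that normal form is part of the very definition of a submersion (the failure of the inverse function theorem being exactly why the strong notion of \Cref{defn: imm/subm} is adopted).
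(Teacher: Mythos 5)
Your proof is correct and takes essentially the same route as the paper's: both reduce to the submersion $\pr_2 \circ \psi \circ \phi|_U$ supplied by transversality, observe that $\phi^{-1}(S) \cap U$ is exactly its zero fibre (using that $\psi$ is a submanifold chart and $\phi(U) \subseteq V$), and then read off a submanifold chart from the submersion normal form with the target chart centred at $0$. The only cosmetic difference is that you phrase the last step as the separate fact ``the zero fibre of a submersion is a (split) submanifold'', which the paper instead records afterwards as \Cref{cor:preimagesubmfd}.
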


\begin{proof}
 The map \eqref{eq:transversal} is a submersion. Shrinking $U, V_2$, there are charts $\varphi \colon U \rightarrow U_1 \times U_2$ and $\kappa \colon V_2 \rightarrow U_2$ such that $\varphi(m)=(0,0)$ and $\kappa(0)=0$ and the following commutes:
$$
  \begin{tikzcd}
    U \ar[r,"f"] \ar[d, "\varphi"] & V \ar[r,"\psi"] & V_1 \times V_2 \ar[r,"\text{pr}_2"] & V_2 \ar[d, "\kappa"]\\
    U_1 \times U_2  \ar[rrr, "\text{pr}_2"] &&& U_2
  \end{tikzcd}
 $$
 Now we will prove that $\varphi$ is a submanifold chart for $f^{-1}(S)$, i.e.\ $\varphi(U\cap f^{-1}(S)) = \varphi(U) \cap (U_1 \times \{0\})$.
 To see this note that since $\psi$ is a submanifold chart, we have for $x \in U$ that $f(x) \in S$ if and only if $\text{pr}_2 (\psi(f(x)))=0$. Now the commutativity of the diagram shows that this is the case if and only if $\varphi (x) \in \text{pr}_2^{-1}(0)=U_1 \times \{0\}$.
\end{proof}

\begin{cor}\label{cor:preimagesubmfd}
 If $f \colon M \rightarrow N$ is a submersion, $f^{-1}(n)$ is a split submanifold for $n \in N$.
\end{cor}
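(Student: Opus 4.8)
The plan is to deduce this directly from the preceding Proposition by taking $S=\{n\}$; the whole content is then to check that a single point is a split submanifold of $N$ and that any submersion is automatically transversal over it.

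First I would verify that $\{n\}$ is a split submanifold. Pick any chart $\psi\colon V\to V_\psi\opn F$ of $N$ around $n$; post-composing with a translation we may assume $\psi(n)=0$. The subspace $\{0\}\subseteq F$ is closed, hence sequentially closed, and it is trivially complemented (with complement $F$). Since $\psi(V\cap\{n\})=\{0\}=V_\psi\cap\{0\}$, the chart $\psi$ is a submanifold chart in the sense of \Cref{defn:submanifold} exhibiting $\{n\}$ as a split submanifold modelled on $\{0\}$. In the notation of the transversality definition this means the ``$V_1$''-direction is $\{0\}$ while the complementary ``$V_2$''-direction is all of $F$.

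Next I would check that $f$ is transversal over $\{n\}$. Fix $m\in f^{-1}(n)$ and a submanifold chart $\psi$ as above. Under the identification $\{0\}\times V_2\cong V_2$ the projection $\mathrm{pr}_2$ becomes the identity, so the composite \eqref{eq:transversal} reduces to $\psi\circ f$. Choosing $U\coloneq f^{-1}(V)$ (open, containing $m$, with $f(U)\subseteq V$) and recalling that $\psi$ is a diffeomorphism while $f$ is a submersion, the composite $\psi\circ f|_U$ is a submersion because compositions of submersions are submersions (Exercise \ref{Ex:submersion} 1.); this is exactly the remark recorded right after the transversality definition. Hence the hypotheses of the Proposition are met and $f^{-1}(\{n\})=f^{-1}(n)$ is a submanifold of $M$.

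Finally I would upgrade ``submanifold'' to ``split''. For this I would inspect the proof of the Proposition: the submanifold charts it constructs have the form $\varphi\colon U\to U_1\times U_2$ with $\varphi(U\cap f^{-1}(n))=\varphi(U)\cap(U_1\times\{0\})$, so $f^{-1}(n)$ is modelled on the first factor $E_1$ of the product $E_1\times E_2$, which is a complemented subspace. Thus $f^{-1}(n)$ is a split submanifold. I do not anticipate a real obstacle: every step is a specialisation of the Proposition, and the only point requiring care is bookkeeping the roles of the factors, namely that the point $\{n\}$ is the ``zero-dimensional'' model so that its complementary direction is the entire modelling space $F$ — which is precisely what makes the transversality condition collapse to the submersion property of $f$.
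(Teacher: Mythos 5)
Your argument is correct and is exactly the intended derivation: the paper states this as an immediate consequence of the transversality proposition with $S=\{n\}$, using precisely the observation (recorded in the remark after the transversality definition) that a submersion composed with the chart and projection is again a submersion, so the transversality hypothesis is automatic. You also rightly note the one point worth care — the proposition's statement only says ``submanifold'' while the corollary claims ``split'' — and your resolution (the chart built in the proposition's proof models $f^{-1}(n)$ on the complemented factor $E_1\times\{0\}$ of $E_1\times E_2$) is the correct way to close that gap.
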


\begin{lem}\label{lem:pullbacksubm}
Let $f \colon M \rightarrow P$ and $g \colon N \rightarrow P$ be smooth maps and $g$ be a submersion. Then the \emph{fibre product}\index{fibre product (of manifolds)} $M \times_P N \coloneq \{(m,n) \in M\times N \mid f(m)=g(p)\}$ is a split submanifold of $M \times N$ and the projection $\mathrm{pr}_1 \colon M \times_P N \rightarrow M$ is a submersion.
\end{lem}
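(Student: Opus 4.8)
The plan is to construct split submanifold charts for $M\times_P N$ by hand (in the spirit of the graph construction in Exercise \ref{Ex:graph}) and to read off the submersion property of $\mathrm{pr}_1$ from the very same charts. Throughout I read the defining condition as $f(m)=g(n)$, correcting the evident typo $g(p)$. Fix a point $(m_0,n_0)\in M\times_P N$, so $f(m_0)=g(n_0)=:p_0$. The key move is to exploit that $g$ is a submersion: around $n_0$ and $p_0$ I choose submersion charts, i.e.\ a chart $\varphi\colon U_\varphi \to W\opn E_P\times H$ of $N$ and a chart $\psi\colon V_\psi\to V\opn E_P$ of $P$ with $\psi\circ g\circ \varphi^{-1}=\mathrm{pr}_{E_P}|_W$. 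Additionally I pick any chart $\alpha\colon U_\alpha\to A\opn E_M$ of $M$ around $m_0$ and shrink $U_\alpha$, using continuity of $f$, so that $f(U_\alpha)\subseteq V_\psi$.

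Next I rewrite the defining equation in these coordinates. Writing $\hat f\coloneq \psi\circ f\circ\alpha^{-1}\colon A\to E_P$ and coordinatising $m=\alpha^{-1}(a)$, $n=\varphi^{-1}(x,y)$, injectivity of $\psi$ gives $f(m)=g(n)\iff \hat f(a)=x$, since $\psi(g(n))=x$ while $\psi(f(m))=\hat f(a)$. Hence in the product chart $\alpha\times\varphi$ the fibre product is the set $\{(a,x,y)\in A\times W\mid x=\hat f(a)\}$. To straighten this I introduce the shear $\tilde\Theta\colon A\times E_P\times H\to A\times E_P\times H$, $(a,x,y)\mapsto (a,\,x-\hat f(a),\,y)$, which is a diffeomorphism of this open set onto itself (inverse $(a,x',y)\mapsto(a,\,x'+\hat f(a),\,y)$, smoothness of both following from smoothness of $\hat f$ and of the vector space operations). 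Then $\kappa\coloneq \tilde\Theta\circ(\alpha\times\varphi)$ is a chart of $M\times N$ around $(m_0,n_0)$ carrying the fibre product into the closed (hence sequentially closed) subspace $E_M\times\{0\}\times H$. Since this subspace is complemented in $E_M\times E_P\times H$ with complement $\{0\}\times E_P\times\{0\}$, the chart $\kappa$ is a split submanifold chart in the sense of \Cref{defn:submanifold}, which settles the first claim.

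Finally, for the submersion statement I work in the chart $\kappa$ just built. Identifying the model $E_M\times\{0\}\times H\cong E_M\times H$, a point of $M\times_P N$ near $(m_0,n_0)$ has coordinates $(a,y)$ with $m=\alpha^{-1}(a)$ and $n=\varphi^{-1}(\hat f(a),y)$; hence $\alpha\circ\mathrm{pr}_1$ sends it to $a$. Thus the local representative of $\mathrm{pr}_1$ in the charts $\kappa$ and $\alpha$ is exactly the projection $E_M\times H\to E_M$, which is the defining form of a submersion in Definition \ref{defn: imm/subm}(c). I expect the only real subtlety to be bookkeeping rather than a genuine obstacle: one must check that $\tilde\Theta$ is a bona fide change of charts (a diffeomorphism onto an open set) and that the straightened subspace is complemented. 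Both rely precisely on the asymmetric hypothesis that $g$, and not $f$, is a submersion, so that the $E_P$-coordinate is ``free'' and the equation $x=\hat f(a)$ can be solved as a graph. An alternative route would be to realise $M\times_P N=(f\times g)^{-1}(\Delta_P)$ and invoke the transversality proposition proved above, but the direct chart construction has the advantage of producing the split structure and exhibiting $\mathrm{pr}_1$ as a submersion simultaneously.
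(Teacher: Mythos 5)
Your proof is correct and is essentially the paper's own argument: the shear $\tilde\Theta\circ(\alpha\times\varphi)$ is exactly the chart $\delta(m,n)=(\varphi(m),\operatorname{pr}_{V_\kappa}(\psi(n))-\kappa(f(m)),\operatorname{pr}_{\tilde V}(\psi(n)))$ used in the paper, which likewise straightens the fibre product onto $E_M\times\{0\}\times H$ and reads off the submersion property of $\mathrm{pr}_1$ from the same chart. The only loose end (that the shear restricted to $A\times W$ is a diffeomorphism onto an open image rather than onto $A\times W$ itself) is the same bookkeeping point the paper also leaves to the reader.
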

 
 \begin{proof}
 Let $(m,n) \in M\times_P N$ and pick submersion charts $\psi \colon U_\psi \rightarrow V_\psi \opn E_N$, $\kappa \colon U_\kappa \rightarrow V_\kappa \opn E_P$ for $g$ with $n \in U_\psi$. Recall that for the submersion charts $\kappa \circ g \circ \psi^{-1} = \pi$ for a continuous projection $\pi \colon E_N = E_P \times F \rightarrow E_P$ (where $F$ is the subspace complement) and we may assume that $V_\psi = V_\kappa \times \tilde{V}$. Finally, we pick a chart $(U_\varphi,\varphi)$ of $M$ such that $m \in U_\varphi$ and $f(U_\varphi) \subseteq U_\kappa$. Hence we obtain a commutative diagram
 \begin{displaymath}
  \begin{tikzcd}
   & U_\psi \ar[r,"\psi"] \arrow[d, "g"] & V_\psi \arrow[r, equals]   \arrow[rd, "\pi"] &  V_\kappa \times \tilde{V} \arrow[d,"\text{pr}_{V_\kappa}"]  \\ 
 U_\varphi \arrow[r,"f"]& U_\kappa \arrow[rr,"\kappa"] &  &V_\kappa &
  \end{tikzcd}
 \end{displaymath}
Denote by $\text{pr}_{\tilde{V}}$ the projection onto $\tilde{V}$. Then we construct a smooth map for $(m,n) \in U_\varphi \times U_\psi$ via
\begin{align*}
 \delta(m,n) \coloneq (\varphi(m),(\text{pr}_{V_\kappa} (\psi(n))-\kappa(f(m)),\text{pr}_{\tilde{V}}(\psi(n)))) \in V_\varphi \times ((V_\kappa-V_\kappa) \times \tilde{V}).  
\end{align*}
This mapping is smoothly invertible with inverse given by
$$\delta^{-1}(x,(y,z)) = (\varphi^{-1}(x),\psi^{-1}(y+\kappa(f(\varphi^{-1}(x))),z)).$$
We leave it again as an exercise to work out that the domain of the inverse is open. Note that due to the commutative diagram we see that
$(m,n) \in M \times_P N$ if and only if $(m,n)$ maps under $\delta$ to $E_M \times \{0\} \times F$ which is a complemented subspace of $E_M \times E_P \times F \cong E_M \times E_N$. Thus $M \times_P N$ is a split submanifold of $M \times N$.
Since the projection $\text{pr}_1 \colon M \times N \rightarrow N$ is smooth, so is its restriction to $M\times_P N$ by \Cref{lem:submfd:initial}. As $\text{pr}_1 \colon M \times_P N \rightarrow M$ is conjugated by $\delta$ and $\varphi$ to the projection $\text{pr}_{V_\kappa}$, it is a submersion. \end{proof}

Finally, there is a close connection between embeddings and split submanifolds. 
\begin{lem}\label{lem:splitembedding}
 Let $f \colon M\rightarrow N$ be smooth. The following conditions are equivalent\begin{enumerate}
                                                                                  \item $f$ is an embedding,
                                                                                  \item $f(M)$ is a split submanifold of $N$ and $f|^{f(M)}\colon M \rightarrow f(M)$ is a diffeomorphism.
                                                                                 \end{enumerate}
\end{lem}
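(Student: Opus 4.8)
The plan is to prove the two implications separately, with (a)$\Rightarrow$(b) carrying the real content. Assume first that $f$ is an embedding and fix $x\in M$. Since $f$ is in particular an immersion, I can choose an immersion chart pair $(U,\varphi)$ around $x$ and $(W,\psi)$ around $f(x)$ with $f(U)\subseteq W$ and $\psi\circ f\circ\varphi^{-1}$ equal to the inclusion $j\colon E_\varphi\to E_\varphi\times H\cong E_\psi$ of the complemented subspace $E_\varphi$; in particular $\psi(f(U))=\varphi(U)\times\{0\}$. The obvious candidate for a submanifold chart for $f(M)$ is $\psi$ together with $F_\psi:=E_\varphi\times\{0\}$, but a priori $\psi(W\cap f(M))$ could be strictly larger than $\varphi(U)\times\{0\}$, since points $f(x')$ with $x'\notin U$ might also land in $W$. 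This is precisely where the topological embedding hypothesis is indispensable, and I expect it to be the main obstacle: because $f$ is a homeomorphism onto $f(M)$, the set $f(U)$ is open in $f(M)$, so there is $\Omega\opn N$ with $\Omega\cap f(M)=f(U)$. Shrinking $W$ to $W\cap\Omega\cap\psi^{-1}(\varphi(U)\times H)$, I arrange simultaneously that $W\cap f(M)=f(U)$ and that $\psi(W)\cap(E_\varphi\times\{0\})=\varphi(U)\times\{0\}$; a short verification of these two set equalities then shows $\psi(W\cap f(M))=\psi(W)\cap F_\psi$, so $\psi$ is a genuine submanifold chart. As $E_\varphi$ is complemented in $E_\psi$, this exhibits $f(M)$ as a split submanifold.

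It remains to see that $f|^{f(M)}$ is a diffeomorphism. Smoothness of $f|^{f(M)}$ is immediate from \Cref{lem:submfd:initial}, since $\iota\circ f|^{f(M)}=f$ is smooth, where $\iota\colon f(M)\to N$ is the inclusion. For the inverse I read off the local representative of $f$ in the chart $\varphi$ of $M$ and the submanifold chart $\psi_N$ of $f(M)$ induced by the $\psi$ constructed above: by construction $\psi_N\circ f\circ\varphi^{-1}$ is the identity on $\varphi(U)$, so $f$ is locally conjugate to the identity and $f|^{f(M)}$ is a local diffeomorphism; being also a continuous bijection, it is a diffeomorphism.

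For the converse (b)$\Rightarrow$(a), assume $f(M)$ is a split submanifold and $f|^{f(M)}$ is a diffeomorphism. The factorisation $f=\iota\circ f|^{f(M)}$ exhibits $f$ as the composite of a diffeomorphism (hence a homeomorphism onto $f(M)$) with the inclusion $\iota$ of a submanifold; since submanifolds carry the subspace topology, $\iota$ is a topological embedding, and therefore so is $f$. To see that $f$ is an immersion, fix $x\in M$ and pick a submanifold chart $\psi\colon W\to V$ at $f(x)$ with $\psi(W\cap f(M))=V\cap F_\psi$ and $E_\psi\cong F_\psi\times H$. Transport the induced chart $\psi_N$ on $f(M)$ back to $M$ along the diffeomorphism, setting $\varphi:=\psi_N\circ f|^{f(M)}$ on the open neighbourhood $U:=(f|^{f(M)})^{-1}(W\cap f(M))$ of $x$; this is a chart for $M$. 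Unwinding the definitions and using $f=\iota\circ f|^{f(M)}$ together with $\psi_N^{-1}=\psi^{-1}$ on $V\cap F_\psi$, one computes $\psi\circ f\circ\varphi^{-1}(z)=z$ for $z\in V\cap F_\psi$, i.e.\ the local representative is the inclusion $V\cap F_\psi\hookrightarrow F_\psi\times H\cong E_\psi$ of the complemented subspace $F_\psi$. Hence $f$ is an immersion and, being also a topological embedding, an embedding, which completes the proof.
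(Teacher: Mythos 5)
Your proof is correct and follows essentially the same route as the paper's: immersion charts refined via the topological-embedding hypothesis to obtain split submanifold charts, the observation that the local representative of $f|^{f(M)}$ is the identity, and, for the converse, transporting the submanifold chart back to $M$ along the diffeomorphism so that $f$ becomes the linear inclusion of a complemented subspace. The only difference is one of explicitness: you spell out the shrinking of $W$ (using that $f(U)$ is open in $f(M)$) needed to ensure $W\cap f(M)=f(U)$, a step the paper compresses into ``a quick computation then shows that $\varphi_y$ restricts to a submanifold chart'' and relegates to Exercise \ref{Ex:submersion} 8. — and this is precisely where the embedding hypothesis, as opposed to mere immersivity, is used.
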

\begin{proof}
 Let $E,F$ be the modelling spaces of $M$ and $N$, respectively.\\
 (a) $\Rightarrow$ (b): By assumption, $f$ is a topological embedding and a smooth immersion. Consider $y \in f(M)$ and $x \in M$ with $f(x)=y$ and pick charts $\varphi_x \colon U_x \rightarrow V_x \subseteq E$ and $\varphi_y \colon U_y \rightarrow V_y \subseteq F$ such that $x \in U_x$, $f(U_x)\subseteq U_y$ and $\varphi_y \circ f \circ \varphi_x^{-1} = j|_{V_x}$ for a linear topological embedding $j\colon E\rightarrow F$ onto a complemented subspace $j(E) \oplus H = F$. Since $j(V_x)$ is relatively open, we may adjust choices such that $j(V_x) = V_y \cap j(E)$. A quick computation then shows that $\varphi_y$ restricts to a submanifold chart and $f(M)$ becomes a split submanifold of $N$. Moreover, in the (sub)manifold charts we have $j|_{V_x}^{V_y \cap j(E)} = \varphi_y|_{f(M)\cap U_y} \circ f\circ \varphi_x^{-1}$ and this map is a diffeomorphism. Thus $f|^{f(M)}\colon M \rightarrow f(M)$ is a local diffeomorphism and a homeomorphism, whence a diffeomorphism.\\
 (b) $\Rightarrow$ (a) Let  $\iota \colon f(M) \rightarrow N$ be the inclusion map. Since $f|^{f(M)}$ is a diffeomorphism, $\iota \circ f|^{f(M)}$ is a topological embedding. Since $f(M)$ is a split submanifold, there is an isomorphism $\alpha \colon E \rightarrow \alpha(E) \subseteq F$ of locally convex spaces, such that $\alpha (E)$ is complemented in $F$. Pick charts $\varphi_x \colon U_x \rightarrow V_x$ and $\varphi_{f(x)} \colon U_{f(x)} \rightarrow V_{f(x)}$ with $x \in U_x$ and $f(U_x)\subseteq U_{f(x)}$. We may assume that $V_{f(x)}=P\times Q$ for $P \opn \alpha(E)$ and $\varphi_{f(x)}(U_x\cap f(M)) = V_{f(x)} \cap \alpha (E) = P$. Set now $W \coloneq \alpha^{-1}(P)$. then it is easy to see that $\theta \coloneq (\varphi_{f(x)}\circ f\circ \varphi^{-1}_x)^{-1} \circ \alpha|_W \colon W \rightarrow V_{x}$ makes sense and is a smooth diffeomorphism, whence $\theta^{-1}\circ \varphi_x \colon U_x \rightarrow W$ is a chart for $M$. By construction $\varphi_{f(x)}\circ f\circ (\theta^{-1}\circ \varphi_x)^{-1} = \alpha|_W$ is a linear topological embedding onto $\alpha(E)$. This shows that $f$ is an immersion.
\end{proof}

\begin{Exercise}\label{Ex:submersion} \vspace{-\baselineskip}%
 \Question Show that if $f \colon M \rightarrow N$ and $g \colon N \rightarrow L$ are submersions, so is $g\circ f \colon M \rightarrow L$.
 \begin{minipage}{7cm}
   {\tiny \textbf{Hint}: The submersion property is local, try constructing small enough neighbourhoods around $m \in U \subseteq M, f(m) \in V \subseteq N$ and $g(f(m)) \in W \subseteq L$ and charts such that:}
           \end{minipage}
           \begin{minipage}{7cm}
            $
 \begin{tikzcd}
U \arrow[r, "f"] \arrow[d] & V \ar[d] \arrow[r, "g"] & W \arrow[d]\\
F \times X \arrow[r, "\text{pr}_F"] & F \cong Y \times Z \arrow[r, "\text{pr}_Y"] & Y
\end{tikzcd}
$
           \end{minipage}
\Question Let $f \colon M \rightarrow N$ and $g\colon K \rightarrow L$ be submersions (immersions) show that then also $f\times g \colon M\times K \rightarrow N \times L, (m,k) \mapsto (f(m),g(k))$ is a submersion (immersion).
\Question Let $p \colon M \rightarrow N$ be a submersion and $n \in N$. From \Cref{cor:preimagesubmfd} we obtain a submanifold $P \coloneq p^{-1}(n)$. Show that for $x \in P$ one can identify the tangent space of the submanifold as $T_x P = \text{ker} T_x p = \{v \in T_xM \mid T_x p (v) =0\}$. 
\Question Let $(H,\langle \cdot , \cdot\rangle)$ be a Hilbert space. Prove that $\psi \colon H \setminus \{0\} \rightarrow \R, x \mapsto \langle x,x\rangle$ is a submersion and deduce that the Hilbert sphere $S_H =\psi^{-1}(1)$ is a submanifold of $H$. Then show that $T_x S_H = \{v \in H \mid \langle v,x\rangle =0\}$ for all $x \in S_H$ (i.e. the tangent space is the orthogonal complement to the base point $x$).
 \Question Let $\varphi \colon M \rightarrow N$ be a smooth submersion. 
  Show that $\varphi$ admits smooth local sections, i.e.~for every $x \in M$ there is $\varphi (x) \in U \opn N$ and a smooth map $\sigma \colon U \rightarrow M$ with $\sigma (\varphi(x))=x$ and $\varphi \circ \sigma = \id_U$. Deduce then that $\varphi$ is an open map. \\
 {\footnotesize \textbf{Hint:} Use the characterisation from \Cref{rem:altsubm}.\\ \textbf{Remark}: If $M,N$ are Banach manifolds, the existence of local sections is equivalent to $\varphi$ being a submersion, see \cite[Proposition 4.1.13]{MR1173211}.}
 \Question Let $\varphi \colon M\rightarrow N$ be a smooth surjective submersion. Show that $f \colon N \rightarrow L$ is $C^r$ if and only if $f\circ \varphi$ is $C^r$ for $r \in \N_0 \cup \{\infty\}$.
 {\footnotesize \textbf{Hint:} Use Exercise \ref{Ex:submersion} 5.}
 \Question Show that if $f \colon M \rightarrow N$ and $g\colon N \rightarrow L$ are immersions (embeddings), so is $g\circ f$.
 \Question Work out the details omitted in the proof of \Cref{lem:splitembedding}.
\end{Exercise}

\begin{Answer}[number={\ref{Ex:submersion} 1.}] 
 \emph{We show that the composition $g\circ f$ of the submersions $f \colon M \rightarrow N$ and $g \colon N \rightarrow L$ is a submersion.} \\[.15em]

 The submersion property is local, whence we can restrict to chart neighbourhoods of submersion charts around $m \in U \subseteq M, f(m) \in V \subseteq N$ and $g(f(m)) \in W \subseteq L$ such that:
            $$
 \begin{tikzcd}
U \arrow[r, "f"] \arrow[d, "\varphi"] & V \ar[d, "\psi_1"] \arrow[equal,r] & V \ar[d, "\psi_2"] \arrow[r, "g"]  & W \arrow[d, "\kappa"]\\
F \times X \arrow[r, "\text{pr}_F"] & F \ar[r, "\cong"] & Y \times Z \arrow[r, "\text{pr}_Y"] & Y
\end{tikzcd}
$$
Now via the typical insertion of charts: 
 \begin{align}\label{compositionconst}
  \kappa \circ g\circ f \circ \varphi^{-1} = \kappa \circ g \circ \psi_2^{-1} \circ \psi_2 \circ \psi_1^{-1} \circ \psi_1 \circ f \circ \varphi^{-1} = \text{pr}_Y \circ \psi_2 \circ \psi_1^{-1} \circ \text{pr}_F.
 \end{align}
Note that the change of charts $\psi_2 \circ \psi_1^{-1}$ is a diffeomorphism on its domain (which we will now call $O$). Shrinking $U$ we may assume that $\varphi (U) = O \times D$. We obtain a modified chart $\tilde{\varphi} \coloneq ((\psi_2 \circ \psi_1^{-1}) \times \id_W) \circ \varphi$. If we now insert $\tilde{\varphi}$ into \eqref{compositionconst} we see that 
$\kappa \circ g\circ f\circ \tilde{\varphi}^{-1} = \text{pr}_Y \colon F \times X \cong Y \times Z \times X \rightarrow Y$. In other words we have constructed submersion charts for the composition which thus turns out to be a submersion.
 \end{Answer}

\chapter{Spaces and manifolds of smooth maps}\label{sect:smoothmappingspaces} \copyrightnotice

In this chapter we consider spaces of differentiable mappings as infinite-dimensional spaces.
These spaces will then serve as the model spaces for manifolds of mappings, i.e. manifolds of differentiable mappings between manifolds.

  \section{Topological structure of spaces of differentiable mappings}\label{sect:top:smoothmaps}
  
  In this section, we denote by $M, N$ (possibly infinite-dimensional) manifolds.
  
  \begin{defn}
   Endow the space $C^\infty (M,N)$ with the initial topology with respect to the map 
   \begin{align*}
    \Phi \colon C^\infty (M,N) \rightarrow \prod_{k \in \N_0} C(T^kM,T^kN)_{\text{c.o.}}, \quad f \mapsto (T^kf)_{k \in \N_0}
   \end{align*}
  where the spaces on the right hand side carry the compact open topology (cf.\ \Cref{sect:cptopntop}). The resulting topology on $C^\infty (M,N)$ is called the \emph{compact open $C^\infty$-topology}.\index{compact open $C^\infty$-topology}
  \end{defn}

  \begin{rem}\label{rem:Cinftopology}
   \begin{enumerate}
    \item The map $\Phi$ is clearly injective (as $T^0f\coloneq f$). Therefore, $\Phi$ is a homeomorphism onto its image.
    \item Note that the compact open $C^\infty$ topology is also the initial topology with respect to the mappings
    $$T^k \colon C^\infty (M,N) \rightarrow C(T^kM,T^kN)_{\text{c.o}},\quad f \mapsto T^kf, \quad k \in \N_0.$$
    \item If $M \opn E, N \opn F$ for $E,F$ locally convex spaces, the compact open $C^\infty$-topology is the initial topology with respect to the mappings
    $$d^k \colon C^\infty (M,N) \rightarrow C(M \times E^k,N)_{\text{c.o.}},\quad f \mapsto d^kf, \quad k\in N_0.$$
    \item By construction the compact open $C^\infty$ topology is finer then the compact open topology (i.e. the topology induced by the inclusion $C^\infty (M,N) \rightarrow C(M,N)$. In particular, if $M$ is locally compact (i.e. $M$ is finite-dimensional) the evaluation $\ev \colon C^\infty (M,N) \times M \rightarrow N, (f,x) \mapsto f(x)$ is continuous by \Cref{lem:eval}.
   \end{enumerate}
  \end{rem}
 
 \begin{lem}\label{continuity_pf_pb}
  Let $h \colon L \rightarrow M$ and $f \colon N \rightarrow O$ be smooth. Then the pushforward\index{pushforward} and the pullback\index{pullback} 
  \begin{align*}
   f_* \colon C^\infty (M,N) &\rightarrow C^\infty (M,O),\quad  g \mapsto f\circ g\\
   h^* \colon C^\infty (M,N) &\rightarrow C^\infty (L,N),\quad  g \mapsto g \circ h
  \end{align*}
  are continuous.
 \end{lem}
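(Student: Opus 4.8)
The plan is to exploit the universal property of the initial topology together with the functoriality of the higher tangent maps. Since the target $C^\infty(M,O)$ carries the initial topology with respect to the family $T^k \colon C^\infty(M,O) \to C(T^kM,T^kO)_{\text{c.o.}}$ (see \Cref{rem:Cinftopology} (b)), the map $f_*$ is continuous if and only if $T^k \circ f_*$ is continuous for every $k \in \N_0$. The same reduction applies to $h^*$ using the family $T^k \colon C^\infty(L,N) \to C(T^kL,T^kN)_{\text{c.o.}}$. Thus in both cases it suffices to understand the composites $T^k \circ f_*$ and $T^k \circ h^*$.

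First I would invoke the chain rule on manifolds \Cref{lem:chainrulemfd}, iterated $k$ times, to obtain $T^k(f \circ g) = T^k f \circ T^k g$ and $T^k(g \circ h) = T^k g \circ T^k h$. Consequently $T^k \circ f_* = (T^k f)_* \circ T^k$ and $T^k \circ h^* = (T^k h)^* \circ T^k$, where $(T^k f)_*$ denotes post-composition with the continuous map $T^k f$ and $(T^k h)^*$ denotes pre-composition with the continuous map $T^k h$, both acting between the relevant spaces of continuous maps with the compact open topology. The maps $T^k$ on the far right are continuous by the very definition of the compact open $C^\infty$-topology, so the whole problem reduces to establishing continuity of post- and pre-composition at the level of compact open function spaces.

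This last point is the only genuine content, and it is a standard property of the compact open topology. For post-composition by a continuous $\phi$, the preimage of a subbasic set $\{c : c(K) \subseteq V\}$ (with $K$ compact, $V$ open) is $\{c : c(K) \subseteq \phi^{-1}(V)\}$, again subbasic and open since $\phi$ is continuous. For pre-composition by a continuous $\psi$, the preimage of $\{c : c(K) \subseteq U\}$ is $\{c : c(\psi(K)) \subseteq U\}$, which is subbasic open because $\psi(K)$ is compact as a continuous image of a compact set. Applying these with $\phi = T^k f$ and $\psi = T^k h$, which are continuous since $f$ and $h$ are smooth, finishes both cases.

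The main obstacle, such as it is, lies only in keeping the bookkeeping of domains and codomains of the various tangent maps straight; there is no analytic difficulty, since the argument is purely topological once the functoriality $T^k(a \circ b) = T^k a \circ T^k b$ is in hand. Everything reduces to the universal property of initial topologies (cf.\ \Cref{app:topo}) and the elementary behaviour of the compact open topology under composition.
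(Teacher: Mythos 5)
Your proof is correct and follows essentially the same route as the paper: reduce via the initial-topology property to the continuity of $T^k \circ f_*$ and $T^k \circ h^*$, use the chain rule to factor these as $(T^k f)_* \circ T^k$ and $(T^k h)^* \circ T^k$, and then invoke the continuity of post- and pre-composition on compact open function spaces. The only cosmetic difference is that you spell out the latter fact directly, whereas the paper cites it as \Cref{lem:pushforward} from the appendix (where the identical subbasic-set computation appears).
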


\begin{proof}
 Since the compact open $C^\infty$-topology is initial with respect to the family $(T^k)_{k \in \N_0}$ it suffices to prove that $T^k \circ f_*$ and $T^k \circ h^*$ are continuous for each $k \in \N_0$.
 However, the chain rule yields for each $k$ commutative diagrams
 \begin{displaymath}
  \begin{tikzcd}
   C^\infty (M,N) \ar[r,"f_*"] \ar[d,"T^k"] & C^\infty (M,O) \ar[d,"T^k"] && C^\infty(M,N) \ar[r,"h^*"] \ar[d,"T^k"] & C^\infty (L,N) \ar[d,"T^k"] \\
   C(T^kM,T^kN)_{\text{c.o.}} \ar[r, "(T^kf)_*"] & C(M,O)_{\text{c.o.}} && C(T^kM,T^kN)_{\text{c.o.}} \ar[r, "(T^kh)^*"] & C(T^kL,T^kN)_{\text{c.o.}} 
  \end{tikzcd}
 \end{displaymath}
The pushforward and the pullback in the lower row are continuous by \Cref{lem:pushforward}. We conclude that $f_*$ and $h^*$ are continuous.
\end{proof}
 \begin{prop}\label{prop:lcvx_mappingsp}
  Let $E$ be a locally convex space. Then the compact open $C^\infty$-topology turns
  $C^\infty (M,E)$ with the pointwise operations into a locally convex space.
  \end{prop}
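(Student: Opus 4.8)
The plan is to exhibit $C^\infty(M,E)$, equipped with its pointwise linear structure, as a linear subspace of a product of locally convex spaces in such a way that the compact open $C^\infty$-topology becomes the induced topology. Recall from \Cref{rem:Cinftopology} that $\Phi \colon C^\infty(M,E) \to \prod_{k \in \N_0} C(T^kM, T^kE)_{\text{c.o.}}$, $f \mapsto (T^kf)_{k}$, is a homeomorphism onto its image, and that the compact open $C^\infty$-topology is by definition the initial topology with respect to $\Phi$. Hence it suffices to verify three things: that the pointwise operations make $C^\infty(M,E)$ a vector space, that each factor $C(T^kM,T^kE)_{\text{c.o.}}$ is a locally convex space, and that $\Phi$ is linear. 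Granting these, $\Phi$ is a linear injection into a locally convex space, and the conclusion follows by pulling back seminorms.

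First I would record the vector space structure. For $f,g \in C^\infty(M,E)$ and $\lambda \in \R$ the map $f + \lambda g$ equals $a \circ (f,g)$, where $a \colon E \times E \to E$, $(u,w) \mapsto u + \lambda w$, is continuous linear and hence smooth; since $(f,g)$ is smooth and compositions of smooth maps are smooth (\Cref{rem:insertchart}), we get $f + \lambda g \in C^\infty(M,E)$. Thus the pointwise operations are well defined and $C^\infty(M,E)$ is a real vector space. Next, the factors: because $E$ is locally convex, each higher tangent space $T^kE$ is again a locally convex space, being linearly isomorphic to $E^{2^k}$ (cf.\ Exercise \ref{Ex:tangentmaps} 2). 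For any topological space $X$ and any locally convex space $F$, the space $C(X,F)_{\text{c.o.}}$ is locally convex under the pointwise operations, its topology being generated by the seminorms $\lVert \phi \rVert_{K,q} \coloneq \sup_{x \in K} q(\phi(x))$ indexed by compact $K \subseteq X$ and continuous seminorms $q$ on $F$ (see \Cref{sect:cptopntop}). Taking $X = T^kM$ and $F = T^kE$ shows every factor is locally convex, whence so is the product.

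The crux is the linearity of $\Phi$, equivalently of each $T^k$, with respect to the pointwise structures. The key observation is that the tangent functor is additive on maps into a locally convex space $G$: for $h_1,h_2 \colon P \to G$ with $P$ a manifold one has, read in any chart, $T(h_1+h_2)(p;v) = ((h_1+h_2)(p), d(h_1+h_2)(p;v)) = Th_1(p;v)+Th_2(p;v)$ under the identification $TG \cong G \times G$, because the directional derivative is linear in the map (the defining limit is), and likewise for scalar multiples. I would then induct on $k$: $T^0 = \id$ is linear, and if $T^k$ is linear then, since $T^kf$ and $T^kg$ are maps from $T^kM$ into the locally convex space $T^kE$, the additivity just noted gives $T^{k+1}(f+g) = T(T^kf + T^kg) = T(T^kf) + T(T^kg) = T^{k+1}f + T^{k+1}g$, and similarly for scalars. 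Hence $\Phi$ is linear. I expect this functorial bookkeeping, together with the identification $T^kE \cong E^{2^k}$, to be the only genuinely delicate point.

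Finally I would assemble the pieces. The map $\Phi$ is linear and injective (already the component $T^0 = \id$ is injective) into the locally convex product $W \coloneq \prod_{k} C(T^kM,T^kE)_{\text{c.o.}}$. If $\{q_i\}_{i}$ is a generating family of continuous seminorms for $W$, then the compact open $C^\infty$-topology, being initial with respect to $\Phi$, coincides with the topology generated by the seminorms $q_i \circ \Phi$ on $C^\infty(M,E)$ (transitivity of initial topologies). These are genuine seminorms, and they separate points because $\Phi$ is injective while the $q_i$ separate points on $W$. By \Cref{defn:lcvx} this exhibits $C^\infty(M,E)$ as a locally convex space, completing the proof.
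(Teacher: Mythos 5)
Your proof is correct and follows essentially the same route as the paper: embed $C^\infty(M,E)$ via $\Phi$ into the product $\prod_k C(T^kM,E^{2^k})_{\text{c.o.}}$, note each factor is locally convex by \Cref{lem:co_vs_uniform}, and observe that $\Phi$ is linear so the image is a linear (hence locally convex) subspace. The only difference is that you spell out the inductive argument for the linearity of $T^k$, which the paper dismisses as "easy to see"; your additivity argument for the tangent functor on maps into a locally convex space is sound.
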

  
  \begin{proof}
   The compact open $C^\infty$-topology is initial with respect to the map 
   $$\Phi \colon C^\infty (M,E) \rightarrow \prod_{k\in \N_0} \underbrace{C(T^kM, T^kE)_{\text{c.o.}}}_{\cong C(T^kM, E^{2^k})_{\text{c.o.}}},\quad f \mapsto (T^kf)_{k \in \N_0}.$$
   Now the spaces $C(T^kM, E^{2^k})_{\text{c.o.}}$ are locally convex spaces by \Cref{lem:co_vs_uniform} since $E^{2^k}$ is a locally convex space. The product of locally convex spaces is again a locally convex space. Thus every linear subspace of the product becomes a locally convex space.
   Now it is easy to see that $\Phi$ is linear with respect to pointwise addition and scalar multiplication. Thus the image of $\Phi$ is a linear subspace and $\Phi$ is an isomorphism of locally convex spaces identifying $C^\infty (M,E)$ as a locally convex subspace of $\prod_{k \in \N_0} C(T^kM,E^{2^k})$. 
  \end{proof}
 
 \subsection*{Interlude: Certain open sets in the compact open $C^\infty$-topology}
 In this section we recall the classic arguments (see e.g.~\cite{Hir76}) that certain subsets of mappings are open in the compact open $C^\infty$-topology. Observe that one needs here (and we shall require it in the whole section) that the source manifold is compact. For non-compact source manifolds, the sets discussed here will in general not be open in the compact open $C^\infty$-topology. We will need the results collected in this subsection in \Cref{Chap:Liegp} when discussing the group of diffeomorphisms. 
  
 \begin{lem}\label{lem:scattering}
  Let $M,N$ be manifolds, $M$ compact and a finite open cover $U_1,\ldots,U_n$ of $M$. Then the map 
  $$\Psi \colon C^\infty (M,N) \rightarrow \prod_{i=1}^n C^\infty (U_i,N),\quad f \mapsto (f|_{U_i})_{i\in I}$$
  is a homeomorphism onto $\mathcal{I} \coloneq \{(f_i) \in \prod_{i=1}^n C^\infty (U_i,N) \mid f_i|_{U_i \cap U_j} \equiv f_j|_{U_i \cap U_j}, \forall i,j \in I\}$. Moreover, $\mathcal{I}$ is closed in $\prod_{i=1}^n C^\infty (U_i,N)$. 
  \end{lem}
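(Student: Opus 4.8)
The plan is to show that $\Psi$ is a continuous bijection onto $\mathcal{I}$ whose inverse is continuous, and then that $\mathcal{I}$ is closed. \textbf{Bijectivity onto $\mathcal{I}$:} restrictions of a single map automatically agree on overlaps, so $\Psi$ takes values in $\mathcal{I}$; since the $U_i$ cover $M$, a map is determined by its restrictions and $\Psi$ is injective. For surjectivity, given $(f_i)\in\mathcal{I}$ the compatibility condition lets me define $f\colon M\to N$ by $f(x):=f_i(x)$ whenever $x\in U_i$; this is well defined and restricts to the smooth $f_i$ on each $U_i$. As being $C^\infty$ is a local condition (it can be tested in charts, and every chart domain may be intersected with the $U_i$), we get $f\in C^\infty(M,N)$ with $\Psi(f)=(f_i)$.

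For continuity of $\Psi$, let $\iota_i\colon U_i\hookrightarrow M$ be the smooth inclusion of the open submanifold $U_i$. Then $\mathrm{pr}_i\circ\Psi=\iota_i^\ast$ is a pullback, which is continuous by \Cref{continuity_pf_pb}. Since the codomain carries the product (hence initial) topology and $\mathcal{I}$ the subspace topology, $\Psi$ is continuous.

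The main step is continuity of $\Psi^{-1}$. Because the compact open $C^\infty$-topology on $C^\infty(M,N)$ is initial with respect to the family $(T^k)_{k\in\N_0}$ (see \Cref{rem:Cinftopology}), it suffices to show each $T^k\circ\Psi^{-1}\colon\mathcal{I}\to C(T^kM,T^kN)_{\mathrm{c.o.}}$ is continuous, and for this that the preimage of every subbasic set $\mathcal{O}:=\{g : g(K)\subseteq O\}$, with $K\subseteq T^kM$ compact and $O\opn T^kN$, is open in $\mathcal{I}$. The key geometric input is that $K$ can be split along the open cover $\{T^kU_i\}$ of $T^kM$: as $T^kM$ is Hausdorff, $K$ is compact Hausdorff hence normal, so the shrinking lemma applied to the finite open cover $\{K\cap T^kU_i\}$ of $K$ yields compact sets $K_i\subseteq T^kU_i$ with $K=\bigcup_{i=1}^n K_i$. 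Writing $f:=\Psi^{-1}((f_j)_j)$ and using $(T^kf)|_{T^kU_i}=T^k(f|_{U_i})=T^kf_i$, one sees $T^kf(K)\subseteq O$ holds if and only if $T^kf_i(K_i)\subseteq O$ for every $i$. Hence
\[
(T^k\circ\Psi^{-1})^{-1}(\mathcal{O})=\bigcap_{i=1}^n \mathrm{pr}_i^{-1}\Big((T^k)^{-1}\{h : h(K_i)\subseteq O\}\Big),
\]
a finite intersection of preimages of subbasic open sets under continuous maps, thus open in $\mathcal{I}$.

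Finally, for closedness of $\mathcal{I}$: for each pair $(i,j)$ the maps $(f_k)_k\mapsto f_i|_{U_i\cap U_j}$ and $(f_k)_k\mapsto f_j|_{U_i\cap U_j}$ into $C^\infty(U_i\cap U_j,N)$ are continuous (projections followed by pullbacks), and since $N$ is Hausdorff that target is Hausdorff (the compact open $C^\infty$-topology refines the compact open topology, which separates points); so their equaliser is closed, and $\mathcal{I}$ is the finite intersection of these equalisers. The one nonformal point is the decomposition $K=\bigcup K_i$ with $K_i\subseteq T^kU_i$ compact: infinite-dimensional manifolds are generally not locally compact, so this cannot use compact neighbourhoods — the right tool is normality of $K$ with the shrinking lemma, which needs only compactness of $K$ (and, incidentally, not of $M$).
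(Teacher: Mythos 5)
Your proof is correct, and its overall skeleton (continuity of $\Psi$ via pullbacks, closedness of $\mathcal{I}$ from the Hausdorff property of $N$, and a decomposition of compact sets subordinate to the cover as the key step) matches the paper's. The implementation of the key step differs, though, in a way worth noting. The paper first shrinks the cover of $M$ itself — using compactness of $M$ and \cite[II, \S 3 Proposition 3.2]{Lang} to find $V_i\subseteq\overline{V}_i\subseteq U_i$ still covering $M$ — and then transports this to $T^{k}M$ by pulling the closed sets $\overline{V}_i$ back along the bundle projections, intersecting with the given compact $K$. You instead shrink inside each compact set $K\subseteq T^kM$ directly, invoking normality of the compact Hausdorff space $K$ and the shrinking lemma to write $K=\bigcup_i K_i$ with $K_i\subseteq T^kU_i$ compact. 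Your version is slightly more economical (one decomposition per subbasic set, no bundle projections) and, as you observe, uses only compactness of $K$, so it in fact establishes the homeomorphism statement for any manifold $M$ with a finite open cover; the paper's route genuinely leans on compactness of $M$. You also supply the surjectivity of $\Psi$ onto $\mathcal{I}$ by gluing, a point the paper passes over silently even though the lemma asserts a homeomorphism onto $\mathcal{I}$ rather than merely onto the image. Finally, your closedness argument via equalisers of the two continuous restriction maps into the Hausdorff space $C^\infty(U_i\cap U_j,N)$ is a repackaging of the paper's pointwise argument with the evaluations $\ev_{x,i,j}$ and the diagonal $\Delta N$; both are fine.
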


  \begin{proof}
   To see that $\mathcal{I}$ is closed we introduce for every $i,j \in I\coloneq \{1,\ldots, n\}$ and $x \in U_i \cap U_j$ the map 
   $\ev_{x,i,j} \colon \prod_{i=1}^n C^\infty (U_i , N) \rightarrow N \times N, (\gamma_k)_{1\leq k\leq n} \mapsto (\gamma_i (x),\gamma_j (x))$. These are continuous since projections onto components in a product and the point evaluations are continuous (cf.\ \Cref{rem:pointeval}). Now we denote by $\Delta N \subseteq N\times N$ the diagonal (i.e. all elements of the form $(n,n)$), which is closed in $N \times N$ due to $N$ being Hausdorff.
   Then $\mathcal{I}$ is closed as the preimage:
   \begin{align*}
    \mathcal{I} = \bigcap_{x \in \sqcup_{i,j \in I}U_i \cap U_j} \ev_{x,i,j}^{-1}(\Delta N).
   \end{align*}
  Note that we can write the restriction $f \mapsto f|_{U_i}$ as the pullback $f|_{U_i}=f\circ \iota_{U_i} =(\iota_{U_i})^*(f)$ with the inclusion of $U_i$ into $M$. Hence the restriction map is continuous by \Cref{continuity_pf_pb}. and as a consequence $\Psi$ is continuous. Moreover, $\Psi$ is clearly injective and we only have to prove that $\Psi$ is an open mapping onto its image. To see this we need to check that finite intersections of sets of the form
  \begin{displaymath}
   \lfloor K,U,k\rfloor \coloneq \{f \in C^\infty (M,N) \mid T^kf(K)\subseteq U\},\quad K \subseteq T^k M \text{ compact}, U \opn T^kN, k\in \N.
  \end{displaymath}
  get mapped to open sets by $\Psi$ (recall that the topology is initial with respect to the $T^k$). Now since $M$ is compact, \cite[II, \S 3 Proposition 3.2]{Lang} implies that there are open sets $V_i \subseteq \overline{V}_i \subseteq U_i, 1\leq i \leq n$ and $M \subseteq \bigcup_{1\leq i \leq n} V_i$. Consider now $f \in \bigcap_{1\leq r \leq \ell} \lfloor K_r,U_r,k_r\rfloor$ and define $T^{k_r}\overline{V}_i \coloneq \pi_{k_r}^{-1}(\overline{V}_i)$, where $\pi_{k_r} \colon T^{k_r}U_i \rightarrow U_i$ is the bundle projection. Note that $T^{k_r}\overline{V}_i$ is closed in $T^{k_r}U_i$ for every $1\leq i \leq n$. Then clearly 
  \begin{align}\label{eq:localCR}
   \gamma|_{U_i} \in \bigcap_{1\leq r \leq \ell} \lfloor K_r\cap T^{k_r}\overline{V}_i,O_r,k_r\rfloor \quad \forall 1\leq i\leq n
  \end{align}
  Now let $(g_i)_{1\leq i \leq n} \in \mathcal{I}$ such that every $g_i$ satisfies \eqref{eq:localCR} for $i$. Since the $T^{k_r}V_i$ cover $K_r$, the unique map $g$ defined via $g|_{U_i}=g_i$ satisfies $g \in \bigcap_{1\leq r \leq \ell} \lfloor K_r,U_r,k_r\rfloor$. We conclude 
  \begin{align*}
   \prod_{i=1}^n \bigcap_{1\leq r\leq \ell}\lfloor K_r\cap T^{k_r}\overline{V}_i , O_r, k_r\rfloor \subseteq \Psi \left(\bigcap_{1\leq r \leq \ell} \lfloor K_r,U_r,k_r\rfloor\right) 
  \end{align*}
  and thus $\Psi$ is open onto its image. 
  \end{proof}
 
 \begin{lem}\label{lem:immsub_opn}
  Let $M$ be a compact manifold and $N$ be a finite-dimensional manifold. Then the sets\index{immersion!set of} \index{submersion!set of}
  \begin{align*}
   \Imm (M,N) = \{ f \in C^\infty (M,N) \mid f \text{ is an immersion}\} \\
   \Sub (M,N) = \{ f \in C^\infty (M,N) \mid f \text{ is a submersion}\}
  \end{align*}
 are open in the compact open $C^\infty$-topology
 \end{lem}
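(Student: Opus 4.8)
The plan is to characterise $\Imm(M,N)$ and $\Sub(M,N)$ through the rank of the first tangent map and then exploit that maximal rank is an open condition which, by compactness of $M$, can be controlled uniformly. First I would record that a compact manifold is finite-dimensional (Exercise~\ref{Ex:graph}), so both $M$ and $N$ are finite-dimensional; write $m\coloneq\dim M$ and work locally with $n\coloneq\dim N$. By the implications collected in \Cref{rel:subm:imm}, over finite-dimensional $M$ an immersion is the same as an infinitesimally injective map, and over finite-dimensional $N$ a submersion is the same as an infinitesimally surjective map. Hence $f\in\Imm(M,N)$ if and only if $T_xf$ is injective for every $x$, and $f\in\Sub(M,N)$ if and only if $T_xf$ is surjective for every $x$; in local charts these say precisely that the Jacobian of the chart representative has rank $m$ (respectively $n$) at each point.

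Since $M$ is compact, choose finitely many charts $\varphi_i\colon U_i\to\R^m$ of $M$ and $\psi_i\colon V_i\to\R^n$ of $N$ with $f(U_i)\subseteq V_i$, together with compact sets $L_i\subseteq U_i$ such that $M=\bigcup_i\mathrm{int}\,L_i$. In the charts $T\varphi_i,T\psi_i$ the tangent map $Tg$ is represented by the Jacobian $Dg_i$ of $g_i\coloneq\psi_i\circ g\circ\varphi_i^{-1}$. Because the compact open $C^\infty$-topology is initial with respect to $T^1\colon C^\infty(M,N)\to C(TM,TN)_{\text{c.o.}}$ (\Cref{rem:Cinftopology}), and the chart maps act continuously by pushforward and pullback (\Cref{continuity_pf_pb}), the requirement that $Tg$ stay $\epsilon_i$-close to $Tf$ over the compact piece $L_i$ — equivalently $\sup_{x\in\varphi_i(L_i)}\|Dg_i(x)-Df_i(x)\|<\epsilon_i$ — cuts out an open neighbourhood of $f$, and so does the compact-open condition $g(L_i)\subseteq V_i$ (which in particular keeps $g_i$ defined near $\varphi_i(L_i)$).

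The uniform-rank step is the technical heart. For an immersion $f$ one has $n\ge m$ and each $Df_i(x)$ has rank $m$, so the continuous function $x\mapsto\max_A|\det A|$, where $A$ ranges over the $m\times m$ submatrices of $Df_i(x)$, is strictly positive on the compact set $\varphi_i(L_i)$ and hence bounded below by some $\delta_i>0$. Since the determinant is uniformly continuous on a bounded set and $Df_i(\varphi_i(L_i))$ is bounded, there is $\epsilon_i>0$ such that every matrix within $\epsilon_i$ of some $Df_i(x)$, $x\in\varphi_i(L_i)$, still has a nonvanishing $m\times m$ minor, i.e.\ rank $m$. Then $\mathcal O\coloneq\bigcap_i\{g : g(L_i)\subseteq V_i,\ \sup_{\varphi_i(L_i)}\|Dg_i-Df_i\|<\epsilon_i\}$ is an open neighbourhood of $f$ every member $g$ of which has $T_yg$ injective for all $y\in\mathrm{int}\,L_i$ and all $i$, hence for all $y\in M$; by \Cref{rel:subm:imm} each such $g$ is an immersion. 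This shows $\Imm(M,N)$ is open. The submersion case is verbatim the same, now with $m\ge n$, rank $n$, and $n\times n$ minors, using the submersion/infinitesimally-surjective equivalence from \Cref{rel:subm:imm}.

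The main obstacle is exactly this last step: turning the pointwise ``maximal rank'' condition into a single open neighbourhood in the compact open $C^\infty$-topology. The difficulty is that the particular maximal minor witnessing full rank may vary from point to point, so one cannot fix one minor globally; passing to the maximum over all minors and using compactness of $L_i$ to extract a uniform lower bound $\delta_i$, and then a uniform tolerance $\epsilon_i$ from uniform continuity of the determinant, is what makes the argument close.
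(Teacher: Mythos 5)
Your proof is correct and follows essentially the same route as the paper's: reduce immersion/submersion to the infinitesimal rank condition via \Cref{rel:subm:imm} (using that compactness forces $\dim M<\infty$), localise with finitely many charts and compact pieces covering $M$, and show that maximal rank of the Jacobian is an open condition uniformly over each compact piece. The only difference is cosmetic: where you take the maximum over all $m\times m$ (resp.\ $n\times n$) minors and extract a uniform lower bound $\delta_i$, the paper instead selects at each point a linearly independent subset of columns and covers the compact set by finitely many neighbourhoods on which a fixed index set works --- two standard ways of making the same openness statement precise.
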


 \begin{proof}
  Since $M$ is compact (whence finite-dimensional) a map is an immersion (submersion) if and only if it is infinitesimally injective (surjective). We need to check that these properties define an open set in $C(TM,TN)$, whence they induce an open set in the compact open $C^\infty$-topology. 
  
  Consider a map $f \in  C^\infty (M,N)$ and pick a pair of charts $(U_\varphi,\varphi)$ of $M$ and $(U_\kappa,\kappa)$ of $N$ such that $f(U_\varphi) \subseteq U_\kappa$.
  In addition we pick a compact set $K_\varphi \subseteq U_\varphi$ with non-empty interior. By compactness of $M$, we can choose a finite set of pairs of charts and compact sets such that the interior of the $K_\varphi$ cover $M$. Apply \Cref{lem:scattering} to obtain an embedding $C^\infty (M,N) \rightarrow \prod_{i=1}^n C^\infty (U_{\varphi_i} ,N)$. 
  We will now construct for each $K_{\varphi_i}$ an open neighborhood in $C^\infty (U_{\varphi_i},N)$ which consists only of immersions (submersions) if $f$ has this property. Pulling back the product of these neighborhoods with the embedding then yields the desired neighborhood of $f$ in $C^\infty (M,N)$. 
  
  To this end, we consider the smooth map $g \coloneq \kappa \circ f\circ \varphi^{-1} \in C^\infty (V_\varphi,V_\kappa)$ where $V_\varphi \opn \R^d$ and $V_\kappa \opn \R^n$. Set $L \coloneq \varphi (K_\varphi)$ and observe that $g$ is an immersion (submersion) if and only if $f$ is an immersion (submersion). 
  Recall that on open subsets of vector spaces we have $Tg = (g,dg) \in C(TV_\varphi,TV_\kappa) =C(V_\varphi \times \R^d,V_\kappa \times \R^n)$. Let $e_1,\ldots, e_d$ be the standard basis of $\R^d$. For $x \in L$ we represent the Jacobian as $J_x (g) = \left[df (x;e_1) , \ldots , df(x,e_d)\right]$. If $g$ is a immersion (submersion) then the Jacobi matrix has for every $x$ maximal rank, i.e.~if $g$ is an immersion, the differential is injective if and only if $\text{rk}J_x (g) =d \leq n$. In particular, the rank of the Jacobi matrix is constant, say $\text{rk} J_x (g) =N$ for all $x \in L$. We can thus pick for every $x \in L$ a subset $I_x \subseteq \{1,\ldots,d\}$ of $N$ elements such that $\{dg(x;e_j)\}_{j \in I_x}$ is linearly independent (note that the indices will in general depend on $x$!)
  If $\{df(x;e_j)\}_{j \in I_x}$ is linearly independent, then there exists $\varepsilon_x >0$ such that every tuple $(x_1,\ldots,x_N) \in \prod_{j \in I_x}B_{\varepsilon}(dg(x;e_j))$ is linearly independent (where $B_\varepsilon (z)$ is the $\varepsilon$-ball in $\R^d$, see \cite[Lemma 1.6.7]{MR1173211}). By continuity of $dg$, there is for every $x \in L$ a compact neighborhood $N_x$ of $x$ such that $(dg(y,e_{j_1}),\ldots ,dg(y,e_{j_N})) \in \prod_{j \in I_x}B_{\varepsilon}(dg(x;e_j)$ for all $y \in \overline{N}_x$. Thus 
  \begin{align}
   dg \in \Omega (g,x) \coloneq \bigcap_{j\in I_{x}}\lfloor N_x \times \{e_j\} , B_{\varepsilon_{x}} (df(x,e_j))\rfloor .
  \end{align}
  By construction every $h \in C^\infty (V_\varphi,V_\kappa)$ with $dh \in \Omega (g,x)$ has a Jacobian of rank $N$ at every point in $N_x$. In other words every such $h$ is an immersion (submersion) on $N_x$ if $g$ is such a map. In Exercise \ref{Ex:cinftytop} 4. we shall now construct from $\Omega (g,x)$ an open neighborhood of $f$ in $C^\infty (U_\varphi , N)$ consisting only of maps which restrict to immersions (submersions) on $K_\varphi$ if $f$ is an immersion (submersions). 
  We conclude that $\Imm (M,N)$ and $\Sub (M,N)$ are neighborhoods of their points, whence open.
  \end{proof} 

 \begin{prop}
  Let $M$ be a compact manifold and $N$ be a finite-dimensional manifold. Then the set of embeddings \index{embedding!set of}
  $$\Emb (M,N) \coloneq \{f \in \Imm (M,N) \mid f \text{ is a topological embedding}\}$$
  is open in the compact open $C^\infty$-topology.
 \end{prop}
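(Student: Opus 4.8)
The plan is to realise $\Emb(M,N)$ as $\Imm(M,N)$ intersected with the injective maps and to exploit compactness twice. First I would record the topological reduction: since $M$ is compact (hence finite-dimensional and, being covered by finitely many Euclidean charts, second countable and metrizable) and $N$ is Hausdorff, any continuous injection $M \to N$ is a homeomorphism onto its image. Consequently a smooth map is an embedding precisely when it is an injective immersion, so $\Emb(M,N) = \Imm(M,N) \cap \{f \text{ injective}\}$. As $\Imm(M,N)$ is already open by \Cref{lem:immsub_opn}, it suffices to show that each $f \in \Emb(M,N)$ has a compact-open-$C^\infty$-neighbourhood on which every immersion is injective. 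I fix a metric $d$ on $M$ and, for $\delta>0$, write $K_\delta \coloneq \{(x,y)\in M\times M \mid d(x,y)\ge \delta\}$, a compact set.

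Next I would secure \emph{local} injectivity near the diagonal by a quantitative version of the immersion argument. Covering $M$ by finitely many charts $(U_i,\varphi_i)$ with $f(U_i)\subseteq U_{\kappa_i}$ and compact $K_i \subseteq U_i$ whose interiors cover $M$ (exactly as in the proof of \Cref{lem:immsub_opn}), the local representatives $g_i \coloneq \kappa_i\circ f\circ \varphi_i^{-1}\colon V_i \to \R^{n}$ have injective differential on $L_i \coloneq \varphi_i(K_i)$. By compactness there is $c_i>0$ with $\|Dg_i(z)v\|\ge c_i\|v\|$ on $L_i$, and by continuity this persists with constant $c_i/2$ on a compact $\rho_i$-neighbourhood of $L_i$. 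The mean value theorem \Cref{prop:mean_value}, written as $h(x)-h(y)=\int_0^1 Dh(y+t(x-y))(x-y)\dd t$, then shows that any $h$ with $\sup \|Dh-Dg_i\|_{\mathrm{op}} < c_i/4$ on that neighbourhood satisfies $\|h(x)-h(y)\|\ge (c_i/4)\|x-y\|$ and is therefore injective on each convex $\rho_i$-ball meeting $L_i$. These are $C^1$-closeness conditions on chart representatives, involving only $d^1$ on compact sets, so by \Cref{rem:Cinftopology}(c) together with the scattering embedding of \Cref{lem:scattering} they cut out an open neighbourhood $\mathcal{O}^{\mathrm{loc}}$ of $f$. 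Transferring the $\rho_i$-balls back to $M$ and taking a Lebesgue number of the cover by the interiors of the $K_i$ yields a single $\delta>0$ such that every pair $(x,y)$ with $d(x,y)<\delta$ lies in one such ball; hence every $g\in\mathcal{O}^{\mathrm{loc}}$ is injective on all such pairs.

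With this $\delta$ fixed I would then handle \emph{global} separation on $K_\delta$. The assignment $\Sigma\colon g \mapsto \big((x,y)\mapsto (g(x),g(y))\big)$ defines a continuous map $C^\infty(M,N) \to C(M\times M, N\times N)_{\text{c.o.}}$, since the compact open $C^\infty$-topology is finer than the compact open topology (\Cref{rem:Cinftopology}(d)) and $\Sigma$ is built from pullbacks along the projections and a pairing, all continuous by \Cref{continuity_pf_pb}. Because $f$ is injective, $\Sigma(f)(K_\delta)$ is a compact subset of $N\times N$ disjoint from the closed diagonal $\Delta_N$; separating them in the Hausdorff space $N\times N$ by an open $V\supseteq \Sigma(f)(K_\delta)$ with $V\cap \Delta_N = \emptyset$, and pulling back the subbasic open set $\{h \mid h(K_\delta)\subseteq V\}$, I obtain an open neighbourhood $\mathcal{O}^{\mathrm{glob}}$ of $f$ such that every $g\in \mathcal{O}^{\mathrm{glob}}$ satisfies $g(x)\ne g(y)$ whenever $d(x,y)\ge \delta$.

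Finally, setting $\mathcal{O} \coloneq \Imm(M,N)\cap \mathcal{O}^{\mathrm{loc}}\cap \mathcal{O}^{\mathrm{glob}}$, an open neighbourhood of $f$, any $g\in\mathcal{O}$ is an immersion that is injective on pairs with $d(x,y)\ge \delta$ and on pairs with $0<d(x,y)<\delta$, hence globally injective; being an injective immersion from a compact manifold into a Hausdorff space it is an embedding by the reduction in the first paragraph. Thus $\mathcal{O}\subseteq \Emb(M,N)$ and the set is open. I expect the local step to be the main obstacle: one must produce a uniform injectivity radius that survives $C^1$-perturbation and coordinate the resulting $\delta$ with the global separation. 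Once the quantitative mean value estimate is in place, the remainder is bookkeeping with the subbasic sets $\lfloor K,U,k\rfloor$ and the embedding of \Cref{lem:scattering}.
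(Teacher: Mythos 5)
Your overall strategy coincides with the paper's: reduce to showing that near an embedding every immersion is injective, obtain injectivity at small scales from a uniform, quantitative version of \Cref{lem:imm_loc_emb}, obtain injectivity at large scales from a compactness/separation argument, and finally invoke that an injective immersion of a compact manifold is an embedding. The one place where you genuinely diverge is the global step. The paper separates, for each chart index $i$, the disjoint compact sets $f(K_i)$ and $f(M\setminus U_i)$ by disjoint open sets $A_i,B_i\opn N$ and imposes the finitely many subbasic conditions $g(K_i)\subseteq A_i$ and $g(M\setminus U_i)\subseteq B_i$; you instead separate the compact set $\Sigma(f)(K_\delta)$ from the diagonal in $N\times N$ and pull back a single subbasic set along the continuous map $\Sigma\colon C^\infty(M,N)\to C(M\times M,N\times N)_{\text{c.o.}}$. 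Both work: the paper's version stays inside $C^\infty(M,N)$ and needs no Lebesgue-number bookkeeping to match the separation scale with the local injectivity radius, while yours replaces $n$ separations by one at the cost of the auxiliary continuity check for $\Sigma$, which is indeed covered by \Cref{lem:pushforward} together with \Cref{rem:Cinftopology}.

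One estimate in your local step is wrong as stated. A pointwise lower bound $\lVert Dh(z)v\rVert\ge c\lVert v\rVert$ on a convex set does \emph{not} imply $\lVert h(x)-h(y)\rVert\ge c'\lVert x-y\rVert$ there: in $\int_0^1 Dh(y+t(x-y))(x-y)\,\mathrm{d}t$ the integrand may rotate and cancel (consider $t\mapsto(\cos t,\sin t)$, whose derivative has norm $1$ everywhere yet which is far from injective on long intervals). The standard repair is to freeze the derivative at a point: write $h(x)-h(y)=Dg_i(y)(x-y)+\int_0^1\bigl(Dh(y+t(x-y))-Dg_i(y)\bigr)(x-y)\,\mathrm{d}t$, use uniform continuity of $Dg_i$ on the compact $\rho_i$-neighbourhood (shrinking $\rho_i$ if necessary) together with the $C^1$-closeness of $h$ to $g_i$ to bound the error term by $(c_i/2)\lVert x-y\rVert$, and conclude from the injectivity bound for the fixed linear map $Dg_i(y)$. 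This is precisely the content of the quantitative inverse function theorem that the paper cites at this point, so the gap is routine to fill; but the inequality $\lVert h(x)-h(y)\rVert\ge(c_i/4)\lVert x-y\rVert$ does not follow from the hypotheses you actually listed, and the uniform-oscillation control is the substantive ingredient your sketch omits.
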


 \begin{proof}
  Let $f \in \Emb (M,N)$ and fix a finite family of charts
  \begin{enumerate}
   \item  $(U_i,\varphi_i)$ of $M$ and $(V_i,\psi_i)$ of $N$ such that $f(U_i) \subseteq V_i$, such that
   \item  for every $i$ there is a compact set $K_i \subseteq U_i$ and the interiors of the $K_i$ cover $M$.
  \end{enumerate}
 Recall that an embedding is in particular an injective immersion. Hence \Cref{lem:immsub_opn} allows us to choose an open neighborhood $O_f \opn C^\infty (M,N)$ of $f$ consisting only of immersions which satisfy also property (a). We shall now show that we can shrink $O_f$ to obtain an open neighborhood of $f$ consisting only of immersions.
 
 We have already seen in \Cref{lem:imm_loc_emb} that every immersion in $O_f$ restricts locally to an embedding. However, since $M,N$ are finite-dimensional manifolds, we can use the quantitative version of the inverse function theorem (see \cite[1.1]{Glofun} and the references there) to obtain a uniform estimate on the size of these neighborhoods: Shrinking $O_f$, we may assume that every $g\in O_f$ satisfies \begin{enumerate}
  \item[(c)] $g|_{U_i}$ is an embedding for every $i$. 
\end{enumerate}
 (an alternative proof of this fact using uniform estimates can be found in \cite[2. Lemma 1.3]{Hir76}).
 Now since $f$ is an embedding, we see that for every $i$ the compact sets $f(K_i)$ and $f(M\setminus U_i)$ are disjoint and we can thus find disjoint $A_i,B_i \opn N$ such that $f(K_i) \subseteq A_i$ and $f(M\setminus U_i) \subseteq B_i$. As there are only finitely many $i$, we can shrink $O_f$ further such that every $g \in O_f$ satisfies
 \begin{enumerate}
  \item[(d)]  $g(K_i) \subseteq A_i$ and $g(M\setminus U_i) \subseteq B_i$ for all $i$.
 \end{enumerate}
 We shall show now that $g \in O_f$ is injective. Let $x,y \in M$ be distinct points and $x \in K_i$. If $y \in U_i$, then $g(x)\neq g(y)$ by (c). If $y \in M\setminus U_i$, then $g(x)\in A_i$ and $g(y) \in B_i$ by (d), so again $g(x)\neq g(y)$. We conclude that $g$ is injective.
 
 Summing up, $O_f$ is an open set consisting entirely of injective immersions. However, since $M$ is compact, every injective immersion is an embedding (see e.g.~\cite[Proposition 3.3.4]{MR1173211}).
 We conclude that $O_f$ consists only of embeddings, whence $\Emb (M,N)$ is open.
\end{proof}

\begin{cor}\label{cor:diffopn}
 If $M$ is a compact manifold, the set of diffeomorphisms \index{diffeomorphism group}
 $$\Diff (M) \coloneq \{f \in C^\infty (M,M) \mid \exists g \in C^\infty (M,M) \text{ with } g\circ f = \id_M\}$$
 is open in $C^\infty (M,M)$ with the compact open $C^\infty$-topology.
 \end{cor}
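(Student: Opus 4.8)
The plan is to leverage that $\Emb(M,M)$ is already known to be open, together with the facts that on a compact manifold a surjective embedding is automatically a diffeomorphism and that surjectivity is preserved under small perturbations. Since $M$ is compact it is finite-dimensional, so the previous proposition applies with $N=M$ and gives $\Emb(M,M)\opn C^\infty(M,M)$. Moreover every $f\in\Diff(M)$ is a bijective local diffeomorphism, hence an embedding, so $\Diff(M)\subseteq\Emb(M,M)$. It therefore suffices to fix $f\in\Diff(M)$ and produce an open neighbourhood of $f$ inside $\Emb(M,M)$ all of whose members are surjective.

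First I would record two structural facts. A compact manifold has only finitely many connected components $N_1,\dots,N_r$ (they are open and pairwise disjoint, so compactness bounds their number), and each $N_j$ is open in $M$. Secondly, by \Cref{lem:splitembedding} any embedding $g\colon M\to M$ restricts to a diffeomorphism onto its image $g(M)$; if in addition $g$ is surjective then $g(M)=M$, so $g$ is a diffeomorphism of $M$ and $g^{-1}\circ g=\id_M$ exhibits $g\in\Diff(M)$. Thus the task reduces entirely to forcing surjectivity.

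To build the neighbourhood, choose for each $j$ a point $p_j\in N_j$ and, using that $f$ is surjective, a preimage $x_j\in M$ with $f(x_j)=p_j$. Because $M$ is finite-dimensional the evaluations $\ev_{x_j}\colon C^\infty(M,M)\to M$ are continuous (the compact open $C^\infty$-topology refines the compact open topology, cf.\ \Cref{rem:Cinftopology}), so
$$O_f\coloneq \Emb(M,M)\cap\bigcap_{j=1}^r \ev_{x_j}^{-1}(N_j)$$
is an open neighbourhood of $f$. I claim every $g\in O_f$ is surjective. Denote by $C_j$ the connected component of $M$ containing $x_j$; since $f$ is a diffeomorphism it maps $C_j$ onto $N_j$, so $\dim C_j=\dim N_j$. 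Now $g(C_j)$ is connected and contains $g(x_j)\in N_j$, hence $g(C_j)\subseteq N_j$; as $g$ is an immersion between the equidimensional manifolds $C_j$ and $N_j$ it is there a local diffeomorphism, so $g(C_j)$ is open in $N_j$, while compactness makes it closed. A nonempty clopen subset of the connected $N_j$ is all of $N_j$, so $N_j\subseteq g(M)$ for every $j$ and $g$ is surjective. By the reduction above each such $g$ lies in $\Diff(M)$, whence $O_f\subseteq\Diff(M)$; as $f$ was arbitrary, $\Diff(M)$ is open.

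The main obstacle is the passage from \emph{embedding} to \emph{surjective}: openness of $\Emb(M,M)$ yields embeddings near $f$, but nothing a priori prevents their images from being proper clopen unions of components. The device that resolves this is pinning a point in each target component via continuous evaluation, combined with the dimension count coming from $f$ being a genuine diffeomorphism, which upgrades ``image meets $N_j$'' to ``image contains $N_j$''. One should also verify the harmless point that the left-inverse definition of $\Diff(M)$ does produce bona fide (bijective) diffeomorphisms on the compact $M$; this again follows from the clopen-image argument applied to $f$ itself, since on each source component the image is a full component and injectivity makes the induced self-map of the finite set of components a bijection.
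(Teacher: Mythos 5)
Your proof is correct, but it forces surjectivity by a different mechanism than the paper. The paper first reduces to connected $M$ (via an open condition making nearby maps respect the component permutation of $\phi$) and then characterises a diffeomorphism as a map that is simultaneously an embedding and a submersion, so that $\Diff(M)$ becomes the intersection of the two open sets $\Emb(M,M)$ and $\Sub(M,M)$; surjectivity of nearby maps comes from the image of a submersion being open, hence clopen by compactness. You never touch $\Sub(M,M)$: you work inside $\Emb(M,M)$ alone, pin one point of each target component using the continuous evaluations $\ev_{x_j}$, and get openness of the image from the fact that an immersion between equidimensional finite-dimensional manifolds is a local diffeomorphism. Both arguments ultimately rest on ``nonempty, open and closed in a connected component is everything'', but yours buys a small economy (only the openness of $\Emb(M,N)$, i.e.\ the proposition immediately preceding the corollary, is needed) and your component bookkeeping is more explicit than the paper's one-sentence reduction to connected $M$. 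The paper's submersion route pays off when verifying smoothness of the inverse via Exercise \ref{Ex:submersion} 6., but it itself records your alternative through \Cref{lem:splitembedding} in parentheses.

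One point deserves more care than your closing remark gives it: to see that every $f$ with a left inverse is a genuine (surjective) diffeomorphism, you assert that ``on each source component the image is a full component''. That presupposes $f$ maps each component into a component of the \emph{same} dimension, whereas $Tg\circ Tf=\id$ only shows $f$ cannot lower dimension. The fix is to start with the components of maximal dimension, where equidimensionality is automatic and your clopen argument shows $f$ permutes them; injectivity then forbids lower-dimensional components from mapping into these, and one inducts downward through the dimensions. This is easily repaired — and the paper's proof silently assumes the same fact by treating elements of $\Diff(M)$ as honest diffeomorphisms from the outset — but since your definition of $\Diff(M)$ only posits a left inverse, the induction should be spelled out.
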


 \begin{proof}
  A diffeomorphism $\phi$ permutes the connected components of $M$ and induces on every component a diffeomorphism onto another component. Since the components are compact, there is an open $\phi$-neighborhood in $C^\infty (M,M)$ whose elements map every component to the same component as $\phi$.
  Thus we may assume that $M$ is connected.
  
  A diffeomorphism $\phi \colon M \rightarrow M$ is in particular a map which is an embedding and a submersion. Assume conversely that $\psi \colon M \rightarrow M$ is a mapping which is a submersion and an embedding. Since the image of a submersions is open (Exercise \ref{Ex:submersion} 5.), the set $\phi (M)$ is open and closed in $M$, whence $\phi (M) =M$ by connectedness of $M$. Hence $\phi$ is a bijective map. Its inverse is smooth by Exercise \ref{Ex:submersion} 6. as $\phi$ is a submersion and $\id_M = \phi^{-1} \circ \phi$ (alternatively a bijective embedding is a diffeomorphism by \Cref{lem:splitembedding}).    
 \end{proof}

 \begin{rem}
  As was already mentioned for non-compact $M$, the subsets considered in this subsection will in general not form open subsets of $C^\infty (M,N)$ with respect to the compact open $C^\infty$-topology. The reason for this is that the compact open topology can only control a function's behaviour on compact sets. On a non-compact manifold this topology is too weak to control the behaviour of a function on all of $M$. For this reason one has to refine the topology if $M$ is non-compact. The Whitney type topologies are a common choice, see \cite{HaS17}. However, many results presented in the next sections do not hold (at least not in the generality stated) for the Whitney type topologies. A few examples of this behaviour for $M$ non-compact are:
  \begin{itemize}
   \item the pullback $h^\ast$ is in general discontinuous for the Whitney-topologies (whereas it is continuous in the compact open topology by \Cref{continuity_pf_pb}),
   \item the exponential law, \Cref{thm:explaw} below, is wrong.
  \end{itemize}
While one can develop a general theory for function spaces on non-compact manifolds (see e.g.\ \cite{Mic}), these examples show already that the resulting theory will require a much higher technical investment. We refrain from a discussion in the context of this book and refer the interested reader instead to the literature \cite{HaS17,Mic}.
 \end{rem}

 \begin{Exercise}\label{Ex:cinftytop}
  \vspace{-\baselineskip}
 \Question Fill in the details for \Cref{rem:Cinftopology}:
  \subQuestion Show that the compact open $C^\infty$-topology is the initial topology wrt.\, $(T^k)_{k\in \N}$.\\
  {\tiny \textbf{Hint:} A mapping into a product is continuous if and only if each component is continuous.}
  \subQuestion If $M,N$ are open subsets of locally convex spaces, show that the initial topologies wrt.~the families $(T^k)_{k\in \N_0}$ and $(d^k)_{k\in \N_0}$ coincide.\\
  {\tiny \textbf{Hint:} Exercise \ref{Ex:tangentmaps} 3. yields one inclusion of topologies. For the converse show inductively that $d^k \circ f, \forall k\in \N_0$ continuous, implies $T^k \circ f$ continuous for all $f \colon Z \rightarrow C^\infty (M,N)$.}
  \Question Let $\varphi \colon M\rightarrow N$ be a smooth map between smooth manifolds and $E$ a locally convex space. Show that the pullback $\varphi^* \colon C^\infty (N,E) \rightarrow C^\infty (M,E), f \mapsto f \circ \varphi$ is continuous linear. Deduce that if $\varphi$ is a diffeomorphism, then $\varphi^*$ is an isomorphism of locally convex spaces.
  \Question Let $K,L$ be compact manifolds and $M$ be a manifold. Show that the composition map $\Comp \colon C^\infty (K,M)\times C^\infty (L,K) \rightarrow C^\infty (L,M),\quad (f,g) \mapsto f\circ g$ is continuous. 
  \Question Work out the missing details in the proof of \Cref{lem:immsub_opn}. Show in particular that due to the compactness of $L$ the $\Omega (g,x)$ yield an open neighborhood of $g$ in $C^\infty (V_\varphi,V_\kappa)$ consisting only of mappings whose Jacobian has maximal rank on the compact set $L$. Moreover, construct a neighborhood of $f \in C^\infty (M,N)$ consisting entirely of immersions (submersions) if $f$ is an immersion (submersion).  
 \end{Exercise}

  \section{The exponential law and its consequences}\label{sect:explaw}
 In this section we prove a version of the exponential law \Cref{thm:explaw} for smooth mappings. Before we begin, let us observe a crucial fact about the compact open $C^\infty$-topology:
 
 Assume that $M$ is a \emph{compact} manifold and $E$  a locally convex space. Since the compact open $C^\infty$-topology is finer then the compact open topology, we see that for every 
 $$O \opn E, \qquad C^\infty (M,O) \coloneq \{f \in C^\infty (M,E) \mid f(M)\subseteq O\}$$ is an open subset. Now $C^\infty (M,E)$ is a locally convex space by \Cref{prop:lcvx_mappingsp} whence $C^\infty (M,O)$ becomes a manifold and it makes sense to consider differentiable mappings with values in $C^\infty (M,O)$.
  
 We now prepare the proof of the exponential law by providing several auxiliary results.

 \begin{lem}\label{lem:fwedge_vector}
  Let $E,F,H$ be locally convex spaces, $U \opn E$ and $V \opn F$. If $f\colon U \times V \rightarrow H$ is smooth, then so is $f^\vee \colon U \rightarrow C^\infty (V,H)$. Its derivative is given by
  \begin{align}\label{eq:wedgederiv}
   df^\vee(x;\cdot) = (d_1f)^\vee(x).
  \end{align}
 \end{lem}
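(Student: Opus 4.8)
The plan is to reduce the claim, through the initial-topology description of the compact open $C^\infty$-topology, to a statement about currying continuous maps, and then to bootstrap to all orders. First, $f^\vee$ is well defined with values in $C^\infty(V,H)$ (a locally convex space by \Cref{prop:lcvx_mappingsp}): for fixed $x$ the partial map $v\mapsto f(x,v)$ is the composite of the smooth affine map $v\mapsto (x,v)$ with $f$, hence smooth by the chain rule \Cref{chainrule}. Next recall from \Cref{rem:Cinftopology}(c) that the topology on $C^\infty(V,H)$ is initial with respect to the linear maps $d^\ell\colon C^\infty(V,H)\to C(V\times F^\ell,H)_{\mathrm{c.o.}}$, $\ell\in\N_0$. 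Consequently a map into $C^\infty(V,H)$ is \emph{continuous} if and only if each of its composites with the $d^\ell$ is continuous; I will use only this universal property, so no completeness or sequential-closedness of the image is needed.

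The heart is to show $f^\vee$ is $C^1$ with $df^\vee(x;h)=\big[v\mapsto d_1 f(x,v;h)\big]$, where $d_1 f$ is the first partial differential from \Cref{prop:rpd}. I would verify this via the difference-quotient characterisation \Cref{lem:altchar}: define $(f^\vee)^{[1]}\colon U^{[1]}\to C^\infty(V,H)$ by sending $(x,h,s)$ to the map $v\mapsto s^{-1}(f(x+sh,v)-f(x,v))$ for $s\neq 0$ and to $v\mapsto d_1 f(x,v;h)$ for $s=0$. Applying the mean value theorem \Cref{prop:mean_value} to the first partial map together with the homogeneity \Cref{lem:homogeneity} rewrites both cases uniformly as
$$(f^\vee)^{[1]}(x,h,s)=\Big[v\mapsto \int_0^1 d_1 f(x+\tau s h,\,v;\,h)\,\mathrm{d}\tau\Big],$$
which lands in $C^\infty(V,H)$ for every $(x,h,s)\in U^{[1]}$. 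Continuity of this map into $C^\infty(V,H)$ is checked componentwise through the $d^\ell$: since $d^\ell$ is linear it commutes with the difference quotient and with the integral, reducing everything to the continuity of parameter-dependent weak integrals of the smooth maps $d^\ell_2 d_1 f$ in the compact open topology.

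Granting the $C^1$ statement, smoothness follows by induction: the map $(x,h,v)\mapsto d_1 f(x,v;h)$ equals $df(x,v;h,0)$ precomposed with a continuous linear (hence smooth) insertion, so it is a smooth map on $(U\times E)\times V$, and $df^\vee$ is exactly its wedge in the variables $(x,h)$. The $C^1$ result applied to this new map shows $df^\vee$ is $C^1$, i.e.\ $f^\vee$ is $C^2$; iterating (each time differentiating once more in the $U$-slot, which only ever produces further smooth maps) yields that $f^\vee$ is $C^\infty$, and the formula \eqref{eq:wedgederiv} is precisely the first-order instance read off from the base case. The main obstacle is the continuity of the parameter-dependent integral appearing above as a map into the compact open spaces $C(V\times F^\ell,H)_{\mathrm{c.o.}}$: one must control the integrand uniformly on compact subsets of $V\times F^\ell$, which is the same ``continuous dependence of weak integrals on parameters'' that the text elsewhere treats as a citable fact. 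Everything else is routine bookkeeping with \Cref{prop:rpd} and the chain rule.
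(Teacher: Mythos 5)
Your argument is correct and lands on the same skeleton as the paper's proof — the well-definedness of $f^\vee$, the reduction of continuity into $C^\infty(V,H)$ to the components $d^\ell$, and in particular your final induction step ($df^\vee=h^\vee$ for the smooth map $h((x,z),y)=d_1f(x,y;z)$, then iterate) is verbatim the paper's Step 3. Where you genuinely diverge is the central $C^1$ step. The paper first proves continuity of $f^\vee$ by establishing the identity $d^k(f^\vee(x))=(d_2^kf)^\vee(x)$ and invoking the $C^0$-exponential law (\Cref{prop:C0expo}); it then shows the difference quotient $t^{-1}(f^\vee(x+tz)-f^\vee(x))$ converges to $(d_1f)^\vee(x;\cdot)$ by fixing a subbasic neighbourhood $\lfloor K,U\rfloor$ of $d^k((d_1f)^\vee)$, using Schwarz' theorem to move the $x$-direction to the last slot, and covering the compact set $K$ by finitely many neighbourhoods on which the difference quotient of $d^kf$ extends continuously to $t=0$ (\Cref{lem:altchar} applied to $d^kf$). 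You instead apply \Cref{lem:altchar} to $f^\vee$ itself, package the difference quotient as the parameter-dependent weak integral $\int_0^1 d_1f(x+\tau sh,\cdot;h)\,\mathrm{d}\tau$ via the mean value theorem, and reduce everything to continuous parameter dependence of weak integrals. Your route is more uniform — it delivers joint continuity in $(x,h,s)$ in one stroke and makes the $C^1$ conclusion immediate from \Cref{lem:altchar} — but it leans on the continuity of parameter-dependent weak integrals as a black box (the same fact the paper concedes, in a footnote to \Cref{prop:rpd}, is cited rather than proven in this book), and on commuting $d^\ell$ past the integral; the latter is better avoided by applying $d^\ell$ to the difference quotient \emph{first} and then invoking the mean value theorem for $d_2^\ell f$ in its first slot, which is in effect what the paper's finite-cover argument does. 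Since the paper's own Step 2 rests on \Cref{lem:altchar}, whose proof cites the same parameter-dependence fact, the net external input of the two arguments is comparable; the paper's version just makes the compactness bookkeeping explicit where you delegate it.
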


 \begin{proof}
  \textbf{Step 1:} \emph{$f^\vee$ is continuous.}\\
  It suffices to prove that $d^k \circ f^\vee\colon U \rightarrow C(V\times F^k,H)_{\text{c.o.}}$ is continuous for every $k\in \N_0$ (cf.\ \Cref{rem:Cinftopology} and Exercise \ref{Ex:cinftytop}). For $k=0$ this was proved in \Cref{prop:C0expo}.
  We prove by induction that 
  \begin{align}\label{eq:diffwedge} 
   d^k \circ f^\vee (x) = d^k(f^\vee(x))=(d_2^kf)^\vee(x) \quad \forall x\in U.
  \end{align}
  The induction start for $k=0$ is trivial. For the induction step let $k>0$ and we compute
  \begin{align*}
   &d^k(f^\vee(x))(y;v_1,\ldots, v_k)\\
   =& \lim_{t\rightarrow 0} t^{-1}\left(d^{k-1}(f^\vee(x))(y+tv_k;v_1,\ldots,v_{k-1})-d^{k-1}(f^\vee(x))(y;v_1,\ldots,v_{k-1})\right) \\
   =&  \lim_{t\rightarrow 0} t^{-1}\left(d^{k-1}(f(x,y+tv_k;(0,v_1),\ldots,(0,v_{k-1}))-d^{k-1}(f(x,y;(0,v_1),\ldots,(0,v_{k-1}))\right)\\
  =& d^kf(x,y;(0,v_1),\ldots, (0,v_k)).
  \end{align*}
 Thus we have identified $d^k\circ f$ as $(d_2^kf)^\vee$ which is again continuous by \Cref{prop:C0expo}. We conclude that $f^\vee$ is continuous.
 
 \textbf{Step 2:} \emph{$f^\vee$ is $C^1$ and the derivative satisfies \eqref{eq:wedgederiv}.} Pick $x \in U$, $z \in E$ and $t \in \R$ small. We shall show that 
 $$\Delta (t,x,z) \coloneq t^{-1}(f^\vee (x+tz)-f^\vee (x)) \xrightarrow{t \rightarrow 0} (d_1f)^\vee(x,\cdot;z)$$
 in $C^\infty (V,H)$. Recall that the compact open $C^\infty$-topology is initial with respect to the family $(d^k \colon C^\infty (V,H) \rightarrow C(V\times F^k,H)_{\text{c.o}})_{k\in \N_0}$. Thus $\Delta(t,x,z)$ converges for $t\rightarrow 0$ if and only if $d^k \circ \Delta(t)$ converges. Therefore, we pick $k \in \N_0$ and a neighborhood $\lfloor K , U \rfloor$ of $d^k ((d_1 f)^\vee)$, i.e.\, $K \subseteq V \times F^k$ is compact such that $(d_1f)^\vee(x,\cdot,z) (K) \subseteq U \opn H$.
 Since higher differentials are symmetric by Schwartz' theorem (Exercise \ref{Ex:Bastiani} 3.) we have
 \begin{align*}
  d^k(d_1f)^\vee (x;z)(y;v_1,\ldots , v_k) &= d^{k+1}f(x,y;(z,0),(0,v_1),\ldots,(0,v_k))\\  
  &= d^{k+1}f(x,y;(0,v_1),\ldots,(0,v_k),(z,0))
 \end{align*}
 Now we have seen in \Cref{lem:altchar} that the difference quotient extends continuously to $t=0$ by the differential. We apply this to $d^kf$: For each $\overline{y} \coloneq (y_0,v_1,\ldots,v_k) \in K \subseteq V\times F^k$ there exists $\overline{y} \in N_{\overline{y}} \opn V \times F^k$ and $\varepsilon_{\overline{y}} >0$ such that 
 \begin{align*}
  N_{\overline{y}} \times& ]-\varepsilon_{\overline{y}} , \varepsilon_{\overline{y}}[\,\setminus \{0\} \rightarrow H \\
  (\overline{w}&,t)  \mapsto t^{-1}(d^k f(x+tz,w_0;(0,w_1),\ldots,(0,w_k))- d^k f(x,w_0;(0,w_1),\ldots,(0,w_k))
 \end{align*}
 (where $\overline{w} = (w_0,\ldots,w_k)$) takes values in $U$ and extends continuously to some function $N_{\overline{y}} \times ]-\varepsilon_{\overline{y}} , \varepsilon_{\overline{y}}[ \rightarrow U$. Exploiting compactness of $K$ we cover it by finitely many of these neighborhoods $N_{\overline{y}_1}, \ldots ,N_{\overline{y}_\ell}$. Hence if $|t| < \min_{1\leq i \leq \ell} \varepsilon_{\overline{y}_i}$, we see that 
 $$t^{-1}(d^kf(x+tz,v;(0,v_1),\ldots,(0,v_k))-d^kf(x,v;(0,v_1),\ldots,(0,v_k)) \in U.$$
 In other words, $d^n \Delta (t,x,z)(\overline{v}) \in U$ for all $t$ small enough and $\overline{v} \in K$, whence \eqref{eq:wedgederiv} holds. Since $(d_1f)^\vee$ is $C^0$ by Step 1, we see that $f^\vee$ is $C^1$.  
 
 \textbf{Step 3:} \emph{$f$ is $C^k$ for each $k\geq 2$.}
 Note that $h \colon (U\times E) \times V \rightarrow H, ((x,z),y) \mapsto d_1f(x,y;z)$ is smooth. Now we argue by induction, where Step 2 is the induction start. Also Step 2 shows that $df^\vee = h^\vee$. The induction hypothesis shows that $h^\vee$ is $C^{k-1}$, so $f^\vee$ must be $C^k$ and since $k$ was arbitrary $f^\vee$ is smooth.
  \end{proof}
  
 \begin{prop}\label{Prop:evsmooth}
  Let $E$ be a finite-dimensional space, $F$ be locally convex and $U \opn E$. Then the evaluation map 
  $\ev \colon C^\infty (U,F) \times U \rightarrow F$
  is smooth.\index{evaluation map}
 \end{prop}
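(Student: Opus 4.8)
The plan is to prove by induction on $k$ that evaluation is $C^k$, and to do so \emph{simultaneously for all finite-dimensional domains}: differentiating in the base point will force us to evaluate on the larger (still finite-dimensional) domain $U\times E$, so the inductive statement must range over all such domains. For the base case $k=0$, continuity of $\ev$ is immediate from \Cref{rem:Cinftopology}(d): since $E$ is finite-dimensional, $U$ is locally compact, and as the compact open $C^\infty$-topology is finer than the compact open topology, evaluation is continuous. The one global input I would record at the start is that $d\colon C^\infty(U,F)\to C^\infty(U\times E,F)$, $f\mapsto df$, is continuous linear; linearity is clear, and continuity into the compact open $C^\infty$-topology follows from the initial-topology description (\Cref{rem:Cinftopology}) together with continuity of pullbacks/pushforwards (\Cref{continuity_pf_pb}), since each $d^{j}\circ d$ is a reindexed version of $f\mapsto d^{j+1}f$.

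To get off the ground I would show that $\ev$ is $C^1$ via the rule on partial differentials \Cref{prop:rpd}, viewing $C^\infty(U,F)\times U \opn C^\infty(U,F)\times E$ with the two factors in the roles of $E_1,E_2$. The partial derivative in the function argument is $d_1\ev((f,x);g)=g(x)$, because $f\mapsto f(x)$ is continuous linear; this extends to a continuous map since it is just $\ev(g,x)$. The partial derivative in the base point is $d_2\ev((f,x);v)=df(x;v)$, and the crucial identification is
$$d_2\ev((f,x);v)=\ev_{U\times E}(df,(x,v)),$$
the evaluation on the finite-dimensional domain $U\times E\opn E\times E$. Its continuity follows from the base case applied to $U\times E$, composed with the continuous map $f\mapsto df$. \Cref{prop:rpd} then gives that $\ev$ is $C^1$ with
$$d\ev((f,x);(g,v))=\ev(g,x)+\ev_{U\times E}(df,(x,v)).$$

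The structural point that closes the induction is that this right-hand side is built \emph{entirely out of evaluation maps on finite-dimensional domains precomposed with continuous linear operations}: the first summand is $\ev$ precomposed with the projection $((f,x),(g,v))\mapsto(g,x)$, and the second is $\ev_{U\times E}$ precomposed with $((f,x),(g,v))\mapsto(df,(x,v))$, which uses the continuous linear $d$ from the first paragraph. Assuming inductively that evaluation on every finite-dimensional domain is $C^{k-1}$, both summands are $C^{k-1}$ by the chain rule, hence $d\ev$ is $C^{k-1}$; since $\ev$ is $C^1$, the standard inductive characterisation of $C^k$-maps (\Cref{rem:easyproof}(c)) yields that $\ev$ is $C^k$. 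As $k$ was arbitrary, $\ev$ is smooth.

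The main obstacle is not a single hard estimate but the bookkeeping needed to make the induction self-referential: one must recognise that differentiating in the base point produces an evaluation on $U\times E$ rather than on $U$, which is exactly why the hypothesis has to quantify over all finite-dimensional domains. The one genuinely topological check is that $f\mapsto df$ is continuous \emph{into the compact open $C^\infty$-topology} (not merely the coarser compact open topology), for which one uses that the former is initial with respect to all iterated differentials $d^{j}$. Finally, finite-dimensionality of $E$ enters exactly once but essentially: it furnishes local compactness of $U$ and of $U\times E$, without which even the base-case continuity of evaluation would fail.
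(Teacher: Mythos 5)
Your proof is correct and follows essentially the same route as the paper: continuity of $\ev$ from the compact open $C^\infty$-topology, $d_1\ev(f,x;g)=\ev(g,x)$ by linearity, $d_2\ev(f,x;v)=\ev_{U\times E}(df,(x,v))$ via the continuous linear map $f\mapsto df$, and then induction on the order of differentiability. Your explicit remark that the induction must range over all finite-dimensional domains (since differentiating produces $\ev_{U\times E}$) is exactly the point the paper compresses into its parenthetical ``(and also $\ev_1$)''.
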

 
 \begin{proof}
  We already know from \Cref{rem:Cinftopology} that $\ev$ is continuous. Moreover, $\ev$ is linear in the first component and thus $d_1 \ev (f,x;g) = \ev (g,x)$ (this implies in particular that all partial derivatives with respect to the first component exist).
  Let us now compute $d_2 \ev$. For $f \in C^\infty (U,F)$ and $x\in U$ $w \in E$ and small $t$ we have
  $$t^{-1}\ev (f,x+tw)-\ev(f,x)=t^{-1}(f(x+tw)-f(x)) \rightarrow df(x;w) \text{ as } t\rightarrow 0.$$
  Hence $d_2\ev (f,x;w)$ exists and is given by $d_2 \ev (f,x;w) = df(x;w)=\ev_1 (df,(x,w))$, where $\ev_1 \colon C^\infty (U\times E,F) \times (U\times E) \rightarrow F, (\gamma,z) \mapsto \gamma(z)$ is continuous.
  We conclude that 
  \begin{align}
   \label{deriv:eval}
d\ev (f,x;g,v) = d_1 \ev (f,x;g) + d_2 \ev(f,x;v) = \ev (g,x) + \ev_1 (df,(x,v))
  \end{align}
  exists and is continuous (the mapping $C^\infty (U,F) \rightarrow C^\infty (U \times E,F), f\mapsto df$ is clearly continuous and linear whence smooth). Thus $\ev$ (and also $\ev_1$) is $C^1$. We see that inductively $\ev$ is $C^k$ as its derivative is already $C^{k-1}$.
 \end{proof}
 
 We will now formulate and prove the exponential law, \Cref{thm:explaw}. To justify the name, denote the set of all functions from $X$ to $Y$ by $Y^X$. In this notation the exponential law for arbitrary maps becomes $(Z^Y)^X \cong Z^{X\times Y}$, whence the name exponential law.
  
   \begin{thm}[Exponential law]\label{thm:explaw}
 Let $M$ be a compact manifold and $O \opn E, U \opn F$ be open subsets of locally convex spaces.\index{exponential law} Then
 \begin{enumerate}
  \item If $f \colon U \times M \rightarrow O$ is smooth, so is $f^\vee \colon U \rightarrow C^\infty (M,O),\quad f^\vee (x)(y)\coloneq f(x,y)$.
  \item The evaluation map $\ev \colon C^\infty (M,O)\times M \rightarrow O , (\gamma,x) \mapsto \gamma(x)$ is smooth and the map 
        $C^\infty(U\times M,O) \rightarrow C^\infty (U,C^\infty (M,O), f \mapsto f^\vee$ is a bijection.
 \end{enumerate}
 In particular, the mapping $f$ is smooth if and only if $f^\vee$ is smooth.
 \end{thm}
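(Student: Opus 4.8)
The plan is to reduce both assertions to the already-established ``flat'' versions, \Cref{lem:fwedge_vector} and \Cref{Prop:evsmooth}, by localising in charts on the compact manifold $M$. Since $M$ is compact it is finite-dimensional (Exercise \ref{Ex:graph}), so I may fix a \emph{finite} atlas $(\varphi_i \colon U_i \to V_i)_{1\le i\le n}$ of $M$ with each $V_i \opn \R^{d_i}$ and $M = \bigcup_i U_i$. The key structural input is \Cref{lem:scattering}: applied with $N=E$ it shows that the restriction map $\Psi \colon C^\infty(M,E) \to \prod_{i=1}^n C^\infty(U_i,E)$, $\gamma \mapsto (\gamma|_{U_i})_i$, is a homeomorphism onto the set $\mathcal{I}$ of compatible tuples, and that $\mathcal{I}$ is closed. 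As the compatibility conditions and $\Psi$ are linear, $\Psi$ is in fact an isomorphism of locally convex spaces onto the \emph{closed vector subspace} $W := \mathcal{I}$ of the product (which is locally convex by \Cref{prop:lcvx_mappingsp}). In particular $\Psi$ is a diffeomorphism, and being closed, $W$ is sequentially closed.

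For part (a), note first that $f^\vee$ genuinely maps $U$ into $C^\infty(M,O)$, since $f(x,\cdot)$ is smooth and $O$-valued for each fixed $x$; and $C^\infty(M,O)$ is open in the locally convex space $C^\infty(M,E)$, so it suffices to prove $f^\vee$ is smooth as a map into $C^\infty(M,E)$. Transporting along the isomorphism $\Psi$, this is equivalent to smoothness of $\Psi\circ f^\vee \colon U \to W$. Because $W$ is sequentially closed in the product, \Cref{lem:seq-closed} lets me test smoothness after including $W$ into $\prod_i C^\infty(U_i,E)$, and a map into a product is smooth iff each component is (\Cref{lem:prodspace}). The $i$-th component sends $x \mapsto f(x,\cdot)|_{U_i}$; identifying $C^\infty(U_i,E)\cong C^\infty(V_i,E)$ via the chart pullback (an isomorphism of locally convex spaces, Exercise \ref{Ex:cinftytop} 2.), it becomes $g_i^\vee$ for the smooth map $g_i \colon U\times V_i \to O$, $g_i(x,v) := f(x,\varphi_i^{-1}(v))$. \Cref{lem:fwedge_vector} (with $V = V_i$, $H = E$) yields that $g_i^\vee$ is smooth into $C^\infty(V_i,E)$, hence into the open subset $C^\infty(V_i,O)$. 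This settles (a).

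For the smoothness of $\ev$ in part (b) I would argue locally: $\{C^\infty(M,O)\times U_i\}_i$ is an open cover of the domain and smoothness is a local property, so it is enough that each restriction $(\gamma,y)\mapsto \gamma(y)$ on $C^\infty(M,O)\times U_i$ be smooth. Writing $\gamma(y) = (\gamma\circ\varphi_i^{-1})(\varphi_i(y))$, this restriction factors as the smooth map $(\gamma,y)\mapsto (\gamma\circ\varphi_i^{-1},\varphi_i(y)) \in C^\infty(V_i,O)\times V_i$ (first component a pullback, hence continuous linear; second a chart) followed by the flat evaluation $\ev_{V_i}\colon C^\infty(V_i,O)\times V_i\to O$, which is smooth by \Cref{Prop:evsmooth} since $V_i\opn\R^{d_i}$ is finite-dimensional. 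For the bijectivity of $f\mapsto f^\vee$: injectivity is immediate from $f = \ev\circ(f^\vee\times\id_M)$, and for surjectivity, given $h\in C^\infty(U,C^\infty(M,O))$ the map $h^\wedge := \ev\circ(h\times\id_M)\colon U\times M\to O$ is smooth (composition of smooth maps, using that $\ev$ is now known to be smooth) and satisfies $(h^\wedge)^\vee = h$. Finally the ``in particular'' follows: (a) gives one direction, and the identity $f=\ev\circ(f^\vee\times\id_M)$ together with smoothness of $\ev$ gives the converse.

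I expect the main obstacle to be the globalisation step rather than any single computation: one must correctly identify $C^\infty(M,O)$ with an open subset of a closed subspace of a product of chart-wise mapping spaces and verify that smoothness may be tested componentwise there. The two ingredients that make this legitimate are that $\Psi$ is an \emph{isomorphism of locally convex spaces} (so smoothness transfers in both directions, including through $\Psi^{-1}$, with no separate argument needed) and that $W$ is sequentially closed (so \Cref{lem:seq-closed} applies). Keeping track of the passage between $C^\infty(U_i,\cdot)$ and $C^\infty(V_i,\cdot)$ through the chart pullbacks, and of the corestrictions to the open sets $O$ and $C^\infty(V_i,O)$, is where most of the bookkeeping lives.
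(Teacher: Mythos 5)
Your proof is correct and follows essentially the same route as the paper's: part (a) via the scattering embedding $\Psi$ of \Cref{lem:scattering} combined with \Cref{lem:seq-closed} and the chart-wise reduction to \Cref{lem:fwedge_vector}, and part (b) by localising the evaluation in charts and invoking \Cref{Prop:evsmooth}, with the same bijectivity argument. The extra care you take in spelling out why $\Psi$ is an isomorphism onto a closed subspace is a fair elaboration of what the paper leaves implicit, not a different approach.
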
 
 
 \begin{proof}
  Since $C^\infty (M,O)$ is an open submanifold of $C^\infty (M,E)$, \Cref{lem:submfd:initial} shows that for the purpose of this proof we may assume without loss of generality that $O=E$.
  \begin{enumerate}
   \item Pick a finite atlas $\{(\varphi_i,U_i)\}_{1\leq i \leq n}$ of the compact manifold $M$ and obtain from \Cref{lem:scattering} a topological embedding with closed image
   $$\Psi \colon C^\infty (M,E) \rightarrow \prod_{i=1}^n C^\infty (U_i,E),\quad f \mapsto (f|_{U_i})_{1\leq i \leq n}.$$
   Now since the image of $\Psi$ is closed we apply \Cref{lem:seq-closed}: For any smooth $f \colon U \times M \rightarrow O$ the map $f^\vee \colon U \rightarrow C^\infty (M,O) \opn C^\infty (M,E)$ will be smooth if and only if $\Psi \circ f^\vee$ is smooth. In other words, $f^\vee$ is smooth if and only if for every $1\leq i \leq n$ the map $U \ni x \rightarrow f^\vee (x)|_{U_i} \in C^\infty (U_i,E)$ is smooth. Applying Exercise \ref{Ex:cinftytop} 2.\, this is equivalent to the smoothness of the mappings $U \ni x \rightarrow f^\vee (x)|_{U_i} \circ \varphi_i^{-1} \in C^\infty (\varphi_i(U_i),E)$ and these mappings are smooth by \Cref{lem:fwedge_vector}.
   \item The map $\ev \colon C^\infty (M,O)\times M \rightarrow O$ is smooth if locally around each point it is smooth. Hence we choose $(f,x) \in C^\infty (M,O)\times M$ and pick $\varphi \colon U \rightarrow V \opn \R^d$ a chart of $M$ around $x$. We obtain another evaluation map $\ev_\varphi \colon C^\infty (V, O) \times V \rightarrow O$ such that
   $$\ev (\eta , z) = \ev_\varphi ((\varphi^{-1})^* (\eta),\varphi (z)),\quad  (\eta, z) \in C^\infty (M,O)\times U.$$
   Note that $(\varphi^{-1})^* \colon C^\infty (M,O) \rightarrow C^\infty (V,O)$ is the restriction of the continuous linear (whence smooth) map $(\varphi^{-1})^*\colon C^\infty (M,E) \rightarrow C^\infty (V,E)$ to the open set $C^\infty (M,O)$ (cf.\ Exercise \ref{Ex:cinftytop} 2.). Hence $(\varphi^{-1})^*$ is smooth and $\ev$ will be smooth if $\ev_\varphi$ is smooth for each chart $\varphi$ of $M$. However, the smoothness of $\ev_\varphi$ was established in \Cref{Prop:evsmooth}.
   
   We finally have to check that $C^\infty(U\times M,O) \rightarrow C^\infty (U,C^\infty (M,O), f \mapsto f^\vee$ is bijective. Obviously it is injective, hence we need to check surjectivity. If $f \colon U \rightarrow C^\infty (M,O)$ is smooth, then the following map is smooth $$U \times M \rightarrow C^\infty (M,O) \times M, \quad (u,m) \mapsto (f(u),m).$$
   Composing with $\ev$, the map $f^\wedge \colon U \times M \rightarrow O, (u,m) \mapsto f(u)(m)$ is smooth and satisfies $(f^\wedge)^\vee =f$. This establishes surjectivity.
   \qedhere
  \end{enumerate}
 \end{proof}

 \begin{rem}
  One can even show that the map from \Cref{thm:explaw} (b) is a homeomorphism. Moreover, one can obtain similar results for finite orders of differentiability. We skip the details here and refer instead to \cite{MR3342623} for more information.
  
  Note that compactness of the manifold $M$ is a crucial ingredient and the statement of the exponential law becomes false for general non-compact $M$, see \cite{Mic}.
 \end{rem}
 
 \begin{Exercise} \label{ex:explaw}\vspace{-\baselineskip}
  \Question Let $M$ be compact and $E$ be a finite-dimensional vector space. Show that the evaluation map $\ev \colon C^\infty (M,E) \times M \rightarrow E$ is a submersion. (If you prefer a real challenge, prove this for a locally convex space $E$)
  \Question Let $f \colon U \times M \rightarrow O$ and $p \colon O \rightarrow N$ be smooth maps. As always, we denote by $p_* \colon C^\infty (M, O) \rightarrow C^\infty (M , N), h \mapsto p\circ h$ the pushforward and assume that the exponential law \Cref{thm:explaw} holds for the spaces $C^\infty (U \times M,O)$ and $C^\infty (U\times M,N)$. Prove that 
  $$p_* \circ (f^\vee) = (p\circ f)^\vee.$$
 \end{Exercise}
\setboolean{firstanswerofthechapter}{true}
\begin{Answer}[number={\ref{ex:explaw} 1.}] 
 \emph{We assume that the exponential law holds for all relevant function spaces in this exercise! If $f \in C^\infty (U\times M,O)$ and $p \in C^\infty (O,N)$, we will prove that the pushforward satisfies $p_*\circ (f^\vee) = (p\circ f)^\vee$} \\[.15em]
 
 Pick $(u,m)\in U \times M$, then $$p_*\circ (f^\vee)(u)(m)= p_* (f^\vee(u))(m) = (p_* \circ f(u,\cdot))(m) = p(f(u,m))=(p\circ f)^\vee (u)(m)$$
 and this proves the assertion. (Why did we need to assume that the exponential law holds if the calculation does not use it?)
 \end{Answer}
\setboolean{firstanswerofthechapter}{false}

\section{Manifolds of mappings}
In this section we discuss spaces of smooth mappings between manifolds as infinite-dimensional manifolds. We shall not directly construct the manifold structure for general spaces (see \Cref{App:canmfdmap} for a sketch).

\begin{tcolorbox}[colback=white,colframe=blue!75!black,title=General assumption] In this section 
$K$ will be a compact smooth manifold,
$M, N$ will be smooth (possibly infinite-dimensional) manifolds.
\end{tcolorbox}

\begin{defn}\label{defn:canonicalmfd}
A smooth manifold structure on $C^\infty(K,M)$ is \emph{canonical} if \index{canonical manifold of mappings}
\begin{itemize} 
 \item the underlying topology is the compact open $C^\infty$-topology, and
 \item For each (possibly infinite-dimensional) $C^\infty$-manifold $N$ and $f\colon N\rightarrow C^\infty(K,M)$, the map $f$ is~$C^\infty$
if and only if
\begin{displaymath}
f^\wedge\colon N\times K\rightarrow M,\quad (x,y)\mapsto f(x)(y) \quad \text{ is } C^\infty.
\end{displaymath}
\end{itemize}
\end{defn}

\begin{rem}
A canonical manifold structure enforces a suitable version of the exponential law, \Cref{thm:explaw}. This enables differentiability properties of mappings to be verified on the underlying manifolds. 

A similar notion of canonical manifold exists also for spaces of finitely often differentiable mappings, \cite{AaGaS18,GaS21}.
We hasten to remark that the usual constructions of manifolds of mappings yield canonical manifold structures (cf.\ \Cref{App:canmfdmap}).
\end{rem}

\begin{lem}\label{base-cano}
If $C^\infty (K,M)$ is endowed with a canonical manifold structure, then
\begin{enumerate}
\item the evaluation map $\ev\colon C^\infty(K,M)\times K\rightarrow M$,
$\ev(\gamma,x)\coloneq\gamma(x)$ is a $C^{\infty}$-map.
\item
Canonical manifold structures are unique:
Writing $C^\infty(K,M)'$ for $C^\infty(K,M)$ with another canonical manifold structure,
then $\id\colon C^\infty(K,M)\rightarrow C^\infty(K,M)'$, $\gamma\mapsto \gamma$
is a $C^\infty$-diffeomorphism.
\item
Let $N\subseteq M$ be a submanifold such that the set $C^\infty(K,N)$
is a submanifold of $C^\infty(K,M)$.
Then the submanifold structure on $C^\infty(K,N)$ is canonical.
\item
If $M_1$, and $M_2$ are smooth manifolds such that $C^\infty(K,M_1)$ and $C^\infty(K,M_2)$ have canonical
manifold structures, then the manifold structure
on the product manifold $C^\infty(K,M_1)\times C^\infty(K,M_2)$
$\cong  C^\infty(K,M_1\times M_2)$
is canonical.
\end{enumerate}
\end{lem}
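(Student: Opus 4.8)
The plan is to derive all four parts formally from the defining universal property of a canonical structure, together with the characterisations of smooth maps into products (\Cref{rem:insertchart}, \Cref{lem:prodspace}) and into submanifolds (\Cref{lem:submfd:initial}); only the matching of topologies in (c) and (d) requires genuine topological input. For (a) I would feed the identity into the universal property: taking the test manifold to be $C^\infty(K,M)$ itself and $f=\id$, which is smooth, the canonical property forces $f^\wedge$ to be smooth, and $f^\wedge(\gamma,x)=\id(\gamma)(x)=\gamma(x)=\ev(\gamma,x)$, so $\ev$ is smooth. For (b), both structures carry the compact open $C^\infty$-topology by definition, so $\id$ is already a homeomorphism; to see it is smooth as a map $C^\infty(K,M)\to C^\infty(K,M)'$ I would invoke the canonical property of the \emph{target}, which reduces the question to smoothness of $\id^\wedge(\gamma,x)=\gamma(x)$, i.e. of the evaluation on the \emph{source}, established in (a). The symmetric argument (using (a) for $C^\infty(K,M)'$) handles the inverse, so $\id$ is a diffeomorphism.

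For (c), let $\iota\colon N\hookrightarrow M$ and $j\colon C^\infty(K,N)\hookrightarrow C^\infty(K,M)$ denote the inclusions. I would first check that the subspace topology on $C^\infty(K,N)$ coincides with its own compact open $C^\infty$-topology, and then verify the universal property: given a smooth manifold $P$ and $f\colon P\to C^\infty(K,N)$, \Cref{lem:submfd:initial} gives that $f$ is smooth iff $j\circ f$ is smooth; the canonical property of $C^\infty(K,M)$ gives that $j\circ f$ is smooth iff $(j\circ f)^\wedge=\iota\circ f^\wedge$ is smooth; and \Cref{lem:submfd:initial}, now applied to $f^\wedge\colon P\times K\to N\subseteq M$, gives that $\iota\circ f^\wedge$ is smooth iff $f^\wedge$ is smooth. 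Chaining these three equivalences yields exactly the canonical property for the submanifold structure.

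For (d), the underlying bijection is $\gamma\mapsto(\pr_1\circ\gamma,\pr_2\circ\gamma)$, identifying $C^\infty(K,M_1\times M_2)$ with $C^\infty(K,M_1)\times C^\infty(K,M_2)$. Using $T^k(M_1\times M_2)\cong T^kM_1\times T^kM_2$ and the fact that the compact open topology identifies a mapping space into a product with the product of the mapping spaces, the product topology matches the compact open $C^\infty$-topology. For the universal property I would write $g=(g_1,g_2)\colon P\to C^\infty(K,M_1)\times C^\infty(K,M_2)$: by the product rule for smoothness (\Cref{rem:insertchart}) $g$ is smooth iff $g_1,g_2$ are; the canonical property of each factor gives $g_i$ smooth iff $g_i^\wedge$ smooth; and since $g^\wedge=(g_1^\wedge,g_2^\wedge)$, the same product rule gives $g^\wedge$ smooth iff $g_1^\wedge,g_2^\wedge$ are. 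Chaining the equivalences closes the argument.

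I expect the universal-property bookkeeping above to be entirely formal; the genuine work lies in the topological identifications, and the sharpest point is in (c). There one must show that the subspace topology inherited from $C^\infty(K,M)$ agrees with the intrinsic compact open $C^\infty$-topology of $C^\infty(K,N)$. The natural route is to observe that the subspace topology is initial with respect to $\gamma\mapsto T^k\iota\circ T^k\gamma$, and then to argue that each tangent map $T^k\iota\colon T^kN\to T^kM$ is a topological embedding (as $N$ is a submanifold), so that postcomposition induces a topological embedding of mapping spaces; this leaves the initial topology unchanged and identifies it with the one defined by the $T^k$ directly. The analogous step for (d) is easier because $T^k$ genuinely commutes with finite products.
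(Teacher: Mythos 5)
Your proposal is correct and follows essentially the same route as the paper's proof: part (a) by feeding $\id$ into the universal property, (b) by applying (a) to both structures, and (c), (d) by chaining the universal property with \Cref{lem:submfd:initial} and the product rule for smooth maps. The only difference is that you explicitly verify that the subspace (resp.\ product) topology agrees with the compact open $C^\infty$-topology in (c) and (d) — a point the paper's proof leaves implicit — and your embedding argument via $(T^k\iota)_*$ for this is sound.
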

\begin{proof}
\begin{enumerate}
 \item Since $\id\colon C^\infty(K,M)\rightarrow C^\infty(K,M)$ is~$C^\infty$ and $C^\infty(K,M)$
is endowed with a canonical manifold structure, it follows that $\id^\vee\colon
C^\infty(K,M)\times K\rightarrow M$, $(\gamma,x)\mapsto \id(\gamma)(x)=\gamma(x)=\ev(\gamma,x)$
is $C^{\infty}$.
 \item The map $f\coloneq \id\colon C^\infty(K,M)\rightarrow C^\infty(K,M)'$ satisfies
$f^\wedge=\ev$ and is thus $C^{\infty}$, by~(a). Since $C^\infty(K,M)'$ is endowed with a canonical manifold structure,
it follows that~$f$ is~$C^\infty$. By the same reasoning, $f^{-1}=\id\colon C^\infty(K,M)'\rightarrow C^\infty(K,M)$ is~$C^\infty$.
\item As $C^\infty(K,N)$ is a submanifold, the inclusion $\iota\colon C^\infty(K,N)\rightarrow C^\infty(K,M)$, $\gamma\mapsto\gamma$ is~$C^\infty$ (cf.\ \Cref{lem:submfd:initial}).
Likewise, the inclusion map $j\colon N\rightarrow M$ is~$C^\infty$. Let $L$ be a manifold and $f\colon L\rightarrow C^\infty(K,N)$ be a map.
If~$f$ is smooth, then $\iota\circ f$ is smooth, entailing that
$(\iota\circ f)^\wedge \colon L\times K\rightarrow M$, $(x,y)\mapsto f(x)(y)$ is~$C^{\infty}$.
As the image of this map is contained in~$N$, which is a submanifold of~$M$,
we deduce that $f^\wedge=(\iota\circ f)^\wedge|^N$ is~$C^{\infty}$.
If, conversely, $f^\wedge\colon L\times K\rightarrow N$ is $C^{\infty}$,
then also $(\iota\circ f)^\wedge=j\circ (f^\wedge)\colon L\times K\rightarrow M$ is $C^{\infty}$.
Hence $\iota\circ f\colon L\rightarrow C^\infty(K,M)$ is~$C^\infty$ (the manifold structure on the range being
canonical). As $\iota\circ f$ is a $C^\infty$-map map with image in $C^\infty(K,N)$ which is a submanifold
of $C^\infty(K,M)$, we deduce that~$f$ is~$C^\infty$.
\item If~$L$ is a manifold and $f=(f_1,f_2)\colon
L\rightarrow C^\infty(K,M_1)\times C^\infty(K,M_2)$
a map, then~$f$ is~$C^\infty$ if and only if~$f_1$ and $f_2$ are~$C^\infty$.
As the manifold structures are canonical, this holds if and only if
$f_1^\wedge\colon L\times K\rightarrow M_1$ and $f_2^\wedge\colon L\times K\rightarrow M_2$
are $C^{\infty}$, which holds if and only if $f^\wedge=(f_1^\wedge,f_2^\wedge)$
is $C^{\infty}$. \qedhere
\end{enumerate}
\end{proof}

\begin{prop}\label{fstar-gen}
Assume that $C^\infty(K,M)$ and $C^\infty(K,N)$ admit canonical manifold structures.
If $\Omega\subseteq K\times M$ is an open subset and $f\colon \Omega \rightarrow N$ is a $C^{\infty}$-map,
then $\Omega'\coloneq\{\gamma\in C^\infty(K,M)\mid \{(k,\gamma (k)) , k \in K\} \subseteq\Omega\}$
is an open subset of $C^\infty(K,M)$ and
\begin{displaymath}
f_\star\colon \Omega' \rightarrow C^\infty(K,N),\quad \gamma\mapsto f\circ (\id_K,\gamma)
\end{displaymath}
is a $C^\infty$-map.
\end{prop}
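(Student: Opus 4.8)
The plan is to treat the two assertions separately: first that $\Omega'$ is open, which follows from continuity of the evaluation map together with the compactness of $K$, and then that $f_\star$ is smooth, which follows from the defining property of the canonical manifold structure on $C^\infty(K,N)$ once I identify the relevant adjoint as a composition of smooth maps.

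For the openness of $\Omega'$, I would introduce the map
$$\Theta\colon C^\infty(K,M)\times K\rightarrow K\times M,\quad (\gamma,k)\mapsto (k,\gamma(k))=\big(\pr_K(\gamma,k),\ev(\gamma,k)\big).$$
By \Cref{base-cano}~(a) the evaluation $\ev$ is smooth, hence continuous, so $\Theta$ is continuous and $W\coloneq\Theta^{-1}(\Omega)$ is open in $C^\infty(K,M)\times K$. By construction $\gamma\in\Omega'$ precisely when the slice $\{\gamma\}\times K$ is contained in $W$. Fixing $\gamma_0\in\Omega'$, the slice $\{\gamma_0\}\times K$ lies in the open set $W$, and since $K$ is compact the tube lemma supplies an open neighbourhood $O$ of $\gamma_0$ in $C^\infty(K,M)$ with $O\times K\subseteq W$; this means $O\subseteq\Omega'$, so $\Omega'$ is open.

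For the smoothness of $f_\star$, I would first note that it is well defined: for $\gamma\in\Omega'$ the map $(\id_K,\gamma)\colon K\rightarrow\Omega$ is smooth with image in $\Omega$, so $f\circ(\id_K,\gamma)\in C^\infty(K,N)$. Since $\Omega'$ is an open submanifold of $C^\infty(K,M)$ and $C^\infty(K,N)$ carries a canonical structure, \Cref{defn:canonicalmfd} reduces the smoothness of $f_\star$ to the smoothness of its adjoint
$$(f_\star)^\wedge\colon \Omega'\times K\rightarrow N,\quad (\gamma,k)\mapsto f_\star(\gamma)(k)=f\big(k,\gamma(k)\big).$$
But $(f_\star)^\wedge=f\circ\big(\Theta|_{\Omega'\times K}\big)$, where $\Theta|_{\Omega'\times K}$ is smooth into $K\times M$ (again by \Cref{base-cano}~(a)) with image inside the open set $\Omega$; corestricting to $\Omega$ and composing with the smooth $f$ shows $(f_\star)^\wedge$ is smooth, and hence $f_\star$ is smooth.

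I expect the argument to be short once the two tools are in hand, the continuity/smoothness of evaluation and the canonical exponential-law property. The only genuinely delicate point is the openness step, where compactness of $K$ is indispensable and enters precisely through the tube lemma; the statement fails for non-compact $K$, in the same spirit as the failure of the exponential law recorded after \Cref{thm:explaw}. A minor point to keep honest is that the corestriction of a smooth map to an open subset containing its image is again smooth, which is immediate since the $C^\infty$-property is checked locally in charts.
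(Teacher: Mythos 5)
Your proof is correct and follows essentially the same route as the paper: openness of $\Omega'$ via compactness of $K$ and the compact-open topology (the paper cites Exercise \ref{Ex:cptopn} 4., whose solution is exactly your tube-lemma covering argument in explicit form), and smoothness of $f_\star$ by writing $(f_\star)^\wedge = f\circ(\pr_K,\ev)$ and invoking smoothness of the evaluation from \Cref{base-cano}~(a) together with the canonical structure on $C^\infty(K,N)$. Your added remarks on well-definedness and on corestriction to the open set $\Omega$ are harmless elaborations of points the paper leaves implicit.
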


\begin{proof}
In Exercise \ref{Ex:cptopn} it was proved that $\Omega'$ is open in $C(K,M)_{\text{c.o}}$, whence it is open in the finer compact open $C^\infty$-topology. By \Cref{base-cano} (a), the evaluation map $\ev\colon C^\infty(K,M)\times K\rightarrow M$ is $C^{\infty}$, whence $C^\infty(K,M)\times K\rightarrow K\times M$, $(\gamma,x)\mapsto (x,\gamma(x))$
is $C^{\infty}$. Since $f$ is $C^{\infty}$, the Chain Rule shows that
\begin{displaymath}
(f_\star)^\wedge\colon \Omega' \times K\rightarrow N, \quad (\gamma,x)\mapsto f_\star(\gamma)(x)=f(x,\gamma(x))=f(x,\ev(\gamma,x))
\end{displaymath}
is $C^{\infty}$. So $f_\star$ is~$C^\infty$,
as the manifold structure on $C^\infty(K,N)$ is canonical.
\end{proof}

\begin{cor}\label{fstar-basic}
Assume that $C^\infty(K,M)$ and $C^\infty(K,N)$
admit canonical manifold structures.
If $f\colon K\times M\rightarrow N$ is a $C^{\infty}$-map,
then we obtain a smooth map 
\begin{displaymath}
f_\star\colon C^\infty(K,M)\rightarrow C^\infty(K,N),\quad \gamma\mapsto f\circ (\id_K,\gamma).
\end{displaymath}
\end{cor}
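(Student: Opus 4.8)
The plan is to derive this directly from \Cref{fstar-gen}, of which it is the special case where the open set exhausts the whole product. The only real work is to identify the correct open set and to check that the domain of $f_\star$ produced by the proposition is then all of $C^\infty(K,M)$.

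Concretely, I would apply \Cref{fstar-gen} with $\Omega\coloneq K\times M$, which is trivially an open subset of $K\times M$, and with the given $C^\infty$-map $f\colon K\times M\rightarrow N$. The proposition produces the set
\begin{displaymath}
\Omega'=\{\gamma\in C^\infty(K,M)\mid \{(k,\gamma(k)) , k\in K\}\subseteq K\times M\},
\end{displaymath}
which is open in $C^\infty(K,M)$. The key observation is that for \emph{every} $\gamma\in C^\infty(K,M)$ and every $k\in K$ one has $\gamma(k)\in M$, so $(k,\gamma(k))\in K\times M$ automatically; hence the graph condition is vacuous and $\Omega'=C^\infty(K,M)$. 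Thus the map $f_\star\colon \Omega'\rightarrow C^\infty(K,N)$, $\gamma\mapsto f\circ(\id_K,\gamma)$, furnished by \Cref{fstar-gen} is defined on all of $C^\infty(K,M)$ and is $C^\infty$. Since this map agrees with the map $f_\star$ in the statement of the corollary, the claim follows.

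I do not anticipate any real obstacle here: both hypotheses of \Cref{fstar-gen} (the canonical manifold structures on $C^\infty(K,M)$ and $C^\infty(K,N)$, and the smoothness of $f$) are assumed verbatim in the corollary, and the restriction of $f$ to the open subset $\Omega=K\times M$ is simply $f$ itself. The one point deserving an explicit sentence is the identification $\Omega'=C^\infty(K,M)$, which is what turns the ``open subset'' statement of the proposition into a genuinely global statement. Everything else—the smoothness of $f_\star$ and the formula $\gamma\mapsto f\circ(\id_K,\gamma)$—is inherited unchanged from \Cref{fstar-gen}.
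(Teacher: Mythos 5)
Your proposal is correct and is exactly the argument the paper intends: the corollary is stated as an immediate consequence of \Cref{fstar-gen} with $\Omega=K\times M$, for which the graph condition defining $\Omega'$ is vacuous and hence $\Omega'=C^\infty(K,M)$. Nothing further is needed.
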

Applying Corollary~\ref{fstar-basic} with $f(x,y)\coloneq (y)$, we get:
\begin{cor}\label{la-reu}
Assume that $C^\infty(K,M)$ and $C^\infty(K,N)$
admit canonical manifold structures.
If $g\colon M\rightarrow N$ is a $C^{\infty}$-map,
then the \emph{pushforward}\index{pushforward} is smooth
\begin{displaymath}
g_*\colon C^\infty(K,M)\rightarrow C^\infty(K,N),\quad \gamma\mapsto g\circ \gamma.
\end{displaymath}
\end{cor}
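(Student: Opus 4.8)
The plan is to recognise $g_*$ as a special instance of the operator $f_\star$ furnished by \Cref{fstar-basic}, so that the corollary becomes a one-line deduction. First I would define the map
$$f \colon K \times M \rightarrow N, \qquad f(x,y) \coloneq g(y),$$
which is just $f = g \circ \pr_2$ for the projection $\pr_2 \colon K \times M \rightarrow M$ onto the second factor. Since $\pr_2$ is a smooth map (being a coordinate projection of a product manifold, cf.\ \Cref{defn:productmfd} and \Cref{rem:insertchart}) and $g$ is $C^\infty$ by hypothesis, the chain rule \Cref{lem:chainrulemfd} shows that $f$ is $C^\infty$.

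Next I would invoke \Cref{fstar-basic}. Its hypotheses are now all in place: $C^\infty(K,M)$ and $C^\infty(K,N)$ carry canonical manifold structures by assumption, and $f$ is smooth by the previous step. The corollary therefore produces a smooth map $f_\star \colon C^\infty(K,M)\rightarrow C^\infty(K,N)$, $\gamma \mapsto f \circ (\id_K,\gamma)$. It then remains only to identify $f_\star$ with $g_*$, which I would do by evaluating pointwise: for $\gamma \in C^\infty(K,M)$ and $x \in K$,
$$f_\star(\gamma)(x) = f\big((\id_K,\gamma)(x)\big) = f(x,\gamma(x)) = g(\gamma(x)) = (g\circ\gamma)(x),$$
so that $f_\star(\gamma) = g\circ\gamma = g_*(\gamma)$ for every $\gamma$. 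Hence $g_* = f_\star$ is smooth.

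There is essentially no obstacle to overcome here, and no genuinely ``hard part'': the entire analytic content has already been front-loaded into \Cref{fstar-basic} (and ultimately into the exponential law encoded in \Cref{defn:canonicalmfd}, via \Cref{base-cano}(a) and \Cref{fstar-gen}). The only points requiring any care are the trivial factorisation $f = g\circ\pr_2$ used to check smoothness of $f$, and the routine pointwise verification that $f_\star$ and $g_*$ agree as maps $C^\infty(K,M)\rightarrow C^\infty(K,N)$.
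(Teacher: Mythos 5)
Your proposal is correct and is precisely the paper's own argument: the corollary is stated in the text as the application of \Cref{fstar-basic} to $f(x,y)\coloneq g(y)$, which is exactly your $f = g\circ\pr_2$. The only difference is that you spell out the (routine) smoothness of $f$ and the pointwise identification $f_\star = g_*$, which the paper leaves implicit.
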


To construct manifold structures on $C^\infty(K,M)$ one needs an additional structure on $M$. This so called \emph{local addition} replaces the vector space addition not present on $M$.
\begin{defn}
Let $M$ be a smooth manifold.
A \emph{local addition}\index{local addition} is a smooth map
\begin{displaymath}
\Sigma \colon U \rightarrow M,
\end{displaymath}
defined on an open neighborhood $U \opn TM$ of the \emph{zero-section}\index{zero-section} of the tangent bundle
$\mathbf{0}_M\coloneq \{0_p\in T_pM\mid p\in M\}$
such that $\Sigma(0_p)=p$ for all $p\in M$,
\begin{displaymath}
U'\coloneq\{(\pi_{M}(v),\Sigma(v))\mid v\in U\}
\end{displaymath}
is open in $M\times M$ and $\theta\coloneq (\pi_{TM},\Sigma)\colon U \rightarrow U'$ is a diffeomorphism.
\end{defn}
If $C^\infty (M,N)$ is canonical and we interpret a tangent vector as an equivalence class of smooth curves $[t\rightarrow c(t)]$, with $c\colon ]-\varepsilon,\varepsilon[ \rightarrow C^\infty (M,N)$, the derivative of $c$ can be identified with the partial derivative of the adjoint map $c^\wedge \colon ]-\varepsilon,\varepsilon[ \times M \rightarrow N$. This shows that as a set we should have $TC^\infty (M,N) \cong C^\infty (M,TN)$. In the presence of a local addition, the set $C^\infty (M,TN)$ turns also into a canonical manifold and the bijection becomes an isomorphism of vector bundles over the identity. Summing up, this identification yields the following result.

\begin{prop}\label{prop: can:locadd}
If~$M$ admits a local addition\footnote{One can show that every paracompact strong Riemannian manifold (see \Cref{RiemGeo}) and thus every finite-dimensional paracompact manifold, admits a local addition. Moreover, Lie groups, cf.\ \Cref{Chap:Liegp}, admit local additions, \Cref{setup:locadd:Lie}.}, then $C^\infty(K,M)$
admits a canonical manifold structure and the tangent bundle can be identified with $C^\infty (K,TM)$.
\end{prop}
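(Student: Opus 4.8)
The plan is to manufacture an atlas for $C^\infty(K,M)$ directly out of the local addition $\Sigma\colon U\to M$, using as model spaces the spaces of smooth vector fields along a map. Fix $\gamma\in C^\infty(K,M)$, consider the pullback bundle $\gamma^\ast TM\to K$, and write its smooth sections as $\Gamma_\gamma\coloneq\{X\in C^\infty(K,TM)\mid \pi_{M}\circ X=\gamma\}$. Setting $O_\gamma\coloneq\{X\in\Gamma_\gamma\mid X(K)\subseteq U\}$ and $U_\gamma\coloneq\{\eta\in C^\infty(K,M)\mid (\gamma(k),\eta(k))\in U'\text{ for all }k\in K\}$, I would take as candidate chart
$$\psi_\gamma\colon O_\gamma\to U_\gamma,\qquad \psi_\gamma(X)\coloneq \Sigma\circ X .$$
Since $\theta=(\pi_{TM},\Sigma)$ is a diffeomorphism onto $U'$ and $\Sigma(0_p)=p$, the map $\psi_\gamma$ is a bijection carrying the zero section $0_\gamma$ (the footpoint of the chart) to $\gamma$, with inverse $\eta\mapsto\theta^{-1}\circ(\gamma,\eta)$, which is smooth as a composition of smooth manifold maps.

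Two preliminary facts turn this into an atlas. First, $\Gamma_\gamma$ is a locally convex space: choosing a finite atlas of the compact manifold $K$ together with compatible trivialisations of $\gamma^\ast TM$, one realises $\Gamma_\gamma$ as a closed linear subspace of a finite product $\prod_i C^\infty(V_i,E)$ (the matching-on-overlaps subspace, exactly as in \Cref{lem:scattering}), each factor being locally convex by \Cref{prop:lcvx_mappingsp}; the resulting topology is the compact open $C^\infty$-topology. Second, $O_\gamma$ and $U_\gamma$ are open, because ``$X(K)\subseteq U$'' and ``$(\gamma,\eta)(K)\subseteq U'$'' are already open conditions in the coarser compact open topology (compactness of $K$ and openness of $U,U'$; compare the sets $\lfloor\,\cdot\,\rfloor$ of \Cref{sect:top:smoothmaps}), and $\psi_\gamma$ is a homeomorphism for these topologies. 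For two footpoints the transition $\psi_{\gamma_2}^{-1}\circ\psi_{\gamma_1}$ sends $X$ to $k\mapsto\theta^{-1}(\gamma_2(k),\Sigma(X(k)))$; read in charts of $K$ and $M$ this is the fibrewise application of a fixed smooth map between open subsets of locally convex spaces, and its smoothness is an instance of the exponential law: the adjoint factors through the smooth evaluation of \Cref{Prop:evsmooth} and the smooth maps $\theta^{-1},\Sigma$, so \Cref{thm:explaw} together with \Cref{lem:fwedge_vector} applies. Hausdorffness is inherited from $C(K,M)_{\mathrm{c.o.}}$. This equips $C^\infty(K,M)$ with a smooth manifold structure modelled on the $\Gamma_\gamma$, with underlying topology the compact open $C^\infty$-topology.

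The substance is verifying that this structure is \emph{canonical}: for any manifold $N$ and $f\colon N\to C^\infty(K,M)$ one must show $f$ is $C^\infty$ if and only if $f^\wedge\colon N\times K\to M$ is $C^\infty$. This is local in $N$ and in the charts $\psi_\gamma$. Writing $g\coloneq\psi_\gamma^{-1}\circ f$, valued in $O_\gamma\subseteq\Gamma_\gamma\subseteq C^\infty(K,TM)$, and using the closed embedding of $\Gamma_\gamma$ with \Cref{lem:seq-closed}, smoothness of $f$ reduces to smoothness of finitely many maps into the factors $C^\infty(V_i,E)$; by \Cref{thm:explaw} these are smooth exactly when their principal-part adjoints $(n,k)\mapsto g(n)(k)$ are smooth. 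Since $f^\wedge=\Sigma\circ g^\wedge$ fibrewise through the diffeomorphism $\theta$, this chain of equivalences (run in both directions) matches smoothness of $g^\wedge$ with smoothness of $f^\wedge$ near the relevant points, giving canonicity; uniqueness of the structure is then automatic by \Cref{base-cano}(b).

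Finally, to identify $TC^\infty(K,M)$ with $C^\infty(K,TM)$, I would observe that in the chart $\psi_\gamma$ the model space is $\Gamma_\gamma$, so $T_\gamma C^\infty(K,M)\cong\Gamma_\gamma=\{X\in C^\infty(K,TM)\mid\pi_{M}\circ X=\gamma\}$; taking the union over $\gamma$ yields a fibrewise-linear bijection $TC^\infty(K,M)\to C^\infty(K,TM)$ realising the heuristic $[c]\mapsto\partial_t|_{0}c^\wedge$ of the preceding remark. A local addition on $M$ induces one on $TM$ (via $T\Sigma$ and the canonical flip), so $C^\infty(K,TM)$ carries its own canonical structure by the first part, and one checks in the charts $T\psi_\gamma$ that the bijection and its inverse are smooth, i.e.\ a vector-bundle isomorphism over $\mathrm{id}_{C^\infty(K,M)}$. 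The main obstacle throughout is the smoothness of these ``fibrewise-application'' maps (the transitions and the canonicity equivalence): the model spaces $\Gamma_\gamma$ vary with the footpoint and are only accessible through local trivialisations on the \emph{non-compact} chart domains $V_i$, so \Cref{thm:explaw} must be fed through the scattering isomorphism \Cref{lem:scattering} to deploy the compactness of $K$ correctly, while keeping track that corestriction to the closed subspace $\Gamma_\gamma\subseteq C^\infty(K,TM)$ is harmless by \Cref{lem:seq-closed}.
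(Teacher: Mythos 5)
Your proposal is correct and follows essentially the same route as the paper's construction in Appendix \ref{App:canmfdmap}: the charts $\psi_\gamma$ are exactly the maps $\phi_f\colon O_f\to O_f'$, $\tau\mapsto\Sigma\circ\tau$ of \ref{setup:mfdstruct} with inverse $\theta^{-1}\circ(f,\cdot)$, the transition and canonicity arguments reduce to the same fibrewise formulas $\theta^{-1}(h(x),\Sigma(\tau(x)))$ handled via the exponential law fed through the section-space/scattering embedding, and the tangent identification via $T_\gamma C^\infty(K,M)\cong C^\infty_\gamma(K,TM)$ together with the induced local addition $T\Sigma\circ\kappa$ on $TM$ is precisely \ref{setup:curves:tan}. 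No substantive gaps.
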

We refer to \Cref{App:canmfdmap} for more information on the proof.
\begin{setup}\label{thetamp}
Assume that $M,N$ admit local additions and $f \colon M \rightarrow N$ is a $C^{\infty}$ map. Then the identification $TC^\infty (K,M)\cong C^\infty (K,TM)$ induces a commuting diagram (Exercise \ref{ex:canmfd} 6.):
\begin{equation}\label{eq:commdiag}
\begin{tikzcd}
TC^{\infty} (K,M) \arrow[r, "\cong"] \arrow[d, "T(f_*)"]
& C^\infty (K,TM)  \arrow[d, "(Tf)_*"]  \\
TC^\infty (K,N)  \arrow[r, "\cong"] & C^\infty (K,TN). 
\end{tikzcd}
\end{equation} 

\end{setup}

We have seen in \Cref{la-reu} and Exercise \ref{ex:canmfd} 1. that the pushforward and the pullback of smooth functions are smooth with respect to canonical manifolds of mappings. Viewing these mappings as partial mappings of the full composition map 
$$\Comp \colon C^\infty (K,M) \times C^\infty (L,K) \rightarrow C^\infty (L,M),\quad (f,g) \mapsto f\circ g,$$
we see that the full composition is separately smooth in its variables. This immediately prompts the question as to whether the full composition map is smooth. In the general case (of a possibly non-compact source manifold) when one has no exponential law available this is complicated, but in our situation it reduces to an easy observation

\begin{prop}\label{smooth:fullcomp}
Let $K,L$ be compact manifolds and assume that the manifolds $C^\infty (K,M),C^\infty (L,M)$ are canonical ($C^\infty (L,K)$ is automatically a canonical manifold as $L$ admits a local addition) then the composition map
$$\Comp \colon C^\infty (K,M) \times C^\infty (L,K) \rightarrow C^\infty (L,M),\quad (f,g) \mapsto f\circ g,$$
is smooth.
\end{prop}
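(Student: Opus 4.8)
The plan is to deduce smoothness of $\Comp$ from the \emph{canonical} manifold structure on the target space $C^\infty(L,M)$. By \Cref{defn:canonicalmfd}, a map from any smooth manifold $N$ into $C^\infty(L,M)$ is $C^\infty$ precisely when its adjoint $N\times L\rightarrow M$ is $C^\infty$. Taking $N\coloneq C^\infty(K,M)\times C^\infty(L,K)$ (a smooth manifold as a product of canonical manifolds, by \Cref{base-cano}~(d), or simply \Cref{defn:productmfd}), it therefore suffices to show that the adjoint
\begin{displaymath}
\Comp^\wedge\colon \big(C^\infty(K,M)\times C^\infty(L,K)\big)\times L \rightarrow M,\qquad ((f,g),x)\mapsto (f\circ g)(x)=f(g(x))
\end{displaymath}
is a $C^\infty$-map.

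The key observation is that $\Comp^\wedge$ factors through the two evaluation maps. Writing $\ev_{K,M}\colon C^\infty(K,M)\times K\rightarrow M$ and $\ev_{L,K}\colon C^\infty(L,K)\times L\rightarrow K$ for the evaluations, I would rewrite
\begin{displaymath}
\Comp^\wedge((f,g),x)=\ev_{K,M}\big(f,\ev_{L,K}(g,x)\big).
\end{displaymath}
Both evaluation maps are $C^\infty$ by \Cref{base-cano}~(a), using that $C^\infty(K,M)$ is canonical and that $C^\infty(L,K)$ carries a canonical structure (here $K$ is compact, hence finite-dimensional paracompact, so it admits a local addition and \Cref{prop: can:locadd} applies).

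It then remains to assemble these pieces. The map
\begin{displaymath}
\Phi\colon \big(C^\infty(K,M)\times C^\infty(L,K)\big)\times L \rightarrow C^\infty(K,M)\times K,\qquad ((f,g),x)\mapsto \big(f,\ev_{L,K}(g,x)\big)
\end{displaymath}
is $C^\infty$, since a map into a product manifold is $C^\infty$ if and only if its components are (\Cref{rem:insertchart}~(c)): the first component is a coordinate projection and the second is $\ev_{L,K}$ precomposed with projections, both smooth. Consequently $\Comp^\wedge=\ev_{K,M}\circ\Phi$ is $C^\infty$ as a composite of smooth maps, and the canonical property of $C^\infty(L,M)$ yields that $\Comp$ is smooth. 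The argument involves no genuine obstacle beyond correctly identifying the adjoint and invoking the exponential law encoded in the canonical structure; the only point requiring a moment's care is to confirm that $C^\infty(L,K)$ is indeed canonical, so that $\ev_{L,K}$ is available as a smooth map.
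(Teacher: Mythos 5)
Your proof is correct and follows essentially the same route as the paper: reduce smoothness of $\Comp$ to smoothness of its adjoint via the canonical (exponential law) property of $C^\infty(L,M)$, write the adjoint as an iterated evaluation $\ev(f,\ev(g,x))$, and invoke smoothness of the evaluation maps from \Cref{base-cano}. The extra care you take in spelling out the intermediate map $\Phi$ and in confirming that $C^\infty(L,K)$ is canonical is consistent with, and slightly more explicit than, the paper's argument.
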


\begin{proof}
 By the exponential law for canonical manifolds, $\Comp$ is smooth if and only if the adjoint map 
 $$\Comp^\wedge \colon C^\infty (K,M) \times C^\infty (L,K) \times L \rightarrow M,\quad (f,g,l) \mapsto f(g(l))$$
 is smooth. However, this shows that $\Comp^\wedge (f,g,l)=\ev(f,\ev(g,l))$ and since the evaluation mappings are smooth for canonical manifolds, also the adjoint map and thus the composition are smooth.
\end{proof}

We have already seen that certain properties "lift to the manifold of mappings". For example, if $f\colon M \rightarrow N$ is smooth, the pushforward $f_* \colon C^\infty(K,M) \rightarrow C^\infty (K,N), g \mapsto f\circ g$ is smooth. Another example of this is the following result whose proof is remarkably involved and technical (we omit the proof here and pose it as Exercise \ref{ex:canmfd} 7.):

\begin{lem}[{Stacey-Roberts Lemma \cite[Lemma 2.4]{AaS19}}]\label{lem:SR}
 Let $p \colon M \rightarrow N$ be a submersion between finite-dimensional manifolds. Endowing the function spaces with their canonical manifold structure, the pushforward $p_* \colon C^\infty (K,M) \rightarrow C^\infty(K,N)$ becomes a submersion.\index{Stacey-Roberts Lemma}
\end{lem}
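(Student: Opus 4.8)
The plan is to verify the submersion property of $p_*$ directly against \Cref{defn: imm/subm}, by producing at each base map a pair of charts in which $p_*$ becomes a continuous linear projection onto a direct summand. Recall from \Cref{prop: can:locadd} that a local addition on a finite-dimensional manifold induces the canonical structure, and that the chart around $g_0\in C^\infty(K,M)$ is modelled on the space of vector fields along $g_0$,
$$\mathfrak{X}(g_0) \coloneq \{X\in C^\infty(K,TM)\mid \pi_M\circ X=g_0\}\cong C^\infty(K,g_0^\ast TM),$$
which is a genuine locally convex space, being the space of sections of a finite-rank bundle over the compact manifold $K$; the same holds at $h_0\coloneq p\circ g_0\in C^\infty(K,N)$. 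As \Cref{defn: imm/subm} is a pointwise condition, it suffices to treat an arbitrary $g_0$ and exhibit charts around $g_0$ and $p_\ast(g_0)=h_0$ of the required form.

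The crux — and the step I expect to be the main obstacle — is the construction of \textbf{compatible local additions}: local additions $\Sigma_M\colon U_M\to M$ and $\Sigma_N\colon U_N\to N$ with $Tp(U_M)\subseteq U_N$ and
$$p\circ\Sigma_M=\Sigma_N\circ Tp\qquad\text{on }U_M.$$
Since $p$ is a submersion, $\cV\coloneq\ker Tp$ is a subbundle of $TM$; I would fix an Ehresmann connection (say via a Riemannian metric on $M$), giving a complement $\cH$ with $TM=\cV\oplus\cH$ and a fibrewise isomorphism $Tp|_{\cH}\colon\cH\xrightarrow{\ \cong\ }p^\ast TN$. Taking $\Sigma_N$ to be a Riemannian exponential map on $N$, I would define $\Sigma_M(v)$ for small $v=v_{\cV}+v_{\cH}\in T_mM$ in two stages: first horizontally lift the $\Sigma_N$-geodesic $t\mapsto\Sigma_N(t\,Tp(v))$ (which starts at $p(m)$) to a path in $M$ starting at $m$ and take its endpoint, landing in the fibre over $\Sigma_N(Tp(v))$; then move inside that fibre by a fibrewise exponential applied to the transport of $v_{\cV}$. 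By construction $p(\Sigma_M(v))=\Sigma_N(Tp(v))$ and $\Sigma_M(0_m)=m$ with $d(\Sigma_M|_{T_mM})(0_m)=\id$, so after shrinking the domain $\theta_M=(\pi_M,\Sigma_M)$ is a diffeomorphism onto a neighbourhood of the diagonal and $\Sigma_M$ is a local addition. Checking smoothness and the diffeomorphism property of this two-stage $\Sigma_M$ is the genuinely technical point.

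Granting compatible local additions, I would then compute the chart representative of $p_\ast$. The chart $\Theta_{g_0}\colon O_{g_0}\to\mathfrak{X}(g_0)$ sends $g$ to the unique section $s$ with $\pi_M\circ s=g_0$ and $\Sigma_M(s(k))=g(k)$, and likewise $\Theta_{h_0}$ over $N$. For $g\in O_{g_0}$ set $s\coloneq\Theta_{g_0}(g)$; then $Tp\circ s$ has base map $p\circ g_0=h_0$, and compatibility gives $\Sigma_N(Tp(s(k)))=p(\Sigma_M(s(k)))=p(g(k))$, so injectivity of $\theta_N$ forces $\Theta_{h_0}(p\circ g)=Tp\circ s$. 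Hence
$$\Theta_{h_0}\circ p_\ast\circ\Theta_{g_0}^{-1}\colon\ s\longmapsto Tp\circ s .$$
Under the splitting $g_0^\ast TM=g_0^\ast\cV\oplus g_0^\ast\cH$ this reads $\mathfrak{X}(g_0)\cong C^\infty(K,g_0^\ast\cV)\times C^\infty(K,g_0^\ast\cH)$, and $Tp|_{\cH}$ identifies $C^\infty(K,g_0^\ast\cH)\cong C^\infty(K,h_0^\ast TN)=\mathfrak{X}(h_0)$; in these coordinates $s\mapsto Tp\circ s$ is precisely the projection onto the second factor, killing the $\cV$-part and acting as the identity on the $\cH$-part.

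This is exactly the projection $E\cong H\times F\to F$ from \Cref{defn: imm/subm} (with $F=\mathfrak{X}(h_0)$ and $H=C^\infty(K,g_0^\ast\cV)$), whose continuous linear right inverse is the inclusion of the $\cH$-summand; alternatively one may invoke \Cref{alt:subm1}. It remains to confirm that $O_{g_0}$ is open and $p_\ast(O_{g_0})\subseteq O_{h_0}$: the former follows by the same openness argument as in \Cref{fstar-gen}, and the latter from $Tp(U_M)\subseteq U_N$, since $g(k)=\Sigma_M(s(k))$ forces $p(g(k))=\Sigma_N(Tp(s(k)))$ with $Tp(s(k))\in U_N$. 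As $g_0$ was arbitrary, $p_\ast$ is a submersion. The only serious work lies in the compatible local additions; everything afterwards is the bookkeeping of \Cref{prop: can:locadd} and the exponential law.
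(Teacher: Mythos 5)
Your proposal is correct and takes essentially the same route the paper itself sketches (it relegates the proof to Exercise \ref{ex:canmfd} 7, whose hint is precisely your strategy): choose a horizontal distribution $\mathcal{H}$ complementing $\mathcal{V}=\ker Tp$, build compatible local additions $\Sigma_M$, $\Sigma_N$ from suitable Riemannian exponential maps so that $p\circ\Sigma_M=\Sigma_N\circ Tp$, and then observe that the canonical charts conjugate $p_*$ to the continuous linear projection $C^\infty(K,g_0^*\mathcal{V})\times C^\infty(K,g_0^*\mathcal{H})\to C^\infty(K,h_0^*TN)$ killing the vertical sections. You also correctly locate the genuinely technical point — the construction and verification of the compatible local additions — which is exactly where the paper defers to \cite{AaS19}.
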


In the next chapters we will study other structures from differential geometry which can be lifted from finite dimensions to spaces of differentiable functions. For Lie groups this leads to the so called current groups (whose most prominent examples are the loop groups). In the context of Riemannian geometry, the lifting procedure gives rise to the $L^2$-metric and more generally to the Sobolev type Riemannian metrics on function spaces. Some examples of Sobolev type metrics will be discussed in \Cref{sect:L2} and \Cref{sect:EAtheory}.

\begin{Exercise}\label{ex:canmfd}   \vspace{-\baselineskip}
 \Question Let $h \colon L \rightarrow K$ be a smooth map. Assume that $C^\infty (K,M)$ and $C^\infty (L,M)$ are canonical manifolds.
 \subQuestion Show that the pullback $h^* \colon C^\infty (K,M) \rightarrow C^\infty (L,M),\quad f \mapsto f\circ h$ is smooth.\index{pullback}
 \subQuestion Assume that $K,L$ are compact and $M$ admits a local addition. Then we identify $TC^\infty (K,M) \cong C^\infty (K,TM)$ (cf.\ \Cref{setup:curves:tan}). Show that this identifies $T(h^*)$ with $h^* \colon C^\infty (K,TM) \rightarrow C^\infty (L,TM)$.
 \Question Let $K$ be a compact manifold and $O\opn E$ in a locally convex space. Prove that, $C^\infty (K,O) \opn C^\infty (K,E)$ (\Cref{prop:lcvx_mappingsp}) is a canonical manifold.
 \Question Let $M$ be a finite-dimensional manifold and $K$ be a compact manifold. Endow $C^\infty (K,M)$ with the canonical manifold structure from \Cref{App:canmfdmap}.
  \subQuestion Show that for $x\in K$ the point evaluation $\ev_x \colon C^\infty (K,M) \rightarrow M, \gamma \mapsto \gamma(x)$ is a submersion.\footnote{It is also possible to prove that the evaluation map $\ev \colon C^\infty (K,M) \times K \rightarrow M, (\gamma,x) \mapsto \gamma(x)$ is a submersion, cf.\ \cite[Corollary 2.9]{SaW16}.}
  \subQuestion Deduce that the set $S(x,y) \coloneq \{f \in C^\infty (K,M) \mid f(x)=y\}$ for some fixed $x \in K, y \in M$ is a split submanifold of $C^\infty (K,M)$.
  \subQuestion Is the set $\bigcap_{1\leq i \leq n} S(x_i,y_i)$ also a submanifold of $C^\infty (K,M)$ if we pick points $x_i \in K, y_i \in M$ for $1\leq i \leq n$ and $n \in \N$?
   \Question Consider a compact manifold $K$ and $M$ a manifold with a local addition. We endow $C^\infty (K,M)$ with the canonical manifold structure induced by the local addition, see \Cref{app:constmfd}. 
 Compute the tangent map of the evaluation map $$\ev \colon C^\infty (K,M) \times K \rightarrow M,\quad (\varphi,m) \mapsto \varphi(m).$$
 {\tiny \textbf{Hint:} Apply the rule on partial differentials, Exercise \ref{Ex:tangentmaps} 3. To compute the derivative for the variable in $C^\infty (K,M)$ exploit that $TC^\infty (K,M)\cong C^\infty (K,TM)$, \Cref{setup:curves:tan}. After choosing a smooth curve $c \colon ]-\varepsilon ,\varepsilon[ \rightarrow C^\infty(K,M)$ apply the exponential law to carry out the computation.} 
 \Question Assume that $K,L$ are compact and $M$ admits a local addition. Compute a formula for the tangent map of the smooth map (cf.\, \Cref{smooth:fullcomp}) 
 $$\Comp \colon C^\infty (L,M) \times C^\infty (K,L) \rightarrow C^\infty (K,M),\quad (g,f) \mapsto g \circ f.$$
 \Question Use the identification $TC^\infty (M,N) \ni [t\mapsto c] \mapsto (x\mapsto \frac{\partial}{\partial t} c^\wedge (t,x)) \in C^\infty (M,TN)$ to establish the commutativity of the diagram \eqref{eq:commdiag}.
 \Question Let $K$ be a compact manifold and $p \colon M \rightarrow N$ be a submersion between finite-dimensional paracompact manifolds. Establish the Stacey-Roberts Lemma by showing that the pushforward $p_* \colon C^\infty (K,M) \rightarrow C^\infty(K,N)$ becomes a submersion.\\
 {\footnotesize \textbf{Hint:} This is an involved exercise in \emph{finite dimensional} geometry which should only be attempted if one is familiar with Riemannian exponential maps, parallel transport and horizontal distributions. The idea is to construct a horizontal distribution $\mathcal{H}$ together with local additions $\eta_M$, $\eta_N$ (constructed from suitable Riemannian exponential maps) such that the following diagram commutes:}
  \begin{displaymath}
  \begin{tikzcd}
 TM =  \mathcal{V} \oplus\mathcal{H} \ar[d,"0 \oplus Tp|_{\mathcal{H}}"] &   \ar[l,"\supseteq"]\Omega_M  \ar[r,"\eta_M"]  & X \ar[d,"p"]  \\
		TN &  \ar[l,"\supseteq"] \Omega_N  \ar[r,"\eta_N"]  &   M
  \end{tikzcd}
  \end{displaymath}
{\footnotesize  Using these local additions, the canonical charts of the manifold of mappings become submersion charts.}
\end{Exercise}

\begin{Answer}[number={\ref{ex:canmfd} 1.}] 
 \emph{Let $h \colon L \rightarrow K$ be a smooth map. Assume that $C^\infty (K,M)$ and $C^\infty (L,M)$ are canonical manifolds. We prove that \begin{enumerate}
 \item The pullback $h^* \colon C^\infty (K,M) \rightarrow C^\infty (L,M),\quad f \mapsto f\circ h$ is smooth.
 \item If $K,L$ are compact and $M$ admits a local addition, then $TC^\infty (K,M) \cong C^\infty (K,TM)$ (cf.\ \Cref{setup:curves:tan}). This identifies $T(h^*)$ with $h^* \colon C^\infty (K,TM) \rightarrow C^\infty (L,TM)$.
 \end{enumerate}}
 (a) The pullback is a partial map of the full composition, whence smooth by \Cref{smooth:fullcomp}. Alternatively, smoothness follows directly from the exponential law, as $h^*$ is smooth if and only if the adjoint $(h^*)^\wedge \colon C^\infty(K,M) \times L \rightarrow M, (f,\ell) \mapsto f(h(\ell)) = \ev(f,h(\ell))$ is smooth. Since $C^\infty (K,M)$ is canonical,  \Cref{base-cano} shows that the evaluation is smooth. Now smoothness of the adjoint follows, since $\ev$ and $h$ are smooth.\\
 (b) Note that we only need the assumptions to identify $TC^\infty (K,M) \cong C^\infty (K,TM)$. To compute the tangent, we pick $c \colon ]-\varepsilon, \varepsilon[ \rightarrow C^\infty (K,M)$ smooth with $c(0)=f$ and $\dot{c}(0)=V_f$. Under the identification we can interpret $V_f(x) = \left.\frac{\partial}{\partial t}\right|_{t=0} c^\wedge (t,x)$ as a function $K \rightarrow TM$. Now 
 $$Th^* (V_f)(x) = \left.\frac{\partial}{\partial t}\right|_{t=0} h^*(c^\wedge (t,x)) = \left.\frac{\partial}{\partial t}\right|_{t=0} c^\wedge (t,h(x)) = V_f(h(x)) = h^* (V_f).$$
 Thus we have identified the tangent map as $h^* \colon C^\infty (K,TM) \rightarrow C^\infty (L,TM)$.
\end{Answer}
\begin{Answer}[number={\ref{ex:canmfd} 4}] 
 \emph{For $K$ a compact manifold and $M$ a manifold with local addition we endow $C^\infty (K,M)$ with its canonical manifold structure and compute the tangent map of the evaluation $\ev \colon C^\infty (K,M) \times K \rightarrow M$.}
 \\[.15em]
 
 We apply the rule on partial differentials for manifolds (Exercise \ref{Ex:tangentmaps} 6.):
 $$T_{(\varphi,k)}\ev (v_\varphi,v_k) = T_\varphi \ev(\cdot,k)(v_\varphi) + T_k\ev(\varphi,\cdot)(v_k)$$
 To evaluate the first term, pick a curve $c \colon ]-\varepsilon,\varepsilon[ \rightarrow C^\infty (K,M)$ with $c(0)=\varphi$ and $\dot{c}(0)=v_\varphi$. If we identify $TC^\infty (K,M)\cong C^\infty (K,TM)$ via \Cref{setup:curves:tan} we can interpret $v_\varphi$ as the smooth mapping $\left.\frac{\partial}{\partial t}\right|_{t=0} c^\wedge \colon K \rightarrow TM$. Then we compute
 $$T_\varphi \ev(\cdot,k)(v_\varphi) = \left.\frac{\mathrm{d}}{\mathrm{d}t}\right|_{t=0} \ev(c(t),k)=\left.\frac{\partial}{\partial t}\right|_{t=0}c^\wedge (t,k) = v_\varphi(k) = \ev_k (v_\varphi).$$
 Here $\ev_k \colon C^\infty (K,TM) \rightarrow TM$ is the evaluation in $k$ and we have exploited the exponential law in the computation.
 For the second term in the sum, it is immediately clear that we get $T_k \varphi (v_k)$.
 Thus we get as a formula for the tangent mapping
 $$T\ev (v_\varphi,v_k) = \ev_k (v_\varphi) + T\varphi (v_k).$$
\end{Answer}

\chapter{Lifting geometry to mapping spaces I: Lie groups}\label{Chap:Liegp} \copyrightnotice

In this chapter, one aim is to study spaces of mappings taking their values in a Lie group. It will turn out that these spaces carry again a natural Lie group structure. However, before we prove this, let us recall the definition and basic properties of (infinite-dimensional) Lie groups.  

\section{(infinite-dimensional) Lie groups}\label{sect:lGP}
Our presentation of Lie groups modelled on infinite-dimensional spaces mostly follows \cite{Neeb06}. There are many literature accounts for finite-dimensional Lie theory (see for example \cite{HaN12}), infinite-dimensional Lie theory (beyond Banach spaces) is by comparison relatively young and in its modern form goes back to Milnor's seminal work \cite{Mil82,Mil84}.

\begin{defn}
 A  \emph{(locally convex) Lie group}\index{Lie group} $G$ is a manifold $G$ modelled on a locally convex space endowed with a group structure such that the multiplication map $m_G \colon G \times G \rightarrow G$ and the inversion map $\iota \colon G \rightarrow G$ are smooth.
 A morphism of Lie groups is a smooth group homomorphism. In the following we shall drop the adjective ''locally convex'' and simply say Lie group
\end{defn}

\begin{tcolorbox}[colback=white,colframe=blue!75!black,title=Standard notation]
Let us fix some standard notation for objects occurring frequently in conjunction with Lie groups. Let $G$ be a Lie group, we shall write 
\begin{itemize}
 \item $\one_G$ for the unit element (or shorter $\one$),
 \item $m_G$ for multiplication, $\iota_G$ for inversion,
 \item for $ g\in G$ we let $\lambda_g \colon G \rightarrow G, h \mapsto gh$ and $\rho_g \colon G \rightarrow G, h \mapsto hg$ the \emph{left (right) translation}\index{Lie group!left (right) translation}. (Observe that $\lambda_g (\rho_h(x))=gxh=\rho_h(\lambda_g (x)).$)
\end{itemize}
\end{tcolorbox}

\begin{ex}
 A locally convex space $E$ is a Lie group with respect to vector addition and the usual manifold structure.
\end{ex}

\begin{ex}\label{ex:findimLIE}
The following examples are the classical finite-dimensional examples encountered in a first course on Lie theory. We include them here for readers who are not familiar with Lie groups.
 \begin{enumerate}
  \item Let $M_n (\R)$ be the set of all $n \times n$ matrices and $\text{Gl}_n (\R) \coloneq \{A \in M_n(\R) \mid \det A \neq 0\}$ be the set of $n\times n$ invertible matrices. Using the determinant, one sees that $\text{Gl}_n (\R) \subseteq M_n (\R) \cong \R^{n^2}$ is an open subset, whence a manifold. Since multiplication of matrices is given by polynomials in the entries of matrices, the multiplication is smooth with respect to the manifold structure. For invertible matrices, Cramer's rule shows that inversion is also polynomial in the entries of the matrix, whence smooth. In conclusion, matrix multiplication and inversion turns $\text{Gl}_n (\R)$ into a Lie group.\footnote{Alternatively, this example can be seen as a special case of the unit group of a CIA, see \Cref{ex:CIAunit}.} 
  \item The \emph{orthogonal group} $\text{O}_n(\R) \coloneq \{A \in \text{Gl}_n(\R) \mid AA^\top = \id_{R^n}\}$ is a closed submanifold of $\text{Gl}_n (\R)$ and this structure turns it into a Lie group. Further, the \emph{special orthogonal group} $\text{SO}_n (\R) \coloneq \{A \in \text{O}_n (\R) \mid \det (A) =1\}$ is an open subset of the orthogonal group and thus also a Lie group.
  \item The unit circle $\mathbb{S}^1 \subseteq \R^2$ is a submanifold (as the unit sphere of the Hilbert space $\R^2$). Identifying $\R^2 \cong \bC$, complex multiplication induces a Lie group structure on $\mathbb{S}^1$ which is explicitly given by the formulae 
  $$(x,y)\cdot(a,b) \coloneq (xa-yb,xb+ay), \quad (x,y)^{-1} \coloneq (x,-y).$$
 \end{enumerate}
\end{ex}

\begin{ex}[Unit groups of continuous inverse algebras]\label{ex:CIAunit}
 To generalise the matrix group example to infinite-dimensions, we recall the notion of a \emph{continuous inverse algebra (CIA)}\index{continuous inverse algebra}\index{CIA!see continuous inverse algebra}: Let $A$ be a locally convex space with a continuous bilinear map $\beta \colon A \times A \rightarrow A$ (we write shorter $xy\coloneq \beta(x,y)$) such that the associativity law $(xy)z=x(yz)$ holds. Furthermore, we assume that there exists an element $\one \in A$ such that $\one x =x= x\one , \forall x \in A$ and define the set of all invertible elements
 $$A^\times \coloneq \{x\in A \mid \exists x^{-1}\in A, \text{ such that } xx^{-1}=\one = x^{-1}x\}.$$
 Then $A^\times$ is a group, under the multiplication, called the \emph{unit group}\index{unit group (of an algebra)} of $A$.
 If $A^\times$ is open in $A$ and inversion $\iota \colon A^\times \rightarrow A^\times, x\mapsto x^{-1}$ is continuous, we call $A$ a \emph{continuous inverse algebra (CIA)}.
 The unit group of a CIA is a Lie group, see Exercise \ref{Ex:Lgpex} 2. \index{continuous inverse algebra!unit group}
 
 CIAs generalise Banach algebras, e.g.\, the algebra of continuous linear operators $L(E,E)$ of a Banach space $E$ with the operator norm $\lVert \cdot \rVert_{\text{op}}$ is a CIA.
\end{ex}

Before we continue with the general infinite-dimensional Lie theory, we will discuss now one of the most important example classes of such Lie groups: the diffeomorphism groups. These groups will return as a running example in the later sections to illustrate the concepts of Lie algebra and regularity.

\begin{ex}\label{ex:Diffgp}
 Let $M$ be a compact manifold. Then $M$ possesses a local addition, whence $C^\infty (M,M)$ is a canonical manifold and  by \Cref{smooth:fullcomp} the composition map $\Comp \colon C^\infty (M,M) \times C^\infty (M,M) \rightarrow C^\infty (M,M)$ is smooth. Recall from \Cref{cor:diffopn} that the \emph{set of diffeomorphisms}\index{diffeomorphism group} $\Diff (M)$ is an open subset of $C^\infty (M,M)$ which forms a group under composition of smooth maps. 
 Hence $\Diff (M)$ is an open submanifold of $C^\infty (M,M)$ and the group product is smooth with respect to this structure.  
 
 We will now prove that inversion $\iota \colon \Diff (M) \rightarrow \Diff (M)$ is smooth. Applying the exponential law for a canonical manifold, $\iota$ is smooth if and only if the mapping $\iota^\wedge \colon \Diff (M) \times M \rightarrow M, (\varphi,m) \mapsto \varphi^{-1}(m)$ is smooth.
 Consider the implicit equation 
 \begin{align}\label{imp:eq}
  \ev (\phi, \iota^\wedge (\phi,m))=\phi (\iota^\wedge (\phi ,m)) = m,\quad \forall m \in M .
 \end{align}
 which takes values in a finite-dimensional manifold, but has an infinite-dimensional parameter ($\phi \in \Diff (M)$). However, $\ev \colon \Diff (M) \times M \rightarrow M$ is smooth, and we can compute its partial differential as $T_{(\phi,x)}\ev (0,z) = T\phi (z)$ (cf.~Exercise \ref{ex:canmfd} 4.). Since $\phi$ is a diffeomorphism, we see that the partial derivative of $\ev$ is indeed invertible for every $\phi \in \Diff (M)$. Now smoothness of $\iota^\wedge$ follows from a suitable implicit function theorem. Observe that due to the infinite-dimensional parameter, the usual implicit function theorem \cite[I. \S 5 Theorem 5.9]{Lang} is not applicable to \eqref{imp:eq}! However, we invoke the generalised implicit function theorem \cite[Theorem 2.3]{MR2269430} which can deal with parameters in locally convex spaces (as long as the target of the implicit equation is a Banach manifold). The usual application of the implicit function theorem to \eqref{imp:eq} shows then that $\iota^\wedge$ and thus $\iota$ is smooth. We conclude that $\Diff (M)$ is a Lie group. 
\end{ex}

\begin{rem}
 To establish smoothness of the inversion $\Diff (M)$ we needed the exponential law and a generalised implicit function theorem. Use of this machinery can be avoided: In \cite[Theorem 11.11]{Mic} differentiability of the inversion is directly verified (which is technical and requires the (non-trivial) verification of continuity first). 
 Our approach is inspired by the proof in the convenient setting \cite[Theorem 43.1]{KM97}. There the problem can be reduced to a finite-dimensional equation which circumvents the need for a generalised implicit function theorem. 
\end{rem}

 The Lie group $\Diff (M)$ comes already with a canonical action on the manifold $M$ which we describe after recalling the notion of a Lie group action.
 
\begin{defn}[Lie group action]
Let $M$ be a manifold and $G$ be a Lie group.\index{Lie group action} Then a smooth map  
$$\alpha \colon G \times M \rightarrow M, (g,m) \mapsto \alpha (g,m) \equalscolon g.m$$
is called a \emph{(left) Lie group action} if it satisfies the following
$$ \alpha (\one_G,m) =m, \qquad \alpha (g_1,\alpha(g_2,m)) = \alpha (g_1g_2,m), \quad \forall g_1,g_2 \in G, m \in M$$
A right action is a smooth map $\beta \colon M \times G \rightarrow M, (m,g) \mapsto \beta(g,m) \equalscolon m.g$ such that 
 $$\beta(m,\one_G) =m, \qquad \beta(\beta(m,g_1),g_2) = \beta(m,g_1g_2), \quad \forall g_1,g_2\in G, m\in M.$$
 If $\beta$ is a right action, then $\alpha \coloneq \beta \circ (\id_M , \iota)$ is a left action. Similarly, we can obtain right actions from left actions and there is no essential difference between both notions. 
\end{defn}

\begin{ex}\label{ex:candiffact}
 Let $M$ be a compact manifold. Then, going through the construction in \Cref{ex:Diffgp}, it is immediately clear that the evaluation map 
 $$\alpha \colon \Diff(M) \times M \rightarrow M,\quad  (\varphi,m) \mapsto \varphi(m)$$
 induces a (left) Lie group action, called the \emph{canonical action of the diffeomorphism group}.\index{diffeomorphism group!canonical action}
 Furthermore, there is also the right action of $\Diff (M)$ on smooth functions
 $$\beta \colon C^\infty (M,N) \times \Diff (M) \rightarrow C^\infty (M,N), \quad (f,\varphi) \mapsto \varphi^* (f)=f\circ \varphi.$$
 If $C^\infty (M,N)$ is a canonical manifold of mappings, then \Cref{smooth:fullcomp} shows that $\beta$ is a (right) Lie group action. The right action $\beta$ is connected to several geometric structures such as the symplectic structure of the loop space $C^\infty (\SSS^1,N)$, see \cite{Wurz95}. We will encounter it again in the context of shape analysis in \Cref{sect:shapeanalysis}.
\end{ex}

 For a left Lie group action $\alpha$ the canonical map
 $$\alpha^\vee \colon G \rightarrow \Diff (M),\quad g \mapsto \alpha (g,\cdot)$$
 makes sense and yields a group morphism. If $M$ is finite-dimensional, the exponential law shows that smoothness of the group action is equivalent to smoothness of $\alpha^\vee$. This breaks down for an infinite dimensional manifold $M$ as there is no smooth structure on $\Diff (M)$. Similarly, if a Lie group $G$ acts by linear mappings on a vector space $E$ (this is called a representation of $G$) the literature considers the smoothness of $\alpha^\vee$ as a mapping to $\text{Aut} (E)$ (the group of linear automorphisms of $E$). If $E$ is normable, the group $\text{Aut}(E)$ inherits a canonical Lie group structure from the operator norm topology such that smoothness of $\alpha$ is equivalent to smoothness of $\alpha^\vee$. Again this equivalence breaks down for locally convex spaces which are not normable as $\text{Aut} (E)$ does in general not carry a Lie group structure. Note that this is not a serious problem, as one can still check smoothness with respect to the product $G \times M$ (and in infinite-dimensional representation theory of Lie groups even weaker concepts of smoothness of representations are more appropriate for the theory, see e.g.\ \cite{Neeb10} and cf.\ also \cite[I.3.4]{NeeMon}. However, we shall not discuss representation theory and the finer points of these problems in this book.  

\begin{defn}\label{defn:liesubgp}
 Let $G$ be a Lie group. We call a submanifold $H \subseteq G$ a \emph{Lie subgroup}\index{Lie group!Lie subgroup} if it is a subgroup of $G$.\footnote{Note that due to \Cref{lem:submfd:initial} this structure turns $H$ into a Lie group.} If $H$ is in addition closed in $G$, we call $H$ closed Lie subgroup.
\end{defn}

\begin{ex}\label{ex:geom_subgroup}
 The diffeomorphism group $\Diff (M)$ of a compact manifold $M$ contains many important subgroups of diffeomorphisms which preserve geometric structures. If $\omega$ is a differential form on $M$, we say \emph{a diffeomorphism} $\phi \in \Diff (M)$ \emph{preserves the differential form},\index{differential form!preserved by a diffeomorphism} if $\phi^\ast \omega = \omega$ (where the pullback is defined as in \Cref{defn:pullback}). As the pullback commutes with function composition, we can consider the subgroup $\Diff_\omega (M)$ of diffeomorphisms preserving a given differential form $\omega$. The most important examples are the following subgroups:  
 \begin{enumerate}
  \item if $\omega = \mu$ is a volume form on $M$, we obtain the group $\Diff_\mu (M)$ of \emph{volume preserving diffeomorphims},\index{diffeomorphism group!volume preserving}
  \item for a symplectic form $\omega$, this yields the \emph{group of symplectomorphism},\index{diffeomorphism group!symplectomorphisms}
  \item $\Diff_\theta (M) = \{\phi \in \Diff (M) \mid \phi^\ast \theta = f \theta \text{ for some } f \in C^\infty (M,\R \},$ the \emph{group of contactomorphisms} for a contact form $\theta$.\index{diffeomorphism group!contactomorphisms}
 \end{enumerate}
In the three cases mentioned above, one can show that the subgroups are also submanifolds of $\Diff (M)$ and thus Lie subgroups of $\Diff (M)$. We refer to \cite[Section 3]{Smo07} for detailed proofs. See however, \Cref{volumepreser} for a sketch of the construction for volume preserving diffeomorphisms.
\end{ex}

In finite-dimensional Lie theory a useful result states that every closed subgroup of a finite-dimensional Lie group is a Lie subgroup, \cite[Theorem 9.3.2]{HaN12}. This is no longer true in infinite-dimensions as the next example shows (cf.\, also \cite[Remark V.2.4,(c)]{NeeMon}).

\begin{ex}[{Wockel, \cite{WocInf}}]\label{closedsubgrp}
 Consider the space $(\ell^2,+)$ of all real sequences which are square summable, \cite[Example 12.11]{MaV97}. This is a Hilbert space with respect to the inner product 
 $$\langle (x_n)_{n\in \N}, (y_n)_{n\in \N}\rangle \coloneq \sum_{n\in \N} x_ny_n \quad \left(\text{and norm } \lVert (x_n)_{n\in \N}\rVert =\sqrt{\sum_{n\in \N} x_n^2}\right).$$ 
 We consider $\ell^2$ as an abelian Lie group and define the subgroup 
 $$H \coloneq \left\{(x_n)_{n \in \N} \in \ell^2 \middle| x_n \in \frac{1}{n} \mathbb{Z}, n \in \N\right\}.$$
 As the projections $\pi_j \colon\ell^2 \rightarrow \R, (x_n)_{n \in \N}\mapsto x_j, j\in \N$ are continuous linear, $H = \bigcap_{n\in \N} \pi_n^{-1} (\tfrac{1}{n}\mathbb{Z})$ is a closed subgroup. However, we shall see in Exercise \ref{Ex:Lgpex} 5. that $H$ is not a submanifold (it is not a manifold with respect to the subspace topology).   
\end{ex} 

For Lie groups, the tangent bundle is again a Lie group and moreover, the tangent bundle is trivial (i.e.\ it splits as a product of a vector space and the base manifold).
\begin{lem}\label{tangentLie}
 Let $G$ be a Lie group. Identify $T(G\times G) \cong TG \times TG$. 
 \begin{enumerate}
  \item The tangent map of the multiplication 
  \begin{align}\notag
   Tm_G \colon T(G\times G) \cong TG\times TG&\rightarrow TG,\\ T_gG \times T_hG \ni (v_g,w_h) &\mapsto Tm_G(v_g,w_h) = T_g\rho_h(v_g)+T_h\lambda_g(w_h)\label{tmult}
  \end{align}
  induces a Lie group structure\index{Lie group!tangent Lie group} on $TG$ with identity element $0_{\one} \in T_{\one}G$ and inversion 
  \begin{align}\label{Tiota}
   T\iota_G \colon TG \rightarrow TG, T_g G \ni v \mapsto -T\rho_{g^{-1}}T\lambda_{g^{-1}}(v)=-T\lambda_{g^{-1}}T\rho_{g^{-1}}(v).
  \end{align}
 The projection $\pi_G \colon TG \rightarrow G$ becomes a morphism of Lie groups with kernel $(T_{\one}G,+)$ and the zero-section $\mathbf{0}\colon G \rightarrow TG, g \mapsto 0_g$ is a morphism of Lie groups with $\pi_G \circ \mathbf{0} = \id_G$.
  \item The map 
  $$\Phi \colon G \times T_{\one} G \rightarrow TG, \quad (g,v) \mapsto g.v \coloneq Tm_G(0_g,v)$$
  is a diffeomorphism.
 \end{enumerate}
\end{lem}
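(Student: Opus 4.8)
The plan is to handle part~(a) by first pinning down the formula \eqref{tmult} for $Tm_G$ and then reading off the remaining assertions from the functoriality of the tangent construction. To obtain \eqref{tmult} I would apply the manifold version of the rule on partial differentials (Exercise~\ref{Ex:tangentmaps} 3.) to $m_G\colon G\times G\to G$ at a point $(g,h)$, using the identification $T(G\times G)\cong TG\times TG$. Since the partial maps of $m_G$ are exactly $m_G(\cdot,h)=\rho_h$ and $m_G(g,\cdot)=\lambda_g$, this gives $Tm_G(v_g,w_h)=T_g\rho_h(v_g)+T_h\lambda_g(w_h)$ at once. Smoothness of $Tm_G$ and of $T\iota_G$ is automatic, since the tangent map of a smooth map is smooth (\Cref{defn:tangentmap}). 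For the group axioms I would argue functorially: the tangent construction preserves products (via $T(G\times G)\cong TG\times TG$) and obeys the chain rule $T(f\circ g)=Tf\circ Tg$ (\Cref{lem:chainrulemfd}), so applying $T$ to the commutative diagrams expressing associativity, the unit law, and the inverse law for $(G,m_G,\iota_G,\one)$ turns them into the corresponding diagrams for $(TG,Tm_G,T\iota_G)$. The unit map $\ast\mapsto\one$ has tangent map $\ast\mapsto 0_{\one}$, which identifies the neutral element as $0_{\one}\in T_{\one}G$.

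Next I would establish the inversion formula \eqref{Tiota} and the morphism properties. Differentiating the identity $m_G(g,g^{-1})=\one$ along a curve through $g$ with velocity $v\in T_gG$ yields $Tm_G(v,T_g\iota_G(v))=0_{\one}$; substituting \eqref{tmult} gives $T_{g^{-1}}\lambda_g(T_g\iota_G(v))=-T_g\rho_{g^{-1}}(v)$, and applying the inverse $T_{\one}\lambda_{g^{-1}}$ of $T_{g^{-1}}\lambda_g$ produces $T_g\iota_G(v)=-T_{\one}\lambda_{g^{-1}}T_g\rho_{g^{-1}}(v)$. The two displayed forms of \eqref{Tiota} agree because left and right translations commute, hence so do their tangent maps. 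That $\pi_G$ is a morphism is precisely the naturality $\pi_G\circ Tm_G=m_G\circ(\pi_G\times\pi_G)$ from \Cref{defn:tangentmap}; its kernel is $\pi_G^{-1}(\one)=T_{\one}G$, on which \eqref{tmult} restricts, using $\rho_{\one}=\lambda_{\one}=\id$, to ordinary addition. Finally $\mathbf{0}$ is a morphism because $Tm_G(0_g,0_h)=0_{gh}$ by \eqref{tmult} (tangent maps send zero vectors to zero vectors), and $\pi_G\circ\mathbf{0}=\id_G$ is immediate.

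For part~(b) I would first simplify $\Phi$. Since $\rho_{\one}=\id$ and $0_g$ is the zero of $T_gG$, formula \eqref{tmult} gives $\Phi(g,v)=Tm_G(0_g,v)=T_{\one}\lambda_g(v)$, the left translation of $v\in T_{\one}G$ into $T_gG$. Smoothness of $\Phi$ is clear, as $\Phi=Tm_G\circ(\mathbf{0}\times\id)$ is a composite of smooth maps. I would then exhibit the candidate inverse $\Psi\colon TG\to G\times T_{\one}G$, $w\mapsto(\pi_G(w),\,T\lambda_{\pi_G(w)^{-1}}(w))$. Using $\lambda_{g^{-1}}\circ\lambda_g=\id$ and $\lambda_g\circ\lambda_{g^{-1}}=\id$ together with the chain rule, both composites $\Psi\circ\Phi$ and $\Phi\circ\Psi$ reduce to the identity, so $\Psi=\Phi^{-1}$ as a set map.

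The one point requiring care, and the step I expect to be the main obstacle, is the smoothness of $\Psi$: the expression $T\lambda_{g^{-1}}(w)$ is a family of linear maps whose parameter $g=\pi_G(w)$ varies with $w$, so it cannot be differentiated naively. I would resolve this by rewriting the second component via \eqref{tmult} as $T\lambda_{g^{-1}}(w)=Tm_G(0_{g^{-1}},w)=Tm_G\bigl(\mathbf{0}(\iota_G(\pi_G(w))),w\bigr)$, which exhibits $\Psi$ as a composite of the smooth maps $\pi_G$, $\iota_G$, $\mathbf{0}$ and $Tm_G$. Hence $\Psi$ is smooth and $\Phi$ is a diffeomorphism.
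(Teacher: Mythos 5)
Your proposal is correct and follows essentially the same route as the paper: the paper derives \eqref{tmult} from the rule on partial differentials and linearity of the tangent map, obtains the group axioms by applying the chain rule to the group-axiom diagrams of $G$, gets \eqref{Tiota} by differentiating $\gamma(t)\gamma(t)^{-1}=\one$ (the content of Exercise~\ref{Ex:Lgpex}~3, which you carry out inline), and proves (b) by exhibiting the same explicit inverse $v\mapsto(\pi_G(v),T\lambda_{\pi_G(v)^{-1}}(v))$. Your rewriting of the second component of $\Phi^{-1}$ as $Tm_G(\mathbf{0}(\iota_G(\pi_G(w))),w)$ is exactly the justification the paper compresses into the remark that the inverse is smooth because inversion, multiplication and $\pi_G$ are.
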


\begin{proof}
 \begin{enumerate}
  \item Since $m_G$ and $\iota_G$ are smooth, the same holds for their tangent maps. The group axioms for $TG$ follow from the ones for $G$ by virtue of the chain rule (and has the claimed unit element). Linearity of the tangent map implies \eqref{tmult}, we leave this formula and \eqref{Tiota} as Exercise \ref{Ex:Lgpex} 3.. From the definition of the zero section and the projection, the morphism properties follow. As a result of \eqref{tmult}, $T_{(\one, \one)}m_G(v_{\one},w_{\one})=v_{\one} + w_{\one}$. Hence the multiplication on the normal subgroup $\text{ker} \pi_G = T_{\one}G$ is the addition.
  \item Since $\Phi=Tm_G (0_g,v)=Tm_G (\mathbf{0}(g),v)$ and the zero-section $\mathbf{0}$ is smooth, smoothness of the multiplication shows that $\Phi$ is smooth. Now a computation shows that $\Phi^{-1}(v)=(\pi_G(v),T_{\pi_G(v)}\lambda_{\pi_G(v)^{-1}}(v))$, whence $\Phi$ is bijective and its inverse is smooth (as inversion and multiplication in $G$ are smooth and the projection $\pi_G$ is smooth).\qedhere
 \end{enumerate}
\end{proof}
\setbox\tempbox=\hbox{\begin{tikzcd}
0 \arrow[rr] & &  N \arrow[rr, hookrightarrow] & & G \arrow[rr, twoheadrightarrow, "p"] && H \ar[rr]&& \one.
\end{tikzcd}}
\begin{rem}
Note that \Cref{tangentLie} shows that $T_{\one}G$ is a normal Lie subgroup of $TG$ and $TG$ is as a Lie group a semidirect product $T_{\one}G \rtimes G$.\index{Lie group!semidirect product}\index{Lie group!split exact sequence}\footnote{A Lie group $G$ with normal Lie subgroup $N$ and Lie subgroup $H$ such that $N\cap H = \{\one_G\}$ is a semidirect product $N \rtimes H$ if there exists a split exact sequence of Lie group homomorphisms  \begin{displaymath}
 \box\tempbox
 \end{displaymath}
 where splitting means that $p|_H = \id_H$. Equivalently, $G \cong N \times H$ (as manifolds) and the group product is given by $(n,h)\cdot (\tilde{n},\tilde{h})=(n \tilde{h}\tilde{n}\tilde{h}^{-1},h\tilde{h})$. See \cite[2.2.2]{HaN12} for more alternative characterisations.} Moreover, instead of left multiplication one can as well use right multiplication to identify the tangent bundle (the two different choices are related by the adjoint action (see \Cref{ex:adj:act} below).
\end{rem}

The tangent space at the identity of a Lie group plays a special r\^{o}le. In the next section this tangent space will be endowed with an additional structure, the Lie bracket. 

\begin{Exercise}\label{Ex:Lgpex}   \vspace{-\baselineskip}
 \Question Verify that $\SSS^1$ is a Lie group with the structure described in \Cref{ex:findimLIE} (c).
 \Question In this exercise we verify that the unit group $A^\times$ of a CIA $(A,\beta)$ forms a Lie group. Note that the multiplication is smooth by Exercise \ref{Ex:Bastiani} 2. Hence it suffices to prove smoothness of inversion.
 \subQuestion Use the identity $b^{-1}-a^{-1}=b^{-1}(a-b)a^{-1}$ to deduce that the differential quotient $d\iota (x;y)$ exists and satisfies $d\iota (x;y)=-x^{-1}yx^{-1}$.
 \subQuestion Use the formula from (a) to prove that $\iota$ is $C^1$ and inductively is $C^k$ for all $k \in \N_0$.
 \Question Work out the missing details in the proof of \Cref{tangentLie}.
  \subQuestion Prove \eqref{tmult} and verify that the tangent maps induce a group structure on $TG$.
  \subQuestion Establish \eqref{Tiota}, i.e. $T_a \iota (v) = -T\lambda_{a^{-1}}T\rho_{a^{-1}} (v)=-T\rho_{a^{-1}} T\lambda_{a^{-1}} (v)$.\\
  {\tiny \textbf{Hint:} Let $\gamma \colon ]-\varepsilon , \varepsilon[ \rightarrow G$ be smooth with $\gamma(0)=a$. Differentiate the relation $\one = \gamma(t)(\gamma(t))^{-1}$.}
  \subQuestion Show that one can obtain a diffeomorphism $TG \cong T_{\one} G \times G$ using right multiplication instead of left multiplication in part (b). 
  \Question Let $(A,\beta)$ be a continuous inverse algebra (CIA) and let $C^\infty (K, A)$ be endowed with the compact open $C^\infty$-topology. 
  \subQuestion Show that then $C^\infty (K,A)$ with the pointwise product is a CIA. 
  \subQuestion Show that $C^\infty (K, A^\times) = C^\infty (K,A)^\times$ and thus the group $C^\infty (K,A^\times)$ with the pointwise product is a Lie group.
 \Question We supply the details to \Cref{closedsubgrp}: Let $H = \{(x_n)_{n \in \N} \in \ell^2 \mid x_n \in \frac{1}{n} \mathbb{Z}, n \in \N\}$. 
 \subQuestion Show that every $0$-neighborhood in the subspace topology of $H$ contains at least one non-zero element.
 \\ {\tiny \textbf{Hint:} It suffices to consider norm balls.}
 \subQuestion Show that there is no $0$-neighborhood in $H$ which contains a continuous path connecting $0$ with a non-zero element. Deduce that $H$ is not locally homeomorphic to an open subset of a locally convex space and thus is not a (sub)manifold.
 \Question Let $G$ be a Lie group, show that the map $L \colon G \times TG \rightarrow TG, \ (g,v_h) \mapsto T\lambda_g (v_h)$ is a left Lie group action. Dually right multiplication yields a right action $R \colon TG \times G \rightarrow TG$. Work out a formula relating $L(g,v_h)$ to $R(v_h,g)$.  
 \end{Exercise}

  \setboolean{firstanswerofthechapter}{true}
\begin{Answer}[number={\ref{Ex:Lgpex} 5.}] 
 \emph{Let $H = \{(x_n)_{n \in \N} \in \ell^2 \mid x_n \in \frac{1}{n} \mathbb{Z}, n \in \N\}$. Then we prove that 
 \begin{enumerate}  
  \item every $0$-neighborhood in the subspace topology of $H$ contains at least one non-zero element.
  \item there is no $0$-neighborhood in $H$ which contains a continuous path connecting $0$ with a non-zero element. Thus $H$ is not a (sub)manifold.
 \end{enumerate}
}
(a) It suffices to consider the intersection of norm balls with $H$, and in particular, we only need to find such elements in $B_{1/m}(0) \cap H$ for $m \in \N$. However, for such a ball, it is clear that the sequence $x^m_n = 0$ if $m\neq n$ and $x^m_m = 1/(2m)$ is contained in the intersection.\\
(b) Assume that there is a $0$-neighborhood in $H$ which is path-connected. Then it contains an element $(x_k)_{k \in \N} \neq 0$. Pick $\ell\in \N$ with $x_\ell \neq 0$. If $c \colon [0,1] \rightarrow H$ is a continuous path connecting $0$ and $(x_k)_{k \in \N}$, then $\pi_\ell \circ c$ is a continuous path in $\R$ connecting $0$ and $x_\ell \neq 0$. Since the path $c$ takes its values in $H$, $\pi_\ell \circ c$ can take only values in a discrete subset of $\R$. Contradiction! Thus there is no zero-neighborhood of $H$ in the subspace topology which is path-connected and $H$ is therefore not locally homeomorphic to a locally convex space. We conclude that it can not be a (sub-)manifold of $\ell^2$.\end{Answer}
\setboolean{firstanswerofthechapter}{false}
  
\section{The Lie algebra of a Lie group}
We associate now to a Lie group a Lie algebra. This construction allows one to reformulate many problems in Lie theory in terms of linear algebra.

\begin{defn}
 A \emph{Lie algebra}\index{Lie algebra} is a vector space $\mathfrak{g}$ together with a \emph{Lie bracket},\index{Lie bracket} i.e.\, a bilinear map $\LB \colon \mathfrak{g} \times \mathfrak{g} \rightarrow \mathfrak{g}$ such that
 \begin{enumerate}
  \item $\LB[x,\LB[y,z]] + \LB[y,\LB[z,x]]+\LB[z,\LB[x,y]] = 0, \forall x,y,z \in \mathfrak{g}$ (Jacobi identity)\index{Lie algebra!Jacobi identity}
  \item $\LB[x,x] = 0, \forall x \in \mathfrak{g}$ 
 \end{enumerate}
If $\mathfrak{g}$ is a locally convex space and the Lie bracket continuous, $\mathfrak{g}$ is a \emph{locally convex Lie algebra}.\index{Lie algebra!locally convex}
A (continuous) linear map $h \colon \mathfrak{g} \rightarrow \mathfrak{h}$ between (locally convex) Lie algebras is a morphism of (locally convex) Lie algebras if $h(\LB[x,y])=\LB[h(x),h(y)], \forall x,y \in \mathfrak{g}$.
\end{defn}

\begin{rem}
 It will be essential for us that the Lie bracket and Lie algebra morphisms are continuous (see e.g.~the proof of \Cref{LieII:reg} for an example where continuity is needed). From now on we will mostly work with locally convex Lie algebras, hence we drop the adjective ``locally convex'' and write only Lie algebra. 
\end{rem}

\begin{ex}\label{ex:assocLie}
 If $A$ is a continuous inverse algebra (or more generally a locally convex algebra), then the commutator $\LB[x,y] \coloneq xy-yx$ turns $A$ into a Lie algebra. Hence the Lie bracket measures commutativity of the algebra product.
\end{ex}

\begin{ex}
 Every locally convex space $E$ is a Lie algebra, called an \emph{abelian Lie algebra}, with the \emph{trivial bracket} $\LB[x,y]\coloneq 0$.\index{Lie bracket!trivial}
\end{ex}

\begin{setup}[The Lie algebra of vector fields]\label{setup:LieVF}
Let $M$ be a manifold, and let 
$$\mathcal{V} (M) \coloneq \{X \in C^\infty (M,TM) \mid \pi_M \circ X = \id_M\}$$
be the locally convex space of all vector fields (cf.\, \Cref{chapter:LA:VF}). If $f \in C^\infty (M,E)$ is smooth with values in some locally convex space $E$ and $X \in \mathcal{V}(M)$, then we obtain a smooth function
$$X.f \coloneq df \circ X \colon M \rightarrow E \quad (\text{recall } df = \text{pr}_2 \circ Tf).$$
For $X,Y \in \mathcal{V}(M)$, there exists a unique vector field $\LB[X,Y] \in \mathcal{V}(M)$ determined by the property that on each $U \opn M$ we have 
\begin{equation}\label{eq:VFLieb}
 \LB[X,Y].f = X.(Y.f)-Y.(X.f) \quad \forall f \in C^\infty (U,E).
\end{equation}
 Thus $\mathcal{V}(M)$ becomes a Lie algebra (the local case is checked in \Cref{chapter:LA:VF} and we discuss the general case in Exercise \ref{Ex:LA} 3.). If $M$ is finite-dimensional, \Cref{cor:VFlcvx:LA} shows that $(\mathcal{V}(M), \LB )$ is a locally convex Lie algebra.\index{Lie bracket!of vector fields}
\end{setup}

\begin{setup}
 Let $G$ be a Lie group. A vector field $X \in \mathcal{V}(G)$ is called \emph{(left) invariant}\index{vector field!(left) invariant} if $X$ is $\lambda_g$-related to itself for all $g \in G$ (i.e.~$X\circ \lambda_g = T\lambda_g \circ X$, cf.~\Cref{chapter:LA:VF}). We write $\mathcal{V}^\ell (G)$ for the set of left-invariant vector fields. Note that relatedness of vector fields is inherited by the Lie bracket due to Exercise \ref{Ex:LA} 3., whence $\mathcal{V}^\ell (G)$ is a Lie subalgebra of $\mathcal{V}(G)$.\index{Lie algebra!of (left) invariant vector fields}
\end{setup}

\begin{prop}\label{prop:LA}
 Let $G$ be a Lie group, then the map 
 $$\Theta \colon T_{\one} G \rightarrow \mathcal{V}^\ell (G),\quad v \mapsto (g \mapsto T_{\one}\lambda_g (v))$$
 is an isomorphism of locally convex spaces with inverse $\Theta^{-1}(X)=X(\one)$. Thus $\Lf(G)\coloneq T_{\one} G$ can be endowed with the Lie bracket 
 $\LB[v,w] \coloneq \Theta^{-1}(\LB[\Theta(v),\Theta(w)]) =\LB[\Theta(v),\Theta(w)](\one)$ turning it into a Lie algebra. We call $(\Lf(G),\LB )$ the \emph{Lie algebra associated to} $G$.\index{Lie algebra!associated to a Lie group}
\end{prop}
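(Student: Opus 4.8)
The plan is to identify $\Theta(v)$ with the trivialising diffeomorphism $\Phi$ from \Cref{tangentLie}, which hands us smoothness for free, and then to read off all the remaining assertions. First I would check that $\Theta$ is well defined. For $v \in T_{\one} G$ the assignment $g \mapsto T_{\one}\lambda_g(v)$ lands in $T_gG$ because $\lambda_g(\one) = g$, so $\pi_G \circ \Theta(v) = \id_G$. Writing $\lambda_g = m_G(g,\cdot)$ and applying the formula \eqref{tmult} at the point $(g,\one)$ (where $\rho_\one = \id_G$) gives $T_{\one}\lambda_g(v) = Tm_G(0_g,v) = \Phi(g,v)$. Hence $\Theta(v) = \Phi(\,\cdot\,,v)$ is a composite of smooth maps and therefore a smooth section of $\pi_G$, i.e. an element of $\mathcal{V}(G)$. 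Left invariance follows from $\lambda_{gh} = \lambda_g\circ\lambda_h$: the chain rule at $\one$ yields $\Theta(v)(gh) = T_{\one}\lambda_{gh}(v) = T_h\lambda_g(T_{\one}\lambda_h(v)) = T_h\lambda_g(\Theta(v)(h))$, which is exactly the relation $\Theta(v)\circ\lambda_g = T\lambda_g\circ\Theta(v)$. Thus $\Theta$ takes values in $\mathcal{V}^\ell(G)$.

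Linearity of $\Theta$ is immediate, since each $T_{\one}\lambda_g$ is linear and the vector space structure on $\mathcal{V}^\ell(G)$ is pointwise. For bijectivity I would verify that $\ev_{\one}\colon X\mapsto X(\one)$ is a two-sided inverse: on the one hand $\ev_{\one}(\Theta(v)) = T_{\one}\lambda_{\one}(v) = v$ because $\lambda_{\one} = \id_G$; on the other hand, for $X \in \mathcal{V}^\ell(G)$ left invariance gives $X(g) = (X\circ\lambda_g)(\one) = (T\lambda_g\circ X)(\one) = T_{\one}\lambda_g(X(\one)) = \Theta(X(\one))(g)$, so $\Theta\circ\ev_{\one} = \id$. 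This establishes $\Theta^{-1} = \ev_{\one}$.

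It remains to upgrade this algebraic bijection to an isomorphism of locally convex spaces, and this is the step I expect to cause the most trouble, because the exponential law is unavailable for the (generally non-compact) $G$. Continuity of $\ev_{\one}$ is easy: it is the restriction of the point evaluation $C(G,TG)_{\mathrm{c.o.}}\to TG$, which is continuous for the compact open topology (the singleton $\{\one\}$ is compact), and the compact open $C^\infty$-topology is finer. For continuity of $\Theta$ I would trivialise: the diffeomorphism $\Phi$ induces, by continuity of pushforwards (\Cref{continuity_pf_pb}), an isomorphism of locally convex spaces $\mathcal{V}(G) \cong C^\infty(G, T_{\one} G)$ sending $X$ to $\pr_2\circ\Phi^{-1}\circ X$, the inverse being the (again continuous) pushforward $f \mapsto \Phi\circ(\id_G,f)$. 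Under this identification a field is left invariant precisely when its principal part is a constant map, and the constant maps form a subspace isomorphic to $T_{\one} G$ via evaluation. Chasing $\Theta$ through these identifications (it goes to $v \mapsto$ the constant map with value $v$) shows it is continuous, completing the topological isomorphism.

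Finally, since $\mathcal{V}^\ell(G)$ is a Lie subalgebra of $\mathcal{V}(G)$ and $\Theta$ is a linear isomorphism, transporting the bracket by $\LB[v,w] \coloneq \Theta^{-1}(\LB[\Theta(v),\Theta(w)])$ automatically inherits bilinearity, antisymmetry and the Jacobi identity from $\mathcal{V}^\ell(G)$, so $(\Lf(G),\LB)$ is a Lie algebra; note that no continuity of this bracket is claimed, which is why the possible failure of continuity of the bracket on $\mathcal{V}(G)$ for infinite-dimensional $G$ is harmless here.
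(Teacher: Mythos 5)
Your proof is correct and follows the same skeleton as the paper's: well-definedness and left-invariance of $\Theta(v)$, linearity, identification of the two-sided inverse with evaluation at $\one$, continuity in both directions, and transport of the bracket along the linear isomorphism. The one place where you genuinely diverge is the analytic step, which the paper relegates to Exercise \ref{Ex:LA} 5 with a hint pointing at local representatives and \Cref{lem:fwedge_vector}: you instead exploit the global trivialisation $\Phi(g,v)=Tm_G(0_g,v)$ from \Cref{tangentLie}, so that $\Theta(v)=\Phi(\cdot,v)$ is manifestly smooth and, after conjugating by the induced isomorphism $\mathcal{V}(G)\cong C^\infty(G,T_{\one}G)$, the map $\Theta$ becomes the inclusion of constant maps, whose continuity is immediate. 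This buys you a chart-free argument and makes the left-invariant fields visibly the constants in the trivialisation; the small price is that you silently identify the topology on $\mathcal{V}(G)$ from \Cref{lcvx:space} (initial with respect to local representatives) with the subspace topology inherited from $C^\infty(G,TG)$ before invoking \Cref{continuity_pf_pb} --- a standard but not entirely free fact that you should at least flag, since it is the only point at which your trivialisation argument touches the definition of the topology on $\mathcal{V}(G)$. Everything else, including the remark that only the algebraic (not the topological) Lie algebra structure is asserted here, matches the paper.
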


\begin{proof}
 As $\Theta(v)(hg)= T_{\one}\lambda_{hg} (v) = T_{g}\lambda_{h}T_{\one}\lambda_{g} (v) = T_g \lambda_h \Theta (v)(g)$, the map $\Theta$ makes sense and its image consists of left-invariant vector fields (cf.\ Exercise \ref{Ex:LA} 4.). Linearity of $\Theta$ follows directly from the linearity of the tangent map. For $X \in \mathcal{V}^\ell(G)$ we have $X(g) = X\circ \lambda_g(\one) = T_{\one} \lambda_g X(\one) =\Theta (X(\one))(g)$, $\Theta$ is surjective. As the translations $\lambda_g$ are diffeomorphisms, it is clear that only $0 \in T_{\one}G$ gets mapped to the zero-vector field. We conclude that $\Theta$ is a vector space isomorphism (its inverse is obviously evaluation in $\one$). Note that $\mathcal{V}^\ell (G)$ carries the subspace topology induced by $\mathcal{V}(G)$ from \Cref{lcvx:space}. This immediately shows that $\Theta^{-1}$ is continuous as point evaluations are continuous in this topology. The continuity of $\Theta$ is left as Exercise \ref{Ex:LA} 5.
 That $\LB$ is a Lie bracket on $T_{\one} G$ follows directly by trivial computations since $\mathcal{V}^\ell(G)$ is a Lie algebra.
\end{proof}

If the Lie group $G$ is finite-dimensional, the above discussion shows that $(T_{\one} G,\LB)$ is a locally convex Lie algebra. Here only the continuity of the Lie bracket is unclear in the general case.
We shall now prove that the Lie bracket on $T_{\one} G$ is always continuous, hence the Lie algebra $\Lf(G)$ associated to a Lie group is always a locally convex Lie algebra. To this end, we need a local model of the multiplication

\begin{setup}\label{setup:locmult}
 Let $G$ be a Lie group. Since $T_{\one} G$ is isomorphic to the model space of $G$, we can pick a chart $\varphi \colon G \supseteq U_\varphi \rightarrow V_\varphi \opn T_{\one} G$ such that $\one \in U_\varphi$ and $\varphi (\one) = 0$. Moreover, we may assume that $T_{\one} \varphi = \id_{T_{\one} G}$. Due to the continuity of the multiplication of $G$ there is an open $\one$-neighborhood $W$ with $W \cdot W \subseteq U_\varphi$ (here $W\cdot W$ denotes the set of all products of two elements in $W$). Hence we can define a local multiplication 
 $$\ast \colon \varphi (W) \times \varphi (W) \rightarrow V_\varphi,\quad  (x,y) \mapsto  x \ast y \coloneq \varphi(\varphi^{-1} (x)\varphi^{-1}(y)).$$
 By construction the local multiplication is smooth and $\ast(0,x) = x = \ast (x,0)$. Hence the construction gives rise to a so called local Lie group (cf.\ \cite[Remark III.1.14]{NeeMon}). 
 As with the Lie group $G$ we can compute a Lie bracket using a local version of left-invariant vector field. To distinguish the local operations from the Lie group operations let us introduce a new symbol for left translation $\ell_x \colon \varphi(W) \rightarrow T_{\one} G,\ y \mapsto x \ast y, \ x \in \varphi (W).$
 For any $v \in T_{\one} G$ we can thus define a left-invariant vector field with respect to the local product
 $$\Lambda^v \colon \varphi(W) \rightarrow T_{\one} G,\quad x \mapsto d\ell_x (0;v) = \left.\frac{d}{dt}\right|_{t=0} x \ast tv.$$
 We will see in Exercise \ref{Ex:LA} 6.\, that a left-invariant vector field $X$ with $X(\one)=v$ is $\varphi$-related to $\Lambda^v$. 
 Together with the properties of the chart this yields the identity
 \begin{align*}
  \LB[v,w] &= \LB[\Theta(v),\Theta(w)](\one) = \LB[\Lambda^v,\Lambda^w](0) = d\Lambda^w (0;\Lambda^v(0))-d\Lambda^v(0;\Lambda^w(0)) \\ &= \left( \left.\frac{d^2}{d t ds}\right|_{t,s=0} sv \ast tw -  \left.\frac{d^2}{d t d s}\right|_{t,s=0} tw\ast sv\right)
 \end{align*}
 This formula shows immediately that the Lie bracket $\LB$ is continuous on $T_{\one} G$.
\end{setup}

\begin{cor}
 The Lie algebra $(\Lf(G),\LB)$ associated to a Lie group is a locally convex Lie algebra.
\end{cor}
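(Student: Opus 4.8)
The statement is an immediate harvest of the computation carried out in \Cref{setup:locmult}, so the plan is mostly to spell out why that computation already contains everything. By \Cref{prop:LA} we already know that $\Lf(G) = T_{\one}G$ is a locally convex space (being isomorphic to the modelling space of $G$) and that $\LB$ is a genuine Lie bracket on it, hence antisymmetric and satisfying the Jacobi identity. The only clause in the definition of a \emph{locally convex} Lie algebra not yet verified is that the bracket $\LB \colon \Lf(G) \times \Lf(G) \rightarrow \Lf(G)$ be continuous. So the entire task reduces to proving joint continuity of the map $(v,w) \mapsto \LB[v,w]$.

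To this end I would start from the closed formula established at the end of \Cref{setup:locmult},
\begin{equation*}
 \LB[v,w] = \left.\frac{d^2}{dt\,ds}\right|_{t,s=0} sv \ast tw - \left.\frac{d^2}{dt\,ds}\right|_{t,s=0} tw \ast sv ,
\end{equation*}
where $\ast \colon \varphi(W) \times \varphi(W) \rightarrow V_\varphi \subseteq T_{\one}G$ is the smooth local multiplication. The point is to read each mixed second derivative not as an iterated limit but as a single value of the globally continuous iterated differential of $\ast$. First I would note that, $\ast$ being smooth and in particular $C^2$, \Cref{defn: der:Bast} furnishes a \emph{continuous} map $d^2\ast \colon (\varphi(W) \times \varphi(W)) \times (T_{\one}G \times T_{\one}G)^2 \rightarrow T_{\one}G$.

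Next, using \Cref{rem:easyproof} together with the rule on partial differentials \Cref{prop:rpd}, I would identify
\begin{equation*}
 \left.\frac{d^2}{dt\,ds}\right|_{t,s=0} sv \ast tw = d^2\ast\big((0,0);(v,0),(0,w)\big),
\end{equation*}
and symmetrically for the swapped term. Since the assignment $(v,w) \mapsto \big((0,0);(v,0),(0,w)\big)$ is a continuous (indeed linear) map $T_{\one}G \times T_{\one}G \rightarrow (\varphi(W)\times\varphi(W)) \times (T_{\one}G \times T_{\one}G)^2$, composing it with the continuous $d^2\ast$ shows that each of the two terms depends continuously on $(v,w)$; their difference is then continuous as well. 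This is precisely the asserted continuity of $\LB$, and combined with \Cref{prop:LA} it yields that $(\Lf(G),\LB)$ is a locally convex Lie algebra.

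The only genuinely delicate step is the middle one: justifying that the informal symbol $\left.\frac{d^2}{dt\,ds}\right|_{t,s=0}$ really is an evaluation of the \emph{jointly} continuous map $d^2\ast$ rather than merely a separately-continuous iterated directional derivative. This is exactly where Bastiani smoothness pulls its weight, since that calculus is designed so that the iterated differentials $d^k$ are continuous on the full product $U \times E^k$; once this identification is in place, continuity of $\LB$ is a formal consequence and no estimates in the seminorms of $T_{\one}G$ are required.
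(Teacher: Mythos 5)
Your proof is correct and follows essentially the same route as the paper: the corollary is obtained there exactly as the final line of the computation in \Cref{setup:locmult}, where the bracket is expressed as a difference of mixed second derivatives of the smooth local multiplication $\ast$ and continuity is read off from the continuity of $d^2\ast$. Your write-up merely makes explicit the identification $\left.\frac{d^2}{dt\,ds}\right|_{t,s=0} sv \ast tw = d^2\ast\big((0,0);(v,0),(0,w)\big)$, which is the step the paper compresses into ``this formula shows immediately that the Lie bracket is continuous.''
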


\begin{rem}[Left-right confusion]\label{rem:LR}
 The reader may wonder now, why one uses left-invariant vector fields to compute the Lie algebra. Instead one could as well use \emph{right-invariant vector fields}, i.e.\ $X(g) =T\rho_g X(\one)$. This would also lead to a Lie algebra structure on $T_{\one}G$, however the induced Lie bracket would have the opposite sign (see Exercise \ref{Ex:LA} 7.). Indeed there is no reason to prefer left-invariant vector fields over right-invariant ones, the choice for left-invariant fields is historically motivated and customary.
\end{rem}

\begin{rem}
 There are several alternative ways to introduce the Lie bracket on $T_{\one} G$. For example, one can use the adjoint action of the Lie group (which is briefly discussed in \Cref{ex:adj:act} below). In addition the construction of the Lie bracket using the local multiplication in  \Cref{setup:locmult} can be interpreted (cf.\, \cite[Lemma III.1.6.]{NeeMon}) as a computation of the antisymmetric part of the second order Taylor polynomial of the local multiplication at $(0,0)$. Moreover, the Lie bracket measures commutativity of the group multiplication (cf.\ Exercise \ref{Ex:LA} 8.). 
\end{rem}

\begin{ex}[The Lie algebra of $\Diff(M)$]\label{ex:Liealgdiffeo}
 Let $M$ be a compact manifold. In \Cref{ex:Diffgp} we have seen that identifying $\Diff(M)$ as an open subset of $C^\infty (M,M)$, it becomes a Lie group under composition of maps. Further, \Cref{prop: can:locadd} shows that $T_{\id}\Diff(M) = T_{\id} C^\infty (M,M) \cong \mathcal{V}(M)$. We shall now show that the Lie algebra is given as $(\mathcal{V}(M), -\LB )$ with the \emph{negative} of the usual bracket of vector fields (cf.\ \Cref{setup:LieVF}).
 Extend $X \in \mathcal{V}(M) = T_{\id} M$ to the right-invariant vector field $R_X \in \mathcal{V}^\rho(\Diff (M)), R_X(\varphi) \coloneq X \circ \varphi$ and consider the product vector field $R_X \times \mathbf{0}_M \in \mathcal{V}(\Diff(M)\times M) = \mathcal{V}(\Diff(M)) \times \mathcal{V}(M)$. 
 We exploit now the canonical action $\alpha \colon \Diff (M) \times M \rightarrow M, (\varphi, m) \mapsto \varphi(m)$, \Cref{ex:candiffact} and note that as a restriction of $\ev$, a manifold version of \eqref{deriv:eval} (see Exercise \ref{ex:canmfd} 4.) yields the tangent map of $\alpha$: 
 \begin{align*}
  T\alpha (R_X \times \mathbf{0}_M)(\varphi,m) &= T_{(\varphi,m)}\alpha (X\circ \varphi ,\mathbf{0}_M(m)) =  X\circ \varphi (m) + T\varphi (0_m)= X \circ \varphi (m)\\ & = X (\alpha (\varphi,m))
 \end{align*}
Hence the product vector field $R_X \times \mathbf{0}_M$ is $\alpha$-related to $X$. Thus for $X,Y \in \mathcal{V}(M)$ the bracket $\LB[X,Y]$ is $\alpha$-related to $\LB[R_X \times \mathbf{0}_M ,R_Y \times \mathbf{0}_M] = \LB[R_X,R_Y] \times \mathbf{0}_M$, whence $\LB[R_X,R_Y]$ is the negative of the usual bracket, cf.\ \Cref{rem:LR}.

If $M$ admits a volume form $\mu$, we have seen that the volume preserving diffeomorphisms $\Diff_\mu (M)$ form a Lie subgroup of $\Diff (M)$. Due to \Cref{volumepreser} the Lie algebra $\Lf (\Diff_\mu (M))$ of this subgroup can be identified as the Lie subalgebra of divergence free vector fields $\mathcal{V}_\mu (M) = \{X \in \mathcal{V}(M)\mid \divr X = 0 (\Leftrightarrow \mathcal{L}_X \mu = 0)\}$.
\end{ex}

\begin{ex}
 Consider the locally convex space $E$ as the abelian Lie group $(E,+)$. An easy computation shows that $\Lf(E) = E$ with the trivial Lie bracket. Thus the Lie algebra of this abelian Lie group is an abelian Lie algebra (see Exercise \ref{Ex:LA} 8.)
\end{ex}

We can associate to every Lie group morphism a morphism of (locally convex) Lie algebras as the following lemma shows.
\begin{lem}\label{Liealghom}
 If $f \colon G \rightarrow H$ is a Lie group morphism (i.e.\, a smooth group homomorphism) then the map $\Lf (f) \coloneq T_{\one} f \colon \Lf(G) \rightarrow \Lf (H)$ is a morphism of Lie algebras, i.e.\ $\Lf(f)(\LB[v,w])=\LB[\Lf(f)(v),\Lf(f)(w)], \forall v,w \in \Lf(G)$.
\end{lem}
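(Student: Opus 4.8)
The plan is to reduce the statement to the behaviour of left-invariant vector fields under $f$, exploiting the description of the bracket on $\Lf(G)=T_{\one}G$ from \Cref{prop:LA}. Since $\Lf(f)=T_{\one}f$ is the tangent map of a smooth map it is automatically continuous and linear, so the only substantive point is that it intertwines the two Lie brackets. Writing $\Theta_G$ and $\Theta_H$ for the isomorphisms onto the left-invariant vector fields on $G$ and $H$ respectively, I recall $\LB[v,w]=\LB[\Theta_G(v),\Theta_G(w)](\one)$, so the whole problem translates into a statement about left-invariant vector fields and their brackets.

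First I would establish the crucial intertwining relation. Because $f$ is a group homomorphism we have $f(\one)=\one$ and $f\circ\lambda_g=\lambda_{f(g)}\circ f$ for every $g\in G$. Applying $T_{\one}$ and the chain rule \Cref{lem:chainrulemfd} to this identity gives, for $v\in\Lf(G)$,
\begin{align*}
T_gf\big(\Theta_G(v)(g)\big)=T_{\one}(f\circ\lambda_g)(v)=T_{\one}(\lambda_{f(g)}\circ f)(v)=T_{\one}\lambda_{f(g)}\big(\Lf(f)(v)\big)=\Theta_H(\Lf(f)(v))(f(g)).
\end{align*}
In the language of \Cref{chapter:LA:VF} this says precisely that $\Theta_G(v)$ is $f$-related to $\Theta_H(\Lf(f)(v))$, and likewise $\Theta_G(w)$ is $f$-related to $\Theta_H(\Lf(f)(w))$. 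This step, though short, is the heart of the argument: everything hinges on the homomorphism property turning left translations on $G$ into left translations on $H$, and on $f$ preserving the identity.

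Then I would invoke the fact that $f$-relatedness is inherited by the Lie bracket of vector fields (Exercise \ref{Ex:LA} 3.): from the two relations above it follows that $\LB[\Theta_G(v),\Theta_G(w)]$ is $f$-related to $\LB[\Theta_H(\Lf(f)(v)),\Theta_H(\Lf(f)(w))]$. Evaluating the defining relation of relatedness at $g=\one$ and using $f(\one)=\one$ yields
\begin{align*}
\Lf(f)(\LB[v,w])=T_{\one}f\big(\LB[\Theta_G(v),\Theta_G(w)](\one)\big)=\LB[\Theta_H(\Lf(f)(v)),\Theta_H(\Lf(f)(w))](\one)=\LB[\Lf(f)(v),\Lf(f)(w)],
\end{align*}
where the outer equalities are just the definitions of the brackets on $\Lf(G)$ and $\Lf(H)$ from \Cref{prop:LA}. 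Together with the automatic linearity and continuity of $\Lf(f)$, this is the desired conclusion. I expect the main obstacle to be purely bookkeeping in the relatedness computation; once the homomorphism identity is differentiated correctly, the relatedness-to-bracket principle of Exercise \ref{Ex:LA} 3.\ does all the remaining work.
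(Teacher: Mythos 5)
Your proof is correct and follows essentially the same route as the paper: differentiate the identity $f\circ\lambda_g=\lambda_{f(g)}\circ f$ to show that $\Theta_G(v)$ is $f$-related to $\Theta_H(\Lf(f)(v))$, invoke the inheritance of $f$-relatedness by the Lie bracket of vector fields, and evaluate at the identity. The only cosmetic discrepancy is the exercise reference for the relatedness-of-brackets fact (the paper's proof cites Exercise \ref{Ex:LA} 4., the global version of \Cref{lem:locrelated}), which does not affect the argument.
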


\begin{proof}
 Let $v \in T_{\one} G$ and $\tilde{v} \coloneq T_{\one} f (v) \in T_{\one} H$. Since $f$ is a group morphism we have 
 $$\Theta (\tilde{v})(f(g))=T_{\one}\lambda_{f(g)} T_{\one} f(v) = T_{g} f (T_{\one} \lambda_g (v)) =T_g f \Theta(v).$$
 Hence for every $v \in T_{\one} G$ the left-invariant vector field $\Theta (v)$ is $f$-related to $\Theta (\tilde{v})$. As $f$-relatedness is inherited by the Lie bracket (Exercise \ref{Ex:LA} 4.) we see that 
 $$T_{\one} f \LB[\Theta (v),\Theta(w)] = \LB[\Theta (\tilde{v}), \Theta(\tilde{w}] \circ f$$
 evaluating in $\one$, we obtain the claimed formula since $f(\one_G)=\one_H$.
\end{proof}

\begin{ex}\label{ex:adj:act}
 Let $G$ be a Lie group and $g \in G$. Then the conjugation with $g$ is the Lie group morphism $c_g \colon G \rightarrow G, h \mapsto ghg^{-1}$. Hence for every $g$ we obtain a Lie algebra morphism $\text{Ad}_g \coloneq \Lf (c_g) \colon \Lf (G) \rightarrow \Lf (G)$, called the \emph{adjoint map} of $g$. This gives rise to a smooth mapping 
 $$\text{Ad} \colon G \times \Lf (G) \rightarrow \Lf (G),\quad  (g,x) \mapsto \text{Ad}_g (x),$$
 called the \emph{adjoint action} of $G$.\index{Lie group action!adjoint action}
\end{ex}

The upshot of this short repetition is that every Lie group comes with an associated Lie algebra and every Lie group morphism gives rise to a Lie algebra morphism. In finite dimensions, the interplay between these objects leads to the classical Lie theorems.

\begin{tcolorbox}[colback=white,colframe=green!50!black,title=The Lie theorems for \textbf{finite-dimensional} Lie groups and Lie algebras]\index{Lie theorems}
\begin{itemize}
 \item[Lie 2] If $G,H$ are Lie groups and $G$ is simply connected, then for every Lie algebra homomorphism $f \colon \Lf(G) \rightarrow \Lf (H)$ there exists a Lie group morphism with $f = \Lf(\varphi)$.
 \item[Lie 3] For every Lie algebra $\mathfrak{g}$ there exists a connected Lie group $G$ with $\Lf (G) = \mathfrak{g}$.
\end{itemize}
\end{tcolorbox}
We omitted the first Lie theorem as it is a purely local statement which does not admit a global formulation on the Lie group. It is well known that the third Lie theorem fails in infinite dimensions. For example one can show that:
\begin{setup}[{A Lie algebra without an associated Lie group \cite{MR630634}}]
 Let $M$ be a connected \emph{non-compact} finite-dimensional manifold. Then there exists no Lie group $G$ such that $\Lf(G) = \mathcal{V}(M)$.
\end{setup}
However, under certain assumptions one can salvage at least the second Lie theorem. The main issue here is that in infinite-dimensional spaces, even simple differential equations might not have solutions (see \Cref{Diffeq:beyond} for explicit counter examples). Thus in our general setting a major task is to establish the existence of solutions to differential equations relevant to Lie theory. These \emph{equations of Lie type}\index{equation of Lie type} will be discussed next.

\begin{Exercise}\label{Ex:LA}   \vspace{-\baselineskip}
\Question Let $(\mathfrak{g},\LB )$ be a Lie algebra. Show that $\LB[x,x] =0$ for all $x \in \mathfrak{g}$ is equivalent to skew-symmetricity of the Lie bracket, i.e.\ $\LB[x,y]=-\LB[y,x], \forall x,y \in \mathfrak{g}$.  
\Question Show that the commutator bracket (\Cref{ex:assocLie}) is a Lie bracket.
 \Question With the help of the material in \Cref{chapter:LA:VF} show that the bracket of vector fields from \Cref{setup:LieVF} turns $\mathcal{V}(M)$ into a Lie algebra.
 \subQuestion Let $X,Y \in \mathcal{V}(M)$ and $\mathcal{A}$ be an atlas of $M$. The brackets of the local representatives yield $(\LB[X_\phi,Y_\phi])_{\phi \in \mathcal{A}}$. Show that this induces a vector field $\LB[X,Y]$ with the properties from \Cref{setup:LieVF}.
 \subQuestion Prove that $\LB[X,X] = 0$ for all $X \in \mathcal{V}(M)$ and the Jacobi identity holds.
 \subQuestion Show that the bracket is continuous with respect to the topology from \Cref{lcvx:space}.\\
 {\tiny \textbf{Hint:} All assertions can be localised in charts where \Cref{lem:locLie} holds.}
  \Question Prove a global version of \Cref{lem:locrelated}, i.e.\, if $(X_i,Y_i) \in \mathcal{V}(M) \times \mathcal{V}(N), i=1,2$ are pairs of $f$-related vector fields, then $\LB[X_1,X_2]$ and $\LB[Y_1,Y_2]$ are $f$-related. 
  \Question We check several details in the proof of \Cref{prop:LA}. Show that 
  \subQuestion $X_v \colon G \rightarrow TG, g\mapsto T_{\one}\lambda_g (v)$ is a smooth left-invariant vector field for $v\in T_{\one}G$.
  \subQuestion $\Theta \colon T_{\one} G \rightarrow \mathcal{V}^\ell(M) \subseteq \mathcal{V}(M), v \mapsto X_v$ is continuous.
  \\
  {\tiny \textbf{Hint:} Combine \Cref{lcvx:space} with \Cref{lem:fwedge_vector}.} 
   \Question Let $G$ be a Lie group and $\varphi$ be a chart with the properties from \Cref{setup:locmult} with respect to which we define a local multiplication $\ast$. Let $X \in \mathcal{V}^\ell (G)$ be left-invariant with $X(\one)=v$. Prove that $X_\varphi|_{\varphi(W)} = \Lambda^v$ and deduce that the (principal part of) left-invariant vector fields are thus locally related to the left-invariant vector fields with respect to the local multiplication. 
   \Question Let $v \in \Lf(G)$ and denote by $L_v$, $R_v$ the left-(/right-)invariant vector field constructed from $v$. Show that $L_v$ and $-R_v$ are $\iota$-related and deduce $\LB[L_v,L_w] = -\LB[R_v,R_w]$.\\
  {\tiny \textbf{Hint:} Use Exercise \ref{Ex:Lgpex} 3.\, to show that $T_{\one}\iota (v) = -v$.}
  \Question Let $G$ be an abelian Lie group. Show that the Lie bracket of $\Lf(G)$ is trivial, i.e.\, the Lie algebra is abelian.\\
 {\tiny \textbf{Hint:} Since $G$ is abelian all left-invariant vector fields are also right-invariant...}
 \Question Let $A$ be a CIA. Show that the Lie algebra of $A^\times$ is $A$ with Lie bracket $\LB[a,b] = ab-ba$.\\ {\tiny \textbf{Hint:} Note that since $A^\times \opn A$, the equation for left invariance is $X_v (g) = gv (=\beta(g,v))$ in $A$} 
 \Question Let $\mathfrak{g},\mathfrak{h}$ be two Lie algebras. Show that there is a canonical way to turn the product $\mathfrak{g} \times \mathfrak{h}$ into a Lie algebra. Explain how this was exploited in \Cref{ex:Liealgdiffeo}.
 \Question Let $G$ be a Lie group with associated Lie algebra $(\Lf (G). \LB )$. In this exercise we consider the adjoint action from \Cref{ex:adj:act}. Show that
 \subQuestion $\Ad \colon G \times \Lf (G) \rightarrow \Lf (G)$ is a Lie group action.
 \subQuestion for $\varphi \colon G \rightarrow H$ a Lie group morphism, $\Ad_{\varphi (g)}(\Lf (\varphi) (x)) = \Lf (\varphi)(\Ad_g (x))$. 
 \subQuestion for $x,y \in \Lf(G)$ one has $\text{ad}_x (y) \coloneq T_{\one_G,y}\Ad (x,0_y) = \LB[x,y]$.
 \end{Exercise}

\section{Regular Lie groups and the exponential map}\label{sect:regularLie}
In this section, we discuss differential equations needed for advanced tools in Lie theory. 

 \begin{defn}
 Let $G$ be a Lie group with Lie algebra $\Lf (G)$. We say $G$ is \emph{semiregular}\index{Lie group!semiregular} if for each smooth curve $\eta \in C^\infty ([0,1],\Lf(G))$ the initial value problem
 \begin{align}\label{eq:semiregular}
  \begin{cases}
   \dot{\gamma} (t) &= \gamma(t).\eta(t) = T_{\one} \lambda_{\gamma(t)} (\eta(t))\\
   \gamma(0) &= \one
  \end{cases}
 \end{align}
 has a (unique) solution $\Evol (\eta) \coloneq \gamma \colon [0,1] \rightarrow G$. We also say that \eqref{eq:semiregular} is a \emph{Lie type equation}. The group $G$ is \emph{regular (in the sense of Milnor)}\index{Lie group!regular (in the sense of Milnor)} if $G$ is semiregular and the following \emph{evolution map}\index{evolution map} is smooth
 $$\evol \colon C^\infty ([0,1], \Lf(G)) \rightarrow G,\quad \eta \mapsto \Evol(\eta)(1).$$
\end{defn}

\begin{setup}\label{setup:logderiv}
 For a smooth curve $c \colon [a,b] \rightarrow G$, we can define the \emph{left logarithmic derivative}\index{derivative!left logarithmic} $\delta^\ell (c) \colon [a,b] \rightarrow \Lf (G), t \mapsto T\lambda_{c(t)}(\dot{c}(t))$. Note that the logarithmic derivative inverts the evolution, i.e.\ $\delta^\ell (\Evol (\eta)) = \eta$.
 There are many identities relating $\delta^\ell$, $\Evol$ and their counterparts defined via right multiplication (see \cite[Section 38]{KM97} for an account, also cf.~Exercise \ref{Ex:MC} 3.). The left (right) logarithmic derivative is closely connected to the left (right) Maurer-Cartan form on $G$, see \Cref{ex:MCform}.
\end{setup}

\begin{rem} \label{rem:regularity}
\begin{enumerate}
\item Solutions to \eqref{eq:semiregular} are automatically unique by \cite[38.3 Lemma]{KM97}.
\item For regular Lie groups, Lie's second theorem holds. Since the proof can most conveniently be formulated within the framework of differential forms, we defer it to \Cref{App:MC}.
\item Instead of using the left multiplication in \eqref{eq:semiregular} one can as well use right multiplication to define regularity. Similar to the definition of the Lie algebra, using inversion of the group shows that the two notions of regularity are the same.
\end{enumerate}
\end{rem}

\begin{rem}
 Due to the usual solution theory of ordinary differential equations, every Banach-Lie group (i.e. Lie group modelled on a Banach space) and thus every finite-dimensional Lie group is regular, cf.\ \cite{Neeb06}. 
\end{rem}

\begin{ex}
 Consider the locally convex space $E$ as a Lie group $(E,+)$. Note that its Lie algebra is again $E$ with the zero-bracket (Exercise \ref{ex:regular} 3.). For a smooth curve $\eta \colon [0,1] \rightarrow \Lf(E)=E$ we interpret the Lie type equation in $TE=E\times E$ and obtain $(\gamma(t),\gamma'(t)) = (\gamma (t),\eta(t))$. Hence a solution $\gamma$ of \eqref{eq:semiregular} satisfies $\gamma' = \eta$. Therefore, if $(E,+)$ is regular, then $E$ is Mackey complete, \Cref{Mackeycomplete}. Conversely, if $E$ is Mackey complete, then $\evol_E (\eta) \coloneq \int_0^1 \eta(s) \mathrm{d}s$ defines a continuous linear (whence smooth) map $\evol_E \colon C^\infty ([0,1],E) \rightarrow E$. Whence $(E,+)$ is regular if and only if it is Mackey complete.
 Note that one can show that any Lie group which is regular is necessarily modelled on a Mackey complete space, \cite[Remark II.5.3 (b)]{Neeb06}.
\end{ex}

\begin{ex}\label{ex:CIAregular}
 Let $(A, \cdot)$ be a continuous inverse algebra (CIA) which is Mackey complete. If the topology of $A$ is generated by a family of seminorms which are submultiplicative, i.e.\, $q(xy) \leq q(x)q(y), \forall x,y \in A$, then $(A^\times,\cdot)$ is regular, \cite{MR2997582}. In this case the solutions to the evolution equation are given by the \emph{Volterra series}\index{Volterra series} 
 \begin{align}\label{Volterraseries}
  \gamma (t) = \one + \sum_{n\in \N} \int_0^t \int_{0}^{t_{n-1}} \cdots +\int_0^{t_2} \eta(t_1)\cdots \eta (t_n) \mathrm{d}t_1 \cdots \mathrm{d}t_n.
 \end{align}
 This has interesting applications in physics, control theory and rough path theory as for certain CIAs the above series modells signatures of irregular paths, cf.\, \Cref{sect:rough}.
\end{ex}

\begin{ex}[$\Diff (M)$ is regular]\label{Diff:reg}
 In \Cref{ex:Diffgp} we saw that for a compact manifold $M$ the group $\Diff (M)$ is a Lie group with Lie algebra $\mathcal{V} (M)$, \Cref{ex:Liealgdiffeo}. If $c \colon [0,1] \rightarrow \mathcal{V} (M) \subseteq C^\infty (M,TM)$ is smooth, we use the exponential law to interpret $X \coloneq c^\wedge$ as a smooth time dependent vector field on $M$. Following \Cref{rem:regularity} (c) we can solve Lie type equations with respect to the right multiplication to establish regularity. As right multiplication $\rho_\phi$ in $\Diff (M)$ is the pullback with $\phi$, we deduce from Exercise \ref{ex:canmfd} 1. $T_{\one}\rho_{\phi} (c(t)) = c(t) \circ \phi$. Hence the exponential law, allows us to rewrite the Lie type equation \eqref{eq:semiregular} with respect to right multiplication on $\Diff (M)$ as a differential equation for the vector field $X_t$ (subscript denoting time dependence) 
 \begin{align}\label{eq:floweqtdep}
  \gamma' (t) = X_t(\gamma(t)), \qquad \gamma(0) = \id_M.
 \end{align}
  In other words, $\gamma$ solves the Lie type equation \eqref{eq:semiregular} (wrt.\ right multiplication) if and only if it satisfies \eqref{eq:floweqtdep}, whence $\gamma$ is the flow $\mathrm{Fl}^X$of the time dependent vector field $X$ on $[0,1]\times M$ (see \Cref{VF:flow}). Since $[0,1] \times M$ is compact, the usual (finite dimensional!) theory of ordinary differential equations \cite[IV. \S 2]{Lang} shows that the flow of such a vector field always exists. We deduce that $\Diff (M)$ is semiregular and can consider the evolution operator $\evol \colon C^\infty ([0,1],\mathcal{V}(M)) \rightarrow \Diff (M)$. 
 
 To see that the evolution is smooth, recall that $\Diff(M) \opn C^\infty (M,M)$. Now we exploit that $C^\infty(M,M)$ is a canonical manifold of mappings to deduce that $\evol$ is smooth if 
 $$\evol^\wedge \colon C^\infty ([0,1], \mathcal{V}(M)) \times M \rightarrow M,\quad (X,m) \mapsto \mathrm{Fl}^{X}_1(m) $$ is smooth. 
 Here $\mathrm{Fl}_1^X$ denotes the time $1$-flow of the time dependent vector field $X$. We view \eqref{eq:floweqtdep} now as an ODE whose right hand side $F(t,X,m) = \ev (X,t)(m)=X(t,m)$ depends smoothly on the parameter $X$. The theory of parameter dependent ordinary differential equations (on finite-dimensional manifolds) shows that $\mathrm{Fl}^X$ depends smoothly on $X$, see e.g.\ \cite[Proposition 5.13]{MR3342623} which allows the (infinite-dimensional!) space $C^\infty([0,1], \mathcal{V}(M))$ as a parameter space. Hence $\evol^\wedge$ is smooth and $\Diff (M)$ is regular.  
\end{ex}

As a consequence of the regularity of $\Diff(M)$, we can apply Lie's second theorem, \Cref{LieII:reg} to Lie algebra morphisms into the Lie algebra of vector fields on a compact manifold.
Milnor \cite{Mil82} used these observations to prove a restricted version of the Lie-Palais theorem, cf.\ \cite{Pal57}. In its general form, the Lie-Palais theorem asserts that every finite-dimensional Lie algebra of vector fields $\mathfrak{g}$ of vector fields on a finite-dimensional smooth manifold $M$ which is generated by complete vector fields\footnote{A vector field $X \in \mathcal{V}(M)$ is complete if its integral curves $\dot{\varphi}_x (t) = X(\varphi_x(t)),\ \varphi_x(0)= x$ exist for all $t\in \R$ and $x \in M$} consists of complete vector fields and can be integrated to a global action of a Lie group $G$ on $M$. We discuss a proof for a severely limited version of this result as Exercise \ref{ex:regular} 7.

Not all Lie groups are regular as one can construct some pathological examples modelled on \textbf{incomplete} spaces. However, for almost all naturally occurring classes of Lie groups, regularity has been established. Moreover, the following conjecture is still open.

\begin{tcolorbox}[colback=white,colframe=green!50!black,title=Conjecture (Milnor '83)]
Every Lie group modelled on a Mackey complete space is regular.
\end{tcolorbox}

For regular Lie groups, one can solve the important class of Lie type differential equations. This allows us to discuss the Lie group exponential function (which can be viewed as an abstraction of the matrix exponential function to Lie groups).

\begin{lem}\label{lem:completeVF}
 Let $G$ be a regular Lie group and $X \in \mathcal{V}^\ell (G)$. Then there exists a unique curve $\gamma_X \colon \R \rightarrow G$ with $\dot{\gamma}_X(t)=X(\gamma_X(t)), \forall t$ and $\gamma_X(0)=\one$. This implies that every left-invariant vector field is complete.\index{vector field!(left) invariant} \index{vector field!complete}
\end{lem}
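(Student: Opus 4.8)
The plan is to recognise the desired integral curve as the solution of a Lie type equation with a \emph{constant} right-hand side, and then to bootstrap the solution from the compact parameter interval $[0,1]$ (on which semiregularity guarantees it) up to all of $\R$ by a scaling argument. Write $v \coloneq X(\one) \in T_{\one}G$ and let $c_a$ denote the constant curve in $C^\infty([0,1],\Lf(G))$ with value $a$. By \Cref{prop:LA} we have $X = \Theta(v)$, that is $X(g) = T_{\one}\lambda_g(v)$ for all $g \in G$, so the equation $\dot\gamma(t) = X(\gamma(t))$, $\gamma(0)=\one$, is precisely the Lie type equation \eqref{eq:semiregular} for $\eta = c_v$. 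Since $G$ is (semi)regular, this yields a unique solution $\gamma \coloneq \Evol(c_v) \colon [0,1] \to G$; observe that only semiregularity, and not the smoothness of $\evol$, is needed for this lemma.

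First I would extend $\gamma$ to $[0,\infty)$. For $N \in \N$ set $\gamma_N \colon [0,N] \to G$, $\gamma_N(t) \coloneq \Evol(c_{Nv})(t/N)$. Then $\gamma_N(0) = \one$, and since $\sigma \mapsto \Evol(c_{Nv})(\sigma)$ solves the Lie type equation for the constant $Nv$, the chain rule together with homogeneity of $T\lambda_g(\cdot)$ (\Cref{lem:homogeneity}) gives
\begin{align*}
 \dot\gamma_N(t) = \tfrac{1}{N}\,T\lambda_{\gamma_N(t)}(Nv) = T\lambda_{\gamma_N(t)}(v),
\end{align*}
so $\gamma_N$ solves the same equation on $[0,N]$. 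Uniqueness of solutions (\Cref{rem:regularity}) forces the $\gamma_N$ to agree on overlapping domains, so they patch to a solution on $[0,\infty)$. A symmetric construction, $t \mapsto \Evol(c_{-Nv})(-t/N)$ on $[-N,0]$, produces a solution on $(-\infty,0]$; the two half-solutions share the value $\one$ and the one-sided derivative $T\lambda_{\one}(v)=v$ at $t=0$, hence glue to a $C^1$ curve $\gamma_X \colon \R \to G$. As $\dot\gamma_X = X\circ\gamma_X$ with $X$ smooth, a routine bootstrap upgrades $\gamma_X$ to a smooth curve, and uniqueness on each compact subinterval yields uniqueness on all of $\R$.

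Finally, completeness of $X$ follows from left-invariance: for any $g \in G$ the curve $t \mapsto \lambda_g(\gamma_X(t)) = g\,\gamma_X(t)$ satisfies, using $\lambda_g \circ \lambda_h = \lambda_{gh}$,
\begin{align*}
 \tfrac{d}{dt}\big(g\,\gamma_X(t)\big) = T\lambda_g\big(\dot\gamma_X(t)\big) = T\lambda_g T\lambda_{\gamma_X(t)}(v) = T\lambda_{g\gamma_X(t)}(v) = X\big(g\,\gamma_X(t)\big),
\end{align*}
so it is the integral curve of $X$ through $g$, defined for all $t \in \R$. Hence every left-invariant vector field is complete. I expect the only genuine obstacle to be the passage from $[0,1]$ to $\R$: the scaling identity together with the cited uniqueness carries the solution across, but one must be careful that the patched curve is smooth (not merely continuous) at the junction points, which is exactly where the $C^1$-matching of one-sided derivatives and the bootstrap through the smooth right-hand side enter.
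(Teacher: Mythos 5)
Your proof is correct, but it extends the solution from $[0,1]$ to $\R$ by a different mechanism than the paper. The paper first passes from $[0,1]$ to $[-1,1]$ by solving the \emph{right}-invariant Lie type equation for $\gamma_R$ and setting $\gamma_X(t)\coloneq\gamma_R^{-1}(-t)$ on $[-1,0]$, verifying via the inversion formula \eqref{Tiota} that this is again an integral curve; it then reaches all of $\R$ by left-translating, $\gamma_{t_0}(t)\coloneq\gamma(t_0)\gamma_X(t-t_0)$, and iterating, with uniqueness forcing the pieces to agree. You instead rescale the generator, taking $\Evol(c_{Nv})(t/N)$ on $[0,N]$ and $\Evol(c_{-Nv})(-t/N)$ on $[-N,0]$, which is essentially the identity of \Cref{lem:deriv:evol} put to work; this is arguably more economical, since it uses only (left) semiregularity and linearity of $T\lambda_g$, avoiding both right-regularity and the inversion formula. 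What the paper's route buys is that the translation identity $\gamma_X(t_0+t)=\gamma_X(t_0)\gamma_X(t)$ falls out of the argument, i.e.\ $\gamma_X$ is exhibited as a one-parameter subgroup (cf.\ Exercise \ref{ex:regular} 1.), which is what one wants downstream for the exponential map. Two small things you handle more carefully than the source: the $C^1$-matching and bootstrap at the junction $t=0$, and the explicit left-translation argument giving the integral curve through an arbitrary $g\in G$, which is what the completeness assertion actually requires. One point you gloss over (harmlessly): the uniqueness statement of \Cref{rem:regularity} is formulated on $[0,1]$, so agreement of $\gamma_N$ and $\gamma_M$ on $[0,\min(N,M)]$ needs the reparametrisation $s\mapsto\gamma(Ns)$ back to the unit interval — but your scaling construction makes that reduction immediate.
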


\begin{proof}
 Consider the smooth (constant) curve $\eta_X \colon \R \rightarrow T_{\one} G, t \mapsto X (\one)$ (in the following we will frequently consider the restriction of $\eta_X$ to $[0,1]$ without further notice). By regularity, we obtain a unique solution $\gamma \colon [0,1] \rightarrow G$ such that $\gamma(0)=\one$ and $\dot{\gamma}(t) = T\lambda_{\gamma(t)}(\eta_X(t)) = X(\gamma(t)), t \in [0,1]$. Thus this curve is the flow of the invariant vector field. Since the (constant) curve $\eta_X$ makes sense for all $t \in \R$, we can consider the equation \eqref{eq:semiregular} for every $t \in \R$. We now extend the flow $\gamma$ to all of $\R$.\\
 \textbf{Step 1:} \emph{Smooth extension to $[-1,1]$.}
 Using right-regularity, we can construct a curve $\gamma_R \colon [0,1] \rightarrow G$ which satisfies $\gamma(0)=\one$ and $\dot{\gamma}(t) = T\rho_{\gamma (t)}(X(1))$. Let us show that the formula 
 $$\gamma_X(t) \coloneq \begin{cases} 
 \gamma_R^{-1}(-t)& t \in [-1,0] \\
 \gamma(t)& t \in [0,1]                                                                                                                                                                                                                                                                                                                                                                                                                                              \end{cases}
$$
 yields a smooth extension of $\gamma$ whose derivative is $X(\gamma_X(t))$ at every $t$. We compute with the formula for the derivative of the inverse \eqref{Tiota}, the derivative of $\gamma_X$ on $[-1,0]$:
 \begin{align*}
  \frac{\mathrm{d}}{\mathrm{d}t} \gamma_R^{-1}(-t) &=-T\lambda_{\gamma_R^{-1} (-t)}T\rho_{\gamma_R^{-1} (-t)} \dot{\gamma}_R (-t)\\
  & = T\lambda_{\gamma_R^{-1} (-t)} T\rho_{\gamma_R^{-1} (-t)} T\rho_{\gamma_R(-t)} (v) = X(\gamma_R^{-1}(-t))
 \end{align*}
 Hence $\gamma_X$ is a smooth integral curve of the left-invariant field $X$.\\
 \textbf{Step 2:} \emph{$\gamma_X$ extends to all of $\R$} Pick $0 < t_0 < 1$ and define $\gamma_{t_0} \colon [-1+t_0,1+t_0] \rightarrow G, t \mapsto \gamma(t_0)\gamma_X(t-t_0)$. Note that $\gamma_{t_0}(t_0) = \gamma_X(t_0)$ and furthermore,  
 $$\dot{\gamma}_{t_0}(t_0+t) =T\lambda_{\gamma(t_0)} (\dot{\gamma}_X(t)) = T\lambda_{\gamma(t_0)} (X(\gamma_X(t)))= X(\gamma(t_0)\gamma_X (t))=X(\gamma_{t_0}(t+t_0)),$$
 where we have used the left invariance of $X$. Uniqueness of the solution implies now that on their common domain of definition $\gamma_X$ and $\gamma_{t_0}$ coincide. Thus we can extend $\gamma_X$ to $[-1,1+t_0]$. Repeating the argument, the domain of $\gamma_X$ is not bounded from above. Choosing $-1 < t_0 <0$ a similar argument shows that the domain can not be bounded from below. Thus $\gamma_X$ can be continued for all of $\R$. 
\end{proof}

\begin{defn}
 Let $G$ be a regular Lie group. Then we define the \emph{Lie group exponential}\index{Lie group!exponential}
 $$\exp_G \colon \Lf(G) \rightarrow G,\quad v \mapsto \gamma_v (1),$$
 where $\gamma_v$ is the unique integral curve starting from $\one$ of the vector field $L_v (g) \coloneq T\lambda_g (v)$.
\end{defn}

\begin{rem}
 Note that $C \colon \Lf(G) \rightarrow C^\infty ([0,1],\Lf(G)), v\mapsto (t\mapsto v)$ is continuous linear, whence smooth. Thus $\exp_G = \evol \circ C$ is smooth for every regular Lie group $G$.
\end{rem}

\begin{lem}\label{lem:deriv:evol}
 Let $G$ be a regular Lie group. Define for $\eta \in C^\infty ([0,1],\Lf(G))$ and  $s \in [0,1]$ the curve $\eta_s (t) \coloneq \eta(st)$. Then  
 \begin{align*}
  \Evol  (s\eta_s)(t) = \Evol (\eta) (st)\ \forall t \in [0,1]\quad \text{ and } \quad \Evol (\eta) (s) = \evol (s\eta_s).    
 \end{align*}
 In particular, this implies $T_0 \exp = \id_{\Lf(G)}$.
\end{lem}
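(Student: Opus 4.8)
The plan is to reduce both displayed identities to the uniqueness of solutions of the Lie type equation \eqref{eq:semiregular} — which is guaranteed for a regular (hence semiregular) group by \Cref{rem:regularity}~(a) — and then to extract the statement about $T_0\exp$ from the special case of a constant curve. Throughout I fix $\eta \in C^\infty([0,1],\Lf(G))$ and write $\gamma \coloneq \Evol(\eta)$. For fixed $s \in [0,1]$ the reparametrised curve $s\eta_s$, $t \mapsto s\,\eta(st)$, lies again in $C^\infty([0,1],\Lf(G))$, so $\Evol(s\eta_s)$ is defined and I may hope to identify it with a reparametrisation of $\gamma$.

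The heart of the argument is a single computation. I would set $\beta(t) \coloneq \gamma(st) = \Evol(\eta)(st)$ and differentiate: by the chain rule for manifold-valued curves together with $\tfrac{d}{dt}(st) = s$ one gets $\dot\beta(t) = s\,\dot\gamma(st)$, and since $\gamma$ solves \eqref{eq:semiregular} this equals $s\,T\lambda_{\gamma(st)}(\eta(st)) = s\,T\lambda_{\beta(t)}(\eta_s(t))$. Using the linearity of the tangent map $T_{\one}\lambda_{\beta(t)}$ to pull the scalar inside, this becomes $\dot\beta(t) = T\lambda_{\beta(t)}(s\eta_s(t))$, and $\beta(0) = \gamma(0) = \one$. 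This is exactly the initial value problem defining $\Evol(s\eta_s)$, so uniqueness forces $\beta = \Evol(s\eta_s)$, which is the first identity $\Evol(s\eta_s)(t) = \Evol(\eta)(st)$.

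The second identity then drops out by evaluating at $t = 1$: the left-hand side is $\Evol(s\eta_s)(1) = \evol(s\eta_s)$ by definition of the evolution map, while the right-hand side is $\Evol(\eta)(s)$, giving $\Evol(\eta)(s) = \evol(s\eta_s)$. For the consequence I specialise to the constant curve $\eta = C(v)$, $t \mapsto v$, for $v \in \Lf(G)$; then $\eta_s$ is again constant equal to $v$, so $s\eta_s = C(sv)$ and the second identity reads $\Evol(C(v))(s) = \evol(C(sv)) = \exp_G(sv)$. Since $\Evol(C(v)) = \gamma_v$ is precisely the integral curve of the left-invariant field $L_v$ starting at $\one$, differentiating $s \mapsto \exp_G(sv) = \gamma_v(s)$ at $s = 0$ yields $T_0\exp_G(v) = \dot\gamma_v(0) = L_v(\one) = T\lambda_{\one}(v) = v$, and as $v$ was arbitrary, $T_0\exp_G = \id_{\Lf(G)}$.

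I expect the only genuinely delicate point to be making the differentiation of $\beta$ precise in the manifold setting, i.e.\ justifying $\dot\beta(t) = s\,\dot\gamma(st)$ via $T\gamma(st,s) = s\,T\gamma(st,1)$ using linearity of $T_{st}\gamma$ in its second argument. Everything else is bookkeeping once uniqueness of the Lie type equation is invoked, and the passage to $T_0\exp$ is immediate from the constant-curve specialisation.
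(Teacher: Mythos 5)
Your proof is correct and follows essentially the same route as the paper: identify $t\mapsto\Evol(\eta)(st)$ as the solution of the Lie type equation for $s\eta_s$ via uniqueness, evaluate at $t=1$, and then specialise to a constant curve and differentiate $s\mapsto\exp(sv)$ at $s=0$. The only cosmetic difference is that the paper computes $T_0\exp(v)$ directly as $\Evol(\eta)(0).\eta(0)=v$ from the Lie type equation, whereas you phrase the same derivative via the integral curve of the left-invariant field $L_v$; these are the same computation.
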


\begin{proof}
 The curve $t \mapsto \Evol (\eta) (st)$ takes $0$ to the identity in $G$ and its derivative is $\frac{\mathrm{d}}{\mathrm{d}t} \Evol (\eta)(st) = s \Evol (\eta)(st).\eta(st)$. Thus it solves the Lie type equation  for the curve $s\eta_s$. This proves the first identity, while we obtain the second for $t=1$. 
 Let now $\eta (t) \coloneq v$ be constant for $v \in \Lf (G)$. Then $\eta_s (t)=\eta(t)$ and we see that $\exp(sv)=\evol (s\eta_s) = \Evol (\eta)(s)$.
 Derivating at $s=0$ yields 
 $$T_0 \exp (v) = \left.\frac{\mathrm{d}}{\mathrm{d}s}\right|_{s=0} \exp(sv) = \left.\frac{\mathrm{d}}{\mathrm{d}s}\right|_{s=0} \Evol (\eta)(s) = \underbrace{\Evol(\eta)(0)}_{=\one_G}.\eta(0)=v.$$
\end{proof}

Unfortunately, the observation that $T_0 \exp = \id_{\Lf(G)}$ is not as useful as in the Banach setting, where the inverse function theorem implies that the Lie group exponential is a local diffeomorphism onto a neighborhood of the unit of the group (then the Lie group the exponential yields a canonical chart called ``exponential coordinates'').\index{Lie group!exponential coordinates}
In general the Lie group exponential need \textbf{neither} be locally injective \textbf{nor} locally surjective. Most prominently, this happens for the diffeomorphism group $\Diff (M)$ from \Cref{ex:Diffgp}. We discuss the special case for $M=\SSS^1$ below in  \Cref{ex:badexponential}. Lie groups for which the exponential function is well behaved thus deserve a special name:

\begin{defn}
 A regular Lie group $G$ is called \emph{locally exponential}\index{Lie group!locally exponential} if the exponential function $\exp \colon \Lf(G) \rightarrow G$ restricts to a local diffeomorphism between a neighborhood of $0 \in \Lf(G)$ and $\one_G \in G$. 
\end{defn}
Unfortunately, diffeomorphism groups are not locally exponential as the following classical example shows: 

\begin{ex}\label{ex:badexponential}
 Consider the unit circle $\SSS^1$. We will show that the image of the Lie group exponential of the diffeomorphism group $\Diff (\SSS^1)$ contains no identity neighborhood. 
 Recall from \Cref{Diff:reg} that the Lie group exponential is the map
 $$\exp \colon \Lf (\Diff (\SSS^1)) = \mathcal{V} (\SSS^1) \rightarrow \Diff(\SSS^1),\quad V \mapsto \text{Fl}^V_1$$
 assigning to a (time independent) vector field its time $1$-flow. 
 
 Recall that $\theta \colon \R \rightarrow \R/2\pi \cong \SSS^1, \theta \mapsto e^{i\theta}$ is a submersion. Composing the submersion with a vector field of the circle, we identify vector fields of the circle with $2\pi$-periodic maps $\R \rightarrow \R$. For a constant vector field $X^c(\theta ) \equiv c$ a quick computation shows that its flow $\exp(tX^c(\theta)) =e^{i(\theta +tc)}$ is for fixed $t$ a rotation of the circle. 
 
 \textbf{Step 1:} \emph{A diffeomorphism $\eta \in \Diff (\SSS^1)$ without fixed points is an exponential of $V \in \mathcal{V}(\SSS^1)$ if and only if $\eta$ is conjugate to a rotation}. 
 If $V \in \mathcal{V}(\SSS^1)$ has a zero, its exponential has a fixed point. Thus if $\eta = \exp(X)$, we must have $X (\theta) \neq 0$. We will now construct $\varphi \in \Diff (\SSS^1)$ and a constant vector field $X^c$ such that $\varphi \circ \eta = \exp(X^c) \circ \varphi$. Assume for a moment that for all $t \in [0,1]$ we have the identity $\varphi \circ \exp (tX) =\exp (tX^c) \circ \varphi$. Differentiating at $t=0$ yields with \Cref{lem:deriv:evol} the identity 
 $T_{\theta}\varphi (X(\theta)) = X^c(\varphi (\theta))$ for all $\theta \in \SSS^1$. For ease of computation identify now $\varphi$ and $X$ with periodic mappings $\R \rightarrow \R$. Then the equation reads $\varphi'(\theta)X(\theta) = c$ for all $\theta \in \R$. Integrating we obtain with $\varphi (0)=0$ and the fact that $X$ vanishes nowhere that $\varphi (\theta) = \int_0^\theta c / X(s)\mathrm{d}s$. We will now choose $c$ such that $\varphi (\theta +2\pi) - \varphi(\theta) = \int_t^{t+2\pi} c/X(s)\mathrm{d}s = 2\pi$ (because then $\varphi$ descends to a diffeomorphism of $\SSS^1$). Since $X$ is $2\pi$-periodic, we can simply choose $c = 2\pi \int_0^{2\pi} X(s)\mathrm{d}s$ 
 Since the flows of the $\varphi$-related vector fields $X$ and $T\varphi^{-1}\circ X \circ \varphi$ are conjugate by $\varphi$ we obtain $\varphi \circ\eta = \varphi \circ \exp(X) = \exp (X^c) \circ \varphi = R_c \circ \varphi$. Thus if a fixed point free diffeomorphism of $\SSS^1$ is the exponential of a vector field, it is conjugate to a rotation.
 
 \textbf{Step 2:} \emph{Diffeomorphisms near the identity which are not exponentials.} 
 We claim that there are diffeomorphisms $\varphi$, arbitrarily near the identity $\id_{\SSS^1}$ such that $\varphi$ 
 \begin{enumerate}
  \item has no fixed points
  \item there exists $\theta_0 \in \SSS^1$ and $n \in \N$, $n >1$ such that $\varphi^n (\theta_0) = \theta_0$, but $\varphi^n \neq \id_{\SSS^1}$. 
 \end{enumerate}
 If this were true, then we note that if $\varphi$ is an exponential, (a) and Step 1 imply that it must be conjugate to rotation. However, this is impossible by (b) as a rotation which has a point of period $n$ must be periodic with period $n$ itself. Thus the image of the exponential does not contain $\varphi$ and $\Diff(\SSS^1)$ can not be locally exponential.
 
 To construct a diffeomorphism with properties (a) and (b) consider for $n \in \N$ large enough and $0 <\varepsilon < 1/n$ the maps
 $$f_{n,\varepsilon} (\theta) = \theta + \frac{\pi}{n} +\varepsilon \sin^2(n\theta),$$
 descend to diffeomorphisms of the circle which satisfy (a)-(b) and can be made arbitrarily close to the identity (in the compact open $C^\infty$-topology). 
 We leave the details as Exercise \ref{ex:regular} 6.
 
 More generally one can prove that the diffeomorphism group of a compact manifold is not locally exponential.
\end{ex}

\begin{ex}
 \begin{enumerate}
  \item As a consequence of \Cref{lem:deriv:evol} and the inverse function theorem, every Banach-Lie group and thus in particular every finite-dimensional Lie group is locally exponential.
  \item The unit group of a Mackey complete CIA $A$ (cf.\, \Cref{ex:CIAregular}) is locally exponential, \cite{MR2997582}. 
  \item If $G$ is a locally exponential Lie group, we shall see in Exercise \ref{ex:currentgroup} 6. that the current groups $C^\infty (K,G)$ (see \Cref{sect:currentgp}) are locally exponential. Due to \cite{MR2353707} this generalises even to the group of gauge transformations of a principal bundle with locally exponential gauge group.
 \end{enumerate}
\end{ex}

\begin{rem}
 While in infinite dimensions not every closed subgroup of a (locally exponential) Lie group is again a Lie group (see \Cref{closedsubgrp}), there are conditions which ensure that a closed subgroup of a locally exponential Lie group is a Lie subgroup. We refer to \cite[IV.]{Neeb06} for more information.   
\end{rem}

\begin{Exercise} \label{ex:regular}\vspace{-\baselineskip}
 \Question A smooth map $\alpha \colon \R \rightarrow G$ to a Lie group is called \emph{$1$-parameter subgroup} of (a regular Lie group) $G$ if it is a group homomorphism, i.e.\ $\alpha(s+t)=\alpha (s)\alpha(t) \forall s,t \in \R$.
 \subQuestion Show that a $1$-parameter subgroup $\alpha$ is an integral curve of the left-invariant vector field $L_v$ and the right-invariant field $R_v$, generated by $v\coloneq \left.\frac{\mathrm{d}}{\mathrm{d}t}\right|_{t=0}\alpha(t)$, i.e.\ $\dot{\alpha}(t) = L_v (\alpha(t))$ and $\dot{\alpha}(t) = R_v (\alpha(t))$.
 \subQuestion Show that if $\alpha, \beta \colon \R \rightarrow G$ are smooth, $\alpha(0)=\one=\beta(0)$ with $\dot{\alpha}(t)=L_v(\alpha(t))$ and $\dot{\beta}(t) = R_v (\beta(t))$ (the associated right-invariant vector field) then $\alpha = \beta$ and these curves are a $1$-parameter subgroup.\\
 {\tiny \textbf{Hint:} To show equality differentiate $\alpha(t)\beta(-t)$, for the second statement derivate $\beta(t-s)\beta(s)$.}
 \Question Consider the multiplicative group $(\R^\times, \cdot)$ as a Lie group. Show that $\delta^\ell (f)(t) = f'(t)/f(t)$ and explain the name ``logarithmic derivative''.
 \Question Let $\gamma_i \colon [0,1] \rightarrow G, i=1,2$ be smooth curves. Show that $\delta^\ell (\gamma_1) = \delta^\ell (\gamma_2)$ if and only if $\gamma_1 (t) = g\gamma_2 (t), \forall t \in [0,1]$ and some fixed $g \in G$.
 \Question Consider a locally convex space $(E,+)$ as a Lie group. Show that the Lie bracket on $\Lf(E) = E$ vanishes and the Lie group exponential is $\exp_E = \id_E$. 
 \Question Let $\alpha \colon G \rightarrow H$ be a Lie group morphism between regular Lie groups. Show that 
 \subQuestion for $\gamma \in C^1 ([0,1],G)$ we have $\delta^\ell (\alpha \circ \gamma) = \Lf(\alpha) (\delta^\ell (\gamma))$.
 \subQuestion for $\eta \in C^\infty ([0,1],\Lf(G))$ one has $\Evol (\Lf(\alpha)\circ \eta) = \alpha \circ \Evol (\eta)$ (a similar formula holds for $\evol$). Further, prove the naturality of the Lie group exponential, i.e.\ 
 \begin{align}\label{naturalexp}
  \exp_H \circ \Lf (\alpha) = \alpha \circ \exp_G.
 \end{align}

 \Question Consider for $n \in \N$ and $0< \varepsilon < 1/n$ the maps $f_{n, \varepsilon} \colon \R \rightarrow \R, \theta \mapsto \theta + \pi/n + \varepsilon \sin^2 (n\theta).$
 \subQuestion Show that $f_{n,\varepsilon}$ descends via the submersion $\R \rightarrow \R/2\pi\mathbb{Z}$ to a diffeomorphism $\varphi_{n,\varepsilon}$ of $\SSS^1$. Moreover, prove that there are $\varphi_{n,\varepsilon}$ arbitrarily near the identity (where $\Diff (\SSS^1) \opn C^\infty (\SSS^1,\SSS^2)$ carries the compact open $C^\infty$-topology).\\
 {\footnotesize \textbf{Hint:} Control $f_{n,\varepsilon}$ in the compact open $C^\infty$-topology on $\R$.}
 \subQuestion Show that $\varphi_{n,\varepsilon}$ does not possess a fixed point, but since $f_{n,\varepsilon}^{2n}(0)=0$ (modulo $2\pi$) it has a periodic point of period $2n$.
 \subQuestion Show that the $2n$-periodic orbit $\varphi_{n,\varepsilon}^k (0), k=1,\ldots, 2n-1$ is unique, i.e.\ if $\theta$ is not contained in the orbit, then $\varphi_{n,\varepsilon}^{2n} (\theta) \neq \theta$. We deduce that $\varphi_{n,\varepsilon}^{2n} \neq \id_{\SSS^1}$. 
 \Question Let $\mathfrak{g}$ be a finite-dimensional Lie algebra. By Lie's third theorem, \cite[Theorem 9.4.11]{HaN12} there exists a connected, simply connected Lie group $G$ such that $\Lf(G)=\mathfrak{g}$. Assume that $M$ is a compact manifold such that $\phi \colon \mathfrak{g} \rightarrow \mathcal{V}(M)$ is a Lie algebra morphism (with respect to the negative of the usual bracket of vector fields). Prove that $\phi$ induces a smooth action of  $G$ on $M$ (this is a very restricted version of the Lie-Palais theorem \cite{Pal57}).  
\end{Exercise}

\begin{Answer}[number={\ref{ex:regular} 7.}] 
 \emph{(Mini Lie-Palais) Every Lie algebra morphism $\phi\colon \mathfrak{g} \rightarrow \mathcal{V}(M)$ from a finite-dimensional Lie algebra $\mathfrak{g}$ to the Lie algebra of vector fields of a compact manifold $M$ (with the negative of the usual bracket) gives rise to a Lie group action $G \times M \rightarrow M$ (with $\Lf (G)=\mathfrak{g}$). 
}\\[1.3em]

Note first that since $\mathfrak{g}$ is finite-dimensional and $\phi$ is linear, $\phi$ is automatically continuous, whence a morphism of locally convex Lie algebras. By Lie's third theorem, \cite[Theorem 9.4.11]{HaN12} there exists a connected, simply connected Lie group $G$ such that $\Lf(G)=\mathfrak{g}$. Now $G$ is finite-dimensional and thus regular and $\Diff (M)$ is regular by \Cref{Diff:reg}. Hence we can apply Lie's second theorem for regular Lie groups, \Cref{LieII:reg}, to integrate $\phi$ to a Lie group morphism $\Phi \colon G \rightarrow \Diff (M)$, i.e.\ $\Lf (\Phi) = \phi$. Exploiting that $\Diff (M) \opn C^\infty (M,M)$ and $C^\infty (M,M)$ is a canonical manifold, \Cref{la:cano:mfdmap}, the adjoint map $\Phi^\wedge \colon G \times M \rightarrow M$ is a Lie group action (naturally induced by $\phi$.
\end{Answer}

\section{The current groups}\label{sect:currentgp}
In this section we construct a class of infinite-dimensional Lie groups which occur naturally in theoretical physics: loop groups and current groups. As a first step in the construction we prove a useful local characterisation of Lie groups

\begin{tcolorbox}[colback=white,colframe=blue!75!black,title=Multiplicative notation for sets]
In the statement of \Cref{loc:Liegrp} below we use multiplicative notation for sets. Recall that $A\cdot B$ means the set of all elements which can be written as a product of elements in $A$ and $B$ (in that order).
\end{tcolorbox}

\begin{prop}[{Bourbaki, \cite[Ch.\, III, \S 1, No.\, 9 Proposition 18]{MR1728312}}] \label{loc:Liegrp} \index{Lie group!Bourbaki construction principle}
 Let $G$ be a group and $U,V \subseteq G$ such that $\one \in V = V^{-1} \coloneq \{g \in G \mid g^{-1}\in V\}$ and $V\cdot V \subseteq U$. Assume that $U$ is equipped with a (smooth) manifold structure such that $V \opn U$ and the mappings $\iota|_V^V \colon V \rightarrow V$ and $m_G|_{V\times V} \colon V \times V \rightarrow U$ are smooth. Then the following holds:
 \begin{enumerate}
  \item There is a unique manifold structure on the subgroup $$G_0 \coloneq \langle V\rangle \coloneq \{v_1\cdots v_k \mid v_i\in V, k\in \N\} $$ such that $G_0$ becomes a Lie group, $V\opn G_0$ and $G_0$ and $U$ induce the same manifold structure on $V$.
  \item Assume that for each $g$ in a generating set of $G$ there is $\one \in W_g \opn U$ such that $gW_gg^{-1} \subseteq U$ and $c_g \colon W_g \rightarrow U, h\mapsto ghg^{-1}$ is smooth. Then there is a unique manifold structure on $G$ turning $G$ into a Lie group such that $V\opn G$ and both $G$ and $U$ induce the same manifold structure on $V$.
 \end{enumerate}
\end{prop}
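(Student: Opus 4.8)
The idea is the classical one: transport the manifold structure of $V$ across the group by left translations and reduce the entire construction to a single smoothness lemma. Fix the model space $E$ of the manifold $U$. For each chart $\varphi\colon O\rightarrow E$ of the manifold $V$ (with $O\opn V$) and each $g\in G_0$, I would define a candidate chart $\varphi_g\colon gO\rightarrow E$, $\varphi_g(y)\coloneq \varphi(g^{-1}y)$. First I topologise $G_0$ by declaring $A\subseteq G_0$ to be open iff $g^{-1}A\cap V$ is open in $V$ for every $g\in G_0$; one checks directly that this is a topology, that every $\lambda_g$ is a homeomorphism, and (using the lemma below) that each $gO$ is open with $\varphi_g$ a homeomorphism onto $\varphi(O)$. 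Since the charts $O$ cover $V$ and $\one\in V$, the $gO$ cover $G_0$. The transition map $\varphi_g\circ\psi_h^{-1}$ equals $\varphi\circ\mu_w\circ\psi^{-1}$ on its domain, where $w\coloneq g^{-1}h$ and $\mu_w(x)\coloneq wx$, so everything rests on the following.

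\textbf{Key Lemma.} For $w\in G$ the set $D_w\coloneq\{x\in V\mid wx\in V\}$ is open in $V$ and $\mu_w\colon D_w\rightarrow V$ is smooth. To prove it, take $x_0\in D_w$ and put $b\coloneq wx_0\in V$ and $a\coloneq x_0^{-1}\in V$; then $w=ba$ and, for $x$ near $x_0$, one has $wx=b(ax)$ with $ax\in V$ (because $ax_0=\one\in V$ and $V\opn U$) and $b(ax)\in V$. Thus near $x_0$ the map $\mu_w=\mu_b\circ\mu_a$ is a composition of the elementary translations $\mu_a,\mu_b$ by elements of $V$, and these are smooth as partial maps of the given smooth $m_G|_{V\times V}$ corestricted to the open set $V\opn U$, with open domains obtained as preimages of $V$; openness of $D_w$ follows from the same description. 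The mirror argument yields local smoothness of right translations $\nu_w(x)=xw$, and combining the two gives that conjugation $c_q=\mu_q\circ\nu_{q^{-1}}$ is smooth on a neighbourhood of $\one$ for every $q\in G_0$ (here $\nu_{q^{-1}}(\one)=q^{-1}\in V$ lies in the domain of $\mu_q$ since $qq^{-1}=\one\in V$).

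With left translations now diffeomorphisms, smoothness of multiplication at a general $(p,q)\in G_0\times G_0$ reduces, after composing with the diffeomorphisms $\lambda_p\times\lambda_q$ on the source and $\lambda_{(pq)^{-1}}$ on the target, to smoothness of $(x,y)\mapsto c_{q^{-1}}(x)\,y$ near $(\one,\one)$, which holds by the previous step together with $m_G|_{V\times V}$; inversion is handled analogously starting from $\iota|_V^V$. Hausdorffness follows because the intersection of all neighbourhoods of $\one$ in $G_0$ is contained in $V$, which is $T_1$, so $\{\one\}$ is closed and the topological group is Hausdorff. Uniqueness is immediate: any structure with the required properties has $V$ as an open submanifold and left translations as diffeomorphisms, which forces exactly the atlas $\{\varphi_g\}$, hence the smooth structure. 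This establishes (a).

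For part (b) I would keep the identical recipe, now letting $g$ range over all of $G$: the charts $\varphi_g$ on $gO$ cover $G$ (as $\one\in V$), and whenever two of them overlap one has $w=g^{-1}h\in V\cdot V$, so the Key Lemma applies verbatim and the atlas is smooth with no extra hypothesis. \emph{The obstruction is concentrated entirely in the group operations.} The reduction above now requires conjugation $c_{q^{-1}}$ to be smooth near $\one$ for arbitrary $q\in G$, and the derivation via the Key Lemma breaks down precisely because $qV\cap V=\emptyset$ when $q$ lies far from $G_0$, so the factorisation $w=ba$ is no longer available. This is exactly where the standing hypothesis of (b) enters: $c_g$ is assumed smooth near $\one$ for each $g$ in a generating set, and since $c_{g_1g_2}=c_{g_1}\circ c_{g_2}$ and conjugation by $G_0$-elements was already shown to be smooth, one propagates smoothness of conjugation to all of $G$ by writing an arbitrary element as a word in the generators, then feeds this back into the reduction to obtain smooth multiplication and inversion on $G$. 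Thus the main obstacle is twofold but has a single source: controlling all products $wx$ through the one elementary building block $m_G|_{V\times V}$ via the factorisation trick, and supplying by hand the conjugation smoothness that this trick can no longer produce for generators moving $V$ off itself.
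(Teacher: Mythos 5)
Your architecture is essentially the paper's (left‑translated charts, smoothness of transitions via the elementary product $m_G|_{V\times V}$, then reduction of the group operations to conjugation near $\one$), and your Key Lemma is a genuinely cleaner organising device: the factorisation $w=(wx_0)\cdot x_0^{-1}$ with both factors in $V$ handles the transition map for an \emph{arbitrary} $w=g^{-1}h\in G$ in one stroke, where the paper instead shrinks the chart domain to a symmetric $W$ with $W^3\subseteq U$ so that overlapping charts force $g_2^{-1}g_1\in W^2$. However, one step in part (a) fails as written. You assert that $c_q=\mu_q\circ\nu_{q^{-1}}$ is smooth near $\one$ for \emph{every} $q\in G_0$, justified by ``$\nu_{q^{-1}}(\one)=q^{-1}\in V$''. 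That inclusion holds only when $q\in V$ (using $V=V^{-1}$); for a general $q\in G_0$ the intermediate point $q^{-1}$ need not lie in $V$, so it is not in the domain $D_q=\{x\in V\mid qx\in V\}$ supplied by your Key Lemma, and the composition $\mu_q\circ\nu_{q^{-1}}$ is not even defined at $\one$. Since your entire reduction of multiplication and inversion at $(p,q)$ rests on smoothness of $c_{q^{-1}}$ near $\one$ for arbitrary $q\in G_0$, this is a real hole in the crux of (a). It is repairable by exactly the mechanism you only deploy in part (b): write $q=v_1\cdots v_k$ with $v_i\in V$ and use $c_q=c_{v_1}\circ\cdots\circ c_{v_k}$, each factor being smooth near $\one$ and fixing $\one$, so that the composition is defined and smooth near $\one$. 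This inductive propagation from $V$ to $V^k$ is precisely what the paper's identity \eqref{Lieloc} together with \eqref{conj:ext} accomplishes, so once you insert it your proof of (a) aligns with the paper's.

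A smaller remark on (b): ``writing an arbitrary element as a word in the generators'' only reaches the sub\emph{semigroup} generated by the generating set; a general element of $G$ is a word in the generators \emph{and their inverses}, and the hypothesis gives smoothness of $c_g$ near $\one$ only for generators $g$. Smoothness of $c_{g^{-1}}=(c_g)^{-1}$ does not follow formally in this setting, where no inverse function theorem is available. The paper's own proof of (b) glosses over the same point (it concludes after establishing smoothness of $\delta$ on a neighbourhood of $G\times\bigcup_k\mathcal{G}^k$ only), so I flag this as a shared rough edge rather than a defect specific to your argument; the clean fix is to assume the generating set symmetric or to verify the conjugation hypothesis for the inverses of the generators as well.
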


\begin{proof}
 \textbf{Step 1: Shifted open sets} If $A \opn V$ and $v_0 \in V$ with $v_0 A \subseteq V$, then $v_0A$ is open in $V$ as the preimage of $A$ under the smooth map $\delta_{v_0} \colon V \rightarrow U, g \mapsto v_0^{-1}g$.\\
 \textbf{Step 2: Shifted charts} 
 Pick $W \opn V$ such that $W=W^{-1}$, $W^3=W\cdot W \cdot W \subseteq U$ and there exists a manifold chart $(\varphi,W)$ of $U$. For $g \in G$ we consider the map $\varphi_g \colon gW \rightarrow \varphi(W), h \mapsto \varphi(g^{-1}h)$. The idea is now to construct an atlas from these shifted charts which is compatible with the manifold structure on $V$.\\
 \textbf{Step 3: Manifold induced by shifted charts}
 Observe that for $g_1W\cap g_2 W \neq \emptyset$ we have $g_2^{-1}g_1 \in W^2$ (this entails $g_1^{-1}g_2 \in W^2$). Then Step 1 yield $W \cap g^{-1}_2g_1W \opn W$, whence $D_{g_1,g_2}\coloneq \varphi_{g_1} (g_1W\cap g_2W)= \varphi (W\cap g^{-1}_2g_1W)$ is open in the model space. For $d \in D_{g_1,g_2}$ we have now 
 $$\varphi_{g_2}\circ \varphi_{g_1}^{-1} (d) = \varphi (g_2^{-1}g_1\varphi^{-1}(d)),$$
 which is a smooth mapping by choice of $W$, whence the change of charts are smooth. 
 Endow $G$ with the final topology with respect to the atlas $\mathcal{S} \coloneq (\varphi_g)_{g \in G}$ (this topology is Hausdorff as $W$ is Hausdorff). We conclude that $\mathcal{S}$ is a manifold atlas turning $G$ into a smooth manifold.
 Moreover, for each $g_0 \in G$ we have $\varphi_{gg_0} \circ \lambda_{g_0}|_{gW} = \varphi_g$, whence the left translation with elements in $G$ is smooth for this manifold structure.\\
 \textbf{Step 4: The manifold structures coincide on $V$.} By our assumptions on multiplication and inversion, we can find for every  $v_0 \in V$ a set $\one \in A\opn W$ such that $v_0 A \subseteq V$. Step 3 shows that $v_0 A$ is open in $G$ and $A \rightarrow v_0A, h \mapsto v_0h$ is a diffeomorphism with respect to the structure induced by $\mathcal{S}$, in particular, $V\opn G$. Now Step 1 implies that this holds for the structure induced by $U$, whence both coincide on $V$.
 
 We are now in a position to prove both claims of the proposition:
 \begin{enumerate}
  \item By definition we have $G_0 = \langle V \rangle = \bigcup_{k\in \N} V^k$. We can write $V^2 = \bigcup_{g \in V} gV \opn G$ and thus inductively, $V^k \opn G$ and $G_0 \opn G$. Denote by $(G,\mathcal{S})$ the manifold induced by the atlas $\mathcal{S}$. Then Step 4 shows that $\delta \colon V \times V \rightarrow G_0 \opn (G,\mathcal{S}), (g,h) \mapsto gh^{-1}$ is smooth and this is equivalent to the smoothness of multiplication and inversion on the set $V$. For $g_0,h_0, g,h\in G$ we let $c_{h_0}(a)\coloneq h_0ah_0^{-1}$ and obtain the identity
  \begin{align}
   \delta(g,h) &= (h_0g_0^{-1})^{-1} h_0(g_0^{-1}g)(h_0^{-1}h)^{-1}h_0^{-1} \notag \\ 
   &= \lambda_{g_0^{-1}h_0} \circ c_{h_0} \circ \delta (\lambda_{g_0^{-1}} (g) ,\lambda_{h_0^{-1}}(h)) \label{Lieloc}
  \end{align}
  We proceed now by induction on $k$ and show first that $\delta$ is smooth on $V^k\times V$. For $k=1$ we know that $\delta$ is smooth. However, note that also $c_{h_0} \colon V \rightarrow G_0$ is smooth as 
  \begin{align}\label{conj:ext}
   c_{h_0} (g) =\lambda_{h_0} \circ \delta (g,h_0)
  \end{align}
 and $\delta$ is smooth on $V\times V$. Moreover, if $g \in V^k$ we can pick $g_0 \in V^{k-1}$ such that for all $x$ in a $g$-neighborhood $g_0^{-1}x \in V$. Following again \eqref{conj:ext}, we see that 
 $$c_{h_0}(x)= \lambda_{h_0g_0} \delta (g_0^{-1}x,h_0) \text{ is smooth for all } h_0 \in H \text{ and $x$ near $g$.}$$
 We conclude that $c_{h_0} \colon G_0 \rightarrow G_0$ is smooth for each $h_0 \in V$. \\
 Thus for the induction step from $k$ to $k+1$ we see now that $\delta \colon V^{k+1}\times V \rightarrow G_0$ is given in a neighborhood of $(g,h)$ by the composition of smooth maps \eqref{Lieloc} for $h_0\coloneq h$ and $g_0 \in V$ with $g_0^{-1}g \in V^{k}$. It follows that $\delta \colon G_0 \times V \rightarrow G_0$ is smooth. Now a trivial induction together with \eqref{Lieloc} shows that $\delta \colon G_0 \times G_0 \rightarrow G_0$ is smooth. We conclude that $G_0$ is a Lie group.   
  \item Prove inductively that $\delta$ is smooth of an element $(g,h)$ where $g$ is arbitrary and $h = s_1 \cdots s_n$ for $s_i, i=1,\ldots ,n$ in the generating set $\mathcal{G}$. Starting with $n=1$ we choose $h_0 \coloneq s_1^{-1}$ and $g_0 \coloneq g^{-1}$. By (a) the map $\delta$ is smooth on $G_0$, whence there is an open $(g,h)$-neighborhood which gets mapped by $\delta \circ (\lambda_{g_0}, \lambda_{h_0})$ into $W_g$. Since $h_0 = s_1 \in \mathcal{G}$ by the assumption, \eqref{Lieloc} implies that $\delta$ is smooth in a neighborhood of $(g,h)$. Now by \eqref{conj:ext} conjugation with $h_0 \in \mathcal{G}$ is smooth in a $g$-neighborhood, whence on all of $G$. If $\delta$ is smooth on an open neighborhood of $G \times \bigcup_{1\leq k \leq n}\mathcal{G}^k$ and $h=s_1\cdots s_{n+1}$. Arguing as above with $h_0 = s_1^{-1}$ shows that $\delta$ is smooth in a neighborhood of $G \times \bigcup_{1\leq k \leq n+1}\mathcal{G}^k$ for all $n \in \N$. As smoothness of $\delta$ is equivalent to the smoothness of the group operations, we deduce that $G$ is a Lie group.
 \end{enumerate}
 The uniqueness of the manifold structure of the Lie groups $G_0$ and $G$ in (a) and (b) follows from the fact that any other manifold structure with these properties induces the same manifold structure on the open $\one$-neighborhood $V$. 
\end{proof}

We will now apply the Bourbaki result to construct the Lie group structure for the current group $C^\infty (K,G)$. The crucial insight here is that for a Lie group, an atlas of charts can be constructed by left translating a chart at the identity. We will see in the construction of the current group, that this allows us to construct a canonical manifold of mappings by left translating the pushforward of a suitable chart at the identity. 

\begin{tcolorbox}[colback=white,colframe=blue!75!black,title=General assumption]
For the rest of this section we let $K$ be a compact manifold, $G$ a (possibly infinite-dimensional) Lie group with Lie algebra $\Lf(G)$.
Then we choose and fix $ \one \in U_1 \opn G$ together with a chart $\varphi \colon U_1 \rightarrow U \subseteq \Lf(G)$ such that $\varphi(\one)=0$ and $T_{\one} \varphi = \id_{\Lf(G)}$. Further we pick $\one \in V_1 \opn U_1$ such that $V_1\cdot V_1 \subseteq U_1, V_1^{-1} = V_1$ and set $V \coloneq \varphi(V_1)$. Mapping spaces will as always be endowed with the compact open $C^\infty$-topology,
\end{tcolorbox}

Let us now exploit again the local formulation of the Lie group structure of $G$ in a chart around the identity (as in the identification of the Lie algebra, \Cref{setup:locmult}). In order to clearly distinguish the local group operations from the (globally defined) group operations and the pushforwards appearing on the function spaces we introduce new symbols for the local operations.

\begin{setup}\label{setup:locop}
 The mappings 
 $$\mu \colon V \times V \rightarrow U, \mu(x,y)\coloneq \varphi(\varphi^{-1}(x)\varphi^{-1}(y)), \quad \iota \colon V \rightarrow V, \iota(x) \coloneq \varphi(\varphi^{-1}(x)^{-1})$$
 are smooth. Moreover, $C^\infty (K,U_1)$ is an open subset of $C^\infty (K,G)$ and we equip it with the smooth manifold structure making the bijection $C^\infty (M,U_1) \rightarrow C^\infty (K,U), \gamma \mapsto \varphi \circ \gamma$ a diffeomorphism of smooth manifolds. Note that due to Exercise \ref{ex:canmfd} 2., the manifold $C^\infty (K,U)$ is canonical.
 Then the pushforwards $\mu_* \colon C^\infty (K,V\times V) = C^\infty (K,V) \times C^\infty(K,V) \rightarrow C^\infty (K,U)$ and $\iota_* \colon C^\infty(K,V)\rightarrow C^\infty(K,V)$ are smooth by \Cref{la-reu}.
\end{setup}

\begin{thm}\label{thm:currentgroup}
 The pointwise operations turn $C^\infty (K,G)$ into a Lie group, a \emph{current group}.\index{current group} Its Lie algebra is $C^\infty (K,\Lf(G))$ with the pointwise Lie bracket (a \emph{current algebra}).\index{Lie algebra!current algebra}
\end{thm}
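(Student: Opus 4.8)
The plan is to apply the Bourbaki construction principle (\Cref{loc:Liegrp}) with the local data assembled in \Cref{setup:locop}, and then to identify the Lie algebra. The group $G_{\text{curr}} \coloneq C^\infty(K,G)$ carries the pointwise group operations, and the distinguished subsets are $\mathcal{U} \coloneq C^\infty(K,U_1)$ and $\mathcal{V} \coloneq C^\infty(K,V_1)$. First I would verify the algebraic hypotheses: since $V_1 = V_1^{-1}$ and $\one \in V_1$ hold pointwise, the same is true for $\mathcal{V}$ with respect to the constant function $\mathbf{1}$ (the pointwise unit), and $V_1 \cdot V_1 \subseteq U_1$ gives $\mathcal{V}\cdot\mathcal{V} \subseteq \mathcal{U}$ evaluated pointwise. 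The smooth manifold structure on $\mathcal{U}$ is the one transported from the canonical manifold $C^\infty(K,U)$ via $\gamma \mapsto \varphi\circ\gamma$, as fixed in \Cref{setup:locop}, and $\mathcal{V}$ is open in $\mathcal{U}$ because $V \opn U$ and the pushforward $\varphi_*$ is a homeomorphism onto $C^\infty(K,U)$ with $C^\infty(K,V)$ open by \Cref{rem:Cinftopology}(d) (the compact open $C^\infty$-topology is finer than the compact open topology).

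The heart of the verification is smoothness of the local operations. In the chart $\varphi_*$ the inversion $\iota|_{\mathcal{V}}^{\mathcal{V}}$ becomes the pushforward $\iota_* \colon C^\infty(K,V) \rightarrow C^\infty(K,V)$ of the smooth local inversion $\iota \colon V \rightarrow V$, and the multiplication $m|_{\mathcal{V}\times\mathcal{V}}$ becomes $\mu_* \colon C^\infty(K,V)\times C^\infty(K,V) \rightarrow C^\infty(K,U)$, after identifying $C^\infty(K,V\times V) \cong C^\infty(K,V)\times C^\infty(K,V)$ (\Cref{base-cano}(d)). Both are smooth by \Cref{la-reu}, exactly as recorded in \Cref{setup:locop}. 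This gives hypothesis (a) of \Cref{loc:Liegrp}, yielding a Lie group structure on $\langle \mathcal{V}\rangle$. To obtain all of $C^\infty(K,G)$ I would invoke part (b): a natural generating set is $C^\infty(K,U_1)$ together with the constant functions (images of $G$ under the diagonal embedding $g \mapsto \mathbf{g}$), and conjugation $c_{\mathbf{g}}$ by a constant $\mathbf{g}$ is the pushforward of the smooth conjugation $c_g \colon G \rightarrow G$ restricted to a suitable $\mathbf{1}$-neighbourhood $W_{\mathbf{g}} \coloneq C^\infty(K, W_g)$, hence smooth by \Cref{la-reu}. One must check that these generators actually generate $C^\infty(K,G)$; this reduces to the fact that every smooth $K \rightarrow G$ has image in a single connected component and that $\langle \mathcal{V}\rangle$ together with constants reaches every such map, using compactness of $K$ to cover the image by finitely many translates of $U_1$ and a scattering argument as in \Cref{lem:scattering}.

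For the Lie algebra I would argue that $\Lf(C^\infty(K,G)) = T_{\mathbf{1}} C^\infty(K,G)$. Using the canonical manifold identification $TC^\infty(K,M) \cong C^\infty(K,TM)$ (\Cref{prop: can:locadd}) applied to $M = G$, the tangent space at $\mathbf{1}$ is $C^\infty(K, T_{\one}G) = C^\infty(K,\Lf(G))$, since a tangent vector at the constant map $\mathbf{1}$ is a section $K \rightarrow TG$ over $\mathbf{1}$, i.e.\ a smooth map into $T_{\one}G = \Lf(G)$. It remains to compute the bracket. Here I would use the local formula for the Lie bracket from \Cref{setup:locmult}, which expresses $\LB[v,w]$ through the second-order mixed derivative of the local multiplication $\ast$ at $(0,0)$. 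Because the local multiplication on the current group is the pushforward $\mu_*$ of the pointwise local multiplication $\mu$, and differentiation commutes with evaluation under the exponential law (\Cref{thm:explaw}), the bracket on $C^\infty(K,\Lf(G))$ is computed pointwise: $\LB[\xi,\eta](k) = \LB[\xi(k),\eta(k)]$ for $\xi,\eta \in C^\infty(K,\Lf(G))$ and $k \in K$. The main obstacle I anticipate is the bookkeeping in part (b) of Bourbaki — namely confirming that the constant maps plus $\langle\mathcal{V}\rangle$ generate and that the conjugation charts $W_{\mathbf{g}}$ can be chosen uniformly — rather than any deep analytic difficulty, since all smoothness statements are delivered directly by \Cref{la-reu} and the exponential law.
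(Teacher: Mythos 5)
Your construction of the local Lie group structure on $C^\infty(K,V_1)$ and your identification of the Lie algebra are sound, but the passage from the local group to all of $C^\infty(K,G)$ via part (b) of \Cref{loc:Liegrp} contains a genuine gap: the set $C^\infty(K,U_1)$ together with the constant maps does \emph{not} generate $C^\infty(K,G)$ in general. Take $K=G=\SSS^1$: every map with image in the (contractible) neighbourhood $V_1$, and more generally in $U_1$, is null-homotopic, products of null-homotopic loops are null-homotopic, and multiplying by constants does not change the winding number. Hence the subgroup generated by your proposed generators lies entirely in the degree-zero component of the loop group and misses, for instance, $\id_{\SSS^1}$. No covering or scattering argument can repair this, because the obstruction is the global homotopy type of $\gamma$, not a failure of local control. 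Consequently you never verify the conjugation hypothesis of \Cref{loc:Liegrp}(b) for the elements that actually need it.

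The paper sidesteps the generation question entirely by taking the generating set to be all of $C^\infty(K,G)$ and proving that conjugation by an \emph{arbitrary} $\gamma\in C^\infty(K,G)$ is smooth on an identity neighbourhood. The key ingredient, which is missing from your argument, is the parameter-dependent conjugation map: by compactness of $\gamma(K)$ one finds $\one\in W_1\opn V_1$ and $\gamma(K)\subseteq P\opn G$ with $pW_1p^{-1}\subseteq U_1$ for all $p\in P$, so that $h_\gamma\colon K\times W\rightarrow U$, $h_\gamma(x,y)=\varphi\bigl(\gamma(x)\varphi^{-1}(y)\gamma(x)^{-1}\bigr)$ is smooth; then \Cref{fstar-gen} (not merely the plain pushforward \Cref{la-reu}) shows that $(h_\gamma)_\star\colon C^\infty(K,W)\rightarrow C^\infty(K,U)$ is smooth, and this map \emph{is} conjugation by $\gamma$ near the identity. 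With that in hand, \Cref{loc:Liegrp}(b) applies directly. Your computation of the bracket via the pushed-forward local multiplication is a legitimate alternative to the paper's argument (which instead uses that the point evaluations $\ev_x$ are Lie group morphisms, so $\Lf(\ev_x)=\ev_x$ is a Lie algebra morphism and the bracket must be pointwise), but the Lie group structure must be established correctly first.
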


\begin{proof}
 We check that the assumptions of \Cref{loc:Liegrp} are satisfied: The identity element in $C^{\infty}(K,G)$ is the constant map $\one_{C^\infty (K,G)}(k) = \one_G$. Thus \Cref{setup:locop}, shows that  $C^\infty (K,U) \opn C^\infty (K,G)$ is an open identity neighborhood such that multiplication and inversion are smooth on the smaller neighborhood $C^\infty (K,V)$. 
 Let now $\gamma \in C^\infty (K,G)$, by compactness of $\gamma(K) \subseteq G$ there are $\one \in W_1 \opn V_1$ and $\gamma(K) \subseteq P \opn G$ such that $pW_1p^{-1} \subseteq U_1$ for all $p\in P$. Set $W \coloneq \varphi(W_1)$ and note that since $C^\infty (K,W) \opn C^\infty (K,V)$ so is $C^\infty (K,W_1) \opn C^\infty (K,V_1)$. Since $c\colon P \times W_1 \rightarrow U_1, h(p,w)\coloneq pwp^{-1}$ is smooth, so is 
 $$h_\gamma = \varphi \circ h \circ (\gamma \times \varphi^{-1}) \colon K \times W \rightarrow U, h_\gamma(x,y)=\varphi(\gamma(x)\varphi^{-1}(y)\gamma(x)^{-1})$$
Since the manifolds $C^\infty (K,W)$ and $C^\infty (K,U)$ are canonical (\Cref{setup:locop}), \Cref{fstar-gen} yields a smooth map $(h_\gamma)_\star \colon C^\infty (K,W) \rightarrow C^\infty (K,U), \eta \mapsto h_\gamma \circ (\id_W\times \eta).$
Now conjugation by $\gamma$ coincides on $C^\infty (K,W)$ with $(h_\gamma)_\star$ and is thus smooth on $C^\infty (K,W) \opn C^\infty (K,G)$. Now \Cref{loc:Liegrp} provides a unique smooth Lie group structure for $C^\infty (K,G)$.

To identify the Lie algebra, note that $\varphi_* \colon C^\infty (K,U_1) \rightarrow C^\infty (K,U), \gamma\mapsto \varphi \circ \gamma$ is a chart around the identity of the Lie group $C^\infty (K,G)$. Exploiting that $T_{\one} \varphi = \id_{\Lf(G)}$ we use \Cref{thetamp} to identify $T_{\one_{C^\infty (K,G)}} \varphi_* = (T_{\one} \varphi)_* = (\id_{\Lf(G)})_*$ whence $\Lf(C^\infty (K,G) \cong C^\infty (K,\Lf(G))$. Now the point evaluations $\ev_x \colon C^\infty (K,G) \rightarrow G, \gamma \mapsto \gamma(x)$ are Lie group morphisms (Exercise \ref{ex:currentgroup}), whence $\Lf (\ev_x) = \ev_x \colon C^\infty (K,\Lf(G))$ is a Lie algebra morphism by \Cref{Liealghom}. This implies $\LB[\gamma,\eta](x)=\LB[\gamma(x),\eta(x)]$ and thus the bracket is given by the pointwise bracket. 
\end{proof}

\begin{rem}
 For non-compact manifolds $M$, the group $C^\infty (M,G)$ can in general not be made a Lie group. To see this consider $\N$ as a $0$-dimensional manifold. Then $C^\infty (\N,\SSS^1) \cong (\SSS^1)^\N \coloneq \prod_{n \in \N} \SSS^1$ is a compact topological group. However, since it is not locally contractible (cf.~Exercise \ref{ex:currentgroup} 5.) it can not be a manifold and thus it can not be a Lie group. 
\end{rem}

We shall now prove that regularity of the target Lie group $G$ is inherited by the current group $C^{\infty } (K,G)$. The idea is simple: The Lie type equation on the current group is just a parametric version of the Lie type equation on the target Lie group. In other words, if $\eta \colon [0,1] \rightarrow C^\infty (K,\Lf(G))$ is a smooth curve, then we obtain for each $k\in K$ a Lie type equation 
$$\frac{\partial}{\partial t} \gamma^\wedge (t,k) = (\dot{\gamma} (t))(k) = (\gamma(t).\eta(t)) (k) = \gamma^\wedge (t,k).\eta^\wedge (t,k).$$
Due to the regularity of the target Lie group $G$, we can solve the Lie type equation (uniquely) for each fixed $k$. Then the solutions to these equations glue back together to a solution on the current group (due to a suitable exponential law).

\begin{prop}\label{prop:reglift}
 If $G$ is a regular Lie group, then $C^\infty (K,G)$ is a regular Lie group.
\end{prop}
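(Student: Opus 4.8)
The plan is to solve the Lie type equation on $C^\infty(K,G)$ \emph{pointwise in} $k$, exploiting the regularity of the target group $G$, and then to reassemble the pointwise solutions into a smooth object by means of the exponential law \Cref{thm:explaw} and the canonical manifold structure. Throughout I identify $\Lf(C^\infty(K,G)) \cong C^\infty(K,\Lf(G))$ as in \Cref{thm:currentgroup}, and I use freely that point evaluations and the evaluation maps of canonical manifolds of mappings are smooth. Given a smooth curve $\eta \in C^\infty([0,1], C^\infty(K,\Lf(G)))$, the exponential law lets me regard it as a smooth map $\eta^\wedge \colon [0,1]\times K \to \Lf(G)$, so that for each fixed $k$ the curve $\eta^\wedge(\cdot,k) \colon [0,1]\to \Lf(G)$ is smooth. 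Since $G$ is regular I set $\gamma^\wedge(t,k) \coloneq \Evol_G(\eta^\wedge(\cdot,k))(t)$, i.e.\ I solve the Lie type equation of $G$ separately for each $k$. This is the only possible candidate for $\Evol(\eta)$, since solutions of Lie type equations are unique (\Cref{rem:regularity} (a)).

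For \textbf{semiregularity} I must show that $\gamma \colon [0,1]\to C^\infty(K,G)$, $t\mapsto \gamma^\wedge(t,\cdot)$, is a smooth curve. By the canonical property of $C^\infty(K,G)$ this holds if and only if $\gamma^\wedge\colon [0,1]\times K\to G$ is smooth. Using \Cref{lem:deriv:evol} I would rewrite $\gamma^\wedge(t,k) = \evol_G(s\mapsto t\,\eta^\wedge(ts,k))$, which involves only the smooth time-one map $\evol_G$. The inner assignment $(t,k)\mapsto (s\mapsto t\,\eta^\wedge(ts,k))$ is a smooth map $[0,1]\times K \to C^\infty([0,1],\Lf(G))$, because by the exponential law its smoothness is equivalent to smoothness of $(t,k,s)\mapsto t\,\eta^\wedge(ts,k)$, a composition of scalar multiplication, the reparametrisation $(t,s)\mapsto ts$, and the smooth evaluation maps of the mapping spaces involved. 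Composing with $\evol_G$ gives smoothness of $\gamma^\wedge$. To see that $\gamma$ really solves the equation, I invoke $TC^\infty(K,G)\cong C^\infty(K,TG)$ (\Cref{prop: can:locadd}): under this identification $\dot\gamma(t)$ is $k\mapsto \tfrac{\partial}{\partial t}\gamma^\wedge(t,k)$, while left translation on $C^\infty(K,G)$ and its tangent map are pointwise, so $\gamma(t).\eta(t)$ corresponds to $k\mapsto \gamma^\wedge(t,k).\eta^\wedge(t,k)$. For each $k$ the two sides agree because $\gamma^\wedge(\cdot,k)=\Evol_G(\eta^\wedge(\cdot,k))$ solves the Lie type equation of $G$, and $\gamma(0)=\one$ since $\gamma^\wedge(0,k)=\one_G$. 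Hence $C^\infty(K,G)$ is semiregular.

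For \textbf{regularity} I note that $\evol(\eta)=\gamma^\wedge(1,\cdot)$, that is $\evol(\eta)(k)=\evol_G(\eta^\wedge(\cdot,k))$, and I factor $\evol = (\evol_G)_*\circ P$. Here $P\colon C^\infty([0,1],C^\infty(K,\Lf(G)))\to C^\infty(K,C^\infty([0,1],\Lf(G)))$ is the transposition $\eta\mapsto (k\mapsto (s\mapsto \eta(s)(k)))$, and $(\evol_G)_*$ is the pushforward by $\evol_G$. The map $P$ is smooth: by two applications of \Cref{thm:explaw} it is a bijection corresponding to the identity on $C^\infty([0,1]\times K,\Lf(G))$, and its smoothness reduces, via the canonical property, to smoothness of $(\eta,k,s)\mapsto \eta(s)(k)$, once more a composition of evaluation maps. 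The pushforward $(\evol_G)_*$ is smooth by \Cref{la-reu}, since $\evol_G$ is smooth and both $C^\infty(K,C^\infty([0,1],\Lf(G)))$ and $C^\infty(K,G)$ are canonical (the locally convex space $C^\infty([0,1],\Lf(G))$ and the Lie group $G$ each admit a local addition, cf.\ \Cref{prop: can:locadd}). Therefore $\evol=(\evol_G)_*\circ P$ is smooth and $C^\infty(K,G)$ is regular.

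The \textbf{main obstacle} is precisely the passage from ``solve for each $k$'' to ``smooth in all variables simultaneously''. Once every object is expressed through $\evol_G$ and the evaluation maps of canonical mapping spaces, and the exponential law is used to trade adjoints, all the smoothness checks collapse to compositions of already-known smooth maps; the genuine care lies in choosing the identifications so that $\dot\gamma$, left translation, and the Lie type equation become \emph{literally pointwise} in $k$, which is what makes the pointwise solution $\gamma^\wedge$ coincide with $\Evol(\eta)$.
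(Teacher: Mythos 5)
Your proof is correct and follows essentially the same route as the paper: solve the Lie type equation pointwise in $k$ using regularity of $G$, then reassemble via the transposition isomorphism and factor $\evol = (\evol_G)_*\circ \Theta$, whose smoothness follows from \Cref{la-reu} (the paper then identifies $h=(\evol_G)_*\circ\Theta$ with $\evol_{C^\infty(K,G)}$ via the point evaluations, which are Lie group morphisms separating points). The only caveat, which the paper relegates to a footnote, is that since $[0,1]$ has boundary one must invoke the stronger exponential law of \cite[Theorem A]{MR3342623} rather than \Cref{thm:explaw} itself.
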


\begin{proof}
In view of the preliminary considerations, we see that the pointwise solutions to the Lie type equation must be the solution to the equation on the current group. It remains to apply the exponential law\footnote{Since $[0,1]$ has boundary, we can not use the exponential law \Cref{thm:explaw}. Instead we have to appeal to the stronger version \cite[Theorem A]{MR3342623}. Note that the result is the same and in particular the (partial) derivative of the adjoint map corresponds to the derivative.} to show that these solutions depend smoothly on the initial data.
 We obtain a canonical isomorphism $\Theta$:
 $$C^\infty ([0,1],C^\infty (K,\Lf(G)) \cong C^\infty ([0,1]\times K , \Lf(G)) \cong C^\infty (K,C^\infty ([0,1],\Lf(G)).$$
 The map $h \coloneq (\evol_G)_* \circ \Theta \colon C^\infty ([0,1],C^\infty(K,\Lf(G)) \rightarrow C^\infty (K,G)$ is smooth by \Cref{la-reu} as $\evol_G \colon C^\infty ([0,1],\Lf(G)) \rightarrow G$ is smooth. Let $\ev_x \colon C^\infty (K,G) \rightarrow G$ and $e_x \colon C^\infty (K,\Lf(G)) \rightarrow \Lf(G)$ be the point evaluations in $x$. We note that $\Lf (\ev_x)=e_x$. Hence Exercise \ref{ex:regular} 5.\, yields $\ev_x \circ h = \evol_G \circ (e_x)_* \circ \Theta$ for all $x \in K$. Since the evaluations separate the points on $C^\infty (K,G)$, this implies $h = \evol_{C^\infty (K,G)}$.
\end{proof}

Similar to regularity being hereditary, current groups inherit the property of being locally exponential from their target Lie group. We leave this as Exercise \ref{ex:currentgroup} 6. In the following sections we discuss two special cases of current groups and their subgroups: loop groups and groups of gauge transformations (associated to a principal bundle). 

\subsection*{Loop groups} \addcontentsline{toc}{subsection}{Loop groups}
If $K = \SSS^1$ the current group $LG \coloneq C^\infty (\SSS^1,G)$ is better known as a \emph{loop group}\index{loop group} (cf.\ \cite{MR900587}).  Much is known about loop groups and their representation theory. We mention that they are in particular connected to the representation theory of Kac-Moody Lie algebras and to quantum field theory, cf.~\cite{Schm10}. 
\begin{rem}
  In the literature also the group $C(\SSS^1,G)$ of continuous loops is often called the loop group $LG$ of $G$. For us the loop group will nevertheless consist always of smooth loops.
\end{rem}

 We shall discuss two canonical subgroups of $LG$ for finite-dimensional $G$:
\begin{setup}
  The canonical group morphism $I \colon G \rightarrow C^\infty (\SSS^1,G), g \mapsto (k\mapsto g)$ identifies $G$ with the subgroup of constant loops.
 As $C^\infty (\SSS^1,G)$ is a canonical manifold by Exercise \ref{ex:currentgroup} 3., smoothness of $I$ is equivalent to smoothness of $I^\wedge \colon G\times \SSS^1\rightarrow G, I^\wedge (g,k) = g$ (which is obvious). Further, a local argument in charts around $(x,k) \in G\times \SSS^1$ shows that the partial derivative of $I^\wedge$ identifies the derivative of $I$. Thus by \Cref{setup:curves:tan} we obtain
 $$T I (v_g) = (k \mapsto v_g \in T_g G) \in C^\infty (\SSS^1,TG) \cong TC^\infty (\SSS^1,G), g\in G, v_g \in T_gG$$ 
 and this map is clearly injective for every $g$. Thus $I$ is infinitesimally injective and since $G$ is finite-dimensional, it is an immersion. Moreover, $I$ is a Lie group morphism with smooth inverse: Due to Exercise \ref{ex:currentgroup} 2 c) the evaluation maps $\ev_{k} \colon LG \rightarrow G, k \in \SSS^1$ are Lie group morphisms. Choosing for example $h \coloneq \left.\ev_{\one_{\SSS^1}}\right|_{I(G)}$, we obviously have $h\circ I = \id_G$, whence $I$ is a topological embedding onto its image. Hence $I$ is a smooth embedding and if we identify $G$ with $I(G)$, we can think of $G$ as a  Lie subgroup of $LG$ (see \cite[Lemma 1.13]{Glofun}).
 
 Define now the group of all loops starting at $\one_G$ as $\Omega G \coloneq \{f \in LG \mid f(\one_{\SSS^1}) = \one_G\}$.
 Since $\ev_{\one_{\SSS^1}} \colon C^\infty (\SSS^1,G) \rightarrow G$ is a Lie group morphism and a submersion with $\Omega G = \ev_{\one_{\SSS^1}}^{-1}(\one_G)$, we see that $\Omega G$ is a split submanifold of $LG$, whence a Lie subgroup. Summarising,
 $$\begin{tikzcd}
\one \arrow[rr] & &  \Omega G \arrow[rr, hookrightarrow] & & LG \arrow[rr, yshift = 3pt, twoheadrightarrow, "\ev_{\one_{\SSS^1}}"] && G \ar[rr] \ar[ll, yshift=-3pt, "I"]&& \one
\end{tikzcd}$$
is a split sequence of Lie groups, whence $LG \cong \Omega G \rtimes G$. We finally note that this allows one to define the \emph{fundamental homogeneous space}   
 $$LG/G =\{[h] \mid f \in [h] \text{ if } f = g\cdot h \text{ for some } g \in G\} ( \cong \Omega G),$$
 which plays an important r\^{o}le in the theory of loop groups (see \cite{MR900587}).
\end{setup}

\subsection*{Groups of gauge transformations}\addcontentsline{toc}{subsection}{Groups of gauge transformations}

We briefly discuss an important class of infinite-dimensional Lie groups also arising in physics. In extremely broad strokes the setup is as follows: Fix a manifold $M$. In physics this would correspond to the space-time. Physical fields in the space-time $M$ are described as sections of certain bundles $P \rightarrow M$ called principal bundles (see \Cref{defn:principal_bdl} below). A premier example is here the electromagnetic field. In this formulation the famous Maxwell equations, cf.\ \Cref{ex:Maxwell}, can be interpreted as differential equations for a certain connection on the principal bundle. We will not describe connections on principal bundles here, but point out that the geometric data of the connection corresponds to the potential of the field. However, the symmetry group of the space of connections on the bundle is the so called group of gauge transformations. We shall describe it as an infinite-dimensional Lie group and identify it as a subgroup of the current groups studied earlier. Let us begin by recalling the notion of a principal bundle (much more information on principal bundles can be found in the usual finite-dimensional literature \cite{Baum14,Hus93}).

\begin{defn}\label{defn:principal_bdl}
 Let $G$ be a Lie group with a smooth right action $\rho \colon E \times G \rightarrow E$. Assume that the quotient $p \colon E \rightarrow M \coloneq E/G$ is a smooth locally trivial fibre bundle with typical fibre $F$, i.e.\ $p$ is a smooth map such that there is an open cover $(U_i)_{i \in I}$ of $M$ and $\rho$-equivariant diffeomorphisms (\emph{bundle trivialisations}) $\kappa_i \colon p^{-1}(U_i) \rightarrow U \times F$ conjugating $p$ to the projection. The quadruple $(E,p,M,F)$ is a \emph{principal $G$-bundle}\index{principal bundle} if the action $\rho$ is simply transitive, i.e.\ for each $f_0 \in F \cong p^{-1}(x)$ we have $G \rightarrow F, g \mapsto f_0g$ is a diffeomorphism.
\end{defn}

The group $G$ in the definition of the principal bundle is also called the \emph{structure group}\index{principal bundle!structure group} of the principal bundle. In the physics literature it is customary to call the structure group the \emph{gauge group}\index{principal bundle!gauge group} of the principal bundle. We will follow the physics terminology and warn the reader that in the literature, the term ``gauge group'' is also used for the group of gauge transformations we are about to define.

\begin{rem}
 Note that for a principal $G$-bundle the fibre $F$ is diffeomorphic to the gauge group $G$, but since $F$ lacks a preferred choice of unit element, there is no preferred group structure on the fibre $F$ (one also says that $F$ is a \emph{$G$-torsor}\footnote{See \url{https://math.ucr.edu/home/baez/torsors.html} for more examples and explanations.}).
\end{rem}

\begin{ex}[The trivial example]\label{ex:Maxwell}
 If $M$ is a smooth manifold and $G$ a Lie group, then the trivial bundle $\text{pr}_M \colon M \times G \rightarrow M$ is a principal bundle. 
 
 While on first sight nothing interesting happens here, trivial principal bundles appear in interesting physical applications. As an example, we would like to mention Maxwell's equations from electromagnetics. Fix a contractible $U \opn \R^3$. The electrical field $E$ and the magnetic field $H$ and the current $J$ in $U$ can be described by time dependent vector fields on $U$. Denote by $\rho$ the electrical charge and by $c$ the speed of light. Then Maxwell's equations can be expressed in differential operator notation as
 $$\text{rot}E = -\frac{1}{c}\frac{\partial H}{\partial t} , \quad \text{div} H = 0 , \quad \text{rot} H = \frac{1}{c} \frac{\partial E}{\partial t} + \frac{4\pi}{c} J, \quad \text{div} E = 4\pi \rho.$$
 Now one can show that the Maxwell equations can be interpreted as differential equations for the curvature of a $\SSS^1$-connection on the trivial $\SSS^1$-bundle $(U \times \R) \times \SSS^1 \rightarrow U \times \R$. We refer to \cite[Section 7.1]{Baum14} for the derivation and more information.  
\end{ex}

Interpreting the Maxwell equations in the language of principal bundles turns out to be a fruitful idea. Replacing the trivial bundle by a bundle with a (non-abelian) structure group one arrives at gauge theories such as Yang-Mills theory or Chern-Simons theory.

\begin{ex}\label{ex:homogeneous}
  Let $K$ be a compact Lie group and $G$ a closed Lie subgroup of $K$, then one can show that the quotient $M \coloneq K/G$ is again a manifold such that the canonical mapping $q \colon K \rightarrow K/G=M$ becomes a submersion (this is a so called \emph{homogeneous space}).\index{homogeneous space} Then $(K,q,M,G)$ is a principal $G$-bundle, \cite[Chapter I Theorem 4.3]{MR1410059}.  
  
  As a more concrete example, consider the compact Lie group $\text{SO}_3 (\R)$ (see \Cref{ex:findimLIE} 2.). Recall from \cite[1.2.A]{DaK00} that $\text{SO}_3 (\R)$ can be identified with the $3$-dimensional unit sphere $\SSS^3$ in $\R^4$. Moreover, the spherical group $\SSS^1$ embeds into $\text{SO}_3 (\R)$ as the rotation subgroup $\left\{\begin{bmatrix} \cos (\alpha) & -\sin (\alpha) & 0 \\ \sin (\alpha) & \cos (\alpha) & 0 \\ 0 & 0 &1\end{bmatrix}, \alpha \in \R \right\}$. This identification allows us to interpret multiplication with $\SSS^1$-elements as rotations on $\SSS^3$ and it is not hard to see that the quotient, $\SSS^3 / \SSS^1 = \text{SO}_3 (\R) /\SSS^1$ is diffeomorphic to the $2$-dimensional sphere $\SSS^2$. The resulting homogeneous space 
  $$\begin{tikzcd}
\SSS^1 \arrow[rr, hookrightarrow] & & \SSS^3 \arrow[rr, twoheadrightarrow, "q"] && \SSS^2
\end{tikzcd}$$
is known as the Hopf fibration in differential topology, cf.\ \cite[Example 3.14]{sharpe97}. Note that the Hopf fibration is not isomorphic (as an $\SSS^1$-principal bundle) to the trivial $\SSS^1$-bundle over $\SSS^2$, \cite[p.56]{DaK00} .
\end{ex}

\begin{defn}\label{defn:gaugegroup}
 Let $(E,p,M,F)$ be a principal $G$-bundle. Then define the \emph{group of gauge transformations}\index{group of gauge transformations} of the bundle as follows
 $$\text{Gau}(E) \coloneq \{\varphi \in \Diff (E) \mid p\circ \varphi = p,\ \forall g \in G,\ \varphi \circ \rho(\cdot,g) = \rho(\cdot,g) \circ \varphi \}.$$
\end{defn}

\begin{setup}\label{gaugeidentification}
 If $\varphi$ is a gauge transformation we can identify $\varphi(e)=\rho(e,f(e))$ for some smooth function $f \colon E\rightarrow G$ and the relation $\varphi \circ\rho(\cdot,g) = \rho(\cdot,g) \circ \varphi$ then yield 
 \begin{align}\label{gauge}
  f(\rho(e,g)) = g^{-1}f(e)g,\quad \forall e \in E, g \in G
 \end{align}
 Conversely, every smooth function $f \colon E \rightarrow G$ satisfying \eqref{gauge} defines a gauge transformation via $\varphi_f(e)\coloneq \rho(e,f(e))$. We will show in Exercise \ref{ex:currentgroup} that for
 \begin{align} 
 C^\infty (E,G)^G\coloneq \{f \in C^\infty (E,G) \mid \forall e\in E, g\in G, f(\rho(e,g))=g^{-1}f(e)g\} \notag \\
  \text{the map } C^\infty (E,G)^G \rightarrow \text{Gau}(E),\quad f \mapsto \varphi_f, \ \varphi_f(e)=\rho(e,f(e)) \label{groupmorph} 
  \end{align}
 is a group isomorphism. Hence the group of gauge transformations can naturally be identified as a subgroup of the current group.
\end{setup}

\begin{ex}
 If $\text{pr}_M \colon M \times G \rightarrow M$ is a trivial principal $G$-bundle, it is easy to see that $\text{Gau}(M\times G) \cong C^\infty (M,G)$. So if $M$ is compact, the group of gauge transformations inherits a Lie group structure from the current group in this case.
 
 For the trivial $\SSS^1$-principal bundle connected to Maxwell's equations, \Cref{ex:Maxwell}, the group of gauge transformations (aka the current group in this case) acts on $\SSS^1$-connections on the trivial bundle (connections in the same orbit are called gauge equivalent). As already mentioned, the Maxwell equations can be interpreted as a differential equation for the curvature of a $\SSS^1$-connection. See \cite[Section 7.1]{Baum14}.  
\end{ex}

\begin{rem}\label{rem:gaugegroup}
 If $G$ is a locally exponential Lie group and $M$ a compact manifold, then the group of gauge transformations $\text{Gau}(E)$ of a smooth principal $G$-bundle $(E,p,M,F)$ carries a natural Lie group structure turning it into a locally exponential Lie group, \cite{wockel}. Indeed one can show that the group of gauge transformations then becomes a Lie subgroup of a finite product of suitable current groups \cite{MR2353707}. 
 
 Alternatively, the Lie group structure of the group of gauge transformations can be derived by identifying the group of gauge transformations with the vertical bisections of a gauge groupoid, see \Cref{rem:vbis} and \Cref{ex:gaugegpd}.
\end{rem}

\begin{Exercise}\label{ex:currentgroup}  \vspace{-\baselineskip}
 \Question We establish some auxiliary results for the proof of \Cref{loc:Liegrp}:
  \subQuestion Let $G$ be a group which is also a manifold. Show that $G$ is a Lie group if and only if the mapping $\delta \colon G \times G \rightarrow G, (g,h) \mapsto gh^{-1}$ is smooth.
  \subQuestion Let $G$ be a Lie group (actually topological group would be enough for the following) and $V \opn G$ with $\one \in V$. Show that there is $\one \in W \opn V$ with $W^{-1}=W$ and $W\cdot W \subseteq V$ (can you generalise this to $W^n \subseteq V, n\in \N$?).
  \subQuestion Let $G,H$ be Lie groups and $f \colon G \rightarrow H$ be a group homomorphism. Prove that $f$ is a Lie group morphism if and only if there exists an open $\one_G$-neighborhood $U$ such that $f|_U$ is smooth.
  \subQuestion Let $G$ be a group with two manifold structures $\mathcal{S}$ and $\mathcal{T}$ turning $G$ into a Lie group. Show that $(G,\mathcal{S})$ and $(G,\mathcal{T})$ are diffeomorphic (as manifolds and then also as Lie groups) if and only if there is an open $\one$-neighborhood on which the two manifold structures coincide. 
 \Question Verify several details from the proof of \Cref{thm:currentgroup}:
 \subQuestion Let $K \subseteq G$ be a compact subset of a Lie group $G$ and $\one \in U \opn G$. Show that there is an open $\one$-neighborhood $W$ and an open $K$-neighborhood $P$ such that $pWp^{-1} \subseteq U$ for all $p\in P$.
 \subQuestion Verify that the final topology with respect to the charts $(\varphi_g)_{g \in G}$ from Step 3 of the proof is Hausdorff.
 \subQuestion Show that the point evaluations  $\ev_x \colon C^\infty (K,G) \rightarrow G, \gamma \mapsto \gamma(x)$ are Lie group morphisms.\\
 {\tiny \textbf{Hint:} For a group homomorphisms it suffices to check smoothness in an identity neighborhood.}
  \Question Prove that the current group $C^\infty (K,G)$ is a canonical manifold.\\ 
 {\tiny \textbf{Hint:} For a Lie group, it suffices to construct an open identity neighborhood whose manifold structure is canonical.}
 \Question By \Cref{setup:locadd:Lie} every Lie group $G$ admits a local addition. Show that the manifold structure on $C^\infty (K,G)$ constructed in \Cref{setup:mfdstruct} coincides with the one from \Cref{thm:currentgroup}.
 \Question Show that $C^\infty (\N , \SSS^1) = \prod_{n \in \N} \SSS^1$ is not locally contractible, i.e.\, show that if $N$ is a neighborhood of a point $x \in \prod_{n \in \N} \SSS^1$ then there is no continuous mapping $h \colon [0,1] \times N \rightarrow N$ such that $h(0,\cdot)=\id_N$ and $h(0,y)=x, \forall y \in N$. 
 \Question Let $G$ be a locally exponential Lie group. Show that then also $C^\infty (K,G)$ is locally exponential.
 \\
 {\tiny \textbf{Hint:} Study \Cref{prop:reglift} and note in addition that $\exp_G$ induces a chart for $G$. }
 \Question Show that the map $C^\infty (E,G)^G \rightarrow \text{Gau}(E), \quad f \mapsto \varphi_f$, \eqref{groupmorph} from \Cref{gaugeidentification} is a group isomorphism.
 \end{Exercise}
  

\chapter{Lifting geometry to mapping spaces II: (weak) Riemannian metrics}\label{RiemGeo}\copyrightnotice
In this section we will discuss Riemannian metrics on infinite-dimensional spaces. Particular emphasis will be placed on the new challenges which arise on infinite-dimensional spaces.
\begin{tcolorbox}[colback=white,colframe=blue!75!black,title=General assumption for this chapter]
To ensure the existence of integrals we shall always assume that the manifolds in this chapter are modelled on (Mackey) complete locally convex spaces.
\end{tcolorbox}

\section{Weak and strong Riemannian metrics}

Riemannian metrics comes in several flavours on infinite-dimensional spaces which are not present in the finite-dimensional setting. 
The strongest flavour (as we shall see) is the notion of a strong Riemannian metric which is treated in classical monographs such as \cite{Lang,MR1330918}.

\begin{defn}
 Let $M$ be a manifold modelled on a locally convex space $E$. A \emph{weak Riemannian metric}\index{Riemannian metric!weak} $g$ is a smooth map 
 $$g \colon TM \oplus TM \rightarrow \R, (v_x,w_x) \mapsto g_x (v_x,w_x)$$
 (where $TM\oplus TM$ is the Whitney sum, cf.\ \Cref{WSum}) satisfying
 \begin{enumerate}
  \item $g_x \coloneq g|_{T_xM\times T_xM}$ is symmetric and bilinear for all $x \in M$,
  \item $g_x(v,v)\geq 0$ for all $v \in T_x M$ with $g_x(v,v)=0$ if and only if $v=0$.
 \end{enumerate}
 If a weak Riemannian metric satisfies in addition 
 \begin{enumerate}
  \item[(c)] The topology of the inner product space $(T_xM , g_x)$ coincides with the topology of $T_xM$ as a subspace of $TM$. 
 \end{enumerate}
We say that $g$ is a \emph{strong Riemannian metric}.\index{Riemannian metric!strong} A manifold with a weak (/strong) Riemannian metric will be called a \emph{weak (/strong) Riemannian manifold}.
\end{defn}

\begin{ex}\label{ex:Hilbert}
 Every Hilbert space $(H,\langle \cdot,\cdot\rangle)$ becomes a strong Riemannian manifold using the identifications $TH=H\times H$, $TH\oplus TH = H^3$ and setting $g_c (v,w) \coloneq \langle v,w\rangle$.
\end{ex}

\begin{ex}\label{ex:L2Hilbert}
 Let $(H,\langle \cdot , \cdot\rangle)$ be a Hilbert space. The locally convex space $C^\infty ([0,1],H)$ with the compact-open $C^\infty$-topology is a \Frechet space (but not a Banach- or Hilbert space!).
  Consider the $L^2$-inner product on this space as 
 $$\langle f,g\rangle_{L^2} \coloneq \int_0^1 \langle f(t),g(t)\rangle \mathrm{d} t.$$
 This is a bilinear map on $C^\infty ([0,1],H)$. By construction of the compact open $C^\infty$-topology, the inclusion $C^\infty ([0,1],H) \rightarrow C([0,1],H)_{\text{c.o}}$ is continuous linear and it is not hard to prove that the mapping $\beta \colon C([0,1],H)^2 \rightarrow H, (f,g) \mapsto \int_0^1 \langle f(t),g(t)\rangle \mathrm{d}t$ is continuous bilinear. In conclusion, the $L^2$-inner product is a continuous bilinear form on $C^\infty ([0,1],H)$.
 Interpreting the locally convex space as a manifold we have $$TC^\infty ([0,1],H) = C^\infty ([0,1],H) \times C^\infty ([0,1],H) \cong C^\infty ([0,1],H\times H) =C^\infty ([0,1],TH).$$
 We obtain an isomorphism $TC^\infty ([0,1],H)\oplus TC^\infty ([0,1],H) \cong C^\infty ([0,1], H^3)$ (cf.\, also \Cref{Whitneyiso}) which transforms $f_c,g_c \in T_c C^\infty ([0,1],H)$ into a triple $(c,f,g)$ of $H$-valued functions. Thus
 $$g_{L^2} \colon TC^\infty ([0,1],H)\oplus TC^\infty ([0,1],H) \cong C^\infty ([0,1],H^3) \rightarrow \R ,\quad (c,f,g) \mapsto \langle f,g\rangle_{L^2}$$ is a weak Riemannian metric, called the \emph{$L^2$-metric}.\index{Riemannian metric!$L^2$-metric} 
 Note that the $L^2$-metric is not a strong Riemannian metric as the topology of the inner product space is the one induced by the inclusion $C^\infty ([0,1],H) \subseteq L^2([0,1],H)$ where the space on the right-hand side is the space of all square integrable functions with the $L^2$-topology.
\end{ex}

\begin{setup}\label{musicaliso}
 If $(M,g)$ is a weak Riemannian manifold, there is a injective linear map 
 $$\flat \colon TM \rightarrow T^*M = \bigcup_{x \in M} L(T_xM, \R),\quad  T_xM \ni v \mapsto g_x (v,\cdot),$$
 where $T^*M$ is the dual tangent bundle, cf.\ \Cref{rem:dualTM}.\index{cotangent bundle}\index{vector bundle!dual bundle} If $M$ is a finite-dimensional manifold, then it follows by counting dimensions that $\flat$ is (fibre-wise) an isomorphism and thus $\flat$ is an isomorphism of vector bundles, cf.\ \Cref{rem:dualTM}. Similarly, if $g$ is a strong Riemannian metric, $\flat$ is an isomorphism of vector bundles, see \Cref{prop:strongmetric} below. 
 The inverse of $\flat$ in these situations is often denoted by $\sharp$ and the pair of isomorphisms is known as the musical isomorphisms of a Riemannian manifold. For infinite-dimensional manifolds, the map $\flat$ will in general not be surjective. 
\end{setup}

The next result yields a useful characterisation of strong Riemannian metrics (which is the definition of a strong Riemannian metric in the classical texts \cite{Lang,MR1330918}). We just mention that for the proof some tools from functional analysis are required (which we cite but do not provide a detailed review of).

\begin{prop}\label{prop:strongmetric}
 Let $(M,g)$ be a weak Riemannian manifold. The following conditions are equivalent.
 \begin{enumerate}
  \item $g$ is a strong Riemannian metric,
  \item $M$ is a Hilbert manifold and $\flat \colon TM \rightarrow T^\ast M$ is surjective,
  \item $M$ is a Hilbert manifold and $\flat \colon TM \rightarrow T^\ast M$ is a vector bundle isomorphism.
 \end{enumerate}
 In particular, on every finite-dimensional manifold $M$, a weak Riemannian metric is automatically strong. 
\end{prop}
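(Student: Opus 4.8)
The plan is to prove the three conditions equivalent via the cyclic chain (a)$\Rightarrow$(b)$\Rightarrow$(c)$\Rightarrow$(a), and then to read off the finite-dimensional statement as a special case. At the outset I only use that each fibrewise map $\flat_x\colon T_xM\to (T_xM)'$, $v\mapsto g_x(v,\cdot)$, is injective by positive-definiteness (property (b) of the metric). Its smoothness as a bundle map is a question I can raise only once $T^*M$ is known to be a genuine vector bundle, which by \Cref{rem:dualTM} is guaranteed precisely in the Hilbert setting appearing in (b) and (c); so in the first implication I argue purely fibrewise.

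For (a)$\Rightarrow$(b), assume $g$ is strong. Then the inner product $g_x$ induces on $T_xM$ exactly the subspace topology, which (via \Cref{lem:tangentident}) is the topology of the model space $E$. This topology is now given by the $g_x$-norm, hence metrisable, and by the general assumption of this chapter $E$ is Mackey complete; by \Cref{Mackey-remark} Mackey completeness and completeness coincide for metrisable spaces, so $(T_xM,g_x)$ is complete, i.e.\ a Hilbert space. Thus $E$ carries an equivalent Hilbert structure and $M$ is a Hilbert manifold. Fibrewise surjectivity of $\flat_x$ onto $(T_xM)'$ is then exactly the Riesz representation theorem for the Hilbert space $(T_xM,g_x)$, giving (b).

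For (b)$\Rightarrow$(c): now that $M$ is a Hilbert manifold, $T^*M$ genuinely carries a vector bundle structure (cf.\ \Cref{rem:dualTM}), $\flat$ is a smooth bundle map by smoothness of $g$, and each $\flat_x$ is a continuous linear bijection of Hilbert spaces, hence a topological isomorphism by the open mapping theorem. In a pair of bundle charts $\flat$ has the local form $(x,v)\mapsto(x,A(x)v)$ with $A(x)$ an invertible continuous operator depending smoothly on $x$; since inversion is smooth on the open set of invertible operators of a Banach space (the unit group of the continuous inverse algebra $L(H,H)$, cf.\ \Cref{ex:CIAunit}), the map $x\mapsto A(x)^{-1}$ is smooth, so $\flat^{-1}=\sharp$ is a smooth bundle map and $\flat$ is a vector bundle isomorphism. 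For (c)$\Rightarrow$(a) I show the $g_x$-norm and the Hilbert norm $\lVert\cdot\rVert$ are equivalent: continuity of $g_x$ gives $g_x(v,v)\le C\lVert v\rVert^2$, while $\flat_x$ being a topological isomorphism yields $c\lVert v\rVert\le\lVert\flat_x v\rVert_{(T_xM)'}$, and the Cauchy--Schwarz inequality for the inner product $g_x$ gives $\lVert\flat_x v\rVert_{(T_xM)'}=\sup_{\lVert w\rVert\le1}|g_x(v,w)|\le\sqrt{C}\,\sqrt{g_x(v,v)}$, whence $g_x(v,v)\ge(c^2/C)\lVert v\rVert^2$. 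So the two topologies coincide and $g$ is strong.

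The finite-dimensional claim is then immediate: a finite-dimensional manifold is a Hilbert manifold, and $\flat_x$ is an injective linear map between spaces of equal finite dimension, hence bijective, so condition (b) holds and $g$ is strong (alternatively, all vector topologies on a finite-dimensional space agree by \Cref{prop:compfindim}, so condition (c) of the definition of a strong metric holds automatically). The main obstacle I anticipate is the passage, in (b)$\Rightarrow$(c), from a fibrewise topological isomorphism to a genuine \emph{vector bundle} isomorphism: one must verify smooth dependence of the inverse on the base point, which rests on smoothness of operator inversion in $L(H,H)$, and one must first ensure that $T^*M$ is a manifold at all—this is exactly why the Hilbert-manifold hypothesis cannot be dropped. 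The remaining ingredients (Riesz representation, the open mapping theorem, and metrisable plus Mackey complete implying complete) are standard once the functional-analytic setting is secured.
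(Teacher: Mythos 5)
Your proof is correct and follows the same cyclic scheme (a)$\Rightarrow$(b)$\Rightarrow$(c)$\Rightarrow$(a) as the paper; the first two implications are essentially identical to the paper's (Mackey completeness of the metrisable tangent space plus Riesz representation for (a)$\Rightarrow$(b), and the open mapping theorem fibrewise plus smooth dependence of the inverse for (b)$\Rightarrow$(c), where the paper simply cites \cite[III.\ Proposition 1.3]{Lang} for the bundle-isomorphism upgrade that you instead justify via smoothness of inversion in the unit group of $L(H,H)$). The one place you genuinely diverge is (c)$\Rightarrow$(a): the paper argues qualitatively, showing the $g_x$-unit ball is \emph{weakly bounded} (every functional $\flat(v)$ is bounded on it by Cauchy--Schwarz) and then invoking the theorem that weakly bounded sets in a Hilbert space are bounded, whereas you extract the quantitative two-sided estimate $c\lVert v\rVert\le\lVert\flat_x v\rVert_{(T_xM)'}\le\sqrt{C}\,\sqrt{g_x(v,v)}$ directly from the open-mapping constant of $\flat_x^{-1}$ and Cauchy--Schwarz, concluding $g_x(v,v)\ge(c^2/C)\lVert v\rVert^2$. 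Your version is more elementary in that it avoids the weak-boundedness machinery, at the mild cost of using the full strength of hypothesis (c) (boundedness of $\flat_x^{-1}$), while the paper's argument only needs surjectivity of $\flat$ at that stage; both are valid, and your explicit flagging of the issue that $T^*M$ is only a bundle in the Hilbert setting is a point the paper leaves implicit.
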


\begin{proof} \textbf{Step 1:} \emph{strong Riemannian metric implies Hilbert and surjective $\flat$.}
 If $g$ is a strong Riemannian metric, every tangent space $T_x M$ is a Hilbert space (since it is Mackey-complete cf.\ \Cref{Mackey-remark}). As the tangent spaces are isomorphic to the modelling spaces of $M$, we see that $M$ is a Hilbert manifold. Now the surjectivity of $\flat$ is a consequence of the Riesz representation theorem \cite[11.9]{MaV97}.\\
 \textbf{Step 2:} \emph{$\flat$ surjective on a Hilbert manifold implies that $\flat$ is a bundle isomorphism}. 
 Note first that since $M$ is a Hilbert manifold, there is a canonical smooth structure on the dual bundle, cf.\ \Cref{rem:dualTM}, whence it makes sense to consider $\flat$ as a smooth map between $TM$ and $T^\ast M$. By construction we know that $\flat$ induces continuous vector space isomorphisms $T_xM \rightarrow (T_xM)^\ast$ for every tangent space. Hence the open mapping theorem \cite[I. 2.11]{Rudin} shows that $\flat$ is fibre-wise an isomorphism, which together with \cite[III. Proposition 1.3]{Lang} shows that $\flat$ is a vector bundle isomorphism.\\
 \textbf{Step 3:} \emph{Hilbert manifold and $\flat$ being a bundle isomorphism imply that $g$ is strong.}
 Consider $x \in M$ and note that as $g$ is continuous on $TM \oplus TM$, $g_x$ is continuous with respect to the Hilbert space topology of the fibre. Let us denote the Hilbert space norm on $T_xM$ by $\lVert \cdot \rVert)$). Then the ball $B_1^{g_x} (0)  \coloneq \{y \in T_xM \mid g_x(y,y)<1\}$ is by construction $B_1^{g_x} (0)$ a disc $0$-neighborhood (see \Cref{defn:convx}) and the norm $\lVert \cdot \rVert_{g_x}$ induced by $g_x$ (aka. the Minkowski functional\index{Minkowski functional} of the ball, \Cref{Mink:disc0nbhd}) is continuous. As the unit ball in $(T_xM,\lVert \cdot \rVert)$ is bounded this shows already that there is a constant $K> 0$ with $\lVert\cdot\rVert \leq K \lVert \cdot \rVert_{g_x}$. To see that the norms are equivalent, we have to prove that $B_{g_x}^1(0)$ is bounded in the Hilbert space topology. 
 As $\flat$ is surjective, every bounded linear functional on $T_xM$ is of the form $\flat (v)$ for some $v\in T_xM$. Applying the Cauchy-Schwarz inequality, we derive
 $$\sup_{y \in B_1^{g_x}(0)} |\flat(v)(y)| =\sup_{y \in B_1^{g_x}(0)}|g_x(v,y)|\leq \lVert v\rVert_{g_x}.$$
 We conclude that every bounded linear functional is bounded on $B_{1}^{g_x}(0)$. This property is called weakly bounded and it is known that on a Hilbert space, every weakly bounded set in a locally convex space is bounded, \cite[Theorem 3.13]{Rudin}. 
 Summing up, $g_x$ induces the Hilbert space topology of $T_xM$ and since $x$ was arbitrary, this shows that $g$ is a strong Riemannian metric.
 \end{proof}

\begin{ex}\label{ex:invL2}
 The space of immersions $\Imm(\SSS^1,\R^d)$ is an open subset of $C^\infty (\SSS^1,\R^d)$ by \Cref{lem:immsub_opn}. We endow it with a weak Riemannian metric (an invariant version of the $L^2$-metric, \Cref{sect:L2metric})\index{Riemannian metric!invariant $L^2$-metric}
 $$g_c (h,k) \coloneq \int_{\SSS^1} \langle h(\theta),k(\theta)\rangle \lVert \dot{c}\rVert \mathrm{d}\theta .$$
 Recall that since $\Imm (\SSS^1,\R^d) \opn C^\infty (\SSS^1,\R^d)$ we have an identification $T_c \Imm(\SSS^1,\R^d) = C^\infty (\SSS^1,\R^d)$ (cf.\ \Cref{setup:curves:tan}). Working in the identification we immediately see that in the identification the musical morphism reduces to the map: 
 $$\flat_c (h) = \lVert \dot{c} \rVert \cdot h, h \in C^\infty (\SSS^1,\R^d),$$
 where the dot signifies pointwise multiplication. Hence the image of $T_c \Imm (\SSS^1,\R^d)$ under $\flat_c$ can be identified as $C^\infty (\SSS^1,\R^d)$. However, the (topological) dual space of $C^\infty (\SSS^1,\R^d)$ is the larger space $\mathcal{D}'(\SSS^1)^d$ of $\R^d$-valued distributions (see \cite[Section 3]{MR2744150}).\index{dual space} 
\end{ex}

\begin{ex}\label{Ex:HSphereRiem} \index{Hilbert sphere}
 Let $(H,\langle \cdot ,\cdot \rangle)$ be a Hilbert space with $S_H = \{x \in H \mid \lVert x\rVert =1\}$ (\Cref{ex: HSphere}). The Hilbert sphere is a strong Riemannian manifold with the induced metric $g_x (v,w) \coloneq \langle v,w\rangle$ (where $T_x S_H = \{v \in H\mid \langle v,x\rangle =0\}$).
\end{ex}

 The distinction between strong and weak Riemannian metrics has far reaching consequences (see for example the \Cref{sect:L2metric} below on geodesic distance).

\begin{Exercise}\label{ex:metricex}\vspace{-\baselineskip}
 \Question Verify \Cref{ex:Hilbert}: Every Hilbert space $(H,\langle\cdot,\cdot\rangle)$ is a strong Riemannian manifold.
 \Question Show that the mapping $\mu \colon C([0,1],H)^2 \rightarrow \R, (f,g) \mapsto \langle f,g\rangle_{L^2}$ is continuous bilinear (whence smooth), see \Cref{ex:L2Hilbert}.\\
 {\tiny \textbf{Hint:} $\int_0^1 \colon C([0,1],\R) \rightarrow \R, f \mapsto \int_0^1 f(t) \mathrm{d}t$ is linear. Exploit that $C([0,1],\R)$ is a Banach space to prove continuity via usual integral estimates.}
 \Question Verify that the Hilbert sphere is a strong Riemannian manifold  (\Cref{Ex:HSphereRiem}).
 \Question We shall treat Riemannian metrics on spaces of smooth functions on the sphere. \\
  {\tiny \textbf{Hint:} If you are not familiar with integration on manifolds: Use $\theta \colon [0,2\pi ] \rightarrow \SSS^1, t \mapsto (\cos(t),\sin(t))$ to reduce the integral to a usual integral, see note at the beginning of \Cref{sect:shapeanalysis} and compare \cite[Chapter 16]{Lee13}.} 
  \subQuestion Let $(M,g)$ be a strong Riemannian metric. Use \Cref{Whitneyiso} to show that the $L^2$-metric 
  $$g^{L^2}_c (h,k) \coloneq \int_{\SSS^1} g_{c(\theta)}(h(\theta),k(\theta)) \mathrm{d}\theta$$
  defines a weak Riemannian metric on $C^\infty (\SSS^1,M)$.\\
  \subQuestion Verify that the metric in \Cref{ex:invL2} is a weak Riemannian metric.
\end{Exercise}

\section{The geodesic distance on a Riemannian manifold}\label{sect:L2metric}

\begin{tcolorbox}[colback=white,colframe=blue!75!black,title=General assumptions]
In this section $(M,g)$ will always denote a weak Riemannian manifold. For convenience we shall always assume that $M$ is connected.
\end{tcolorbox}

Having a Riemannian metric at our disposal, we can define the length of curves:

\begin{defn}
 Let $c \colon [a,b] \rightarrow M$ be a piecewise $C^1$-curve.\index{curve!piecewise $C^1$}\footnote{i.e.\ there exists a partition of $[a,b]$ into subintervals such that on each of them the curve restricts to a $C^1$-curve.} Then we define the \emph{length}\index{curve!length of a} and the \emph{energy}\index{curve!energy of a} of $c$ as
 \begin{align*}
  \text{Len}(c) &\coloneq \int_a^b \sqrt{g_{c(t)}(\dot{c}(t),\dot{c}(t))}\mathrm{d}t \\
  \text{En}(c) &\coloneq \frac{1}{2}\int_a^b g_{c(t)} (\dot{c}(t),\dot{c}(t))\mathrm{d}t
 \end{align*}
 For $x,y \in M$ define then 
 $$\Gamma (x,y) \coloneq \{c \colon [0,1] \rightarrow M \mid c(0)=x,c(1)=y,\ \text{ and } c\text{ is piecewise } C^1\}.$$
 Finally, we define the \emph{geodesic distance}\index{geodesic distance} between points $x,y \in M$ as 
 $$\dist (x,y) \coloneq \inf_{c \in \Gamma(x,y)} \text{Len} (c) = \inf_{c \in \Gamma(x,y)} \int_0^1\sqrt{g_{c(t)}(\dot{c}(t),\dot{c}(t))}\mathrm{d}t. $$
\end{defn}

Due to the Cauchy-Schwarz inequality, for curves $c \colon [a,b] \rightarrow M$ we find (cf.\ \cite[Proposition 1.8.7]{MR1330918}) that
\begin{align}\label{Energyest}
\text{Len}(c)^2 \leq 2 \text{En}(c)(b-a)\qquad (\text{with equality if and only if } \dot{c} \text{ is constant}). 
\end{align}

\begin{rem}\label{rem:intervallindep}
 Note that since every interval $[a,b]$ is diffeomorphic to $[0,1]$, the chain rule implies that the definition of $\Gamma(x,y)$ and of the geodesic distance does not depend on $[0,1]$. It is only a convenient choice for us and we shall ignore this choice in the construction of paths to avoid cumbersome reparametrisation arguments.
\end{rem}

\begin{lem}\label{lem:pseudodist}
 The geodesic distance is a pseudo-distance, i.e.
 \begin{enumerate}
  \item $\dist(x,y) \geq 0$ for $x,y \in M$,
  \item $\dist (x,y) = \dist (y,x), \forall x,y \in M$,
  \item $\dist (x,z) \leq \dist (x,y) + \dist (y,z), \forall x,y,z \in M$
 \end{enumerate}
\end{lem}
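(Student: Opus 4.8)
The plan is to verify the three pseudo-distance axioms directly from the definition of $\dist$ as an infimum of lengths over piecewise $C^1$-curves. Throughout I would use the elementary fact that lengths are non-negative, that curves can be reversed, and that curves can be concatenated.

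First I would establish (a). Since $g_{c(t)}(\dot c(t),\dot c(t)) \geq 0$ by positivity of the metric, the integrand $\sqrt{g_{c(t)}(\dot c(t),\dot c(t))}$ is non-negative, so $\text{Len}(c) \geq 0$ for every $c \in \Gamma(x,y)$. Taking the infimum of a set of non-negative numbers yields $\dist(x,y) \geq 0$. (One should note $\Gamma(x,y)$ is non-empty because $M$ is assumed connected, so the infimum is over a non-empty set and hence well-defined; a piecewise $C^1$-path between any two points exists since on a manifold path-components and connected components coincide.)

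Next, for (b), I would exhibit a length-preserving bijection $\Gamma(x,y) \to \Gamma(y,x)$. Given $c \in \Gamma(x,y)$, define the reversed curve $\tilde c(t) \coloneq c(1-t)$, which lies in $\Gamma(y,x)$ and is again piecewise $C^1$. By the chain rule $\dot{\tilde c}(t) = -\dot c(1-t)$, so by bilinearity of $g_x$ in each slot we get $g_{\tilde c(t)}(\dot{\tilde c}(t),\dot{\tilde c}(t)) = g_{c(1-t)}(\dot c(1-t),\dot c(1-t))$. Substituting $s = 1-t$ in the integral shows $\text{Len}(\tilde c) = \text{Len}(c)$. As $c \mapsto \tilde c$ is an involution on the union of both path sets, the infima agree and $\dist(x,y) = \dist(y,x)$.

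Finally, for the triangle inequality (c), I would use concatenation. Fix $\varepsilon > 0$ and pick $c_1 \in \Gamma(x,y)$ and $c_2 \in \Gamma(y,z)$ with $\text{Len}(c_1) \leq \dist(x,y) + \varepsilon$ and $\text{Len}(c_2) \leq \dist(y,z) + \varepsilon$. Concatenate them into a single piecewise $C^1$-curve $c \in \Gamma(x,z)$ (first traverse $c_1$, then $c_2$; the result is piecewise $C^1$ and connects $x$ to $z$, with a possible corner at the junction which is harmless for piecewise $C^1$ curves). By additivity of the integral over the two pieces, $\text{Len}(c) = \text{Len}(c_1) + \text{Len}(c_2)$, so $\dist(x,z) \leq \text{Len}(c) \leq \dist(x,y) + \dist(y,z) + 2\varepsilon$. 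Letting $\varepsilon \to 0$ gives the claim. The only subtlety worth care — and the main (mild) obstacle — is the reparametrisation bookkeeping: the concatenation must be defined on $[0,1]$ (or any fixed interval), which requires rescaling each piece; by \Cref{rem:intervallindep} this rescaling does not change the length, so it may be suppressed. No genuinely hard analysis is involved; the proof is a formal consequence of non-negativity, bilinearity, and additivity of the length functional.
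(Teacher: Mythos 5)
Your proof is correct and is exactly the standard argument the text intends (the paper leaves this lemma as an exercise without a printed solution): non-negativity of the integrand for (a), the length-preserving reversal $c\mapsto c(1-\cdot)$ for (b), and concatenation of $\varepsilon$-almost-optimal paths for (c), with the reparametrisation to $[0,1]$ justified by the remark on interval independence. You also correctly flag the one point that actually needs the standing connectedness assumption, namely that $\Gamma(x,y)$ is non-empty so the infimum is finite.
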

The proof of \Cref{lem:pseudodist} is left as an exercise.
On strong Riemannian manifolds, the geodesic distance is also \emph{point separating}, i.e.\ in addition to the properties from \Cref{lem:pseudodist} it satisfies 
\begin{enumerate}
 \item[(d)] $\dist (x,y) \neq 0$ for all $x,y\in M$ with $x\neq y$. 
\end{enumerate}
Moreover, one can prove the following result for strong Riemannian metrics
\begin{thm}[{\cite[Theorem 1.9.5]{MR1330918}}]
 Let $(M,g)$ be a strong Riemannian metric. The function $\dist \colon M \times M \rightarrow \R$ defines a metric on $M$. The topology derived from $\dist$ coincides with the given topology of $M$.
\end{thm}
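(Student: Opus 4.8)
The plan is to prove two separate assertions: first that $\dist$ is a genuine metric (the only nontrivial axiom being point-separation, property (d)), and second that the metric topology agrees with the manifold topology. Since \Cref{lem:pseudodist} already supplies nonnegativity, symmetry, and the triangle inequality, I would concentrate on (d) and on the two inclusions of topologies.

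For point-separation, suppose $x \neq y$ and work in a single chart $\varphi\colon U_\varphi \to V_\varphi \subseteq H$ around $x$, where $H$ is the Hilbert model space. The key local fact I would establish is a \emph{two-sided} comparison of the Riemannian norm $\sqrt{g_z(v,v)}$ with the fixed Hilbert norm $\lVert\cdot\rVert$ on a small neighbourhood of $\varphi(x)$. Because $g$ is strong, \Cref{prop:strongmetric} tells me each $g_z$ induces the Hilbert topology; using continuity of $g$ on $TM \oplus TM$ together with the compactness of a small closed ball (here I would pass to the chart and exploit that, fibrewise, $g_{z}$ is comparable to $\lVert\cdot\rVert$ uniformly for $z$ in a neighbourhood of $\varphi(x)$) I would obtain constants $0 < a \leq b$ and a radius $r>0$ with
$$a\lVert v\rVert \leq \sqrt{g_{\varphi^{-1}(z)}(v,v)} \leq b\lVert v\rVert, \qquad z \in \overline{B_r(\varphi(x))},\ v \in H.$$
Granting this, any piecewise $C^1$ path from $x$ to $y$ must either leave $\overline{B_r(\varphi(x))}$ — in which case its length is at least $a r$ by integrating the lower bound until the path first reaches the boundary sphere — or stay inside, in which case its image in the chart joins $\varphi(x)$ to $\varphi(y)$ and its length is at least $a\lVert\varphi(x)-\varphi(y)\rVert$. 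Taking the infimum over $\Gamma(x,y)$ yields $\dist(x,y) \geq a\min\{r, \lVert\varphi(x)-\varphi(y)\rVert\} > 0$, which is exactly (d).

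The same local comparison drives the topological equivalence. The lower bound $\sqrt{g}\geq a\lVert\cdot\rVert$ shows, by the boundary-crossing argument just described, that a $\dist$-ball of small radius is contained in a chart ball, so $\dist$-open sets are manifold-open; conversely the upper bound $\sqrt{g}\leq b\lVert\cdot\rVert$ shows that the straight-line path inside the chart from $\varphi(x)$ to a nearby point has length at most $b$ times the Hilbert distance, so small chart balls are contained in $\dist$-balls, giving the reverse inclusion. Thus the identity map between $(M,\dist)$ and $M$ with its manifold topology is a homeomorphism.

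The main obstacle is the uniform two-sided norm comparison on a neighbourhood. The lower bound is the delicate direction: strongness gives comparability of $g_z$ with $\lVert\cdot\rVert$ at each \emph{single} point, but I need a \emph{uniform} lower constant $a$ valid for all base points $z$ near $\varphi(x)$ simultaneously, and infinite-dimensional unit balls are not compact, so I cannot simply extract the constant by a compactness-in-$v$ argument. I expect to secure uniformity by a continuity argument: the function $(z,v)\mapsto g_{\varphi^{-1}(z)}(v,v)$ is continuous and homogeneous of degree two in $v$, and strongness at the center point $z_0=\varphi(x)$ gives $g_{z_0}(v,v)\geq c\lVert v\rVert^2$; shrinking the $z$-neighbourhood so that $g_z$ stays within a factor of $g_{z_0}$ (which is where I must invoke joint continuity of $g$ and the bundle-triviality of $TM$ over the chart to control the variation in $z$ uniformly in the direction $v$) then transfers the estimate to all nearby $z$. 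Once this uniform comparison is in hand, every remaining step is an elementary length estimate.
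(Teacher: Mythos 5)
First, note that the paper does not actually prove this statement: it is imported wholesale from Klingenberg \cite[Theorem 1.9.5]{MR1330918}, so there is no in-text proof to compare against. Your skeleton is the standard textbook argument — reduce everything to a uniform two-sided comparison $a\lVert v\rVert \le \sqrt{g_z(v,v)} \le b\lVert v\rVert$ on a chart ball, then run elementary length and first-exit-time estimates — and all of the length estimates downstream of that comparison are fine.

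The genuine gap is exactly where you suspect it, and the repair you offer does not work as stated. Joint continuity of the local representative $g\colon V_\varphi\times H\times H\to\R$ together with bilinearity gives, by rescaling at $(z_0,0,0)$, a uniform \emph{upper} bound $|g_z(u,v)|\le b^2\lVert u\rVert\,\lVert v\rVert$ for $z$ near $z_0$; but the uniform \emph{lower} bound requires $\lVert g_z-g_{z_0}\rVert_{\mathrm{op}}\to 0$, i.e.\ continuity of $z\mapsto g_z$ into the Banach space of bounded bilinear forms, and joint continuity of the three-variable map does not imply this. (On $\ell^2$ one can build jointly continuous symmetric families with $h_{z_0}=0$ yet $\lVert h_z\rVert_{\mathrm{op}}=1$ for all $z\neq z_0$, by letting the mass migrate to coordinates of ever higher index as $z\to z_0$; the rescaling trick only bounds $\lVert h_z\rVert_{\mathrm{op}}$ by $\varepsilon/\delta(\varepsilon)^2$, which need not be small.) Your parenthetical appeal to compactness of a small closed ball is also a non-starter in a Hilbert space, as you yourself concede two sentences later.

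The gap is closable inside the paper's framework in either of two ways. (i) Use that $g$ is Bastiani $C^1$: $d_1g(w,\cdot,\cdot;\cdot)$ is trilinear and jointly continuous, so the same rescaling argument applied to \emph{it} gives $|d_1g(w,u,v;\xi)|\le C\lVert u\rVert\,\lVert v\rVert\,\lVert\xi\rVert$ for $w$ near $z_0$, and the mean value theorem (\Cref{prop:mean_value}) in the first variable yields $\lVert g_z-g_{z_0}\rVert_{\mathrm{op}}\le C\lVert z-z_0\rVert$; combined with $g_{z_0}(v,v)\ge c\lVert v\rVert^2$ at the single point $z_0$ this gives the uniform lower bound. (ii) Alternatively invoke \Cref{prop:strongmetric}: $\flat$ is an isomorphism of Banach vector bundles $TM\to T^\ast M$, so its local representative $z\mapsto\flat_z\in L(H,H')$ is operator-norm continuous and invertible at $z_0$; since the group of invertibles is open and inversion is continuous, $\lVert\flat_z^{-1}\rVert$ is locally bounded, which is equivalent to the desired lower bound. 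With either repair, the remainder of your argument (point separation via the exit-time dichotomy, and the two inclusions of topologies) is correct.
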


However, in general \textbf{on infinite-dimensional manifolds} the geodesic distance might not be a metric. Indeed it might be non point separating and even stronger, the geodesic distance might be vanishing.\footnote{We say the geodesic distance is \emph{non-vanishing}\index{geodesic distance!non-vanishing} if there exist $x,y \in M$ such that $\dist(x,y) \neq 0$. So every point separating geodesic distance is non-vanishing but not vice versa.} Note that if $c$ is a path connecting $x \neq y$, then $\text{Len} (c) >0$ (or in other words $\text{Len}(c)=0$ implies that the path is constant). Thus the vanishing of $\dist (x,y)$ means that we can find arbitrarily short paths connecting the two points. We showcase this in the following example.

\begin{ex}[{\cite{MaT19}}]\label{ex:Hilbertdistance_van}
 Consider the space $(\ell^2, \langle \cdot ,\cdot\rangle)$ of all square-summable real sequences, cf.\ \Cref{closedsubgrp}. The map $A \colon \ell^2 \rightarrow \ell^2, (x_n)_{n\in \N} \mapsto (\tfrac{1}{n^3}x_n)_{n\in \N}$ is continuous linear and induces a bilinear symmetric map $B \colon \ell^2 \times \ell^2 \rightarrow \R, B(\mathbf{x},\mathbf{y}) \coloneq \langle \mathbf{x},A\mathbf{y}\rangle$. Identifying the tangent spaces of $\ell^2$, we obtain a weak Riemannian metric via 
 $$g \colon T\ell^2 \oplus T\ell^2 = \bigcup_{\mathbf{p} \in \ell^2} \ell^2 \times \ell^2 \rightarrow \R,\quad T_{\mathbf{p}} \ell^2 \ni (\mathbf{x},\mathbf{y}) \mapsto  e^{-\lVert \mathbf{p}\rVert^2}B(\mathbf{x},\mathbf{y}).$$
 We will now prove that the weak Riemannian manifold $(\ell^2,g)$ has vanishing geodesic distance, i.e.\ that for every $\mathbf{p} \neq \mathbf{q}$ in $\ell^2$ we have $\dist (\mathbf{p},\mathbf{q})=0$. To this end, let $\mathbf{e}_n$ be the sequence with $1$ in the $n$th place and zeroes everywhere else. We construct a path from $\mathbf{p}$ to $\mathbf{q}$ via
 \begin{align*}
  c_n \colon [0,1] \rightarrow \ell^2, t \mapsto \begin{cases}
                                                  \mathbf{p}+3tn\mathbf{e}_n & t \in [0,1/3] \\
                                                  \mathbf{p}+n\mathbf{e}_n + (3t-1)(\mathbf{q}-\mathbf{p}) & t \in [1/3,2/3]\\
                                                  \mathbf{q}+(3-3t)n\mathbf{e}_n & t \in [2/3,1]
                                                 \end{cases}
 \end{align*}
 By construction $c_n$ is a piecewise linear curve connecting $\mathbf{p}$ to $\mathbf{q}$ and passing through $\mathbf{p}+n\mathbf{e}_n$ and $\mathbf{q}+n\mathbf{e}_n$ on the way.
 We claim that $\text{Len}(c_n) \rightarrow 0$ as $n\rightarrow \infty$ and thus $\dist (\mathbf{p},\mathbf{q})=0$. To see this let us observe that 
 $$c_n' (t) = 3n\mathbf{e}_n, t \in [0,1/3[\quad c_n' (t) = 3(\mathbf{q}-\mathbf{p}), t \in ]1/3,2/3[,\quad c_n' (t)=-3n\mathbf{e}_n,\ t \in ]2/3,1].$$
 Moreover, since $\mathbf{p}, \mathbf{q} \in \ell^2$ there is $N>0$ such that every component of $\mathbf{p}$ and $\mathbf{q}$ with $n>N$ satisfies $|\pi_n (\mathbf{p})|, |\pi_n (\mathbf{q})| < 1/3$ (here $\pi_n$ is the projection on the $n$th component). Hence we see that for every such $n$ we have $\pi_n (\mathbf{p}+t(\mathbf{q}-\mathbf{p}) >-1$
 We now estimate the length $\text{Len}(c_n )$:
 \begin{align*}
  &\int_0^1 c_n(t) \mathrm{d}t = 3\left(\int_0^{1/3} e^{-\lVert c_n (t)\rVert^2} \sqrt{B(n\mathbf{e}_n, n\mathbf{e}_n)} \mathrm{d}t + \int_{1/3}^{2/3} e^{-\lVert c_n (t)\rVert^2} \underbrace{\sqrt{B(\mathbf{q}-\mathbf{p},\mathbf{q}-\mathbf{p})}}_{\equalscolon K >0} \mathrm{d}t \right.\\ 
  &\hspace{3cm} \left. \qquad + \int_{2/3}^1 e^{-\lVert c_n (t)\rVert^2} \sqrt{B(-n\mathbf{e}_n,- n\mathbf{e}_n)} \mathrm{d}t \right)\\
  \leq& 3\left(\int_0^{1/3} \sqrt{\langle n\mathbf{e}_n, \frac{1}{n^2}\mathbf{e}_n)} \mathrm{d}t + K\int_{1/3}^{2/3} e^{-(n^2+\lVert \mathbf{p}+(3t-1)(\mathbf{q}-\mathbf{p})\rVert^2 + 2n \langle \mathbf{e}_n,  \mathbf{p}+(3t-1)(\mathbf{q}-\mathbf{p})\rangle)}\mathrm{d}t\right. 
  \\
  & \hspace{4cm}\left. +\int_{2/3}^{1} \sqrt{B(-n\mathbf{e}_n,- n\mathbf{e}_n)} \mathrm{d}t  \right)\\
  \leq & \frac{1}{\sqrt{n}} + 3K \int_{1/3}^{2/3} e^{-n^2-2n\pi_n (\mathbf{p}+t(\mathbf{q}-\mathbf{p}))} \mathrm{d}t + \frac{1}{\sqrt{n}} \stackrel{n > N}{\leq} \frac{2}{\sqrt{n}} + Ke^{-n^2+2n}  
 \end{align*}
 We conclude that the length of the curve $c_n$ converges to zero as $n\rightarrow \infty$ whence the geodesic distance vanishes.
\end{ex}

Another interesting example in this regard is the invariant $L^2$-metric on the group of circle diffeomorphisms. Before we state this example, let us exhibit a general construction principle for (weak) Riemannian metrics on Lie groups:

\begin{setup}[Invariant metrics on a Lie group]\label{setup:invmetric}
 Let $G$ be a (possibly infinite-dimensional) Lie group with Lie algebra $\Lf (G)$. Assume that $\langle \colon ,\colon \rangle \colon \Lf (G) \times \Lf (G) \rightarrow \R$ is a continuous inner product on the Lie algebra. 
 Then we define the \emph{right invariant (weak) Riemannian metric}\index{Riemannian metric!right/left invariant} via the following formula
 \begin{align}\label{inv-metric}
  \langle V, W \rangle_{g} \coloneq \langle T\rho_g^{-1}(V),T\rho_g^{-1}(W)),\quad \forall V,W \in T_g G.
 \end{align}
 Here $\rho_g$ is the right translation by $g$ and we remark that due to the smoothness of the group operations the resulting metric is indeed a (weak) Riemannian metric. By construction, the right invariant metric is invariant under the right action of the Lie group $G$ on $TG$ via right multiplication.
 
 Note that by replacing every $\rho_g$ in \eqref{inv-metric} by the left translation $\lambda_g$, we can obtain the \emph{left invariant (weak) Riemannian metric} associated to the given inner product. 
\end{setup}

\begin{ex}[Right-invariant $L^2$-metric on $\Diff(\SSS^1)$]\label{MMvanish}
 We consider $\Diff(\SSS^1)$ again as an open subset of $C^\infty (\SSS^1,\SSS^1)$. Recall from \Cref{ex:Diffgp} that this manifold structure turns the diffeomorphism group into a Lie group. Moreover, $T_\varphi\Diff(\SSS^1) = C^\infty_\varphi (\SSS^1,T\SSS^1) \cong C^\infty_\varphi (\SSS^1, \SSS^1 \times \R) \cong C^\infty (\SSS^1,\R)$.\footnote{Here we exploit that $\SSS^1$ is a Lie group, whence the tangent bundle $T\SSS^1 \cong \SSS^1 \times \R$ is trivial.}
 
 On $C^\infty (\SSS^1,\R)$ we have the inner product (where we refer to the discussion in the beginning of \Cref{sect:shapeanalysis} for the meaning of the integral):
 $$\langle u,v\rangle_{L^2} \coloneq \int_{\SSS^1} f(\theta)g(\theta)\mathrm{d}\theta.$$ Plugging this into \eqref{inv-metric} we obtain the \emph{(right)invariant $L^2$-metric}\index{Riemannian metric!invariant $L^2$-metric}
 $$g^{L^2,\text{inv}}_\varphi (u,v) \coloneq \langle u\circ \varphi^{-1},v\circ \varphi^{-1})_{L^2}$$
 Due to a theorem by Michor and Mumford, the geodesic distance with respect to this metric vanishes, cf.\, e.g.\, \cite[Theorem 4]{MR2434729}. 
\end{ex}

\begin{Exercise}\vspace{-\baselineskip}
 \Question Establish the estimate \eqref{Energyest}.
 \Question Prove \Cref{lem:pseudodist} and verify that the pseudodistance does not depend on the choice of interval (\Cref{rem:intervallindep}).
 \Question We supply the details for \Cref{ex:Hilbertdistance_van}: 
 \subQuestion Show that the map $A$ makes sense and is linear and continuous and thus the map $B$ is bilinear and symmetric.
 \subQuestion Prove that $g$ is a weak Riemannian metric on $\ell^2$.
 \Question Prove that the construction of right (or left) invariant metrics on a Lie group from \Cref{setup:invmetric} yields a weak Riemannian metric.\\
 {\tiny \textbf{Hint:} To check smoothness of the metric use \Cref{tangentLie} to identify the Whitney sum $TG \oplus TG$.}
 \end{Exercise}

\subsection*{Geodesics on infinite-dimensional manifolds (informal discussion)}
To get a better understanding of the distance on weak Riemannian metrics, it seems useful to study curves ``of shortest length'' between two points the so called \emph{geodesics}.\index{geodesic} Before we study geodesics in the next section, we discuss some aspects in an informal way:

If the manifold is a Hilbert space $(H,\langle \cdot,\cdot\rangle)$ viewed as a strong Riemannian manifold, \Cref{ex:Hilbert}, the curve of shortest length between $a,b \in H$ is the straight line 
\begin{align}\label{shortest}
\R \ni t \mapsto \gamma(t)\coloneq (b-a)t+a \in H. 
\end{align}
Note that this curve also satisfies $\text{dist}(\gamma(t),\gamma(s)) = \lVert b-a\rVert |t-s|$, whence it realises the shortest distance between any two given points on the line. On a manifold, we would like to compute curves which satisfy the same property at least locally, i.e.\, in a neighborhood of each point the curve realises the shortest distance for every pair of points on the curve.\footnote{The sphere $\SSS^1$ is a Riemannian manifold by \Cref{Ex:HSphereRiem}. For $x \in \SSS^1$ consider a closed curve running in a great circle from $x$ around the sphere. Then this curve realises the shortest distance from $x$ to another point $y$ until it passes the point antipodal to $x$. However, as long as we restrict to an open neighborhood which does not contain points antipodal to each other, the curve realises the shortest distance from one point to the other.} 
Due to \eqref{Energyest}, one can equivalently describe geodesics between $p$ and $q$ as curves of minimal energy, i.e.\ extrema of the energy $\text{En}$ restricted to $\Gamma(p,q)$. Hence to find geodesics we consider the derivative of the energy. Working locally in a chart $(U,\varphi)$ of $M$ (and suppressing most identifications in the notation, see \Cref{lem:energydifferential}) this yields for the derivative $d\text{En}(c;h)$ the formula
$$\int_0^1 \frac{1}{2} d_1g_U(c,c'(t),c'(t);h)) - d_1g(c(t),h(t),c'(t);c'(t)))-g_U(c(t),h(t),c''(t)) \mathrm{d}t.$$
To find the geodesics, one would now have to isolate $h$ in the expression to rewrite the differential in the form $\int_0^1 g_c(\ldots, h) \mathrm{d}t$ and extract the geodesic equation. In general this is not possible, as the $\flat$-map is not an isomorphism whence the existence of geodesics for weak Riemannian metrics is a priori unclear (cf.~also \Cref{prop:strongmetric}). 

We remark that the geodesics of weak Riemannian metrics are also of independent interest in the context of Euler-Arnold theory (see \Cref{sect:EAtheory}). There certain partial differential equations can be interpreted as geodesic equations on infinite-dimensional manifolds.

\begin{ex}[Inviscid Burgers equation]\label{Burger1} One can show \cite[3.2]{MR2434729}, that geodesics with respect to the invariant $L^2$-metric from \Cref{MMvanish} correspond to solutions of the inviscid Burgers equation (also known as Hopf equation)\index{Riemannian metric!invariant $L^2$-metric}
$$u_t + 3u u_x=0 \text{  (subscripts denoting partial derivatives)}$$
We shall investigate a similar situation in \Cref{ex:inv-burger} below.
\end{ex}
The observant reader should have noted that we are talking about geodesics (which are the solutions of the inviscid Burgers equation) for a weak Riemannian metric which was described in \Cref{MMvanish} as having vanishing geodesic distance. It might be tempting to think that this implies that all geodesics must be constant (since only these curves have length $0$ and geodesics were described as smooth curves (locally) minimizing the length between their endpoints). This however is wrong (the reader might either consult the PDE literature, or observe (see \Cref{ex:inv-burger}) that the related equation $u_t + uu_x=0$ is the geodesic equation of a weak Riemannian metric with non-vanishing geodesic distance) as the Burgers equation admits non-constant solutions, so what is wrong here? 
For strong Riemannian metrics one can show that every geodesic is (locally) length minimizing. However there are also other characterisations of geodesics (which we will discuss in the next section) which coincide with our informal definition for strong Riemannian metrics. In the weak setting the situation is (as always) more complicated.

\begin{Exercise}\vspace{-\baselineskip} \label{exercise:length}
 \Question We verify some details concerning geodesics in Hilbert space
 \subQuestion Show that the path $c(t) = (b-a)t+a, t \in [0,1]$, \eqref{shortest}, realises the shortest length with respect to all $C^1$-curves connecting $a$ to $b$ in the Hilbert space $H$.\\
 {\tiny \textbf{Hint:} Prove first for a path consisting of two line segments meeting at a point $p$ that its length is longer then the length of $c$ if $p$ does not lie on $c$. Use then that $C^1$-paths are rectifiable.}  
 \subQuestion Prove the distance property claimed after \eqref{shortest}: For all $s,t \in \R$, $\text{dist}(\gamma(s),\gamma(t))= \lVert b-a\rVert |t-s|$. Note that this is (up to reparametrisation to unit speed) the geodesic property for curves in metric spaces, cf.\, \cite[Definition 1.3]{MR1744486}.
\end{Exercise}

\section{Geodesics, sprays and covariant derivatives}\label{sect:geod_spray}

In this section, we consider three objects associated to a Riemannian metric (see e.g.~\cite{Lang,MR960687}). This will enable us to study geodesics on Riemannian manifolds. The idea is that every (strong) Riemannian metric induces a covariant derivative, a metric spray and a connector. These can be used to conveniently describe geodesics.
The main point is to introduce the concepts of spray and covariant derivative while we relegate many details of the constructions to the literature, e.g.~\cite[Chapter VIII]{Lang}.

\begin{tcolorbox}[colback=white,colframe=blue!75!black,title=General assumptions]
In this section $(M,g)$ will always denote a \textbf{strong} Riemannian manifold. Thus there will be no need to worry about the existence of solutions to certain differential equations. Note that for weak Riemannian metrics similar computations are in principle possible, but require the careful checking of several technical details.\footnote{We shall do this in \Cref{sect:shapeanalysis} for the $L^2$-metric on $C^\infty (\SSS^1,M)$.}
\end{tcolorbox}

We define first certain vector fields on the tangent manifold which are called sprays. For any spray, geodesics can be defined and if the spray is the metric spray associated to a Riemannian metric, these geodesics will turn out to be the geodesics of the Riemannian metric.

\begin{defn}[Spray]
 A vector field $S \in \mathcal{V} (TM)$ is said to be of \emph{second order}\index{vector field!second order} if
 $$T \pi_M (S(v))=v \text{ for all } v \in TM.$$
 For each $t \in \R$ we denote by $t_{TM} \colon TM \rightarrow TM$ the vector bundle morphism which
is given in each fibre by multiplication with $t$. Now a second order vector field $S$ is a \emph{spray}\index{spray} if
\begin{align}
 \label{spray}
S(tv) = T (t_{TM})(t \cdot S(v)) \text{ for all } t \in \R, v \in TM.
\end{align}
\end{defn}

To understand the meaning of \eqref{spray}, let us localise in a chart (Warning the next identities hold only up to identifications in charts, which we will suppress in the notation!). 

\begin{setup}\label{local:spray}Choose $U \opn M$ such that $TU \cong U \times E$ (where $E$ is the model space of $M$). Then $TTU \cong (U\times E) \times (E\times E)$. Now if $S$ is a second order vector field, its restriction to $U$ is given for $(u,V) \in U \times E $ by $S(u,V) =((u,V),V,S_2(u,V))$. If $S$ is a spray, the equation \eqref{spray} reads on $U$ as follows: 
$$S(u,tV) = (u,tV,tV,S_{U,2} (u,tV))= (u,tV,tV,t^2S_{U,2}(u,V))$$
The map $S_{U,2}$ is thus quadratic with respect to scalar multiplication in the fibre. Furthermore, this implies together with Exercise \ref{ex:spray} 1.~that $S_{U,2} (x,v) = \frac{1}{2}d_2^2S_2 ((x,0);(v,v))$.

We define $B \colon M \rightarrow C^\infty (TM\oplus TM,TM)$ as the map which is locally on a chart domain $(U,\varphi)$ given by the symmetric bilinear map $B_U(x;v,w) \coloneq d_2^2S_{U,2} ((x,0);(v,w))$. Associated to this bilinear map is the quadratic form $Q_U (x,v) \coloneq B_U (x;v,v)$.\footnote{We recall that due to the identity $B_U (x;v,w) = \frac{1}{2} (Q_U (x,v+w)-Q_U(x,v)-Q_U(x,w))$ the quadratic form carries exactly the same information as the bilinear form.} Note that in finite-dimensional Riemannian geometry $-B$ is represented by the so called \emph{Christoffel symbols},\index{Christoffel symbol} \cite[p.213-214]{Lang}.
\end{setup}

\begin{setup}[Integral curves of second order vector fields]
 Let $S$ be a smooth second order vector field on the manifold $M$ and assume that $S$ admits integral curves (i.e.\ $C^1$-curves $\beta \colon J \rightarrow TM$ with $S(\beta)=\dot{\beta}$\footnote{If the manifold $M$ is modelled on a Banach space, such curves always exist due to the standard ODE solution theory. Beyond Banach spaces existence of such curves needs to be established for the special case at hand.}. Note that an integral curve $\beta \colon J \rightarrow TM$ of a second order vector field $S$ satisfies $\dot{(\pi_M \circ \beta)}(t) = \beta(t)$. 
\end{setup}

\begin{defn}\label{defn:geodesic}
 A $C^2$-curve $\alpha \colon J \rightarrow M$ is a \emph{geodesic}\index{geodesic!of a spray} of the spray $S$, if $\dot{\alpha} \colon J \rightarrow TM$ is an integral curve of $S$, or equivalently, if $\alpha$ satisfies the \emph{geodesic equation}\index{geodesic equation}
 $$\ddot{\alpha}(t) = S\left(\dot{\alpha}(t)\right), \quad \forall t \in J.$$
 Note that $\dot{\alpha}(t) =T_t \alpha (1) \in TM$. Equivalently, the geodesic equation becomes (in local coordinates) the equation $\ddot{\alpha}(t) = B_{\alpha(t)}(\dot{\alpha}(t),\dot{\alpha}(t))$.
\end{defn}

\begin{ex}\label{ex:sprayassoc}
 Let $(M,g)$ be a strong Riemannian manifold. Then $g$ induces a spray $S^g \colon TM \rightarrow T^2M$ which we describe locally on a chart domain $U\opn M, TU = U \times E, T^2U = (U \times E)\times E \times E$ (again suppressing the identifications!). Namely, we think of the local representative $g_U \colon U \times E \times E \rightarrow \R, (x,v,w) \mapsto g_x (v,w)$ of the metric as a mapping with three components. Then we define the associated spray via $S_U^g(x,v) \coloneq ((x,v),(v,\Gamma_U (x,v)))$ for $v \in T_x M$, where the quadratic form $\Gamma$ is the unique map which satisfies
 \begin{align}\label{spray-formula}
  g_U (x,\Gamma_U (x,v),w) = \frac{1}{2} d_1 g_U (x,v,v;w) -d_1g_U (x,v,w;v),\quad \forall v,w \in T_xU.
 \end{align}
 Here $d_1g_U$ denotes the partial derivative with respect to the first component (cf.~\Cref{prop:rpd}). We leave the verification that the local mappings $S_U^g$ yield a spray $S^g$ on $TM$ to the reader (Exercise \ref{ex:spray} 2.) Note that the formula \eqref{spray-formula} makes sense for weak Riemannian metrics, but since $\flat$ is in general not surjective for a weak Riemannian metric (cf.~\Cref{prop:strongmetric}), it is a priori not clear whether the condition defines a unique map $\Gamma_U$.
\end{ex}

\begin{defn}[Metric spray]\label{defn:metricspray}
 Let $(M,g)$ be a weak Riemannian manifold. A spray $S$ is called \emph{metric spray}\index{spray!metric} if its associated quadratic form $Q$ satisfies for each chart $(U,\varphi)$ the formula \eqref{spray-formula}, i.e.
 \begin{align}\label{metricspray:Qform}
   g_U (x,Q_U (x,v),w) = \frac{1}{2} d_1 g_U (x,v,v;w) -d_1g_U (x,v,w;v),\quad \forall v,w \in T_xU.
 \end{align}
\end{defn}
 Let us remark that the metric spray $S^g$ associated to a Riemannian metric $g$ can be interpreted in the sense of Hamiltonian mechanics: It can be shown (see \cite[p. 493]{MR960687}) that the spray $S^g$ is the Hamiltonian vector field on $TM$ associated with the kinetic energy function $e \colon TM \rightarrow \R , v_x \mapsto \tfrac{1}{2}g_x (v_x,v_x)$. This has several interesting consequences, such as conservation of energy by the geodesics (again we refer to \cite[Supplement 8.1.B]{MR960687} for more information). We shall return to the relation between energy and geodesics in \Cref{sect:EAtheory}.
 
 The next example shows that there is not necessarily a metric spray. It was pointed out to me by C.\ Maor and we urge the reader to compare it with \Cref{thm:metricsprayident} below.

 \begin{ex}[A weak Riemannian metric without metric spray]\label{nometricspray}
  We return to the weak Riemannian metric on the Hilbert space $(\ell^2.\langle \cdot,\cdot\rangle)$ from \Cref{ex:Hilbertdistance_van}:
  $$g \colon T\ell^2 \oplus T\ell^2 = \bigcup_{\mathbf{p} \in \ell^2} \ell^2 \times \ell^2 \rightarrow \R,\quad T_{\mathbf{p}} \ell^2 \ni (\mathbf{x},\mathbf{y}) \mapsto  e^{-\lVert \mathbf{p}\rVert^2}\sum_{n=1}^\infty \frac{\mathbf{x}_n \mathbf{y}_n}{n^3}.$$
  where the subscripts denote elements of a sequence. A quick computation shows that
  $$d_1 g(\mathbf{p},\mathbf{x},\mathbf{y};\mathbf{w}) = -2\langle \mathbf{p},\mathbf{w}\rangle g(\mathbf{p},\mathbf{x},\mathbf{y})=-2g(\mathbf{p}, (n^3\mathbf{p}_n)_{n\in \N}, \mathbf{w})\sum_{n=1}^\infty \frac{\mathbf{x}_n \mathbf{y}_n}{n^3}$$
  where the last equality only makes sense if the sequence $(n^3\mathbf{p}_n)_{n\in \N}$ is contained in $\ell^2$. Plugging this identity into the right-hand side of \eqref{metricspray:Qform}, we see that 
  \begin{align*}
   &\frac{1}{2} d_1 g(\mathbf{p},\mathbf{x},\mathbf{x};\mathbf{w}) -d_1g (\mathbf{p},\mathbf{x},\mathbf{w};\mathbf{x}) = - g(\mathbf{p}, (n^3\mathbf{p}_n)_{n\in \N}, \mathbf{w})\sum_{n=1}^\infty \frac{\mathbf{x}_n^2}{n^2} + 2 \langle \mathbf{p},\mathbf{x}\rangle g(\mathbf{p}, \mathbf{x}, \mathbf{w})\\
   =& g(\mathbf{p}, 2 \langle \mathbf{p},\mathbf{x}\rangle\mathbf{x} -\left(\sum_{n=1}^\infty \mathbf{x}_n/n^3\right) (n^3\mathbf{p}_n)_{n\in \N},\mathbf{w}).
  \end{align*}
We conclude that if a spray existed, its quadratic form needs to be given by
$$Q(\mathbf{p},\mathbf{x}) = 2 \langle \mathbf{p},\mathbf{x}\rangle\mathbf{x} -\left(\sum_{n=1}^\infty \mathbf{x}_n/n^3\right)(n^3\mathbf{p}_n)_{n\in \N}$$ and this expression is ill defined if $(n^3\mathbf{p}_n)_{n\in \N}$ is not contained in $\ell^2$. Hence $g$ does not admit a metric spray.
  \end{ex}

\begin{Exercise}\label{ex:spray} \vspace{-\baselineskip}
 \Question Let $U \opn E$, where $E$ is a locally convex space, $0\in U$ and $f \colon U \rightarrow E$ is smooth. Prove that if $f$ is \emph{locally homogeneous of order $p$}, i.e.\, $f(tx)=t^pf(x), \forall (t,x) \in \R \times U,$ such that $tx\in U$, then $f(x)= \frac{1}{p!}d^pf(0;x,\ldots,x)$.
  \Question Show that the local formula \eqref{spray-formula} defines a spray on a chart domain $U$. Then show that for any other chart $V$, the change of charts relates the local formulae obtained in this way to each other, i.e.~the sprays can be combined to define a spray on $TM$. 
 \Question Let $S$ be a spray on a manifold $M$, $(U,\varphi)$ and $(V,\psi)$ be two charts of $M$ with change of charts $\tau \coloneq \psi \circ \varphi^{-1}$. Prove the following change of charts formulae for the local expression of the spray and the associated bilinear form (cf.\, \Cref{local:spray}):
 \begin{align}
  S_{V,2} (\tau(x),d\tau(x;v)) &= d^2\tau (x;v,v)+d\tau(x;S_{U,2}(x,v))\\
  B_V (\tau(x); d\tau(x;v),d\tau(x;w)) &= d^2\tau (x;v,w) + d\tau (x;B_U(x;v,w)) \label{locbilform}
 \end{align}
 \Question Define the subspace $S \coloneq \{\mathbf{x} \in \ell^2 \mid (n^3\mathbf{x}_n)_{n \in \N} \in \ell^2\}$ of $\ell^2$. Endow $S$ with the restriction of the weak Riemannian metric  
 $g_{\mathbf{p}} (\mathbf{x},\mathbf{y}) =  e^{-\lVert \mathbf{p}\rVert^2}\sum_{n=1}^\infty \frac{\mathbf{x}_n \mathbf{y}_n}{n^3}$ from \Cref{ex:Hilbertdistance_van}. Show that the resulting weak Riemannian metric
 \subQuestion has vanishing geodesic distance.
 \subQuestion admits a metric spray and deduce that the (non-)existence of a (smooth) metric spray does not imply the non-degeneracy of the geodesic distance.\\
 {\footnotesize \textbf{Remark:} Again I am indebted to C.\ Maor for pointing this example out to me.}
\end{Exercise}

\subsection*{Covariant derivatives}
On a locally convex space $E$, we can identify vector fields with functions, whence for two vector fields $X,Y \in C^\infty (E,E)$, we can differentiate $Y$ in the direction of $X$ via $X.Y (v) = dY(v,X(v))$ (cf.\, \Cref{chapter:LA:VF}). This yields again a smooth function from $E$ to $E$. On a manifold $M$, the corresponding construction for vector fields would be $TY \circ X \colon M \rightarrow T^2M$ which is obviously \emph{not} a vector field. Hence to define a derivative taking two vector fields to vector fields an additional structure is needed.

\begin{defn}
 A \emph{covariant derivative}\index{derivative!covariant} $\nabla \colon \mathcal{V}(M) \times \mathcal{V} (M) \rightarrow \mathcal{V}(M), (X,Y) \mapsto \nabla_XY$ is a bilinear map satisfying the properties
 \begin{enumerate}
  \item for $f \in C^\infty (M,\R), X,Y \in \mathcal{V}(M)$ we have 
  \begin{enumerate}
  \item[1.] $\nabla_{fX}Y =f\nabla_XY$,
  \item[2.] $\nabla_X(fY)= \mathcal{L}_X (f) Y+f\nabla_XY$ (where $\mathcal{L}_X(f)$ is the Lie derivation \eqref{Liederiv}). 
  \end{enumerate}
  \item $\nabla_XY-\nabla_YX = \LB[X,Y]$ for all $X,Y \in \mathcal{V}(M)$.
 \end{enumerate}
\end{defn}

We shall show now that every spray induces a covariant derivative.
Recall from \Cref{chapter:LA:VF} that for a chart $(\varphi,U)$ we denote by $X_\varphi$ the local representative of a vector field $X \in \mathcal{V}(M)$.

\begin{prop}\label{assoc:covD}
 Given a spray $S$ on $M$, there exists a unique covariant derivative $\nabla$ such that in a chart $(\varphi,U)$, the derivative is given by the local formula
 \begin{align}\label{loc:covD}
  (\nabla_X Y)_\varphi (x) = X_\varphi . Y_\varphi (x) - B_U (x; X_\varphi (x),Y_\varphi(x))
 \end{align}
 We call $\nabla$ the \emph{covariant derivative associated to the spray}\index{spray!associated covariant derivative} $S$. Suppressing the indices, the above formula reads $\nabla_XY = X.Y-B(X,Y)$.
\end{prop}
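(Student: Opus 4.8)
The plan is to prove \Cref{assoc:covD} by first establishing that the local formula \eqref{loc:covD} genuinely defines a vector field on each chart, then checking that these local definitions are compatible on overlaps (so that they glue to a global object $\nabla_X Y \in \mathcal{V}(M)$), and finally verifying the three axioms of a covariant derivative together with uniqueness. The key computational input is the change-of-charts formula \eqref{locbilform} for the bilinear form $B$ established in Exercise \ref{ex:spray} 3., namely $B_V(\tau(x);d\tau(x;v),d\tau(x;w)) = d^2\tau(x;v,w) + d\tau(x;B_U(x;v,w))$ where $\tau = \psi\circ\varphi^{-1}$ is the change of charts. The whole point of subtracting $B$ in \eqref{loc:covD} is precisely to cancel the second-order term $d^2\tau$ which obstructs $X_\varphi.Y_\varphi$ from transforming tensorially.

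First I would verify the gluing. Recall that the local representative of a vector field transforms as $Y_\psi(\tau(x)) = d\tau(x;Y_\varphi(x))$, and the directional-derivative term $X_\varphi.Y_\varphi(x) = dY_\varphi(x;X_\varphi(x))$. Differentiating the transformation law for $Y$ by the chain rule (and the rule on partial differentials, \Cref{prop:rpd}, in the form of Exercise \ref{Ex:Bastiani} 5.) produces
\begin{align*}
 X_\psi.Y_\psi(\tau(x)) = d^2\tau(x;X_\varphi(x),Y_\varphi(x)) + d\tau(x;X_\varphi.Y_\varphi(x)).
\end{align*}
Subtracting the transformed bilinear term \eqref{locbilform} evaluated at $v=X_\varphi(x)$, $w=Y_\varphi(x)$, the two occurrences of $d^2\tau(x;X_\varphi(x),Y_\varphi(x))$ cancel, leaving
\begin{align*}
 (X_\psi.Y_\psi - B_V(X_\psi,Y_\psi))(\tau(x)) = d\tau\big(x;\, X_\varphi.Y_\varphi(x) - B_U(x;X_\varphi(x),Y_\varphi(x))\big),
\end{align*}
which is exactly the transformation law a vector field must satisfy. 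Hence the local formulae \eqref{loc:covD} define a consistent global vector field $\nabla_X Y$.

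Next I would check the axioms, all of which reduce to local computations in a single chart where $B_U$ is symmetric and bilinear in its last two arguments. Bilinearity of $(X,Y)\mapsto \nabla_X Y$ is immediate since both $X_\varphi.Y_\varphi$ and $B_U$ are bilinear in the representatives. For axiom (a)1, $\nabla_{fX}Y = f\nabla_X Y$ follows because $(fX)_\varphi.Y_\varphi = f\cdot(X_\varphi.Y_\varphi)$ (the derivative only hits $Y_\varphi$) and $B_U$ is homogeneous in its first slot. For axiom (a)2 I would use the Leibniz rule for the directional derivative: $X_\varphi.(fY_\varphi) = (\mathcal{L}_X f)Y_\varphi + f(X_\varphi.Y_\varphi)$, and again linearity of $B_U$ in its second slot, to obtain $\nabla_X(fY) = \mathcal{L}_X(f)Y + f\nabla_X Y$. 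For axiom (b), the torsion-free property, I would compute $\nabla_X Y - \nabla_Y X$ in the chart: the two $B_U$ terms cancel by the \emph{symmetry} of $B_U$ (established in \Cref{local:spray}), leaving $X_\varphi.Y_\varphi - Y_\varphi.X_\varphi$, which is the local expression for the Lie bracket $\LB[X,Y]$ by \eqref{eq:VFLieb} and \Cref{setup:LieVF}. Uniqueness is then clear: any covariant derivative whose local expression agrees with \eqref{loc:covD} on every chart is determined on each chart domain, hence globally.

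The main obstacle I anticipate is bookkeeping rather than conceptual difficulty: carefully justifying the second-derivative chain-rule identity for $X_\psi.Y_\psi$ in the Bastiani setting and matching it against the precise form of \eqref{locbilform}, including the symmetry that makes the $d^2\tau$ terms cancel in the torsion computation. Since $B_U$ is symmetric and \eqref{locbilform} is symmetric in $(v,w)$, this cancellation is exactly what is needed, but one must be attentive that the Schwarz symmetry (Exercise \ref{Ex:Bastiani} 3.) of $d^2\tau$ is invoked correctly. No functional-analytic completeness issues arise here because $M$ is a \textbf{strong} Riemannian manifold by the standing assumption of this section, so the spray $S$ (and hence $B$) exists and is smooth, and everything is a finite algebraic manipulation of smooth maps between open subsets of the model space.
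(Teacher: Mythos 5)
Your proposal is correct and follows essentially the same route as the paper: define $\nabla_XY$ chart-wise by \eqref{loc:covD}, glue via \Cref{glueinglemma} by showing the local representatives are $\tau$-related — which is exactly the cancellation of the $d^2\tau$ terms coming from \eqref{ident:doublederiv} against \eqref{locbilform}, using Schwarz symmetry — and check the covariant-derivative axioms locally. The only cosmetic difference is that the paper relegates the axiom verification (bilinearity, the Leibniz rules, and torsion-freeness via the symmetry of $B_U$) to Exercise \ref{ex:covd} 2., whereas you sketch it inline.
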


\begin{proof}
 Define $\nabla_XY$ locally over $U$ via the formula \eqref{loc:covD}. In Exercise \ref{ex:covd} 2.\, we shall see that this formula has all properties of a covariant derivative for vector fields on $U$ (and all of the defining properties of a covariant derivative localise on $U$!). Obviously we can repeat the construction for every chart in an atlas $\mathcal{A}$. It suffices now to prove that the family $((\nabla_XY)_\varphi)_{\varphi \in \mathcal{A}}$ induces a vector field, i.e.\, in view of \Cref{glueinglemma}, it suffices to prove that the local representatives are related by the change of charts. We will check this for $\tau \coloneq \psi\circ \varphi^{-1}$, i.e.\, we prove $d\tau \circ (\nabla_XY)_\varphi = (\nabla_XY)_\psi \circ \tau$. Note first that by construction $Y_\psi \circ \tau = d\tau \circ Y_\varphi$ and thus \eqref{ident:doublederiv} yields 
 \begin{align}
  \label{eq:chchhigh}
d(Y_\psi \circ \tau)(x;v) = d(d\tau (x;Y_\varphi)(x;v) = d^2\tau (x;Y_\varphi(x),v)+d\tau(x,dY_\varphi(x;v))
 \end{align}
 We now apply the change of charts formulae for the spray and associated bilinear form from Exercise \ref{ex:spray} 3.:
 \begin{align*}
 & (\nabla_{X} Y)_\psi(\tau(z)) = dY_\psi (\tau(z);d\tau(z;X_\varphi(z))-B_V (\tau(z);d\tau(z;X_\varphi(z)),d\tau(z;Y_\varphi(z)))\\
  \stackrel{\eqref{locbilform}}{=}& d(Y_\psi \circ \tau)(z;X_\varphi(z))-d^2\tau (z;Y_\varphi(z),X_\varphi(z))-d\tau(z;B_U(z;X_\varphi(z),Y_\varphi(z)))\\
  \stackrel{\eqref{eq:chchhigh}}{=}& d^2\tau (z;Y_\varphi(z),X_\varphi(z)) +d\tau(x,dY_\varphi(x;X_\varphi (z)))\\
 &-d^2\tau (z;Y_\varphi(z),X_\varphi(z))-d\tau(z;B_U(z;X_\varphi(z),Y_\varphi(z)))\\
 \stackrel{\hphantom{\eqref{eq:chchhigh}}}{=}& d\tau (z;(\nabla_XY)_\varphi(z))
 \end{align*}
where we have exploited that the second derivative $d^2\tau(z;\cdot)$ is symmetric by Schwarz theorem, Exercise \ref{Ex:Bastiani} 3.
\end{proof}

\begin{rem}\label{rem:cov_op}
 For later use, we observe that the covariant derivative depends only on the values of the field in whose direction we derivate: Let $X,Y$ be vector fields and $\nabla$ be a covariant derivative associated to a spray $S$. Then the formula \eqref{loc:covD} shows that $\nabla_X Y (p)$ depends only on $X(p)$ but not on the vector field $X$ in a neighborhood of $p$. 
\end{rem}

As soon as we have a covariant derivative associated to a spray one can define an associated curvature tensor. We recall the definition here for later use:

\begin{defn}\label{defn:curvature}
 Let $M$ be a manifold with a covariant derivative $\nabla$. For vector fields $X,Y,Z \in \mathcal{V} (M)$ we define the linear map 
 \begin{align*}
  R(X,Y) \colon& \mathcal{V} (M) \rightarrow \mathcal{V}(M) \text{ given by the formula }  \\
 R(X,Y)Z \coloneq& \nabla_X \nabla_Y Z -\nabla_Y \nabla_X Z - \nabla_{\LB[X,Y]}Z.
 \end{align*}
 Then one can show that $R$ is a trilinear map in the variables $X,Y,Z$ called the \emph{curvature}\index{curvature} associated to the covariant derivative, cf.~Exercise \ref{ex:covd} 6. If $S$ a spray inducing $\nabla$ one also says that $R$ is the curvature of $S$. Similarly if $\nabla$ is the metric derivative of a (weak) Riemannian metric $g$, we say that $R$ is the curvature of the metric $g$.
\end{defn}

Curvature (associated to the metric spray) is a fundamental invariant of a Riemannian manifold \cite{MR1330918,Lang,GaHaL04}. Here we mention only that the curvature of certain infinite-dimensional (weak and strong) Riemannian manifolds plays a crucial r\^{o}le in important applications of infinite-dimensional geometry such as Arnold's result \cite{MR202082} on the practical impossibility of long term weather forecasts. Also there is an interesting divide between the curvature of strong and weak Riemannian metrics: If $(M,g)$ is a strong Riemannian manifold, the curvature is always (locally) bounded (this follows from the fact that it can be represented as a smooth section into a suitable tensor bundle). However, there are examples of weak Riemannian manifolds with covariant derivative, such that the curvature is unbounded (wrt.\ the norm induced by the metric) locally as well as a multilinear operator on the tangent space at a single point (see Exercise \ref{ex:covd} 7.).

\begin{setup}
 Similar to the construction of a covariant derivative, every spray induces a bundle morphism $K \colon T^2M \rightarrow TM$ which is locally (on a chart domain $(U,\varphi)$) given by 
 \begin{equation}\label{connectionformula}
  K(x,y,v,w) \coloneq (x,w-B_U(x;y,v)). 
 \end{equation}
 This bundle morphism is a linear connection\footnote{A linear connection for a bundle $p\colon E\rightarrow M$ is given by a bundle map $k \colon TE \rightarrow E$ over the bundle projection which is given in bundle trivialisations by bilinear maps as in \eqref{connectionformula}. See e.g.\, \cite[1.5]{MR1330918}.} called the \emph{connector} of the spray.\index{spray!connector} 
 By definition of the connector, the associated covariant derivative is $\nabla_X Y = K \circ TY \circ X$. 
\end{setup}

\begin{rem}
 If the manifold admits smooth cut-off functions (e.g.\ if $M$ is finite-dimensional, cf.\, also \Cref{smooth:bumpfun}) one can show, \cite[VII, \S 2 Theorem 2.4]{Lang}, that every covariant derivative is associated to a smooth spray as in \Cref{assoc:covD}.
\end{rem}

\begin{setup}\label{setup:metricderiv}
 A covariant derivative $\nabla$ on a Riemannian manifold $(M,g)$ is called \emph{metric derivative}\index{derivative!metric derivative} if for all $X,Y,Z \in \mathcal{V} (M)$ the following equation holds 
 $$X.g(Y,Z) = g(\nabla_XY,Z) + g(Y,\nabla_XZ).$$ 
 Here $X.f = df \circ X$ is the derivative of $f \colon M \rightarrow \R$ in the direction of the field $X$. 
\end{setup}

\begin{thm}[{\cite[VIII. \S 4 Theorem 4.1 and 4.2]{Lang}}]\label{thm:metricsprayident}
 Let $(M,g)$ be a strong Riemannian manifold. Then $g$ admits a metric derivative and there exists a unique spray $S \colon TM \rightarrow T^2M$, the \emph{metric spray},\index{spray!metric spray} whose associated covariant derivative is the metric derivative.
\end{thm}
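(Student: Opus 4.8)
The plan is to establish the two assertions—existence of a metric derivative and existence of a unique metric spray inducing it—by leveraging the correspondence between sprays, covariant derivatives and connectors already developed in this section, together with the fact that on a \textbf{strong} Riemannian manifold the musical morphism $\flat$ is a vector bundle isomorphism by \Cref{prop:strongmetric}. First I would work locally in a chart $(U,\varphi)$ with $TU \cong U \times E$ and write the metric as $g_U \colon U \times E \times E \rightarrow \R$. The defining equation \eqref{spray-formula} for a metric spray prescribes the value of $g_U(x,Q_U(x,v),w)$ as a concrete bilinear-in-$w$ expression built from $d_1 g_U$. Since $g$ is strong, the map $w \mapsto g_U(x,\cdot,w)$ is, fibrewise, a topological isomorphism onto the dual (this is exactly where \Cref{prop:strongmetric} enters and where the weak case fails, cf.\ \Cref{nometricspray}); hence the right-hand side of \eqref{spray-formula}, being a continuous linear functional of $w$ for each fixed $x,v$, is represented by a \emph{unique} element $Q_U(x,v) \in E$. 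Smoothness of $Q_U$ in $(x,v)$ follows from smoothness of $\flat^{-1}$ as a bundle map.

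The next step is to verify that the locally defined quadratic forms $Q_U$ patch together to a global spray. For this I would appeal to the change-of-charts formula \eqref{locbilform} from Exercise \ref{ex:spray} 3.: one checks that the defining identity \eqref{spray-formula} is preserved under a change of charts $\tau = \psi \circ \varphi^{-1}$, using the transformation behaviour of $d_1 g_U$ under $\tau$ and the fact that $g$ is a well-defined tensor. Because the representing element is \emph{unique} fibrewise (again by strongness), the local solutions must agree on overlaps after applying the correct transition rule, so they define a genuine second-order vector field satisfying the homogeneity condition \eqref{spray} for a spray. This is essentially the content of Exercise \ref{ex:spray} 2.\ specialised to the metric formula, so I would cite that verification rather than redo it.

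Having produced the metric spray $S = S^g$, I would then invoke \Cref{assoc:covD} to obtain its associated covariant derivative $\nabla$, given locally by $(\nabla_X Y)_\varphi = X_\varphi.Y_\varphi - B_U(X_\varphi,Y_\varphi)$ where $B_U$ is the symmetric bilinear form polarising $Q_U$. The remaining task is to show this $\nabla$ is a metric derivative in the sense of \Cref{setup:metricderiv}, i.e.\ that $X.g(Y,Z) = g(\nabla_X Y,Z) + g(Y,\nabla_X Z)$. This is a purely local computation: expanding the left-hand side via the product/chain rule gives $d_1 g_U(x;\,\cdot\,)$ terms plus $g_U(x; X.Y, Z) + g_U(x; Y, X.Z)$, and substituting the local formula for $\nabla$ reduces the compatibility identity to precisely the defining equation \eqref{spray-formula} (written once with the roles of the second and third slots as $(Y,Z)$ and once symmetrically), whose two instances combine to cancel the $d_1 g_U$ contributions. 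Symmetry of $B_U$ ensures the torsion-free condition, so $\nabla$ is indeed a covariant derivative.

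For \textbf{uniqueness} of the metric spray, I would argue that any metric derivative is uniquely determined by $g$ through the Koszul-type formula—the compatibility condition together with torsion-freeness over-determines $g(\nabla_X Y, Z)$ in terms of derivatives of $g$, and since $\flat$ is an isomorphism this pins down $\nabla_X Y$ itself; a spray is in turn determined by its connector and hence by its covariant derivative, so two metric sprays inducing the same metric derivative coincide. The main obstacle I anticipate is not conceptual but bookkeeping: correctly tracking the partial-derivative terms $d_1 g_U$ through the chart transformations and confirming that the uniqueness from strongness genuinely forces the local representatives to glue. Once strongness of $g$ is exploited to turn $\flat$ into an isomorphism, every existence-and-uniqueness claim reduces to solving $g_U(x,\,\cdot\,,w) = (\text{given functional})$, which has a unique smooth solution; the rest is the routine verification of covariance already packaged in the cited exercises.
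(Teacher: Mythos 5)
Your proposal is sound, but note that the paper itself offers no proof of this theorem: it is imported verbatim from Lang (VIII.\ \S 4, Theorems 4.1 and 4.2), so there is no in-text argument to compare against. What you have written is essentially the standard proof (and essentially Lang's), correctly reassembled from the paper's own local machinery: the defining identity \eqref{spray-formula} from \Cref{ex:sprayassoc}, the fibrewise Riesz representation supplied by strongness via \Cref{prop:strongmetric} (which is exactly the step that fails for weak metrics, as \Cref{nometricspray} shows), the chart-covariance packaged in Exercise \ref{ex:spray}, and the passage from spray to covariant derivative in \Cref{assoc:covD}. Two points deserve slightly more care than your sketch gives them. First, smoothness of $(x,v)\mapsto Q_U(x,v)$ is not purely formal: one must check that the functional $w\mapsto \tfrac{1}{2}d_1g_U(x,v,v;w)-d_1g_U(x,v,w;v)$ depends smoothly on $(x,v)$ as an element of $L(E,\R)$ with the operator norm before composing with $\flat^{-1}$; this works precisely because $M$ is a Hilbert manifold, again by \Cref{prop:strongmetric}. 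Second, your description of the metric-compatibility check is a little loose; the clean route is to polarise \eqref{spray-formula} into the identity \eqref{eq:bilin} (which the paper records in the proof of \Cref{prop:covdident}), write it once for $B(X,Y)$ paired with $Z$ and once for $B(X,Z)$ paired with $Y$, and add — the cross terms cancel by symmetry of $g$ and one is left with $g(B(X,Y),Z)+g(Y,B(X,Z))=-d_1g(Y,Z;X)$, which is exactly the compatibility condition for $\nabla_XY=X.Y-B(X,Y)$. With those two refinements your argument is a complete and correct substitute for the citation.
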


\begin{rem}
 To every (strong) Riemannian metric, there is an associated metric spray and a metric derivative. The metric spray turns out to be the one we computed in \Cref{local:spray} (whence it coincides with the metric spray in \Cref{defn:metricspray}). For a weak Riemannian metric we have no guarantee that a metric spray exists as \Cref{nometricspray} shows, but if a metric spray exists, its associated covariant derivative is the metric derivative.
\end{rem}

 We investigate some examples of Riemannian manifolds for which these objects can be computed explicitly.

\begin{ex}[The trivial example]\label{setup:trivial}
 Let $(H,\langle \cdot,\cdot\rangle)$ be a Hilbert space considered as a strong Riemannian manifold. Due to Exercise \ref{ex:covd} 1., the covariant derivative is $\nabla_XY=X.Y$. This implies that the bilinear form $B$ of the associated spray (aka the Christoffel symbols) need to vanish, we see that the connector and the metric spray associated to the metric are 
 \begin{align*}
  K \colon T^2H \cong H^4 & \rightarrow TH\cong H^2, (x,u,v,w) \mapsto (x,w)\\
  S \colon TH \cong H^2 & \rightarrow T^2H \cong H^4, (x,u) \mapsto (x,u,u,0)
 \end{align*}
 Finally, from the formula for the covariant derivative one sees that the curvature $R$ identically vanishes on $H$ (one also says that $H$ is flat).
\end{ex}

\begin{ex}[The submanifold example]\label{ex:submfd:covd}
Recall from \Cref{Ex:HSphereRiem} that the Hilbert sphere $S_H$ of a Hilbert space $(H,\langle \cdot,\cdot\rangle)$ is a submanifold of $H$. This structure turns $S_H$ into strong Riemannian manifold with respect to the pullback metric 
$$g_x (V,W) \coloneq \langle V,W\rangle \text{ for },W \in T_x S_H = \{y \in H \mid \langle x,y\rangle =0\}.$$
 We shall now describe the covariant derivative of this Riemannian metric. Define the smooth map $\text{pr} \colon S_H \times H \rightarrow H, (p,v) \mapsto v - \langle p,v\rangle p$ and note that for fixed $p \in S_H$, $\text{pr}_p\coloneq \text{pr}(p,\cdot)$ is just the orthogonal projector onto the tangent space $T_p S_H$.
 Since the tangent space $T_p S_H$ has been identified with the orthogonal complement of $p$ in $H$, we may identify vector fields on $S_H$ with smooth maps $X \colon S_H \rightarrow H$ which satisfy $\langle X(p),p\rangle =0 , \forall p \in S_H$. Using these identifications, we can then define define a map 
 \begin{align}\label{cov:submfd}
  \nabla^{S_H} \colon \mathcal{V}(S_H)^2 \rightarrow \mathcal{V}(S_H), \nabla^{S_H}_X Y (p)\coloneq \text{pr}_p (dY (p,X(p))).
 \end{align}
 Its a straight forward computation (Exercise \ref{ex:covd} 3.) that $\nabla^{S_H}$ defines a covariant derivative on $S_H$. Let us show now that it is the metric derivative of the pullback metric $g$.
 Pick vector fields $X,Y,Z \in \mathcal{V}(S_H)$. We can now compute as follows:
 \begin{align*}
   X.g(Y,Z)(p) &= X.\langle Y,Z\rangle (p) = d\langle Y,Z\rangle (p;X(p)) \\ 
   &= \langle dY(p;X(p));Z(p)\rangle + \langle Y(p), dZ(p;X(p))\rangle\\ 
   &= g(\nabla^{S_H}_X Y,Z) (p) + g(Y,\nabla^{S_H}_X Z)(p)
 \end{align*}
 where the last equality follows from the fact that vector fields on $S_H$ take their image in the orthogonal complement of the base point. Thus $\nabla^{S_H}$ is the metric derivative of the pullback metric. The connector and the metric spray are now simply the restrictions of the ones from \Cref{setup:trivial}.
 There is also an associated formula (called the Gauss equations) relating the curvature of the sphere to the one of the flat ambient space (see \cite[Example 1.11.6]{MR1330918}). 
\end{ex}

The formulae for covariant derivative, spray and connector in \Cref{ex:submfd:covd} might seem ad hoc and indeed there is not much to see due to the simplicity of the data of the ambient space. However, the whole procedure turns out to be a special case of a formula which relates the covariant derivative of an isometrically immersed submanifold to the covariant derivative of the ambient manifold (cf.~\cite[Theorem 1.10.3]{MR1330918}). For the general notion, one needs to define the covariant derivative of vector fields along smooth maps. For now, we shall define this concept only for the special case of a covariant derivative for vector fields along curves (but see \Cref{defn:covalong}).

\begin{setup}
For a $C^1$-curve $c\colon [a,b] \rightarrow M$ we say a $C^1$ curve $\alpha \colon [a,b] \rightarrow TM$ \emph{lifts} $c$ if $\pi_M \circ \alpha = c$. Denote by $\text{Lift}(c)$ the \emph{set of all lifts} of $c$. Note that the pointwise operations turn $\text{Lift}(c)$ into a vector space on which the smooth functions $C^\infty ([a,b],\R)$ act by pointwise multiplication.
\end{setup}

If $X$ is a (smooth) vector field on $M$, then for every $C^1$-curve $c \colon [a,b] \rightarrow M$, the curve $X \circ c$ is a lift of $c$. Note however, that not every lift of a curve needs to arise as the composition of a vector field and a curve (for example, if the curve intersects itself, there can be lifts taking different values for the different time parameters associated to the intersection).
\begin{setup}\label{setup:can:lift}
 Consider a $C^1$-curve $c \colon [a,b] \rightarrow M$ and a chart $(U,\varphi)$ of $M$. We define a local representative $c_U \coloneq \varphi \circ c|_{c^{-1}(U)}$ of $c$ in the chart $\varphi$. For curves with $\pi_M \circ \alpha = c$ of $M$, we also define the \emph{principal part}\index{curve!principal part of a} with respect to the chart $(U,\varphi)$. Namely, we set $T\varphi \circ \alpha|_{\alpha^{-1}(TU)} = (\varphi\circ c|_{\alpha^{-1}(TU)},\alpha_U) = (c_U (t),\alpha_U (t))$ for some $C^1$-map $\alpha_U$. 
 
 Furthermore, define for $c$ a curve $\dot{c} \colon [a,b] \rightarrow TM$ with the property $\pi_{TM} \circ \dot{c} = c$ as follows: In any chart $(U,\varphi)$ the principal part of $\dot{c}$ is $(\dot{c})_U (t) \coloneq (\varphi \circ c)'(t) =(c_U)'(t)$. Obviously the definition does not depend on the choice of charts and we note that if $c$ is a $C^2$-curve, then $\dot{c} \in \text{Lift} (c)$. We will later use the same notation for mappings from the circle $\SSS^1$ with values in a manifold. Note that (up to a harmless identification) the new definition will coincide with the one here, cf.\ \Cref{sect:L2}.
\end{setup}

\begin{prop}[{\cite[VIII, \S 3 Theorem 3.1]{Lang}}]\label{liftinglemma}
 Let $S$ be a spray on $M$ with associate bilinear form $B$ and $c \in C^2([a,b],M)$, then there exists a unique linear map 
 $$\nabla_{\dot{c}} \colon \mathrm{Lift}(c) \rightarrow \{\gamma \in C([a,b],TM) \mid \pi_M \circ \gamma = c\}$$
 which in a chart $(U,\varphi)$ is given by the formula
 \begin{align}\label{locform:deriv}
  (\nabla_{\dot{c}} \alpha)_U (t) = \alpha_U'(t)-B_U (c_U(t);c_U'(t),\alpha_U(t)).
 \end{align}
 Furthermore, $\nabla_{\dot{c}}$ acts as a derivation on multiplication with $C^1$-functions $\varphi$, i.e.
 $$\nabla_{\dot{c}}(\varphi \alpha) = \varphi'(t)\nabla_{\dot{c}}\alpha (t)+\varphi(t)\nabla_{\dot{c}(t)}\alpha(t).$$
\end{prop}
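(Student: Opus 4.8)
The plan is to prove existence and uniqueness separately, leveraging the fact that the defining local formula \eqref{locform:deriv} is already given; the real content is showing it is well-defined (chart-independent) and then verifying the derivation property. Throughout I work with a $C^2$-curve $c$ and a lift $\alpha \in \mathrm{Lift}(c)$, using the local representatives $c_U = \varphi \circ c$ and the principal part $\alpha_U$ of $\alpha$ as set up in \Cref{setup:can:lift}.

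First I would establish \textbf{well-definedness}. Given two charts $(U,\varphi)$ and $(V,\psi)$ with change of charts $\tau \coloneq \psi \circ \varphi^{-1}$, I must check that the two candidate local expressions in \eqref{locform:deriv} glue, i.e.\ that they transform as principal parts of a lift should: $(\nabla_{\dot c}\alpha)_V(t) = d\tau(c_U(t); (\nabla_{\dot c}\alpha)_U(t))$. Since $\alpha$ is a lift, its principal parts satisfy $\alpha_V(t) = d\tau(c_U(t);\alpha_U(t))$, and differentiating this relation in $t$ via \eqref{ident:doublederiv} (Exercise \ref{Ex:Bastiani} 5.) gives
\begin{align*}
\alpha_V'(t) = d^2\tau(c_U(t);c_U'(t),\alpha_U(t)) + d\tau(c_U(t);\alpha_U'(t)).
\end{align*}
Plugging this together with the change-of-charts formula \eqref{locbilform} for the bilinear form $B$ into the definition of $(\nabla_{\dot c}\alpha)_V$, the two symmetric second-derivative terms $d^2\tau(c_U(t);c_U'(t),\alpha_U(t))$ cancel exactly (using symmetry of $d^2\tau$ by Schwarz' theorem, Exercise \ref{Ex:Bastiani} 3.), leaving $d\tau(c_U(t);\alpha_U'(t) - B_U(c_U(t);c_U'(t),\alpha_U(t))) = d\tau(c_U(t);(\nabla_{\dot c}\alpha)_U(t))$. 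This is precisely the required transformation law, so the local formulae assemble into a well-defined continuous curve $\nabla_{\dot c}\alpha$ lifting $c$ in the sense $\pi_M \circ \nabla_{\dot c}\alpha = c$. This computation is essentially parallel to the one already carried out in the proof of \Cref{assoc:covD}, which is reassuring.

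Next I would verify the \textbf{linearity} of $\nabla_{\dot c}$ and the \textbf{derivation property}. Linearity over $\R$ is immediate from the locality formula, since $\alpha \mapsto \alpha_U' - B_U(c_U;c_U',\alpha_U)$ is linear in $\alpha_U$ ($B_U(x;\cdot,\cdot)$ being bilinear and $\alpha \mapsto \alpha_U'$ being linear). For the derivation property, given $\varphi \in C^1([a,b],\R)$ I compute in a chart: the principal part of $\varphi\alpha$ is $\varphi(t)\alpha_U(t)$, so $(\varphi\alpha)_U'(t) = \varphi'(t)\alpha_U(t) + \varphi(t)\alpha_U'(t)$ by the ordinary product rule, while $B_U(c_U(t);c_U'(t),\varphi(t)\alpha_U(t)) = \varphi(t)B_U(c_U(t);c_U'(t),\alpha_U(t))$ by bilinearity of $B_U$. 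Subtracting yields $(\nabla_{\dot c}(\varphi\alpha))_U(t) = \varphi'(t)\alpha_U(t) + \varphi(t)(\nabla_{\dot c}\alpha)_U(t)$, which is the local expression of the claimed identity.

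Finally, \textbf{uniqueness} is forced: any linear map satisfying \eqref{locform:deriv} in every chart agrees with the constructed one on each chart domain, and since the chart domains cover $[a,b]$ (pulled back through $c$) two such maps coincide everywhere. I expect the main obstacle to be the well-definedness step, specifically getting the bookkeeping of the chart-transition terms exactly right so that the unwanted $d^2\tau$ contribution cancels; everything else is routine once \eqref{ident:doublederiv} and \eqref{locbilform} are in hand. One minor care point worth flagging is the notational clash between the chart map $\varphi$ and the scalar function $\varphi$ in the derivation statement, which I would keep distinct in the write-up.
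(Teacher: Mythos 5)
Your proof is correct and follows exactly the route the paper intends: the paper's own ``proof'' simply defers to the argument of \Cref{assoc:covD} and Exercise \ref{ex:covd} 5., which is precisely the chart-transition computation you carry out, using \eqref{ident:doublederiv} and \eqref{locbilform} together with Schwarz' theorem to cancel the $d^2\tau$ terms, followed by the routine bilinearity and product-rule checks. One further remark: the Leibniz formula you derive, $\nabla_{\dot c}(\varphi\alpha) = \varphi'\alpha + \varphi\,\nabla_{\dot c}\alpha$, is the correct one --- the display in the statement of the proposition contains a typo ($\varphi'(t)\nabla_{\dot c}\alpha(t)$ should read $\varphi'(t)\alpha(t)$).
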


\begin{proof}
 The proof is similar to \Cref{assoc:covD} and left as Exercise \ref{ex:covd} 5.
\end{proof}

\begin{setup}
 For a $C^2$-curve $c$, a lift $\gamma$ is $c$\emph{-parallel}\index{curve!$c$-parallel} if $\nabla_{\dot{c}} \alpha =0$. In a chart $(U,\varphi)$ this is equivalent to $\alpha_U'(t) = B_U(c_U(t);c_U'(t), \alpha_U(t))$. Due to \Cref{defn:geodesic} $c$ is a geodesic for the spray $S$ if and only if $\dot{c}$ is $c$-parallel, i.e.\ if and only if it satisfies the \emph{geodesic equation}\index{geodesic equation}
 $$\nabla_{\dot{c}}\dot{c} =0.$$ 
 \end{setup}

 \begin{ex}
  From Exercise \ref{exercise:length} we see that a lift $\alpha \colon [a,b] \rightarrow H \times H$ of a curve $c$ in a Hilbert space $H$ is $c$-parallel if and only if its principal part $\text{pr}_2 \circ \alpha$ is constant, i.e. the lift corresponds to a curve which is parallel to $c$ in the usual sense of the word. 
 \end{ex}

From the informal discussion at the end of the last section we know that a geodesic for a Riemannian manifold should (at least locally) be the shortest path between points which are not far apart.\index{geodesic} From the presentation in this section this is not apparent. A full proof requires more techniques,\footnote{In \Cref{Energy:vs:geodesic} we shall see that a curve is geodesic if and only if it extremises energy. As energy bounds length, a geodesics extremises the length (i.e.\ it is locally of minimal length).}   whence we just state:

\begin{thm}[{\cite[Theorem 1.9.3]{MR1330918}}]
 Let $(M,g)$ be a strong Riemannian manifold, $p \in M$. Then there is an open $p$-neighborhood $U_p$ and a constant $\eta >0$ such that every geodesic starting in $U_p$ of length $<\eta$ is a curve of minimal length between its end points.\index{geodesic}
\end{thm}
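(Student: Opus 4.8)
The plan is to follow the classical argument (cf.\ \cite[1.9]{MR1330918}), taking care that every analytic tool invoked survives precisely because $(M,g)$ is \emph{strong}. First I would introduce the Riemannian exponential map. By \Cref{thm:metricsprayident} the metric $g$ admits a metric spray $S$, and since $M$ is modelled on Hilbert spaces the flow of $S$ exists and depends smoothly on initial data by the standard Banach ODE theory \cite[IV. \S 2]{Lang}. Writing $\gamma_v$ for the geodesic with $\dot\gamma_v(0)=v$, this yields a smooth map $\exp_q(v)\coloneq \gamma_v(1)$ defined on a neighbourhood of the zero section of $TM$, with $\exp_q(0_q)=q$ and, using the homogeneity \eqref{spray} of the spray (whence $\gamma_{tv}(1)=\gamma_v(t)$), $T_{0_q}\exp_q=\id_{T_qM}$ for every $q$.

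The key point, and the reason the statement concerns geodesics starting \emph{anywhere} in $U_p$ rather than only at $p$, is to make the radius of the normal neighbourhood uniform in the base point. To this end I would consider the two-point map
\begin{equation*}
E\colon \mathcal{O}\rightarrow M\times M,\qquad E(v_q)\coloneq (q,\exp_q(v_q)),\quad q=\pi_M(v_q),
\end{equation*}
defined on a neighbourhood $\mathcal{O}$ of the zero section. In a chart $E$ reads $(q,v)\mapsto (q,\exp_q(v))$, so from $T_{0_p}\exp_p=\id$ its differential at $0_p$ is the topological linear isomorphism $(a,b)\mapsto (a,a+b)$ (with inverse $(x,y)\mapsto(x,y-x)$). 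Because $M$ is a Hilbert manifold the inverse function theorem is available (\Cref{invfunction}), so $E$ restricts to a diffeomorphism from an open $\mathcal{W}\ni 0_p$ onto an open neighbourhood of $(p,p)$. Shrinking, I would extract $p\in U_p\opn M$ and $\eta>0$ such that for every $q\in U_p$ the map $\exp_q$ is a diffeomorphism of the metric ball $B_\eta(0_q)=\{v\in T_qM\mid g_q(v,v)<\eta^2\}$ onto an open set containing $U_p$; this is a \emph{uniformly normal} neighbourhood of $p$.

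It then remains to show that on each normal ball $\exp_q(B_\eta(0_q))$ the radial geodesics issuing from $q$ are the unique length-minimisers to their endpoints. This is the Gauss lemma computation: one decomposes the velocity of a competing curve into its radial part and its $g_q$-orthogonal part and estimates the length from below, combining this with the energy-length inequality \eqref{Energyest}. The orthogonal decomposition is legitimate precisely because $g$ is strong---each $g_q$ is a genuine Hilbert inner product inducing the fibre topology and $\flat$ is a bundle isomorphism (\Cref{prop:strongmetric})---so none of the weak-metric pathologies intervene; I would cite \cite[1.9]{MR1330918} for the details. Finally, given a geodesic $\alpha\colon[0,1]\to M$ with $\alpha(0)=q\in U_p$ and $\text{Len}(\alpha)<\eta$, constant speed forces $\alpha(t)=\exp_q(tv)$ with $\sqrt{g_q(v,v)}=\text{Len}(\alpha)<\eta$; hence $\alpha$ is a radial geodesic inside the normal ball of radius $\eta$ about $q$, and so minimises length between $\alpha(0)$ and $\alpha(1)$.

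The main obstacle is the uniformity step: proving that a single radius $\eta$ serves all base points $q\in U_p$ simultaneously. Treating each $q$ in isolation only yields a normal ball whose radius might shrink to $0$ as $q$ moves, which would not suffice; it is the invertibility of the \emph{two-point} map $E$---rather than of the individual maps $\exp_q$---that upgrades this to the desired uniform statement. A secondary point demanding care is that the whole argument rests on the inverse function theorem and on ODE solvability, tools the introduction warns can fail beyond Banach spaces; here they are legitimate only because the strong-metric hypothesis forces $M$ to be a Hilbert manifold, which is exactly where the distinction between weak and strong metrics becomes decisive.
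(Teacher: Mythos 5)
Your proposal is correct, but note that the paper itself offers no proof to compare against: the author explicitly writes that ``a full proof requires more techniques'' and simply cites \cite[Theorem 1.9.3]{MR1330918}, with a footnote sketching only the weaker heuristic that geodesics extremise energy (\Cref{Energy:vs:geodesic}) and hence length. What you have written is essentially the proof of the cited result itself, and it is sound: the reduction of all analytic tools (ODE solvability for the spray, the inverse function theorem for the two-point map $E(v_q)=(\pi_M(v_q),\exp(v_q))$) to the Hilbert-manifold setting forced by \Cref{prop:strongmetric} is exactly the point where the strong-metric hypothesis enters, and you correctly identify the invertibility of $E$ rather than of the individual maps $\exp_q$ as the source of the uniform radius $\eta$ --- this is what the paper's footnote route cannot deliver, since extremality of energy gives neither minimality nor any uniformity in the base point. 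Two small glosses deserve flagging. First, extracting from the open set $E(\mathcal{W})$ a pair $(U_p,\eta)$ with $B_\eta(0_q)\subseteq \mathcal{W}$ for all $q\in U_p$ needs a local uniform comparison of the norms $\sqrt{g_q(\cdot,\cdot)}$ with a fixed chart norm; this again uses that $g$ is strong and smooth (in a chart, $g_q$ is an inner product inducing the model topology and depending continuously on $q$), and should be said rather than absorbed into ``shrinking''. Second, the inequality \eqref{Energyest} is not actually needed in the Gauss-lemma step: once the lemma shows $\exp_q$ is a radial isometry, the length of any competing curve is bounded below by the total variation of its radial coordinate, which handles curves leaving the normal ball as well; invoking the energy--length inequality there is harmless but superfluous. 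With these caveats your argument is a faithful and complete-in-outline version of the proof the paper delegates to Klingenberg.
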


Thus a geodesic is always at least locally a curve of minimal length among all curves connecting two (close enough) points on the geodesic.

\begin{Exercise}\label{ex:covd} \vspace{-\baselineskip}
\Question Show that for a Hilbert space $(H,\langle \cdot , \cdot \rangle)$ considered as a strong Riemannian manifold, the metric derivative is given by the usual derivative, i.e.\, $\nabla_X Y = X.Y = dY \circ (\id, X)$. Deduce that geodesics in this case are of the form $at+b, a,b\in H$ (cf.\ Exercise \ref{exercise:length} 2.)
 \\ 
 {\tiny \textbf{Hint:} The metric derivative is unique. If it is $X.Y$, then the bilinear map $B$ of the associated metric spray needs to vanish.}
 \Question Let $(U,\varphi)$ be a chart for the manifold $M$. 
  Show that the local formula \eqref{loc:covD} induces a covariant derivative on $\mathcal{V}(U)$.\\
 {\tiny \textbf{Hint:} Review \Cref{chapter:LA:VF} to prove the statement on the Lie bracket.}
 \Question Show that \eqref{cov:submfd} defines a covariant derivative on $S_H$.
 \Question Consider a Hilbert space $(H,\langle \cdot,\cdot\rangle)$ in the canonical way as a strong Riemannian manifold. Show that for a curve $c \in C^2 ([a,b],H)$ the covariant derivative $\nabla_{\dot{c}} f = \dot{f}_H$ for all $f = (c,f_H) \in \text{Lift} (c)$. Deduce that geodesics in $(H,\langle \cdot,\cdot\rangle)$ are lines in $H$. 
 \Question Use Exercise \ref{ex:spray} 3.~to work out a proof for \Cref{liftinglemma}. 
 \Question Let $M$ be a manifold with spray $S$ and associated covariant derivative $\nabla$. Work locally in a chart $(U,\varphi)$ of $M$, where we write $X_\varphi, Y_\varphi, Z_\varphi$ for the local representatives of vector fields and $B_U$ for the bilinear form associated to the spray. 
 \subQuestion Establish the following local formula for the curvature \index{curvature}
 \begin{align*}
  (R(X,Y)Z)_\varphi &= B_U (B_U(Y_\varphi , Z_\varphi) , X_\varphi)) - B_U (B_U(X_\varphi , Z_\varphi) , Y_\varphi))\\ & \quad + d_1 B_U (X_\varphi,Z_\varphi;Y_\varphi) -d_1B_U (Y_\varphi,Z_\varphi;X_\varphi). 
 \end{align*}
 {\footnotesize\textbf{Hint:} Recall that $B_U$ takes three arguments $B_U(x;X_\varphi(x),Y_\varphi(x))$ and use \Cref{assoc:covD}.}
 \subQuestion Deduce that $R \colon \mathcal{V}(M)^3 \rightarrow \mathcal{V}(M)$ is a trilinear map which satisfies\index{curvature!Bianchi identity}
 $$R(X,Y)Z + R(Y,Z)X+R(Z,X)Y = 0  \text{     (Bianchi identity)}.$$
 \Question Consider again the subspace $S \subseteq \ell^2$ with the weak Riemannian metric from Exercise \ref{ex:spray} 4.. Derive an explicit formula for the curvature $R(X,Y)Z$ and show that the curvature at the point $s = 0$ is unbounded.\\
 {\footnotesize \textbf{Hint:} Everything is local. Since $S \subseteq \ell^2$ is a subspace of a Hilbert space, $S$ admits smooth bump functions. Thus every vector can be continued to a smooth vector field and we write the curvature now for vectors (understanding that they can be continued to vector fields). Let $e_i$ be the vector in $\ell^2$ whose only non-zero entry is $1$ in the $i$th place. Set $u_i = \tfrac{1}{\sqrt{i^3}}e_i$ and work out a formula for $g_0 (R(u_i,u_j)u_j|_{s=0},u_i)$ by exploiting that the $e_i$ are orthonormal wrt.~to $g_0$. What happens for $i,j\rightarrow \infty$?} 
\end{Exercise}

\subsection*{Weak Riemannian metrics with and without metric derivative}

For a weak Riemannian metric one can not expect in general that there exists a metric derivative associated to the metric. 
An example of a weak Riemannian metric without an associated covariant derivative can be found in \cite[p.12]{MR3265197}: For a Sobolev-type right invariant metric on a certain subgroup of the diffeomorphism group $\Diff(\R)$, the geodesic equation and the covariant derivative do not exist on the subgroup.\footnote{We will not discuss any details here as this would require at least a discussion of manifold structures on function spaces with non-compact domain. It is worth remarking though that the geodesic equation of this metric is related to the non-periodic Hunter-Saxton equation, see \cite[p.12 Theorem]{MR3265197}.}

\begin{rem}
To remedy the problem that the categorisation of weak and strong Riemannian metrics is not sharp enough to capture existence of covariant derivatives and related important structures for weak Riemannian metrics (see \Cref{sect:shapeanalysis} for an example of a weak Riemannian metric which admits a covariant derivative, connector and metric spray). several authors have proposed a finer classification of infinite-dimensional Riemannian metrics.

We mention here the following concepts and refer the interested readers to the original sources for more information.
\begin{itemize}
  \item Micheli, Michor and Mumford define in \cite{MR3098790} a \emph{robust Riemannian metric}\index{Riemannian metric!robust} as a weak Riemannian metric $g$ on $M$ such that
  \begin{enumerate}
   \item the associated metric derivative exists, and
   \item the Hilbert space completions $\overline{T_xM}^{g_x}$ form a smooth vector bundle $\bigcup_{x \in M} \overline{T_xM}^{g_x}$ whose trivialisations extend the trivialisations of the bundle $TM$. 
  \end{enumerate}
  In particular, this setting is strong enough to enable certain calculations of curvature for the weak Riemannian manifold.
 \item Stacey strengthens the notion of a weak Riemannian structure in \cite{stacey2008construct} via an additional structure: As observed in \Cref{musicaliso}, one of the main difference between the strong and the weak setting is the failure of the mapping $\flat \colon TM \rightarrow T^\ast M$ to be an isomorphism. While this can not be directly remedied, requiring a so called \emph{co-orthogonal structure}\index{Riemannian metric!co-orthogonal structure} allows one to obtain a map replacing the inverse $\sharp$ (which does not exist) of $\flat$ by a mapping with dense image. Exploiting the additional structure it is possible to define a Dirac operator on loop spaces. 
 \end{itemize}
\end{rem}

We refer to \Cref{sect:L2} for computations of metric derivatives for the (weak) $L^2$- and $H^1$-metrics.
 \section{Geodesic completeness and the Hopf-Rinow theorem}\label{sect:HopfRinow}

In this section we investigate geodesic completeness of infinite-dimensional Riemannian manifolds and the Hopf-Rinow theorem.
We will see that this theorem fails in infinite-dimensional geometry as it is built on top of (local) compactness of the underlying manifold. Let us first recall some definitions.

\begin{defn}\label{defn:completeness}
 Let $(M,g)$ be a (weak) Riemannian manifold. Then $M$ 
 \begin{enumerate}
  \item is \emph{metrically complete}, if the geodesic distance turns $M$ into a complete metric space (i.e.\, Cauchy sequences with respect to the geodesic distance converge).
  \item is \emph{geodesically complete} if every geodesic can be continued for all time.
  \item \emph{has minimizing geodesics} if for every two points in the same connected component of $M$, there exists as length minimizing geodesic $c$ connecting the points (i.e.\, for $a,b$ there is a geodesic $c$ with $\text{Len}(c) = \text{dist} (a,b)$)
 \end{enumerate}
\end{defn}

In finite-dimensions the Hopf-Rinow theorem\index{Hopf-Rinow theorem} holds (see \cite[Theorem 2.1.3]{MR1330918}):

\begin{thm}[Hopf-Rinow] \label{thm:Hopf-Rinow}
 Let $(M,g)$ be a \emph{finite-dimensional} Riemannian manifold. Then $M$ is metrically complete if and only if it is geodesically complete. Moreover, if $M$ is metrically complete it has minimizing geodesics.
\end{thm}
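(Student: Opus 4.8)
The plan is to reduce everything to the existence of a single minimizing geodesic, exploiting the fact that finite-dimensionality makes $M$ locally compact and (by \Cref{prop:strongmetric}) turns $g$ into a strong metric, so that the metric spray $S^g$ of \Cref{thm:metricsprayident} exists and, via the standard finite-dimensional ODE theory, yields a smooth exponential map $\exp_p$ defined on an open neighbourhood of $0 \in T_pM$ with $T_0\exp_p = \id$. First I would record two elementary facts: a geodesic may be assumed parametrised by arc length, so that $\dist(\gamma(s),\gamma(t)) \le |s-t|$; and $\exp_p$ restricts to a diffeomorphism on a small ball $\{|v| < \delta\}$, whose boundary sphere $\Sigma_\delta \coloneq \exp_p(\{|v|=\delta\})$ is compact.

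The heart of the argument is the following lemma: if $\exp_p$ is defined on all of $T_pM$, then every $q \in M$ is joined to $p$ by a minimizing geodesic. To prove it, set $r \coloneq \dist(p,q)$, pick $\delta \in {]0,r[}$ as above, and minimise the continuous function $x \mapsto \dist(x,q)$ over the compact sphere $\Sigma_\delta$; let the minimum be attained at $p_0 = \exp_p(\delta v_0)$ with $|v_0|=1$, and put $\gamma(t) \coloneq \exp_p(t v_0)$, which exists for all $t$ by hypothesis. I would then show
\begin{equation*}
\dist(\gamma(t),q) = r - t \qquad \text{for all } t \in [0,r],
\end{equation*}
so that $\gamma(r)=q$ and $\gamma|_{[0,r]}$ realises the distance. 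This identity follows by a connectedness argument: the set $A \coloneq \{t \in [0,r] \mid \dist(\gamma(t),q)=r-t\}$ is nonempty (it contains $0$, and one checks $\delta \in A$ using the triangle inequality together with the sphere-minimisation) and closed; and if $t_0 \in A$ with $t_0 < r$, repeating the sphere construction around $\gamma(t_0)$ produces a short extension showing $[t_0, t_0+\varepsilon] \subseteq A$. Hence $A = [0,r]$.

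With the lemma in hand the equivalences follow quickly. If $M$ is geodesically complete then $\exp_p$ is defined on all of $T_pM$ for every $p$, so minimizing geodesics exist and each closed ball $\overline{B}_R(p) = \exp_p(\{|v|\le R\})$ is compact, being the continuous image of a compact set; thus $M$ has the Heine--Borel property, every Cauchy sequence is bounded hence has a convergent subsequence, and $M$ is metrically complete. Conversely, if $M$ is metrically complete and a unit-speed geodesic $\gamma$ is defined only on $[0,T[$, I would choose $t_n \uparrow T$; the arc-length estimate makes $(\gamma(t_n))$ Cauchy, so it converges to some $x \in M$, and the existence of geodesics for a uniform time on a compact neighbourhood of $x$ lets me continue $\gamma$ past $T$, contradicting maximality. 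This yields geodesic completeness, and the ``minimizing geodesics'' clause is exactly the lemma applied under the (now established) hypothesis that $\exp_p$ is everywhere defined.

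The main obstacle---and the whole reason the theorem is stated only in finite dimensions---is the compactness of the geodesic sphere $\Sigma_\delta$ and of the closed balls $\overline{B}_R(p)$. These rest on local compactness of $M$, which holds precisely because $M$ is modelled on a finite-dimensional space; in infinite dimensions the minimisation of $\dist(\cdot,q)$ over $\Sigma_\delta$ has no reason to attain its infimum, and the Heine--Borel step collapses, so this is the single point at which the proof genuinely cannot be transplanted to the infinite-dimensional setting.
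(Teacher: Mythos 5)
Your proposal is correct in outline, but it is worth noting that the paper does not actually prove this theorem: it cites \cite[Theorem 2.1.3]{MR1330918} for the finite-dimensional statement and only supplies a proof of the single implication that survives in infinite dimensions, namely that metric completeness of a strong Riemannian manifold implies geodesic completeness. For that implication your argument coincides with the paper's --- reparametrise to unit speed so that $\dist(\gamma(t_n),\gamma(t_m))\leq |t_n-t_m|$, extract a limit point of the Cauchy sequence $\gamma(t_n)$, and extend the geodesic using normal coordinates around the limit. Everything else in your write-up (the key lemma that surjectivity of $\exp_p$ forces the existence of minimizing geodesics, proved by minimising $\dist(\cdot,q)$ over a compact geodesic sphere and running the connectedness argument on $A=\{t\mid \dist(\gamma(t),q)=r-t\}$, followed by the Heine--Borel argument for the converse implication) is the standard classical proof that the paper delegates to the literature, and your closing diagnosis of where local compactness enters matches the paper's own discussion via Grossman's ellipsoid and the metric-space version of Hopf--Rinow in \cite[Proposition 3.7]{MR1744486}. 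The one step you should not leave at the level of a gesture is the induction step ``$t_0\in A$, $t_0<r$ implies $[t_0,t_0+\varepsilon]\subseteq A$'': after minimising over the small sphere around $\gamma(t_0)$ you obtain a point $p_0'$ with $\dist(p,p_0')\geq t_0+\delta'$ attained by the broken path $p\to\gamma(t_0)\to p_0'$, and you must argue that a piecewise-geodesic path realising the distance is an unbroken geodesic (so that $p_0'=\gamma(t_0+\delta')$); without this the extension of $A$ does not follow. That argument is standard but it is the actual content of the induction step.
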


In infinite-dimensional settings \Cref{thm:Hopf-Rinow} does not hold. What remains true however is that metrical completeness of a strong Riemannian manifold implies geodesic completeness. 

\begin{proof}[Metrically complete implies geodesically complete]
Let $c_X \colon J \rightarrow M$ be a geodesic with $\dot{c}(0)=X \in T_p M$. We argue by contradiction and assume that $\sup J = t^+ < \infty$. We can reparametrise $c_X$ such that $\text{Len}(c_X|_{[r,s]}=|r-s|$ for all $r,s \in J$. Pick a Cauchy sequence $(t_n)_{n \in \N}$ with $t_n \nearrow t^+$. As 
$$\text{Len} (c_X(t_n),c_X(t_m)) \leq |t_n-t_m|$$
we see that the points $(c(t_n))_{n\in \N}$ form a Cauchy-sequence with respect to the geodesic distance, whence they converge towards some limit $q \in M$. Pick now $\varepsilon >0$ so small that $B_{\varepsilon}^{\text{dist}}(q)$ is the domain of Riemannian normal coordinates \cite[VIII, \S 6. Theorem 6.4]{Lang}. For $t^+-t_0 < \varepsilon /2$ there is a geodesic $\gamma$ with $\dot{\gamma}(0)= \dot{c}(t_0)$. Note that by uniqueness of geodesics $\gamma(t) = c(t+t_0)$. Since $\gamma$ is contained in the normal coordinates around $q$, the domain of definition of $\gamma$ contains at least $[-\varepsilon /2,\varepsilon/2]$, whence $c$ can be extended beyond $t^+$.
\end{proof}

 In infinite-dimensions the following example shows that metric and geodesic completeness do not imply existence of minimising geodesics. This is mainly a consequence of the lack of local compactness as the generalised version of the Hopf-Rinow theorem (in the context of metric spaces) shows, cf.\ \cite[Proposition 3.7]{MR1744486}. 
 
 \begin{ex}[{Grossman's ellipsoid, \cite{McA65}}]\label{GmEllipsoid} \index{Grossman's ellipsoid}
  Let $H= \ell^2$ be the Hilbert space of square summable sequences with the orthonormal basis $(\mathbf{e}_n)_{n\in \N}$ (where $\mathbf{e}_n$ is the sequence with a $1$ in the $n$th place and $0$s everywhere else). Recall that the inner product on $\ell^2$ is $\langle (x_n)_n , (y_n)_n\rangle = \sum_n x_ny_n$. We define $a_1 = 1$ and $a_n = 1+2^{-n}$ for $n\geq 2$ and consider the ellipsoid
  $$E \coloneq \left\{(x_n)_{n \in \N} \in \ell^2\middle| \sum_{n \in \N} \frac{x_n^2}{a_n^2} =1\right\}.$$
  Defining the smooth diffeomorphism $F \colon H \rightarrow H, (x_n)_{n \in \N} \mapsto (a_n x_n)_{n\in \N}$ we see that the ellipsoid is the image of the Hilbert sphere $E = F(S_H)$. As the Hilbert sphere is a submanifold of $H$ by \Cref{ex: HSphere}, so is the ellipsoid by Exercise \ref{ex:HR} 2. We endow the ellipsoid with the strong Riemannian metric induced by the embedding $E \subseteq H$ whence it becomes a strong Riemannian manifold. Note that also $S_H$ is a Riemannian manifold with respect to the induced metric. Due to \cite[1.10.13 (iii)]{MR1330918} geodesics in this Riemannian manifold are given by great circles and this shows that $S_H$ is complete (we shall study such a great circle in the next step). Note that by definition of $F$ we have $F(r\mathbf{e}_1) = r\mathbf{e}_1 \forall r \in \R$. Moreover, if $\gamma$ is a path in $S_H$ connecting $\mathbf{e}_1$ to $-\mathbf{e}_1$, then $F\circ \gamma$ is a path connecting $\mathbf{e}_1$ and $-\mathbf{e}_1$ in $E$. As $F$ is a diffeomorphism every path connecting $\mathbf{e}_1$ and $-\mathbf{e}_1$ in $E$ arises in this way. Now due to Exercise \ref{ex:HR} d), we can apply \cite[VIII \S 6, Theorem 6.9]{Lang} which implies that also $E$ is complete (and geodesically complete).
  
   We shall now prove that $\text{dist}_E (\mathbf{e}_1,-\mathbf{e}_1)=\pi$, but there exists no path realising the minimal distance.
   Moreover, we shall prove in Exercise \ref{ex:HR} 2. that the half circle $\gamma (t) = \cos(\pi t)\mathbf{e}_1 + \sin(\pi t) \mathbf{e}_2, t \in [0,1]$ is a geodesic connecting $\mathbf{e}_1$ and $-\mathbf{e}_1$, whence $\text{dist} (\mathbf{e}_1,-\mathbf{e}_1)$ in $S_H$ is $\pi$.  Consider now an arbitrary $\gamma \colon [0,1] \rightarrow H, \gamma(t) = (\gamma_n (t))_{n\in \N}$ in $S_H$ connecting $\mathbf{e}_1$ and $-\mathbf{e}_1$. For the length of the paths in $S_H$ and $E$ we obtain as a special case of Exercise \ref{ex:HR} 3. the following
  \begin{align}\label{estimate:ellipsoid}
  \pi \leq  \text{Len}(\gamma) = \int_0^1 \sqrt{\sum_{n\in \N} \dot{\gamma}_n(t)^2} \mathrm{d}t \leq \int_0^1 \sqrt{\sum_{n\in \N} a_n^2\dot{\gamma}_n(t)^2} =\text{Len}(F(\gamma)) 
  \end{align}
  By definition of $F$, we have equality $\text{Len}(\gamma)=\text{Len}(F\gamma)$ if and only if $\dot{\gamma}_n(t) =0$ for $n \geq 2$. However, the only curve starting in $\mathbf{e}_1$ satisfying this is the constant curve. Hence we have for all curves joining $\mathbf{e}_1$ and $-\mathbf{e}_1$ the strict inequality $\text{Len}(F\circ \gamma) >\pi$. Considering now the half ellipse $\gamma_n(t) = F(\cos (\pi t)\mathbf{e}_1 + \sin(\pi t)\mathbf{e}_n), t \in [0,1]$ joining $\mathbf{e}_1$ and $-\mathbf{e}_1$ in the $(\mathbf{e}_1,\mathbf{e}_n)$-plane, then 
 $$\text{Len}(\gamma_n) = \int_0^1\pi \sqrt{\sin^2(\pi t)+(1+2^{-n})^2\cos^2(\pi t)}\mathrm{d}t\leq \sqrt{1+2^{-n}}\pi \rightarrow \pi = \text{dist}_E(\mathbf{e}_1,-\mathbf{e}_1).$$
 \end{ex}

 We have now seen the failure of the Hopf-Rinow theorem in infinite-dimensions. Note that explicit counter examples are known for the other relations (between completeness, geodesic completeness and existence of minimizing geodesics) established in the Hopf-Rinow theorem. See \cite{MR400283,MR1432537} for more information.
 
 \begin{rem}\label{rem:HopfRinow_seminegative}
  It should be mentioned that parts of the Hopf-Rinow theorem can be salvaged also in the infinite-dimensional setting. However, then certain assumptions on the curvature of the Riemannian manifold are needed. For a manifold with seminegative curvature geodesic completeness implies completeness, see e.g.~\cite[IX. \S 3]{Lang}.
 \end{rem}

\begin{Exercise}\label{ex:HR} \vspace{-\baselineskip}
 \Question Let $c$ be a geodesic a Riemannian manifold $(M,g)$ with covariant derivative $\nabla$.
 \subQuestion Show that if $\varphi \colon \R \rightarrow \R, t \mapsto at+b$ for $a,b \in \R$, then also the reparametrisation $c \circ \varphi$ is a geodesic.  
 \subQuestion Let $\text{Len} (c|_{[a,b]})=L|a-b|$. Show that there is a reparametrisation $\psi$ such that $\text{Len} (c \circ \psi|_{[c,d]})=|c-d|$ for all $c,d$ in the domain of the reparametrised geodesic.
 \Question We check various details for \Cref{GmEllipsoid}.
 \subQuestion Show that the mapping $F \colon H \rightarrow H,\ F(\sum x_ne_n) = \sum a_n x_n e_n$ is a smooth map with smooth inverse.\\
 {\tiny \textbf{Hint:} Observe that $F$ is linear and use that limits and series can be exchanged if the series is dominated by a convergent majorant.}
 \subQuestion Let $S \subseteq M$ be a split submanifold and $F \colon M \rightarrow N$ a diffeomorphism. Show that $F(S)$ is a split submanifold of $N$.\\
 {\tiny \textbf{Hint:} By \cite[Lemma 1.13]{Glofun} $S \subseteq M$ is a submanifold if and only if the inclusion $S \rightarrow M$ is an immersion which is a topological embedding onto its image (an $f$ satisfying these properties is usually called embedding).}
 \subQuestion The covariant derivative on $S_H$ as a submanifold of $H$ with the induced metric is given as $\nabla = \text{pr} \circ \nabla^H$, where $\nabla^H$ is the covariant derivative on $H$ and $\text{pr}$ the projector onto the tangent space of $S_H$ (see \Cref{ex:submfd:covd}). Prove that the half-circle $\gamma(t) = \cos(\pi t)e_1 + \sin(\pi t)e_2$ is a geodesic in $S_H$.
  \subQuestion Show that for all $x, v \in H$ we have $\lVert T_xF(v)\rVert \geq \lVert v\rVert$.
 \Question Generalise \eqref{estimate:ellipsoid} in the following way: If $f \colon (M,g) \rightarrow (N,h)$ is a map between Riemannian manifolds such that there exists a constant $C >0$ with
 $$ \sqrt{h_{f(\pi(v))}(Tf(v),Tf(v))} \geq C \sqrt{g_{\pi(v)}(v,v)} \qquad \text{ for all }v \in TM,$$ 
 show that for a piecewise $C^1$-path $\gamma \colon [a,b] \rightarrow M$ we have $\text{Len}(f\circ \gamma) \geq C \text{Len}(\gamma)$. 
 \end{Exercise}

\chapter{Weak Riemannian metric with application in shape analysis}\label{sect:shapeanalysis}\copyrightnotice

In this chapter we study in detail the (weak) $L^2$-metric on spaces of smooth mappings. Its importance stems from the fact that this metric and its siblings, the Sobolev $H^s$-metrics are prevalent in shape analysis. It will be essential for us that geodesics with respect to the $L^2$-metric can explicitly be computed. Before we look into the specifics, let us clarify what we mean here by shape and shape analysis.\index{shape analysis}
Shape analysis seeks to classify, compare and analyse shapes. As a mathematical discipline, shape analysis goes back to the classical works by D`Arcy Thompson \cite{Tomp42} (originally published in 1917). In recent years there has been an explosion of applications of shape analysis to diverse areas such as computer vision \cite{CaEaS16}, medical imaging, registration of radar images and many more (see \cite{BaBaM14} for an expository article). There are different mathematical settings as to what is meant by a shape and what kind of data describes it. Popular choices are for example \begin{itemize}
            \item points, 
            \item curves (or surfaces) in euclidean space of manifolds, 
            \item level sets of functions, or
            \item images.
\end{itemize}
Another typical feature in (geometric) shape analysis is that one wants to remove superfluous information from the data. For example, in the comparison of shapes, rotations, translations, scalings and reflections are typically disregarded as being inessential differences. Conveniently, these inessential differences can mostly be described by actions of suitable Lie groups (such as the rotation and the diffeomorphism groups). This hints at the general process of constructing (an infinite-dimensional) manifold of shapes: One starts out with an (infinite-dimensional) manifold of data (e.g.~smooth curves) called the pre-shape space. Then the undesirable information is removed by quotienting out suitable group actions (e.g.~if reparametrisation invariance of the shapes is desired quotients out a suitable diffeomorphism group). The resulting quotient is then called shape space and one seeks to construct suitable tools (such as a Riemannian metric) to compare, classify and analyse the objects in shape space.

In the present chapter we will restrict our attention to shapes which arise as images of smooth curves which take their values in $\R^2$. The pre-shape space will thus be the infinite-dimensional manifold of smooth immersions (from the circle) with values in the two-dimensional space. As the objective is to compare images of these curves, we need to remove the specific parametrisation of the immersion. Hence we pass to shape space by quotienting out an action of the diffeomorphism group on the immersions (modelling the reparametrisation). Our aim is then to construct a suitable Riemannian metric on shape space which will allow us to compare shapes using the geodesic distance induced by it.

\section{The $L^2$-metric and its cousins}\label{sect:L2}
Having now discussed several ideas to remedy the problems in the (in general) ill behaved weak Riemannian setting, we consider now several weak Riemannian metrics which admit metric derivatives, spray etc. The metrics which we will consider are on one hand the $L^2$-metric. We will see that the $L^2$-metric admits a spray, a connector and a covariant derivative albeit being only a weak Riemannian metric. This metric will play a decisive r\^{o}le in the investigation of shape analysis in \Cref{sect:shapeanalysis}. The construction for the $L^2$-metric follows the argument first presented in \cite{BruL2}. 
Finally, we will briefly describe a Sobolev type metric whose covariant derivative can explicitly be given and is of independent interest.
Before we begin, let us set some conventions concerning the integration of functions on manifolds:

\begin{tcolorbox}[colback=white,colframe=blue!75!black,title=On integration of functions on $\SSS^1$]
The unit circle $\SSS^1$ can be parametrised (up to the double endpoint) by $\theta \colon [0,2\pi] \rightarrow \SSS^1, t \mapsto (\cos(t),\sin(t))$. We abuse now notation and denote by $\theta$ both the parameter and the parametrisation. If you have not seen integration theory on submanifolds of $\R^d$ (e.g.\ \cite[XVI.]{Lang}), this implies that for a continuous $f \colon \SSS^1 \rightarrow \R^d$, the integral on $\SSS^1$ satisfies
$$\int_{\SSS^1} f(\theta) \mathrm{d} \theta = \int_0^{2\pi} f(\cos(t),\sin(t)) \mathrm{d}t,$$
whence it can be computed as a usual one-dimensional integral. Further, for differentiable maps $c \colon \SSS^1 \rightarrow \R$ we write $c^\prime (\theta) \coloneq T_\theta c(1)$ where $1 \in T_\theta \SSS^1 \cong \R$ (the notation was previously reserved for curves and we justify it as $T_{\theta(t)} c(1)$ is (up to the linear isomorphism $T_t \theta$) given by the curve differential $(c\circ \theta)^\prime$) 
\end{tcolorbox}

\begin{rem}
 None of the techniques employed in this chapter depend on the compact manifold $\SSS^1$. We can thus skip a discussion of integration against a volume form on a compact manifold. However, we remark that all of the results in this section carry over if we replace $\SSS^1$ by an arbitrary compact manifold (we invite the reader to check this for themselves).  
\end{rem}

As in \Cref{ex:L2Hilbert} consider the space $C^\infty (\SSS^1,M)$ for a strong Riemannian manifold $(M,g)$ with the $L^2$-metric\index{Riemannian metric!$L^2$-metric}
\begin{align}\label{l2metric}
 g^{L^2}_c (f,g) = \int_{\SSS^1} g_{c(\theta)} (f(\theta),g(\theta)) \mathrm{d}\theta \qquad f,g \in T_c C^\infty (\SSS^1,M).
\end{align}
We shall show that this weak Riemannian metric admits a metric spray and a metric derivative. It will turn out that all the relevant object can be lifted to from the target manifold to the manifold of mappings.

 \begin{setup}\label{setup:Conlift}
 Let $(M,g)$ be a strong Riemannian manifold with metric spray $S$, connector $K$ and metric derivative $\nabla$. Since $(M,g)$ is a strong Riemannian manifold, it admits a local addition \cite[Lemma 10.2]{Mic}, whence the manifold structure on $C^\infty (\SSS^1,M)$ is canonical and in addition $T C^\infty(\SSS^1,M) \cong C^\infty (\SSS^1,TM)$ and $T^2C^\infty (\SSS^1,M) \cong C^\infty (\SSS^1,T^2M)$ (cf.\, \Cref{App:canmfdmap}). 
 Then the pushforward of the spray and the connector 
 \begin{align*}
  K_* \colon C^\infty (\SSS^1,T^2M) &\rightarrow C^\infty (\SSS^1,TM),\quad q \mapsto K\circ q\\
  S_* \colon C^\infty (\SSS^1,TM) &\rightarrow C^\infty (\SSS^1,T^2M),\quad v \mapsto S\circ v
 \end{align*}
 are smooth mappings by \Cref{la-reu}. Moreover, the identification of the tangent bundles immediately shows that $S_*$ is a spray on $C^\infty (\SSS^1,M)$ and that $K_*$ is a connector.
 \end{setup}
 
 \begin{prop}\label{prop:tediousdetails}
  In the situation of \Cref{setup:Conlift}, the connector associated to $S_*$ is $K_*$.
 \end{prop}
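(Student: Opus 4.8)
The plan is to reduce everything to the defining local formula for the connector of a spray, \eqref{connectionformula}, and to show that in suitable charts this formula reproduces $K_*$ pointwise. First I would fix a chart $(U,\varphi)$ of $M$ with $\varphi \colon U \to V_\varphi \opn E$ and pass to the induced chart $\varphi_* \colon C^\infty(\SSS^1,U) \to C^\infty(\SSS^1,V_\varphi)$ on $N \coloneq C^\infty(\SSS^1,M)$. As in the construction of the current group (\Cref{setup:locop}) this is a genuine chart compatible with the canonical manifold structure, since $C^\infty(\SSS^1,U) \opn N$ and pushforwards of smooth maps are smooth by \Cref{la-reu}. Under the identifications $TN \cong C^\infty(\SSS^1,TM)$ and $T^2N \cong C^\infty(\SSS^1,T^2M)$ the tangent charts $T\varphi_*$ and $T^2\varphi_*$ correspond to the pointwise pushforwards $(T\varphi)_*$ and $(T^2\varphi)_*$, by iterating the identification $T(f_*) \cong (Tf)_*$ of \Cref{thetamp}. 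Consequently the local representative of $S_*$ in these charts is the pointwise pushforward $(S_U)_*$ of the local representative $S_U$ of $S$.

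The key step is to compute the bilinear form $B^*$ associated to the spray $S_*$ and to identify it with the pointwise pushforward of $B_U$. Writing $S_{U,2}$ for the fibre-quadratic part of $S_U$ as in \Cref{local:spray}, the corresponding part of $(S_U)_*$ is the pointwise map $(S_{U,2})_*$ with $(S_{U,2})_*(c,v)(\theta) = S_{U,2}(c(\theta),v(\theta))$. Since forming iterated directional derivatives commutes with the pushforward — $d^{k}(g_*)$ corresponds to $(d^{k}g)_*$, again by iterating \Cref{thetamp} and extracting $d^k$ from $T^k$ as in Exercise \ref{Ex:tangentmaps} 2. — taking the second fibre derivative yields
$$B^*_V(c;v,w)(\theta) = d_2^2 S_{U,2}\big((c(\theta),0);(v(\theta),w(\theta))\big) = B_U\big(c(\theta);v(\theta),w(\theta)\big).$$
Thus $B^*$ is the pointwise pushforward of $B_U$; it is automatically symmetric and continuous because $S_*$ is already known to be a spray by \Cref{setup:Conlift}.

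Finally I would insert this into \eqref{connectionformula}. In the chart the connector associated to $S_*$ is
$$(\xi,\eta,\nu,\omega) \mapsto (\xi,\ \omega - B^*_V(\xi;\eta,\nu)),$$
so evaluated at $\theta \in \SSS^1$ it reads $\big(\xi(\theta),\ \omega(\theta) - B_U(\xi(\theta);\eta(\theta),\nu(\theta))\big) = K\big(\xi(\theta),\eta(\theta),\nu(\theta),\omega(\theta)\big)$, which under $TN \cong C^\infty(\SSS^1,TM)$ and $T^2N \cong C^\infty(\SSS^1,T^2M)$ is exactly the local representative of the pointwise pushforward $K_*$. Hence the connector associated to $S_*$ agrees with $K_*$ on every chart domain, and therefore globally. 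The main obstacle is the careful bookkeeping of the identifications $T^2N \cong C^\infty(\SSS^1,T^2M)$ and the verification that forming $d_2^2$ commutes with the pushforward; once this naturality is in hand the computation collapses to evaluating the finite-dimensional connector formula pointwise.
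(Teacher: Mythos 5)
Your reduction of the statement to the identity $B_{S_*} = (B_U)_*$ for the associated bilinear forms, and the pointwise computation of the second fibre derivative, is exactly the computation the paper performs. The genuine problem is the localisation you use to carry it out. You work in the charts $\varphi_* \colon C^\infty(\SSS^1,U) \rightarrow C^\infty(\SSS^1,V_\varphi)$ induced by charts $(U,\varphi)$ of $M$. Each of these is indeed a chart of the canonical manifold structure, but their domains do \emph{not} cover $C^\infty(\SSS^1,M)$: a loop need not have image inside a single chart domain of $M$ (take $M=\SSS^1$ and $\gamma = \id_{\SSS^1}$; no chart domain of $\SSS^1$ is all of $\SSS^1$, so $\gamma$ lies in none of your chart domains). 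Hence your final step, ``the connector associated to $S_*$ agrees with $K_*$ on every chart domain, and therefore globally'', only establishes the identity over the proper open subset $\bigcup_U C^\infty(\SSS^1,U)$, and this subset is not even dense (nearby loops are homotopic, so a homotopically nontrivial loop has a whole neighbourhood that your charts miss). This is precisely the pitfall flagged in \Cref{App:canmfdmap}: pushforwards of charts of $M$ do not yield an atlas of $C^\infty(\SSS^1,M)$.

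The paper circumvents this by localising in the parameter $\theta \in \SSS^1$ rather than in the target: by the exponential law it suffices to differentiate the adjoint map $(S_*)^\wedge \colon C^\infty(\SSS^1,TM)\times\SSS^1 \rightarrow T^2M$, $(h,\theta)\mapsto S(h(\theta))$, with respect to the first variable. For fixed $h$ and $\theta$ one only needs a chart $(U,\varphi)$ of $M$ with $h(\theta)\in TU$ and a compact $L\ni\theta$ with $h\in\lfloor L,TU\rfloor$; restricting to $L^\circ$ the computation becomes your pointwise one, and since $\theta$ and $h$ were arbitrary this identifies $B_{S_*}$ with the pushforward of $B$ at \emph{every} point (see the solution to Exercise \ref{EX:L2} 1.\,c)). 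Your argument is repaired by replacing the global charts $\varphi_*$ with this parameterwise localisation; the rest of your computation, commuting $d_2^2$ with the pushforward and inserting the result into the local connector formula \eqref{connectionformula}, then goes through unchanged.
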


 \begin{proof}
  By definition, the connector of $S_*$ is uniquely determined by the associated map $B_{S_*}$. If we can show that $B_{S_*}$ coincides with the pushforward $B_*$ of the map $B$ associated to $S$, then the definition of the connectors yield that the connector of $S_*$ is $K_*$.
  To compute $B_{S_*}$ we need to compute $d_2^2 (S_*)_{O,2} ((x,0);\cdot)$, where $(S_*)_{O,2}$ is a local representative of $S_*$ (cf.\, \Cref{local:spray}). Since the bundle trivialisations are complicated for manifolds of mappings, we apply instead the exponential law to see that instead we can just take partial derivatives of 
  $$(S_*)^\wedge \colon C^\infty (\SSS^1,TM) \times \SSS^1 \rightarrow T^2M, \quad (h,\theta) \mapsto S(h(\theta)).$$
  Observe that we only need to take partial derivatives with respect to the first component thus $\SSS^1$ is simply a parameter set and the formula holds since it can be checked pointwise (see Exercise \ref{EX:L2} 1.).
  \end{proof}

 We use the connector $K$ of the covariant derivative on $(M,g)$ to define a covariant derivative for vector fields along smooth maps. 
 
  \begin{defn}\label{defn:covalong}
   Let $N$ be a smooth manifold and $F \colon N \rightarrow M$ be a smooth map to a Riemannian manifold $(M,g)$. A mapping $s \in C^\infty_F (N,TM) = \{s \in C^\infty (N,TM) \mid \pi_M \circ s = F\}$ is called a \emph{vector field along $F$}. \index{vector field!along a smooth map}
   Assume that $(M,g)$ admits a metric spray with associated connector $K$. For $X \in \mathcal{V}(N)$, we define
  \begin{align}\label{newcovd}
   \nabla_X^gs \coloneq K \circ Ts \circ X \colon N \rightarrow TM.
  \end{align}
  \end{defn}
  Note that the construction in \Cref{defn:covalong} is a generalised version of \Cref{liftinglemma}. If $\nabla$ is the metric derivative of a Riemannian manifold which admits a metric spray then the \emph{covariant derivative along}\index{derivative!covariant, along a map} $F \colon N \rightarrow M$ also satisfies a version of \Cref{setup:metricderiv} (cf.\ e.g.\ \cite[Proposition 1.8.14]{MR1330918}):
  \begin{align}\label{covalong:Riemannian}
   X.g(Y,Z) = g(\nabla_X Y,Z) + g(Y,\nabla_X Z), \qquad Y,Z \text{ vector field along } F, X \in \mathcal{V}(N)
  \end{align}

  We will exploit the exponential law for the canonical manifold $C^\infty (\SSS^1,M)$. To $s \in C^\infty (N,C^\infty(\SSS^1,TM))$ we associated the map $s^\wedge \in C^\infty (N\times \SSS^1,M)$. 
  
  \begin{rem}
   We face a problem as in \Cref{ex:Liealgdiffeo}: The exponential law suggests to work with vector fields on $N \times \SSS^1$,  while the covariant derivative in \eqref{newcovd} is only defined for vector fields on $N$. Luckily, vector fields on product manifolds are products of vector fields on the parts. Hence we extend $X \in \mathcal{V}(N)$ to a vector field on $N \times \SSS^1$ by the zero vector field on $\SSS^1$ via $X \times \mathbf{0}_{\SSS^1} \in \mathcal{V} (N\times \SSS^1)$. This allows us to obtain vector fields on the correct manifold on which we can now extend the covariant derivative of $N$.
  \end{rem}
  
 \begin{setup}[A covariant derivative on $C^\infty (\SSS^1,M)$]\label{defn:covd}
   Choosing $N = C^\infty (\SSS^1,M)$ we define via \eqref{newcovd} a map $\nabla^g_{X \times \mathbf{0}_{\SSS^1}} s^\wedge \in C^\infty (C^\infty (\SSS^1,M) \times \SSS^1,TM)$ and set
  \begin{align}
  \nabla_Xs \coloneq (\nabla^g_{X \times \mathbf{0}_{\SSS^1}} s^\wedge)^\vee \in C^\infty (C^\infty (\SSS^1,M),C^\infty (\SSS^1,TM)). \label{infcovd}\end{align}
  Observe that the identification $TC^\infty (\SSS^1,M) \cong C^\infty (\SSS^1,TM)$ allows us to identify $s \in \mathcal{V}(C^\infty (\SSS^1,M)) \subseteq C^\infty (C^\infty (\SSS^1,M),C^\infty (\SSS^1,TM))$. Computing with the help of Exercise \ref{EX:L2} 2.\, and Exercise \ref{ex:explaw} 2., 
  \begin{align*}
   \pi_{C^\infty (\SSS^1,M)} \circ \nabla_X s &= (\pi_{M})_*(\nabla^g_{X \times \mathbf{0}_{\SSS^1}} s^\wedge)^\vee = (\pi_M \circ (\nabla^g_{X \times \mathbf{0}_{\SSS^1}} s^\wedge)^\vee \\
   &= (\pi_M \circ K \circ T(s^\wedge) \circ (X \times \mathbf{0}_{\SSS^1}))^\vee = (\pi_M \circ K \circ (Ts \circ X)^\wedge)^\vee\\
   &= (\pi_M \circ K)_* (Ts \circ X) = \id_{C^\infty (\SSS^1,M)}.
  \end{align*}
  Hence \eqref{infcovd} induces a bilinear map $\nabla \colon \mathcal{V}(C^\infty (\SSS^1,M))^2 \rightarrow \mathcal{V}(C^\infty (\SSS^1,M))$. 
 \end{setup}
 
 The construction \Cref{defn:covd} certainly looks quite messy, however, we remark that it is indeed a very natural covariant derivative we obtain in this way. Namely, for any smooth curve $c \colon [a,b] \rightarrow C^\infty (\SSS^1,M)$ and smooth lift $\alpha \in \text{Lift} (c)$ we apply the exponential law to obtain smooth maps $c^\wedge \colon [a,b] \times \SSS^1 \rightarrow M$ and $\alpha^\wedge \colon [a,b] \times \SSS^1 \rightarrow TM$. Now the covariant derivative $\nabla$ from \Cref{defn:covd} is related to the covariant derivative $\nabla^g$ on $M$ by the following formula
 \begin{align}\label{L2:covd_curves}
  \nabla_{\dot{c}} \alpha(\cdot)(x) = \nabla^g_{\dot{c}^\wedge (\cdot, x)} \alpha (\cdot,x) \qquad \forall x\in \SSS^1.
 \end{align}
 We relegate the verification of \eqref{L2:covd_curves} to Exercise \ref{EX:L2} 2. However, the point is that despite the technical difficulties in defining the covariant derivative, it can be viewed just as a lifting of the covariant derivative of the target manifold $M$.
 
 \begin{prop}\label{prop:associatedconnector}
  For the map $\nabla \colon \mathcal{V}(C^\infty (\SSS^1,M))^2 \rightarrow \mathcal{V}(C^\infty (\SSS^1,M))$ and $X,Y \in \mathcal{V}(C^\infty (\SSS^1,M)$, the formula 
  \begin{align}\label{connectorL2}
\nabla_X Y = K_* \circ TY \circ X
  \end{align}
  holds. Since $K_*$ is a linear connector, $\nabla$ is a covariant derivative (with associated connector $K_* \colon C^\infty (\SSS^1,T^2M) \rightarrow C^\infty (\SSS^1,TM)$).
  In particular, \eqref{infcovd} is the covariant derivative associated to the spray $S_*$.
 \end{prop}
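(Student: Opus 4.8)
The plan is to unwind the definition \eqref{infcovd} of $\nabla_X Y$ and to recognise it, after one application of the exponential law, as the pushforward expression $K_* \circ TY \circ X$; the remaining assertions then drop out by combining this with the fact (established in \Cref{setup:Conlift}) that $K_*$ is a connector and with \Cref{prop:tediousdetails}, which identifies $K_*$ as the connector of $S_*$. The computation is a close cousin of the one already carried out in \Cref{defn:covd} to verify $\pi_{C^\infty(\SSS^1,M)}\circ\nabla_X s = \id_{C^\infty(\SSS^1,M)}$, the only difference being that here I would retain the full $TM$-valued output instead of composing with $\pi_M$.

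Concretely, for $X,Y \in \mathcal{V}(C^\infty(\SSS^1,M))$, with the identifications $TC^\infty(\SSS^1,M)\cong C^\infty(\SSS^1,TM)$ and $T^2C^\infty(\SSS^1,M)\cong C^\infty(\SSS^1,T^2M)$ understood throughout, I would write
\begin{align*}
 \nabla_X Y &\stackrel{\eqref{infcovd}}{=} \left(\nabla^g_{X\times\mathbf{0}_{\SSS^1}} Y^\wedge\right)^\vee \stackrel{\eqref{newcovd}}{=} \left(K\circ T(Y^\wedge)\circ (X\times\mathbf{0}_{\SSS^1})\right)^\vee\\
 &= \left(K\circ (TY\circ X)^\wedge\right)^\vee = K_*\circ TY\circ X.
\end{align*}
The first equality is the definition; the second is \eqref{newcovd} applied to the vector field $Y^\wedge$ along the evaluation map $\ev\colon C^\infty(\SSS^1,M)\times\SSS^1\to M$, with $X$ extended by the zero field on $\SSS^1$. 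The third equality uses the identity $T(Y^\wedge)\circ(X\times\mathbf{0}_{\SSS^1}) = (TY\circ X)^\wedge$ already invoked in \Cref{defn:covd} (Exercise \ref{EX:L2} 2; cf.\ the diagram \eqref{eq:commdiag} of \Cref{thetamp}). The final equality is Exercise \ref{ex:explaw} 2, applied with $p\coloneq K$ and $f\coloneq (TY\circ X)^\wedge$, which gives $(K\circ f)^\vee = K_*(f^\vee) = K_*\circ TY\circ X$.

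Once the formula $\nabla_X Y = K_*\circ TY\circ X$ is in hand, the rest is immediate. By \Cref{setup:Conlift} the map $K_*$ is a linear connector on $C^\infty(\SSS^1,M)$, so by the defining relation of the covariant derivative associated to a connector (the formula $\nabla_X Y = K\circ TY\circ X$ recorded after \eqref{connectionformula}), the map $\nabla$ is precisely the covariant derivative associated to $K_*$; in particular it is a genuine covariant derivative. Since \Cref{prop:tediousdetails} shows that $K_*$ is the connector of the spray $S_*$, we conclude that $\nabla$ from \eqref{infcovd} is the covariant derivative associated to $S_*$.

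The main obstacle is the third equality, i.e.\ the interchange $T(Y^\wedge)\circ(X\times\mathbf{0}_{\SSS^1}) = (TY\circ X)^\wedge$. This is the only genuinely non-formal step: it encodes that the canonical identifications $T^kC^\infty(\SSS^1,M)\cong C^\infty(\SSS^1,T^kM)$ are compatible both with taking tangent maps and with the adjunction $(\cdot)^\wedge\!/(\cdot)^\vee$, and it is the place where extending $X$ by the zero field $\mathbf{0}_{\SSS^1}$ is essential. I would verify it, just as in \Cref{prop:tediousdetails}, by passing to adjoint maps and checking the equality pointwise in $\theta\in\SSS^1$, treating $\SSS^1$ as a mere parameter set, so that the identity reduces to the evaluation-wise relation $T(Y^\wedge)(X(\gamma),0_\theta) = \bigl(TY(X(\gamma))\bigr)(\theta)$, which holds by the very definition of the tangent-bundle identification. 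All remaining steps are purely formal manipulations with the exponential law.
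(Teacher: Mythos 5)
Your proposal is correct and follows essentially the same route as the paper's proof: both reduce \eqref{connectorL2} to the chain of identities $\nabla^g_{X\times\mathbf{0}_{\SSS^1}} Y^\wedge = K\circ T(Y^\wedge)\circ(X\times\mathbf{0}_{\SSS^1}) = K\circ (TY\circ X)^\wedge$ via Exercise \ref{EX:L2} 2 and the exponential law, and then invoke \Cref{prop:tediousdetails} to identify $K_*$ as the connector of $S_*$. The only cosmetic difference is that the paper computes $(\nabla_X Y)^\wedge$ while you apply $(\cdot)^\vee$ throughout; the content is identical.
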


 \begin{proof}
  By \Cref{prop:tediousdetails} the connector $K_*$ is the associated connector to the spray $S_*$. Hence it suffices thus to prove \eqref{connectorL2}. We have essentially done this already as the exponential law and Exercise \ref{EX:L2} 2.\, yield
  \begin{align*}
  (\nabla_Xs)^\wedge &= \nabla^g_{X \times \mathbf{0}_{\SSS^1}} s^\wedge = K \circ Ts^\wedge \circ (X \times \mathbf{0}_{\SSS^1}) = K \circ (Ts \circ X)^\wedge \\
  &= (K_* \circ Ts \circ X)^\wedge. \qedhere
 \end{align*}
 \end{proof}

 \begin{prop}\label{prop:covdident}
  Let $(H,\langle \cdot , \cdot \rangle)$ be a Hilbert space with a strong Riemannian metric $g$ (not necessarily $g = \langle \cdot,\cdot \rangle)$. For the $L^2$-metric \eqref{l2metric} on $C^\infty (\SSS^1,H)$, the metric derivative is the covariant derivative $\nabla$ from \Cref{defn:covd}.
 \end{prop}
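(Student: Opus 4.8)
The plan is to invoke \Cref{prop:associatedconnector}, which already identifies $\nabla$ as a genuine covariant derivative (the one associated to the spray $S_*$, with connector $K_*$). Consequently the only thing left to verify is the metric-compatibility identity from \Cref{setup:metricderiv}, namely
$$X.g^{L^2}(Y,Z) = g^{L^2}(\nabla_X Y, Z) + g^{L^2}(Y, \nabla_X Z), \qquad X,Y,Z \in \mathcal{V}(C^\infty(\SSS^1,H)).$$
My strategy is to peel off the integral defining $g^{L^2}$ and reduce everything to the pointwise metric compatibility of the target metric derivative $\nabla^g$, which exists on the strong Riemannian manifold $(H,g)$ by \Cref{thm:metricsprayident}.

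Concretely, I would fix $c_0 \in C^\infty(\SSS^1,H)$, pick a smooth curve $c\colon J \to C^\infty(\SSS^1,H)$ with $c(0)=c_0$ and $\dot c(0)=X(c_0)$, and write the directional derivative as
$$X.g^{L^2}(Y,Z)(c_0) = \frac{d}{dt}\Big|_{t=0}\int_{\SSS^1} g_{c^\wedge(t,\theta)}\big(Y(c(t))(\theta),Z(c(t))(\theta)\big)\,\mathrm{d}\theta,$$
using the exponential law to pass to the adjoint $c^\wedge\colon J\times\SSS^1\to H$ and to read the lifts $t\mapsto Y(c(t))$, $t\mapsto Z(c(t))$ pointwise in $\theta$. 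After differentiating under the integral sign (justified exactly as for parameter-dependent weak integrals, cf.\ the footnote to \Cref{prop:rpd} and \cite[I Theorem 2.1.5]{MR656198}), the integrand at each fixed $\theta$ becomes the $t$-derivative of $g$ evaluated along the curve $t\mapsto c^\wedge(t,\theta)$ in $H$ together with the two vector fields $t\mapsto Y(c(t))(\theta)$ and $t\mapsto Z(c(t))(\theta)$ along it. Here I would apply the metric compatibility \eqref{covalong:Riemannian} of $\nabla^g$ for vector fields along a map, specialised to $N=J$ and $F = c^\wedge(\cdot,\theta)$, to split this into the two expected terms. Finally, \eqref{L2:covd_curves} rewrites each target covariant derivative $\nabla^g_{\dot c^\wedge(\cdot,\theta)}(\,\cdots)$ as the $\theta$-component of $\nabla_{\dot c}(Y\circ c)$ resp.\ $\nabla_{\dot c}(Z\circ c)$, and the connector formula $\nabla_X Y = K_*\circ TY\circ X$ from \Cref{prop:associatedconnector} identifies $(\nabla_{\dot c}(Y\circ c))(0) = (\nabla_X Y)(c_0)$. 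Reassembling the integral then yields precisely $g^{L^2}(\nabla_X Y, Z)(c_0) + g^{L^2}(Y,\nabla_X Z)(c_0)$.

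I expect the main obstacle to be bookkeeping rather than a conceptual hurdle: one must carefully track the chain of identifications $TC^\infty(\SSS^1,H)\cong C^\infty(\SSS^1,TH)$, the adjunction $(\,\cdot\,)^\wedge$, and the relation \eqref{L2:covd_curves}, and one must legitimately commute $\frac{d}{dt}$ with $\int_{\SSS^1}$. The differentiation under the integral is the only genuinely analytic point; since $\SSS^1$ is compact and all maps involved are smooth, it is covered by the continuity of parameter-dependent integrals already used elsewhere in this chapter, so no new estimates should be required.
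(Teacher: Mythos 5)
Your argument is correct, but it takes a genuinely different (though closely related) route from the one in the text. The paper does not verify the compatibility identity of \Cref{setup:metricderiv} directly; instead it verifies the polarised Koszul-type identity \eqref{eq:bilin} for the pushforward bilinear form $B_{S_*}$ with respect to $g^{L^2}$, concludes that $S_*$ is the metric spray of $g^{L^2}$ in the sense of \Cref{defn:metricspray}, and only then infers that the associated covariant derivative is the metric derivative. The common core of both arguments is the same reduction mechanism: since $\int_{\SSS^1}\colon C^\infty(\SSS^1,\R)\to\R$ is continuous linear, the derivative of $g^{L^2}=\int_{\SSS^1}\circ\, g_*$ is computed pointwise under the integral, so the identity to be checked on $C^\infty(\SSS^1,H)$ collapses to the corresponding identity on $H$. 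What differs is which identity gets transported: you transport the metric compatibility \eqref{covalong:Riemannian} of $\nabla^g$ along the curves $t\mapsto c^\wedge(t,\theta)$, using \eqref{L2:covd_curves} and \Cref{prop:associatedconnector} to recognise the resulting pointwise covariant derivatives as $(\nabla_XY)(c_0)(\theta)$ and $(\nabla_XZ)(c_0)(\theta)$, whereas the paper transports the first-derivative formula for the Christoffel symbols. Your version needs only the covariant derivative itself and makes the role of \eqref{L2:covd_curves} transparent (and, since \Cref{prop:associatedconnector} already supplies torsion-freeness, checking compatibility really does suffice); the paper's version yields the slightly stronger intermediate statement about the metric spray. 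One small simplification is available to you: the interchange of $\frac{d}{dt}$ with $\int_{\SSS^1}$ that you justify via parameter-dependent weak integrals is already covered by the chain rule applied to the continuous linear map $\int_{\SSS^1}$, which is exactly how the paper phrases it, so no separate convergence argument is needed.
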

 
 \begin{proof}
  Recall that $TH \oplus TH = H \times H \times H$ and we can consider $g$ as a smooth map of three variables. Lets agree that the first component represents the base point in the bundle. Since $S$ is the spray of the metric $g$, the associated bilinear form $B$ satisfies for all $X,Y,Z \in H$ the relation
  \begin{align}\label{eq:bilin}
   -2 g(p,(B (p;X,Y),Z) = d_1g (p,Y,Z;X)+d_1g(p,Z,X;Y) -d_1 g(p,X,Y;Z)
  \end{align}
  see \cite[VIII, \S 4 Theorem 4.2]{Lang} or \cite[Theorem 1.8.11]{MR1330918}. Hence if we can compute that an identity as \eqref{eq:bilin} holds for the bilinear form associated to $S_*$ (which is given as pushforward with $B$) with respect to $g^{L^2}$ does this imply that $g^{L^2}$ admits an associated bilinear form (aka Christoffel symbols). Hence if we know the Christoffel symbols, we can compute the connector of giving the metric derivative. In this case, \Cref{defn:covd} shows that the connector is $K_*$, whence $S_*$ is the metric spray and $\nabla$ the metric derivative of the $L^2$-metric (cf.\ also \Cref{thm:metricsprayident}).
  
  Let us now establish the desired analogue of \eqref{eq:bilin} for the $L^2$-metric. The integral operator $\int_{\SSS^1} \colon C^\infty (\SSS^1, \R) \rightarrow \R$ is continuous linear. Up to the identification \Cref{Whitneyiso}, the derivative of $g^{L^2} = \int_{\SSS^1} \circ g_*$ in a direction is thus given by the pointwise derivative, i.e.
  $$d_1g^{L^2} (c,h,k;\xi) = \int_{\SSS^1} d_1g(c(\theta),h(\theta),k(\theta);\xi(\theta)) \mathrm{d} \theta.$$
  We apply this observation to the right hand side of \eqref{eq:bilin} and recall from \Cref{prop:tediousdetails} that the associated bilinear form $B_{S_*}$ of $S_*$ is given by the pushforward of the associated bilinear form $B$ of $S$. Together this yields
  \begin{align*}
   &d_1g^{L^2} (c,Y,Z;X)+d_1g^{L^2}(c,Z,X;Y) -d_1 g^{L^2}(c,X,Y;Z)\\ =& \int_{\SSS^1} d_1g (c,Y,Z;X)+d_1g(c,Z,X;Y) -d_1 g(c,X,Y;Z) d\theta \\ =& \int_{\SSS^1} -2g(c(\theta), B (c(\theta);X(\theta),y(\theta),Z(\theta))\mathrm{d} \theta = -2g^{L^2} (c,B_{S_*}(c;X,Y),Z).\qedhere
  \end{align*}
 \end{proof}

 \begin{rem}
  \begin{enumerate}
  \item Using the point evaluations of $TC^\infty (\SSS^1,M) \cong C^\infty (\SSS^1,TM)$ it is possible to directly describe the metric derivative of the $L^2$-metric by looking at it pointwise evaluated. While this allows one to avoid the construction in \Cref{defn:covd} one is then left with lots of localisation arguments to establish \Cref{prop:covdident}. We refer to \cite[Lemma 2.1]{YaRaT15} for more information.
   \item All of the above computations for spray, connector and covariant derivative did not exploit that the source manifold was $\SSS^1$. Thus via the same proof one can obtain a spray, connector and covariant derivative for the $L^2$-metric on $C^\infty (K,M)$ for any compact manifold $K$.
   \item Since \eqref{eq:bilin} can be formulated in any local chart, a more involved argument works for any strong Riemannian manifold $M$. However, using the Nash embedding theorem, \Cref{prop:covdident} generalises directly to all finite-dimensional Riemannian manifolds as \cite[Theorem 4.1]{BruL2} shows. 
  \end{enumerate} \label{rem:L2deriv}  
 \end{rem}
 Since we have identified the covariant derivative of the $L^2$-metric we can describe geodesics of the $L^2$-metric: A geodesic between $f,g \in C^\infty (\SSS^1,H)$ is a path of curves which is pointwise for every $\theta \in \SSS^1$ a geodesic in $H$ between $f(\theta)$ and $g(\theta)$. Let us explicitly compute this in the case where the inner product of the Hilbert space gives us the Riemannian metric:
 
 \begin{ex}\label{ex:trivialex:cont}
  Consider $(H,\langle \cdot ,\cdot \rangle)$ as a strong Riemannian manifold. Recall that the metric spray and the covariant derivative were computed for this metric in \Cref{setup:trivial}: With the identification $T^k H = H^{k+1}, k\in \N$ we obtain $S(x,v) = (x,v,v,0)$ and $\nabla_XY = X.Y$ for suitable paths and their lifts $\nabla_{\dot{c}} h =\dot{h}$. 
  Endow now $C^\infty (\SSS^1,H)$ with the $L^2$-metric and pick $p \in C^\infty (\SSS^1,H)$. We compute now the geodesics $c \colon J \rightarrow C^\infty (\SSS^1,H)$ starting at $p$ with derivative $\dot{c} =X$. Identify now the first and second derivatives as $\dot{c}(t) = (c(t),c'(t)) \in C^\infty (\SSS^1,H\times H)$ and $\ddot{c}(t) = (c(t),c'(t),c'(t), c''(t))$. The exponential law allows us to compute the geodesic equation with respect to the spray as
  $$(c(t),c'(t),c',c''(t))^\wedge = \left(\ddot{c}(t)\right)^\wedge = (S_*(c(t),c'(t)))^\wedge = S\left(c^\wedge (t,\cdot),\frac{\partial}{\partial t}c^\wedge (t,\cdot)\right).$$
  Evaluating in $\theta \in \SSS^1$ one immediately sees that $c''(t) = 0, \forall t$. Hence, from the usual rules of calculus we deduce that $c(t)(\theta) = tX(\theta) +p(\theta)$.
  A geodesic between $f,g \colon \SSS^1 \rightarrow H$ does thus always exist and is described by a map $\gamma \colon [0,1] \times \SSS^1 \rightarrow H$ such that for every $\theta \in \SSS^1$ we have $\gamma(t,\theta) = (1-t)f(\theta) + tg(\theta)$. In other words geodesics in this example are given by pointwise linear interpolation between the two functions.\index{geodesic!of the $L^2$-metric}
 \end{ex}
 
 We shall further investigate the geodesic equation of a generalised version of the $L^2$-metric on the open submanifold $\Diff (M) \opn C^\infty(M,M)$ in \Cref{sect:EAtheory}.
 However, we can also use our knowledge of the $L^2$-metric to derive other interesting examples of metric derivatives for certain weak Riemannian metrics. Note however, that the following example requires a more in depth knowledge of Riemannian geometry (e.g.~the Hodge-Laplacian, \Cref{defn:classicdiffop} and curvature \Cref{defn:curvature}).

\begin{ex}[{\cite{YaRaT15}}]
 Let $(M,g)$ be a finite-dimensional Riemannian manifold with metric derivative $\nabla^g$. We recall that every (finite-dimensional) Riemannian manifold has a (Hodge-)Laplacian $\Delta = \mathrm{d} \mathrm{d}^\ast + \mathrm{d}^\ast \mathrm{d}$ associated to the metric (cf.~\cite[p.423]{Lang}) and a curvature tensor $R$ (see \Cref{defn:curvature}).
 
 Now consider the loop space $C^\infty (\SSS^1,M)$. By \Cref{rem:L2deriv} 3.~we know that the $L^2$-metric admits a metric derivative $\nabla$. We will now use the notation of lifts and covariant derivative along curves on $[0,2\pi]$ in the context of curves on $\SSS^1$ (implicitly identifying elements in $LM$ with curves on $[0,2\pi]$ by composing with $(\sin,\cos) \colon [0,2\pi] \rightarrow \SSS^1$). Hence for a smooth curve $\gamma \in LM$ we have $T_\gamma C^\infty (\SSS^1,M) = \text{Lift} (\gamma) \cap C^\infty (\SSS^1,TM)$. We can then endow every tangent space of $C^\infty (\SSS^1,M)$ with the $H^1$-inner product\index{Riemannian metric!$H^1$-metric}
 \begin{align}\label{H1_IP}
  g^{H^1}_\gamma(X,Y) \coloneq \frac{1}{2\pi} \int_0^{2\pi} g_{\gamma(\theta)}\left((1+\Delta)X(\theta) ,Y(\theta)\right) \mathrm{d}\theta
 \end{align}
 Rewriting the codifferential in the above formula and exploiting duality with respect to the metric $g$, one can show that the inner product \eqref{H1_IP} describes the sum of the $L^2$-inner products of the lift of $\gamma$ and its first derivative. We can thus leverage Exercise \ref{ex:metricex} 4 a) where we have seen that the $L^2$-metric is a (weak) Riemannian metric. Differentiation is continuous linear (on the tangent space of $C^\infty (\SSS^1,M)$), whence the $H^1$-metric is a weak Riemannian metric. Now a (non-trivial!) computation shows that the metric derivative of the $H^1$-metric is intimately connected to the curvature tensor, the metric derivative of $g$ and the metric derivative of the $L^2$-metric. Namely, \cite[Theorem 2.2]{YaRaT15} provides for $X,Y \in T_\gamma C^\infty (\SSS^1,M)$ the following formula for the metric derivative $\nabla^{H^1}_X Y(\gamma)$:
 \begin{align*}
 \nabla_X Y+ \frac{1}{2} (1+\Delta)^{-1}&\left(-\nabla^g_{\dot{c}}(R(X, \dot{c})Y) - R(X, \dot{c})\nabla^g_{\dot{c}} Y- \nabla^g_{\dot{c}}(R(Y,\dot{c}))X  \right.\\
 &\ \left. -R(Y,\dot{c})\nabla^g_{\dot{c}}X + R(X,\nabla^{g}_{\dot{c}} Y) \dot{c} - R(\nabla^g_{\dot{c}}X,Y)\dot{c}\right).
 \end{align*}
 where $\nabla^g_{\dot{c}}(R(X, \dot{c})Y)$ denotes the lift of $\gamma$ whose value at $\theta \in \SSS^1$ is given by the formula
 $-\nabla^g_{\dot{c}(\theta)}(R(X(\theta), \dot{c}(\theta))Y(\theta))$. While the above formula looks daunting and we do not attempt to unravel its meaning here, we would like to mention that it can be used to connect the Riemannian geometry of the $H^1$-metric to pseudodifferential operators acting on a trivial bundle over the circle. This link then yields information on Chern-Simmons classes on the tangent bundle of the loop space. We refer the interested reader to \cite{YaRaT15} for more information.
\end{ex}

 \begin{Exercise} \label{EX:L2}\vspace{-\baselineskip}
  \Question Let $S \colon TM \rightarrow T^2M$ be a spray on $M$ with $K \colon T^2 M \rightarrow TM$ its associated connector.
  \subQuestion Prove that $S_*$ is a spray on $C^\infty (\SSS^1,M)$. \\
  {\tiny \textbf{Hint:} Use that $TC^\infty (\SSS^1,M) \cong C^\infty (\SSS^1,TM)$ identifies $T(p_*)$ with $(Tp)_*$ for smooth maps. For the quadratic condition review the effect of the diffeomorphism on the fibres.}
  \subQuestion Check that $K_*$ is a connector.
  \subQuestion Show that the second derivative of the vertical part of $S_*$ is the pushforward of the second derivative of the vertical part of $S$ (cf.\, \Cref{prop:tediousdetails}).
  \Question Prove for $s \in C^\infty (N,C^\infty (\SSS^1,M))$ and $X \in \mathcal{V}(N)$ the identity
  $$Ts \circ X = (Ts^\wedge \circ (X \times \mathbf{0}_{\SSS^1}))^\vee.$$
  Furthermore, establish the formula \eqref{L2:covd_curves}: $\nabla_{\dot{c}} \alpha(\cdot)(x) = \nabla^g_{\dot{c}^\wedge (\cdot, x)} \alpha^\wedge (\cdot,x),\ \forall x\in \SSS^1$.
  \Question Continue \Cref{ex:trivialex:cont} and compute for $C^\infty (\SSS^1,H)$ with the $L^2$-metric an explicit form of the geodesic equation $\nabla_{\dot{c}}\dot{c}=0$. Deduce again that a geodesic $c \colon J \rightarrow C^\infty (\SSS^1,H)$ is given for each $\theta \in \SSS^1$ by the affine linear map $c(t)(\theta) = tc'(0)(\theta) + c(0)(\theta)$.
  \Question Let $(M,g)$ be a Riemannian manifold with a metric spray and metric derivative $\nabla$. Work in a local chart to establish the identity $X.g(Y,Z) = g(\nabla_X Y,Z) + g(Y,\nabla_X Z)$, \eqref{covalong:Riemannian}, for the covariant derivative along a smooth map $F \colon N \rightarrow M$.
  \end{Exercise}

  \setboolean{firstanswerofthechapter}{true}
 \begin{Answer}[number={\ref{EX:L2} 1. c)}] 
  \emph{We check the details of \Cref{prop:tediousdetails} and show that the second derivative of the vertical part of $S_*$ is given by the pushforward of the second derivative of the vertical part of $S$.}\\[.15em]
  
  In \Cref{prop:tediousdetails} we have already seen that we can compute instead the partial derivative (with respect to the first variable) of 
  $$(S_*)^\wedge \colon C^\infty (\SSS^1,TM) \times \SSS_1 \rightarrow T^2M,\quad (h,\theta) \mapsto S(h(\theta)).$$
  Pick $\theta \in \SSS^1$ and $h \in C^\infty (\SSS^1,TM)$ together with a chart $(U,\varphi)$ of $M$ such that $h(\theta) \in TU \cong U \times E$ (for $E$ a locally convex space). By continuity, there is a compact set $L$ such that $h \in \lfloor L,TU\rfloor$ and $\theta \in L^\circ$. For a curve $c \colon ]-\varepsilon,\varepsilon[ \rightarrow \lfloor L,TU\rfloor$ 
  apply again the exponential law \Cref{thm:explaw} to see that $c(t)|_{L^\circ} = T\varphi^{-1} c_E(t)$ for some smooth $c_E  \colon ]-\varepsilon,\varepsilon[ \rightarrow C^\infty (L^\circ, U \times E)$.
  Plugging this into $(S_*)^\wedge$ we derivate to obtain 
  \begin{align*}T^2 \varphi \circ (S_*)^\wedge (T\varphi^{-1})_* \circ c_E(t),\theta) &= T^2 \varphi S(T\varphi^{-1}(c_E(t)(\theta))\\ &= (\pi_{M} \circ h(\theta),c_E(t)(\theta),c_E(t)(\theta),S_{U,2} (h(\theta),c_E(t)(\theta))).
  \end{align*}
  Here $S_{U,2}$ is the non-trivial vector part of the spray $S$. We conclude that after projecting onto the fourth component and after fixing the parameter $\theta$, the second derivative of the vertical part of $S_*$ can be identified with the pushforward of the second derivative of $S_{U,2}$. This proves that $B_{S_*}$ is the pushforward of $B$.
 \end{Answer}
 \setboolean{firstanswerofthechapter}{false}
 \begin{Answer}[number={\ref{EX:L2} 2)}] 
  \emph{We establish the formula \eqref{L2:covd_curves}: $\nabla_{\dot{c}} \alpha(\cdot)(x) = \nabla^g_{\dot{c}^\wedge (\cdot, x)} \alpha^\wedge (\cdot,x),\ \forall x\in \SSS^1$.}\\[.15em]
  
  The trick is to avoid at all costs working in charts of the manifold of mappings $C^\infty (\SSS^1,M)$. However, the object $\nabla_{\dot{c}} \alpha$ is defined via the local formula \eqref{locform:deriv}. Taking a look at the local formula for the connector \eqref{connectionformula} we see that $\nabla_{\dot{c}} \alpha = K_* (\dot{\alpha})$, where $K_*$ is the connector associated to the covariant derivative $\nabla$. As already shown in the notation (and proved in \Cref{prop:associatedconnector}), the connector of $\nabla$ is the pushforward of the connector $K$ associated to the Riemannian metric $g$ on $M$ (and thus also to the covariant derivative $\nabla^g$). 
  Setting in now these relations we obtain the desired identity
  \begin{align*}
   \nabla_{\dot{c}} \alpha (t) (x) = K_*(\dot{\alpha} (t))(x) = (K\circ \alpha^\wedge) (t,x) = \nabla_{c^\wedge (t,x)} \alpha^\wedge (t,x).
  \end{align*}

 \end{Answer}
 \section{Shape analysis via the square root velocity transform}\label{sect:SRVT}
 
 We return to the announced application of the $L^2$-metric to shape analysis. As we have seen in the introduction we seek to construct a shape space together with a Riemannian structure which will allow us to compare its elements using geodesics and geodesic distance. We begin by defining the necessary spaces and metrics.
\begin{setup}
 Define the \emph{pre-Shape space} of closed curves\index{pre-shape space}
 $$\mathcal{P} \coloneq \Imm (\SSS^1,\R^2) = \{c \in C^\infty (\SSS^1,\R^2) \mid \dot{c}(t) \neq 0, \forall t \in [0,1]\}.$$
 In Exercise \ref{Ex:cinftytop} 3. and \Cref{ex:invL2} we have seen that $\mathcal{P}$ is an open subset of $C^\infty (\SSS^1,\R^2)$ which becomes a weak Riemannian manifold with respect to the Riemannian metric
 $$g_c(f,g) = \int_{\SSS^1} \langle f(\theta),g(\theta)\rangle \lVert \dot{c}(\theta)\rVert \mathrm{d} \theta.$$
 We are actually interested in the images of elements in $\mathcal{P}$ and want to identify all curves which yield the same image up to a reparametrisation. To model the reparametrisation, consider the group
 $$\Diff (\SSS^1) \coloneq \{\varphi \in C^\infty(\SSS^1,\SSS^1) \mid \varphi \text{ is bijective with smooth inverse}\}$$
 of diffeomorphisms of $\SSS^1$. The group acts on $\mathcal{P}$ via reparametrisation, i.e.\ 
 \begin{align*}
  \gamma \colon \textstyle \Diff (\SSS^1) \times \mathcal{P}\displaystyle \rightarrow \mathcal{P},\quad (\varphi,c) \mapsto c\circ \varphi 
 \end{align*}
 is a Lie group action, cf.\ \Cref{ex:Diffgp}. We can thus define the \emph{shape space}\index{shape space} $\mathcal{S} = \mathcal{P}/\Diff(\SSS^1)$ as the quotient of $\mathcal{P}$ with respect to the Lie group action. One can show that $\mathcal{S}$ is almost a manifold\footnote{There exist singularities, turning $\mathcal{S}$ into an orbifold. See \Cref{ex:ill:behaved} for an example. The existence of singularities is usually ignored in shape analysis, as an open dense subset of $\mathcal{S}$ is a manifold, such that the projection restricts on this set to a submersion.} and since the Riemannian metric is invariant under the reparametrisation action, the Riemannian metric induces a Riemannian metric on $\mathcal{S}$. Unfortunately, Michor and Mumford \cite[3.10]{MaM06} have shown that $g$ has vanishing geodesic distance (cf.\, also \Cref{MMvanish}), whence any attempt to compare shapes by computing their geodesic distance has to fail. 
\end{setup}
The defect of the weak Riemannian metric can be solved by incorporating derivatives in the definition of the Riemannian metric. This leads to the notion of a family of metric called $H^1$-metrics (the name indicates that the associated strong Riemannian manifold consists of Sobolev $H^1$-functions). 

\begin{setup}[An elastic inner product, \cite{JaMaS07}]\label{setup:elastic}
 Let $c \in \Imm(\SSS^1,\R^2)$ with $\dot{c}=(c,c^\prime)$. Then we define $u_c(\theta) \coloneq c^\prime(\theta)/\lVert \dot{c}(\theta)\rVert$ and the arc length derivative $D_{c,\theta} (h) = h^\prime/\lVert \dot{c}\rVert$. Then we define an inner product on $T_c \Imm(\SSS^1,\R^2) = C^\infty (\SSS^1,\R^2)$ 
 \begin{align}\label{elastic}
 \begin{split}G_c (h,k) \coloneq \int_{\SSS^1} \frac{1}{4}& \langle D_{c,\theta} h,u_c\rangle \langle D_{c,\theta} k,u_c\rangle \\ & + \langle D_{c,\theta}h-u_c \langle D_{c,\theta} h,u_c\rangle,  D_{c,\theta}k - u_c \langle D_{c,\theta} k,u_c\rangle\rangle \lVert \dot{c}\rVert \mathrm{d}\theta.
  \end{split}
 \end{align}
 This inner product is called \emph{elastic inner product} as the first term measures stretching in the direction of $c$, while the second term measures bending of the curve $c$. Note that due to its construction, the elastic inner products are invariant under the reparametrisation action of $\Diff(\SSS^1)$ on $\mathcal{P}$.
\end{setup}

It is not hard to see that these inner products yield a weak Riemannian metric on the pre-Shape space $\mathcal{P}$. We will derive this only for a smaller submanifold using the so called \emph{square root velocity transform (SRVT)}:

\begin{defn}\label{defn:SRVT}
 Define the mappings
 \begin{align*}
  \mathcal{R} \colon \mathcal{P} =\Imm(\SSS^1,\R^2) &\rightarrow \{q \in C^\infty (\SSS^1,\R^2) \mid q(\theta)\neq 0 \forall \theta \in \SSS^1\} = C^\infty (\SSS^1, \R^2 \setminus \{0\})\\
  c &\mapsto \mathcal{R}(c)(t) \coloneq c^\prime(t)/\sqrt{\lVert \dot{c}(t)\rVert},\\
  \mathcal{R}^{-1} \colon C^\infty (\SSS^1,\R^2\setminus \{0\}) &\rightarrow \mathcal{P},\\
  q &\mapsto \left((\cos (t),\sin(t)) \mapsto \int_{0}^t q(\cos (s),\sin(s)) \cdot \lVert q(\cos(s),\sin(s)\rVert \mathrm{d} s \right)
  \end{align*}
 We call $\mathcal{R}$ the \emph{square root velocity transform (SRVT)}. \index{square root velocity transform} \index{SRVT!see square root velocity transform}
\end{defn}

Using that $\mathcal{P} \opn C^\infty (\SSS^1,\R^2)$ and that pushforwards of smooth mappings are smooth maps between canonical manifolds of mappings (cf.~\Cref{sect:smoothmappingspaces}), it is not hard to see that $\mathcal{R}$ and $\mathcal{R}^{-1}$ are smooth. 
 Moreover, $\mathcal{R} \circ \mathcal{R}^{-1}(q)=q$, but since $\mathcal{R}$ involves differentiation, we loose information on the starting point of the curve and have $\mathcal{R}^{-1}\circ\mathcal{R}(c)=c$ if and only if $c(\cos(0),\sin(0))=0$, i.e.\, the curve starts at the origin. As we are interested in shapes, it will be irrelevant as to where in $\R^2$ the shape is located. With other words, we can restrict to the submanifold 
$$\mathcal{P}_* \coloneq \{c \in \mathcal{P} \mid c(\cos(0),\sin(0))=0\}$$
of all immersions starting at the origin. We will see in Exercise \ref{ex:SRVT} 1.~that the SRVT induces a diffeomorphism between $\mathcal{P}_*$ and the manifold $C^\infty (\SSS^1,\R^2\setminus \{0\})$. Recall now the notion of pullback of a Riemannian metric

\begin{defn}
 Let $(M,g)$ be a (weak) Riemannian manifold and $\varphi \colon N \rightarrow M$ be an immersion. Then $N$ can be made a (weak) Riemannian manifold with respect to the \emph{pullback metric}\index{Riemannian metric!pullback metric} defined via 
 $$(\varphi^*g)_m (v,w) \coloneq g_{\varphi(m)} (T_m \varphi(v),T_m\varphi(w)).$$
\end{defn}

\begin{ex}
 If $U \opn M$ and $(M,g)$ is a weak Riemannian manifold, then the inclusion $\iota \colon U \rightarrow M$ is an immersion and the restriction of $g$ to $U$ coincides with the pullback metric obtained from $\iota$. In particular an open subset of a weak Riemannian manifold, such as $C^\infty (\SSS^1,\R^2\setminus \{0\}) \opn C^\infty (\SSS^1,\R^2)$ (with the weak $L^2$-metric), becomes a weak Riemannian manifold by restriction.
\end{ex}

\begin{prop}\label{prop:elastic_pullback}
 The pullback metric of the non-invariant $L^2$-metric on $C^\infty (\SSS^1,\R^2)$ from \Cref{ex:L2Hilbert} with respect to the SRVT $\mathcal{R}$ is the \emph{elastic metric}\index{Riemannian metric!elastic metric} described by \eqref{elastic} on each tangent space of $\mathcal{P}_*$.\footnote{The statement remains valid if we replace $\R^2$ by an arbitrary Hilbert space of dimension $\geq 2$.}
\end{prop}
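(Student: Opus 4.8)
The plan is to unwind the definition of the pullback metric and reduce the claim to a pointwise comparison of integrands. Recall that the non-invariant $L^2$-metric on $C^\infty(\SSS^1,\R^2)$ from \Cref{ex:L2Hilbert} (with $H=\R^2$) is $g^{L^2}_q(f,g)=\int_{\SSS^1}\langle f(\theta),g(\theta)\rangle\,\mathrm{d}\theta$, so by the definition of the pullback metric just above,
\begin{align*}
 (\mathcal{R}^\ast g^{L^2})_c(h,k)=g^{L^2}_{\mathcal{R}(c)}\big(T_c\mathcal{R}(h),T_c\mathcal{R}(k)\big)=\int_{\SSS^1}\big\langle T_c\mathcal{R}(h)(\theta),T_c\mathcal{R}(k)(\theta)\big\rangle\,\mathrm{d}\theta .
\end{align*}
Since $\mathcal{P}=\Imm(\SSS^1,\R^2)\opn C^\infty(\SSS^1,\R^2)$ and $\mathcal{R}$ takes values in the open set $C^\infty(\SSS^1,\R^2\setminus\{0\})$, all tangent spaces in sight are canonically identified with (subspaces of) $C^\infty(\SSS^1,\R^2)$, so $T_c\mathcal{R}(h)$ is again a smooth $\R^2$-valued map on $\SSS^1$ and the entire argument can be run pointwise in $\theta$. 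I would also remark at the outset why we restrict to $\mathcal{P}_\ast$: the formula for $T_c\mathcal{R}$ below has constant maps in its kernel on all of $\mathcal{P}$, so only after imposing $c(\cos 0,\sin 0)=0$ does $\mathcal{R}$ become an immersion and the pullback metric well defined (this is \Cref{ex:SRVT} 1.).

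The core step is the tangent map of $\mathcal{R}$. Writing $\mathcal{R}(c)=\lVert\dot c\rVert^{-1/2}c^\prime$ (legitimate as a composition of the continuous linear map $c\mapsto c^\prime$ with pointwise norm and scalar operations, which are smooth where $c^\prime$ never vanishes), I would differentiate along $t\mapsto c+th$. Using $\tfrac{\mathrm d}{\mathrm dt}\big|_{0}\lVert c^\prime+th^\prime\rVert=\langle c^\prime,h^\prime\rangle/\lVert c^\prime\rVert$, the product and chain rules give
\begin{align*}
 T_c\mathcal{R}(h)=\lVert\dot c\rVert^{-1/2}h^\prime-\tfrac12\lVert\dot c\rVert^{-5/2}\langle c^\prime,h^\prime\rangle\,c^\prime .
\end{align*}
Substituting $c^\prime=\lVert\dot c\rVert u_c$ and $h^\prime=\lVert\dot c\rVert D_{c,\theta}h$ and collecting the powers of $\lVert\dot c\rVert$ collapses this to the clean expression
\begin{align*}
 T_c\mathcal{R}(h)=\lVert\dot c\rVert^{1/2}\Big(D_{c,\theta}h-\tfrac12\langle D_{c,\theta}h,u_c\rangle\,u_c\Big).
\end{align*}

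With this in hand the rest is elementary algebra. Abbreviating $a\coloneq D_{c,\theta}h$, $b\coloneq D_{c,\theta}k$, $\alpha\coloneq\langle a,u_c\rangle$, $\beta\coloneq\langle b,u_c\rangle$ and using $\lVert u_c\rVert=1$, the pullback integrand becomes
\begin{align*}
 \big\langle T_c\mathcal{R}(h),T_c\mathcal{R}(k)\big\rangle=\lVert\dot c\rVert\,\big\langle a-\tfrac12\alpha u_c,\,b-\tfrac12\beta u_c\big\rangle=\lVert\dot c\rVert\big(\langle a,b\rangle-\tfrac34\alpha\beta\big).
\end{align*}
On the other side, expanding the elastic integrand of \eqref{elastic} gives $\tfrac14\alpha\beta+\langle a-\alpha u_c,\,b-\beta u_c\rangle=\tfrac14\alpha\beta+\langle a,b\rangle-\alpha\beta=\langle a,b\rangle-\tfrac34\alpha\beta$, again weighted by $\lVert\dot c\rVert$. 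The two integrands coincide pointwise, so integrating over $\SSS^1$ yields $(\mathcal{R}^\ast g^{L^2})_c=G_c$ on every tangent space of $\mathcal{P}_\ast$, which is the claim.

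I do not expect a genuine obstacle: the only places demanding care are the derivative of $\lVert\dot c\rVert^{-1/2}$ and the bookkeeping of the powers of $\lVert\dot c\rVert$ when re-expressing $c^\prime,h^\prime$ through $u_c$ and $D_{c,\theta}$. The one point worth stating cleanly is that $\lVert\dot c\rVert=\lVert c^\prime\rVert$ never vanishes on $\mathcal{P}_\ast\subseteq\Imm(\SSS^1,\R^2)$, so the negative powers above are well defined and smooth in $\theta$; this is precisely what keeps the computation inside the immersions and motivates working on $\mathcal{P}_\ast$ rather than on all of $C^\infty(\SSS^1,\R^2)$.
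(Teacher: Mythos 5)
Your proof is correct and follows essentially the same route as the paper: compute $T_c\mathcal{R}$ from the derivative of the scaling map applied to $h'$, insert it into the $L^2$-inner product, and compare integrands pointwise. The only difference is cosmetic — you carry out explicitly (via the substitution $c'=\lVert\dot c\rVert u_c$, $h'=\lVert\dot c\rVert D_{c,\theta}h$ and the resulting common value $\lVert\dot c\rVert(\langle a,b\rangle-\tfrac34\alpha\beta)$) the final comparison that the paper dismisses as "an easy but tedious computation," and your bookkeeping there is accurate.
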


\begin{proof}
 By construction, the square root velocity transform $\mathcal{R}$ is the composition of the derivative operator $D \colon \text{Imm} (\SSS^1,\R^2) \rightarrow C^{\infty} (\SSS^2,\R^2 \setminus \{0\}), q \mapsto (\theta \mapsto q^\prime(\theta) = T_\theta c (1))$ and the scaling map $\text{sc}\colon C^\infty (\SSS^1,\R\setminus \{0\}) \rightarrow C^\infty (\SSS^1,\R\setminus \{0\}), f \mapsto f / \sqrt{\lVert f\rVert}$. Thus by the chain rule we have $T_c \mathcal{R} = T\text{sc} \circ T_c D$ and to arrive at the desired formula, we have to compute the derivatives of these two mappings. To this end, we exploit that $TC^\infty (\SSS^1,\R^2) = C^\infty (\SSS^1,T\R^2) = C^\infty (\SSS^1,\R^2 \times \R^2)$ and since $\text{Imm} (\SSS^1,\R^2) \opn C^\infty (\SSS^1,\R^2)$ we can similarly identify the tangent bundle of the immersions. Now arguments as in Exercise \ref{EX11} 2 c) show that the differential operator $D$ is continuous linear, whence for an element $(c,V)$ in $T_c C^\infty (\SSS^1,\R^2) = \{(c,V) \in C^\infty (\SSS^1,\R^2)^2\}$ we obtain $T_c D (V) = V^\prime$. In Exercise \ref{ex:SRVT} 2.~we shall show that the derivative of the scaling map is 
 \begin{align}\label{deriv:scalingmap}
  T_q\text{sc} (Z) = \frac{Z}{\sqrt{\lVert q\rVert}} - \frac{1}{2 \sqrt{\lVert q\rVert}^5} \langle Z,q \rangle q
 \end{align}
 To obtain the derivative of the SRVT at $(c,V)$, we simply have to replace $q$ with $c^\prime$ and $Z$ with $V^\prime$. This yields the following formula:
 \begin{align}
  &(\mathcal{R}^* \langle \cdot , \cdot\rangle_{L^2})_c (V,W) = \langle T_c\mathcal{R}(V),T_c\mathcal{R}(W) \rangle_{L^2} =  \int_{\SSS^1} \langle T_{c^\prime}\text{sc}(V^\prime)(\theta),T_{c^\prime}\text{sc}(W)(\theta)\rangle \mathrm{d} \theta \notag \\
  =&  \int_{\SSS^1} \left\langle \frac{V^\prime}{\sqrt{\lVert c^\prime\rVert}} - \frac{1}{2 \sqrt{\lVert c^\prime\rVert}^5} \langle V^\prime,c^\prime \rangle c^\prime, \frac{W^\prime}{\sqrt{\lVert c^\prime\rVert}} - \frac{1}{2 \sqrt{\lVert c^\prime\rVert}^5} \langle W^\prime,c^\prime \rangle c^\prime\right\rangle (\theta)\mathrm{d} \theta \label{eq:pullback_elastic}
 \end{align}
 Since the inner product is bilinear, we can factor out terms of the form $\lVert c^\prime\rVert$ and replace $V^\prime, W^\prime$ and $c^\prime$ with their rescaled versions (see \Cref{setup:elastic}). Now an easy but tedious computation shows that the pullback metric \eqref{eq:pullback_elastic} coincides with the elastic metric \eqref{elastic}.
 \end{proof}

Hence the elastic metric can be understood by studying the $L^2$-metric on the manifold $C^\infty(\SSS^1,\R^2\setminus \{0\})$.
However, as this is just an open subset of the (weak) Riemannian manifold $(C^\infty (\SSS^1,\R^2),\langle \cdot,\cdot\rangle^{L^2})$ we already know the spray, connector, covariant derivative and geodesics of this metric from our discussion of the $L^2$-metric. It is important to observe that the elastic metric pulls back to the non-invariant $L^2$-metric. For the non-invariant $L^2$-metric, the geodesic distance does not vanish (compare this with the invariant $L^2$-metric \Cref{ex:invL2}) and the pullback metric (i.e.~the elastic metric) is invariant under reparametrisation.

A natural extension for the vector space valued shape spaces discussed in \Cref{prop:elastic_pullback} are Lie group valued shape spaces. Here a shape is (up to quotienting out the reparametrisation action) an element of the loop group $C^\infty (\SSS^1,G)$. If $G$ is a Hilbert Lie group, the construction of the square root velocity transform can be adapted to this more general setting by using the logarithmic derivative of Lie group valued mappings. For details, we refer to Exercise \ref{ex:SRVT} 4. Similar techniques have been used in \cite{CaEaS18} for shape analysis on homogeneous spaces. 

\begin{ex}[Sample application: Motion capturing (see e.g.~\cite{CaEaS16})]
 Assume that we have motion capturing data of e.g.~a human walking, given by a number of time dependent datapoints in $\R^3$. The associated virtual character is then modelled as a skeleton to which these datapoints represent positions of certain parts. Shifting the focus from the position to their relative position, we can interpret the positions of every part of the skeleton as an angle between neighbouring parts. Now angles in $\R^3$ can be identified with rotations, i.e.~elements in the Lie group $\text{SO}(3)$ of rotations of $\R^3$. Thus if we do not impose constraints on the allowed angles (which can lead to unnatural movements, but is generally fine when working with real motion capturing data), we can think of motion capturing data as a smooth curve with values in a product of copies of $\text{SO}(3)$ (the number depends on the number of data points which move relative to each other). 
 In \cite{CaEaS16} numerical algorithms for automatic interpolation and transformation of motion capturing data have been constructed which exploit the above point of view.
 \end{ex}

\begin{rem}
 Note that the geodesic distance of the $L^2$-metric on $C^\infty (\SSS^1,H)$ does not vanish and is indeed positive for any two shapes which are not equal (where $H$ is a Hilbert space). Indeed the geodesic distance just coincides with the $L^2$-distance of the curves. The situation gets more complicated in the space $C^\infty (\SSS^1,H\setminus \{0\})$ and in particular for $H=\R^2$(which is not simply connected, so an element $\gamma$ for which $0$ is contained in a bounded connected component of $\R^2\setminus \gamma (\SSS^1)$ can not be connected by a continuous curve in $C^\infty (\SSS^1,\R^2\setminus \{0\})$ to an element for which $0$ is not contained in such a component).
 
 While this does not happen for open shapes (i.e.\, elements of $\Imm([0,1],\R^2)$) the following problem is even more significant when it comes to applications in shape analysis: As geodesics are not allowed to pass through $0$, the $L^2$-distance of two functions is \textbf{not} the geodesic distance even if there exist continuous paths between them. If the linear interpolation between two points $c(\theta)$ and $d(\theta)$ passes through $0$, then the geodesic from the ambient space $C^\infty (\SSS^1,\R^2)$ does not exist in the smaller space. Instead the geodesic distance then needs to be computed using curves which "move one shape around the hole". This leads to a geodesic distance which is strictly larger then the $L^2$-distance. In numerical applications this is often just plainly ignored.
\end{rem}

That the geodesic distance on $C^\infty(\SSS^1,\R^2\setminus\{0\})$ diverges from the easily computed $L^2$-distance is a consequence of the space being incomplete. Indeed since we have "drilled a hole" by excluding a point, geodesics passing through that point can only exist up to the time they enter. As the $L^2$ distance moves points on the image of functions along straight lines interpolating between them, the $L^2$-distance fails to give the geodesic distance for points lying on opposite sides of the excluded point.

\begin{Exercise}\label{ex:SRVT} \vspace{-\baselineskip}
 \Question Establish some properties of the square root velocity transform $\mathcal{R}$, \Cref{defn:SRVT}.\\
 {\tiny \textbf{Hint:} Use the substitution rule for integrals on submanifolds; in the case at hand, this works just as in usual interval case.}\\ 
 Show that
 \subQuestion $\mathcal{R}$ and $\mathcal{R}^{-1}$ are smooth maps with $\mathcal{R}\circ \mathcal{R}^{-1}=\id_{C^\infty (\SSS^1,\R^2\setminus \{0\}}$. 
 \subQuestion $\mathcal{P}_*=\{c \in \mathcal{P} \mid c(\cos(0),\sin(0))=0\}$ is a closed submanifold of $\mathcal{P}$. \subQuestion $\mathcal{R}$ and $\mathcal{R}^{-1}$ induce a diffeomorphism between the manifolds $\mathcal{P}_*$ and $C^\infty (\SSS^1,\R^2\setminus \{0\})$
 \subQuestion If $\varphi \in \Diff(\SSS^1)$ with $\varphi^\prime (\theta) >0, \forall \theta \in \SSS^1$, then $\mathcal{R}(c\circ \varphi) = \varphi^\prime \cdot \mathcal{R}(c)\circ \varphi $.
 \Question Let $(E,\lVert \cdot \rVert)$ be a Hilbert space with $\lVert v\rVert^2 = \langle v,v\rangle $ and $K$ a compact manifold. Prove that the scaling map $\text{sc} \colon C^\infty (K,E\setminus \{0\}) \rightarrow C^\infty (K,E\setminus \{0\}), q \mapsto q/\sqrt{\lVert q\rVert}$ is smooth with tangent map given by the formula \eqref{deriv:scalingmap}:
 $$T_q \text{sc} (W) =  \frac{W}{\sqrt{\lVert q\rVert}} - \frac{1}{2 \sqrt{\lVert q\rVert}^5} \langle W,q \rangle q.$$
 {\footnotesize \textbf{Hint}: Use the exponential law together with the canonical identification of the tangent bundles.} 
 \Question Show that the elastic metric \eqref{elastic} is invariant under reparametrisation with elements $\varphi$ in $\Diff (\SSS^1)$ which satisfy $T_\theta \varphi(1) >0, \forall \theta \in \SSS^1$. 
 \Question Let $G$ be a Hilbert Lie group, i.e.\ $\Lf(G)$ is a Hilbert space with inner product $\langle \cdot,\cdot\rangle$. Then we define a square root velocity transform on the subset of immersions of the loop group
 $$\mathcal{R} \colon \text{Imm} (\SSS^1, G) \rightarrow C^\infty (\SSS^1,\Lf(G) \setminus \{0\}),\quad c \mapsto \delta^r (c) / \sqrt{\lVert \delta^r (c)\rVert},$$
 where $\delta^r$ is the right logarithmic derivative.
 \subQuestion Show that $\mathcal{R}$ is a smooth diffeomorphism (what is its inverse?). 
 \subQuestion Compute a formula for the pullback of the $L^2$-metric on $C^\infty (\SSS^1,\Lf(G)\setminus \{0\})$.\\
 This metric is known as the elastic metric on the Lie group valued immersions and it can be used in computer animation and motion capturing applications. See \cite{CaEaS16} for more information.
 \end{Exercise}

 \begin{Answer}[number={\ref{ex:SRVT} 3.}]
  \emph{The elastic metric \eqref{elastic} is invariant under reparametrisations with elements $\varphi$ in $\Diff (\SSS^1)$ which satisfy $T_\theta \varphi(1) >0, \forall \theta \in \SSS^1$.}\\[.15em]
  
  We have seen in \Cref{prop:elastic_pullback} that the elastic metric is the pullback of the $L^2$-metric via the SRVT. In Exercise \ref{ex:SRVT} 1. c) we have seen that for a diffeomorphism $\varphi$ in $\Diff (\SSS^1)$ which satisfy $T_\theta \varphi(1) >0, \forall \theta \in \SSS^1$ one has $\mathcal{R}(c\circ \varphi) = \dot{\varphi} \cdot \mathcal{R}(c)\circ \varphi $.
  Plugging this in the $L^2$-inner product, we see that invariance follows from the usual transformation rule for integrals.
 \end{Answer}

\chapter{Connecting finite, infinite-dimensional and higher geometry}\copyrightnotice

In this section we will highlight the an interesting connection between finite and infinite-dimensional differential geometry. To this end we shall consider in \Cref{subsect:groupoids} elements from ``higher geometry'', so called Lie groupoids. The moniker ``higher geometry'' stems from the fact that in the language of category theory, these objects form higher categories. We shall not explore higher categories or its connection to differential geometry in this book (but the reader might consult \cite{MaZ15} or the general introduction \cite{Baez97}). In previous sections we have discussed how finite-dimensional manifolds and geometric structures give rise to infinite-dimensional structures such as Lie groups (e.g.~the diffeomorphims and groups of gauge transformations) and Riemannian metrics (such as the $L^2$-metric from shape analysis).
While we have seen that every manifold determines an (in general infinite-dimensional) group of diffeomorphisms, we turn this observation now on its head and ask: \\
\emph{Can we recognise the underlying finite-dimensional geometric structure from the infinite-dimensional object?}

\section{Diffeomorphism groups determine their manifolds}\label{sect:determination}
Let us examine this question for the diffeomorphism group.\index{diffeomorphism group} We have already seen that to every compact manifold we can associate the infinite-dimensional diffeomorphism group. Conversely, Takens \cite{takens79} has shown that the diffeomorphism group identifies (up to diffeomorphism) the underlying manifold. Namely, we have the following theorem (which we cite here from the much more general statement of \cite{MR693972}):

\begin{thm}[Takens '79/Filipkiewicz '82/Banyaga '88/Rubin '89]\label{thm:recognition}
 If $M,N$ are smooth compact, connected manifolds such that $\phi \colon \Diff (M) \rightarrow \Diff(N)$ is a group isomorphism then there exists a diffeomorphism $\phi \colon M \rightarrow N$ such that $\Phi (\gamma) = \phi \circ \gamma \circ \phi^{-1}$.
\end{thm}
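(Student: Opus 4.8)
The plan is to recover the manifold $M$ from the abstract group structure of $\Diff(M)$ by identifying points of $M$ with suitable algebraically-characterised subgroups, and then to show that the abstract isomorphism $\Phi$ respects this identification. First I would fix, for each point $p \in M$, the stabiliser subgroup $\Diff(M)_p \coloneq \{\gamma \in \Diff(M) \mid \gamma(p) = p\}$, and more usefully the subgroups of diffeomorphisms supported away from $p$ or concentrated near $p$. The key idea (going back to Whittaker and made precise by Takens and Filipkiewicz) is that these geometrically-defined subgroups admit a purely group-theoretic characterisation: one can single out, among all subgroups of $\Diff(M)$, exactly those of the form $\Diff_c(U)$ for $U$ an open ball, or the maximal subgroups that play the role of point stabilisers, using only the lattice of subgroups and commutation relations. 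Since $\Phi$ is a group isomorphism, it must carry these distinguished subgroups of $\Diff(M)$ bijectively to the corresponding distinguished subgroups of $\Diff(N)$, and this bijection on subgroups descends to a bijection $\phi \colon M \rightarrow N$.

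The heart of the argument is therefore the \emph{algebraic recognition of points}. I would proceed as follows. One characterises the subgroups $G_U \coloneq \{\gamma \in \Diff(M) \mid \gamma|_{M \setminus U} = \id\}$ of compactly-supported diffeomorphisms of an open set $U$ by a fragmentation-type property together with the behaviour of their commutators: two such subgroups $G_U, G_V$ commute elementwise if and only if $U \cap V = \emptyset$. From this disjointness-detecting commutation relation one reconstructs, in a lattice-theoretic manner, the inclusion and intersection structure of the open sets, and in particular one can characterise those subgroups corresponding to a neighbourhood basis shrinking to a single point. A point $p$ is then recovered as (the data determining) a maximal nested family of such subgroups. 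Because $\Phi$ preserves elementwise commutation and the entire subgroup lattice, it transports this combinatorial data, yielding a well-defined bijection $\phi \colon M \to N$ with $\Phi(G_U) = G_{\phi(U)}$ for all open $U$; in particular $\Phi(\Diff(M)_p) = \Diff(N)_{\phi(p)}$.

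Next I would upgrade the bijection $\phi$ to a diffeomorphism and verify the conjugation formula. That $\phi$ is a homeomorphism follows from the fact that it preserves the inclusion and disjointness structure of open sets, hence the topology. To see that $\phi$ is smooth (and the relation $\Phi(\gamma) = \phi \circ \gamma \circ \phi^{-1}$), I would use that for each $\gamma \in \Diff(M)$ the isomorphism $\Phi$ must intertwine the $\Diff(M)$-action on its own subgroup lattice with the $\Diff(N)$-action: $\Phi$ carries conjugation by $\gamma$ to conjugation by $\Phi(\gamma)$, so $\Phi(\gamma)$ moves $\phi(U)$ to $\phi(\gamma(U))$. This forces $\Phi(\gamma)$ to agree with $\phi \circ \gamma \circ \phi^{-1}$ as a set map on $N$, and since both sides are diffeomorphisms the identity $\Phi(\gamma) = \phi \circ \gamma \circ \phi^{-1}$ holds. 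Smoothness of $\phi$ itself is then extracted by applying this conjugation identity to well-chosen local flows: if $\phi$ conjugates every smooth flow on $M$ to a smooth flow on $N$, a bootstrapping argument (comparing the infinitesimal generators, i.e.\ the induced map on vector fields) shows $\phi$ must be a $C^\infty$-diffeomorphism.

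The main obstacle will be the purely algebraic recognition of points, i.e.\ proving that the subgroups $G_U$ and the point-stabilisers are genuinely characterised by group-theoretic properties alone, with no reference to the manifold. This is exactly the technically demanding content of the Takens--Filipkiewicz--Banyaga--Rubin theorem, and it rests on delicate fragmentation lemmas and the simplicity of the identity component of $\Diff_c(U)$; connectedness and compactness of $M$ enter here to guarantee transitivity and the absence of pathological global obstructions. Rather than reprove these deep results, I would cite \cite{MR693972} for the recognition step and focus the exposition on how the preserved subgroup lattice yields $\phi$ and the conjugation formula, flagging that the smoothness bootstrap is where the differentiable (as opposed to merely topological) structure is recovered.
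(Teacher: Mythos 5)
Your proposal is correct in outline, but it follows a genuinely different route from the paper's in both halves of the argument. For the recognition step, you unpack the algebraic characterisation of points via the support subgroups $G_U$, the disjointness-detecting commutation relations and the lattice of open sets -- this is indeed how the cited sources prove the deep input, but the paper instead black-boxes exactly one consequence of it, namely that any group isomorphism carries point stabilisers $S_{x_0}\Diff(M)$ to point stabilisers $S_{y_0}\Diff(N)$ (citing \cite{MR693972}), and then builds the bijection directly and very concretely as $\omega(x) \coloneq \Phi(h_x)(y_0)$ for any $h_x$ with $h_x(x_0)=x$, using $1$- and $2$-transitivity of the actions (\Cref{lem:ind_homeo}); well-definedness, the conjugation formula and uniqueness all drop out of the stabiliser correspondence with a few lines of group manipulation. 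For the smoothness upgrade, you propose conjugating local flows and comparing infinitesimal generators, which is the classical bootstrap and requires some care (one must know conjugation preserves smooth flows and invoke Bochner--Montgomery type regularity); the paper instead assumes $\Phi$ is a Lie group isomorphism (flagging that continuity of $\Phi$ is automatic but deep, again from \cite{MR693972}) and then gets smoothness of $\omega$ essentially for free from the identity $\omega \circ \ev_{x_0} = \ev_{y_0}\circ \Phi$, since the point evaluations are smooth surjective submersions on the canonical manifold $C^\infty(M,M)$ and submersions admit smooth local sections (Exercise \ref{Ex:submersion}). Your route stays closer to the original finite-dimensional literature and makes the algebraic content visible; the paper's route is shorter and deliberately showcases the infinite-dimensional manifold structure on $\Diff(M)$, which is the theme of the chapter. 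Neither version reproves the hard recognition theorem, and both correctly isolate it as the input to be cited.
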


A full proof of \Cref{thm:recognition} would leads us too far astray, but it is possible to highlight certain aspects of the proof which are of special interest for us with regard to the question if finite-dimensional objects can be recognised from their associated infinite-dimensional objects. Before we begin, let us recall two concepts for diffeomorphism groups:

\begin{defn}
Let $M$ be a smooth and compact manifold. 
\begin{itemize}
 \item For $x_0 \in M$, the \emph{stabiliser}\index{diffeomorphism group!stabiliser of a point} is defined as
$$S_{x_0}\Diff(M) \coloneq \{ h \in \Diff (M) \mid h(x_0)=x_0\}.$$
 \item The group $\Diff(M)$ \emph{acts $n$-transitive}\index{diffeomorphism group!acting $n$-transitive} on the manifold $M$ for $n\in \N$ if for any two sets $\{x_1,\ldots , x_n\}, \{y_1,\ldots,y_n\}$ of non-repeating points in $M$ there is $h \in \Diff (M)$ with $h(x_i)=y_i, i\in \{1,\ldots, n\}$.
\end{itemize}
\end{defn}
We shall show how a group isomorphism of diffeomorphism groups mapping stabilisers to each other induces a diffeomorphism on the base manifold. The first step towards this is \cite[Lemma 1]{Ban88} (also compare \cite{Ryb95}):

\begin{lem}\label{lem:ind_homeo}
 Let $M, N$ be two connected smooth manifolds and $\phi \colon \Diff (M) \rightarrow \Diff (N)$ a group isomorphism such that the following holds 
 \begin{enumerate}
  \item For $K \in \{M,N\}$, $\Diff(K)$ acts $n$-transitively for $n\in \{1,2\}$, and  
  \item for each $x_0 \in M$ there exists $y_0 \in N$ such that $\phi (S_{x_0}\Diff(M)) = S_{y_0}\Diff(N)$. \label{cond:B}
 \end{enumerate}
 Then there is a unique homeomorphism $\omega \colon M \rightarrow N$ such that $\phi (f) = \omega f \omega^{-1}$.
 \end{lem}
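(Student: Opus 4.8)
The plan is to construct the homeomorphism $\omega \colon M \rightarrow N$ directly from the correspondence between stabilisers provided by condition \eqref{cond:B}, and then to verify that $\phi$ is given by conjugation with this $\omega$. The starting observation is that condition \eqref{cond:B} assigns to each $x_0 \in M$ a point $y_0 \in N$ with $\phi(S_{x_0}\Diff(M)) = S_{y_0}\Diff(N)$. First I would show that this $y_0$ is \emph{uniquely} determined by $x_0$: if $\phi$ maps $S_{x_0}\Diff(M)$ onto both $S_{y_0}\Diff(N)$ and $S_{y_0'}\Diff(N)$, then $S_{y_0}\Diff(N) = S_{y_0'}\Diff(N)$, and one checks that distinct points of $N$ have distinct stabilisers (using that $\Diff(N)$ acts $2$-transitively, so given $y_0 \neq y_0'$ there is a diffeomorphism fixing $y_0$ but moving $y_0'$). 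This lets me \emph{define} $\omega(x_0) \coloneq y_0$ as a well-defined map $M \rightarrow N$.

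**Bijectivity and the conjugation formula.**

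Next I would establish that $\omega$ is a bijection. Applying the same argument to the inverse isomorphism $\phi^{-1} \colon \Diff(N) \rightarrow \Diff(M)$ yields a map $\omega' \colon N \rightarrow M$ built from the stabiliser correspondence in the reverse direction; since $\phi$ and $\phi^{-1}$ are mutually inverse group isomorphisms, the relation $\phi(S_{x_0}\Diff(M)) = S_{\omega(x_0)}\Diff(N)$ forces $\omega' = \omega^{-1}$, so $\omega$ is a bijection. The central identity to prove is the conjugation formula $\phi(f) = \omega f \omega^{-1}$ for every $f \in \Diff(M)$. I would derive this from the naturality of stabilisers under the group action: for $f \in \Diff(M)$ one has $f \, S_{x_0}\Diff(M) \, f^{-1} = S_{f(x_0)}\Diff(M)$, the stabiliser of the image point. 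Applying the homomorphism $\phi$ and using \eqref{cond:B} on both sides gives
\begin{align*}
 \phi(f) \, S_{\omega(x_0)}\Diff(N) \, \phi(f)^{-1} = S_{\omega(f(x_0))}\Diff(N).
\end{align*}
On the other hand, conjugating the stabiliser $S_{\omega(x_0)}\Diff(N)$ by $\phi(f)$ directly gives $S_{\phi(f)(\omega(x_0))}\Diff(N)$. Comparing the two expressions and invoking the uniqueness of the point determining a stabiliser yields $\phi(f)(\omega(x_0)) = \omega(f(x_0))$ for all $x_0 \in M$, which is precisely the pointwise statement $\phi(f) \circ \omega = \omega \circ f$, i.e.\ $\phi(f) = \omega f \omega^{-1}$.

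**Continuity and the main obstacle.**

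It remains to show that $\omega$ is a homeomorphism. Since $\omega^{-1}$ arises symmetrically from $\phi^{-1}$, it suffices to prove continuity of $\omega$; continuity of the inverse then follows by the same argument applied to $\phi^{-1}$. The hard part will be extracting topological (continuity) information from what is a priori purely algebraic data about the abstract groups. The key technical step is a characterisation of the topology of $M$ in group-theoretic terms: one shows that the point $x_0$, and hence convergence of points $x_n \to x_0$, can be detected through the nested behaviour of stabiliser subgroups or through the supports of suitable diffeomorphisms that can be made arbitrarily ``small'' near $x_0$. Concretely I would use $2$-transitivity together with the existence of diffeomorphisms supported in arbitrarily small neighbourhoods (available here because $M$ is finite-dimensional, so bump functions and compactly supported flows exist, cf.\ \Cref{smooth:bumpfun}) to show that the conjugation formula forces $\omega$ to respect these neighbourhood systems, giving continuity. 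This is the genuinely delicate part of the argument, and it is where the finite-dimensionality and smooth structure of the manifolds enter essentially; I would lean on the cited source \cite{Ban88} for the detailed verification that the stabiliser data reconstructs the topology, since the bookkeeping with supports and transitivity is lengthy but conceptually routine.
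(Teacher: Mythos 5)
Your construction and the paper's are essentially the same argument in slightly different clothing. The paper fixes one pair $(x_0,y_0)$ from condition (b) and sets $\omega(x)\coloneq \phi(h_x)(y_0)$ for any $h_x\in\Diff(M)$ with $h_x(x_0)=x$, whereas you apply condition (b) at every point; these give the same map, since $\phi(S_x\Diff(M))=\phi(h_x)\,S_{y_0}\Diff(N)\,\phi(h_x)^{-1}=S_{\phi(h_x)(y_0)}\Diff(N)$. Your derivation of $\phi(f)\circ\omega=\omega\circ f$ from the identity $fS_x\Diff(M)f^{-1}=S_{f(x)}\Diff(M)$ is, if anything, cleaner than the paper's corresponding step. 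Deferring continuity is also in line with the paper, which relegates it to an exercise; the concrete mechanism there is that the sets $M\setminus\mathrm{Fix}(f)$, $f\in\Diff(M)$, form a basis of the topology and that $\mathrm{Fix}(\phi(g))=\omega(\mathrm{Fix}(g))$ — a sharper version of your ``supports of small diffeomorphisms'' idea, and worth stating explicitly rather than gesturing at nested stabilisers.

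The one genuine omission is the uniqueness of $\omega$, which the lemma explicitly asserts. The paper proves it by contradiction using $2$-transitivity: if $\tilde\omega$ also induces $\phi$, then $\rho\coloneq\tilde\omega^{-1}\circ\omega$ commutes with every $f\in\Diff(M)$, and a diffeomorphism fixing some $x$ while moving $\rho(x)\neq x$ yields a contradiction. In your setup the argument is even shorter: any homeomorphism $\tilde\omega$ with $\phi(f)=\tilde\omega f\tilde\omega^{-1}$ satisfies $\phi(S_x\Diff(M))=\tilde\omega\, S_x\Diff(M)\,\tilde\omega^{-1}=S_{\tilde\omega(x)}\Diff(N)$, so your observation that distinct points of $N$ have distinct stabilisers forces $\tilde\omega=\omega$. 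Either way, a sentence to this effect is needed to complete the proof.
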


 \begin{proof} \textbf{Step 1:} \emph{Construction of the homeomorphism $\omega$.}
  Fix a pair of points $x_0$ and $y_0$ as in condition \ref{cond:B}. Since $\Diff(M)$ and $\Diff (N)$ are $1$-transitive, we see that 
  \begin{align*}
   \ev_{x_0} \colon \Diff(M) \rightarrow M,\quad h \mapsto h(x_0) \text{ and } \ev_{y_0} \colon \Diff (N) \rightarrow N,\quad g \mapsto g(y_0)
  \end{align*}
 are surjective with $\ev_{x_0}^{-1}(x_0)=S_{x_0} \Diff (M)$ (and similarly for $y_0$). We can thus choose for every $x \in M$ an (in general) non-unique diffeomorphism $h_x \in \Diff(M)$ such that $h_x (x_0)=x$. Now if $\tilde{h}$ is another diffeomorphism such that $\tilde{h}(x_0)=x$ we see that $\tilde{h}^{-1}h_x(x_0) =x_0$, whence $\tilde{h}^{-1}h_x \in S_{x_0} \Diff (M)$. As by assumption, $\phi(S_{x_0} \Diff (M)) = S_{y_0}\Diff (N)$, this implies $\phi (\tilde{h}^{-1}h_x) = \phi(\tilde{h}^{-1})\phi(h_x) \in S_{y_0}\Diff(N)$. Indeed, the value is independent of the choice of $h_x$, whence we obtain a well-defined map 
 $$\omega \colon M \rightarrow N,\quad x \mapsto \ev_{x_0}(\phi(h_x))=\phi(h_x)(y_0)$$
 as this mapping does not depend on the choice of $h_x$. Again by the above, the map $\omega$ is a bijection which is even a homeomorphism (details will be checked in Exercise \ref{Ex:diffrec}).
 
 \textbf{Step 2:} \emph{$\omega$ induces $\phi$.} Let $y \in N$ and $h \in \Diff (N)$ with $h(y_0)=y$ and $x = \phi^{-1}(h)(x_0)$. By construction we have $\omega (x)=y$. If $f \in \Diff (M)$ we pick $g \in \Diff (M)$ with $g(x_0)=f(x)$. Then $f^{-1}(g(x_0))=x=\phi^{-1}(h)(x_0)$ and thus $g^{-1}f\phi^{-1}(h) \in S_{x_0} \Diff(M)$.
 We deduce that $(\phi(g))^{-1}\circ \phi(f) \circ h \in S_{y_0} \Diff (N)$ or in other words $\phi(f)\circ h(y_0)=\phi(g)(y_0)$. Now $h(y_0)=y=\omega(x)$ and $\phi (g)(y_0)=\omega (f(x))$ (as $g(x_0)=f(x)$. We deduce that 
 $$\phi(f)(\omega(x))=\omega (f(x)), \quad \phi(f)\circ \omega = \omega \circ f,$$
 and using that $\omega$ is bijective this yields $\phi(f) = \omega \circ f \circ \omega^{-1}$.
 
 \textbf{Step 3:} \emph{$\omega$ is unique.}
 Assume that there is another homeomorphism $\tilde{\omega} \colon M \rightarrow N$ inducing $\phi$. Then 
 $$\omega \circ f \circ \omega^{-1} = \phi (f) = \tilde{\omega} \circ f \circ \tilde{\omega}, \quad \forall f \in \Diff (M).$$
 In other words we have for $\rho \coloneq \tilde{\omega}^{-1} \circ \omega$ that $\rho \circ f \circ \rho^{-1} = f , \forall f\in \Diff (M)$. Arguing by contradiction we assume that $\rho \neq \id_M$. Then there exists $x \in M$ with $y = \rho(x) \neq x$. Pick $z \in M\setminus\{x,y\}$. Now $\Diff(M)$ is $2$-transitive, whence there is $f \in \Diff (M)$ with $f(x)=x$ and $f(y)=z$. We see that 
 $$\rho \circ f \circ \rho^{-1} (y) = \rho(f(x))=\rho(x)=y\neq z = f(y).$$
 However, this contradicts $\rho \circ f \circ \rho^{-1} =f$, whence we must have $\tilde{\omega} = \omega$.
 \end{proof}

\begin{proof}[{Sketch of the proof of \Cref{thm:recognition}}]
Every group isomorphism $\phi \colon \Diff(M) \rightarrow \Diff (N)$ satisfies condition \ref{cond:B} in the statement of \Cref{lem:ind_homeo} (this is far from trivial, cf.\ \cite{MR693972}),
Moreover, the group of smooth diffeomorphisms acts $n$-transitively for every $n \in \N$ if $\dim M > 1$, see \cite{MR1319441}. Thus we can apply \Cref{lem:ind_homeo} to obtain a homeomorphism $\omega \colon M \rightarrow N$. 

We prove that $\omega$ is a diffeomorphism under the assumption that $\phi \colon \Diff (M) \rightarrow \Diff(N)$ is a Lie group isomorphism. Note that the statement of the theorem is much stronger as, a priori, $\phi$ need not even be continuous. However, in this case one needs a deep result on Lie group actions on manifolds, see \cite[Step 3 on p.173]{MR693972}. A posteriori this implies that any group isomorphism $\Diff(M) \rightarrow \Diff (N)$ is already a Lie group isomorphism.

So let us assume that $\phi$ is a Lie group isomorphism and let us study the composition $\omega \circ \ev_{x_0} \colon \Diff (M) \rightarrow N$. By step 2 of the proof of \Cref{lem:ind_homeo} we can rewrite this as
$$\omega \circ \ev_{x_0} (h) = \ev_{y_0} (\phi (h)) \quad (\text{and conversely } \omega^{-1}\circ \ev_{y_0} = \ev_{x_0} \circ \phi^{-1}).$$
Now Exercise \ref{ex:canmfd} 3.a) shows that $\ev_{x_0}$ and $\ev_{y_0}$ are smooth surjective submersions. Hence the smoothness of the right hand side together with Exercise \ref{Ex:submersion} 5. shows that $\omega$ and $\omega^{-1}$ are smooth.
\end{proof}

We have seen that diffeomorphism groups determine (up to diffeomorphism) their underlying manifold uniquely. In the next section we shall discuss objects, so called Lie groupoids, which can be used to describe many finite-dimensional geometric structures and which admit a similar connection to infinite-dimensional groups.

\begin{Exercise}\label{Ex:diffrec}   \vspace{-\baselineskip}
 \Question We are working in the setting of \Cref{lem:ind_homeo} and let $\phi \colon \Diff(M) \rightarrow \Diff (N)$ be a group isomorphism which maps the stabiliser $S_{x_0}\Diff(M)$ to the stabiliser $S_{y_0}\Diff(N)$. 
 \subQuestion Show that the mapping $\omega \colon M \rightarrow N, x \mapsto \phi(h_x)(y_0)$ is a bijection (where $h_x \in \Diff (M)$ with $h_x(x_0)=x$-
 \subQuestion It is well known that $\Diff(M)$ satisfies the following condition: \emph{For any non-empty connected $U \opn M$ and $x \in U$, there exists $h \in \Diff (M) \setminus \{\id_M \}$ such that $\overline{\{y \in M \mid h(x)\neq x\}} \subseteq U$ and $x$ is contained in the interior of $\overline{\{y \in M \mid h(x)\neq x\}}$}. 
 
 Let $f \in \Diff (M)$ and define $\text{Fix}(f) \coloneq \{x \in M \mid f(x)=x\}$. Show that the set $\mathcal{B} \coloneq \{M\setminus \text{Fix}(f) \mid f \in \Diff (M)\}$ is a basis for the topology on $M$.
 \subQuestion Show that $\text{Fix} (\phi(g)) = \omega (\text{Fix}(g))$ holds and conclude that $\omega$ is a homeomorphism. 
\end{Exercise}

\section{Lie groupoids and their bisections}\label{subsect:groupoids}

In this section we consider a generalisation of Lie groups called Lie groupoids. These objects allow one to treat constructions in differential geometry as differentiable objects. For example quotients of manifolds modulo Lie group actions may fail to be manifolds. However, one can encode them using suitable Lie groupoids. We motivate the construction with the following example of an ill-behaved quotient.

\begin{ex}\label{ex:ill:behaved}
 In \Cref{sect:shapeanalysis}, we studied shape spaces which arise as quotients of manifolds of mappings modulo an action of the diffeomorphism group.
 Namely, we considered the canonical action of the Lie group $\Diff (\SSS^1)$ on the open submanifold $\text{Imm}(\SSS^1,\R^2) \opn C^\infty (\SSS^1,\R^2)$ (see \Cref{ex:Diffgp} and \Cref{lem:immsub_opn}) via precomposition 
 $$p \colon \text{Imm}(\SSS^1,\R^2) \times \Diff (\SSS^1) \rightarrow \text{Imm}(\SSS^1,\R^2), \quad (f,\varphi) \mapsto f \circ \varphi.$$
 The shape space $\mathcal{S} \coloneq \text{Imm}(\SSS^1,\R^2) /\Diff (\SSS^1)$ is then the quotient modulo the action. It inherits a natural topology which however does not turn $\mathcal{S}$ into a manifold. As the action $p$ is not free, the quotient has singular points in which one fails to obtain charts. An example for such a point is the image of the immersion of the circle into $\R^2$ given by the map tracing out the circle in ``double speed''
 $$c \colon \SSS^1 \rightarrow \R^2,\quad e^{i\theta} \mapsto e^{i2\theta}.$$
 Since $c$ traces the circle twice we see that $p(c,\varphi) = c \circ \varphi = c$ for the diffeomorphism $\varphi \colon \SSS^1 \rightarrow \SSS^1, e^{i\theta} \mapsto e^{i(\theta + \pi)}$ and we deduce that the immersion $c$ has a non-trivial stabiliser under the action $\rho$. Thus in particular, the quotient $\mathcal{S}$ is not a manifold (though every singularity is mild in the sense that it is generated by a finite group, i.e.~one obtains an infinite-dimensional orbifold\index{orbifold}, cf.\ \cite[Section 7.3]{Mic20}). As manifolds are the basic setting for differential geometry, one needs to pass to the subset generated by the free immersions (i.e.\, those immersions with trivial stabiliser).These form indeed a dense open subset which is a manifold.  
 \end{ex}

The last example exhibits that quotients of Lie group actions will in general not be manifolds. While in this special example one could still say a lot about the structure of the quotient, it shows that quotient constructions with manifolds are in general very badly behaved (not only in infinite dimensions). Thus we would like to avoid quotients and obtain an object which contains the same information as the quotient: a (Lie) groupoid. There are many literature accounts for the basic theory of (finite-dimensional) Lie groupoids such as \cite{Mackenzie05,Mein17}. While the finite-dimensional examples will be most important for us as they describe geometric constructions, the concept of a Lie groupoid can also be formulated in the infinite-dimensional context (as the notion of submersion makes sense in this setting).  

\begin{defn}[Groupoid]\label{defn:Liegpd}
 Let $G, M$ be two sets with surjective maps $\src,\trg \colon G \rightarrow M$ (\emph{source} and \emph{target}) and a partial multiplication $\mathbf{m} \colon G \times G\supseteq (\src,\trg)^{-1}(M \times M ) \rightarrow G, (a,b) \mapsto ab$ which satisfies:
 \begin{enumerate}
  \item $\src(ab) = \src(b)$ and $\trg(ab)=\trg(a)$, and $(ab)c=a(bc)$
  \item \emph{identity section} $\one \colon M \rightarrow G$ with $\one(\trg(g))g=g$ and $g\one(\src(g))=g$ for all $g \in G$, 
  \item \emph{inverses} $\forall g \in G$ there is $g^{-1} \in G$ with $g^{-1}g=\one(\src(g))$ and $gg^{-1}=\one(\trg(g))$.
 \end{enumerate}
 We call $G$ (or $\mathcal{G} = (G\toto M)$) a \emph{groupoid} and the set $M$ is called the \emph{set of units}. If $G,M$ are smooth manifolds, such that the structure maps $\src,\trg$ are smooth submersions and $\mathbf{m},\one$ and the inversion map $\mathbf{i} \colon G \rightarrow G, \mathbf{i}(g)=g^{-1}$ are smooth maps, we say that $\mathcal{G} = (G\toto M)$ is a \emph{Lie groupoid}.\index{Lie groupoid}
\end{defn}

\begin{tcolorbox}[colback=white,colframe=blue!75!black,title=Standard notation for Lie groupoids]
Throughout this section (if nothing else is said), we write $\cG =(G\toto M)$ for a Lie groupoid with structure maps $\one, \src,\trg, \mathbf{m}, \mathbf{i}$ as in the definition of a groupoid.
\end{tcolorbox}

\begin{rem}
 In this book manifolds are required to be Hausdorff. Thus \Cref{defn:Liegpd} excludes by design Lie groupoids $\mathcal{G} = (G \toto M)$ whose space of arrows $G$ is not Hausdorff. A broad (and important) class of Lie groupoids with non-Hausdorff space of arrows are the so called foliation groupoids, arising from the treatment of foliations in a groupoid framework, see \cite{MaM03}. In principle many of the results presented here are also valid in the non-Hausdorff setting, cf.~e.g.~\cite{Ryb02}.
\end{rem}

A useful mental image to keep in mind is to picture the units of the groupoid as dots connected by arrows which represent the elements of the groupoid which are not units. This is illustrated in \Cref{fig:gpd} below. Obviously two arrows can then only be composed if one of them ends where the other starts. This picture also immediately shows in which way a groupoid generalises the concept of a group: it can possess more units and its elements are not necessarily composable. 

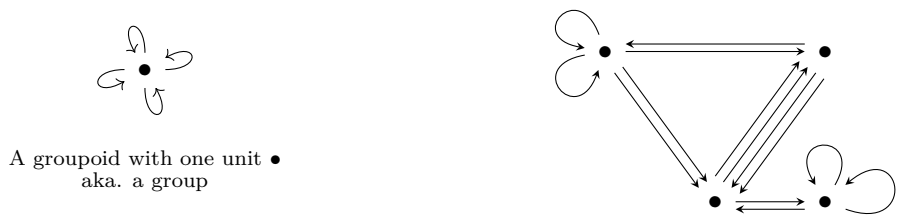
\begin{figure}[h]\label{fig:gpd}
 $$
 \begin{tikzcd}
  \bullet \arrow[out=0,in=30,loop,swap]
  \arrow[out=90,in=120,loop,swap]
  \arrow[out=180,in=210,loop,swap]
  \arrow[out=270,in=300,loop,swap] \\
  \substack{\text{A groupoid with one unit $\bullet$}\\ \text{aka. a group}}
\end{tikzcd} \hspace{3cm} \begin{tikzcd}[arrow style=tikz,>=stealth,row sep=4em]
\bullet \arrow[rr]
  \arrow[out=110,in=170,loop] 
   \arrow[out=190,in=250,loop] 
  \arrow[dr,shift left=.4ex]
  \arrow[dr,shift right=.4ex,swap]
&& \bullet\arrow[ll, shift right=.6ex] 
\arrow[dl,shift left=1.2ex]
  \arrow[dl,shift left=.4ex]&\\
& \bullet \arrow[ur,shift left=.4ex]
  \arrow[ur,shift left=1.2ex]
  \arrow[r]&
  \bullet \arrow[l,shift left =.6ex] 
  \arrow[out=-20,in=40,loop] 
   \arrow[out=60,in=120,loop] 
\end{tikzcd}
 $$ \caption{Picturing groups and groupoids. In the right picture we suppressed all arrows between the two nodes with looping arrows. As arrows tracing a path from one node to the other can always be composed, a picture of all groupoid elements would also need to represent these arrows.}
\end{figure}
\begin{defn}
For $G\toto M$ a groupoid, and $a \in M$ a unit, we consider the fibres $\src^{-1}(a)$ and $\trg^{-1}(a)$ of all arrows starting, respectively ending at $a$. The intersection $G_a \coloneq \src^{-1}(a) \cap \trg^{-1}(a)$ forms a group, called the \emph{vertex group}\index{Lie groupoid!vertex group} at $a$ of the groupoid. 
\end{defn}
 If $(G\toto M)$ is a Lie groupoid $\src^{-1}(a)$ and $\trg^{-1}(a)$ are submanifolds of $G$ by \Cref{cor:preimagesubmfd}. A natural question is then as to whether the vertex groups $G_a$ inherit a Lie group structure. In general (for arbitrary infinite-dimensional Lie groupoids) this question is still open as the finite-dimensional argument \cite[Corollary 1.4.11]{Mackenzie05} establishing the Lie group structure breaks down in infinite dimensions. However, it has recently been proven in \cite{BaGaJaP19} that if $G,M$ are Banach manifolds, then the vertex groups of $G \toto M$ are (Banach) Lie groups
Before we now finally give examples for Lie groupoids, let us first define groups which will play similar r\^{o}les as the diffeomorphism group in the previous section in relation to a manifold.

\begin{defn}
The \emph{group of bisections}\index{group of bisections} $\Bis (\cG)$ of $\cG$ is given as the set of smooth maps
 $\sigma \colon M \rightarrow G$ such that $\src \circ \sigma =\id_M$ and $\trg \circ \sigma \colon M \rightarrow M$ is a diffeomorphism. The group structure is given by the product
 \begin{equation}\label{eq: BISGP1}
  (\sigma \star \tau ) (x) \coloneq \sigma ((\trg \circ \tau)(x))\tau(x)\text{ for }  x \in M.
 \end{equation}
 The identity section $\one \colon M \rightarrow G$ is the neutral element and the inverse of $\sigma$ is
 \begin{equation}\label{eq: BISGP2}
  \sigma^{-1} (x) \coloneq \mathbf{i}( \sigma ((\trg \circ\sigma)^{-1} (x)))\text{ for } x \in M.
 \end{equation}
\end{defn}
 The definition of bisection is not symmetric with respect to source and target. This lack of symmetry can be avoided by defining a bisection as a set (see \cite[p.23]{Mackenzie05}). However, this point of view does not fit well into the function space perspective we take. Thus we shall stick with the asymmetric definition. 
 
Lie groupoids simultaneously generalise Lie groups and (differentiable) equivalence relations. To emphasise this, we recall the following standard examples (cf.~\cite{Mackenzie05}).
\begin{ex}In the following, we denote by $\{\bullet\}$ the one-point manifold. 
\begin{enumerate}
 \item Let $G$ be a Lie group. Then the Lie group structure yields a Lie groupoid $G \toto \{\bullet\}$, i.e.\ a Lie group is a Lie groupoid and conversely every Lie groupoid whose set of units contains only one element is a Lie group.
 In this case $\Bis (G \toto \{\bullet\})=G$
 \item Let $\pi \colon M \rightarrow N$ be a submersion. Then the fibre product of $M$ with itself gives rise to a Lie groupoid $M \times_N M \toto M$. Its source and target maps are given as $\src = \text{pr}_2$ and $\trg = \text{pr}_1$. Multiplication is then given by concatenation $(m,n) \cdot (n,k) \coloneq (m,k)$. Using \Cref{lem:pullbacksubm} it is not hard to see that this construction yields a Lie groupoid encoding the equivalence relation $x \sim y \Leftrightarrow \pi(x)=\pi(y)$. We mention two special cases of this construction:  
 \begin{itemize} 
  \item If $\pi = \id_M$,  we obtain the \emph{unit groupoid}\index{Lie groupoid!unit groupoid} $\mathfrak{u} (M) \coloneq (M \toto M)$ (where all structure mappings are the identity). Clearly $\Bis (\mathfrak{u}(M))=\{\id_M\}$.
  \item The map $\pi \colon M \rightarrow \{\bullet\}$ yields the \emph{pair groupoid},\index{Lie groupoid!pair groupoid} $\mathfrak{p}(M) = (M\times M \toto M)$. We shall see in Exercise \ref{Ex:Groupoid} 2. that $\Bis(\mathfrak{p}(M))\cong \Diff(M)$.
 \end{itemize}
 \end{enumerate}
 \end{ex}
 
 \begin{ex}
  Consider a (left) Lie group action $\alpha \colon G \times M \rightarrow M, (g,m) \mapsto g.m$. We form the \emph{action groupoid}\index{Lie groupoid!action groupoid} $\mathcal{A}_\alpha = (G \times M \toto M)$, where $\src (g,m) \coloneq m$ and $\trg (g,m) \coloneq \alpha(g,m)$. Now multiplication is defined as $(g,hm)\cdot (h,m)\coloneq(gh,m)$.
  The associated bisection group can be identified as 
  \begin{align}\label{bis:act}
   \Bis (\mathcal{A}_\alpha) = \{\sigma \in C^\infty (M,G) \mid m \mapsto \alpha(\sigma (m),m) \text{ is a diffeomorphism}\}. 
  \end{align}
  Now $C^\infty (M,G) \rightarrow C^\infty (M,M), f \mapsto \alpha_* (f \times \id_{M})$ is smooth. Continuity of this map together with $\Diff (M) \opn C^\infty (M,M)$ yields $\Bis (\cA_\alpha) \opn C^\infty (M,G)$. However, the bisections are not am open subgroup of the current group $C^\infty (M,G)$ (see \Cref{sect:currentgp}) as the multiplication is $\sigma \star \tau (m) = \sigma (\tau(m).m).m$ instead of the pointwise product.
  
  It is important to note that an action groupoid contains the same information as the group action and the quotient space. So instead of the ill-behaved quotient of the Lie group action 
  $$p \colon \text{Imm} (\SSS^1, \R^2) \times \Diff (\SSS^1) \rightarrow \text{Imm} (\SSS^1,\R^2),$$
  from \Cref{ex:ill:behaved} one could instead work with the (infinite-dimensional) Lie groupoid $\mathcal{A}_p = (\text{Imm} (\SSS^1, \R^2) \times \Diff (\SSS^1) \toto \text{Imm} (\SSS^1,\R^2))$ and carry out the geometric analysis on the groupoid instead of the quotient shape space. Since this quotient is of interest in shape analysis, Riemannian structures compatible with the Lie groupoid structure would then be needed to replace the metric on the quotient space. A suitable concept for such metrics has been worked out in \cite{dHaF18}.
 \end{ex}
 
 \begin{rem}
  Another interesting perspective on Lie groupoids is that they model symmetries which cannot be described by a global group action. A class of groupoids which fits well to this theme are the orbifold atlas groupoids, \cite{MaP97}. Recall that an \emph{orbifold}\index{orbifold} is a manifold with mild singularities, i.e.\ a Hausdorff space which is locally homeomorphic to a manifold modulo a finite group of diffeomorphisms. The key point here is that the local group acting is allowed to change. As a visual example consider the sphere $\SSS^2$ where the upper half is rotatet around the north pole by a rotation group of order $p$, while the lower half is rotated around the south pole by a rotation group of order $q$ and the results are glued together. Topologically the space is still $\SSS^2$ but the manifold structure breaks down at the two fixed points. It has been shown that these structures are equivalent to certain Lie groupoids. We refrain from discussing the rather technical details and refer instead to \cite{MaP97,MaM03} for a detailed account.
 \end{rem}

 The concept of a Lie groupoid carries over without any changes to infinite-dimensional settings (using submersions as defined in \Cref{sect:subm}). For example the action Lie groupoid modelling \eqref{ex:ill:behaved} is infinite-dimensional. Let us mention further examples: 
 In \cite{BaGaJaP19} Lie groupoids modelled on Banach spaces were studied. These arise naturally in studying certain pseudo-inverses in $C^\ast$-algebras.     
 As a more concrete example of a genuine infinite-dimensional Lie groupoid consider the following:
 
 \begin{ex}
  Let $\cG = (G\toto M)$ be a finite-dimensional Lie groupoid and $K$ a compact manifold. Then the pushforwards of the groupoid operations yield a Lie groupoid, called the \emph{current groupoid}\index{Lie groupoid!current groupoid} $C^\infty (K,\mathcal{G}) \coloneq (C^\infty (K,G) \toto C^\infty (K,M))$. It is an easy exercise (Exercise \ref{Ex:Groupoid} 3.) to verify that the current groupoid is a Lie groupoid. The theory for such groupoids was developed in \cite{AaGaS18}. There it was shown that current groupoids inherit many structural properties from the finite-dimensional target groupoids.
 \end{ex}

 However, it should be noted that from the rich theory available for finite-dimensional Lie groupoids, \cite{Mackenzie05}, virtually nothing is known for Lie groupoids modelled on general locally convex spaces. It is for example unclear as to whether the vertex groups always inherit a Lie group structure from the ambient groupoid.
 
 \begin{ex}\label{ex:gaugegpd}
  Let $(E,p,M,F)$ be a principal $G$-bundle where $E,M$ are Banach manifolds and $G$ is a Banach Lie group (see \Cref{defn:principal_bdl}). Denote by $e\cdot g$ the right-$G$ action on $E$ and consider the diagonal $G$-action $(e,f)\cdot g\coloneq (e\cdot g,f\cdot g)$ on $E\times E$. Then the quotient $Q\coloneq (E\times E) /G$ is a manifold (with the unique structure turning the quotient map into a submersion). We obtain a Lie groupoid, called the \emph{Gauge groupoid}\index{Lie groupoid!gauge groupoid} $\text{Gauge}(E) = (Q \toto M)$, associated to the principal $G$-bundle. The groupoid is given by the following source, target and identity maps (cf.~Exercise \ref{Ex:Groupoid} 5.)
 $$\src ([e,f])\coloneq p(f), \quad \trg ([e,f]) \coloneq p(e), \quad \one (m) \coloneq [u,u]\ (\text{where } u \in p^{-1}(x) \text{ is arbitrary}).$$
 Then we use the difference map $\delta \colon E \times_M E\rightarrow G, [e\cdot g,e] \mapsto g$ to define the multiplication 
 $$\mathbf{m} ([e,f],[\tilde{e},\tilde{f}])\coloneq [e,\tilde{f}\delta(f,\tilde{e})], \text{ where } (f,\tilde{e}) \in E \times_M E.$$
 So we can associate to every principal bundle (of Banach manifolds)\index{principal bundle} a Lie groupoid. Conversely, one can show that the gauge groupoid uniquely identifies the principal bundle (see \cite[Proposition 1.3.5]{Mackenzie05}). Hence a gauge groupoid contains the same information as a principal bundle. 
 The bisection group $\Bis (\text{Gauge}(E))$ is the automorphism group
 $$\text{Aut} (E,p,M) = \{f \in \Diff (E) \mid p \circ f \in \Diff (M), f(v\cdot g)=f(v)\cdot g, \forall v\in E, g\in G\}.$$
 This group is known to be an infinite-dimensional Lie group (cf.~\cite{AaCaMaM89}) which contains the group of gauge transformations from \Cref{defn:gaugegroup} as a (proper) Lie subgroup. 
 \end{ex}

 We have now seen in several examples that Lie groupoids can be used to formulate concepts from finite-dimensional differential geometry such as Lie group actions and principal bundles. Moreover, they come with an associated group, the bisection group, which in some instances can be identified with infinite-dimensional Lie groups. The next proposition shows that this is no accident.
 
 \begin{prop}\label{prop:bis:Lie}
  Assume that for a Lie groupoid $\cG$, $G$ is finite-dimensional and $M$ is compact. Then $\Bis (\cG)$ is a Lie group and $\trg_* \colon \Bis (\cG) \rightarrow \Diff (M), \sigma \mapsto \trg \circ \sigma$ is a Lie group morphism.
 \end{prop}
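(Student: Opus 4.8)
The plan is to realise $\Bis(\cG)$ as an open submanifold of the manifold of smooth sections of the source map sitting inside $C^\infty(M,G)$, and then to verify smoothness of the group operations by reducing them to the composition and pushforward maps already shown to be smooth. Since $G$ is finite-dimensional it admits a local addition, so by \Cref{prop: can:locadd} the space $C^\infty(M,G)$ carries a canonical manifold structure. The source map $\src\colon G\to M$ is a submersion between finite-dimensional manifolds, so the Stacey--Roberts Lemma \Cref{lem:SR} shows that $\src_*\colon C^\infty(M,G)\to C^\infty(M,M)$ is a submersion. Hence the section space $\Gamma_\src\coloneq\src_*^{-1}(\id_M)=\{\sigma\in C^\infty(M,G)\mid \src\circ\sigma=\id_M\}$ is a split submanifold by \Cref{cor:preimagesubmfd}. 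The pushforward $\trg_*\colon C^\infty(M,G)\to C^\infty(M,M)$ is smooth by \Cref{la-reu}, and its restriction to $\Gamma_\src$ remains smooth by \Cref{lem:submfd:initial}. As $M$ is compact, $\Diff(M)\opn C^\infty(M,M)$ by \Cref{cor:diffopn}, so $\Bis(\cG)=(\trg_*|_{\Gamma_\src})^{-1}(\Diff(M))$ is open in $\Gamma_\src$, whence an open submanifold and in particular a manifold in the compact open $C^\infty$-topology.

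Next I would verify smoothness of the group operations. I would rewrite the product as $\sigma\star\tau=\mathbf{m}\circ\big(\sigma\circ(\trg\circ\tau),\tau\big)$, i.e.\ as $\mathbf{m}_\star$ applied to the pair map $x\mapsto(\sigma(\trg(\tau(x))),\tau(x))$, which takes values in the composable arrows $G\times_M G$. The crucial ``twisting'' term $\sigma\circ(\trg\circ\tau)$ is the value of the full composition map $\Comp(\sigma,\trg\circ\tau)$, which is smooth by \Cref{smooth:fullcomp} precisely because $M$ is compact and $\trg\circ\tau\in\Diff(M)$. Since $G\times_M G$ is a split submanifold of $G\times G$ by \Cref{lem:pullbacksubm} and is finite-dimensional (so $C^\infty(M,G\times_M G)$ is again canonical), post-composing the pair map with the smooth pushforward $\mathbf{m}_\star$ (\Cref{la-reu}) yields smoothness of $\star$. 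Inversion $\sigma^{-1}(x)=\mathbf{i}(\sigma((\trg\circ\sigma)^{-1}(x)))$ is handled identically: the assignment $\sigma\mapsto(\trg\circ\sigma)^{-1}$ is smooth as the composite of $\trg_*$ with inversion in the Lie group $\Diff(M)$ (\Cref{ex:Diffgp}); then $\Comp$ produces $\sigma\circ(\trg\circ\sigma)^{-1}$ smoothly, and $\mathbf{i}_\star$ is smooth by \Cref{la-reu}.

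Finally, the groupoid axiom $\trg(ab)=\trg(a)$ gives $\trg\circ(\sigma\star\tau)=(\trg\circ\sigma)\circ(\trg\circ\tau)$, so $\trg_*\colon\Bis(\cG)\to\Diff(M)$ is a group homomorphism into $(\Diff(M),\circ)$; being smooth by the above, it is a Lie group morphism. I expect the main obstacle to be the smoothness of multiplication, specifically controlling the reparametrised section $\sigma\circ(\trg\circ\tau)$ in which one bisection is precomposed with a diffeomorphism depending on the \emph{other} argument. This is exactly where compactness of $M$ is used decisively, via the smoothness of the full composition map $\Comp$ (\Cref{smooth:fullcomp}), and where the argument would break down for non-compact $M$. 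A secondary technical point is to confirm that the pair map lands smoothly in the canonical manifold $C^\infty(M,G\times_M G)$, which I would settle by corestricting the evidently smooth map into $C^\infty(M,G)\times C^\infty(M,G)$ to the submanifold using \Cref{lem:submfd:initial}.
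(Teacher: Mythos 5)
Your proposal is correct and follows essentially the same route as the paper: the manifold structure on $\Bis(\cG)$ via the Stacey--Roberts Lemma applied to $\src_*$ together with openness of $\Diff(M)$, and smoothness of $\star$ and inversion by rewriting them through $\Comp$ and the pushforwards of $\mathbf{m}$ and $\mathbf{i}$. The only (cosmetic) difference is the order of the two reductions — you first cut down to $\src_*^{-1}(\id_M)$ and then pass to the open subset $\trg_*^{-1}(\Diff(M))$, whereas the paper restricts $\src_*$ to $\trg_*^{-1}(\Diff(M))$ before taking the preimage of $\id_M$ — and your extra care in corestricting the pair map into $C^\infty(M,G\times_M G)$ is a detail the paper leaves implicit.
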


 \begin{proof}
  Recall from \Cref{ex:Diffgp} that $\Diff (M)$ is an open submanifold of $C^\infty(M,M)$. Further, the pushforward $\trg_* \colon C^\infty (M,G) \rightarrow C^\infty (M,M), f \mapsto \trg \circ f$ is smooth by \Cref{la-reu}. Since $\src \colon G \rightarrow M$ is a submersion, the Stacey-Roberts Lemma \Cref{lem:SR} asserts that $\src_* \colon C^\infty (M,G) \rightarrow C^\infty (M,M)$ is a submersion, whence the restriction $\theta \coloneq \src_*|_{\trg_*^{-1} (\Diff (M))}$ is a submersion. We deduce that $\theta^{-1} (\id_M) = \Bis (\cG)$ is a submanifold of $C^\infty (M,G)$.
To see that this manifold structure turns $\Bis (\cG)$ into a Lie group, we rewrite the formulae \eqref{eq: BISGP1} and \eqref{eq: BISGP2} as follows:
\begin{align*}
\sigma \star \tau = \mathbf{m}_* \big(\text{Comp} (\sigma, \trg_* (\tau))\big), \tau) \qquad \sigma^{-1} = \one_* \circ \text{Comp} (\sigma , \iota \circ \trg_* (\sigma))
\end{align*}
where $\mathbf{m}$ is groupoid multiplication, $\mathbf{i}$ groupoid inversion and $\iota$ the inversion in the Lie group $\Diff (M)$ (cf.\ \Cref{ex:Diffgp}). Since $M$ is compact, pushforwards and the composition map are smooth by \Cref{smooth:fullcomp}. In conclusion the group operations are smooth as composition of smooth mappings.

As $\Bis (\cG) \subseteq C^\infty (M,G)$ is a submanifold, the smoothness of $\trg_*$ on $\Bis (\cG)$ follows from the smoothness of pushforwards on manifolds of mappings, \Cref{la-reu}. To see that $\trg_*$ is a group morphism, we observe that 
$$(\trg_* (\sigma \star \tau )(x) = \trg (\sigma (\trg (\tau(x)))\tau(x)) = \trg (\sigma (\trg (\tau(x)))) = (\trg_* (\sigma) \circ \trg_* (\tau)) (x). \qedhere$$
 \end{proof}

  \begin{rem}
  The assumptions on $\mathcal{G}$ in the formulation of \Cref{prop:bis:Lie} are superfluous. The same proof (see \cite[Proposition 1.3]{AaS19b})works for any finite-dimensional Lie groupoid (dropping the compactness assumption on $M$), while in \cite[Theorem A]{SaW} a proof for compact $M$ but infinite-dimensional $G$ was given (thus dropping the assumption on $G$). The latter proof is believed to generalise to non-compact $M$ (and infinite-dimensional $G$).
  \end{rem}

 \begin{rem}
  The Lie group structure of the bisections turns $\Bis (\mathfrak{p}(M))\cong \Diff (M)$ into an isomorphism of Lie groups. Note however that \Cref{prop:bis:Lie} cannot replace the classical construction of the Lie group structure on $\Diff (M)$ as we exploited this structure already in the proof of the proposition.
 \end{rem}

 \begin{rem}\label{rem:vbis}
  The kernel of the Lie group morphism $\trg_* \colon \Bis (\cG) \rightarrow \Diff (M)$ is the \emph{group of vertical bisections}\index{group of bisections!vertical bisections}
  $\vBis (\cG)$. Under certain assumptions on the Lie groupoid, it was shown in \cite{Sch20} that the vertical bisections form an infinite-dimensional Lie group.
  
  If $\cG$ is a gauge groupoid of some principal bundle, the vertical bisections coincide with the group of gauge transformations of the bundle. Moreover, in this case, the Lie group structure of the vertical bisections coincides with the Lie group structure on the group of gauge transformations, \Cref{rem:gaugegroup}.
 \end{rem}

 Note that similar to the diffeomorphism group acting via evaluation on the underlying manifold, there is a canonical smooth action of the bisection group on the manifold of arrows of the groupoid.
 \begin{lem}\label{canon:act}
  The evaluation map induces a Lie group action 
  $$\gamma \colon \Bis (\cG) \times G \rightarrow G, \quad (\sigma, g) \mapsto \sigma (\trg (g)) \cdot g$$
 \end{lem}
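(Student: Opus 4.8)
The plan is to verify the two group action axioms from the definition of a Lie group action, and to establish smoothness of the map $\gamma$. First I would check the axioms. For the identity $\one \in \Bis(\cG)$ we have $\gamma(\one, g) = \one(\trg(g)) \cdot g = g$ by the groupoid identity axiom (property (b) in \Cref{defn:Liegpd}), since $\one(\trg(g)) g = g$. For the compatibility with the group product, I would compute $\gamma(\sigma, \gamma(\tau, g))$ and compare it with $\gamma(\sigma \star \tau, g)$. Writing $h \coloneq \tau(\trg(g)) \cdot g$, note that $\trg(h) = \trg(\tau(\trg(g)))$ by the groupoid axiom $\trg(ab) = \trg(a)$. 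Then $\gamma(\sigma, \gamma(\tau,g)) = \sigma(\trg(h)) \cdot (\tau(\trg(g)) \cdot g)$, whereas $\gamma(\sigma \star \tau, g) = (\sigma \star \tau)(\trg(g)) \cdot g$. Using the definition \eqref{eq: BISGP1} of the bisection product, $(\sigma\star\tau)(\trg(g)) = \sigma((\trg\circ\tau)(\trg(g))) \cdot \tau(\trg(g))$; associativity of the partial multiplication (property (a)) then reconciles the two expressions, so $\gamma(\sigma \star \tau, g) = \gamma(\sigma, \gamma(\tau,g))$.

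Next I would address smoothness, which is the substantive part of the statement. By \Cref{prop:bis:Lie}, $\Bis(\cG)$ is a Lie group and is a submanifold of $C^\infty(M,G)$; the map $\trg_* \colon \Bis(\cG) \rightarrow \Diff(M)$ is a smooth Lie group morphism. The map $\gamma$ factors through evaluation and groupoid multiplication as follows: I would write
\begin{equation*}
\gamma(\sigma, g) = \mathbf{m}\big(\ev(\sigma, \trg(g)),\, g\big),
\end{equation*}
where $\ev \colon \Bis(\cG) \times M \rightarrow G$, $(\sigma, x) \mapsto \sigma(x)$ is the evaluation map. Since $\Bis(\cG)$ is a submanifold of $C^\infty(M,G)$ and $M$ is compact, the evaluation $C^\infty(M,G) \times M \rightarrow G$ is smooth (it is a restriction of the smooth evaluation map for canonical manifolds of mappings, \Cref{base-cano}(a), noting that $C^\infty(M,G)$ is canonical by \Cref{ex:canmfd} or \Cref{prop: can:locadd}), and its restriction to the submanifold $\Bis(\cG) \times M$ is smooth by \Cref{lem:submfd:initial}. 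The target map $\trg$ is smooth, and groupoid multiplication $\mathbf{m}$ is smooth by the definition of a Lie groupoid. Thus $\gamma$ is a composite of smooth maps, but one must be careful that the multiplication is only partially defined: I would check that the pair $(\ev(\sigma, \trg(g)), g) = (\sigma(\trg(g)), g)$ indeed lies in the domain $(\src,\trg)^{-1}(M \times M)$ of composability. This holds because $\src(\sigma(\trg(g))) = \trg(g)$ (as $\sigma$ is a bisection, $\src\circ\sigma = \id_M$) matches $\trg(g)$, so the product $\sigma(\trg(g)) \cdot g$ is defined.

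I expect the main obstacle to be the careful bookkeeping of source and target constraints to ensure all compositions land in the domain of the partial multiplication, together with confirming that evaluation restricted to the bisection submanifold is genuinely smooth rather than merely separately smooth. The associativity verification in the second axiom requires tracking several applications of $\trg(ab)=\trg(a)$ and the associativity law, which is routine but error-prone; I would lay out the intermediate arrows explicitly to avoid mistakes. Once smoothness and the two axioms are in place, the conclusion that $\gamma$ is a (left) Lie group action follows immediately from \Cref{defn:liesubgp}'s preceding action definition.
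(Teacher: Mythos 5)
Your proposal is correct and follows essentially the same route as the paper: the paper likewise notes that the action axioms are immediate from the definitions and then establishes smoothness by rewriting $\gamma(\sigma,g)=\mathbf{m}(\ev(\sigma,\trg(g)),g)$ and invoking the smoothness of the evaluation map for canonical manifolds of mappings. Your additional bookkeeping (explicit verification of the axioms, the composability check via $\src\circ\sigma=\id_M$, and the restriction of $\ev$ to the submanifold $\Bis(\cG)$) only spells out details the paper leaves implicit.
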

 \begin{proof}
 Setting in the definition, it is immediately clear that $\gamma$ is a group action, Now rewrite $\gamma$ as $\gamma (\sigma,g) = \mathbf{m} (\ev (\sigma , \trg (g)),g),\quad \sigma \in \Bis (\cG), g \in G.$
 Exploiting the smoothness of the evaluation map, \Cref{base-cano}, we see that the action is smooth.
 \end{proof}

 With the help of the action one can identify the Lie algebra of the diffeomorphism group (cf.~Exercise \ref{Ex:Groupoid} 6.). We will focus here on global aspects of the theory and thus do not go into the details of the construction. However, it should be remarked that the Lie algebra of the bisection group is closely connected to the infinitesimal level of Lie groupoid theory. To make sense of this, let us mention that every Lie groupoid admits an infinitesimal object called a Lie algebroid (its r\^{o}le is similar to that of a Lie algebra associated to a Lie group). A Lie algebroid is a vector bundle together with certain additional structures. Equivalently, a Lie algebroid can be formulated as a special type of Lie algebra, called Lie-Rinehard algebra. In the present case, the Lie-Rinehard algebra turns out to be the Lie algebra of the bisection group. This is left as Exercise \ref{Ex:Groupoid} 7.\ and we refer to \cite{SaW,Mackenzie05} for more information.
 
\begin{Exercise}\label{Ex:Groupoid}   \vspace{-\baselineskip}
 \Question Let $\mathcal{G} = (G\toto M)$ be a Lie groupoid. Show that
 \subQuestion the domain of the multiplication $\mathbf{m}$ is a smooth manifold (whence it makes sense to require it to be smooth in the definition of a Lie groupoid),
 \subQuestion the unit map $\one \colon M \rightarrow G$ is a section of $\src$ and $\trg$ and as a consequence, $\one$ is a smooth embedding (i.e.~an immersion which is a homeomorphism onto its image).
 \subQuestion if only $\src$ is a submersion, so is $\trg$ (vice versa if $\trg$ is a submersion, so is $\src$). Hence the submersion requirements in the definition of Lie groupoid can be weakened.
 \Question Let $M$ be a manifold and $\alpha \colon G \times M \rightarrow M$ a Lie group action.
 \subQuestion Show that the bisection group of the pair groupoid $\mathfrak{p}(M)$ is isomorphic (as a group) to the diffeomorphism group $\Diff (M)$.
 \subQuestion Assume in addition that $M$ is compact. Show that the group isomorphism from a) becomes a Lie group isomorphism where the Lie group structure of $\Bis (\cG)$ is as in \Cref{prop:bis:Lie} and the one on $\Diff (M)$ as in \Cref{ex:Diffgp}. 
 \subQuestion  Work out the bisection group $\Bis (\mathcal{A}_\alpha)$ of the associated action groupoid. When is this group isomorphic to the current group $C^\infty (M,G)$? 
 \Question Use the Stacey-Roberts Lemma (\Cref{lem:SR}) to prove that the current groupoid $C^\infty (K,\cG)$ is a Lie groupoid for a finite-dimensional Lie groupoid $\cG$.
 \Question Let $G \toto M$ be a Lie groupoid such that $G,M$ are manifolds modelled on Banach spaces. Show that the multiplication map $\mathbf{m} \colon G \times_M G \rightarrow G$ is a submersion. Deduce that the multiplication in every Banach Lie group is a submersion.
 \\
 {\footnotesize \textbf{Hint}: Since we are in the Banach setting, a submersion is a mapping which admits smooth local sections (see Exercise \ref{Ex:submersion} 5.)}
 \Question Let $(E,p,M,F)$ be a principal $G$-bundle where $E,M$ are Banach manifolds and $G$ is a Banach Lie group. We check that the associated gauge groupoid is a Lie groupoid. Show that 
 \subQuestion one can construct a manifold structure on the quotient $(E\times E)/G$ turning the quotient map $E \times E \rightarrow (E\times E) /G$ into a submersion.\\ 
 {\footnotesize \textbf{Hint}: Cover $M$ by domains of sections of the submersion $p$ and use the sections to construct charts for the manifold. For the submersion use Exercise \ref{Ex:Groupoid} 4.}
 \subQuestion the structure maps $\src,\trg,\one, \mathbf{m}$ are smooth and that $\src, \trg$ are submersion. Conclude that the gauge groupoid is a Lie groupoid.\\
  {\footnotesize \textbf{Hint}: Assume $p$ is a surjective submersion and $q$ a smooth map between Banach manifolds. Then if $q\circ p$ is a submersion so is $q$, \cite[Proposition 4.1.5]{MR1173211}.}
  \Question Let $\mathcal{G} = (G\toto M)$ be a Lie groupoid. Show that by applying the tangent functor $T$ to every manifold and structure map of $\mathcal{G}$, one obtains a Lie groupoid $T\mathcal{G}$. One calls $T\mathcal{G}$ the \emph{tangent (prolongation) groupoid}\index{Lie groupoid!tangent groupoid} of $\mathcal{G}$. 
 \Question In this exercise we identify the Lie algebra of the bisections $\Bis (\cG)$ as a Lie algebra of sections of a certain vector bundle. Note that this is precisely the algebra of sections induced by the Lie algebroid $\Lf (\cG)$ associated to $\cG$. The Lie algebroid is the infinitesimal object associated to $\cG$ (similar to the Lie algebra associated to a Lie group). As we have no need for a discussion of Lie algebroids, we will not discuss it but refer instead to  \cite[3.5]{Mackenzie05}.
 \subQuestion Exploit that $\src$ is a submersion, and use submersion charts to show that 
 $$T^{\src} G \coloneq \bigcup_{g \in G} T_g\src^{-1} (\src{g}) = \bigcup_{g \in G} \text{ker} T_g \src.$$
 is a submanifold of $TG$ and even a subvector bundle of $TG$.
 \subQuestion Use Exercise \ref{Ex:submersion} 3.~to show that 
 $$T_{\one} \Bis (\cG) = \text{ker} T_{\one} \src_* = \{f \in C^\infty (M,TG) \mid f(m) \in T_{\one (m)} G \text{ for all } m\in M\}.$$
 and deduce that $T_{\one} \Bis (\cG) \cong \Gamma (\one^{*}T^{\src} G)$ as vector spaces (where the right hand denotes sections of the pullback bundle $T^{\src} G$.
 \subQuestion Observe that $\mathbf{m} (g,\cdot) \colon \src^{-1} (\src{g}) \rightarrow \src (g), h \mapsto gh$ is smooth for every $g\in G$ and show that every $X \in \Gamma(\one^* T^{\src}G)$ extends to a vector field on $G$ via the formula
 $$ \overrightarrow{X} (g) \coloneq T (\mathbf{m}(g,\cdot))(X(\trg (g))).$$
 Prove that $X= \overrightarrow{X} \circ \one$, whence the linear map $\Gamma(\one^* T^{\src}G) \rightarrow \mathcal{V}(G), X \mapsto \overrightarrow{X}$ must be injective and we can define a Lie bracket on $\Gamma(\one^* T^{\src} G)$ via $\LB[X,Y] \coloneq \LB[\overrightarrow{X},\overrightarrow{Y}] \circ \one$ (where the Lie bracket on the right is the Lie bracket of vector fields).  
 \subQuestion Adapt the proof to bisection groups, i.e.~show that if $X^R$ is a right-invariant vector field on $\Bis (\cG)$ then the vector field $X^R \times 0_G \in \mathcal{V} (\Bis (\cG) \times G)$ is $\gamma$-related to $\overrightarrow{X(\one)}$. Deduce from this that the Lie bracket can be identified with the negative of the bracket form c).
 \Question Let $\cG_1,\cG_2$ be Lie groupoids a \emph{morphism of Lie groupoids} is a pair of smooth maps $F \colon G_1 \rightarrow G_2$ and $f \colon M_1 \rightarrow M_2$ such that $\src_2 \circ F = f\circ \src_1$, $\trg_2 \circ F = f \circ \trg_1$ and $F(gh) = F(g)F(h)$ (whenever, $g,h \in G_1$ are composable). If $f = \id_{M_1}$, we say $F$ is a \emph{morphism over the identity}. Show that every morphism $F$ over the identity induces a Lie group morphism $F_\ast \colon \Bis (\cG_1) \rightarrow \Bis (\cG_2), \sigma \mapsto F\circ \sigma$.\\ 
 {\footnotesize \textbf{Remark:} So far avoided Lie groupoid morphisms as Lie groupoids and general morphisms exhibit a more complicated interplay as they form a $2$-category (thus the moniker ``higher geometry``). We will not delve into the details of this construction.}  \end{Exercise}

 \setboolean{firstanswerofthechapter}{true}
 \begin{Answer}[number={\ref{Ex:Groupoid} 3.}]
  \emph{We show that for a Banach Lie groupoid $G \toto M$ the multiplication map $\mathbf{m} \colon G \times_M G \rightarrow G$ is a submersion.}\\
  \textbf{Remark}: I do not know whether this proof (which was shared with me by D.M.~Roberts (Adelaide))or even the corresponding statement generalises beyond the Banach setting. \\[.5em]
  
  We exploit the following characterisation of submersions between Banach manifolds: The map $\mathbf{m}$ is a submersion if and only if it admits local sections, i.e.~for every $(g_1,g_2) \in G \times_M G$ exists a smooth map $\varphi \colon U \rightarrow G \times_M G$ such that $g\coloneq \mathbf{m}(g_1,g_2) \in U$, $\varphi (g)=(g_1,g_2)$ and $\mathbf{m} \circ \varphi = \id_{U}$ (cf.~\cite[Proposition 4.1.13]{MR1173211}). Thus we fix $g_1,g_2$ and $g$ as above and write $\varphi = (\varphi_1,\varphi_2) \in C^{\infty}(G,G \times G)$ (exploiting that $G\times_M G$ is a submanifold of the cartesian product, whence it suffices to obtain two smooth maps with values in $G$ such that their combination takes values in the fibre-product). Now exploiting the groupoid structure we observe that $g_2 = g_1^{-1} \cdot g$. Ignoring for a moment that multiplication is not globally defined, we see that for any smooth map $\varphi_1$ with $\varphi_1 (g)=g_1$, the smoothness of the groupoid operations yields a smooth map $\varphi_2$ via 
  \begin{align}\label{eq:secondcomp}
   \varphi_2 (x)= \varphi_1 (x)^{-1}\cdot x = \mathbf{m}(\mathbf{i}(\varphi_1 (x)),x).
  \end{align}
  Setting in $x=g$ we immediately see that $\varphi_2(g)=g_2$. However, to make the formula \eqref{eq:secondcomp} well defined we need to require that $\src (\varphi_1 (x)^{-1}) = \trg (x)$. Since inversion intertwines source and target this yields $\trg \circ \varphi_1 = \trg$. We deduce that it suffices to construct a certain smooth map $\varphi_1 \colon U \rightarrow G$ on some neighborhood $U$ of $g$ with $\trg \circ \varphi_1 = \trg$ and $\varphi_1 (g)=g_1$.
  
  Set $y \coloneq \trg (g)$ and observe that $\trg (g)=\trg (g_1g_2) =\trg(g_1)$. Since $\trg$ is a submersion, there is an open neighborhood $O_y$ of $y$ together with a smooth section $\sigma \colon O_y \rightarrow G$ such that $\sigma (y) = g_1$ and $\trg \circ \sigma =\id_{O_y}$. We can now choose an open neighborhood $g \in U$ such that $\trg (U) \subseteq O_y$ and define $\varphi_1 \colon U \rightarrow G, x \mapsto \sigma \circ \trg (x)$. Then $\varphi_1$ is smooth and satisfies $\varphi_1 (g)=g_1$ and $\trg \circ \varphi_1 = \trg$. We conclude that $\mathbf{m}$ admits local sections and is thus a submersion.
 \end{Answer}
 \setboolean{firstanswerofthechapter}{false}
 
 \begin{Answer}[number={\ref{Ex:Groupoid} 4.}] 
  \emph{We check that the Gauge groupoid associated to a principal $G$-bundle $(E,p,M,F)$ is a Lie groupoid if $E,M$ and $G$ are Banach manifolds.}\\[.5em]
  
  a) We begin with the construction of charts for $(E\times E)/G$. Let $(U_{i})_{i\in I}$ be an open cover of $M$ such that there exist smooth local sections $\sigma_{i}\colon U_{i} \rightarrow  E$ of $p$. This yields an atlas $(U_i,\kappa_i)_{i \in I}$ of local trivialisations of the bundle $p \colon E \rightarrow M$ which are given by
 \begin{equation*}
  \kappa_i \colon p^{-1} (U_i) \rightarrow U_i \times G,\quad x \mapsto (p (x), \mathrm{d}(\sigma_{i}(p(x)), x))
 \end{equation*}
 with $\mathrm{d} \colon E \times_M E \rightarrow G$, $(x,y) \mapsto x^{-1}\cdot y$. Here we use $x^{-1}\cdot y$ as the suggestive notation for the element $g\in G$ that satisfies $x\cdot g=y$.
 
 The local trivialisations commute with the right $G$-action on $E$ since
 \begin{equation*}
  \kappa_i (x\cdot g) = (p(x \cdot g), \mathrm{d} (\sigma_{i}(p(x\cdot g)),x\cdot g)=(p(x), \mathrm{d} (\sigma_{i}(p(x)),x)\cdot g.
 \end{equation*}
 In particular, the trivialisations descent to manifold charts for the arrow manifold of the gauge groupoid:
 \begin{align*}
  K_{ij} \colon (p^{-1}(U_i) \times p^{-1}(U_j))/G &\rightarrow U_i
  \times U_j \times G ,\\ [ x_1,x_2] &\mapsto (p (x_1), p (x_2) ,
  \mathrm{d}( \sigma_{i}(p( x_{1})),x_{1}) \mathrm{d} (\sigma_{j}(p(x_{2})),x_{2})^{-1}).
 \end{align*}
 To see that the projection is a submersion, it suffices to prove this locally in charts. In the trivialisations and the charts, the quotient becomes the map
 $$(U_i \times G) \times (U_j \times G) \rightarrow U_i\times U_j \times G , \quad \left((u_i,g_i), (u_j,g_j)\right) \mapsto (u_i,u_j,g_ig_j^{-1}).$$
 While we have the identity in the $u$-components, the $G$ component is the composition of inversion in the second component with the Lie group multiplication. Inversion in a Lie group is a diffeomorphism, while the multiplication in the Banach Lie group $G$ is a submersion by Exercise \ref{Ex:Groupoid} 4. Now the composition of submersions is a submersion by Exercise \ref{Ex:submersion} 1. and by Exercise \ref{Ex:submersion} 2., the quotient map is a submersion.
 
 b) Smoothness of the mappings follow from Exercise \ref{Ex:submersion} 6. by composing them with the submersion $q\colon E\times E \rightarrow (E\times E)/G$ and observing that the resulting mappings are smooth on $E\times E$. In particular, $\src \circ q = p$ is a surjective submersion, whence by \cite[Proposition 4.1.5]{MR1173211}, $\src$ is a surjective submersion.
 \end{Answer}

\section{(Re-)construction of a Lie groupoid from its bisections}

We shall now consider whether a Lie groupoid is determined by its group of bisections or can even be reconstructed from this group.
For the reconstruction, we consider again the Lie group action of the bisections on the manifold of arrows. This action turns out to be a submersion.

\begin{prop}\label{prop:canon:subm}
 Let $\cG$ be a finite-dimensional Lie groupoid. Then the following mappings are submersions:
 \begin{enumerate}
  \item $\gamma_m \colon \Bis (\cG) \rightarrow \src^{-1}(m), \quad \sigma \mapsto \sigma (m), \quad \forall m \in M$,
    \item $\ev \colon \Bis (\cG) \times M \rightarrow G, \quad \sigma \mapsto \sigma (m)$,
  \item $\gamma \colon \Bis (\cG) \times G \rightarrow G,\quad (\sigma, g) \mapsto \sigma (\trg (g))\cdot g$.
 \end{enumerate}
 \end{prop}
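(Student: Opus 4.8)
The plan is to exploit that $\cG$ is finite-dimensional, so that all three targets---the fibre $\src^{-1}(m)$ in (a) and the arrow manifold $G$ in (b) and (c)---are \emph{finite-dimensional} manifolds. By the relations collected in \Cref{rel:subm:imm}, for a map into a finite-dimensional (hence Banach) manifold, infinitesimal surjectivity already implies the \naive\ submersion property and thence the submersion property. Thus the whole problem reduces to showing that each of the three maps is infinitesimally surjective, i.e.\ that $T_x\phi$ is surjective at \emph{every} point. This is the decisive simplification: the usual infinite-dimensional gap between ``infinitesimally surjective'' and ``submersion'' disappears because the codomain is finite-dimensional.

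For the computations I first record the tangent space of the bisection group. Recall from (the proof of) \Cref{prop:bis:Lie} that $\Bis(\cG)$ is, near any point $\sigma$, the level set $\src_*^{-1}(\id_M)$ inside the open subset $\trg_*^{-1}(\Diff(M))$ of $C^\infty(M,G)$, and that $\src_*$ is a submersion by the Stacey--Roberts Lemma \Cref{lem:SR}. Hence, using the description of the tangent space of a submersion fibre (\Cref{Ex:submersion} 3) together with the identification $TC^\infty(M,G)\cong C^\infty(M,TG)$ (\Cref{prop: can:locadd}) under which $T\src_*=(T\src)_*$ (\Cref{thetamp}), I obtain $T_\sigma\Bis(\cG)=\ker T_\sigma\src_*=\Gamma(\sigma^\ast T^\src G)$, the space of sections $X\colon M\to TG$ with $X(p)\in\ker T_{\sigma(p)}\src$ for all $p$; at $\sigma=\one$ this is precisely the identification of \Cref{Ex:Groupoid} 7.

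Now for statement (a): since $\gamma_m$ is the restriction to $\Bis(\cG)$ of the point evaluation $C^\infty(M,G)\to G$, its differential is $T_\sigma\gamma_m(X)=X(m)$, landing in $T_{\sigma(m)}\src^{-1}(m)=\ker T_{\sigma(m)}\src=(\sigma^\ast T^\src G)_m$. The claim is then exactly that evaluation-at-$m$ of sections of the finite-rank vector bundle $\sigma^\ast T^\src G$ over the (paracompact, finite-dimensional) manifold $M$ is surjective; this follows by prescribing the desired value in a local trivialisation and multiplying by a bump function. For (b) I would compute the differential of $\ev$ by the rule on partial differentials as $T_{(\sigma,m)}\ev(X,w)=X(m)+T_m\sigma(w)$. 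By (a) the first summand already sweeps out all of $\ker T_{\sigma(m)}\src$, while $T\src\circ T_m\sigma=T_m(\src\circ\sigma)=\id$ gives $T\src(T_m\sigma(w))=w$ running through all of $T_mM$; writing any $u\in T_{\sigma(m)}G$ as $u=\bigl(u-T_m\sigma(T\src\,u)\bigr)+T_m\sigma(T\src\,u)$ with the first bracket in $\ker T\src$ then yields surjectivity.

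Finally (c) is the easiest. Since $\gamma$ is a Lie group \emph{action} (\Cref{canon:act}), for each fixed $\sigma\in\Bis(\cG)$ the partial map $\gamma(\sigma,\cdot)\colon G\to G$ is a diffeomorphism with smooth inverse $\gamma(\sigma^{-1},\cdot)$ (indeed $\gamma(\sigma^{-1},\gamma(\sigma,g))=\gamma(\sigma^{-1}\star\sigma,g)=\gamma(\one,g)=g$). Hence the partial derivative $v\mapsto T_{(\sigma,g)}\gamma(0,v)=T_g\gamma(\sigma,\cdot)(v)$ is already a linear isomorphism onto $T_{\gamma(\sigma,g)}G$, so $T_{(\sigma,g)}\gamma$ is surjective at every point. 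Combining the three infinitesimal-surjectivity statements with the reduction of the first paragraph finishes the proof. The only step requiring genuine care is the bundle-theoretic identification of $T_\sigma\Bis(\cG)$ and the surjectivity of section evaluation used in (a); everything else is a one-line differentiation, so I expect no deeper obstacle.
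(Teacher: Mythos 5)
Your proof is correct, and for part (a) it coincides with the paper's own argument: reduce to infinitesimal surjectivity via \Cref{rel:subm:imm} (legitimate because the codomain $\src^{-1}(m)$ is finite-dimensional), identify $T_\sigma\Bis(\cG)=\ker T_\sigma(\src_*)\cong\Gamma(\sigma^*T^{\src}G)$ using the Stacey--Roberts Lemma and Exercise \ref{Ex:submersion} 3, and realise any prescribed value of the finite-rank bundle by a section built from bump functions. For (b) and (c) you genuinely diverge from the paper. The paper does \emph{not} prove (b): it declares the argument ``quite involved'', warns precisely that $T\src^{-1}(m)$ is properly contained in $TG$, and defers to \cite[Proposition 2.8 and Corollary 2.10]{SaW16}. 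Your splitting $u=\bigl(u-T_m\sigma(T_{\sigma(m)}\src\, u)\bigr)+T_m\sigma(T_{\sigma(m)}\src\, u)$, combined with part (a), the derivative formula of Exercise \ref{ex:canmfd} 4 and $\src\circ\sigma=\id_M$, addresses exactly the difficulty the paper flags; since $G$ is finite-dimensional this does yield a valid shortcut, but be aware that your entire reduction hinges on the implication ``infinitesimally surjective $\Rightarrow$ \naive\ submersion $\Rightarrow$ submersion'' from \Cref{rel:subm:imm}, which is the deep input (Gl\"ockner's inverse function theorem with parameters); the cited reference has to work harder because it constructs actual submersion charts rather than invoking this implication. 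For (c) the paper factors $\gamma(\sigma,g)=\mathbf{m}(\ev(\sigma,\trg(g)),g)$ and combines (b) with the submersion properties of $\trg$ and $\mathbf{m}$ (Exercise \ref{Ex:Groupoid} 4), so its (c) depends on (b); your observation that $\gamma(\sigma,\cdot)$ is a diffeomorphism of $G$ (being the action of a group element, with inverse $\gamma(\sigma^{-1},\cdot)$) makes the partial derivative in the $G$-direction already an isomorphism, which is simpler and renders (c) independent of (b). Both routes are sound; yours buys self-containedness for (b) and economy for (c), while the paper's buys validity in settings where the finite-dimensionality reduction is unavailable.
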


 \begin{proof}
  (a) In Exercise \ref{ex:canmfd} 4. we saw that the tangent of $\ev_m \colon C^\infty (M,G) \rightarrow G, f\mapsto f(m)$ is given by $T_f\ev_m \colon C^\infty_f (M,TG) \rightarrow T_{f(m)}G, h \mapsto h(m)$. By assumption $\src^{-1}(m) \subseteq G$ is a finite-dimensional manifold, whence \Cref{rel:subm:imm} shows that it suffices to prove that for each $\sigma \in \Bis (\cG)$ the tangent map $T_\sigma \Bis (\cG) \rightarrow T_{\sigma (m)} \src^{-1} (m)$ is surjective.\footnote{While the above formula immediately shows that $\ev_m$ is a submersion, we cannot directly conclude this for $\gamma_m$ without identifying the subspace of $C^\infty_\sigma (M,G)$ identified with $T_\sigma \Bis (\cG)$}. By construction $(\src_*)^{-1}(\id_M) \cap (\trg_*)^{-1}(\Diff (M)) = \Bis (\cG) \subseteq C^\infty (M,G)$ and since $\src_*$ is a submersion we have with Exercise \ref{Ex:submersion} 3.~ and arguments as in Exercise \ref{Ex:Groupoid} 6.b) that 
  $$T_\sigma \Bis (\cG) = \text{ker} T_\sigma (\src_*)  = \text{ker} (T\src)_*|_{T_\sigma C^\infty (M,G)}\cong \Gamma (\sigma^* T^{\src} G) \subseteq C^\infty_{\sigma} (M,G).$$
  This shows that $T_\sigma \gamma_m = T_\sigma \ev_m|_{T_\sigma \Bis_{\cG}}$ is surjective as every element of $T_{\sigma(m)}^{\src} G$ can be written as $X(\sigma (m))$ for some $X \in \Gamma (T^{\src}G) \cong \Gamma (\sigma^*T^{\src}G)$ (see Exercise \ref{Ex:quotientgpd} 2.). We deduce that $\gamma_m$ is a submersion.
  
  b) The proof turns out to be quite involved and involves a reduction step to the case already dealt with in a). Note that this is not obvious as $T\src^{-1}(m)$ is in general properly contained in $TG$. For similar reasons we cannot deduce the submersion property of $\ev$ from the submersion property of $\ev \colon C^\infty (M,G) \times M \rightarrow G$, For the proof and the necessary details we refer to \cite[Proposition 2.8 and Corollary 2.10]{SaW16}.  
  
  c) Note first that we can write $\gamma (\sigma , g) = \mathbf{m} (\ev(\sigma , \trg (g)),g)$
  and both $\trg $ and $\mathbf{m}$ are submersions (cf.~Exercise \ref{Ex:Groupoid} 4.). Hence $\gamma$ is a submersion as $\ev \colon \Bis (\cG) \times M \rightarrow G$ is a submersion by b).
 \end{proof}

We have now a submersion $\ev \colon \Bis (\cG) \times M \rightarrow G$ which is a Lie group action of the bisections on the arrow manifold of the groupoid we constructed the bisections from. 
Using that the unit embeds $M$ as a submanifold of $G$ and $\trg_*(\Bis (\cG)) \subseteq \Diff (M)$. This observation allows us to prolong the action of the bisections to an action on $M$:
$$A \colon \Bis (\cG) \times M \rightarrow M ,\quad (\sigma , m ) \mapsto \trg (\sigma (m)).$$
We will now show that the action groupoid constructed from this action determines (under certain conditions) the Lie groupoid $\cG$.

\begin{defn}
 Let $\cG = (G\toto M)$ be a Lie groupoid and $A \colon \Bis (\cG) \times M \rightarrow M$ the canonical Lie group action of the bisections on the units.
 Then we call the action groupoid $\Bis (\cG) \ltimes M \coloneq (\Bis (\cG) \times M \toto M)$ the \emph{bisection action groupoid}.\index{Lie groupoid!bisection action groupoid} 
 Furthermore, the map $\ev \colon \Bis (\cG) \times M \rightarrow G$ induces a Lie groupoid morphism $\mathbf{ev}$ over the identity\footnote{A Lie groupoid morphism over the identity is a smooth map $f \colon G \rightarrow G*$ (for groupoids $G \toto M$ and $G' \toto M$) which relates the structural maps of the groupoids, i.e.~$\src' \circ f = \src$, $\trg \circ f = \trg$, $\mathbf{m}'\circ (f,f) = f \circ \mathbf{m}$ and $\mathbf{i}'\circ f = f \circ \mathbf{i}$. We verify the conditions for $\ev$ in Exercise \ref{Ex:quotientgpd} 2.} 
\end{defn}

The question is now of course, whether the bisection action groupoid completely determines the Lie groupoid from which the bisections were constructed. 
In general this will not be the case as there will not be enough bisections to obtain all elements in the arrow manifold of a Lie groupoid.

\begin{ex}\label{ex:notenough}
Let $M$ be a compact manifold with two connected components $M = M_1 \sqcup M_2$ such that $M_1$ and $M_2$ are \textbf{not} diffeomorphic. We have seen in the last chapter that for the pair groupoid $\mathfrak{p}(M) = (M \times M \toto M)$ the bisection group can be identified as $\Bis (\mathfrak{p}(M))\cong \Diff (M)$. Consider now an element $(m_1,m_2) \in M$ such that $m_1 \in M_1$ and $M_2$. If there were a bisection $\sigma$ such that $\sigma (m_2) = (m_1,m_2)$ this would imply that there must be a diffeomorphism $\phi \colon M \rightarrow M$ such that $\phi (m_2) = \phi (m_1)$. As this entails $\phi (M_2) = M_1$ (since diffeomorphisms permute the connected components of a manifold), this is clearly impossible. We conclude that for every pair such that the elements come from different components, there cannot be a bisection through this element of the Lie groupoid.  
\end{ex}

So if there should be any hope that the bisection group identifies the Lie groupoid from which they were derived, we need to require that there are enough bisections in the following sense:
\begin{defn}
 A Lie groupoid $\cG = (G\toto M)$ is said to have \emph{enough bisections}\index{Lie groupoid!enough bisections} if for every $g \in G$ there exists a bisection $\sigma_g \in \Bis (\cG)$ with $\sigma_g (\src(g))=g$.
\end{defn}

Fortunately, sufficient conditions for a Lie groupoid to possess enough bisections are known. Indeed it turns out that the deficiency pointed out in \Cref{ex:notenough} is caused by a lack of connectedness.
This can be remedied by requiring that the groupoid $\cG$ is \emph{source connected}, i.e.~for every $m \in M$ the source fibre $\src^{-1}(m)$ is connected.

\begin{rem}
 If $\cG$ is the pair groupoid of a manifold, $\Bis (\cG) \cong \Diff (M)$, the groupoid is source connected if and only if $M$ is connected. Our next result will entail that $\Diff (M)$ acts transitively on $M$. We remark that connectedness of $M$ was required in the statement of the Takens/Filipkiewicz result in \Cref{sect:determination}. Note that transitivity of the group action was an essential ingredient in the proof of the result.
\end{rem}

\begin{lem}
 If $\cG$ is source connected, then $\cG$ has enough bisections.
\end{lem}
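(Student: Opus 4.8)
The plan is to reformulate the statement in terms of the set
\[
\widehat{O} \coloneq \{g \in G \mid \exists\, \sigma \in \Bis(\cG) \text{ with } \sigma(\src(g)) = g\} = \ev(\Bis(\cG)\times M)
\]
of all arrows lying on some global bisection, and to prove $\widehat{O} = G$ by a connectedness argument carried out separately in each source fibre $\src^{-1}(m)$. The first observation is that $\widehat O$ is open: by \Cref{prop:canon:subm}(b) the evaluation $\ev \colon \Bis(\cG)\times M \to G$ is a submersion, hence an open map (Exercise~\ref{Ex:submersion} 5.), and $\widehat O$ is precisely its image. Notably, this means no explicit construction of local bisections is required; openness comes for free from the submersion property established earlier.

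Next I would verify that $\widehat O$ is a \emph{wide subgroupoid} of $\cG$, i.e.\ it contains all units and is closed under inversion and under composable multiplication. All units lie in $\widehat O$ because the identity section $\one$ is itself a bisection with $\one(m)=\one(m)$. If $g = \sigma(\src g)$ and $g' = \tau(\src g')$ with $\src g = \trg g'$, then the product formula \eqref{eq: BISGP1} gives $(\sigma \star \tau)(\src g') = \sigma(\trg(g'))\,g' = \sigma(\src g)\,g' = g\,g'$, while $\src(gg')=\src g'$, so $gg' \in \widehat O$. A short computation with \eqref{eq: BISGP2}, using that $\trg\circ\sigma$ is a diffeomorphism and $\trg(\sigma(\src g))=\trg g$, shows $\sigma^{-1}(\src(g^{-1})) = \mathbf{i}(g) = g^{-1}$, so $\widehat O$ is also closed under inversion.

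Finally, fix $m$ and set $F_m \coloneq \widehat O \cap \src^{-1}(m)$; this is open in the connected manifold $\src^{-1}(m)$ (a submanifold of $G$ by \Cref{cor:preimagesubmfd}) and contains $\one(m)$. The crux of the argument---and the step I expect to be the main obstacle---is to show that $F_m$ is also closed, for then source-connectedness forces $F_m = \src^{-1}(m)$, and letting $m$ range over $M$ yields $\widehat O = G$, which is exactly the assertion. To prove closedness I would show that the complement is open: suppose $g\in\src^{-1}(m)\setminus\widehat O$ were the limit of a sequence $k_j\in F_m$. Since inversion $\mathbf{i}$ and multiplication $\mathbf{m}$ are continuous and $\src(g)=m=\trg(k_j^{-1})$ for all $j$, the composable products satisfy $g\,k_j^{-1}\to g\,g^{-1}=\one(\trg g)$. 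As $\one(\trg g)\in\widehat O$ and $\widehat O$ is open, we get $g\,k_j^{-1}\in\widehat O$ for all large $j$; but then $g = (g\,k_j^{-1})\,k_j$ is a composable product of two elements of the subgroupoid $\widehat O$, whence $g\in\widehat O$, a contradiction. Thus every $g$ outside $\widehat O$ has a fibre-neighbourhood disjoint from $F_m$, so $F_m$ is clopen, and the proof concludes. The delicate point here is the interplay between the continuity of the groupoid operations and the algebraic identity $g=(g\,k_j^{-1})k_j$, which together convert "openness of $\widehat O$ near units" into "closedness of $F_m$".
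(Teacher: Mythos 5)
Your proof is correct and follows essentially the same route as the paper: both identify the image $\ev(\Bis(\cG)\times M)$ as an open wide subgroupoid of $\cG$ and show that its trace on each source fibre is clopen, so that source-connectedness forces it to be all of $\src^{-1}(m)$. The only (cosmetic) difference is the closedness step, where the paper exhibits the complement of $\cU_m$ directly as a union of right-translates $\cU_{\trg(g)}\cdot g$ of open sets, while you run the equivalent argument with sequences — which is legitimate here since the source fibres are finite-dimensional manifolds, hence first countable.
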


\begin{proof}
  The image $\cU \coloneq \ev (\Bis (\cG) \times M)$ contains the image of the object inclusion $\one$, i.e.\ $\one (m) \in \cU$ for all $m \in M$.
  Define for $m \in M$ the set $\cU_m = \cU \cap \src^{-1} (m)$ and note that $\cU_m = \ev (\Bis (\cG) \times \{m\})$.
  Clearly the set $\cU$ contains $\one (M)$ and forms a subgroupoid $\cU \toto M$ of $\cG$ of $G \toto M$ (see Exercise \ref{Ex:quotientgpd} 3.). As submersions are open maps, $\cU$ is an open Lie subgroupoid of $\cG$. Now As $\ev_m \colon \Bis (\cG) \rightarrow \src^{-1} (m)$ is a submersion by \Cref{prop:canon:subm} a) we infer that $\cU_m$ is an open subset of $\src^{-1} (m)$. 
  However, $\cU_m$ is also closed: The complement $\src^{-1}(m) \setminus \cU_m$ is the union $\bigcup_{g \in \src^{-1}(m)\setminus \cU_m}\cU_{\trg (g)}\cdot g$. As $\cU_{\trg (m)} \opn \src^{-1}(\trg(g))$, we see that $\cU_{\trg (g)}\cdot g$ is open, whence $\cU_m$ is also closed. We deduce that the closed and open set $\cU_m \subseteq \src^{-1}(m)$ equals $\src^{-1}(m)$ as $\cG$ is source connected.
  \end{proof}
If $\cG$ has enough bisections, the evaluation map $\ev \colon \Bis (\cG) \times M \rightarrow G$ becomes a surjective submersion. Hence the Lie groupoid structure of $\cG$ is completely determined by the group of bisections. One can moreover show that the original groupoid is a quotient groupoid of the bisection action groupoid in this case (see \cite[Theorem B]{SaW16}). As we do not wish to introduce deeper concepts in groupoid theory, we do not go into details concerning this result. The main upshot however is that for a Lie groupoid with enough bisections, the groupoid is uniquely determined by the action of the bisection group.

\begin{rem}
 The results presented so far in this section are a reconstruction result. This means that starting from a Lie groupoid, we can recover the Lie groupoid (under certain topological assumptions) as the quotient of a Lie groupoid we cook up from the action of the bisection group. 
 One can of course ask whether there are construction results for Lie groupoids which do not require to start from a Lie groupoid. Instead, one would like to start from an action of a suitable infinite-dimensional group and construct a Lie groupoid, such that in the case we started with the canonical action of a bisection group, one would recover the Lie groupoid. At least partial answers to these questions exist. We refer to \cite{SaW16}, where transitive pairs, i.e.\ a principal bundle version of Klein geometries, \cite[Chapter 3]{sharpe97}, are proposed as a starting point for a construction result.  
\end{rem}

\begin{Exercise}\label{Ex:quotientgpd}   \vspace{-\baselineskip}
 \Question Let $\cG = (G\toto M)$ be a Lie groupoid. Show that the image of the canonical action $E\coloneq \ev (\Bis (\cG) \times M) \subseteq G$ is closed under multiplication and inversion in $G$. Hence with the induced structure maps we obtain a subgroupoid $E \toto M$ of $\cG$.
  \Question  Prove that $\ev \colon \Bis (\cG) \times M \rightarrow G$ induces a Lie groupoid morphism $\Bis (\cG) \ltimes M \rightarrow G$ over the identity.
 \Question Let $\pi \colon E \rightarrow M$ be a finite-dimensional vector bundle. Show that for every $e \in E$ there is $X^e\in \Gamma (E)$ with $X^e(\pi(e)) = e$.\\
 {\footnotesize \textbf{Hint:} Construct locally in trivialisations using bump functions. Note that the assumption of being finite-dimensional can be replaced by requiring the existence of suitable bump functions.}
\end{Exercise}

\chapter{Euler-Arnold theory: PDE via geometry} \label{sect:EAtheory} \copyrightnotice

In this chapter we shall give an introduction to Euler-Arnold theory for partial differential equations (PDEs). The main idea of this theory is to reinterpret certain PDEs as smooth ordinary differential equations (ODEs) on infinite-dimensional manifolds. One advantage of this idea is that the usual solution theory for ODEs can be used to establish properties for the PDE under consideration. This principle has been successfully applied to a variety of PDE arising for example in hydrodynamics. Among these are the Euler equations for an ideal fluid, the Camassa-Holm equation, the Hunter-Saxton and the inviscid Burgers equation. We refer to \cite[p.34]{MR2456522} for a much longer list of physically relevant PDE which fit into this setting. 

As in the rest of the book, we shall only work with smooth functions. This is rather unnatural for solutions of partial differential equations but allows us to avoid spaces and manifolds of finitely often differentiable mappings. From the theoretical point of view this is problematic (at least considering the results we are after) and we will comment on the ``correct setting'' in the end of this chapter.
Before we begin, recall the relation between the energy of a curve and it being a geodesic (cf.\ \Cref{sect:L2metric}).  

\begin{defn}\label{defn:smoothvariation}
 Let $M$ be a manifold and for $x,y \in M$ we define the closed submanifold 
 $$C^\infty_{x,y}([0,1],M) \coloneq \{c\in C^\infty ([0,1],M) \mid c(0)=x, c(1)=y\} \subseteq C^\infty([0,1],M),$$ of curves from $x$ to $y$ (cf.\ Exercise \ref{Ex:EA:prelim} 1.).
 A smooth map $p \colon ]-\delta,\delta[ \times [0,1] \rightarrow M$ is a \emph{smooth variation}\index{smooth variation} of $c \in C^\infty_{x,y}([0,1],M)$ if $p(0,t) = c(t), \forall t \in [0,1]$ and $p(s,\cdot) \in C^\infty_{x,y}([0,1],M), \forall s \in ]-\delta,\delta[$.  
\end{defn}

With the notation in place, we can now formulate the following standard result from Riemannian geometry for weak Riemannian metrics which admit a metric spray.

\begin{prop}\label{Energy:vs:geodesic}
 Let $(M,g)$ be a (weak) Riemannian manifold which admits a metric spray $S$. Then $c \colon [0,1] \rightarrow M$ is a geodesic\index{geodesic} if and only if it extremises the energy  $$\mathrm{En}(c) =  \frac{1}{2}\int_a^b g_{c(t)} (\dot{c}(t),\dot{c}(t))\mathrm{d}t,$$
 i.e.\ if for each smooth variation $p\colon ]-\delta,\delta[\times [0,1] \rightarrow M$ of $c$ we have  $\left.\frac{d}{d s}\right|_{s=0} \mathrm{En} (p(s,\cdot)) = 0 $. 
\end{prop}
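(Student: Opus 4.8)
The plan is to prove the first variation formula for the energy functional and show that its vanishing is equivalent to the geodesic equation $\nabla_{\dot c}\dot c = 0$, which by the definitions in \Cref{defn:geodesic} and the discussion following \Cref{liftinglemma} characterises geodesics of the metric spray $S$. The key technical point is that a metric spray is available by hypothesis, so the covariant derivative along curves $\nabla_{\dot c}$ from \Cref{liftinglemma} exists and the metric-compatibility identity \eqref{covalong:Riemannian} holds. This lets me manipulate the integrand without ever needing $\flat$ to be surjective.

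First I would fix $c \in C^\infty_{x,y}([0,1],M)$ and a smooth variation $p\colon \,]-\delta,\delta[\,\times[0,1]\rightarrow M$. I view $p$ as a smooth map into $M$ and set $N = \,]-\delta,\delta[\,\times[0,1]$, with the two coordinate vector fields $\partial_s, \partial_t \in \mathcal{V}(N)$. The curve $t\mapsto p(s,t)$ has velocity field $\dot c_s := Tp\circ\partial_t$, a vector field along $p$, and the variational field is $V := Tp\circ\partial_s$, also a vector field along $p$. Applying the covariant derivative along $p$ (the map $F=p$ version of \Cref{defn:covalong}) together with \eqref{covalong:Riemannian} gives
\begin{align*}
\frac{d}{ds}\,\mathrm{En}(p(s,\cdot)) &= \frac{1}{2}\int_0^1 \partial_s\, g_{p}\big(\dot c_s,\dot c_s\big)\,dt
= \int_0^1 g_{p}\big(\nabla_{\partial_s}\dot c_s,\, \dot c_s\big)\,dt.
\end{align*}
The essential ingredient is the symmetry $\nabla_{\partial_s}\dot c_s = \nabla_{\partial_t} V$, which follows because $\partial_s,\partial_t$ commute ($\LB[\partial_s,\partial_t]=0$) and the spray-induced covariant derivative is symmetric, equivalently because the bilinear form $B_U$ associated to the spray is symmetric (\Cref{local:spray}); in local coordinates this is the equality of $\partial_s\partial_t p_U - B_U(p_U;\partial_s p_U,\partial_t p_U)$ with the same expression in the other order, using Schwarz' theorem (Exercise \ref{Ex:Bastiani} 3.).

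Next I would integrate by parts in $t$. Using the product rule \eqref{covalong:Riemannian} applied to $\partial_t\, g_p(V,\dot c_s)$, I rewrite
\begin{align*}
g_{p}\big(\nabla_{\partial_t}V,\dot c_s\big) &= \partial_t\, g_{p}\big(V,\dot c_s\big) - g_{p}\big(V,\nabla_{\partial_t}\dot c_s\big),
\end{align*}
so that
\begin{align*}
\frac{d}{ds}\Big|_{s=0}\mathrm{En}(p(s,\cdot)) &= \Big[\,g_{c}\big(V,\dot c\big)\,\Big]_0^1 - \int_0^1 g_{c}\big(V,\nabla_{\dot c}\dot c\big)\,dt,
\end{align*}
where everything is evaluated at $s=0$ so $\dot c_0 = \dot c$. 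Because $p(s,0)=x$ and $p(s,1)=y$ for all $s$, the variational field satisfies $V(0,0)=V(0,1)=0$, so the boundary term vanishes, leaving
\begin{align*}
\frac{d}{ds}\Big|_{s=0}\mathrm{En}(p(s,\cdot)) &= -\int_0^1 g_{c(t)}\big(V(t),\,\nabla_{\dot c}\dot c\,(t)\big)\,dt.
\end{align*}

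Finally I would extract the equivalence. If $c$ is a geodesic, then $\nabla_{\dot c}\dot c = 0$ and the integral vanishes for every variation, so $c$ extremises the energy. For the converse, I would argue that every smooth vector field $W$ along $c$ with $W(0)=W(1)=0$ arises as the variational field of some smooth variation; this is where I would invoke the metric spray to build an exponential-type variation $p(s,t):=\alpha_{W(t)}(s)$ from the geodesic spray, or simply work locally in a chart using $p_U(s,t)=c_U(t)+sW_U(t)$ and patch via a partition of the interval, so that $V=W$. The main obstacle is precisely this surjectivity-of-variational-fields step: in infinite dimensions one cannot casually invoke a tubular neighbourhood, but since $S$ is a spray one has geodesics at one's disposal and the local chart construction suffices on the compact interval $[0,1]$. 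With $V$ ranging over all such $W$, the vanishing of $\int_0^1 g_{c}(W,\nabla_{\dot c}\dot c)\,dt$ forces $\nabla_{\dot c}\dot c = 0$ pointwise: fixing $t_0$ and choosing $W$ supported near $t_0$ and approximating $g_{c(t_0)}(\cdot,\nabla_{\dot c}\dot c(t_0))$ via a continuous linear functional (Hahn–Banach, \Cref{thm:HahnBanach}, to probe each coordinate), the non-degeneracy of $g_{c(t_0)}$ yields $\nabla_{\dot c}\dot c(t_0)=0$. Hence $c$ is a geodesic.
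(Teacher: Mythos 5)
Your argument follows the same basic strategy as the paper's proof --- derive the first variation formula $\left.\frac{d}{ds}\right|_{s=0}\mathrm{En}(p(s,\cdot)) = -\int_0^1 g_{c(t)}(V(t),\nabla_{\dot c}\dot c(t))\,\mathrm{d}t$ and conclude by non-degeneracy --- but you reach it by a more invariant route. The paper simply inserts the pre-computed chart formula for $d\mathrm{En}$ from \Cref{lem:energydifferential} into the defining identity \eqref{metricspray:Qform} of the metric spray and reads off $-g_U(h,\nabla_{\dot c}\dot c)$ via \eqref{locform:deriv}; you instead work with the covariant derivative along the variation map, metric compatibility \eqref{covalong:Riemannian}, the symmetry $\nabla_{\partial_s}\dot c_s = \nabla_{\partial_t}V$, and an explicit integration by parts. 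Both are sound, and your treatment of the converse (realising a prescribed field $W$ along $c$ with vanishing endpoints as a variational field via a bump-function construction in a chart near a fixed $t_0$) is more careful than the paper's, which does not address that point at all.

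The one step that does not work as written is the appeal to Hahn--Banach in the final localisation. Hahn--Banach produces a continuous linear functional $\lambda \in (T_{c(t_0)}M)'$ separating $\nabla_{\dot c}\dot c(t_0)$ from $0$, but your constraint only involves functionals of the form $g_{c(t_0)}(w,\cdot)$, i.e.\ elements of the image of $\flat$. For a \emph{weak} Riemannian metric $\flat$ is not surjective (cf.\ \Cref{musicaliso}), so there is no reason the separating functional can be realised in this form, and the argument stalls exactly at the point where weak and strong metrics differ. The standard repair --- and what the paper's phrase ``the only element which gets annulled by all $g(h,\cdot)$ is $0$'' amounts to --- is to test against $W = \chi\cdot\nabla_{\dot c}\dot c$ for a bump function $\chi$ supported near $t_0$: then $0 = \int_0^1 \chi(t)\, g_{c(t)}\bigl(\nabla_{\dot c}\dot c(t),\nabla_{\dot c}\dot c(t)\bigr)\,\mathrm{d}t$ has a continuous non-negative integrand, so the integrand vanishes on the support of $\chi$, and positive definiteness of $g$ forces $\nabla_{\dot c}\dot c = 0$ there. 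With that substitution your proof is complete.
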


\begin{proof}
 Pick a smooth variation $q$ of a curve $c$ with $h(t) \coloneq \left.\frac{\partial}{\partial s}\right|_{s=0} q(s,t)$. We compute the derivative by exploiting the formula for the derivative of the energy from \Cref{lem:energydifferential} in a local chart $(U,\varphi)$. Suppressing again the identifications in the notation we find for $\left.\frac{d}{d s}\right|_{s=0} \mathrm{En} (q(s))$ the formula
 \begin{align*}
   &\int_0^1 \frac{1}{2} d_1g_U (c,c'(t),c'(t);h) - d_1g_U(c(t),h(t),c'(t);c'(t))-g_U (c(t),h(t),c''(t)) \mathrm{d}t\\ 
  \stackrel{\eqref{metricspray:Qform}}{=} &\int_0^1 g_U (c(t),B_U (c(t),c'(t)),c'(t),h(t)) - g_U (c(t),h(t),c''(t))\mathrm{d}t\\ 
  \stackrel{\hphantom{\eqref{metricspray:Qform}}}{=}& \int_0^1 g_U (c(t),B_U (c(t),c'(t),c'(t)) - c''(t),h(t)) \mathrm{d}t \stackrel{\eqref{locform:deriv}}{=} \int_0^1 -g_U (h(t),\nabla_{\dot{c}(t)} \dot{c}(t),h(t)) \mathrm{d}t.
 \end{align*}
 Hence $g_U (h(t),\nabla_{\dot{c}(t)} \dot{c}(t),h(t))$ needs to vanish for every $h$. Since the only element which gets annulled by all $g(h,\cdot)$ is $0$, we conclude that $\left.\frac{d}{d s}\right|_{s=0} \mathrm{En} (p(s,\cdot)) = 0$ holds for all smooth variations if and only if $\nabla_{\dot{c}} \dot{c} = 0$, i.e.\ $c$ is a geodesic.
\end{proof}

\begin{rem}
 \Cref{Energy:vs:geodesic} shows that geodesics are critical points of the energy. In \Cref{sect:geod_spray} we have taken the perspective that geodesics are locally length minimizing, i.e.~critical points of the length. However, the energy depends on the parametrisation while the length does not. To reconcile this, note that the Cauchy-Schwarz inequality yields equality of energy and length if $g_c(\dot{c},\dot{c})$ is constant. In Exercise \ref{Ex:EA:prelim} 2. we will see that every geodesic satisfies this property, whence our definition of geodesic comes with a preferred parametrisation which makes both points of view equivalent. 
\end{rem}

The variational approach will enable us to identify the geodesic equations of infinite-dimensional Riemannian metrics. Before we turn to these results, let us fix some notation for this and later sections.

\begin{tcolorbox}[colback=white,colframe=blue!75!black,title=Concerning partial derivatives]
 Let $p \colon I \times J \rightarrow M$ be a smooth variation. Assume that $\nabla$ is the metric derivative of the Riemannian manifold $(M,g)$. For $s \in I$ and $t \in J$ we denote by $\partial s$ and $\partial t$ the constant unit vector field (on $I$ resp.\ on $J$).
For $p$, we denote by $\frac{\partial}{\partial s} p(s,t) \coloneq Tp(s,t)(1,0)$ and $\frac{\partial}{\partial t} p(s,t) \coloneq Tp(s,t)(0,1)$ the partial derivatives as vector fields along $p$. Hence we can consider e.g.\ $\frac{\nabla}{\partial s} \frac{\partial}{\partial t} p$ as a vector field along $p$ (in the sense of \Cref{defn:covalong}). 
Moreover, we note that as $\nabla$ is the metric derivative (whence torsion free), \cite[Proposition 1.5.8. i)]{MR1330918} implies that 
\begin{align}\label{commuting:derivatives}
 \frac{\nabla}{\partial s} \frac{\partial}{\partial t} p = \frac{\nabla}{\partial t} \frac{\partial}{\partial s} p.
\end{align}
\end{tcolorbox}

\begin{ex}[The inviscid Burgers equation]\label{ex:inv-burger}
 Recall from \Cref{cor:diffopn} that the diffeomorphism group $\Diff (\SSS^1) \opn C^\infty (\SSS^1,\SSS^1)$ is an open submanifold. Fix some Riemannian metric $g$ on $\SSS^1$. We exploit that $\SSS^1 \subseteq \R^2$ is an embedded submanifold and apply \Cref{rem:L2deriv} c) to deduce from \Cref{prop:covdident} that the $L^2$-metric\index{Riemannian metric!$L^2$-metric} on $C^\infty (\SSS^1,\SSS^1)$ admits a metric spray. The same then holds to its restriction to $\Diff (\SSS^1)$:
 $$g^{L^2} (X ,Y ) = \int_\SSS^1 g_{\varphi (\theta)}( X(\theta), Y(\theta)) \mathrm{d}\theta.$$
 Where we exploited the identification $T_\varphi \Diff (\SSS^1) \cong \{X \circ \varphi \mid X \in \mathcal{V}(\SSS^1)\}$.
 Pick now a smooth variation $c \colon ]-\delta , \delta [ \times [0,1] \rightarrow \Diff (\SSS^1)$ of some curve (which we also denote by $c$). Set $v (t) \coloneq \left.\frac{\partial}{\partial s}\right|_{s=0} c(s,t)$ and note that taking the derivative with respect to $s$ coincides with taking the derivative with respect to the unit vector field $\partial s$. We can now compute the variation of the energy
 \begin{align*}
  \left.\frac{d}{d s}\right|_{s=0} \mathrm{En} (c(s,\cdot)) &\stackrel{\hphantom{\eqref{covalong:Riemannian}}}{=} \frac{1}{2}\int_0^1 \int_\SSS^1 \left.\frac{\partial}{\partial s}\right|_{s=0}g\left(\frac{\partial}{\partial t} c(s,t)(x), \frac{\partial}{\partial t} c(s,t)(x)\right) \mathrm{d}\theta \mathrm{d}t\\
  &\stackrel{\eqref{covalong:Riemannian}}{=}\int_0^1 \int_\SSS^1 \left.g\left(\frac{\nabla}{\partial s}\frac{\partial}{\partial t} c(s,t)(x), \frac{\partial}{\partial t} c(s,t)(x)\right)\right|_{s=0} \mathrm{d}\theta \mathrm{d}t\\
  &\stackrel{\eqref{commuting:derivatives}}{=} \int_0^1 g^{L^2}\left. \left(\frac{\nabla}{\partial t} \frac{\partial}{\partial s} c(s,t)(x), \frac{\partial}{\partial t}c(s,t)(x) \right)\right|_{s=0}\mathrm{d}t \\ 
  &\stackrel{\hphantom{\eqref{covalong:Riemannian}}}{=} -\int_0^1 g^{L^2} \left(v(t), \frac{\nabla}{\partial t}\frac{\partial}{\partial t}  c(t)\right) \mathrm{d}t
 \end{align*}
 where the last equality is due to a usual integration by parts argument.
 In particular, we see that if $c \colon [0,1] \rightarrow \Diff (\SSS^1)$ should be a geodesic, it must satisfy the pointwise equation 
 \begin{align}\label{eq:BUrgercond}
  \frac{\nabla}{\partial t} \frac{\partial}{\partial t} c^\wedge(t,\theta) = 0,\quad \forall \theta \in \SSS^1. 
 \end{align}
 To connect this equation to objects on the finite-dimensional manifold $\SSS^1$, we construct a (time-dependent) vector field on $\SSS^1$ from the curve of diffeomorphisms $c(t)$ by setting
 \begin{align}\label{rel:recon}
  u(t,\theta) = \frac{\partial}{\partial t} c^\wedge (t, c^{-1} (t,\theta))
 \end{align}
 where the inverse is the inverse in $\Diff (M)$. In other words, $c(t)(\theta)$ is the flow of the time-dependent vector field $u$, i.e.\ $\frac{\partial}{\partial t} c(t)(\theta) = u(t,c(t)(\theta))$. Plugging this definition into the left hand side of \eqref{eq:BUrgercond}, the chain rule and a quick computation yields the following statement. 
 \begin{lem}\label{lem:cov_deriv_calc}
 Let $c\colon [0,1] \rightarrow \Diff (\SSS^1)$ be smooth and the flow of the time-dependent vector field $u$ on $\SSS^1$, cf.\ \eqref{rel:recon}. Then
  \begin{align}\label{Burgers:Euler}
 \frac{\nabla}{\partial t} \frac{\partial}{\partial t} c^\wedge(t,\theta) &= \frac{\nabla}{\partial t} (u(t,c^\wedge (t,\theta))) = \frac{\partial u}{\partial t}(t,c^\wedge(t,\theta)) + \nabla_u u (t,c^\wedge(t,\theta),
\end{align}
where $\nabla_u u$ denotes the covariant derivative of $u$ against itself for fixed $t$ and we interpret $\frac{\partial u}{\partial t} (t,\theta)$ as a partial derivative of $u (\cdot, \theta)$ in $T_\theta \SSS^1$ for every fixed $\theta$.
  \end{lem}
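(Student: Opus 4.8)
The plan is to read both equalities in \eqref{Burgers:Euler} off the local formula for the covariant derivative along a curve. Throughout fix $\theta\in\SSS^1$ and write $\gamma(t)\coloneq c^\wedge(t,\theta)$ for the resulting curve in $\SSS^1$. Evaluating \eqref{rel:recon} at the point $c^\wedge(t,\theta)$ in place of $\theta$ and using $c^{-1}(t,c^\wedge(t,\theta))=\theta$ yields the flow equation $\dot\gamma(t)=\frac{\partial}{\partial t}c^\wedge(t,\theta)=u(t,\gamma(t))$. The first equality in \eqref{Burgers:Euler} is then immediate: substituting the flow equation into the argument of $\frac{\nabla}{\partial t}$ replaces $\frac{\partial}{\partial t}c^\wedge$ by $u(t,c^\wedge(t,\theta))$, so both sides denote the covariant derivative along $\gamma$ of the lift $\alpha(t)\coloneq u(t,\gamma(t))\in\text{Lift}(\gamma)$.

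For the second equality I would localise in a chart $(U,\varphi)$ of $\SSS^1$ around $\gamma(t)$ and apply \Cref{liftinglemma}. Writing $u_U$ for the principal part of the (time-dependent) vector field $u$ and $\gamma_U$ for the local representative of $\gamma$, the principal part of $\alpha$ is $\alpha_U(t)=u_U(t,\gamma_U(t))$, and \eqref{locform:deriv} gives
\begin{align*}
 (\nabla_{\dot\gamma}\alpha)_U(t)=\alpha_U'(t)-B_U\big(\gamma_U(t);\gamma_U'(t),\alpha_U(t)\big).
\end{align*}
The key computation is to expand the total time derivative $\alpha_U'(t)$ by the chain rule into its explicit and implicit parts,
\begin{align*}
 \alpha_U'(t)=\frac{\partial u_U}{\partial t}(t,\gamma_U(t))+du_U\big(t,\gamma_U(t);\gamma_U'(t)\big),
\end{align*}
where the first summand differentiates $u_U(\cdot,x)$ at fixed $x=\gamma_U(t)$ and the second is the spatial derivative of $u_U(t,\cdot)$ in the direction $\gamma_U'(t)$.

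The main (essentially bookkeeping) obstacle is to recognise the remaining terms as $\nabla_u u$. Here I would invoke the flow equation in local form, $\gamma_U'(t)=u_U(t,\gamma_U(t))=\alpha_U(t)$, so that the implicit part together with the $B_U$-term becomes $du_U(t,\gamma_U;u_U)-B_U(\gamma_U;u_U,u_U)$. Comparing this with the local formula \eqref{loc:covD}, $\nabla_XY=X.Y-B(X,Y)$, applied to $X=Y=u(t,\cdot)$ (the fixed-time vector field), identifies it as $(\nabla_u u)_U$ evaluated at $\gamma_U(t)$. Collecting the two summands yields $(\nabla_{\dot\gamma}\alpha)_U(t)=\frac{\partial u_U}{\partial t}(t,\gamma_U(t))+(\nabla_u u)_U(t,\gamma_U(t))$, which is \eqref{Burgers:Euler} read in the chart.

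Finally I would argue that each of the three terms is an intrinsically defined tangent vector at $\gamma(t)$: the left-hand side and $\nabla_u u$ are defined without reference to charts, and $\frac{\partial u}{\partial t}(t,x)$ lies in $T_x\SSS^1$ because $t\mapsto u(t,x)$ is a curve in the fixed vector space $T_x\SSS^1$, so its derivative is again in $T_x\SSS^1$ and has $\frac{\partial u_U}{\partial t}$ as its local representative. Hence the chart-wise identity promotes to the chart-independent equality \eqref{Burgers:Euler}. I would close by remarking that nothing in this computation uses the specific manifold $\SSS^1$, so the same material-derivative decomposition holds for curves in $\Diff(K)$ for any compact $K$.
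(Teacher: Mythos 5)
Your proof is correct and is exactly the computation the paper has in mind: the text introduces the lemma with ``the chain rule and a quick computation'' and relegates the verification to Exercise \ref{Ex:EA:prelim}, and your argument supplies precisely that computation, combining the local formula \eqref{locform:deriv} for $\nabla_{\dot\gamma}$ with the flow equation and the local expression \eqref{loc:covD} for $\nabla_u u$. The chart-independence remark at the end is a sensible finishing touch but not strictly needed, since all three terms are already intrinsically defined.
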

  
  Note that \eqref{Burgers:Euler} is central to the idea of Euler-Arnold theory (whence we promoted it to its own lemma) and holds in similar form if one replaces $\SSS^1$ by an arbitrary smooth compact manifold $M$.
  To distinguish the interpretation of $\frac{\partial u}{\partial t} (t,\theta)$ from the usual partial derivative of a smooth variation, let us write $\partial_t u$ for this derivative. We conclude from \Cref{lem:cov_deriv_calc} and \eqref{eq:BUrgercond} that $c$ is a geodesic of the $L^2$-metric if and only if the associated vector field $u$ solves the inviscid Burgers (or Hopf) equation
 \begin{align}\label{Burgersmfd}
\partial_t u + \nabla_u u =0.  
 \end{align}
 Burgers equation is a partial differential equation ($\nabla_u u$ takes derivatives of $u$) which is miraculously equivalent to an ordinary differential equation (the geodesic equation) on the infinite-dimensional group $\Diff (\SSS^1)$. 
 Note that \eqref{Burgersmfd} is connected to the classical Burgers equation $u_t + 3uu_x=0$ (subscripts denoting partial derivatives) from \Cref{Burger1}. This will be made explicit in Exercise \ref{Ex:EA:prelim} 3.: Since $\SSS^1 \subseteq \R^2$ is an embedded submanifold, we endow $\SSS^1$ with the pullback metric induced by the euclidean inner product $g_x(v,w) \coloneq \langle v,w\rangle$ (by identifying $T_x \SSS^1 \subseteq \R^2$). Working out the covariant derivatives a canonical identification shows that the Burgers equation \eqref{Burgers:Euler} coincides with the classical Burgers equation $u_t + uu_x =0$ (which is up to scaling equivalent to \Cref{ex:inv-burger}).
\end{ex} 

\begin{rem}
 The derivation of Burgers equation as a geodesic equation on $\SSS^1$ did not exploit any special structure of $\SSS^1$. It just enabled us to make sense of the integrals without recourse to integration against volume forms.
 Thus the same argument carries over without any change to an arbitrary compact manifold $M$. There the inviscid Burgers equation 
 $$\partial_t u + \nabla_u u =0$$
 makes sense (with respect to the covariant derivative induced by the metric) and is the geodesic equation of the $L^2$-metric (cf.\ \eqref{generalL2} below) on $\Diff (M)$.
\end{rem}

In the next section we will systematically investigate the mechanism to associate a geodesic equation to certain partial differential equations. 

\begin{Exercise}\label{Ex:EA:prelim}\vspace{-\baselineskip}
 \Question Let $(M,g)$ be a (weak) Riemannian manifold (with metric derivative $\nabla$) and denote by $C^\infty ([0,1],M)$ the space of smooth curves with the manifold structure from \Cref{sect:mfdofcurves}. Fix $x,y \in M$. Show that 
 \subQuestion $C^\infty_{x,y} ([0,1],M) =\{c\in C^\infty ([0,1],M) \mid c(0)=x, c(1)=y\}$ is a closed submanifold of $C^\infty ([0,1],M)$.\\
 {\footnotesize \textbf{Hint:} Consider a canonical chart for the manifold of mappings. Show that the model space splits for every $c\in C^\infty_{x,y} ([0,1],M)$.}
  \subQuestion $p \colon ]-\delta,\delta[\times [0,1]\rightarrow M$ is a smooth variation of $c \in C^\infty_{x,y}([0,1],M)$ if and only if $p^\vee \colon ]-\delta,\delta[\rightarrow C^\infty_{x,y} ([0,1],M)$ is smooth.
  \subQuestion the energy $\mathrm{En}$ restricts to a smooth function on $C^\infty_{x,y} ([0,1],M)$ and prove the following analogue of \Cref{Energy:vs:geodesic}: A curve $c$ is a geodesic connecting $x$ and $y$ if and only if $d\mathrm{En} (c;\cdot)$ vanishes on $C^\infty_{x,y}([0,1],M)$.
  \Question Let $(M,g)$ be a weak Riemannian manifold which allows a metric spray $S$ and an associated metric derivative $\nabla$. Show that a geodesic $c \colon [0,1] \rightarrow M$ is a curve of constant speed, i.e. $g_c (\dot{c},\dot{c})$ is constant.\\
  {\footnotesize \textbf{Hint:} Use \Cref{setup:metricderiv} to show that  $\frac{d}{dt}g_c (\dot{c},\dot{c})$ vanishes}
  \Question Show that in \Cref{ex:inv-burger} we can rewrite \eqref{Burgersmfd} in traditional notation as
  $$u_t + uu_x = 0,\quad \text{ where } u \colon ]-\delta ,\delta[ \times [0,2\pi] \rightarrow \R.$$
  In addition show that in this setting the flow $\eta$ of $u$ satisfies then $\frac{\partial^2}{\partial t^2} \eta = 0$.\\
  {\footnotesize \textbf{Hint:} Use that the tangent bundle of $\SSS^1$ is trivial together with \Cref{ex:submfd:covd}.}
  \Question Prove \Cref{lem:cov_deriv_calc}.
 \end{Exercise}

\section{The Euler equation for an ideal fluid}
We shall exhibit the general principle first for the classical example considered by Arnold \cite{MR202082}: the Euler equation for an inviscid incompressible fluid on a Riemannian manifold. It describes the development of a fluid occupying the manifold $M$ under certain assumptions. Let us first fix some notation.
\begin{tcolorbox}[colback=white,colframe=blue!75!black,title=Conventions]
In this section we will denote by $(M,g)$ a compact (thus finite-dimensional) orientable Riemannian manifold.
\begin{itemize}
 \item For a (time-dependent) vector field $u$ we write $\partial_t u(t,x)$ for the partial $t$-derivative in $T_x M$ (thus not taking values in $T(TM)$!).  
 \item Since $M$ is orientable, it admits a volume form $\mu$ induced by $g$. Further, we denote by $\divr X$ the \emph{divergence}\index{vector field!divergence} of a vector field $X \in \mathcal{V} (M)$ and by $\grad f$ the \emph{gradient}\index{gradient} of a smooth function $f \colon M \rightarrow \R$ (see \Cref{supp:diff-findim}).
\end{itemize}
\end{tcolorbox}

We will not derive Eulers equations here from first principles, but refer to \cite{modin2019geometric} for an account together with a  history of the problem.

\begin{setup}[Euler equation]
 The \emph{Euler equation} for an incompressible fluid is 
 \begin{align} \label{Euler:eq}
  \begin{cases}
   \partial_t u(t,m) + \nabla_u u (t,m) = -\grad p &\\
   \divr u(t,\cdot) = 0, & \forall t  \\
   u(0,\cdot) = u_0 & \text{with } \divr u_0 = 0 
  \end{cases}
 \end{align}
where the function $p \colon \R \times M \rightarrow \R$ is interpreted as 'pressure'. Euler's equation searches for a (time-dependent) vector field $u$ on $M$. The condition, $\divr u(t,\cdot) = 0$, i.e.~that $u$ is divergence-free, is the condition enforcing the incompressibility of the fluid. 

In \eqref{Euler:eq} we seek a vector field, whence one says that the equation is in \emph{Eulerian form}.\index{Euler equation!Eulerian form}
\end{setup}

\begin{rem}
 Apart from the incompressibility condition, Euler's equation is similar to Burgers equation. Indeed the only difference on the PDE level is the right hand side which is given by the gradient of a pressure function. We will see later that the gradient acts as a Lagrange multiplier enforcing the incompressibility condition (in general the term $\nabla_u u$ will not be divergence-free).
\end{rem}

Again Euler's equation, just like Burgers equation, is formulated on a finite-dimensional manifold and has a priori nothing to do with infinite-dimensional geometry. However, we will change the perspective to uncover the connection to infinite-dimensional geometry. Again the idea is similar to what we did for Burgers equation (but we will now start with the vector field rather than a flow).

\subsection*{From the Eulerian to the Lagrangian perspective}
  Let us consider a time-dependent smooth vector field $u \colon I \times M \rightarrow TM$ on a compact interval $I$. Then recall (e.g.~from \cite[IV. \S 1]{Lang}, where we exploit that $M$ is compact) that the flow for $u$ is a mapping $\eta \colon I \times M \rightarrow M$ such that
\begin{align}\label{Euler:eq:Lag}
 u (t,\eta(t,m)) = \frac{\partial}{\partial t} \eta (t,m), \forall t \in I, m \in M.
\end{align}
Furthermore, it is well known that for each $t \in I$ the flow $u(t,\cdot)$ is a diffeomorphism of $M$. The equation \eqref{Euler:eq:Lag} or the equivalent equation \eqref{rel:recon} are sometimes called the \emph{reconstruction equation}. Observe now that instead of constructing a vector field $u$ which solves the Euler equation \eqref{Euler:eq}, we can instead construct its flow. Searching for the flow whose associated vector field solves the PDE, is called the \emph{Lagrangian perspective} on the PDE.
If now $u$ is divergence-free, $\divr u = 0$, this implies that $\eta_\ast \mu = \mu$, i.e.\ for every $t$, the diffeomorphism $\eta(t,\cdot)$ leaves the volume form invariant. As $\eta$ is smooth, the exponential law, \Cref{thm:explaw}, allows us to reinterpret the flow $\eta \colon J \times M \rightarrow M$ as a smooth curve $\eta^\vee \colon J \rightarrow \Diff (M) \opn C^\infty (M,M)$. Now the incompressibility condition shows that $\eta^\vee$ takes its values in the closed Lie subgroup $\Diff_\mu (M) \subseteq \Diff (M)$ of volume-preserving diffeomorphisms.

Our aim is again to connect the finite-dimensional PDE to the infinite-dimensional Riemannian geometry induced by the $L^2$-metric. Let us briefly recall its definition:

\begin{defn}[$L^2$-metric on the diffeomorphism group]
Let $(M,g)$ be a compact Riemannian manifold\footnote{Readers unfamiliar with integration on manifolds may safely replace $M$ in the following by $\SSS^1$. However, this does not simplify any of the argument.}. We define a weak Riemannian metric on $\Diff (M)$ via 
\begin{equation}\label{generalL2}
 g^{L_2}_\varphi (X \circ \varphi, Y \circ \varphi) = \int_M g_{\varphi (m)} (X(\varphi(m)),Y(\varphi(m)))\mathrm{d}\mu (m).
\end{equation}
Here $X,Y \in \mathcal{V} (M), \varphi \in \Diff(M)$ and we exploited that $\Diff (M)$ is a Lie group, whence its tangent bundle is trivial, i.e.\ $T\Diff (M) \cong \Diff(M) \times \mathcal{V} (M)$ (where the diffeomorphism is induced by right translation). It follows directly from the rules of integration that $g^{L^2}$ is a right-invariant Riemannian metric\index{Riemannian metric!invariant $L^2$-metric} (see \Cref{setup:invmetric} on the subgroup $\Diff_\mu (M)$ of volume-preserving diffeomorphisms (but not on $\Diff (M)$!).\index{diffeomorphism group!volume-preserving} Moreover, in Exercise \ref{Ex:EA:Euler} 2. we will see that the weak Riemannian metric admits a metric spray and a covariant derivative. 
With more work, one can also establish this for the restriction of the $L^2$-metric to the closed Lie subgroup $\Diff_\mu (M)$ (cf.\ \cite[Theorem 9.6]{EM70}).
\end{defn}

We have seen above that for vector fields solving Eulers equations, the associated flow yields a curve into the group of volume-preserving diffeomorphisms. Since $g^{L^2}$ induces a weak Riemannian metric on $\Diff_\mu (M)$ which admits a metric spray, we can compute geodesics as curves which extremise the energy. This allows us to derive a differential equation for the flow corresponding to vector field solutions of \eqref{Euler:eq}. As in the Burgers case, we need that $g^{L^2} (h,\frac{\nabla}{\partial t}\frac{\partial}{\partial t} \eta)$ vanishes for every $h \in T\Diff_\mu (M)$. We know that the tangent spaces of $\Diff_\mu (M)$ is (up to a shift) given by divergence-free vector fields. Now due to the Helmholtz decomposition \Cref{Helmholtz}, elements which are $L^2$-orthogonal for every $h$ are gradients of functions. Thus if $\frac{\nabla}{\partial t}\frac{\partial}{\partial t} \eta$ is a gradient then the inner product vanishes and the curve $\eta$ extremises the energy. Conversely, if we assume that $u$ solves \eqref{Euler:eq} and denote by $\eta$ its flow, we compute the derivative as follows
\begin{align}\label{Euler:eq:Lagrange}  \begin{split}
 \frac{\nabla}{\partial t}\frac{\partial}{\partial t} \eta (t,m) &= ((\partial_t u) + \nabla_u u))(t,\eta(t)) = -\grad p (t,\eta(t,m)). 
 \end{split}
\end{align}
In other words, the Helmholtz decomposition shows that a flow $\eta$ extremises the energy if and only if its associated vector field solves Eulers equation \eqref{Euler:eq}. Hence the Euler equation can equivalently be formulated as the following set of ordinary differential equations on the volume-preserving diffeomorphisms.

\begin{setup}[Lagrangian formulation of the Euler problem]
Find $\eta(t,\cdot) \in \Diff (M)$ such that
 \begin{align}\label{Euler:Lagrangeproblem}
  \begin{cases}
    \frac{\nabla}{\partial t} \frac{\partial}{\partial t} \eta (t,x) = - \grad p (t,\eta(t,x))\\ 
    \eta (t,\cdot)_\ast \mu = \mu &  \forall t \\
    \eta(0,\cdot) = \id_M
   \end{cases}
 \end{align}
 These equation \eqref{Euler:Lagrangeproblem} are called \emph{Euler equations in Lagrangian form}.\index{Euler equation!Lagrangian form}
\end{setup}

We achieved our goal to rewrite Eulers equation as a differential equation on an infinite-dimensional manifold. However, we have not yet exploited that the metric and the equation are right-invariant (with respect to the group multiplication). In the next section we will investigate these properties and connect the Lagrangian formulation to the geometry of the Lie group at hand. This will lead (among other things) to another derivation of the geodesic equation as the Euler equation. While this might on first sight look like a superfluous exercise (after all we already know that Eulers equations can be rewritten as the geodesic equation) we wish to point out that this property is crucial for the investigation of PDE in the Euler-Arnold framework we present here.

\section{Euler-Poincar\'{e} equations on a Lie group}
Comparing the Lagrangian version of Eulers equations \eqref{Euler:Lagrangeproblem} and the $L^2$-metric, it is immediate that all terms arise by right shifting objects. Moreover, as the tangent of the right shift with a diffeomorphism is just precomposition with the diffeomorphism, we obtain for a curve $c$ with values in the volume-preserving diffeomorphisms the formula 
\begin{align}\label{inv:Energy}
\mathrm{En} (c) = \int_0^1 \int_M g_m(\dot{c}(t) \circ c(t)^{-1}(m),\dot{c}(t)t  \circ c(t)^{-1}(m)) \mathrm{d}\mu(m)\mathrm{d}t . 
\end{align}
Hence we can compute the energy using the $L^2$-inner product on the Lie algebra. We shall see that the geometry of the Lie group is tightly connected to the geodesics by virtue of the Riemannian metric being right-invariant. 
To state Eulers equations as a geodesic equation, we need to understand derivatives of right shifted variations.

\begin{lem}\label{lem:commutator:smoothvar}
 Let $G$ be a Lie group and $p \colon ]-\delta,\delta[ \times [0,1] \rightarrow G$ a smooth variation. We identify $TG \cong G \times \Lf (G)$ by right multiplication (cf.~\Cref{tangentLie}) and define:
 \begin{align*}
  X_p \colon  ]-\delta,\delta[ \times [0,1] &\rightarrow \Lf (G),\quad (s,t) \mapsto \pr_2 \left(T\rho_{p(s,t)^{-1}} \frac{\partial}{\partial t} p(s,t)\right),\\
  Y_p \colon  ]-\delta,\delta[ \times [0,1] &\rightarrow \Lf (G),\quad (s,t) \mapsto \pr_2 \left(T\rho_{p(s,t)^{-1}} \frac{\partial}{\partial s} p(s,t)\right).
 \end{align*}
 Then the mixed derivatives of the \emph{right shifted variations}\index{smooth variation!right shifted} are related as follows
 \begin{align}\label{eq:comm:mixed}
 \frac{\partial}{\partial s} X_p - \frac{\partial}{\partial t} Y_p   = -\LB[X_p,Y_p]  
 \end{align}
\end{lem}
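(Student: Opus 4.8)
The identity \eqref{eq:comm:mixed} is a pointwise statement in $(s,t)$ comparing two $\Lf(G)$-valued functions, so the plan is to fix an arbitrary point $(s_0,t_0)$ and verify it there. First I would observe that both $X_p$ and $Y_p$ are \emph{right} logarithmic derivatives and hence invariant under right translation of $p$: replacing $p$ by $\tilde p\coloneq\rho_{a^{-1}}\circ p$ with $a\coloneq p(s_0,t_0)$ yields $\partial_t\tilde p=T\rho_{a^{-1}}(\partial_t p)$ and $\rho_{\tilde p^{-1}}=\rho_{p^{-1}}\circ\rho_{a}$, so that $T\rho_{\tilde p^{-1}}(\partial_t\tilde p)=T\rho_{p^{-1}}(\partial_t p)$ and likewise for the $s$-derivative; thus $X_{\tilde p}=X_p$ and $Y_{\tilde p}=Y_p$. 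Since $\tilde p(s_0,t_0)=\one$, this reduces the claim to the case $p(s_0,t_0)=\one$.

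Next I would localise using the chart $\varphi\colon U_\varphi\to V_\varphi\opn\Lf(G)$ centred at $\one$ from \Cref{setup:locmult}, with $\varphi(\one)=0$ and $T_\one\varphi=\id$, together with the local multiplication $\ast$ and local inversion $\iota$. Writing $P\coloneq\varphi\circ p$ (defined near $(s_0,t_0)$, with $P(s_0,t_0)=0$), the right translation $\rho_{p^{-1}}$ becomes $x\mapsto x\ast\iota(P)$ in the chart, and since $T_0\varphi=\id$ its value at $\one$ is read off directly in $\Lf(G)$. Setting $G(x,y,v)\coloneq d_1\ast(x,y;v)$ (the partial differential of $\ast$ in the first slot, available via \Cref{prop:rpd}), I obtain the local formulas $X_p=G(P,\iota(P);\partial_t P)$ and $Y_p=G(P,\iota(P);\partial_s P)$.

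The heart of the computation is then to differentiate these expressions, using the chain rule \Cref{chainrule} together with \Cref{prop:rpd}, and evaluate at the base point where $P=0$. I would record the following normalisations, all consequences of $\ast(x,0)=x$ and $\ast(0,y)=y$: that $G(0,0;v)=v$ and $d_1G(0,0,v;h)=0$ (differentiate $d_1\ast(x,0;v)=v$ in $x$); that $T_0\iota=-\id_{\Lf(G)}$ (differentiate $\ast(x,\iota(x))=0$ at $0$, or invoke Exercise \ref{Ex:Lgpex} 3); and the crucial identification
\[
d_2G(0,0,v;k)=A(v,k)\quad\text{where}\quad A(v,k)\coloneq\tfrac{d^2}{dt\,ds}\Big|_{t,s=0}(sv\ast tk),
\]
which follows by taking the $s$- and $t$-derivatives of $\ast(sv,tk)$ in the order $\partial_t\partial_s$. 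Abbreviating $a\coloneq\partial_tP|_0$, $c\coloneq\partial_sP|_0$ and $m\coloneq\partial_s\partial_tP|_0=\partial_t\partial_sP|_0$ (Schwarz, Exercise \ref{Ex:Bastiani} 3), the chain rule gives $\partial_sX_p|_0=m-A(a,c)$ and $\partial_tY_p|_0=m-A(c,a)$, the mixed term $m$ cancelling. Hence $\partial_sX_p-\partial_tY_p=-(A(a,c)-A(c,a))$ at the base point, while $X_p|_0=a$ and $Y_p|_0=c$; the local bracket formula of \Cref{setup:locmult} identifies $A(a,c)-A(c,a)$ with $\LB[a,c]=\LB[X_p,Y_p]$, which is exactly \eqref{eq:comm:mixed}.

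The main obstacle is the bookkeeping in the last paragraph: one must keep straight which second-order partial derivative of $\ast$ appears in each term and confirm that the mixed derivative $d_2G(0,0;\cdot)$ is precisely the bilinear form $A$ whose antisymmetrisation is the bracket (and not its transpose), while verifying that the first-slot derivative $d_1G(0,0;\cdot)$ and the symmetric contributions all cancel. Everything else reduces to the normalisations of the chart and the already-established local bracket formula, so no analytic input beyond the Bastiani chain rule, \Cref{prop:rpd} and Schwarz' theorem is required.
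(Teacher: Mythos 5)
Your argument is correct and follows essentially the same route as the paper's proof: reduce to the case $p(s_0,t_0)=\one$ by right translation, pass to the local multiplication $\ast$ of \Cref{setup:locmult}, write $X_p$ and $Y_p$ as $d_1\ast(q,q^{-1};\cdot)$, differentiate, and identify the surviving antisymmetric second-order term with the bracket via the local formula. The only cosmetic difference is that you dispose of the $d_1^2\ast$ and mixed-derivative contributions by the normalisations $d_1(d_1\ast)(0,0;v,h)=0$ and Schwarz' theorem, whereas the paper cancels them by observing they are symmetric under exchanging the roles of $s$ and $t$; the content is the same.
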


\begin{proof}
It suffices to establish \eqref{eq:comm:mixed} pointwise for every pair $(s_0,t_0)$. Define the smooth map $\tilde{p}(s,t) \coloneq p(s,t)\cdot p(s_0,t_0)^{-1}$. Then $\tilde{p}(s_0,t_0)=\one_G$ and a quick calculation shows that we have $\pr_2 \left(T\rho_{\tilde{p}(s,t)^{-1}} \left(\frac{\partial}{\partial t} \tilde{p}(s,t)\right)\right) = X_p(s,t)$.
A similar identity holds for $Y_p$ and we may thus assume without loss of generality for the proof that $p(s_0,t_0) = \one_G$.

We work locally and pick a chart $\varphi \colon G \supseteq U \rightarrow V \subseteq \Lf (G)$ such that $\varphi (\one_G) = 0$ and $T_{\one_G} \varphi =\id_{\Lf (G)}$. As in \Cref{setup:locmult} we define a local multiplication $v \ast w \coloneq \varphi(\varphi^{-1}(v)\varphi^{-1}(w))$ for all elements $v,w \in \Lf (G)$ near enough to zero. For an element $v$ we define (if it is close enough to $0$) the inverse $v \coloneq \varphi (\varphi^{-1}(v)^{-1}$. Choose an open $0$-neighborhood $\Omega$ such that $\ast$ is defined on $\Omega \times \Omega$ and $\Omega$ is symmetric (i.e.\ $v \in \Omega$ implies $v^{-1} \in \Omega$).

By construction, we have for all $(s,t)$ in a neighborhood of $(s_0,t_0)$ that $q \coloneq \varphi \circ p$ makes sense and takes values in $\Omega$. Note that also $q^{-1} = \varphi (p^{-1})$ takes values in $\Omega$ for all such $(s,t)$.
Employing now the rule on partial differentials, \Cref{prop:rpd}, shows that $X_p(s,t) = d_1 \ast \left(q(s,t),q^{-1}(s,t);\frac{\partial}{\partial t} q(s,t)\right)$. Specialising to $(s_0,t_0)$, we see that $X_p(s_0,t_0) = \frac{\partial}{\partial t} q(s_0,t_0)$. Similar identities hold for $Y_p$ by exchanging $t$ and $s$.
Compute the second derivative using the rule on partial differentials twice (where again the situation is symmetric in $s$ and $t$):
\begin{align*}
 &\frac{\partial}{\partial s} X_p(s,t) = \frac{\partial}{\partial s} d_1 \ast \left(q,q^{-1};\frac{\partial}{\partial t} q\right)\\ 
 =&d_1^2\ast \left(q,q^{-1};\frac{\partial}{\partial t} q,\frac{\partial}{\partial s} q\right) + d_2 (d_1\ast) \left(q,q^{-1};\frac{\partial}{\partial t}q , \frac{\partial}{\partial s} q^{-1}\right) + d_1 \ast \left(q,q^{-1};\frac{\partial^2}{\partial t \partial s} q\right)
\end{align*}
Due to Schwarz' rule, we see that the first and third term in the above formula are completely symmetric in $t$ and $s$. Hence these terms will not contribute to the difference \eqref{eq:comm:mixed}. Let us now compute the differential of the inverse:
\begin{align}
 \frac{\partial}{\partial s} q^{-1} &= \pr_2 T \varphi \left(\frac{\partial}{\partial s} p(s,t)^{-1}\right) \stackrel{\eqref{Tiota}}{=} -\pr_2 T \varphi T\lambda_{p(s,t)^{-1}}T\rho_{p(s,t)^{-1}}\left(\frac{\partial}{\partial s} p(s,t)\right) \label{inverseformula}
\end{align}
In particular,\eqref{inverseformula} reduces for $(s_0,t_0)$ to $Y_p (s_0,t_0)$. Likewise for $\tfrac{\partial}{\partial t} q^{-1} (s_0,t_0)$ we obtain $X_p (s_0,t_0)$. Note that by construction $q(s_0,t_0) = 0 = q^{-1}(s_0,t_0)$. Hence we can deduce now that the difference $\left(\frac{\partial}{\partial s} X_p -  \frac{\partial}{\partial t} Y_p\right)(s_0,t_0)$ is given as
\begin{align*}
 &\left(d_2 (d_1\ast) \left(q,q^{-1};\frac{\partial}{\partial t}q ,\frac{\partial}{\partial s} q^{-1}\right) - d_2 (d_1\ast) \left(q,q^{-1};\frac{\partial}{\partial s}q , \frac{\partial}{\partial t} q^{-1}\right)\right)(s_0,t_0)\\
 \stackrel{\eqref{inverseformula}}{=}& -\left(d_2 (d_1\ast) \left(0,0;X_p (s_0,t_0) , Y_p(s_0,t_0)\right) + d_2 (d_1\ast) \left(0,0;Y_p (s_0,t_0) ,X_p (s_0,t_0)\right)\right)\\
  \stackrel{\hphantom{\eqref{inverseformula}}}{=}&\left.\frac{\partial^2}{\partial s \partial t}\right|_{s,t=0} \left( tY_p(s_0,t_0) \ast sX_p(s_0,t_0) - sX_p(s_0,t_0) \ast tY_p(s_0,t_0) \right)\\
 \stackrel{\Cref{setup:locmult}}{=}& - \LB[X_p(s_0,t_0),Y_p(s_0,t_0)]
\end{align*}
\end{proof}

We are now in a position to establish Arnold's classical result on the Euler equation via geometry on the Lie group of volume-preserving diffeomorphisms:

\begin{thm}[Arnold]\label{Arnolds_theorem}
 Let $(M,g)$ be a compact Riemannian manifold and consider a curve $\varphi \colon [0,1] \rightarrow \Diff_\mu (M)$. Then $\varphi$ is a geodesic of the $L^2$-metric (i.e.\ the restriction of \eqref{generalL2} to $\Diff_\mu (M)$) if and only if 
 $$u \coloneq \dot{\varphi} \circ \varphi^{-1} \in \mathcal{V}_\mu (M)$$
 solves the Euler equation \eqref{Euler:eq}, i.e.\ for some function $p \colon [0,1] \times M \rightarrow \R$ we have 
 $$\partial_t u + \nabla_u u = - \grad p.$$
\end{thm}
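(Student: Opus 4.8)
The plan is to prove the equivalence by the variational characterisation of geodesics, exploiting that the $L^2$-metric is right-invariant so that the whole computation descends to the Lie algebra $\mathcal{V}_\mu(M)$; this is exactly the Euler--Poincaré mechanism prepared by \Cref{lem:commutator:smoothvar}. Throughout I would abbreviate $\langle X,Y\rangle_{L^2} = \int_M g_m(X(m),Y(m))\,\mathrm{d}\mu(m)$ for the $L^2$-inner product on $\mathcal{V}_\mu(M)$ and let $P$ denote the Helmholtz projection onto divergence-free fields from \Cref{Helmholtz}, so that $Z-P(Z)=\grad f$ for some function $f$, for every $Z\in\mathcal{V}(M)$.

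First I would invoke \Cref{Energy:vs:geodesic}: since the $L^2$-metric restricts to a weak Riemannian metric on $\Diff_\mu(M)$ admitting a metric spray (this uses the cited result \cite[Theorem 9.6]{EM70}), the curve $\varphi$ is a geodesic if and only if $\frac{d}{ds}\big|_{s=0}\mathrm{En}(p(s,\cdot))=0$ for every smooth variation $p\colon\,]-\delta,\delta[\,\times[0,1]\to\Diff_\mu(M)$ of $\varphi$ with fixed endpoints. Using the right-invariant form \eqref{inv:Energy} of the energy together with the right-trivialised velocity fields $X_p,Y_p$ of \Cref{lem:commutator:smoothvar}, the energy becomes $\mathrm{En}(p(s,\cdot))=\tfrac12\int_0^1\langle X_p(s,t),X_p(s,t)\rangle_{L^2}\,\mathrm{d}t$. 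Differentiating in $s$ and writing $u(t)=X_p(0,t)=\dot{\varphi}(t)\circ\varphi(t)^{-1}$ and $v(t)=Y_p(0,t)$ gives $\frac{d}{ds}\big|_{s=0}\mathrm{En}=\int_0^1\langle \partial_s X_p|_{s=0},u\rangle_{L^2}\,\mathrm{d}t$; here $v$ is a divergence-free field with $v(0)=v(1)=0$, and conversely every such $v$ is realised by an admissible variation (flowing $sv$ inside $\Diff_\mu(M)$), so testing against all such $v$ detects geodesics.

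Next I would substitute the Euler--Poincaré relation $\partial_s X_p=\partial_t Y_p-\LB[X_p,Y_p]$ from \Cref{lem:commutator:smoothvar}, so that at $s=0$
\[
\frac{d}{ds}\Big|_{s=0}\mathrm{En}=\int_0^1\Big(\langle\partial_t v,u\rangle_{L^2}-\langle\LB[u,v],u\rangle_{L^2}\Big)\,\mathrm{d}t.
\]
The first term I would integrate by parts in $t$ (the inner product being an integral against the fixed volume $\mu$, so $\partial_t$ passes through, and the boundary terms vanish as $v(0)=v(1)=0$), turning it into $-\int_0^1\langle v,\partial_t u\rangle_{L^2}\,\mathrm{d}t$. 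For the bracket term I would pass to the $L^2$-adjoint, $\langle\LB[u,v],u\rangle_{L^2}=\langle v,\operatorname{ad}_u^\ast u\rangle_{L^2}$. The crucial computation, and the main obstacle, is to identify $\operatorname{ad}_u^\ast u=P(\nabla_u u)$: writing the $\Diff_\mu$-bracket as the negative of the ordinary vector-field bracket (\Cref{rem:LR}, \Cref{ex:Liealgdiffeo}) and using metric compatibility of $\nabla$ with the divergence theorem on the closed manifold $M$, one shows that for divergence-free $u,v$ one has $\int_M g(\nabla_u v,u)\,\mathrm{d}\mu=-\int_M g(v,\nabla_u u)\,\mathrm{d}\mu$ and $\int_M g(\nabla_v u,u)\,\mathrm{d}\mu=0$; since the gradient part of $\nabla_u u$ is $L^2$-orthogonal to the divergence-free $v$, this yields $\langle\LB[u,v],u\rangle_{L^2}=\langle v,P(\nabla_u u)\rangle_{L^2}$. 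Getting the signs straight across the two bracket conventions is the delicate point.

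Combining these, $\frac{d}{ds}\big|_{s=0}\mathrm{En}=\int_0^1\langle v,-\partial_t u-P(\nabla_u u)\rangle_{L^2}\,\mathrm{d}t$. By the fundamental lemma of the calculus of variations, applied among divergence-free fields $v$ vanishing at the endpoints (legitimate because $\partial_t u+P(\nabla_u u)$ is itself divergence-free, so testing against such $v$ already forces it to vanish), this integral vanishes for all admissible variations if and only if $\partial_t u+P(\nabla_u u)=0$ for every $t$. Finally I would unwind the projection via \Cref{Helmholtz}: $P(\nabla_u u)=\nabla_u u-\grad q$ for some function $q$, whence the condition becomes $\partial_t u+\nabla_u u=\grad q=-\grad(-q)$, which is precisely the Euler equation \eqref{Euler:eq} on setting $p=-q$ (the incompressibility $\divr u=0$ being automatic since $u(t)\in\mathcal{V}_\mu(M)$). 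Because the argument proceeds through a chain of ``if and only if'' statements, both directions of the equivalence are established simultaneously.
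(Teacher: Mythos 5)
Your proof is correct and follows essentially the same route as the paper's: variation of the right-invariant energy, the mixed-derivative relation from \Cref{lem:commutator:smoothvar}, integration by parts in $t$, identification of the bracket term with $\nabla_u u$ modulo a gradient via metric compatibility and orthogonality of gradients to divergence-free fields, and finally the Helmholtz decomposition to produce the pressure. The only difference is presentational — you package the key step as computing $\operatorname{ad}_u^\ast u = P(\nabla_u u)$, while the paper performs the identical manipulation inline using the identity of Exercise \ref{ex:classical} 5 — and your sign bookkeeping across the two bracket conventions is consistent with the paper's.
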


\begin{proof}
 Let $\varphi (s,t)$ be a smooth variation of $\varphi$ and $u(s,t) \coloneq \frac{\partial}{\partial t}\varphi (s,t) \circ \varphi(s,t)^{-1}$, i.e.\ $u(t)=u(0,t)$. Set $h(t) = \left.\frac{\partial}{\partial s}\varphi (s,t) \circ \varphi(s,t)^{-1}\right|_{s=0}$ and note that by picking different smooth variations, $h(t)$ can be chosen to be an arbitrary curve in $\mathcal{V}_\mu (M)$ with vanishing end points. We take now the derivative of the energy and compute with \eqref{inv:Energy} 
 \begin{align}
  \left.\frac{d}{ds}\right|_{s=0}\mathrm{En} (\varphi(s,\cdot)) &\stackrel{\hphantom{\eqref{eq:comm:mixed}}}{=}  \int_0^1 \int_M g_m \left(u(t)(m),\left.\frac{\partial}{\partial s}\right|_{s=0} u(s,t)(m)\right) \mathrm{d}\mu (m) \mathrm{d}t \notag\\
  &\stackrel{\eqref{eq:comm:mixed}}{=}  \int_0^1 g^{L^2}\left(u(t),\frac{d}{dt} h(t)-\LB[u(t),h(t)]\right) \mathrm{d}t \label{eq:energy:variation}
\end{align}
Recall that the Lie bracket in \eqref{eq:energy:variation} is the Lie bracket of $\mathcal{V}_\mu (M)$. As this is a subalgebra of the Lie algebra of $\Diff (M)$, \Cref{ex:Liealgdiffeo} implies that it is the negative of the commutator bracket of vector fields, whence $\nabla_u h - \nabla_h u = -\LB[u,h]$. Replace now the Lie bracket and apply Exercise \ref{ex:classical} 5. to $g(u,-\nabla_h u)$ to see that \eqref{eq:energy:variation} yields: 
\begin{align*}
&\int_0^1 g^{L^2} \left(u(t),\frac{d}{dt} h(t)+\nabla_{u(t)}h(t)-\nabla_{h(t)} u(t) \right) \mathrm{d}t\\
 =&\int_0^1 \left(g^{L^2} \left(u(t),\frac{d}{dt} h(t)+\nabla_{u(t)} h(t)\right) \underbrace{-\frac{1}{2} g^{L^2}(\grad g(u(t),u(t)),h(t))}_{=0 \text{ by \Cref{Helmholtz} as $h(t)\in \mathcal{V}_\mu (M)$}}\right) \mathrm{d}t\\
 =&\int_0^1 g^{L^2} \left(u(t),\frac{d}{dt} h(t)+\nabla_{u(t)} h(t))\right)\mathrm{d}t
\end{align*}
We continue with integration by parts with respect to $t$ and the identity $g(u,\nabla_u h) = g(u,\grad g(u,h))-g(\nabla_u u ,h)$ (see Exercise \ref{ex:classical} 5.). Together with the Helmholtz decomposition \Cref{Helmholtz} the above equation is equal to
\begin{align*}
 -\int_0^1 g^{L^2} \left(\frac{d}{dt}u(t) +\nabla_{u(t)} u(t), h(t)\right) \mathrm{d}t
\end{align*}
Hence if $\varphi$ extremises the energy, we see that $-(\partial_tu + \nabla_u u)$ must be $L^2$-orthogonal to every curve (with vanishing end points) in $\mathcal{V}_\mu(M)$. By the Helmholtz decomposition, this happens if and only if there is some function $p$ (determined up to a constant) such that $\partial_t u + \nabla_u u = -\grad p$.  
\end{proof}

We have seen now that the geometry of the group of volume-preserving diffeomorphisms can be exploited to identify the Euler equation as a geodesic equation. This connection is typical for PDE and their associated geodesic equations which are amenable to Arnold's approach. Indeed there is one last reformulation of the Euler equation on the Lie group $\Diff_\mu (M)$ which needs to be mentioned here as it exhibits the connection between invariant Riemannian metric and Lie group more explicitly.

\begin{rem}[The Euler equation as an Euler-Poincar\'{e} equation on $\Diff_\mu (M)$]
 Our aim is to identify the geodesic equation as a so called \emph{Euler-Poincar\'{e} equation} on $\Diff_\mu (M)$. For this, let us start more general: Let $G$ be a regular Lie group with Lie algebra $(\Lf (G),\LB )$ and $\langle \cdot ,\cdot \rangle$ a continuous inner product on $\Lf (G)$. Assume that we with to compute geodesics for the right-invariant metric induced by the choice of inner product. Arguing as for the Euler equation, we see that a curve $\varpi \colon [0,1] \rightarrow G$ extremises the energy if and only if if the expression \eqref{eq:energy:variation} vanishes, i.e.\ in the notation of Exercise \ref{Ex:LA} 11. we must have
 \begin{align*}
  0= \int_0^1 \left\langle u(t),\frac{d}{dt} h(t)-\LB[u(t),h(t)]\right\rangle\mathrm{d}t =  \int_0^1\left\langle u(t),\frac{d}{dt} h(t)-\text{ad}_{u(t)}(h(t))\right\rangle \mathrm{d}t,
 \end{align*}
 where $u(t) = \delta^r \varphi (t)$ (the right logarithmic derivative of $\varphi$). Assume now that for all $x \in \Lf (G)$ there exists an adjoint $\text{ad}_x^{\top}$ for the linear operator $\text{ad}_x$ with respect to the inner product $\langle \cdot, \cdot \rangle$, i.e. $\langle \text{ad}_x^\top (y),z\rangle = \langle y,\text{ad}_x (z)\rangle$. Applying again integration by parts we see that $\varphi$ is a geodesic if and only if its right logarithmic derivative satisfies the \emph{Euler-Poincar\'{e} equation}\footnote{The name goes back to honour Henri Poincar\'{e} who formulated in \cite{poin01} differential equations for mechanical systems on (finite dimensional) Lie groups in the presented form.} \index{Euler-Poincar\'{e} equation}
 $$\frac{d}{dt} \delta^r \varphi = -\text{ad}^\top_{\delta^r \varphi} (\delta^r \varphi).$$
 Thus we have derived yet another expression which is equivalent to the geodesic equation and by the previous results also to the Euler equation of an incompressible fluid if $G = \Diff_\mu (M)$ and the inner product is the $L^2$-inner product. Observe that the Euler-Poincar\'{e} equation is a differential equation on the Lie algebra. The Euler-Poincar\'{e} equation reduces the geodesic equation to the Lie algebra and shows that the geometry of the Lie group and the Riemannian geometry of a right-invariant metric are closely intertwined. We will not discuss this fruitful perspective on the Euler equation. However, there are accounts of the general mechanism with many examples available in the literature. The interested reader is referred for example to \cite{Viz08,modin2019geometric} and \cite[II.3]{MR2456522}.
\end{rem}
\newpage

\begin{Exercise}\vspace{-\baselineskip} \label{Ex:EA:Euler}
 \Question Let $(M,g)$ be a compact Riemannian manifold. Show that the $L^2$-metric \eqref{generalL2} on $\Diff (M)$ restricts to a right-invariant Riemannian metric on $\Diff_\mu (M)$.
 \Question We let again $(M,g)$ be a compact Riemannian manifold and consider the $L^2$-metric $g^{L^2}$ \eqref{generalL2} on $\Diff (M)$. Let $S$ be the metric spray of $g$ and $K$ the associated connector. The aim of this exercise is to prove that $g^{L^2}$ admits a metric spray, connector and covariant derivative by exploiting the right invariance of the metric.\\
 {\footnotesize \textbf{Remark:} The proof for this statement for the $L^2$-metric on $C^\infty (\SSS^1,M)$ from \Cref{sect:L2} can be adapted to the present situation. However, we will follow in this exercise the classical argument of \cite{EM70} which highlights the use of invariance properties.}
 \subQuestion Show that the pushforward $S_*$ is a spray on $\Diff (M)$ and the pushforward $K_*$ is a connector on $\Diff(M)$.
 \subQuestion Define the covariant derivative $\nabla^{L^2}_X Y = K_* \circ TY \circ X$ associated to the connector $K$ (i.e.\ $X,Y$ are vector fields on $\Diff (M)$).
              Work out $\nabla^{L^2}_X Y$ for right-invariant vector fields on $\Diff (M)$ (i.e.\ vector fields $X(\varphi) = X(\id) \circ \varphi$, where $X(\id) \in \mathcal{V} (M) = \Lf (\Diff (M)$). Verify then that $\nabla^{L^2}$ satisfies the properties of the metric derivative associated to $g^{L^2}$ for all right-invariant vector fields.
 \subQuestion Establish that $\nabla^{L^2}$ is the metric derivative of $g^{L^2}$ and deduce that $K_*$ is the connector and $S_*$ the metric spray associated to $g^{L^2}$.\\
 {\footnotesize \textbf{Hint:} Exploit that for every vector field $X \in \mathcal{V} (\Diff (M))$ and $\varphi \in \Diff (M)$ there exists a right-invariant vector field $X^R$ such that $X^R (\varphi) = X(\varphi)$.} 
 \Question Supply the necessary details for the proof of \Cref{lem:commutator:smoothvar}. Show in particular, that $\pr_2 \left(T\rho_{\tilde{p}(s,t)^{-1}} \left(\frac{\partial}{\partial s} \tilde{p}(s,t)\right)\right) = X_p(s,t)$, $X_p = d_1\ast (q,q^{-1};\frac{\partial}{\partial s} q)$ and $X_p(s_0,t_0) = \frac{\partial}{\partial s} q(s_0,t_0)$ and \eqref{inverseformula} reduces to $X_p (s_0,t_0)$.
\end{Exercise}

\setboolean{firstanswerofthechapter}{true}
\begin{Answer}[number={\ref{Ex:EA:Euler} 1.}] 
 \emph{Let $(M,g)$ be a Riemannian metric with volume form $\mu$. We show that the $L^2$-metric is a right-invariant Riemannian metric on $\Diff_\mu (M)$
} \\[.5em]

We have already seen (for $M = \SSS^1$ but the general case is similar) that the $L^2$-inner product $\langle X,Y\rangle_{L^2} \coloneq \int_M g_m (X(m),Y(m))\mathrm{d}\mu (m)$ is a continuous inner product on $\mathcal{V} (M)$.
From the formula for the tangent of the right multiplication in $\Diff (M)$ (cf.\ \Cref{thetamp}), we see that the right-invariant metric induced by the $L^2$-inner product is given by
\begin{align*}
 \langle X \circ \varphi , Y \circ \varphi\rangle_\varphi = \langle X \circ \varphi \circ \varphi^{-1}, Y\circ \varphi \circ \varphi^{-1}\rangle_{L^2} = \langle X , Y \rangle_{L^2}
\end{align*}
Let us assume now that $\varphi$ is a volume-preserving diffeomorphism. Then by diffeomorphism invariance of the integral (see e.g.\ \cite[Proposition 16.6]{Lee13}) we derive now that 
\begin{align*}
  \langle X \circ \varphi , Y \circ \varphi\rangle_\varphi= \langle X , Y \rangle_{L^2} = \int_M \varphi^* g(X,Y) \mathrm{d}\mu = \int_M g_{\varphi(m)}(X\circ \varphi (m),Y\circ \varphi(m))\mathrm{d}\mu(m)
\end{align*}
Thus for every volume-preserving diffeomorphism, the $L^2$-metric coincides with the right-invariant Riemannian metric induced by the $L^2$-inner product on the vector fields. We conclude that the $L^2$-metric is right-invariant on the subgroup $\Diff_\mu (M)$.   
\end{Answer}
\setboolean{firstanswerofthechapter}{false}

\begin{Answer}[number={\ref{Ex:EA:Euler} 1.}] 
 \emph{For $(M,g)$ a compact Riemannian manifold consider the $L^2$-metric $g^{L^2}$ \eqref{generalL2} on $\Diff (M)$. Let $S$ be the metric spray of $g$ and $K$ the associated connector. (a) Then $S_*$ and $K_*$ define a spray and a connector on $\Diff (M)$. (b)-(c) $\nabla^{L^2}_XY \coloneq K_* \circ TY \circ X$ defines the metric derivative on $g^{L^2}$.} \\[.5em]
 
 (a) As $\Diff (M) \opn C^\infty (M,M)$ and $C^\infty (M,M)$ is a canonical manifold, the pushforwards are smooth. That they from a spray and a connector follows directly from the identification of $T^k\Diff (M) \opn T^kC^\infty (M,M) \cong C^\infty (M,T^kM)$.\\
 (b)-(c) Details for this proof are recorded in \cite[Proof of Theorem 9.1]{EM70}.
\end{Answer}

\section{An outlook on Euler-Arnold theory}

In the last section we have seen how the Euler equation of an incompressible fluid can be identified as a geodesic equation of a right-invariant metric on an infinite-dimensional Lie group.
This equation can in turn be rephrased as the Euler-Poincar\'{e} equation on the Lie algebra, highlighting the close connection of Lie group and Riemannian geometry. 
In the present section we will discuss several applications of the theory developed so far. The section concludes with a discussion of the problems one faces to carry out the program sketched.

\begin{setup}[Applications of Euler-Arnold theory]\label{EA:theory:application}
 As we have seen, certain PDE, such as the Euler equation of an incompressible fluid, can be rewritten as geodesic equations on an infinite-dimensional manifold. Moreover, the mechanism is reversible, i.e.\ solutions to the original PDE correspond to geodesics. Vice versa solutions to the geodesic equation of certain (weak) Riemannian metrics yield solutions to partial differential equations. This has the following immediate applications.
 \begin{enumerate}
  \item \textbf{Existence, uniqueness and parameter dependence of solutions}. Geodesics are solution to ordinary differential equations. To establish properties of their solutions (such as local existence of unique solutions) one can hope to apply the usual toolbox for ordinary differential equations. Transporting solutions back to the finite-dimensional world, this will yield local existence and uniqueness for solutions of the PDE. Historically, this was how the existence and uniqueness problem for Eulers equations of an incompressible fluid was first solved in the general case in \cite{EM70}.\footnote{The emphasis is here on general. Some results were known for special cases previous to the treatment in loc.cit.} The caveat here is that ODE tools break down beyond Banach manifolds and one requires a technical analysis to make the program work (see \Cref{EA:theory:problems} below). 
\end{enumerate}
\end{setup}

Before we continue let us recall the concept of sectional curvature.
\begin{setup}\label{sectionalcurvature}
 Let $(M,g)$ be a (weak) Riemannian manifold with covariant derivative $\nabla$ and curvature $R$. Then for two linearly independent vectors $u,v \in T_m M$ spanning a $2$-dimensional subspace $\sigma$ the \emph{sectional curvature}\index{curvature!sectional} is defined as  
  $$K(\sigma) \coloneq \frac{g_x(R(v,w)w,v)}{g_x(v,v)g_x(w,w)-(g_x(v,w))^2}$$
  (where we actually evaluate the curvature $R$ in (local) vector fields $V,W$ with $V(x)=v$ and $W(x)=w$). 
  
As a concrete example, endow the diffeomorphism group $\Diff (M)$ with the weak $L^2$-metric. Every vector $V \in T_\varphi \Diff (M)$ can be expressed as the value of a right-invariant vector field $X_V$ on $\Diff (M)$. Hence it suffices to compute the sectional curvature using right-invariant vector fields. The formula for the covariant derivative of the $L^2$-metric then shows that for a subspace $\sigma$ generated by the orthonormal elements $\{X_V(\varphi) , Y_V(\varphi)\}$ the sectional curvature is given as 
$$K^{L^2} (\sigma) = g^{L^2} (R^{L^2}(X_V,Y_V)Y_V,X_V) = \int_M K(X_V (\id)(m),Y_V(\id)(m)) \mathrm{d}\varphi^*(\mu)(m),$$
where $K$ is the sectional curvature on $M$ (computed with respect to the space spanned by the vectors $X_V(\id)(m)$ and $Y_V(\id)(m)$, \cite[6.4]{Smo07}).
\end{setup}
Continuing with our review of Euler-Arnold theory from \Cref{EA:theory:application}:
\begin{enumerate}
  \item[(b)] \textbf{Geometric tools for PDE analysis}. It is well known from finite-dimensional Riemannian geometry that curvature controls the behaviour of geodesics (see e.g.\ \cite[Chapter 5]{doC92} and note also the connection to the Hopf-Rinow theorem in \Cref{rem:HopfRinow_seminegative}). The point is that for positive sectional curvature, geodesics starting at the same point with slight variation of the initial velocity tend to converge towards each other while for negative sectional curvature they diverge (this can be made explicit as in the finite-dimensional case, but we will not discuss the details here). In the context of partial differential equations these properties can be interpreted as stability of solutions under perturbations of initial conditions. 
  
  Indeed Arnold showed in \cite{MR202082} that the sectional curvature of the $L^2$-metric on $\Diff_\mu (M)$ is negative in almost all directions. So nearby fluid regions will typically diverge exponentially fast from each other. This analysis applies in particular to partial differential equations employed in weather forecasts. So infinite-dimensional Riemannian geometry shows that reliable long term weather forecasts are practically impossible.
 \end{enumerate}

\begin{rem}
 There is also a beautiful connection of the Euler-Arnold equations to ideas from Hamiltonian mechanics on $\Diff (M)$. The differential geometric context for this are (weakly) symplectic structures on $\Diff (M)$ and we refer to \cite[Section 6 and 7]{Smo07} for a discussion. 
\end{rem}

 In the present chapter we have only seen the mechanism applied to the Burgers and Eulers equation. There are many more PDE which typically arise in hydrodynamics and can be treated in the same framework. Equations which are amenable to this treatment are nowadays called \emph{Euler-Arnold equations}.\index{Euler-Arnold equation} We refer to \cite{MR2456522} for an extensive list but mention explicitly the Camassa-Holm equation, the Hunter-Saxton and the Korteweg-deVries (KdV) equation as PDE belonging to this class. 
Since the Hunter-Saxton equation admits a beautiful geometric interpretation, we will now briefly discuss a few more details related to this equation and its geometric treatment.

\begin{ex}[The Hunter-Saxton equation]
 We consider the \emph{periodic Hunter-Saxton equation}\index{Hunter-Saxton equation} on the circle $\SSS^1$. The task is to find a time-dependent vector field $u \colon [0,T[ \times \SSS^1 \rightarrow \R$ which satisfies the following equation
 \begin{equation}\label{HS:equation}
  u_{t xx}+2u_xu_{xx}+uu_{xxx}=0,
 \end{equation}
 where subscripts denote again partial derivatives with respect to time ($t$) or $x\in \SSS^1$. We will see in Exercise \ref{Ex:EA:general} 1 that the Hunter-Saxton equation is the geodesic equation of the right-invariant $\dot{H}^1$-semimetric.\index{Riemannian metric!$\dot{H}^1$-semimetric} To describe this semimetric, we recall from \Cref{sect:L2} the notation $U^\prime \coloneq T_\theta U (1)$ and define the inner product   
 \begin{equation}\label{H1-semimetric}
  g^{\dot{H}^1}_{\id}(U ,V) = \frac{1}{4}\int_{\SSS^1} U^\prime(\theta) V^\prime(\theta) \mathrm{d} \theta \qquad \text{on } C^\infty (\SSS^1,\R).
 \end{equation}
 Then the $\dot{H}^1$-semimetric is the right-invariant semimetric induced by \eqref{H1-semimetric}. Note that it is a semimetric as constant vector fields are annihilated. In \Cref{ex:badexponential} we saw that the constant vector fields generate the group of rotations $\text{Rot} (\SSS^1)$. Hence there are two possibilities to obtain a (weak) Riemannian metric: One can work with the quotient manifold $\Diff(\SSS^1)/\text{Rot} (\SSS^1)$ (see \cite{Len08} for a detailed discussion) or one has to fix a subgroup containing only the trivial rotation. We consider the induced weak Riemannian metric on the Lie subgroup
 \begin{align}\label{fixinggroup}
  D_0 \coloneq \{\phi \in \Diff (\SSS^1) \mid \phi (\theta(0)) = \theta(0)\},
 \end{align}
 where $\theta \colon [0,2\pi] \rightarrow \R^2, t \mapsto (\cos (t),\sin(t)) $ is the canonical parametrisation of the circle. To ease the computations we follow Lenells \cite{Len08} and will in the following always identify diffeomorphisms and vector fields of $\SSS^1$ as periodic mappings $[0,2\pi] \rightarrow \R$. This allows one to prove that the Hunter-Saxton equation exhibits a fascinating geometric feature discovered in \cite{Len07}: The group $D_0$ can be identified as a convex subset of a sphere and this embedding is a Riemannian isometry relating the $\dot{H}^1$-metric to the $L^2$-metric. Indeed this embedding is surprisingly simple, as it is given by
 $$\Psi \colon D_0 \rightarrow C^\infty (\SSS^1,\R) ,\quad \Psi (\varphi) = \sqrt{\varphi^\prime}.$$
 Its image becomes the convex set $\{f(\theta)>0,\ \forall \theta \in \SSS^1, \int_{\SSS^1} |f(\theta)|^2 \mathrm{d}\theta = 1\} \subseteq C^\infty (\SSS^1,\R)$. We omit the details here and refer instead to the exposition in \cite{Len07}. 

 The geometric content of this observation is that we can transform the Hunter-Saxton equation to a geodesic equation on the $L^2$-sphere in $C^\infty (\SSS^1,\R)$. Now solutions to the geodesic equation on the $L^2$-sphere (with respect to the natural $L^2$-metric) can explicitly be computed: We have seen in \Cref{GmEllipsoid} that geodesics of the $L^2$-sphere (seen as the unit sphere of the Hilbert space $L^2 (\SSS^1,\R)$) are given by great circles.  
\end{ex}

While the Hunter-Saxton equation can be interpreted as the geodesic equation on an infinite-dimensional sphere in a Hilbert space, our approach so far has been to consider this equation as an equation on the space $C^\infty (\SSS^1,\R)$ which is not a Hilbert space. This is quite unnatural for two reasons: From the perspective of the PDE this approach will only allow solutions which are smooth in space, whereas it is often of interest to have much less regular solutions, e.g.\ solutions which are only finitely often differentiable in space. The geometric perspective allows us to connect the PDE to an ODE which we then have to solve. However, on $\Diff (M)$ this presents a problem as we will discuss now.

\begin{setup}[Returning to the Banach and Hilbert setting]\label{EA:theory:problems}
 In \Cref{EA:theory:application} we listed as an advantage of the Euler-Arnold approach to PDEs that (local) existence and uniqueness of solutions to these PDE can be obtained by methods for ordinary differential equations (ODE) on infinite-dimensional manifolds. Unfortunately the manifolds we have been working in this chapter are submanifolds of the manifolds of mappings $C^\infty (K,M)$ (where $K$ is a compact manifold. These manifolds are never (except in trivial cases) Banach manifolds, so there are no black-box techniques for ODE as \Cref{Diffeq:beyond} shows. Thus the elegant theory developed so far misses an essential analytic ingredient to solve the ODEs occurring.
 
 The solution to this problem is in principle simple (if one glosses over the technical details): Replace $C^\infty$ functions by finitely often differentiable ones. Note that this dovetails nicely with the problem statement from the PDE side we mentioned earlier. By going to finitely often differentiable functions, we allow solutions to the PDE which are much less regular in space. From the perspective of infinite-dimensional manifolds, one can prove that  
 $$C^\infty (K,M) = \bigcap_{k \in \N_0} C^k (K,M) = \bigcap_{s \in \N_0} H^s (K,M).$$
 where the manifolds in the middle are Banach manifolds and the spaces on the right even Hilbert manifolds. Here $H^s(K,M)$ denotes all mappings of Sobolev $H^s$-type (meaning that their weak derivatives are in $L^2$). We refrain from defining Sobolev spaces on manifolds as there are several subtle points involved in their construction. Instead we remark that they admit a manifold structure similar to the manifolds of mappings we constructed in \Cref{sect:smoothmappingspaces}. For more information we refer the reader to the detailed exposition in \cite{IaKaT13}. 
 
 Conveniently, also Sobolev type groups of diffeomorphisms $\Diff^{H^s}(K)$ exist and one can even prove that $\Diff (K) = \displaystyle \lim_{s \rightarrow \infty} \Diff^{H^s}(K)$ as a projective limit in the category of manifolds. This structure is called an ILH-Lie group, see \cite{Omo74}, and coincides with the Lie group structure of $\Diff (K)$ constructed in \Cref{ex:Diffgp}. The point of the construction is of course that one can work on the Hilbert manifold of Sobolev morphisms. Unfortunately, due to Omori's theorem \cite{MR0579603},\index{Omori's theorem} the groups $\Diff^{H^s} (K)$ can not be Lie groups. They are however manifolds and topological groups such that the left multiplication is continuous but not differentiable (cf.\ \eqref{eq:commdiag} to see that the derivative looses orders of differentiability). However, right multiplication is still smooth and one obtains a so called \emph{half-Lie group},\index{half-Lie group} \cite{MaN18}. This leads to several analytic problems which need to be solved to establish smoothness of the associated metric sprays (cf.\ e.g.\ \cite{EM70}). 
 We refer the reader to the literature for more details as these problems are beyond the scope of this chapter.
\end{setup}

 At this point, the reader should be suitably equipped to understand the classical research literature on these topics. For example \cite{EM70,Ebin15} present the theory for the Euler equation of an incompressible fluid. Also the monograph \cite{MR2456522} presents an excellent overview of the theory together with many pointers towards the literature. We conclude this chapter with a short remark on some more recent developments in Euler-Arnold theory.
 
 \begin{rem}
  Euler-Arnold theory is still an active area of research. Among the many recent results, I like to point out several which I find particularly interesting:
  \begin{enumerate}
   \item Classical Euler-Arnold theory works with right-invariant Riemannian metrics on (subgroups of) diffeomorphism groups. In \cite{BaM20} it has been shown that the approach works also for Riemannian metrics on $\Diff(M)$ which are only invariant with respect to $\Diff_\mu (M)$. Thus a whole new family of PDE, such as certain shallow water equations, can be treated by Euler-Arnold methods.
   \item Instead of a purely deterministic PDE one can apply the mechanism to a stochastic partial differential equation. Stochastic versions of Eulers equations for an incompressible fluid have recently been considered as models for data driven hydrodynamics (modelling uncertainty in the data). For the Euler equation of an incompressible fluid, \cite{MaMaS19} works out the necessary details to make the Euler-Arnold machinery work in the stochastic setting.
   \item There is a connection between Euler-Arnold theory, the differential geometry of diffeomorphism groups and optimal mass transport. This is based on the observation that there is a submersion $\pi \colon \Diff (M) \rightarrow \text{Dens}(M)$ from the diffeomorphism group to the manifold of densities on $M$. This links the Riemannian metrics on $\Diff(M)$ to the Wassertein metric from optimal transport. An introduction to these topics can be found in \cite[Appendix A.5]{MR2456522}.
  \end{enumerate}

 \end{rem}

\begin{Exercise}\vspace{-\baselineskip} \label{Ex:EA:general}
 \Question We consider the group $\Diff (\SSS^1)$ and the $\dot{H}^1$-semimetric \eqref{H1-semimetric}. Show that
 \subQuestion identifying diffeomorphisms with periodic mappings we can identify $D_0$ with $\{u+\id \mid u \colon [0,2\pi] \rightarrow \R,\quad \frac{d}{dx} u >-1, u(0)=0=u(2\pi)\}$.
 \subQuestion $E = \{u \in C^\infty ([0,2\pi],\R) \mid u(0)=0\}$ is a closed subspace of $C^\infty ([0,2\pi],\R)$ and $D_0$ is diffeomorphic to an open subset of $\id +E \subseteq E$.
 \subQuestion the continuous linear operator $A (u)\coloneq -u''$ induces an isomorphism from $E$ to $F \coloneq \{f \in C^\infty ([0,2\pi],\R) \mid \int_0^{2\pi} f(x) \mathrm{d}x =0\}$. Moreover, show that (up to identification) $g^{\dot{H}^1}_{\id} (U,V) = \int_{\SSS^1} UA(V)\mathrm{d}\theta$.
 \subQuestion the group $D_0$ from \eqref{fixinggroup} is a Lie subgroup of $\Diff (\SSS^1)$ and its Lie algebra can be identified as $\Lf (D_0) = \{f \in C^\infty (\SSS^1,\R) \mid \int_{\SSS^1} f(\theta) \mathrm{d}\theta =0\}$.
 \subQuestion \eqref{H1-semimetric} induces a right-invariant weak Riemannian metric on $D_0$.
 \subQuestion a curve $\varphi$ extremises the energy of the $\dot{H}^1$-semimetric \eqref{H1-semimetric} if and only if $u = \frac{\partial}{\partial t} \varphi \circ \varphi^{-1}$ satisfies the Hunter-Saxton equation \eqref{HS:equation}. 
 \Question Prove the claims on the sectional curvature of the $L^2$-metric on $\Diff (M)$ from \Cref{sectionalcurvature}.
\end{Exercise}

\chapter{The geometry of rough paths} \label{sect:rough} \copyrightnotice

In this chapter we will discuss the (infinite-dimensional) geometric framework for rough paths and their signature. Rough path theory originated in the 90's with the work of T.\ Lyons, see e.g.\ \cite{Lyo98}. 
It seeks to establish a theory of integrals and differential equations driven by rough signals. For example, one is interested in the controlled ordinary differential equations of the following type 
\begin{align}\label{cont:ODE}
 y_t' = f (y_t) + g(y_t) X_t'.
\end{align}
Here the subscripts track the time parameter $t$, $X$ is an input path with values in $\mathbb{R}^d$ and $y$ is the output with values in $\mathbb{R}^e$. We will for this exposition assume that all derivatives and integrals needed exist (e.g.\ if $X$ is a smooth path). Finally, $f,g$ are non-linear functions with values in $\R^e$ and $\mathcal{L}(\R^d,\R^e)$, respectively. Focussing on the control term in \eqref{cont:ODE}, let us consider a simple approximation to the solution in the case that $f=0$, i.e.\ $y_t' = g(y_t)X_t'$. For example the first order Euler method gives us the following approximation for the components of $y$:
\begin{align*}
 y_t^i - y_s^i \approx g^i(y_s) \int_s^t dX^i = \lim_{|P|\rightarrow 0} \sum_{[t_j,t_{j+1}] \in P} \left(X^i_{t_{j+1}}-X^i_{t_j}\right), \quad i=1,\ldots ,e. 
\end{align*}
Here superscripts denote components of maps, the integral is defined via a Riemann-Stieltjes sum\index{Riemann-Stieltjes sum} and we think of $g(y_s)$ as a matrix (selecting columns and rows appropriately). The information needed for the approximation is the integral of $X$. In general we would like better approximations (or even a solution), so it is natural to increase the order of the approximations. To obtain the desired formula one applies a Taylor expansion to obtain the following $2$nd order Euler approximation for $i = 1,\ldots ,e$
\begin{align}\label{2step-Euler}
 y_t^i -y_s^i \approx g^i(y_s) \int_s^t dX^i + \sum_{k = 1}^e\sum_{j=1}^d g_k^i(y_s) \frac{\partial}{\partial k} g^j(y_s) \int_s^t \int_s^r \mathrm{d}X^i\mathrm{d}X^j.
\end{align}
Thus higher-order approximation requires knowledge about (mixed) iterated integrals of the path $X$ against itself. So far we have tacitly assumed that the necessary paths and derivatives exist (e.g.\ that all the objects in question are smooth). 

Weakening this requirement, the objects we are interested in are iterated integrals of H\"{o}lder continuous paths with values in $\R^d$.\footnote{The basic theory extends to Banach space valued paths but this requires more technical efforts such as, for example, the introduction of tensor norms.}  
For convenience we will assume throughout this chapter that we are dealing with paths on the interval $[0,1]$. This is no restriction since we can always reparametrise H\"{o}lder paths on any $[0,1]$ to obtain corresponding paths on $[0,1]$ (though this changes the H\"{o}lder norm). Assume we have two continuous mappings $X \colon [0,1] \rightarrow \R^d$ and $Y \colon [0,1] \rightarrow \mathcal{L}(\R^d,\R^e)$, where the continuous linear maps $\mathcal{L}(\R^d,\R^e)$ have been endowed with the operator norm. We would like to define an integral now of $Y$ against the path $X$ and these integrals should yield a continuous map $(X,Y) \mapsto \int Y \mathrm{d}X$. 
Setting $X_{s,t} \coloneq X_t-X_s$ we could try to define the integral as a limit of Riemann-Stieltjes sums,\index{Riemann-Stieltjes sum}
$$\int_0^1 Y(t)\mathrm{d}X(t) \coloneq \lim_{|\mathcal{P}| \rightarrow 0} \sum_{[s,t] \in \mathcal{P}} Y(s)X_{s,t},$$
where $\mathcal{P}$ is a partition of $[0,1]$ and the limit takes the mesh size to $0$. The resulting integral is called the \emph{Young integral}\index{Young integral} and Young has shown in \cite{Young} that the Riemann-Stieltjes sum converges if $X$ is an $\alpha$-H\"{o}lder path\footnote{The notion of $\alpha$-H\"{o}lder paths is recalled in \Cref{sec:roughintro} below.} and $Y$ is a $\beta$-H\"{o}lder path such that $\alpha + \beta > 1$. This result is sharp as one can construct examples of paths such that the sum becomes ill defined. 
So in general, there is no hope for an integration theory which allows us to integrate arbitrarily rough paths against each other. However, the key insight of rough path theory is that the regularity assumption $\alpha + \beta >1$ from Young's theorem can be circumvented if additional structure is added to the paths. Thus a rough integral can be built if we enhance H\"{o}lder continuous paths with additional information to so called rough paths. The point is that this information can be chosen for irregular paths such as Brownian motion (which is known to be $\alpha$-H\"o{}lder for $\alpha \in ]0,1/2[$). 
Here rough path theory excels at clever estimates for the integrals appearing. In the present chapter we will focus on the geometric side of the picture and leave the hard analytic estimates to the rough path literature. Our first aim is to develop a convenient geometric framework to record iterated integrals.

\begin{tcolorbox}[colback=white,colframe=blue!75!black,title=Notation for increments and iterated integrals]
We frequently encounter increments of continuous paths $X \colon [0,1] \rightarrow \R^d$:
\begin{itemize}
 \item $X_t \coloneq X(t)$ and 
 \item $X_{s,t} \coloneq X_t-X_s$ for the increment and $t,s \in [0,1]$.
 \end{itemize}
 With enough differentiability of $X$, we can define iterated integrals of $X$ against itself. For $0\leq s \leq t\leq 1$ we integrate componentwise in $\R^d \otimes \R^d \cong \R^{d^2}$ and set
 $$\int_s^t \int_s^r \mathrm{d}X \otimes \mathrm{d}X \coloneq \int_s^t X_{s,r} \otimes \mathrm{d}X \coloneq \int_s^t X_{s,r} \otimes \mathrm{d}X_r \coloneq \int_s^t X_{s,r} \otimes X_r' \mathrm{d}r.$$
 The notation suppresses indices with the understanding that the objects are matrices whose components are iterated integrals. For higher iterated integrals this will quickly become impractical whence we shall use tensor notation instead.
\end{tcolorbox}

\section{Iterated integrals and the tensor algebra}

In this section we consider the tensor algebra as a continuous inverse algebra. As seen in the introduction to this chapter, we are interested in iterated integrals of the components of a path $X \colon [0,1] \rightarrow \R^d$. For example, the second iterated integrals \eqref{2step-Euler} yield a matrix object whose components we can also conveniently be recorded using tensor notation:
$$\int_s^t \int_s^r \mathrm{d}X^i \otimes \mathrm{d}X^j \coloneq \int_s^t \left(\int_s^r \mathrm{d}X^i\right) \mathrm{d}X^j e_i\otimes e_j,$$
where $e_i,e_j$ are standard basis vectors of $\R^d$. Iterating for higher orders, we can write the resulting integrals as elements in an iterated tensor product. Note that on the basic levels tensors are just a bookkeeping device to track the components integrated against each other. Indeed the canonical identification $\R^d \otimes \R^d \cong \R^{d\times d}$ maps $e_i \otimes e_j$ to the component in $i$th row and $j$th column. Hence, detailed knowledge on tensor products is not needed and we refer to \cite[Chapter 5]{MR960687} for an introduction. 

\begin{defn}[(truncated) Tensor algebra]
 For $d\in \N$ and $k \in \N$ we set 
 $$(\R^d)^{\otimes k} \coloneq \underbrace{\R^d \otimes \R^d \otimes \cdots \otimes \R^d}_{k \text{times}} \text{ and } (\R^d)^{\otimes 0}\coloneq\R.$$
 Write $e_i, 1\leq i \leq d$ for the standard basis vectors of $\R^d$ and recall that the products $e_{i_1} \otimes \cdots \otimes e_{i_k}$ form a base for $(\R^{d})^{\otimes_k}$ whence this space is isomorphic to $\R^{d^k}$. 
 Then we define for $N \in \N \cup \{\infty\}$ the \emph{(truncated) tensor algebra}\footnote{The tensor algebra discussed in this section differed from what is usually called the tensor algebra. In the literature, the tensor algebra usually denotes the direct sum of iterated tensor products (i.e.\ finite sequences of iterated tensors). From this perspective, the tensor algebra $\mathcal{T}^\infty (\R^d)$ should rather be called the completed   tensor algebra.}\index{tensor algebra}
 $$\mathcal{T}^N(\R^d) \coloneq \prod_{k=0}^N (\R^d)^{\otimes k}.$$
 Elements in $\mathcal{T}^N(\R^d)$ will be denoted as sequences $(x_k)_{k < N+1}$ (where $\infty+1=\infty$). An element concentrated in the $k$th factor $(\R^d)^{\otimes k}$ is called \emph{homogeneous of degree} $k$.
 Then the algebra product is given by 
 \begin{align}\label{algebraprod}
  (x_k)_{k < N+1} \otimes (y_k)_{k < N+1} \coloneq \left(\sum_{n+m=k} x_n \otimes y_m\right)_{k <N+1}.
 \end{align}
 Since tensor products over $\R$ with elements of $\R$ contract, i.e.\ $\lambda \otimes v = \lambda v$, we see that the tensor algebra has a unit element $\one = (1,0,0,\ldots) \in \mathcal{T}^N(\R^d)$ (i.e.\ $\one$ is homogeneous of degree $0$) and the map $\pi_0^N \colon \mathcal{T}^N (\R^d) \rightarrow (\R^d)^{\otimes 0}$ is an algebra morphism (and in particular the homogeneous elements of degree $0$ form a subalgebra of the (truncated) tensor algebra). Note however, that the algebra is almost always non-commutative (Exercise \ref{ex:tensor} 2.).  
 \end{defn}
 
 In the introduction we have seen that iterated integrals can be conveniently identified with elements in the tensor algebra. From this point of view it would be enough to treat the tensor algebra as locally convex space which simply stores information. However, it turns out that iterated integrals satisfy several natural identities which can be expressed using the product in the tensor algebra. For example, if $X \colon [0,1] \rightarrow \R^d$ is a smooth path and $\mathbf{X}_{s,t} \coloneq \int_s^t X_{s,r}\otimes \mathrm{d}X \in \R^d \otimes \R^d$ its iterated integral, it is easy to see that these satisfy \emph{Chen's relation}\index{Chen's relation}
 \begin{align}\label{simple:Chen}
  \mathbf{X}_{s,t} - \mathbf{X}_{s,u} - \mathbf{X}_{u,t} = X_{s,u} \otimes X_{u,t}, \quad \forall u \in [s,t]
 \end{align}
To get a feeling for these identities and for the truncated tensor algebra we recommend Exercise \ref{ex:tensor} 2. below. Summing up, we should be interested in the algebra structure of the tensor algebra as well. Our next result discusses the topological structure of these algebras.
 
 \begin{lem}\label{lem:top:CIA}
  Fix $N \in \N\cup\{\infty\}$ and $d \in \N$.
  \begin{enumerate}
   \item An element $a$ in $\mathcal{T}^N(\R^d)$ is invertible in the tensor algebra if and only if the associated element of degree $0$, $a_0 \coloneq \pi_0^N(a)$ is invertible.  
   \item The algebra $\mathcal{T}^N (\R^d)$ is a continuous inverse algebra (CIA).
  Moreover if $N< \infty$, then $\mathcal{T}^N (\R^d)$ is a Banach algebra and if $N=\infty$, $\mathcal{T}^N (\R^d)$ is a \Frechet algebra. 
  \end{enumerate}
 \end{lem}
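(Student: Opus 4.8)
The plan is to prove both parts by exploiting the grading $\mathcal{T}^N(\R^d)=\prod_{k=0}^N(\R^d)^{\otimes k}$ together with the fact that the product \eqref{algebraprod} only mixes components up to a fixed degree, so that every computation can be carried out coordinatewise.

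For part (a), the implication ``$a$ invertible $\Rightarrow a_0$ invertible'' is immediate: since $\pi_0^N$ is an algebra morphism, an inverse $b=a^{-1}$ gives $a_0b_0=\pi_0^N(a\otimes b)=\pi_0^N(\one)=1$, whence $a_0\neq 0$. For the converse I would first factor out the central scalar: if $a_0\neq 0$ write $a=a_0(\one+m)$ with $m\coloneq a_0^{-1}(a-a_0\one)$, so that $\pi_0^N(m)=0$, i.e.\ $m$ has components only in degrees $\geq 1$. It then suffices to invert $\one+m$. The key observation is that $m^{\otimes j}$ is concentrated in degrees $\geq j$, so the Neumann series $b\coloneq\sum_{j=0}^\infty(-1)^j m^{\otimes j}$ is meaningful in both cases: for $N<\infty$ it is a finite sum (all terms with $j>N$ vanish), while for $N=\infty$ only the terms with $j\leq k$ contribute in degree $k$, so the partial sums stabilise in each coordinate and the series defines an element of the product. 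The identities $(\one+m)\otimes b=b\otimes(\one+m)=\one$ then follow by the usual telescoping, which is a finite computation in each fixed degree (and both one-sided products telescope because $m$ commutes with its own powers).

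For part (b) I would first record that the multiplication is continuous: by \eqref{algebraprod} the degree-$k$ component $(x\otimes y)_k=\sum_{n+m=k}x_n\otimes y_m$ is a finite sum of the (continuous bilinear, finite-dimensional) tensor maps, hence depends continuously on $(x,y)$; since a map into a product is continuous iff each component is, $\otimes$ is continuous, so $\mathcal{T}^N(\R^d)$ is a locally convex topological algebra. To pin down the Banach/\Frechet structure I would equip each $(\R^d)^{\otimes k}$ with the projective tensor norm $\|\cdot\|_k$ built from the Euclidean norm on $\R^d$, which satisfies the cross-norm estimate $\|u\otimes v\|_{n+m}\leq\|u\|_n\|v\|_m$. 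For $N<\infty$ the norm $\|x\|\coloneq\sum_{k=0}^N\|x_k\|_k$ is submultiplicative and the space is finite-dimensional, hence a Banach algebra. For $N=\infty$ the seminorms $q_n(x)\coloneq\sum_{k=0}^n\|x_k\|_k$, $n\in\N_0$, generate the product topology, and the estimate $q_n(x\otimes y)\leq q_n(x)q_n(y)$ (valid because a pair with $n'+m\leq n$ has $n',m\leq n$) shows they are submultiplicative; as a countable product of complete finite-dimensional spaces, $\mathcal{T}^\infty(\R^d)$ is complete and metrisable, hence a \Frechet algebra.

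It remains to verify the CIA conditions. Openness of the unit group follows at once from part (a): $A^\times=(\pi_0^N)^{-1}(\R\setminus\{0\})$ is the preimage of an open set under the continuous projection $\pi_0^N$. For continuity of inversion I would avoid any global estimate and instead use the recursion obtained from $a\otimes b=\one$ in degree $k\geq 1$, namely $b_0=a_0^{-1}$ and $b_k=-a_0^{-1}\sum_{n=1}^{k}a_n\otimes b_{k-n}$. By induction each $b_k$ is a continuous function of $a$ on $A^\times$, being a polynomial expression in $a_0^{-1}$ and finitely many components $a_1,\dots,a_k$ assembled from continuous scalar multiplication and continuous tensor maps; since inversion maps into the product, continuity of every coordinate $a\mapsto b_k$ yields continuity of $a\mapsto a^{-1}$. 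The main obstacle I anticipate is exactly this continuity of inversion in the \Frechet case $N=\infty$, where it is not automatic for general Fréchet algebras; the graded structure resolves it cleanly, since the inverse is computed degree by degree through the finite recursion above and no convergence control of the seminorms is required.
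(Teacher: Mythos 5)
Your proof is correct and follows essentially the same route as the paper's: both directions of (a) rest on $\pi_0^N$ being an algebra morphism plus the Neumann series, with the grading guaranteeing that the series reduces to a finite (polynomial) expression in each homogeneous degree, and the topological claims in (b) are all verified coordinatewise through the continuous projections. You are somewhat more explicit than the paper in two places — the degree-by-degree recursion $b_k=-a_0^{-1}\sum_{n=1}^{k}a_n\otimes b_{k-n}$ making continuity of inversion transparent (the paper merely asserts it from the Neumann series), and the construction of submultiplicative seminorms $q_n$ (which the paper does not record here, though it is the hypothesis invoked later for regularity of the unit group) — but these are elaborations of the same argument rather than a different one.
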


\begin{proof}
 As a first step, let us topologise $\mathcal{T}^N(\R^d)$. 
 
 \textbf{If $N< \infty$,} we obtain a finite dimensional algebra and thus it is a Banach algebra and a CIA (alternatively see Exercise \ref{ex:tensor} 1.).

\textbf{If $N = \infty$,} we observe first that $\mathcal{T}^\infty(\R^d)$ is a countable product of finite dimensional locally convex spaces, whence it is a \Frechet space. Note that this topology and the Banach topology for $N< \infty$ turn the projection $\pi_0^N$ into a continuous map.
 Exploiting the product topology, we see that the product $\mathcal{T}^\infty(\R^d) \times \mathcal{T}^\infty(\R^d) \rightarrow \mathcal{T}^\infty(\R^d)$ is continuous if and only the component maps 
 $P_k \colon \mathcal{T}^\infty(\R^d) \times \mathcal{T}^\infty(\R^d) \rightarrow (\R^d)^{\otimes k}, k\in \N$ are continuous. However, as the algebra product \eqref{algebraprod} respects the homogeneous degree of elements, it is clear that $P_k$ factors through the continuous inclusion $\mathcal{T}^k(\R^d)$ and the algebra product of $\mathcal{T}^k(\R^d)$. Both the inclusion and the algebra product are continuous, whence $P_k$ and consequently the product on $\mathcal{T}^\infty (\R^d)$ is continuous. We deduce that $\mathcal{T}^\infty (\R^d)$ is a locally convex algebra. To see that it is also a CIA, let us prove first the claim on invertibility of elements.

 Let $a \in \mathcal{T}^N(\R^d)$ and write $a = a_0 +b$, where $a_0 = \pi_0^N(a)$ and $b = a-a_0$. Since $\pi_0^N$ is an algebra homomorphism, $a_0$ must be invertible if $a$ is invertible.
 For the converse we observe that $a$ is invertible if and only if $a_0^{-1}a=1+a_0^{-1}b$ is invertible. Plugging $a_0^{-1}a$ into the \emph{Neumann inversion formula}\index{Neumann inversion formula} (cf.\ \cite[Theorem II.1.11]{MR1787146})
 \begin{align}\label{neumann_inverse}
  (1+X)^{-1} = \sum_{k=0}^\infty (-1)^kX^{\otimes k}
 \end{align}
 we obtain the desired inverse $a_0^{-1}(1+b)^{-1}$ if the series converges. For this we observe that $1-a_0^{-1}a$ has no part which is homogeneous of degree $0$. Taking products of this element with itself, we only obtain contributions by homogeneous parts of higher degrees. Hence, if $N< \infty$ and we truncate, the series is just a polynomial. For $N=\infty$ we see similarly that after projecting to the factors $(\R^d)^{\otimes k}$ we again obtain only a polynomial. Hence also in this case the series converges as it (trivially) converges in every factor. Thus $a_0^{-1}a$ is invertible and the inverse depends continuously on $a$. Moreover, the set of units $\mathcal{T}^\infty (\R^d)^\times = (\pi_0^\infty)^{-1}(\R\setminus \{0\})$ is open. Summing up this proves that $\mathcal{T}^\infty (\R^d)$ is a CIA.
 \end{proof}
 
 \begin{cor}\label{unit}
  For every $N \in \N \cup\{\infty\}$ and $d\in \N$. The group $(\mathcal{T}^N(\R^d))^\times =\{v\in \mathcal{T}^N(\R^d)\mid \pi_0^N(v) \neq 0\}$ is a regular locally exponential Lie group whose Lie algebra is $\mathcal{T}^N(\R^d)$ with the commutator bracket. Moreover, its Lie group exponential is given by  
  $$\exp_\otimes  \colon \mathcal{T}^N(\R^d) \rightarrow (\mathcal{T}^N(\R^d))^\times,\quad \exp_\otimes (v) = \sum_{k=0}^\infty \frac{v^{\otimes k}}{k!},$$
  where $v^{\otimes k}$ is the $k$th power of $v$ with respect to the algebra product. 
 \end{cor}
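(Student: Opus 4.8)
The plan is to assemble the statement almost entirely from the structural facts about continuous inverse algebras collected earlier, the only genuinely new work being the production of a submultiplicative generating family of seminorms in the \Frechet case. First I would record the Lie group structure and the Lie algebra: by \Cref{lem:top:CIA} the algebra $\mathcal{T}^N(\R^d)$ is a CIA (a Banach algebra when $N<\infty$, a \Frechet algebra when $N=\infty$), so its unit group is a Lie group by \Cref{ex:CIAunit}. The explicit description $(\mathcal{T}^N(\R^d))^\times=\{v\mid \pi_0^N(v)\neq 0\}$ is exactly part (a) of \Cref{lem:top:CIA}. By Exercise \ref{Ex:LA} 9 the Lie algebra of the unit group of a CIA is the algebra itself with the commutator bracket, which is the asserted Lie algebra.

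Next I would establish regularity and local exponentiality. The space $\mathcal{T}^N(\R^d)$ is complete (Banach, resp.\ \Frechet), hence Mackey complete by \Cref{Mackey-remark}. To apply \Cref{ex:CIAregular} I must exhibit a generating family of \emph{submultiplicative} seminorms. I would equip each finite-dimensional factor $(\R^d)^{\otimes k}$ with the projective tensor norm $\lVert\cdot\rVert_k$ built from the Euclidean norm on $\R^d$; this is a cross norm, so $\lVert a\otimes b\rVert_{n+m}\le \lVert a\rVert_n\lVert b\rVert_m$ for homogeneous $a,b$. Setting $p_N(x)\coloneq\sum_{k=0}^N\lVert x_k\rVert_k$ and using the product formula \eqref{algebraprod}, the degree-$\le N$ part of $x\otimes y$ only involves $x_n,y_m$ with $n,m\le N$, so
$$p_N(x\otimes y)\le \sum_{\substack{n,m\ge 0\\ n+m\le N}}\lVert x_n\rVert_n\lVert y_m\rVert_m\le p_N(x)\,p_N(y).$$
For $N<\infty$ this single norm is a submultiplicative Banach-algebra norm, while for $N=\infty$ the increasing family $\{p_N\}_{N\in\N_0}$ generates the product topology and is submultiplicative. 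Thus \Cref{ex:CIAregular} gives regularity, and local exponentiality follows since unit groups of Mackey complete CIAs are locally exponential, \cite{MR2997582}.

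Finally I would identify the exponential. Recall that for a regular Lie group $\exp_G=\evol\circ C$ with $C$ the constant-curve inclusion, so it suffices to compute $\Evol$ of the constant curve $\eta\equiv v$. For the CIA $\mathcal{T}^N(\R^d)$ this solution is the Volterra series of \Cref{ex:CIAregular}; inserting $\eta\equiv v$, every time-ordered product $\eta(t_1)\cdots\eta(t_n)$ collapses to $v^{\otimes n}$, and the iterated integral over the simplex $\{0<t_1<\cdots<t_n<t\}$ equals $t^n/n!$, whence $\Evol(\text{const }v)(t)=\sum_{n\ge 0} t^n v^{\otimes n}/n!=\exp_\otimes(tv)$. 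Evaluating at $t=1$ yields $\exp_G(v)=\exp_\otimes(v)$. Convergence is automatic from the seminorm estimate $p_N(v^{\otimes k}/k!)\le p_N(v)^k/k!$, which gives absolute convergence in each (sub)seminorm and hence a limit by completeness. As an alternative to quoting the Volterra series, I would differentiate the series termwise to check directly that $\gamma(t)\coloneq\exp_\otimes(tv)$ satisfies $\dot\gamma(t)=\gamma(t)v=T\lambda_{\gamma(t)}(v)$ with $\gamma(0)=\one$, and invoke uniqueness of solutions (\Cref{rem:regularity}(a)) to conclude $\gamma=\Evol(\text{const }v)$.

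The hard part is the submultiplicative seminorm construction in the case $N=\infty$: everything else is a direct citation, but \Cref{ex:CIAregular} is only unlocked once one checks that the cross-norm estimate on the tensor factors survives summation over degrees. Once that bookkeeping is in place the remaining steps are routine.
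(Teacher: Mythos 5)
Your proposal is correct and follows essentially the same route as the paper: Lie group structure and Lie algebra from the CIA results, regularity via \Cref{ex:CIAregular}, the exponential by inserting a constant path into the Volterra series, and a citation to \cite{MR2997582} for local exponentiality when $N=\infty$. The one place you go beyond the paper's own proof is a genuine improvement: the paper invokes \Cref{ex:CIAregular} citing only completeness, silently passing over the hypothesis that the topology be generated by \emph{submultiplicative} seminorms, whereas your cross-norm construction of the family $p_N$ actually verifies it.
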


 \begin{proof}
  The Lie group structure was established in \Cref{ex:CIAunit} and the Lie algebra computed in Exercise \ref{Ex:LA} 8. Note that due to \Cref{lem:top:CIA} the (truncated) tensor algebra is complete, whence its unit group is regular by \Cref{ex:CIAregular}. There we also mentioned that the evolution map $\Evol$ is given by the Volterra series \eqref{Volterraseries}. Plugging a constant path $t\mapsto v$ into the Volterra series\index{Volterra series} e immediately obtain that $\exp_\otimes$ is the Lie group exponential (check this!). If $N < \infty$ the CIA is finite dimensional, whence $(\mathcal{T}^N(\R^d))^\times$ is a finite dimensional Lie group and thus locally exponential. The case $N=\infty$ is much more involved and we refer to \cite[Theorem 5.6]{MR2997582} for details. Note however, that we will establish the convergence of the Lie group exponential and its inverse on a certain closed subspace in \Cref{Lgpexp:onsubspace} below. 
 \end{proof}

 Returning briefly to iterated integrals of a smooth path $X \colon [0,1] \rightarrow \R^d$, we shall now investigate iterated integrals of the path against itself. For this identify $\R^d$ with the homogeneous elements of degree $1$. Thus $X$ becomes a smooth curve to the tensor algebra. However, for reasons which will become apparent in a moment, we are more interested in the smooth curve $DX (t) \coloneq (0,X' (t),0,0,\ldots) \in \mathcal{T}^N(\R^d)$. Applying the evolution to $DX$ (viewed as a Lie algebra valued path) and sorting the result of the Volterra series by degree we obtain 
 $$\Evol (DX)(t) = \left( t \mapsto \left(1,X_{0,t}, \int_0^t \int_0^r \mathrm{d}X\otimes \mathrm{d}X_r, \int_0^t \int_0^{r_2} \int_0^{r_1} \mathrm{d} X \otimes \mathrm{d}X \otimes \mathrm{d}X, \ldots \right)\right)$$
 where we either truncate at $N < \infty$ or take the full tensor series for $N=\infty$. We have now used the evolution of the Lie groups to define a well known object in the theory of rough paths:
 
 \begin{defn}[Signature of a smooth path]
  Let $X \colon [0,1] \rightarrow \R^d$ be a smooth path. Then we define the signature \emph{$N$-step signature}\index{signature of a smooth path}
  \begin{align}\label{sig:defn}
   S_N (X)_{s,t} \coloneq \left(1,X_{s,t}, \int_s^t \int_s^{r_1}\mathrm{d}X \otimes \mathrm{d}X, \int_s^t \int_s^{r_2}\int_s^{r_1}\mathrm{d}X \otimes \mathrm{d}X \otimes \mathrm{d}X, \ldots \right).
  \end{align}
  If $N=\infty$ we also write shorter $S(X)\coloneq S_\infty (X)$ and say that $S(X)$ is the \emph{signature of the smooth path} $X$.  
 \end{defn}
 
 As $S_N(X)$ is the evolution of a smooth path it satisfies for every $N \in \N \cup \{\infty\}$ the Lie type differential equation 
 \begin{align}\label{sig:diffeq}
  \begin{cases}
    \frac{d}{dt}S_N(X)_{0,t} = S_N(X)_{0,t} \otimes (dX)_t & t \in [0,1],\\
    S_N(X)_{0,0}= \one . &
   \end{cases}
 \end{align}
 Before we continue, it is important to stress that the signature can not only be defined for a smooth path. We shall see later (see \Cref{sec:roughintro}) that the signature exists for arbitrary rough paths.
 The signature of a path has turned out to be an immensely important object in the theory of rough paths and its applications. For example, the signature can be computed in advance for paths of interest and can then applied in numerical analysis or for machine learning purposes. We refer to \cite{chevyrev2016primer} for an introduction. 
 
 We will generalise \eqref{sig:diffeq} in Exercise \ref{ex:tensor} 4. and show that $S_N(X)_{s,t}$ can be recovered directly from $\Evol (DX)(t) = S_N (X)_{0,t}$ by virtue of Chen's relation $S_{N}(X)_{s,t} = S_N(X)_{s,u} \otimes S_N(X)_{u,t}$ for all $s<u<t$. However, our investigation so far also hints at the fact that the unit group of the tensor algebra is much larger than needed and contains many elements which will not turn out to be signatures of smooth paths. For example, the second level of the signature is a matrix (viewed as an element in $\R^d \otimes \R^d$). Its symmetric part is fixed by the shuffle of the level $1$ part of the signature with itself (Chen's relation). Hence the second level uniquely is determined by its antisymmetric part which is in the stochastical theory interpreted as the Levy-area. To capture these non-linear constraints, one restricts to a certain Lie subgroup of the unit group which expresses the geometric features of the signature. For this let us first study the restriction of the Lie group exponential.

 \begin{lem}\label{Lgpexp:onsubspace}
  Let $N \in \N \cup \{\infty\}$ and $d \in \N$ and set $\mathcal{I}_N \coloneq (\pi^N_0)^{-1}(0)$. Then $\mathcal{I}_N$ is a Lie algebra ideal in $\mathcal{T}^N (\R^d)$ and the following map are mutually inverse smooth diffeomorphisms
  \begin{align*}
   \exp_N \colon \mathcal{I}_N &\rightarrow \one + \mathcal{I}_N,\quad v \mapsto \sum_{0\leq n\leq N} \frac{X^{\otimes n}}{n!}\\
   \log_N \colon \one + \mathcal{I}_N &\rightarrow \mathcal{I}_N, \quad 1+v \mapsto \sum_{0\leq n \leq N} (-1)^{n+1} \frac{Y^{\otimes n}}{n}
  \end{align*}
 \end{lem}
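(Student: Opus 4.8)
The plan is to establish the four claims in turn—that $\mathcal{I}_N$ is a Lie ideal, that the two series are well defined with values in the stated affine subspaces, that they are mutually inverse, and that both are smooth—with the mutual inversion being the substantive point. First I would note that the ideal property is essentially formal: the text records that $\pi_0^N \colon \mathcal{T}^N(\R^d) \to (\R^d)^{\otimes 0} = \R$ is an algebra morphism, so $\mathcal{I}_N = \ker \pi_0^N$ is a two-sided associative ideal, since $\pi_0^N(a \otimes x) = \pi_0^N(a)\pi_0^N(x) = 0 = \pi_0^N(x \otimes a)$ for $x \in \mathcal{I}_N$ and arbitrary $a$. Then $\LB[a,x] = a \otimes x - x \otimes a \in \mathcal{I}_N$, so $\mathcal{I}_N$ is in particular a Lie ideal.

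The bookkeeping fact underlying everything else is that for $v \in \mathcal{I}_N$ the power $v^{\otimes n}$ is concentrated in homogeneous degrees $\geq n$, i.e.\ $\pi_k^N(v^{\otimes n}) = 0$ whenever $n > k$; this follows by a one-line induction from the product formula \eqref{algebraprod}, since each factor lives in degrees $\geq 1$. Consequently each homogeneous component of $\sum_n v^{\otimes n}/n!$ is a finite sum, so for $N = \infty$ the series converges in the Fréchet topology of $\mathcal{T}^\infty(\R^d)$, while for $N < \infty$ it is an honest finite sum; the identical remark applies to $\log_N$. Reading off the degree-$0$ part shows $\exp_N(v) \in \one + \mathcal{I}_N$ and $\log_N(\one + v) \in \mathcal{I}_N$, so both maps go between the asserted spaces.

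For the inversion I would transfer the classical formal identities $\log\bigl(1 + (e^T - 1)\bigr) = T$ and $e^{\log(1+S)} - 1 = S$ from $\R[[T]]$ to the algebra. The degree bound says exactly that every $v \in \mathcal{I}_N$ is topologically nilpotent, so substitution $T \mapsto v$ defines a unital algebra homomorphism $\phi_v \colon \R[[T]] \to \mathcal{T}^N(\R^d)$, $\sum a_n T^n \mapsto \sum a_n v^{\otimes n}$, with well-definedness and multiplicativity verified degree by degree (at each degree $k$ only the finitely many $n \leq k$ contribute). Writing $w \coloneq \exp_N(v) - \one = \phi_v(e^T - 1) \in \mathcal{I}_N$, I would then check $\log_N(\exp_N(v)) = \phi_w\bigl(\log(1+S)\bigr) = \phi_v\bigl(\log(1 + (e^T - 1))\bigr) = \phi_v(T) = v$, the middle equality being the statement that $\phi_v$ intertwines series substitution, which is legitimate because $e^T - 1$ has no constant term and so every rearrangement is finite once projected to a fixed degree. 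The reverse composition is symmetric, giving $\exp_N \circ \log_N = \id_{\one + \mathcal{I}_N}$. I expect this intertwining of substitution with the (in the case $N = \infty$ infinite) series composition to be the main obstacle, precisely because it must be argued through the degree-wise finiteness rather than naive continuity on $\R[[T]]$.

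Finally, for smoothness I would reduce to the ambient space. Since $\mathcal{I}_N$ is a closed subspace, \Cref{lem:seq-closed} allows testing smoothness of $v \mapsto \exp_N(v) - \one$ as a map into $\mathcal{T}^N(\R^d) = \prod_k (\R^d)^{\otimes k}$, and \Cref{lem:prodspace} reduces this to each component $\pi_k^N \circ \exp_N$. By the degree bound this component equals the finite sum $\sum_{n \leq k} \pi_k^N(v^{\otimes n})/n!$, and each summand is the composition of the continuous $n$-fold tensor multiplication—smooth because continuous multilinear maps are smooth (Exercise \ref{Ex:Bastiani} 2.)—with the continuous projection $\pi_k^N$, hence smooth. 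The same applies to $\log_N$. As $\exp_N$ and $\log_N$ are thus mutually inverse smooth maps, each is a diffeomorphism, completing the proof.
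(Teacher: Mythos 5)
Your proof is correct and follows essentially the same route as the paper's: the degree-shift observation that $v^{\otimes n}$ lives in degrees $\geq n$, reduction to finite sums (hence smoothness) in each homogeneous degree, and formal power series manipulation for the mutual inversion. The paper compresses the last step into the phrase ``inserting and rearranging the formal power series into each other''; your substitution homomorphism $\phi_v$ is simply a careful rendering of that same idea, so no new method is involved.
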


 \begin{proof}
  Since the Lie bracket is given by the commutator, it is clear that $\mathcal{I}_N = (\pi_0^N)^{-1}(0)$ is a Lie ideal (i.e.\ $\LB[v,w] \in \mathcal{I}_N$ if either $v$ or $w$ is in $\mathcal{I}_N$. Note first that since $v \in \mathcal{I}_N$ we have $\pi_0^N(v)=0$ and thus $v^{\otimes k}$ does not contain contributions by homogeneous elements of degree less then $k$. In particular, we see that for every degree, the series $\exp_N$ and $\log_N$ reduce to polynomials in $v$. Thus both mappings are well defined smooth mappings to $\mathcal{T}^N(\R^d)$. Inserting and rearranging the formal power series into each other shows that $\exp_N$ and $\log_N$ are mutually inverse, whence diffeomorphisms.  
 \end{proof}

 Note that $\one + \mathcal{I}_N$ is a subgroup of $\mathcal{T}^N(\R^d)^\times$ and a closed submanifold (as a closed affine subspace of $\mathcal{T}^N(\R^d)$. Thus in particular it is a closed Lie subgroup of the unit group.
 Since the exponential series yields the Lie group exponential of the unit group, we can interpret \Cref{Lgpexp:onsubspace} as the statement that the Lie group exponential of $\one + \mathcal{I}_N$ is a diffeomorphism from the Lie algebra onto the group. However, the Lie group we are after is yet a smaller Lie subgroup of $\one + \mathcal{I}_N$ which nevertheless contains all signatures. 
 
 \begin{defn}
  Let $N \in \N \cup \{\infty\}, d\in \N$. Then we define $\mathfrak{g}^N(\R^d)$ as the smallest closed Lie subalgebra generated by the homogeneous elements of degree $1$ in $\mathcal{T}^N(\R^d)$.
  Explicitly, these algebras are constructed as follows: Set $\mathcal{P}^1 (\R^d) \coloneq \R^d \subseteq \mathcal{T}^N (\R^d)$ (via the canonical identification). Then define recursively
  \begin{align*}
   \mathcal{P}^{n+1}(\R^d) &\coloneq \mathcal{P}^n (\R^d) + \text{span} \{\LB[x,y] \mid x \in \R^d , y \in \mathcal{P}^{n}(\R^d)\}\\
   \mathcal{P}^\infty (\R^d) &\coloneq \left\{(0,P_1,P_2,\ldots ) \middle|\, \forall i \in \N, P_i \in (\R^d)^{\otimes i} \cap \mathcal{P}^n(\R^d) \text{ for some } n\in \N\right\}
  \end{align*}
  Elements in $\mathcal{P}^n(\R^d)$ are called \emph{Lie polynomials}\index{Lie polynomial} while elements in $\mathcal{P}^\infty (\R^d)$ are called \emph{Lie series}.\index{Lie series}\footnote{The name hails from the custom to write a Lie series in the form of a formal power series instead of the sequence representation we have chosen. We refer to \cite[Chapter 1]{Reut93} for more information.} Then we set $\mathfrak{g}^N (\R^d ) \coloneq \overline{\mathcal{P}^N(\R^d)}$ (where the bar denotes topological closure) and observe that these spaces are closed Lie subalgebras of $\mathcal{I}_N$ for $N \in \N \cup\{\infty\}$. 
 \end{defn}

 If we take now the image of $\mathfrak{g}^N(\R^d)$ under the exponential map $\exp_N$ we obtain a closed subset $G^N(\R^d) \coloneq \exp_N (\mathfrak{g}^N(\R^d))$ of the unit group. It is non-trivial to see that $G^N (\R^d)$ forms a group under tensor multiplication. The classical proof for this fact employs the Baker-Campbell-Hausdorff series as an essential tool. As this would lead us too far from our objects of interests, we will import this result and investigate just its differentiability.
 
 \begin{prop}\label{prop:GNlocexp}
  The set $G^N (\R^d)$ is a closed Lie subgroup of the unit group $(\mathcal{T}^N(\R^d))^\times$ for all $N \in \N \cup \{\infty\}$. Moreover, this structure turns it into a (locally) exponential Lie group.
 \end{prop}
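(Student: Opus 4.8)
The strategy is to transport all of the structure through the global diffeomorphism $\exp_N$ of \Cref{Lgpexp:onsubspace}, reducing everything to the linear datum $\mathfrak{g}^N(\R^d) \subseteq \mathcal{I}_N$, and to import the group property via Baker--Campbell--Hausdorff as announced. First I would recall that $\one + \mathcal{I}_N$ is a closed Lie subgroup of $(\mathcal{T}^N(\R^d))^\times$ and that $\log_N \colon \one + \mathcal{I}_N \to \mathcal{I}_N$ is a global chart. Since $\mathfrak{g}^N(\R^d) = \overline{\mathcal{P}^N(\R^d)}$ is by definition a closed, hence sequentially closed, vector subspace of $\mathcal{I}_N$, and since $\log_N(G^N(\R^d)) = \mathfrak{g}^N(\R^d)$ (because $\log_N = \exp_N^{-1}$), the chart $\log_N$ is in fact a global submanifold chart in the sense of \Cref{defn:submanifold}. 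This exhibits $G^N(\R^d)$ as a closed submanifold of $\one + \mathcal{I}_N$, and thus of the unit group; its tangent space at $\one$ is $T_0\exp_N(\mathfrak{g}^N(\R^d)) = \mathfrak{g}^N(\R^d)$ using $T_0 \exp_N = \id$ (\Cref{lem:deriv:evol}).

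Importing now that $G^N(\R^d)$ is a subgroup, being simultaneously a submanifold and a subgroup of the unit group makes it a Lie subgroup (\Cref{defn:liesubgp}, whose footnote invokes \Cref{lem:submfd:initial} to produce the Lie group structure); closedness was already established, so it is a closed Lie subgroup with Lie algebra $(\mathfrak{g}^N(\R^d), \LB )$, the commutator bracket restricted from $\mathcal{T}^N(\R^d)$. It then remains to settle (local) exponentiality. For $N < \infty$ the ambient group $(\mathcal{T}^N(\R^d))^\times$ is finite-dimensional by \Cref{lem:top:CIA}, so $G^N(\R^d)$ is a finite-dimensional closed subgroup, hence automatically regular and locally exponential. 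For $N = \infty$ I would argue as follows: writing $\iota$ for the inclusion Lie group morphism $G^N(\R^d) \hookrightarrow (\mathcal{T}^N(\R^d))^\times$, whose derivative $\Lf(\iota)$ is the inclusion $\mathfrak{g}^N(\R^d) \hookrightarrow \mathcal{T}^N(\R^d)$, naturality of the Lie group exponential (Exercise~\ref{ex:regular} 5.) forces $\exp_{G^N(\R^d)}$ to be the corestriction of $\exp_\otimes|_{\mathfrak{g}^N(\R^d)}$. But on $\mathfrak{g}^N(\R^d) \subseteq \mathcal{I}_N$ one has $\exp_\otimes = \exp_N$, and $\exp_N$ restricts to the diffeomorphism $\mathfrak{g}^N(\R^d) \to G^N(\R^d)$ by \Cref{Lgpexp:onsubspace}. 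Hence the group exponential is a global diffeomorphism, yielding local (indeed global) exponentiality --- \emph{provided} $G^N(\R^d)$ is regular.

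The main obstacle is precisely this regularity for $N = \infty$, since beyond Banach spaces there is no black-box ODE theory. The plan is to show that the evolution in the (regular) ambient group stays inside $G^N(\R^d)$: given $\eta \in C^\infty([0,1], \mathfrak{g}^N(\R^d))$, the solution $\gamma = \Evol(\eta)$ of the Lie type equation automatically lies in $\one + \mathcal{I}_N$, and a bootstrapping argument on $\xi(t) \coloneq \log_N(\gamma(t))$ --- using the derivative-of-exponential series, the fact that $\mathfrak{g}^N(\R^d)$ is an $\operatorname{ad}$-invariant closed subalgebra, and that weak integrals of curves in a sequentially closed subspace remain there (cf.\ the reasoning of \Cref{lem:seq-closed}) --- should show $\xi([0,1]) \subseteq \mathfrak{g}^N(\R^d)$. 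Once this invariance is known, $\evol_{G^N(\R^d)}$ is the corestriction of $\evol$ and is smooth by \Cref{lem:submfd:initial}, which gives regularity.

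An attractive alternative for $N = \infty$ exploits $\mathcal{T}^\infty(\R^d) = \varprojlim_n \mathcal{T}^n(\R^d)$, whence (via the truncation algebra morphisms, which map signatures to signatures) $G^\infty(\R^d) = \varprojlim_n G^n(\R^d)$: one solves the Lie type equation at each finite, and therefore regular, level $G^n(\R^d)$ and glues the compatible solutions into an evolution on the projective limit, with smoothness of $\evol$ following from the universal property of the limit. Either way, verifying the convergence in the bootstrapping route, or the compatibility and limit-interchange in the projective-limit route, is the technical heart of the argument, while the submanifold and exponential statements are essentially formal consequences of \Cref{Lgpexp:onsubspace} and \Cref{unit}.
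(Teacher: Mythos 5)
Your proposal follows the same route as the paper's proof: closedness and the subgroup property are imported from the algebra literature, the submanifold structure is obtained by transporting the closed subspace $\mathfrak{g}^N(\R^d)$ through the exponential, and (local) exponentiality comes from the fact that $\exp_\otimes$ restricts to the group exponential of $G^N(\R^d)$ and maps $\mathfrak{g}^N(\R^d)$ diffeomorphically onto it. Two remarks on where you diverge. First, you use the global chart $\log_N$ on $\one+\mathcal{I}_N$; the paper instead takes a small identity neighbourhood $V$ on which $\exp_\otimes$ is a diffeomorphism, observes $\exp_\otimes(V\cap\mathfrak{g}^N(\R^d))=\exp_\otimes(V)\cap G^N(\R^d)$, and left-translates this single submanifold chart around the group. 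The two constructions amount to the same transport, and yours is if anything cleaner, since \Cref{Lgpexp:onsubspace} already hands you the global diffeomorphism. Second, and more substantively: you single out regularity of $G^\infty(\R^d)$ as the technical heart and leave both of your proposed strategies (bootstrapping on $\log_N(\gamma)$, or gluing along the projective limit) unexecuted. The paper sidesteps this: to identify the group exponential one only needs the integral curves of the left-invariant fields $L_v$ for $v\in\mathfrak{g}^N(\R^d)$, and these are explicitly $t\mapsto\exp_\otimes(tv)=\exp_N(tv)\in\exp_N(\mathfrak{g}^N(\R^d))=G^N(\R^d)$ --- no invariance argument is required, since the solution in the ambient unit group visibly stays in the closed submanifold and hence, by \Cref{lem:submfd:initial}, solves the same equation there. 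So the ``provided $G^N(\R^d)$ is regular'' caveat can be discharged for the purposes of this proposition by the explicit formula; establishing full regularity (for arbitrary smooth $\eta$) would indeed require carrying out one of your two programmes, but the paper is content with the same brisk observation that solutions with data in the closed subalgebra remain in $G^N(\R^d)$, and does not spell this out either.
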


 \begin{proof}
  We have seen already that $G^N(\R^d)$ is a closed subset.  It is a subgroup by \cite[Corollary 3.3]{Reut93}. To see that it is a Lie group, we have to show that it is a submanifold. Exploit that $\mathcal{T}^N(\R^d)$ is locally exponential (see \Cref{unit}). Hence there is a $0$-neighborhood $V \opn \mathcal{T}^N(\R^d)$ such that $\exp_\otimes$ restricts to a diffeomorphism on $V$. By construction we have $\exp_\otimes (\mathfrak{g}^N(\R^d)) = \exp_N (\mathfrak{g}^N(\R^d))=G^N(\R^d)$, whence $\exp_\otimes (V \cap \mathfrak{g}^N (\R^d)=\exp_\otimes (V)\cap G^N(\R^d)$ yields a submanifold chart $(\varphi,V)$ for $G^N(\R^d)$ around the identity. Exploiting that $\mathcal{T}^N(\R^d)^\times$ is a Lie group, we see that a submanifold atlas for $G^N(\R^d)$ is then given by $\varphi_g \colon V \rightarrow G^N(\R^d), x \mapsto g\varphi(x),\ g \in G^N(\R^d)$. We conclude that $G^N(\R^d)$ is a closed Lie subgroup of the unit group with Lie algebra $\mathfrak{g}^N(\R^d)$. To see that it is locally exponential, it suffices to notice that $\exp_\otimes$ restricts to the Lie group exponential of $G^N(\R^d)$. This is due to the fact that the Lie group exponential is defined via solution to certain differential equations. Given initial values in the closed subspace $\mathfrak{g}^N (\R^d)$, the solutions of the equation in the unit group already stay in $G^N(\R^d)$, whence they solve the differential equation in the subgroup.
  \end{proof}
  
 The algebraic arguments in this section were just cited from the literature as we wished to keep the exposition simple. However, the algebraic structure is the key to understanding the Lie groups at hand, since, as a consequence of \Cref{Lgpexp:onsubspace}, the Lie groups are globally diffeomorphic to their Lie algebra via the exponential map. For the next result, we assume familiarity with projective limits (see e.g.\ \cite[Chapter 1]{HaM07}).
 
 \begin{prop}\label{prop:proj-lim}
  Let $m,n \in \N \cup \{\infty\}$ such that $m \geq n$.
  \begin{enumerate}
   \item Then the canonical projection $\pi^m_n \colon \mathcal{T}^m (\R^d) \rightarrow \mathcal{T}^n (\R^d)$ is a morphism of locally convex algebras, which restricts to a Lie group morphism $p^m_n \colon G^m (\R^d) \rightarrow G^n(\R^d)$.
   \item We obtain a commutative diagram of Lie groups and their associated Lie algebras
    \begin{equation}\label{proj-system}
  \begin{tikzcd}
   \{0\} \times \R^d = \mathfrak{g}^1 (\R^d) \ar[d,"\exp_1"] & \mathfrak{g}^2 (\R^d)  \ar[l,"\Lf (p_1^2)"] \ar[d,"\exp_2"] & \mathfrak{g}^3 (\R^d)  \ar[l, "\Lf (p_2^3)"] \ar[d, "\exp_3"]& \ar[l] \cdots \\
   \{1\} \times \R^d = G^1 (\R^d) & G^2 (\R^d) \ar[l, "p_1^2"] & \ar[l,"p_2^3"] G^3 (\R^d) & \ar[l] \cdots
  \end{tikzcd}
 \end{equation}
 where the upper row is a projective system of locally convex Lie algebras whose limit is $\mathfrak{g}^\infty (\R^d)$.   
  \end{enumerate}
 \end{prop}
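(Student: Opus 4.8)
The plan for (a) is to handle $\pi^m_n$ first at the level of the full tensor algebras and only then restrict. As a coordinate projection in the product $\mathcal{T}^m(\R^d)=\prod_{k=0}^m(\R^d)^{\otimes k}$, the map $\pi^m_n$ is continuous and linear, hence smooth. Reading off the convolution product \eqref{algebraprod}, the degree-$k$ component of $a\otimes b$ depends only on the components of $a,b$ in degrees $\le k$; so for $k\le n$ truncating before or after multiplying gives the same element, whence $\pi^m_n(a\otimes b)=\pi^m_n(a)\otimes\pi^m_n(b)$. Since $\pi^m_n(\one)=\one$, this is a unital morphism of locally convex algebras. Because invertibility in $\mathcal{T}^N(\R^d)$ is governed solely by the degree-$0$ part (\Cref{lem:top:CIA}), and $\pi^m_n$ fixes that part, $\pi^m_n$ restricts to a group morphism of the unit groups. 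Being an algebra morphism it is a Lie algebra morphism for the commutator bracket, and for $n\ge 1$ it acts as the identity on the degree-$1$ elements $\R^d$; hence it carries each $\mathcal{P}^k(\R^d)$ into the corresponding space inside $\mathcal{T}^n(\R^d)$ and, by continuity, $\mathfrak{g}^m(\R^d)$ into $\mathfrak{g}^n(\R^d)$. As a continuous algebra morphism it commutes with the exponential series (in each fixed degree the series is a finite sum), so $\pi^m_n(\exp_m(v))=\exp_n(\pi^m_n(v))$ for $v\in\mathfrak{g}^m(\R^d)$; therefore $\pi^m_n(G^m(\R^d))=\exp_n(\pi^m_n(\mathfrak{g}^m(\R^d)))\subseteq\exp_n(\mathfrak{g}^n(\R^d))=G^n(\R^d)$. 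The corestriction $p^m_n\colon G^m(\R^d)\to G^n(\R^d)$ is thus a group morphism and is smooth as a restriction of a smooth map to submanifolds (\Cref{lem:submfd:initial}), i.e.\ a Lie group morphism.

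For (b), the commutativity of each square $p^m_n\circ\exp_m=\exp_n\circ\Lf(p^m_n)$ is exactly the naturality of the Lie group exponential \eqref{naturalexp} applied to the morphism $p^m_n$, once we observe that $\exp_{G^N}$ is the restriction of $\exp_\otimes$ (equivalently of $\exp_N$) by \Cref{prop:GNlocexp} and \Cref{Lgpexp:onsubspace}, and that $\Lf(p^m_n)=T_{\one}p^m_n$ is just $\pi^m_n|_{\mathfrak{g}^m(\R^d)}$, the tangent map of a restriction of the linear map $\pi^m_n$. The transitivity relation $\pi^l_n=\pi^m_n\circ\pi^l_m$ for $n\le m\le l$ then gives $\Lf(p^l_n)=\Lf(p^m_n)\circ\Lf(p^l_m)$, so the top row of \eqref{proj-system} is a projective system whose bonding maps are continuous morphisms of locally convex Lie algebras.

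It remains to identify the limit, and for this I would pass to the homogeneous decomposition into Lie elements. Write $L_k\coloneq(\R^d)^{\otimes k}\cap\mathfrak{g}^\infty(\R^d)$ for the degree-$k$ homogeneous part of the free Lie algebra. From the recursive definition of $\mathcal{P}^n(\R^d)$ together with the structure theory of free Lie algebras (see \cite{Reut93}), one has $\mathcal{P}^n(\R^d)=\bigoplus_{k=1}^n L_k$; hence $\mathfrak{g}^n(\R^d)=\bigoplus_{k=1}^n L_k$ for finite $n$, while $\mathfrak{g}^\infty(\R^d)=\prod_{k\ge 1}L_k$, this product being already closed since each finite-dimensional $L_k$ is closed. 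Under this description the bonding map $\Lf(p^m_n)=\pi^m_n|$ merely truncates, so a thread $(v_n)_n\in\varprojlim_n\mathfrak{g}^n(\R^d)$ is the same datum as a choice of $w_k\in L_k$ for every $k$; the assignment $(v_n)_n\mapsto(w_k)_k$ is a bijection onto $\mathfrak{g}^\infty(\R^d)$, and it is a homeomorphism because both the projective-limit topology and the product (subspace) topology reduce to componentwise convergence. Thus $\varprojlim_n\mathfrak{g}^n(\R^d)=\mathfrak{g}^\infty(\R^d)$ as locally convex Lie algebras, completing the diagram.

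The only genuinely non-formal input is the identification $\mathcal{P}^n(\R^d)=\bigoplus_{k=1}^n L_k$, i.e.\ that iteratively bracketing with degree-$1$ elements exhausts all homogeneous Lie elements of each degree; this rests on the structure of free Lie algebras and is the step I would cite rather than reprove. Everything else is bookkeeping with the product topology, the algebra-morphism property of the truncations, and the naturality of the exponential already available from \eqref{naturalexp}.
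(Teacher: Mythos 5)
Your proof is correct. For part (a) and the commutativity of the squares you follow essentially the same route as the paper: $\pi^m_n$ is a continuous linear algebra morphism, hence restricts to the Lie algebras and unit groups, and naturality of the Lie group exponential \eqref{naturalexp} gives $p^m_n\circ\exp_m=\exp_n\circ\Lf(p^m_n)$; you are in fact more explicit than the paper about why $\pi^m_n$ maps $G^m(\R^d)$ into $G^n(\R^d)$, deriving it from $\pi^m_n\circ\exp_m=\exp_n\circ\pi^m_n$ on $\mathfrak{g}^m(\R^d)$, where the paper simply asserts the restriction. Where you genuinely diverge is the identification of the projective limit in (b). The paper argues abstractly: the product topology on $\mathcal{T}^\infty(\R^d)$ is the projective limit of the topologies on the $\mathcal{T}^n(\R^d)$, each $\pi^\infty_n$ carries $\mathfrak{g}^\infty(\R^d)$ into $\mathfrak{g}^n(\R^d)$ (using closedness of the latter), and hence $\mathfrak{g}^\infty(\R^d)$ carries the limit topology and satisfies the universal property -- no decomposition of the Lie algebras is ever made. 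You instead invoke the graded structure of the free Lie algebra, $\mathcal{P}^n(\R^d)=\bigoplus_{k=1}^n L_k$, so that a thread in the limit is visibly the same datum as an element of $\prod_{k\geq 1}L_k=\mathfrak{g}^\infty(\R^d)$, with matching (componentwise) topologies. Your route costs one nontrivial citation to \cite{Reut93} -- that iterated bracketing with degree-one elements exhausts each homogeneous component $L_k$ -- but in exchange it makes the surjectivity of $\mathfrak{g}^\infty(\R^d)$ onto the limit completely transparent, a point the paper's argument passes over rather quickly. Both arguments are sound.
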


 \begin{proof}
  Due to the definition of the product in the tensor algebra it is clear that the $\pi^m_n$ are algebra morphisms. As $\pi_n^m$ is just the projection from a product onto some of its components, it is continuous in the product topology, whence a morphism of locally convex algebras. Note that this entails that $\pi^m_n$ restricts to a morphism of locally convex Lie algebras $q_n^m \colon \mathfrak{g}^m (\R^d) \rightarrow \mathfrak{g}^n(\R^d)$ and a morphism of Lie groups $p_n^m \colon G^m(\R^d) \rightarrow G^n(\R^d)$ (it is smooth as the restriction of a continuous linear map to closed submanifolds). Since $\pi_n^m$ is linear, we have $\Lf (p_m^n) = T_{\one} \pi_n^m|_{\mathfrak{g}^\infty (\R^d)} = q_n^m$. Thus the naturality of the exponential map \eqref{naturalexp}, yields $p_n^m \circ \exp_m = \exp_n \circ \Lf (p_m^n)$. Summing up, this proves (a) and establishes the commutativity of \eqref{proj-system}.
  
  For part (b) let us note that (a) establishes that the upper row of \eqref{proj-system} is a projective system of locally convex Lie algebras (as $\Lf (p_i^j) \circ \Lf (p_k^i) = \Lf(p_k^j)$ and these mappings are continuous morphisms of Lie algebras). Consider now $x \in \mathfrak{g}^\infty(\R^d)$. Its projection $\pi_n^\infty (x)$ is contained in $\mathfrak{g}^n(\R^d)$ (this is clear for a Lie series and follows by considering converging sequences for the elements in the closure since the Lie algebras $\mathfrak{g}^n(\R^d)$ are closed). Since $\pi^\infty_n$ restricts to the Lie algebra morphism $\Lf (p_n^\infty)$ on $\mathfrak{g}^\infty (\R^d)$, this implies that $\mathfrak{g}^\infty (\R^d)$ is the projective limit of the projective system in the category of Lie algebras. In addition, the product topology on $\mathcal{T}^\infty(\R^d)$ is the projective limit of the locally convex spaces $\mathcal{T}^n(\R^d)$, whence the topology on $\mathfrak{g}^\infty (\R^d)$ is the locally convex projective limit topology induced by the projective system. In conclusion, $\mathfrak{g}^\infty (\R^d)$ is the projective limit in the category of locally convex Lie algebra. 
 \end{proof}

  \begin{rem}[Projective limits of finite dimensional Lie groups]\label{pro-Lie:rem}  \index{pro-Lie group}
  In \Cref{prop:proj-lim} we exploited that the projective limit of locally convex Lie algebras can be described as the projective limit of Lie algebras with the (locally convex) projective limit topology. 
  Due to the commutativity of \eqref{proj-system}, also the lower row forms a projective system of finite dimensional Lie groups. Projective limits for Lie groups may not exist (while they always exist in the category of  topological groups). Topological groups which are projective limits of finite dimensional Lie groups are called \emph{pro-Lie groups}, \cite{HaM07}. Hence \eqref{proj-system} shows that $G^\infty (\R^d)$ is a pro-Lie group which is simultaneously a Lie group. This situation has been studied in \cite{HaN09}. It is worth mentioning that groups with these properties inherit a surprising amount of structure from the finite dimensional Lie groups which were used in their construction. 
 \end{rem}
  Summing up there the truncated groups are closely connected to their projective limit $G^\infty (\R^d)$. Moreover, (truncated) signatures of smooth paths extend naturally to the projective limit. In the next section we will review the concept of a rough path, which, in a certain sense, generalises the signature for paths of low regularity. For this, the geometry of the groups $G^N(\R^d)$ will be instrumental.  
 
\begin{Exercise}\label{ex:tensor}   \vspace{-\baselineskip}
 \Question Declare a Hilbert space structure on $\mathcal{T}^N(\R^d)$ for $N \in \N$ by defining the canonical basis elements $e_{i_1} \otimes e_{i_2} \otimes \cdots \otimes e_{i_k}$ to be orthonormal. Show that
 \subQuestion the norm corresponding to the inner product satisfies $\lVert v\otimes w\rVert \leq \lVert v\rVert \cdot \lVert w \rVert$ and $\lVert v\otimes w \rVert = \lVert w\otimes v\rVert$. Deduce that $\mathcal{T}^N(\R^d)$ becomes a Banach algebra.
 \subQuestion identifying the homogeneous elements of degree $k$ with elements $\R^{d^k}$ by sending the canonical bases to each other, the resulting isomorphism is an isometry of Hilbert spaces. 
  \Question Consider the step-$2$ truncated tensor algebra $\mathcal{T}^2 (\R^d) = \R \times \R^d \times \R^d \otimes \R^d$. 
 \subQuestion Show that the multipliction of the truncated tensor algebra is given by 
 \begin{align*}
  (a,b,c) \cdot (x,y,z) = (ax,ay+xb,az+xc+b\otimes y) \\
  \text{ and } (1,b,c)^{-1} = (1,-b,-c + b \otimes b)
 \end{align*}
 Deduce that if $d\neq 1$, the product is not commutative.
 \subQuestion Let $X \colon [0,1] \rightarrow \R^d$ be a smooth path and $\mathbb{X}_{s,t} \coloneq \int_s^t X_{s,r}\otimes X_r' \mathrm{d}r$. Define $\mathbf{X}_{s,t} \coloneq (1,X_{s,t},\mathbb{X}_{s,t}) \in \mathcal{T}^2(\R^d)$. Establish that $X$ satisfies Chen's relation\index{Chen's relation} \eqref{simple:Chen} 
 $$ \mathbb{X}_{s,t} - \mathbb{X}_{s,u} - \mathbb{X}_{u,t} = X_{s,u} \otimes X_{u,t}, \quad \forall u \in [s,t].$$
 Prove then that $\mathbf{X}_{s,t} = \mathbf{X}_{s,u} \otimes \mathbf{X}_{u,t}$ (also called Chen's relation). 
 \Question Show that for  $N \in \N$ the algebra $\mathcal{T}^N(\R^d)$ is a quotient of $\mathcal{T}^\infty (\R^d)$ modulo the algebra ideal $\mathcal{I}_N = \{(x_k)_{k \in \N} \in \mathcal{T}^\infty (\R^d)\mid x_1 = \cdots = x_N = 0\}$. 
 \Question Let $X\colon [0,1] \rightarrow \R^d$ be a smooth path, $N \in \N \cup \{\infty\}$ and $DX \colon [0,1] \rightarrow \mathcal{T}^N(\R^d), t\mapsto  (0,X_t',0,\ldots)$. Show that
 \subQuestion for fixed $s$ the signature satisfies the differential equation 
 $$\begin{cases}
    \frac{\mathrm{d}}{\mathrm{d}t} S_N (X)_{s,t} = S_N(X)_{s,t} \otimes (DX)_t & s < t \leq 1 \\
    S_N(X)_{s,s}=\one =(1,0,\ldots) &
   \end{cases}
$$
 \subQuestion the signature satisfies Chen's relation 
 $$S_N (X)_{s,t} = S_N(X)_{s,u}\otimes S_N(X)_{u,t}, \qquad 0\leq s\leq u \leq t \leq 1$$
 {\footnotesize \textbf{Hint:} It suffices to prove this for every projection of $S_N (X)$ to $(\R^d)^{\otimes k}$ and consider the iterated integral of $\mathrm{d}X_{r_1} \otimes \cdots \otimes \mathrm{d}X_{r_k}$ over the simplex $\Delta^N=\{s<r_1 < r_2 < \cdots < r_n < t\}$.}
 \subQuestion Deduce that $S_N(X)_{s,t} = S_N (X)_{0,s}^{-1}\otimes S_N(X)_{0,t}$, whence the signature can be recovered from the curve $\Evol (DX)$ via the group operations.
 \Question Let $a \in \mathcal{T}^N (\R^d)$, $N\in \N \cup \{\infty\}$ such that $a_0 \coloneq \pi^N(a) \neq 0$. Write $a= a_0 (\one +b)$ and prove that the Neumann inverse \eqref{neumann_inverse} yields $a^{-1} = a_0^{-1}(1+b)^{-1}=a_0^{-1}\sum_{k=1}^\infty (-1)^{k}b^{\otimes k}$.  
\end{Exercise}

  \setboolean{firstanswerofthechapter}{true}
\begin{Answer}[number={\ref{ex:tensor} 4.}] 
 \emph{Let $N \in \N \cup\{\infty\}, d \in \N$ and $X \colon [0,1] \rightarrow \R^d$ be a smooth path. Define $DX \colon [0,1] \rightarrow \mathcal{T}^N(\R^d), t\mapsto  (0,X_t',0,\ldots)$. We will then 
 \begin{enumerate}
  \item show that $\frac{\mathrm{d}}{\mathrm{d}t} S_N (X)_{s,t} = S_N(X)_{s,t} \otimes (DX)_t, s < t \leq 1, S_N(X)_{s,s} = \one$.
  \item establish Chen's relation 
 $$S_N (X)_{s,t} = S_N(X)_{s,u}\otimes S_N(X)_{u,t}, \qquad 0\leq s\leq u \leq t \leq 1$$
 \item thus $S_N(X)_{s,t} = S_N (X)_{0,s}^{-1}\otimes S_N(X)_{0,t}$ holds.
 \end{enumerate}
}

Note first that the claims will follow for $N \in \N \cup \{\infty\}$ if we can establish the identity for the projection to every finite degree $k \in \N_0$.
\begin{enumerate}
 \item For $s=0$ the claim is just \eqref{sig:diffeq}. For arbitrary $s$ the claim follows from inspecting the Volterra series. Namely, projecting to homogeneous elements of degree $k \geq 1$, the component of $S_N(X)_{s,t}$ is  \begin{align*}
  &\int_s^t \int_s^{r_{k-1}}\cdots \int_s^{r_1} \mathrm{d}X_{r_1} \otimes \cdots \otimes \mathrm{d}X_{r_k} \\
 =& \int_{s}^t \left(\int_{s}^{r_{k-1}} \cdots \int_s^{r_1} \mathrm{d}X_{r_1} \otimes \cdots \otimes \mathrm{d}X_{r_{k-1}} \right) \otimes \mathrm{d}X_{r_k} = \int_{s}^t \pi_{k-1}^N \left(S_N(X)_{s,r_k}\right) \otimes \mathrm{d}X_{r_k}
 \end{align*}
 In other words, the signature satisfies the integral equation $S_N(X)_{s,t} = 1+ \int_s^t S_N(X)_{s,r} \otimes \mathrm{d}X_r$ in $\mathcal{T}^N (\R^d)$, whence it solves the desired ODE.
 \item We proceed by induction on $k$. Note that the identity is trivially true for $k=0$ since it reads $1=1\cdot 1$. Assume now that we have established the claim now for every $s < u < t \in [0,1]$ and $\ell \leq k$, whence $S_k (X)_{s,t}= S_k (X)_{s,u}\otimes S_k(X)_{u,t}$.
We work now in the truncated tensor algebra $\mathcal{T}^{k+1}(\R^d)$ (and remark that the following identities hold precisely by truncating after degree $k+1$)
\begin{align*}
 S_{k+1}(X)_{s,u} = 1 + \int_s^u S_{k+1}(X)_{s,r} \otimes \mathrm{d}X_r = \int_s^u S_k (X)_{s,r}\otimes \mathrm{d}X_r\\
 S_{k+1}(X)_{s,u} \otimes \int_u^t S_N (X)_{u,r} \otimes \mathrm{d}X_r = S_{k}(X)_{s,u} \otimes \int_u^t S_k{X}_{u,r}\otimes \mathrm{d}X_r
\end{align*}
Applying the induction hypothesis to split the $S_k(X)$ for $s<u<r<t$, we obtain
\begin{align*}
 S_{k+1}(X)_{s,t} &= 1 + \int_s^u S_k(X)_{s,r} \mathrm{d}X_r + \int_u^t S_{k} (X)_{s,u}\otimes S_k (X)_{u,r} \otimes \mathrm{d}X_r \\
 &= S_{k+1}(X)_{s,u} + S_{k+1}(X)_{s,u} \otimes \left(\int_u^t S_{k}(X)_{t,r} \otimes \mathrm{d}X_r\right)\\
 &=S_{k+1}(X)_{s,u} \otimes \left(1+ (S_{k+1}(X)_{u,t}-1)\right) = S_{k+1}(X)_{s,u}\otimes S_{k+1}(X)_{u,t}.
\end{align*}
\item Multiplying Chen's relation for $S_N(X)_{0,t}$ from the left with the inverse of $S_{N}(X)_{0,s}$ immediately yields the desired identity.
\end{enumerate}
\end{Answer}
\setboolean{firstanswerofthechapter}{false}

\section{A rough introduction to rough paths}\label{sec:roughintro}

In this section we will recall the notion of a rough path. The main idea of rough path theory is that paths of much lower regularity than being smooth can be augmented with extra information replacing the iterated integrals we studied in the last section. Indeed the basic idea is to declare the signature to be the object of interest and define signature like objects in the tensor algebra. While we will present the basic theory of rough paths, we recommend one of the excellent introductions to rough path theory available (see e.g.\ \cite{FaV10,FaH20}) for more in depth information.

As a starting point let us formalise the properties observed for the signature in the last section.

\begin{defn}
 Let $\Delta \coloneq \{(s,t) \in [0,1] \mid 0\leq s \leq t \leq 1\}$ be the \emph{standard simplex},\index{standard simples ($\Delta$)} $N \in \N \cup\{\infty\}$ and $d\in \N$. We call a map
 $$\bX \colon \Delta \rightarrow \mathcal{T}^N (\R^d),\quad \bX_{s,t}=(X^0_{s,t},X^1_{s,t},X^2_{s,t},\ldots)$$
 \emph{multiplicative functional}\index{multiplicative functional}\footnote{There is a deeper story going on which motivates the term ``multiplicative functional''. We will briefly discuss this in \Cref{sect:shuffle} below.} of degree $N$ if $X^0_{s,t}= 1$ for all $(s,t) \in \Delta$ and the map satisfies \index{Chen's relation}
 \begin{align}\label{full:chen}
 \text{\emph{Chen's relation}}\qquad \bX_{s,t} = \bX_{s,u}\otimes \bX_{u,t} \quad \forall s,u,t\in [0,1], s\leq u \leq t
 \end{align}
\end{defn}
Note that Chen's relation for the first two (non-trivial) components of $\bX$ reduces to (cf.\ Exercise \ref{ex:tensor} 2.)
\begin{align}\label{Chen:lvl2}
 X^1_{s,t} = X_{s,u}^1 + X_{u,t}^1, \qquad X_{s,t}^2 = X_{s,u}^2+X_{u,t}^2+X_{s,u}^1 \otimes X_{u,t}^1.
\end{align}
Moreover, we have seen in the last chapter that every multiplicative functional is invertible in the tensor algebra, whence Chen's relation entails
$\bX_{s,t} = \bX_{0,s}^{-1}\otimes \bX_{0,t}$. So instead of a multiplicative functional, we may think of the path $[0,1] \rightarrow \mathcal{T}^N(\R^d), t \mapsto \bX_{0,t}$. Hence the term rough path will make more sense once we define it.
For the first level however, this translates to the statement that there is a path $X \colon [0,1] \rightarrow \R^d$ with $X^{1}_{s,t} = X_t - X_s$. 

\begin{ex}\label{ex:multfunc.}
 In the previous section we have already seen that the signature of a smooth path yields a multiplicative functional. For example, we can augment the zero-path in different ways to generate multiplicative functionals:
 $\bX_{s,t} \coloneq (1,0,(t-s)w)$ is a multiplicative functional of degree $2$ for any $w \in \R^d \otimes \R^d$. Note that it coincides with the level $2$-signature of the zero path only if $w$ is the zero element.
 More generally, for any function $F \colon [0,1] \rightarrow (\R^d)^{\otimes N}$ the map $\bX_{s,t} = (1,0,\ldots,0, F(t)-F(x)) \in \mathcal{T}^N(\R^d)$ is a multiplicative functional (see Exercise \ref{Ex:rough} 1.).
\end{ex}

Now we need to add an analytic condition to the algebraic objects we just defined. In this case we wish to consider rough signals in the framework of H\"{o}lder regular path. 

\begin{defn}\label{defn:Hoelder} \index{H\"{o}lder continuity}
 Let $X \colon [0,1] \rightarrow E$ be a continuous map with values in a Banach space $(E,\lVert \cdot \rVert)$. We say $X$ is an $\alpha$-H\"{o}lder path for $0 < \alpha < 1$ if
 $$\lVert X \rVert_\alpha \coloneq \sup_{\substack{t,s \in [0,1],\\ t \neq s}} \frac{\lVert X_t-X_s\rVert}{|t-s|^\alpha} <\infty.$$
 Denote by $\mathcal{C}^{\alpha}([0,1],E)$ \emph{the space of all $\alpha$-H\"{o}lder continuous functions}.\index{space!of H\"{o}lder continuous functions} 
\end{defn}
It is well known (see Exercise \ref{Ex:rough} 2. below) that  $\mathcal{C}^{\alpha}([0,1],E)$ is a Banach space on which $\lVert \cdot \rVert_\alpha$ is a continuous seminorm (it annihilates all constant paths).

\begin{rem}
 Again we will restrict here to paths on the interval $[0,1]$. The results carry over to any interval $[0,1]$ by composing the H\"{o}lder paths with a reparametrisation of the interval. Note however, that the H\"{o}lder norm is \textbf{not} invariant under reparametrisation. This is one reason why in the rough paths literature one often considers the (more or less) equivalent formulation via $p$-variation paths for $p>1$. The $p$-variation norm is invariant under reparametrisation, cf.\ \cite{FaV10}.
\end{rem}

Due to Young's theorem, it is not possible to augment an $\alpha$-H\"{o}lder path with iterated integrals against itself if $\alpha < 1/2$. The core idea of rough path theory is to augment an $\alpha$-H\"{o}lder path with additional information to make integration against it feasible and circumvent the restrictions of Young integration theory. To this end we augment the concept of a multiplicative functional with a H\"{o}lder condition. 

\begin{defn}
Fix $\alpha \in ]0,1[$ and $N \geq \lfloor 1/\alpha\rfloor$. An $\R^d$-valued \emph{$\alpha$-rough path}\index{rough path} consists of a mulitplicative functional $\bX \coloneq (1,X^1,X^{2}, X^{3},\ldots,X^{N} )$ such that the $k$th component $X^{k}$ is ``$k$-times $\alpha$ H{\"o}lder continuous'', i.e.
\begin{align}\label{Holder continuity}
 \lVert X^{k}_{s,t}\rVert \lesssim  |t-s|^{k\alpha} \text{  where \textquotedblleft} \lesssim \text{\textquotedblright means inequality up to a constant.} 
\end{align}
If in addition the $\alpha$-rough path takes values in $G^N(\R^d)$, we call $\bX$ a \emph{weakly geometric rough path}.\index{rough path!weakly geometric} 
We define $\mathrm{RP}^{\alpha}([0,1],\R^d)$ as the \emph{set of weakly geometric $\alpha$-rough paths}\index{rough path!set of weakly geometric} (we shall see in \Cref{thm:Lyons-lift} below that it makes sense to drop $N$ from the notation for $\mathrm{RP}^{\alpha}([0,1],\R^d)$).
\end{defn}

The higher levels of a rough path are not given by increments of a function, whence they do not become constant even if $k$-times $\alpha$-H\"{o}lder means that the H\"{o}lder index satisfies $k\alpha >1$.
Lyon's original concept of rough path does not require the rough path to take its image in the group $G^N(\R^d)$. However, it has turned out that the general notion is not sufficient to solve non-linear rough differential equations (see e.g.\ \cite{FaV10,FaH20}). For this purpose one should consider weakly geometric rough paths and we will do this in the rest of the section. To ease notation we shall (unless we explicitly say otherwise) only consider weakly geometric rough paths and simply call them ``rough paths''. Let us show that the cut-off level $N$ in the definition of a rough path is unimportant as long as $N \geq \lfloor 1/\alpha\rfloor$.

\begin{thm}[{Lyon's lifting theorem, \cite[Theorem 9.5]{FaV10}}]\label{thm:Lyons-lift} \index{Lyon's lifting theorem}
 Let $\alpha \in ]0,1[$ and $\lfloor 1/\alpha \rfloor \leq n$. Assume that $\bX \colon \Delta \rightarrow G^n(\R^d)$ is an $\alpha$-rough path. Then there exists a unique $\alpha$-rough path $\bX^{n+1} \colon \Delta \rightarrow G^{n+1} (\R^d)$ extending $\bX$, i.e.\ if $\pi_n^{n+1} \colon \mathcal{T}^{n+1} (\R^d) \rightarrow \mathcal{T}^{n} (\R^d)$ is the canonical projection, then $\pi_n^{n+1} (\bX^{n+1}) = \bX$. A rough path extending $\bX$ in this way is called a \emph{Lyons lift}\index{Lyons lift} of $\bX$.
 
 Thus every $\alpha$-rough path with values in $G^n(\R^d)$ extends to an $\alpha$-rough path with values in $G^m (\R^d), m\geq n \geq \lfloor 1/\alpha\rfloor$
\end{thm}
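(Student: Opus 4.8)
The proof rests on the classical \emph{sewing} (Lyons extension) argument, and the whole statement reduces to a single inductive step: lifting a rough path from $G^n(\R^d)$ to $G^{n+1}(\R^d)$, after which the concluding assertion follows by iterating the step $m-n$ times. So I fix $\bX\colon \Delta \to G^n(\R^d)$ with $n\geq \lfloor 1/\alpha\rfloor$ and write $\bX_{s,t}=(1,X^1_{s,t},\ldots,X^n_{s,t})$. The only genuinely new datum is the degree-$(n+1)$ component $X^{n+1}\colon \Delta\to (\R^d)^{\otimes (n+1)}$; projecting Chen's relation \eqref{full:chen} onto degree $n+1$ shows it must satisfy
\begin{equation}\label{eq:chenlift}
X^{n+1}_{s,t}-X^{n+1}_{s,u}-X^{n+1}_{u,t}=\sum_{k=1}^{n}X^{k}_{s,u}\otimes X^{n+1-k}_{u,t}=:\omega_{s,u,t}.
\end{equation}
The decisive numerical fact is $\gamma:=(n+1)\alpha>1$, which holds in every case because $n\geq\lfloor 1/\alpha\rfloor$ forces $n+1>1/\alpha$. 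Using the $k$-fold Hölder estimates $\lVert X^{k}_{s,t}\rVert\lesssim |t-s|^{k\alpha}$ one bounds $\lVert\omega_{s,u,t}\rVert\lesssim|t-s|^{\gamma}$, i.e.\ the prescribed defect in \eqref{eq:chenlift} is of order strictly larger than $1$.

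Uniqueness I would dispose of first, as it is immediate. If $X^{n+1}$ and $\tilde{X}^{n+1}$ both solve \eqref{eq:chenlift} with the \emph{same} lower levels, their difference $D_{s,t}$ is additive, $D_{s,t}=D_{s,u}+D_{u,t}$, and satisfies $\lVert D_{s,t}\rVert\lesssim|t-s|^{\gamma}$ with $\gamma>1$. Evaluating $D_{s,t}=\sum_{[u,v]\in P}D_{u,v}$ along a partition $P$ yields $\lVert D_{s,t}\rVert\lesssim|P|^{\gamma-1}(t-s)\to 0$, hence $D\equiv 0$. This also dispenses with the uniqueness claim in the theorem.

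For existence I would define the candidate by Riemann-type products over partitions. Lift each increment canonically to the group by $\hat{\bX}_{u,v}:=\exp_{n+1}(\log_n \bX_{u,v})\in G^{n+1}(\R^d)$, using that $\log_n,\exp_{n+1}$ are the diffeomorphisms of \Cref{Lgpexp:onsubspace} and that the canonical projection $p^{n+1}_n$ is a group morphism with $p^{n+1}_n(\hat{\bX}_{u,v})=\bX_{u,v}$ by \Cref{prop:proj-lim}. For a partition $P\colon s=u_0<\cdots<u_m=t$ set $\bX^{P}_{s,t}:=\hat{\bX}_{u_0,u_1}\otimes\cdots\otimes\hat{\bX}_{u_{m-1},u_m}\in G^{n+1}(\R^d)$. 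By multiplicativity of $\bX$ in $G^{n}(\R^d)$, its projection to degrees $\leq n$ telescopes to $\bX_{s,t}$, so only the degree-$(n+1)$ part $X^{n+1,P}_{s,t}$ depends on $P$. The heart of the argument — and the step I expect to be the main obstacle — is to show that $(X^{n+1,P}_{s,t})_P$ is a Cauchy net as $|P|\to 0$. Inserting one point $u$ into a subinterval $[u_i,u_{i+1}]$ replaces the factor $\hat{\bX}_{u_i,u_{i+1}}$ by $\hat{\bX}_{u_i,u}\otimes\hat{\bX}_{u,u_{i+1}}$; these two group elements agree under $p^{n+1}_n$, so differ only by a homogeneous degree-$(n+1)$ element equal to $\omega_{u_i,u,u_{i+1}}$ up to the higher-order correction from $\exp_{n+1}$, hence of size $\lesssim|u_{i+1}-u_i|^{\gamma}$; since the other factors contribute only their unit part to the top degree, the full product's degree-$(n+1)$ part changes by exactly this element. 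Removing interior points successively (each time the one realising the smallest local gap) and summing these super-additive increments, the convergence of $\sum_{N\geq 1}N^{-\gamma}$ — available precisely because $\gamma>1$ — bounds the difference of any two partition sums. This is the sewing lemma packaged for the tensor algebra, which I would either prove inline or cite.

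Finally I would collect the consequences of the limit $X^{n+1}_{s,t}:=\lim_{|P|\to 0}X^{n+1,P}_{s,t}$. Passing to the limit in the one-point-insertion estimate gives the Hölder bound $\lVert X^{n+1}_{s,t}\rVert\lesssim|t-s|^{\gamma}$, and comparing the limits across a fixed subdivision point produces Chen's relation \eqref{eq:chenlift}; together these show that $\bX^{n+1}:=(\bX,X^{n+1})$ is an $\alpha$-rough path extending $\bX$. It remains to verify that it is \emph{weakly geometric}, i.e.\ $\bX^{n+1}_{s,t}\in G^{n+1}(\R^d)$. But each $\bX^{P}_{s,t}$ lies in $G^{n+1}(\R^d)$ by construction, its degrees $\leq n$ converge to $\bX_{s,t}$ and its top degree converges to $X^{n+1}_{s,t}$, so $\bX^{P}_{s,t}\to\bX^{n+1}_{s,t}$ in $\mathcal{T}^{n+1}(\R^d)$; since $G^{n+1}(\R^d)$ is closed by \Cref{prop:GNlocexp}, the limit lies in $G^{n+1}(\R^d)$. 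Iterating the step just carried out proves the final assertion for all $m\geq n\geq\lfloor 1/\alpha\rfloor$.
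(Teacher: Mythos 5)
There is nothing in the paper to compare your argument against: the text states \Cref{thm:Lyons-lift} as an imported result, citing \cite[Theorem 9.5]{FaV10}, and gives no proof. Your proposal is the standard Lyons extension argument, and as a sketch it is correct: the reduction to a single inductive step, the projection of Chen's relation onto degree $n+1$, the observation that $\gamma=(n+1)\alpha>1$ because $n\geq\lfloor 1/\alpha\rfloor$, the uniqueness argument via additivity of the difference, and the closedness of $G^{n+1}(\R^d)$ (from \Cref{prop:GNlocexp}) to secure weak geometricity of the limit are all sound and are exactly the ingredients of the textbook proof.

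Two points deserve to be made explicit rather than left implicit. First, your one-point-insertion estimate and your final H\"older bound both rest on the claim that the degree-$(n+1)$ component of $\hat{\bX}_{u,v}=\exp_{n+1}(\log_n\bX_{u,v})$ is itself $O(|v-u|^{\gamma})$; this needs a line of justification, namely that $\log_n\bX_{u,v}$ has $k$-th component of order $|v-u|^{k\alpha}$ (by the polynomial structure of $\log_n$ from \Cref{Lgpexp:onsubspace} and the graded H\"older bounds on $\bX$), so that the degree-$(n+1)$ part of the exponential, being a sum of products of components whose degrees add to $n+1$, is of order $|v-u|^{(n+1)\alpha}$. Second, the Cauchy-net step is, as you say, where all the work lies: the successive-removal bound only directly compares a partition with its refinements, so to conclude you should pass through the common refinement of two partitions and note that removing the extra points inside each subinterval of $P$ costs at most $\sum_{[u,v]\in P}C|v-u|^{\gamma}\leq C|P|^{\gamma-1}(t-s)$, which tends to zero with the mesh. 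With those two points filled in, your outline is a complete and correct proof of the single extension step, and the final assertion follows by iteration as you indicate.
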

As a consequence of \Cref{thm:Lyons-lift}, every $\alpha$-rough path can be obtained as a restriction of an $\alpha$-rough path with values in $G^\infty (\R^d)$, or conversely as an extension of an $\alpha$-rough path with values in $G^{\lfloor 1/\alpha\rfloor}(\R^d)$. The remarkable fact here is of course that the extension is uniquely determined once information up to level $\lfloor 1/\alpha\rfloor$ is available. 

\begin{ex}
We have seen that for a smooth path $X$ and $N \in \N \cup \{\infty\}$, the signature $S_N (X)$ is a multiplicative functional. More generally, if we start with an $\alpha$-H\"{o}lder path for $\alpha > 1/2$, we can compute its signature using iterated Young integrals. It is then a consequence of Young's inequality \cite{Young} that the resulting multiplicative functional is indeed an $\alpha$-rough path (cf. \cite[Section 4]{FaH20} for a detailed discussion).
\end{ex}

For paths of lower regularity, the idea is that the components $X^k$ of a rough path $\bX$ replace the (in general ill defined) iterated integrals of the path giving the first level increments. Due to its importance, let us stress it here again explicitly: If $\bX = (1,X^1_{s,t}, X^{2}_{s,t}))$ is an $\alpha$-rough path for $\alpha \in ]1/3,1/2[$ and $X^1_{s,t} = X_t -X_s$, one should interpret the second level as 
$$\int_s^t X_{u,s} \mathrm{d} X_u \coloneq X^2_{s,t},\qquad 0\leq s\leq t\leq 1,$$
where the left hand side is defined via the right hand and not the other way around! 
The levels of an $\alpha$-rough path thus encode similar information as the iterated integrals in the signature of a smooth path.  

It is not obvious at all whether a given $\alpha$-H\"{o}lder path with values in $\R^d$ can be enhanced to yield an $\alpha$-rough path. Due to a result by Lyons and Victoire this this can be achieved in many case through an abstract extension result. If $1/\alpha \not\in \N$, every $\alpha$-H\"{o}lder path can be enhanced (non uniquely) to an $\alpha$-H\"{o}lder rough path (this is the Lyons-Victoire lifting theorem \cite[Theorem 1]{LaV07}). As an example for the non-uniqueness of the extension let us mention the following example for rough paths arising from Brownian motion.

\begin{ex}[Brownian motion as a rough path]\label{ex:BM} \index{rough path!Brownian motion}
 Brownian motion models the (random) movement of a particle in a fluid. It can be modelled as a stochastic process $B \colon \Omega \times [0,1] \rightarrow \R^d$, with independent Gaussian increments and continuous sample paths. Here $(\Omega, \mathcal{F},\mathbb{P})$ is a probability space. However we refer to the stochastic literature for explanations and more details. 
One can show that for partitions $P$ of $[0,1]$, the Riemann sum  
$$
\int_0^1 B_r \otimes d^{\theta} B_r = \lim_{ | P | \rightarrow 0} \sum_{ P = (t_i) } B_{ t_i + \theta( t_{i+1} - t_i) } \otimes B_{t_i,t_{i+1}} 
$$
converges in $L^2(\Omega, \R^{d \times d})$ for mesh size $|P|$ converging to $0$ and $\theta \in [0,1]$. Contrary to usual Riemann-Lebesgue integration theory, the integral $\int B_r \otimes d^{\theta}B_r$ is \emph{not} independent of $\theta$. The choice $\theta = 0$ leaves the martingale structure invariant and is referred to as the It\^{o} integral, whereas $\theta = \frac12$ is compatible with regular calculus rules and is referred to as the Stratonovich integral. It is well known that $B$ is an $\alpha$-H\"{o}lder path for every $\alpha \in ]1/3,1/2[$ and if we write $B^{It\hat{o}}$ and $B^{Strat}$ for the It\^{o} and the Stratonovich integrals, we obtain two $\alpha$-rough paths $(1,B,B^{It\^{o}})$ and $(1,B,B^{Strat})$. Note that the rough path obtained via the It\^{o} integral does not take its values in $G^2(\R^d)$ (i.e.\ it is not a weakly geometric rough path), while the one obtained from Stratonovich integration takes values in $G^2 (\R^d)$ and is weakly geometric. We refer to \cite[Chapter 3]{FaH20} for a detailed discussion of these examples.
\end{ex}

The reader may wonder now, in which sense a rough path is a path, i.e.\ can we interpreted it as a H\"{o}lder continuous path $[0,1] \rightarrow G^N(\R^d)$? For this we have to consider the metric geometry of the groups $G^N(\R^d)$ for $N<\infty$. However, as a first step to make sense of the following constructions, we need the following result (whose full proof we postpone to \Cref{rem:fullproof}). 

\begin{lem}\label{inverse_RP}
 Let $\bX \colon \Delta  \rightarrow G^N(\R^d), \bX = (1,X^1,X^2,\ldots)$ be an $\alpha$-rough path. Taking the pointwise inverse, the map $\bX^{-1} \colon \Delta \rightarrow G^N(\R^d), t \mapsto (\bX_t)^{-1}=(1,X^{-1}_t,X^{-2}_t,\ldots)$ is graded $k\alpha$-H\"{o}lder, i.e. $\lVert X^{-k}\rVert_{k\alpha} < \infty$ for all $k \in \N$
 \end{lem}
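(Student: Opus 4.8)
The plan is to reduce everything to the explicit inversion formula in the tensor algebra combined with submultiplicativity of the tensor norm. Since $\bX_{s,t}\in G^N(\R^d)$ and $G^N(\R^d)$ is a group (\Cref{prop:GNlocexp}), the pointwise inverse $(\bX_{s,t})^{-1}$ again takes values in $G^N(\R^d)$, so the claim about the codomain is immediate and only the graded Hölder estimate needs to be established. Because every rough path has $X^0_{s,t}=1$, I would first write $\bX_{s,t}=\one+b_{s,t}$ with $b_{s,t}=(0,X^1_{s,t},X^2_{s,t},\ldots)\in\mathcal{I}_N$, and then invoke the Neumann inversion formula \eqref{neumann_inverse} (cf.\ Exercise \ref{ex:tensor} 5.) to obtain $(\bX_{s,t})^{-1}=\sum_{k\ge 0}(-1)^k b_{s,t}^{\otimes k}$. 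As $b_{s,t}$ has no homogeneous part of degree $0$, the power $b_{s,t}^{\otimes k}$ contributes only in degrees $\ge k$, so in each fixed degree this series is in fact a finite sum.

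Next I would project onto the homogeneous component of degree $m$, which is exactly the level $X^{-m}_{s,t}$ appearing in the statement. Expanding $b_{s,t}^{\otimes k}$ into homogeneous pieces and collecting the terms of degree $m$ gives
\begin{align*}
 X^{-m}_{s,t} = \pi_m\big((\bX_{s,t})^{-1}\big) = \sum_{j=1}^{m}(-1)^j \sum_{\substack{i_1+\cdots+i_j=m\\ i_1,\ldots,i_j\ge 1}} X^{i_1}_{s,t}\otimes \cdots \otimes X^{i_j}_{s,t},
\end{align*}
a sum over the finitely many compositions of $m$ into positive parts.

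The estimate is now routine. Using that the tensor norm is submultiplicative, $\lVert v\otimes w\rVert\le\lVert v\rVert\,\lVert w\rVert$ (Exercise \ref{ex:tensor} 1.), together with the defining Hölder bounds $\lVert X^{i}_{s,t}\rVert\le\lVert X^{i}\rVert_{i\alpha}\,|t-s|^{i\alpha}$ of the rough path $\bX$, each summand satisfies
\begin{align*}
 \big\lVert X^{i_1}_{s,t}\otimes \cdots \otimes X^{i_j}_{s,t}\big\rVert \le \prod_{\ell=1}^j \lVert X^{i_\ell}_{s,t}\rVert \le \Big(\prod_{\ell=1}^j \lVert X^{i_\ell}\rVert_{i_\ell\alpha}\Big)\,|t-s|^{(i_1+\cdots+i_j)\alpha} = \Big(\prod_{\ell=1}^j \lVert X^{i_\ell}\rVert_{i_\ell\alpha}\Big)\,|t-s|^{m\alpha}.
\end{align*}
Summing over the finitely many compositions of $m$ yields $\lVert X^{-m}_{s,t}\rVert\le C_m\,|t-s|^{m\alpha}$ with a constant $C_m$ depending only on the finitely many Hölder norms $\lVert X^1\rVert_{\alpha},\ldots,\lVert X^m\rVert_{m\alpha}$, i.e.\ $\lVert X^{-m}\rVert_{m\alpha}<\infty$, as claimed. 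There is no genuinely hard step here; the only point demanding care is the bookkeeping of the inversion expansion, namely verifying that each level of $(\bX_{s,t})^{-1}$ is a finite sum of tensor products whose homogeneous degrees add up exactly to $m$, so that submultiplicativity produces precisely the exponent $m\alpha$. Everything else then follows from the graded Hölder regularity of $\bX$.
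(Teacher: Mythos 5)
Your proof is correct, but it is a genuinely different and in fact more elementary route than the one taken in the text. The paper only carries out the argument for $N=2$, $d=2$, and does so by exploiting the constraints that membership in $G^2(\R^2)$ imposes on the second level (expressing the symmetric part of $X^2$ through $X^1$), deferring the general case to the shuffle-Hopf-algebra antipode computation in \Cref{rem:fullproof}. You instead work directly in the tensor algebra: Neumann inversion plus submultiplicativity of the graded norm gives, on each level $m$, a finite sum over compositions $i_1+\cdots+i_j=m$ of terms bounded by $\prod_\ell\lVert X^{i_\ell}\rVert_{i_\ell\alpha}\,|t-s|^{m\alpha}$, which is all that is needed. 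Your bookkeeping of the expansion is right (only $1\le j\le m$ contributes in degree $m$, and there are finitely many compositions), so the estimate closes. What your approach buys is generality: it nowhere uses that $\bX$ takes values in $G^N(\R^d)$, only the multiplicativity $X^0\equiv 1$ and the graded H\"older bounds, and it handles all $N$ and $d$ at once. This puts it in tension with the paper's assertion that the lemma ``will in general be false if the rough path is not weakly geometric''; the worry voiced there, that $X^1\otimes X^1$ is ``a product of $\alpha$-H\"older functions'', conflates the one-parameter situation (where $f(t)g(t)-f(s)g(s)$ is only $\alpha$-H\"older) with the two-parameter increment $X^1_{s,t}\otimes X^1_{s,t}$, whose norm is trivially $\lesssim|t-s|^{2\alpha}$. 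Your argument is the standard one and shows the geometric hypothesis is not needed for this particular conclusion; the weak geometricity only becomes essential later (e.g.\ for solving nonlinear rough differential equations), not for the stability of the graded H\"older scale under pointwise inversion.
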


\begin{proof}
We will only prove the case where $N=2$ \textbf{and} $d=2$. Without more techniques from \Cref{sect:shuffle} the general case turns out to be quite involved.
Note that for $N=2$ due to Exercise \ref{ex:tensor} 2.a) we have $\bX^{-1} = (1,-X^1,-X^{2}+X^1 \otimes X^1)$. Thus the degree $1$ component is $\alpha$-H\"{o}lder since $X^1$ is $\alpha$-H\"{o}lder. Immediately, we see that  the degree two component might not be $2\alpha$-H\"{o}lder, as $X^1 \otimes X^1$ is a product of $\alpha$-H\"{o}lder functions. To circumvent this we express $\bX$ with the help of the standard basis $e_1,e_2$ of $\R^2$ as $\bX = (1, x_1 e_1 + x_2 e_2 , \sum_{1\leq i,j\leq 2}y_{ij} e_i \otimes e_j)$. Working out the constraints imposed by $G^2(\R^2) = \exp_2(\mathfrak{g}^2 (\R`2)$ (the reader should check this!) we find that 
$$x_1x_2 = y_{12}+y_{21}, \quad x_1^2 = 2 y_{11}, \quad x_2^2 = 2y_{22}.$$
Plugging this into the inversion formula we find for the degree $2$ component:
\begin{align*}
 X^{-2} &= - \sum_{1\leq i,j\leq 2}y_{ij} e_i \otimes e_j + x_1^2e_1\otimes e_1 + x_1x_2e_1\otimes e_2 + x_1x_2 e_2\otimes e_1 + x_2^2 e_2\otimes e_2\\
 &= y_11 e_1\otimes e_1 +y_22 e_2 \otimes e_2 + y_{12}e_2\times e_1 + y_{21}e_1\otimes e_2.
\end{align*}
Thus the second component is $2\alpha$-H\"{o}lder as it contains only contributions from the $2\alpha$-H\"{o}lder maps $y_{ij}$ comprising the second level of $\bX$. This happens in general, as the non-linear constraints imposed by $G^N(\R^d)$ allow to rewrite the $k$th degree component of $\bX^{-1}$ as a sum of the functions comprising the $k$th degree component of $\bX$ (whence they preserve the H\"{o}lder condition). 
\end{proof}

The discussion of the $N=2,d=2$ case reveals that \Cref{inverse_RP} will in general be false if the rough path is not weakly geometric. We define now a suitable metric on the group $G^N(\R^d)$ for which every $\alpha$-rough path will correspond to a H\"{o}lder function. 

\begin{defn}
 Consider for $N \in \N$ the subset $\mathcal{T}_1^N(\R^d) = \{x \in \mathcal{T}^N(\R^d) \mid \pi_0^N (x) = 1\}$. Let $x = (1,x^1,x^2,\ldots,x^N) \in \mathcal{T}_1^N(\R^d)$, then we define
\begin{align}\label{norm:homo}
 |x|\coloneq \max_{k=1,\ldots , N} \{(k!\lVert x^k\rVert)^{1/k}\} +\max_{k=1,\ldots , N} \{(k!\lVert (x^{-1})^k\rVert)^{1/k}\},
\end{align}
where $\lVert \cdot \rVert$ is the norm we have chosen on $\mathcal{T}^N(\R^d)$. Then we set $\rho_N (x,y) \coloneq |x^{-1} \otimes y|$. Identifying $\mathcal{T}_1^N(\R^d)$ with the vector space $\mathcal{I}_N$ (by subtracting $\one$), it is easy to see that $|\cdot |$ becomes a norm and $\rho_N$ induces a left invariant, symmetric and subadditive metric on $G^N (\R^d)$ (see Exercise \ref{Ex:rough} 4. for the details). 
\end{defn}

The metric $\rho_N$ turns the group $G^N(\R^d)$ into a homogeneous group in the sense of \cite{FaS82}.\footnote{The groups $G^N(\R^d)$ possess many strong structural properties which are exploited in rough path theory. They are homogeneous groups and connected and simply connected nilpotent Lie groups (i.e.\ Carnot groups, which are studied in sub-Riemannian geometry, cf.\ \cite{LDaZ21}). The details are beyond the scope of this chapter and we refer the reader to the literature.}
Now if $\bX$ is an $\alpha$-rough path (with values in $G^N(\R^d)$), then the path $x_t \coloneq \bX_{0,t}$ satisfies
\begin{align}\label{Holderpath}
 \lVert x\rVert_\alpha^{\rho_N} \coloneq \sup_{\substack{t\neq s\\ s,t \in [0,1]}} \frac{\rho_N (x_s,x_t)}{|t-s|^\alpha} = \sup_{\substack{t\neq s\\ s,t \in [0,1]}} \frac{\rho_N (\one,\bX_{s,t})}{|t-s|^\alpha} < \infty.
\end{align}
In other words, an $\alpha$-rough path is an $\alpha$-H\"{o}lder continuous path with values in metric space $(G^N(\R^d),\rho_N)$. This statement hinges on the rough path taking values in $G^N(\R^d)$ (i.e.\ the path being weakly geometric) as we need H\"{o}lder continuity of the pointwise inverse $\bX^{-1}$. 

Indeed due to Chen's relation this is an equivalent point of view, up to forgetting the starting point of the path in $\R^d$.
Unfortunately this point of view is limited to the truncated groups, as the metric \eqref{norm:homo} does not extend to $G^\infty (\R^d)$. 

\begin{rem}
 In \Cref{pro-Lie:rem} we saw that $G^\infty (\R^d)$ is the projective limit (as a Lie group) of the truncated groups $G^N(\R^d)$. Moreover, this Lie group is modelled on the \Frechet space $\mathfrak{g}^\infty (\R^d)$. So in view of the characterisation of rough paths as H\"{o}lder continuous paths in the truncated groups, the question is of course whether we can use the metric on $\mathfrak{g}^\infty (\R^d)$ to construct a suitable metric $\text{dist}$ which allows us to cast $\alpha$-rough path as an $\alpha$-H\"{o}lder path with values in the metric space $(G^\infty(\R^d),\text{dist})$.
 
 This problem was considered in \cite{LDaZ21} starting from the \emph{Carnot-Caratheodory metric} on $G^N(\R^d), N <\infty$. It is defined as $d_{\mathrm{CC}}^N(y, z) \coloneq d_{\mathrm{CC}}^N(\one, y^{-1} \otimes z)$ and
$$d_{\mathrm{CC}}^N(\one, y) \coloneq \inf \left\{ \int_0^1 \| X_t'\| \, \mathrm{d}t \middle| X \in C([0,1], \R^d), \begin{subarray}{c} X_0 = 0, \, \text{$X_t$ has bounded variation } \\ \\ \text{ and } y = S_N (X)_{0,1} \end{subarray}  \right\}, $$
where the signature of the bounded variation path is defined as in \eqref{sig:defn} using (iterated) Riemann-Stieltjes integrals. We will abbreviate this metric as the CC-metric and note that it is not straight forward to prove that it is a metric. Moreover, one can show that the CC-metric is equivalent to the metric $\rho_N$ (see \cite[7.5.4]{FaV10} for details on the CC-metric). 
Now the one can argue that the metric $d_\infty$ should correspond to the metric on the projective limit of the system  $(G^N(\R^d),\one,d_{\text{CC}}^N)$ in the category of (pointed) metric spaces (with morphisms given by submetries). However, one of the main results of \cite{LDaZ21} is the somewhat surprising insight that the limiting object $(G_\infty,\one,d_\infty)$ can not be a topological group in the topology induced by $d_\infty$ (let us note that $G_\infty \neq G^\infty (\R^d)$).

Hence there seems to be no straight forward way to construct a metric on the projective limit which simultaneously captures the desired convergence and geometry, is left-invariant and turns the projective limit into a Lie group. 
\end{rem}

Summing up, rough paths can naturally be identified as maps with values in the infinite-dimensional Lie group $G^\infty (\R^d)$. This group is closely connected to the truncated groups $G^N(\R^d)$ and inherits many properties from the projective system of these groups. However, there are important geometric properties connected to the sub-Riemannian geometry of the groups $G^N(\R^d)$, which have no counterpart on the infinite-dimensional group.
This leaves us at an uncomfortable situation: If it is enough to work with the finite dimensional groups, why bother with the more complicated situation of $G^\infty (\R^d)$?

One reason to care about the infinite-dimensional group is that it hosts all rough paths regardless of their regularity (recall that an $\alpha$-rough path can only be defined on the group $G^N(\R^d)$ where $N \geq \lfloor 1/\alpha\rfloor$). It would be interesting to understand the geometry and manifold structure of the set of all elements which can be reached by $\alpha$-rough paths (the construction of tangent spaces to $\alpha$-rough paths for $\alpha \in ]1/3,1/2[$ in \cite{QaT11} can be understood as a step in this direction). Another reason might be that one could be interested in different flavours of rough paths such as Gubinelli's branched rough paths, \cite{Gub10}. The properties of branched rough paths require a different geometry and one has to replace the groups $G^N(\R^d)$. We shall describe the general construction in the next section (but mention that for the branched rough paths the construction actually yields isomorphic groups due to a deep algebraic result).

\begin{Exercise}\label{Ex:rough}   \vspace{-\baselineskip}
 \Question Let $N \in \N$ and $\bX,\bY \colon \Delta \rightarrow \mathcal{T}^N(\R^d)$ be multiplicative functionals which agree up to degree $N-1$ (i.e.\ their projections onto the components up to $m$th level are equal). Prove then that
 \subQuestion The difference function $F_{s,t} = X^N_{s,t} - Y_{s,t}^N \in (\R^d)^{\otimes N}$ is additive in the sense that for all $s\leq u\leq t$ one has $F_{s,t}= F_{s,u}+F_{u,t}$.
  \subQuestion if $F\colon \Delta \rightarrow (\R^d)^{\otimes N}$ is an additive function (in the above sense), then also $(s,t) \mapsto \bX_{s,t} + F_{s,t}$ is an additive functional (where addition is addition in the tensor algebra).
 \Question We consider the $\alpha$-H\"{o}lder space $\mathcal{C}^{\alpha}([0,1],E)$ for $n \in \N$, $0 < \alpha< 1$ for $(E,\lVert \cdot \rVert)$ a Banach space. Show that the map $$\lVert x \rVert_\alpha \coloneq \sup_{\substack{t,s \in [0,1],\\ t \neq s}} \frac{\lVert x_t-x_s\rVert}{|t-s|^\alpha}$$
 is a semi-norm on $\mathcal{C}^{\alpha}([0,1],E)$ which is not a norm. Then deduce that
 $$\lVert x\rVert_{\mathcal{C}^{\alpha}} \coloneq \sup_{t \in [0,1]} \lVert x_t\rVert + \lVert x\rVert_\alpha$$
 is a norm turning $\mathcal{C}^{\alpha}([0,1],E)$ into a Banach space.
 \Question Assume that a path $x \colon [0,1] \rightarrow E$ with values in a Banach space is $\alpha$-H\"o{}lder in the sense of \Cref{defn:Hoelder}. If $\alpha >1$, show that $x$ is differentiable and constant.
 \Question Consider for $N \in \N$ the subset $\mathcal{T}_1^N (\R^d) = \{x \in \mathcal{T}^N(\R^d) \mid \pi_0(x) = 1\}$ (i.e. all elements in the tensor algebra whose zeroth degree term is $1$ (recall that these are invertible!) set $\rho_N (x,y) \coloneq |x^{-1}\otimes y|$ where as in \eqref{norm:homo} we have
 $$|(1,x^1,x^2,\ldots,x^n)| =  \max_{k=1,\ldots , N} \{(k!\lVert x^k\rVert)^{1/k}\} +\max_{k=1,\ldots , N} \{(k!\lVert ((x^{-1})^k\rVert)^{1/k}\}.$$
 \subQuestion Show that $|\cdot|$ is subadditive, i.e.\ $|x\otimes x'| \leq |x|+|x'|$.
 \subQuestion For $\lambda \in \R$ define the dilation $\delta_\lambda \colon \mathcal{T}_1^N (\R^d) \rightarrow \mathcal{T}_1^N (\R^d), (x^k)_{0\leq k\leq N} \mapsto (\lambda^kx^k)_{0\leq k\leq N}$. Show that $|\delta_\lambda (x)|=|\lambda| |x|$
 \subQuestion Show that $\rho_N$ is a left invariant metric on $G^N(\R^d)$, i.e.\ $\rho_N (b\otimes x , b \otimes y) =\rho_N(x,y)$.
 \subQuestion Let $N \geq \lfloor 1/\alpha \rfloor$. Prove that a multiplicative functional $X \colon \Delta \rightarrow G^N(\R^d)$ is an $\alpha$-rough path if and only if \eqref{Holderpath} is satisfied.\\
 {\footnotesize \textbf{Hint}: You will need \Cref{inverse_RP} and can use (without a proof) that the $k$th component of $\bX^{-1}$ arises by applying a linear map to the $k$th component of $\bX$.}
 \end{Exercise}

\section{Rough paths and the shuffle algebra} \label{sect:shuffle}

In this section we will broaden the scope of the investigation. The reason for this is twofold. On one hand, it allows us to give some further motivation to the question why infinite-dimensional differential geometry is of relevance in rough path theory. On the other hand, there is a pleasing framework which generalises the construction of the groups $G^\infty (\R^d)$ and leads to a whole class of infinite-dimensional Lie groups which have recently been found in a variety of mathematical contexts.

To start, consider again a smooth path $X \colon [0,1] \rightarrow \R^d$ with components $X^i, i=1,\ldots d$. Now the iterated integrals comprising the signature $S_N(X)$ satisfy several algebraic conditions. For example, for $S_2 (X)_{s,t} = (1,X_{s,t},\int_s^tX_{s,r}\mathrm{d}X)$ the product rule of ordinary calculus yields
\begin{align}\label{product-rule}
X_{s,t}^i \cdot X_{s,t}^j =  \int_s^t X_{s,r}^i \mathrm{d}X^j + \int_s^t X_{s,r}^j \mathrm{d}X^i = \int_s^t \int_s^r \mathrm{d}X^i \mathrm{d}X^j +  \int_s^t \int_s^r \mathrm{d}X^j \mathrm{d}X^i
\end{align}
Similar identities for higher order iterated integrals and this is ultimately responsible for the non-linear set $G^N(\R^d)$ being the correct state space of a rough path. The question is of course how these identities and the associated combinatorics can be conveniently expressed. These questions lead to the so called shuffle product.

\begin{defn}[{Shuffle algebra, \cite{Reut93}}]\label{shufflealgebra} \index{shuffle algebra}
 Consider the set $\mathcal{A} = \{1,2,\ldots,d\}$, which is called in this context an \emph{alphabet} and its elements \emph{letters}. By concatenation we can construct words from the letters and the set $\mathcal{A}^\ast$ of all words including the empty word $\emptyset$. We construct now the shuffle algebra $\text{Sh} (\mathcal{A}) = \R \mathcal{A}^\ast$ as the vector space generated by the words over $\mathcal{A}$. Note that as a locally convex space $\text{Sh}(\mathcal{A})$ is isomorphic to the direct sum of countably many copies of $\R$, see \Cref{ex:boxtop}. Moreover, $\mathcal{A}^\ast$ and thus also $\text{Sh}(\mathcal{A})$ is graded by word-length, i.e.\ if $w = a_1\cdots a_n \in \mathcal{A}^\ast$ for letters $a_i$, we set $|w|=n$ and say that an element of $\text{Sh}(\mathcal{A})$ is homogeneous of degree $n \in \N_0$ if it is a linear combination of words of length $n$. For the algebra structure let $a,b \in \mathcal{A}$ and $u,w \in \mathcal{A}^\ast$. Then the \emph{shuffle product} is defined recursively by
    \begin{displaymath}
    \emptyset \shuffle w  = w \shuffle \emptyset = w, \quad  (au) \shuffle (bw) := a(u\shuffle (bw)) + b((au) \shuffle w)
    \end{displaymath}
 We will see in Exercise \ref{Ex:Hopfexp} 1. that it extends to a continuous product on $\text{Sh}(\mathcal{A})$, whence $\text{Sh}(\mathcal{A})$ becomes a unital locally convex algebra, called the \emph{shuffle algebra}.\footnote{The shuffle algebra is actually a graded bialgebra. The coproduct is given by deconcatenation of words
    \begin{displaymath}
    \delta \colon\text{Sh} (\mathcal{A}) \rightarrow\text{Sh} (\mathcal{A}) \otimes \text{Sh} (\mathcal{A}),\quad  \delta (a_1 a_2 \cdots a_n) = \sum_{i=0}^n a_1  a_2 \cdots a_i \otimes a_{i+1} \cdots a_{i+2} \cdots a_n , \quad a_1,\ldots, a_n \in \mathcal{A}.
    \end{displaymath}
 As the subspace of homogeneous elements of degree $0$ is $1$-dimensional, the graded bialgebra $\text{Sh} (\mathcal{A})$ becomes a Hopf algebra (see \cite{Man08} for an introduction). This means that $\text{Sh} (\mathcal{A})$ admits an antipode, i.e.\ a mapping $S\colon \text{Sh} (\mathcal{A}) \rightarrow \text{Sh} (\mathcal{A})$ connecting the algebra and coalgebra structures and given by $$S (a_1 a_2 \ldots a_n) = (-1)^n a_n a_{n-1} \ldots a_1, \quad \forall a_1,\ldots a_n \in \mathcal{A}.$$}
\end{defn}
It turns out that the shuffle product encodes the combinatorics keeping track of the non-linear identities in the signature. For this we define for a smooth path $X \colon [0,1]\rightarrow \R^d$ with (truncated) signature $S_N (X), N\in \N \cup \{\infty\}$ the evaluation 
$$\langle S_N (X) , w \rangle \coloneq \int_s^t\int_s^{t_{n-1}} \cdots \int_s^{t_1} \mathrm{d}X^{a_1} \mathrm{d}X^{a_2} \cdots \mathrm{d}X^{a_m},\quad \text{for } w= a_1 \cdots a_n \in \mathcal{A}^\ast, |w|\leq N.$$
With this notation in place, one can show (see Exercise \ref{Ex:Hopfexp} 2.) that the chain rule of ordinary calculus implies
\begin{align}\label{signature:shuffle}
 \langle S_N (X)_{s,t},w\rangle \langle S_{N}(X)_{s,t},u\rangle = \langle S_N(X)_{s,t},w\shuffle u\rangle, \quad \forall w,u \in \mathcal{A}^\ast.
\end{align}
Thus in view of \eqref{signature:shuffle}, we see that the chain rule induces some non-linear constraints to the signature. Indeed these constraints are responsible for the signature to take its values in $G^N(\R^d)$ instead of the full tensor algebra. The role these identities play will become clearer once we change our point of view slightly. For this we identify the $k$th level of the signature $S_\infty (X)$ as a mapping into $(\R^d)^{\otimes k} \cong \R^{d^k}$ whose components are given by the maps $\Delta \rightarrow \langle S_\infty (X),w\rangle$ for all $w \in \mathcal{A}^\ast$ with $|w|=k$. Hence for every pair $(s,t) \in \Delta$ we can identify the signature with a functional 
$$\langle S_\infty (X)_{s,t}, \cdot \rangle \colon \text{Sh}(\mathcal{A}) \rightarrow \R,\quad \mathcal{A}^\ast \ni w \mapsto \langle S_\infty (X)_{s,t},w\rangle ,$$
In Exercise \ref{Ex:Hopfexp} 2. we will show that $\langle S_\infty (X)_{s,t},\cdot)$ is continuous, i.e.\ it takes its values in the continuous dual $(\text{Sh}(\mathcal{A}))'\cong \prod_{w \in \mathcal{A}^\ast} \R$ (this follows from Exercise \ref{Ex:final} 2. as the dual of a locally convex direct sum of copies of $\R$ is the direct product, \cite[Proposition 24.3]{MaV97}). From this identification of the dual one deduces at once that the signature gives rise to a continuous map $\langle S_\infty (X),\cdot \rangle \colon \Delta \rightarrow (\text{Sh}(\mathcal{A}))'$. Elements in the image of $\langle S_\infty (X)_{s,t},\cdot\rangle$ map the algebra product to the multiplication in $\R$ by \eqref{signature:shuffle}, whence they are algebra morphisms. 

Algebra morphism from the shuffle algebra are called \emph{characters}\index{character of an algebra} of the algebra $\text{Sh}(\mathcal{A})$. We have already mentioned that the shuffle algebra carries more structure and is indeed a Hopf algebra. A \emph{Hopf algebra}\index{Hopf algebra} $A$ is an algebra which is simultaneously a coalgebra $A \rightarrow A\otimes A$ such that the algebra and coalgebra structure are connected via the so called antipode $S \colon A \rightarrow A$. We refer to e.g.\ \cite{Man08} for the full definition and many examples. Now, for a Hopf algebra such as the shuffle algebra, the characters form a group $\mathcal{G}(\text{Sh}(\mathcal{A}),\R)$ under the convolution product \index{character group of a Hopf algebra}
$$\varphi \star \psi (w) \coloneq \varphi \otimes \psi (\delta(w)), \text{ where } \delta \text{ is the deconcatenation coproduct}.$$
Note that we exploit here $\R \otimes \R \cong \R$. Inversion in the character group is given by precomposition with the antipode, i.e.\ $\varphi^{-1} = \varphi \circ S$ (cf.\ e.g.\ \cite[Lemma 2.3]{BaDaS16}). 
Summing up, we have just proved the following:

\begin{lem}
 For a smooth path $X \colon [0,1] \rightarrow \R^d$ the signature induces a continuous map 
 $$\langle S_\infty (X),\cdot\rangle \colon \Delta \rightarrow \mathcal{G}(\mathrm{Sh}(\mathcal{A}),\R) \subseteq \mathrm{Sh}(\mathcal{A})', \quad (s,t) \mapsto \langle S_\infty (X)_{s,t},\cdot\rangle$$
 with values in the character group of the Hopf algebra $\mathrm{Sh}(\mathcal{A})$.
\end{lem}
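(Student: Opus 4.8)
The plan is to assemble the claim from pieces that are already available in the excerpt, treating it as a repackaging result rather than a computation. The statement bundles together three assertions: that $\langle S_\infty(X),\cdot\rangle$ is well-defined as a map into $\mathrm{Sh}(\mathcal{A})'$, that it is continuous, and that its image lies in the character group $\mathcal{G}(\mathrm{Sh}(\mathcal{A}),\R)$. The text immediately preceding the lemma explicitly says ``we have just proved the following,'' so the proof should simply collect and organize those observations into a clean argument, citing the relevant exercises.

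First I would fix $(s,t)\in\Delta$ and record that the pairing $w\mapsto \langle S_\infty(X)_{s,t},w\rangle$ is defined on all basis words $w\in\mathcal{A}^\ast$ by the iterated-integral formula, hence extends linearly to all of $\mathrm{Sh}(\mathcal{A})=\R\mathcal{A}^\ast$. Continuity of this functional on $\mathrm{Sh}(\mathcal{A})$ is the first genuine point: since $\mathrm{Sh}(\mathcal{A})$ is the locally convex direct sum of countably many copies of $\R$ (graded by word length), a linear functional is automatically continuous iff it is finite on each summand, and by the identification of the dual of a direct sum with the direct product (\cite[Proposition 24.3]{MaV97}, as invoked in the text) we get $\langle S_\infty(X)_{s,t},\cdot\rangle\in\mathrm{Sh}(\mathcal{A})'\cong\prod_{w\in\mathcal{A}^\ast}\R$. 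This is precisely what Exercise \ref{Ex:Hopfexp} 2 establishes, so I would cite it here.

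Next I would verify that each such functional is a \emph{character}, i.e.\ an algebra morphism $\mathrm{Sh}(\mathcal{A})\to\R$. The key input is the shuffle identity \eqref{signature:shuffle},
\[
\langle S_\infty(X)_{s,t},w\rangle\,\langle S_\infty(X)_{s,t},u\rangle = \langle S_\infty(X)_{s,t},w\shuffle u\rangle,\qquad \forall w,u\in\mathcal{A}^\ast,
\]
which by bilinearity of the shuffle product extends from words to all of $\mathrm{Sh}(\mathcal{A})$ and says exactly that the functional converts the shuffle product into multiplication in $\R$. Together with the normalisation $\langle S_\infty(X)_{s,t},\emptyset\rangle=1$ (the degree-$0$ component of the signature is $\one$), this shows the functional is a unital algebra morphism, hence lies in $\mathcal{G}(\mathrm{Sh}(\mathcal{A}),\R)$.

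Finally I would address continuity of the \emph{map} $\langle S_\infty(X),\cdot\rangle\colon\Delta\to\mathrm{Sh}(\mathcal{A})'$. Since the target carries the product topology, it suffices to check that each coordinate $\Delta\ni(s,t)\mapsto\langle S_\infty(X)_{s,t},w\rangle$ is continuous for every fixed $w$; but this coordinate is just an iterated Riemann integral of the smooth path $X$ over a simplex whose vertices depend continuously (indeed smoothly) on $(s,t)$, so continuity is clear. The main obstacle, such as it is, is the bookkeeping identification of continuity on a direct sum with finiteness on summands and the extension of the shuffle identity from words to the whole algebra by bilinearity; both are routine once Exercise \ref{Ex:Hopfexp} 2 and the dual-of-direct-sum identification are in hand, so the proof is essentially a matter of assembling these cited facts in the right order.
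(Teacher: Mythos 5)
Your proposal is correct and follows essentially the same route as the paper, which itself introduces the lemma with ``we have just proved the following'' and relies on exactly the three ingredients you assemble: the identification $\mathrm{Sh}(\mathcal{A})'\cong\prod_{w\in\mathcal{A}^\ast}\R$ (Exercise \ref{Ex:Hopfexp} 2 together with the dual of a countable direct sum being the product), the shuffle identity \eqref{signature:shuffle} giving the character property, and coordinate-wise continuity of $(s,t)\mapsto\langle S_\infty(X)_{s,t},w\rangle$ in the product topology. Your explicit mention of unitality via $\langle S_\infty(X)_{s,t},\emptyset\rangle=1$ is a small but welcome addition the paper leaves implicit.
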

More general, if we consider the $k$th level of an element $\bX = (1,X^1,X^2,\ldots) \in G^\infty (\R^d)$ we see that $X^k \in (\R^d)^{\otimes k}$ can be written as $X^k = \sum_{w \in \mathcal{A}^\ast, w=a_1 \cdots a_k} \alpha_{\bX} (w) e_{a_1} \otimes \cdots e_{a_k}$, where $e_{a_\ell}$ is the standard basis vector labelled by $a_\ell \in \mathcal{A} = \{1,\ldots ,d \}$. With some work on the algebra, one can prove the following:

\begin{lem}[{\cite[Theorem 2.6]{Ree58}}]
 Let $\mathcal{A} = \{1,\ldots , d\}$ and for $\bX = (1,X^1,X^2,\ldots)$ we write $X^k = \sum_{w \in \mathcal{A}^\ast, w=a_1 \cdots a_k} \alpha_{\bX} (w) e_{a_1} \otimes \cdots e_{a_k}$. Then the following map is a well defined group isomorphism
 \begin{equation}\label{groupiso} \begin{split}
 \Psi\colon G^\infty (\R^d) \rightarrow & \ \mathcal{G}(\mathrm{Sh}(\mathcal{A}),\R),\quad \bX = (1,X^1,X^2,\ldots) \mapsto \psi_\bX, \\ 
  \textup{ where } & \psi_\bX \textup{ is defined on $\mathcal{A}^\ast$ as } \psi_\bX (w) = \alpha_\bX (w),\quad \forall w \in \mathcal{A}^\ast.
\end{split}
\end{equation}
Moreover, a similar statement holds for the character group of the truncated shuffle algebra and the groups $G^N(\R^d), N \in \N$. 
\end{lem}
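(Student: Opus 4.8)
The plan is to exploit the graded duality between the shuffle Hopf algebra $\mathrm{Sh}(\mathcal{A})$ and the concatenation algebra $\mathcal{T}^\infty(\R^d)$, reducing the genuinely algebraic content to the cited theorem of Ree. First I would fix the pairing: for $\bX = (1,X^1,X^2,\ldots)$ and a word $w = a_1 \cdots a_k \in \mathcal{A}^\ast$ set $\langle \bX, w\rangle \coloneq \alpha_\bX(w)$, the coefficient of $e_{a_1}\otimes \cdots \otimes e_{a_k}$ in $X^k$. As already observed just before the statement, $\mathrm{Sh}(\mathcal{A})$ is the locally convex direct sum $\bigoplus_{w \in \mathcal{A}^\ast}\R$, so its continuous dual is the direct product $\prod_{w \in \mathcal{A}^\ast}\R \cong \mathcal{T}^\infty(\R^d)$; thus $\bX \mapsto \langle \bX,\cdot\rangle$ is a linear bijection of $\mathcal{T}^\infty(\R^d)$ onto $\mathrm{Sh}(\mathcal{A})'$, and $\Psi$ is exactly its restriction to $G^\infty(\R^d)$. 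Non-degeneracy of this pairing (the coefficients $\alpha_\bX(w)$ determine $\bX$) already gives injectivity of $\Psi$ for free.

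Next I would verify that $\Psi$ is a homomorphism, and this is where the concatenation-versus-deconcatenation dictionary enters. Writing out the concatenation product \eqref{algebraprod}, the coefficient of $e_{a_1}\otimes\cdots\otimes e_{a_k}$ in the degree-$k$ component $\sum_{i+j=k} X^i \otimes Y^j$ of $\bX \otimes \bY$ is $\sum_{i=0}^k \alpha_\bX(a_1\cdots a_i)\,\alpha_\bY(a_{i+1}\cdots a_k)$, which is precisely $(\psi_\bX \otimes \psi_\bY)(\delta w)$ with $\delta$ the deconcatenation coproduct. Hence $\langle \bX \otimes \bY,\cdot\rangle = \psi_\bX \star \psi_\bY$; that is, concatenation in the tensor algebra is dual to the convolution product, and clearly $\psi_\one$ is the counit, i.e.\ the neutral element. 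This identity holds for all elements of the unit group, so once the image is known to consist of characters, $\Psi$ is automatically a group morphism.

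The crux is well-definedness together with surjectivity, and here I would invoke Ree's theorem directly. By construction $\bX \in G^\infty(\R^d)$ if and only if $\bX = \exp_\infty(\ell)$ for a Lie series $\ell \in \mathfrak{g}^\infty(\R^d)$. Ree's characterisation of Lie elements and their exponentials \cite{Ree58} states exactly that $\langle \exp_\infty(\ell),\cdot\rangle$ is multiplicative for the shuffle product, $\langle \bX, u \shuffle w\rangle = \langle \bX,u\rangle\langle \bX,w\rangle$, and conversely that every functional with $\langle\cdot,\emptyset\rangle = 1$ enjoying this shuffle-multiplicativity is of the form $\langle \exp_\infty(\ell),\cdot\rangle$ for a unique Lie series. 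Translating through the pairing, shuffle-multiplicativity of $\psi_\bX$ is precisely the statement that $\psi_\bX$ is a character, so Ree's theorem shows simultaneously that $\Psi(\bX)$ lands in $\mathcal{G}(\mathrm{Sh}(\mathcal{A}),\R)$ and that every character is attained. Combined with the injectivity and the homomorphism property above, this yields that $\Psi$ is a group isomorphism. I expect the genuinely hard content to be entirely contained in Ree's equivalence (group-like $\Leftrightarrow$ exponential of a Lie series $\Leftrightarrow$ shuffle character), which we import; the remaining work is the bookkeeping of the duality, whose only real subtleties are matching concatenation with deconcatenation and shuffle-multiplicativity with the exponential condition.

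Finally, the truncated case follows by running the same argument degree by degree, with care for the word-length grading. Since deconcatenation of a word of length $\leq N$ produces only factors of length $\leq N$, the convolution product is well defined on functionals supported on words of length $\leq N$, and the restricted pairing identifies these with $\mathcal{T}^N(\R^d)$; the multiplicativity defining a truncated shuffle character is understood within the available degrees (total length $\leq N$). The degree-restricted form of Ree's theorem then singles out, among such functionals, exactly the images of $\exp_N(\mathfrak{g}^N(\R^d)) = G^N(\R^d)$ as the truncated shuffle characters, giving the isomorphism $\Psi_N \colon G^N(\R^d) \to \mathcal{G}(\mathrm{Sh}(\mathcal{A}),\R)$ at truncation level $N$ by the identical reasoning.
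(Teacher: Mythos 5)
Your proposal is correct and follows essentially the same route as the paper, which offers no independent proof but imports the entire algebraic content (group-like $\Leftrightarrow$ exponential of a Lie series $\Leftrightarrow$ shuffle-multiplicative coefficient functional) from Ree's theorem, exactly as you do. The duality bookkeeping you supply --- the identification $\mathrm{Sh}(\mathcal{A})' \cong \prod_{w\in\mathcal{A}^\ast}\R \cong \mathcal{T}^\infty(\R^d)$ and the matching of concatenation with the deconcatenation coproduct, hence of the tensor product with convolution --- is precisely what the paper relegates to Exercise \ref{Ex:Hopfexp} 3 and to standard Hopf-algebra duality, and your degree-by-degree treatment of the truncated case is the intended reading of the final sentence of the lemma.
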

Turning now to the topological side, $G^\infty (\R^d)$ is a Lie group with respect to the subspace topology induced by the tensor algebra. As a locally convex space, the tensor algebra is isomorphic to a countable product of copies of the reals. We can also topologise the character group $\mathcal{G}(\text{Sh}(\mathcal{A}),\R) \subseteq \text{Sh}(\mathcal{A})' = \prod_{w \in \mathcal{A}^\ast} \R$ with the subspace topology. This topology even turns the character group into an infinite-dimensional Lie group, \cite[Theorem A]{BaDaS16}. Thus we have the following additional statement:

\begin{prop}\label{prop:RPidentify}
 The group isomorphism \eqref{groupiso} is an isomorphism of Lie groups. Thus there is a bijection between $\alpha$-rough path $\bX$ with values in $G^\infty (\R^d)$ and continuous maps 
 $$\Psi_\bX \colon \Delta \rightarrow \mathcal{G} (\mathrm{Sh}(\mathcal{A}),\R), (t,s) \mapsto \psi_{\bX_{s,t}} $$
 such that for every $w \in \mathcal{A}^\ast$ the component path $\Delta \rightarrow \R, (t,s) \mapsto \psi_{\bX_{s,t}} (w)$ satisfies the ``$k\alpha$-H\"o{}lder condition''
 \begin{align}\label{kHoelder:character}
  \sup_{\substack{(s,t)\in \Delta\\ s\neq t}} \frac{| \psi_{\bX_{s,t}}(w)|}{|t-s|^{k\alpha}} < \infty.
 \end{align}
\end{prop}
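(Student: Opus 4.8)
The plan is to deduce the whole statement from one observation: after the natural coordinate identifications, $\Psi$ is simply the restriction of a topological linear isomorphism between two copies of $\prod_{w\in\mathcal{A}^\ast}\R$. First I would set up the two ambient spaces. The shuffle algebra is the free vector space $\mathrm{Sh}(\mathcal{A})=\bigoplus_{w\in\mathcal{A}^\ast}\R$ on the words, so its continuous dual is $\mathrm{Sh}(\mathcal{A})'\cong\prod_{w\in\mathcal{A}^\ast}\R$ (as already noted, \cite[Proposition 24.3]{MaV97}). On the other hand, indexing the standard basis of $(\R^d)^{\otimes k}$ by the words $a_1\cdots a_k$ of length $k$ and regrouping the coordinates of $\mathcal{T}^\infty(\R^d)=\prod_{k\geq 0}(\R^d)^{\otimes k}$ by word length yields a topological isomorphism $\bar\Psi\colon \mathcal{T}^\infty(\R^d)\xrightarrow{\cong}\prod_{w\in\mathcal{A}^\ast}\R\cong\mathrm{Sh}(\mathcal{A})'$ of locally convex spaces sending $\bX=(1,X^1,X^2,\ldots)$ to the functional $w\mapsto\alpha_\bX(w)$; both sides carry the product topology, so this is a mere reindexing of countably many coordinate lines. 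By construction $\bar\Psi$ extends the group isomorphism $\Psi$ of \eqref{groupiso}, and $\bar\Psi,\bar\Psi^{-1}$ are continuous linear, hence smooth in the Bastiani sense (cf.\ the example following \Cref{defn: der:Bast}).

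Next I would promote $\Psi$ to a diffeomorphism. The group $G^\infty(\R^d)$ is a closed Lie subgroup of the unit group of $\mathcal{T}^\infty(\R^d)$ by \Cref{prop:GNlocexp}, hence a submanifold of $\mathcal{T}^\infty(\R^d)$ (the unit group is open), and by \cite[Theorem A]{BaDaS16} the character group $\mathcal{G}(\mathrm{Sh}(\mathcal{A}),\R)$ is a Lie group realised as a submanifold of $\mathrm{Sh}(\mathcal{A})'$. Since $\iota_{\mathcal{G}}\circ\Psi=\bar\Psi\circ\iota_{G^\infty(\R^d)}$ is a composition of the (smooth) inclusion with a continuous linear map, and $\Psi$ takes its values in the submanifold $\mathcal{G}(\mathrm{Sh}(\mathcal{A}),\R)$, \Cref{lem:submfd:initial} shows that $\Psi$ is smooth; the symmetric argument applied to $\bar\Psi^{-1}$ and the inclusion $\iota_{\mathcal{G}}$ shows that $\Psi^{-1}$ is smooth. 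As $\Psi$ is already a group isomorphism, it is thereby an isomorphism of Lie groups.

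Finally I would transport the notion of an $\alpha$-rough path through $\Psi$. Because $\Psi$ is a homeomorphism and a group homomorphism, post-composition $\bX\mapsto\Psi\circ\bX=\Psi_\bX$ is a bijection from maps $\Delta\to G^\infty(\R^d)$ onto maps $\Delta\to\mathcal{G}(\mathrm{Sh}(\mathcal{A}),\R)$ that preserves continuity and intertwines Chen's relation \eqref{full:chen} with the convolution identity $\psi_{\bX_{s,t}}=\psi_{\bX_{s,u}}\star\psi_{\bX_{u,t}}$. It then remains to match the analytic conditions level by level: the defining estimate $\lVert X^k_{s,t}\rVert\lesssim|t-s|^{k\alpha}$ lives on the \emph{finite-dimensional} space $(\R^d)^{\otimes k}$, and under $\bar\Psi$ the norm $\lVert X^k_{s,t}\rVert$ is equivalent to $\max_{|w|=k}|\psi_{\bX_{s,t}}(w)|$ by equivalence of norms in finite dimensions. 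Hence the graded Hölder condition holds for $\bX$ if and only if \eqref{kHoelder:character} holds for every $w$, which yields the asserted bijection.

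The hard part will be the interface with \cite{BaDaS16}: one must confirm that the Lie group structure placed on the character group there is exactly the subspace structure induced from $\mathrm{Sh}(\mathcal{A})'$ under the regrouping identification above, i.e.\ that the inclusion $\mathcal{G}(\mathrm{Sh}(\mathcal{A}),\R)\hookrightarrow\mathrm{Sh}(\mathcal{A})'$ is a smooth embedding onto a submanifold. Only then does \Cref{lem:submfd:initial} reduce smoothness of $\Psi$ and $\Psi^{-1}$ to the triviality that $\bar\Psi$ is continuous linear. The remaining ingredients — the regrouping isomorphism, the preservation of continuity and of Chen's relation, and the finite-dimensional norm equivalence — are routine once this compatibility is secured.
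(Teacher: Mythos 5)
Your proposal is correct and follows essentially the same route as the paper's proof: the paper likewise realises $\Psi$ as the restriction of a locally convex isomorphism between the ambient spaces (this is exactly Exercise \ref{Ex:Hopfexp} 3.), invokes \Cref{prop:GNlocexp} and \cite{BaDaS16} for the submanifold structures so that \Cref{lem:submfd:initial} yields smoothness of $\Psi$ and its inverse, and then transfers the graded H\"older conditions level by level via equivalence of norms on the finite-dimensional spaces $(\R^d)^{\otimes k}$. The compatibility issue you flag at the end is indeed the one point the paper outsources entirely to \cite{BaDaS16} without further comment.
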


\begin{proof}
 In Exercise \ref{Ex:Hopfexp} 3. below you will show that \eqref{groupiso} is the restriction of an isomorphism of locally convex spaces, whence an isomorphism of topological groups. Now by \Cref{prop:GNlocexp} the group $G^\infty (\R^d)$ is a Lie group and a submanifold of the tensor algebra. From \cite[Theorem B]{BaDaS16} we know that also $\mathcal{G}(\text{Sh}(\mathcal{A}),\R)$ is Lie group and a submanifold of the dual space of the shuffle algebra. Hence we deduce from \Cref{lem:submfd:initial} that \eqref{groupiso} is already smooth and thus an isomorphism of Lie groups. 
 
 Every $\alpha$-rough path $\bX$ with values in $G^\infty (\R^d)$ is a continuous $G^\infty (\R^d)$-valued map (since its components are continuous in the product topology $G^\infty(\R^d) \subseteq \mathcal{T}^\infty (\R^d)$) and the $k$th level satisfies a $k\alpha$-H\"{o}lder condition. Hence $\Psi_\bX \coloneq \Psi \circ \bX \colon \Delta \rightarrow \mathcal{G}(\text{Sh}(\mathcal{A}),\R)$ is continuous. Up to changing the H\"{o}lder constant, we can change the norm on $\R^d)^{\otimes k } \cong \R^{d^k}$ to the maximum norm. This shows that the components $\Psi_\bX(w)$ inherit any H\"{o}lder condition the $k$th level of $\bX$ satisfies, i.e.\ \eqref{kHoelder:character} holds. Vice versa if \eqref{kHoelder:character} holds for every $w\in \mathcal{A}^\ast$ with $|w|=k$, then the $k$th level of $\bX$ satisfies a $k\alpha$-H\"{o}lder condition. This establishes the claimed bijection. 
\end{proof}

 \Cref{prop:RPidentify} yields a nice interpretation of a (weakly geometric) rough path as a ``multiplicative functional'' on the shuffle algebra.\index{multiplicative functional} Of course this was not intended when T.\ Lyons originally coined the term multiplicative functional. 

\begin{rem}\label{rem:fullproof}
We are finally able to give a complete and easy proof for \Cref{inverse_RP}: In view of \Cref{prop:RPidentify}, a rough path $\bX$ takes its values in $\mathcal{G}(\mathrm{Sh}(\mathcal{A}),\R)$ and the group structure is isomorphic to the one of $G^\infty(\R^d)$. Hence the pointwise inverse $\bX^{-1}$ can be computed via the group structure. Now it is a well known fact about characters of a Hopf algebra, that for a character $\phi$, its inverse in the character group is given by $\phi \circ S$, where $S$ is the antipode of the Hopf algebra, \cite[Lemma 2.3]{BaDaS16}. Thus in the case of the shuffle Hopf algebra, this yields for an arbitrary linear combination of the words of length $k$ the formula  
$$S\left(\sum_{i_1,\ldots i_k \in\{1,\ldots ,d\}} a_{(i_1,\ldots , i_k)} i_1 \cdots i_k\right) = \sum_{i_1,\ldots i_k \in\{1,\ldots ,d\}} (-1)^ka_{(i_1,\ldots , i_k)} i_k \cdots i_1.$$
Now the $k$th component of $\bX$ corresponds to a linear combination of basis element $e_{i_1} \otimes e_{i_2} \otimes \cdots \otimes e_{i_k}$. The isomorphism $\mathcal{G} (\mathrm{Sh}(A),\R) \cong G^\infty (\R^d)$ identifies $i_1i_2 \ldots i_k$ with $e_{i_1}\otimes \cdots \otimes e_{i_k}$, whence the inversion formula shows the $k$th component of the pointwise inverse $\bX^{-1}$ arises by permuting coefficients (and multiplying by $(-1)^k$). Hence the $k$th component of $\bX^{-1}$ is $k\alpha$-H\"{o}lder if the $k$th component of $\bX$ is so.
\end{rem}

\begin{Exercise}\label{Ex:Hopfexp} \vspace{-\baselineskip}
 \Question Let $\mathcal{A} = \{1,\ldots , d\}$ be a finite alphabet and $\mathcal{A}^\ast$ the monoid of words generated by $\mathcal{A}$. Recall that $\text{Sh}(\mathcal{A}) = \R\mathcal{A}^\ast$ is the vector space generated by $\mathcal{A}^\ast$. Show that 
 \subQuestion for all words $(u\shuffle v)\shuffle w = u \shuffle (v\shuffle w)$, $u\shuffle v = v \shuffle u$ and $|u \shuffle v| = |u|+|v|$ hold. 
 \subQuestion one can bilinearly extend the shuffle product to $\shuffle \colon \text{Sh}(\mathcal{A}) \times  \text{Sh}(\mathcal{A}) \rightarrow  \text{Sh}(\mathcal{A})$. 
 \subQuestion the box-topology from \Cref{ex:boxtop} makes $\shuffle \colon \text{Sh}(\mathcal{A}) \times\text{Sh}(\mathcal{A}) \rightarrow \text{Sh}(\mathcal{A})$ continuous and $(\text{Sh}(\mathcal{A}),\shuffle)$ a locally convex algebra. \\
 {\footnotesize \textbf{Hint:} By Exercise \ref{Ex:final} 2. it suffices to show that for every pair of words $v,w$ the shuffle induces a continuous map $\R \times \R \rightarrow \text{Sh}(\mathcal{A}), (a,b) \mapsto (a v)\shuffle (bw)$.}
\Question Let $X \colon [0,1] \rightarrow \R^d$ be a smooth path with signature $S_N (X)$ (for some $N \in \N \cup \{\infty\}$). Let $w = a_1\cdots a_n \in \mathcal{A}^\ast$ 
 Define $\langle S_N(X)_{s,t} , w\rangle = \int_s^t \mathrm{d}X^{a_1} \mathrm{d}X^{a_2} \cdots \mathrm{d}X^{a_n}$. Show that
 \subQuestion \eqref{signature:shuffle} holds, i.e. $\langle S_N (X)_{s,t},w\rangle \langle S_{N}(X)_{s,t},u\rangle = \langle S_N(X)_{s,t},w\shuffle u\rangle, \quad \forall w,u \in \mathcal{A}^\ast.$
 {\footnotesize \textbf{Hint:} Use induction by word length and the chain rule. For $|w|=1$, the claim becomes \eqref{product-rule}.}
 \subQuestion for every pair $(s,t) \in \Delta$ the linear map $\langle S_\infty (X)_{s,t}, \cdot \rangle \colon  \text{Sh}(\mathcal{A}) \rightarrow \R$ is continuous, where we endow the shuffle algebra with the box-topology from \Cref{ex:boxtop}.
 \subQuestion the signature gives rise to a continuous map 
 $\langle S_\infty (X),\cdot \rangle \colon \Delta \rightarrow (\text{Sh}(\mathcal{A}))'\cong \prod_{w\in \mathcal{A}^\ast} \R.$
 {\footnotesize \textbf{Hint:} The identification is given by $\text{Sh}(\mathcal{A}) \ni f \mapsto (f(w))_{w \in \mathcal{A}^\ast} \in \prod_{w\in \mathcal{A}^\ast}\R$.}
 \Question Show that there is a canonical isomorphism $\text{Sh}(\mathcal{A})' = \prod_{n\in \N} \R \cong \mathcal{T}^\infty (\R^d)$ of locally convex spaces if $\mathcal{A}=\{1,\ldots, d\}$. Deduce that this restricts to  \eqref{groupiso}, whence $\eqref{groupiso}$ is a morphism of topological) groups if we endow $G^\infty (\R^d)$ with its Lie group topology and $\mathcal{G}(\text{Sh}(\mathcal{A}),\R)$ with the subspace topology induced by embedding it in $\text{Sh}(\mathcal{A})'$. 
\end{Exercise}
 
 \section{The grand geometric picture (rough paths and beyond)} \label{sect:beyond_rough}

 In the last section we have seen that rough paths can be understood as certain continuous paths with values in the character group of the shuffle Hopf algebra.
 We will now extend the focus slightly to different types of rough paths. For this, recall that the signature of a smooth path had to satisfy certain algebraic identities which connected it to the shuffle algebra and its character group. For smooth paths this was a consequence of the chain rule of ordinary calculus. However, there are also many interesting objects, which do not satisfy a classical chain rule. One example for this is the It\^{o}-integral from stochastic calculus (nevertheless we constructed the rough path It\^{o}-lift of Brownian motion in \Cref{ex:BM}). However, this motivates the idea to relax the requirement of \eqref{signature:shuffle} and record iterated integrals of a path $X \colon [0,1] \rightarrow \R^d$ of the form 
\begin{align}\label{branchedintegral}
 \int_s^t \left(\int_s^r \mathrm{d}X^i\right) \left( \int_s^r \mathrm{d}X^j\right) \mathrm{d}X^k.
\end{align}
 For the rough paths we have considered so long, the iterated integral \eqref{branchedintegral} (respectively the corresponding combination of levels of the rough path) simplifies via the shuffle identity to an iterated integral (resp.\ level of the rough path) we have already recorded. Relaxing the requirement \eqref{signature:shuffle} necessitates that we record objects corresponding to \eqref{branchedintegral} as additional information.
 Following this approach one arrives at Gubinelli's concept of a \emph{branched rough path}, \cite{Gub10}.\index{rough path!branched} Branched rough paths satisfy different algebraic identities than the rough paths we have been considering (which are called \emph{weakly geometric rough paths} in the literature, \cite{FaH20}). By dropping \eqref{signature:shuffle}, branched rough paths will not take values in the character group of the shuffle algebra. However, this can easily be remedied by replacing the shuffle algebra with another Hopf algebra. This leads us to the following concept.
 
 \begin{defn}
  Let $\mathcal{H}$ be a graded and connected Hopf algebra\footnote{Graded and connected means that $\mathcal{H} = \bigoplus_{n \in \N_0} \mathcal{H}_n$ as vector spaces with $\mathcal{H}_0 \cong \R$ and the algebra and coalgebra structures are compatible with the grading, see \cite{BaDaS16}. Recall that an element $x \in \mathcal{H}$ is called homogeneous of degree $|x|=n$ if $x \in \mathcal{H}_n$.} and $(\mathcal{G}(\mathcal{H},\R), \star)$ its character group. Fix a basis $B$ of the vector space $\mathcal{H}$ consisting of homogeneous elements. Then an $\alpha$-rough path over $\mathcal{H}$ is a map
  $$\bX \colon \Delta \rightarrow \mathcal{G}(\mathcal{H},\R)$$
  which satisfies Chen's relation
  $$\bX_{s,u} \star \bX_{u,t} = \bX_{s,t},\qquad s \leq u \leq t \in [0,1],$$
  and for every $h \in B$ we have the graded H\"{o}lder condition $|\langle \bX_{s,t} , h\rangle| \lesssim |t-s|^{\alpha|h|}$ (where $\langle \cdot ,h\rangle$ is evaluation in $h$).
  If $\text{dim} \mathcal{H}_1 = d$ and $B \cap \mathcal{H}_1 = \{e_1, \ldots, e_d\}$, an $\mathcal{H}$-rough path $\bX$ \emph{lifts} $x \colon [0,1] \rightarrow \R^d, x(t)=(x_1(t),\ldots , x_d(t))$ if $\langle \bX_{s,t},e_i\rangle = x_i(t)-x_i(s), i=1,\ldots, d$.
   \end{defn}
  
  Since the Hopf algebra $\mathcal{H}$ is graded, one can also define truncated versions of the rough paths by looking at the character group of truncated Hopf algebra.
  Note that the choice of basis $B$ is part of the definition of an $\mathcal{H}$-rough path and the graded H\"{o}lder condition will depend on the choice of grading and the base.
  
\begin{ex}
 The concept of an  $\alpha$-rough path over $\mathcal{H}$ is quite flexible and we illustrate this with the following list of different types of rough paths appearing in the literature:
 \begin{itemize}
  \item For $\mathcal{H} = \text{Sh}(\mathcal{A})$ and $B = \mathcal{A}^\ast$, we recover the notion of a (weakly) geometric rough path as discussed in \Cref{sec:roughintro} and \Cref{sect:shuffle}.
  \item If $\mathcal{H}$ is the Butcher-Connes-Kreimer Hopf algebra\index{Butcher-Connes-Kreimer algebra} of (decorated) rooted trees, \cite[Example 4.6]{BaDaS16} and $B$ the base of all forests, one obtains branched rough paths, \cite{Gub10}.
  \item For the Munthe--Kaas-Wright Hopf algebra and $B$ the base consisting of planar forests, one obtains the notion of a planarly branched rough path. These objects generalise the concept of a branched rough path to homogeneous spaces. See \cite{CaEFaMaMK20} for more information.
 \end{itemize}
\end{ex}
While switching the Hopf algebra allows to treat different concepts of rough paths, the general geometric theory stays the same for the different types of rough paths which can be treated in this generalised setting.
For example, there is a generalised version of the Lyons-Victoire lifting theorem (see \cite[Theorem 3.4]{TaZ20}) and the character groups can be endowed with an infinite-dimensional Lie group structure for which every rough path becomes a continuous map (see \cite[Theorem A]{BaDaS16}). Thus the theory for weakly geometric rough paths from \Cref{sect:shuffle} carries over to the generalised concept of rough path over $\mathcal{H}$.
However, there are also new geometric features which appear in this generalised setting. For example, the following result hints at interesting geometry: 

\begin{prop}[{\cite[Theorem 1.2]{TaZ20}}]\label{prop:homogeneous}
 For every $\alpha \in ]0,1[$ with $\alpha^{-1} \not \in \N$ the $\alpha$-branched rough paths form a homogeneous space under the action of a vector space of $\alpha$-H\"{o}lder functions.\index{rough path!branched} 
\end{prop}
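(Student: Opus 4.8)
The plan is to realise the set of $\alpha$-branched rough paths as a homogeneous space by transporting, into the Hopf-algebraic setting of \Cref{sect:shuffle}, the level-by-level analysis of multiplicative functionals from Exercise \ref{Ex:rough} 1. Recall that an $\alpha$-branched rough path is a map $\bX \colon \Delta \rightarrow \mathcal{G}(\mathcal{H},\R)$ into the character group of the Butcher--Connes--Kreimer Hopf algebra $\mathcal{H}=\bigoplus_n \mathcal{H}_n$ of decorated rooted forests, satisfying Chen's relation $\bX_{s,u}\star\bX_{u,t}=\bX_{s,t}$ together with the graded bound $|\langle\bX_{s,t},h\rangle|\lesssim|t-s|^{\alpha|h|}$ on homogeneous $h$. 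Since characters are multiplicative, $\bX$ is determined by its values on rooted \emph{trees}, graded by their number of nodes. First I would fix the acting object as the real vector space $V_\alpha\coloneq\bigoplus_{\tau\text{ tree},\,\alpha|\tau|<1}\mathcal{C}^{\alpha|\tau|}_0([0,1],\R)$, where $\mathcal{C}^{\beta}_0$ denotes the $\beta$-Hölder functions vanishing at $0$; the truncation $\alpha|\tau|<1$ is legitimate precisely because the hypothesis $\alpha^{-1}\notin\N$ forces $\alpha|\tau|\neq 1$ for every $\tau$.

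The key step is the difference lemma, the exact analogue of Exercise \ref{Ex:rough} 1. If $\bX,\bY$ agree on all trees of degree $<n$ and $\tau$ is a tree with $|\tau|=n$, the reduced coproduct reads $\Delta\tau=\tau\otimes 1+1\otimes\tau+\sum\tau'\otimes\tau''$ with $1\le|\tau'|,|\tau''|<n$. Pairing Chen's relation against $\tau$ gives $\langle\bX_{s,t},\tau\rangle=\langle\bX_{s,u},\tau\rangle+\langle\bX_{u,t},\tau\rangle+\sum\langle\bX_{s,u},\tau'\rangle\langle\bX_{u,t},\tau''\rangle$, whose mixed sum involves only degrees $<n$, where $\bX$ and $\bY$ coincide. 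Hence $F^\tau_{s,t}\coloneq\langle\bX_{s,t},\tau\rangle-\langle\bY_{s,t},\tau\rangle$ is additive, so $F^\tau_{s,t}=f^\tau(t)-f^\tau(s)$ with $f^\tau(r)\coloneq F^\tau_{0,r}$ and $f^\tau(0)=0$; the graded bound gives $f^\tau\in\mathcal{C}^{\alpha n}_0$. When $\alpha n>1$, Exercise \ref{Ex:rough} 3 shows $f^\tau$ is constant, hence $f^\tau\equiv0$: the components on trees of degree $>1/\alpha$ are already forced by the lower ones. This is the branched counterpart of Lyons' lifting theorem \Cref{thm:Lyons-lift} and confines all genuine freedom to the finitely many degrees $n<1/\alpha$.

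With the difference lemma available I would prove transitivity by induction on $n$. Given $\bX,\bY$, I first match degree $1$ using the $\alpha$-Hölder increment between their first levels; having achieved agreement up to degree $n-1$ by the action of the lower-degree components of some $g\in V_\alpha$, the difference lemma yields an $\alpha n$-Hölder function whose increment is precisely the remaining degree-$n$ discrepancy, which I declare to be the degree-$n$ component of $g$. The action $\bX\mapsto g\cdot\bX$ translates the tree-values of $\bX$ by the increments prescribed by $g$ and then propagates this change to all forests and to higher degrees so as to preserve multiplicativity and Chen's relation; one checks that $g\cdot\bX$ again lands in $\mathcal{G}(\mathcal{H},\R)$ and is an $\alpha$-branched rough path. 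Since the truncation stops at $n<1/\alpha$ the induction terminates, producing $g$ with $g\cdot\bX=\bY$; non-emptiness of the space comes from the generalised Lyons--Victoire lifting \cite[Theorem 3.4]{TaZ20}. Thus the action is transitive.

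The main obstacle is to make this scheme a genuine \emph{vector-space} action rather than a bookkeeping of level-by-level moves. Two things require care. First, well-definedness: a translation at degree $n$ forces nonlinear corrections at every higher degree in order to keep Chen's relation and the character property, and one must verify that these corrections stay inside the Hölder classes of $V_\alpha$ and are independent of the order in which degrees are treated. Second, identifying the acting group with a vector space: the operation inherited from the convolution product $\star$ on $\mathcal{G}(\mathcal{H},\R)$ is a priori non-abelian, given by a Baker--Campbell--Hausdorff-type series, so I would construct a logarithm-type bijection onto $V_\alpha$ intertwining it with addition, using that $\mathcal{H}$ is graded connected and that the truncation $\alpha|\tau|<1$ leaves only finitely many brackets non-zero; this mirrors the way $\exp_N$ and $\log_N$ linearised the group in \Cref{Lgpexp:onsubspace}. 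Freeness of the action, which would upgrade ``homogeneous space'' to a torsor, then follows at once from the difference lemma, since $g\cdot\bX=\bX$ forces every increment $f^\tau$ to vanish.
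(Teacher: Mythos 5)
A preliminary remark: the paper itself offers no proof of \Cref{prop:homogeneous}; the statement is imported verbatim from \cite[Theorem 1.2]{TaZ20}, so there is no in-text argument to compare yours against. Judged on its own terms, your outline does follow the strategy of that reference: the additive ``difference lemma'' extracted from Chen's relation (the branched analogue of Exercise \ref{Ex:rough} 1), the observation that a $k\alpha$-H\"{o}lder increment with $k\alpha>1$ is trivial (Exercise \ref{Ex:rough} 3), which kills all freedom above degree $\lfloor 1/\alpha\rfloor$ and explains the hypothesis $\alpha^{-1}\notin\N$, the restriction to trees via freeness of the Butcher--Connes--Kreimer algebra, the degree-by-degree induction for transitivity, and freeness from the normalisation $f^\tau(0)=0$. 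These are all the right ingredients.

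The genuine gap is the one you flag but do not close: the construction of the action itself. Translating the value on a tree $\tau$ with $|\tau|=n$ by an increment $f^\tau(t)-f^\tau(s)$ destroys Chen's relation in all degrees $>n$ unless the higher tree-components are corrected simultaneously, and for degrees $n<m\leq\lfloor 1/\alpha\rfloor$ the corrected components are \emph{not} determined by the lower ones (uniqueness of the extension only holds once $m\alpha>1$). One therefore has to make a choice at each intermediate degree and then prove that the resulting map $(g,\bX)\mapsto g\cdot\bX$ is independent of the order in which degrees are treated and satisfies $(g+h)\cdot\bX=g\cdot(h\cdot\bX)$; this additivity is the actual content of the theorem and is resolved in \cite{TaZ20} by a canonical, \emph{linear} choice of extension coming from a continuous right inverse of the coboundary $\delta F_{s,u,t}=F_{s,t}-F_{s,u}-F_{u,t}$ (a sewing-type argument), not by the Baker--Campbell--Hausdorff/logarithm device you propose. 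Indeed the acting group is the additive vector space from the outset and is not naturally a subgroup of $(\mathcal{G}(\mathcal{H},\R),\star)$, so there is no convolution product to linearise; that paragraph of your plan is misdirected. Without the well-definedness and additivity of the action you have produced a free and transitive \emph{relation} between rough paths and elements of $V_\alpha$, which is not yet the claimed homogeneous-space structure.
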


The term ``homogeneous space''\index{homogeneous space} in \Cref{prop:homogeneous} means here, that there is a transitive and free group action $G \times X \rightarrow X$ of a vector space $G$ of H\"{o}lder continuous functions on the space of branched rough paths $X$. The canonical manifold structures on the spaces and groups should turn the branched rough paths into an infinite-dimensional homogeneous space in the sense of differential geometry (see \Cref{ex:homogeneous}), however this question was, to the best of my knowledge, not yet investigated. While there is a lot more which could be said about rough paths and their interplay with (finite and infinite dimensional) geometry we will not go into more details here. Instead, let us briefly point out a general theme underlying the idea to identify rough paths as paths into the character group of a suitable Hopf algebra.

\begin{setup}[Elements in the character group as formal power series]\label{formal:powerseries}
 Assume that we have a graded Hopf algebra $\mathcal{H} = \bigoplus_{n \in \N_0} \mathcal{H}_n$ with $B$ a vector space base of $\mathcal{H}$ consisting only of homogeneous elements. Instead of thinking of an element in  the (continuous) dual space $\mathcal{H}' = \prod_{n\in \N_0} \mathcal{H}_n'$ as a linear map, we can identify it with its values on the base $B$ and write it as a \emph{formal power series}\index{formal power series} 
 \begin{align}\label{formal:power}
  \psi = \sum_{w \in B} \langle \psi , w\rangle w, \qquad \text{with } \langle \psi , w\rangle \coloneq \psi (w)
 \end{align}
 Now elements in the character group $\mathcal{G} (\mathcal{H},\R)$ are algebra morphisms. Hence it suffices to record their values on a set $C$ which generates the algebra $\mathcal{H}$ in the sense that $\mathcal{H}$ is isomorphic (as an algebra) to the (not necessarily commutative) polynomial algebra $\R \langle C\rangle$. If we have chosen a set $C$ generating $\mathcal{H}$ in this sense, it suffices to record instead of the formal power series \eqref{formal:power} the series 
 \begin{align}\label{reduced:formalpower}
  \psi = \sum_{w \in C} \langle \psi , w \rangle w , \qquad \text{ if } \psi \in \mathcal{G} (\mathcal{H},\R).
 \end{align}
\end{setup}

\begin{ex} \index{shuffle algebra}
 For the shuffle algebra $\text{Sh} (\mathcal{A})$ we can represent a functional as the formal power series 
 $$\psi = \sum_{w\in \mathcal{A}^\ast} \langle \psi , w \rangle w$$
 these series are also called \emph{word series} and they are studied in relation to dynamical systems and numerical integration, \cite{MaSZ17}. Note that the shuffle algebra can be interpreted as a commutative polynomial algebra generated by the Lyndon words, \cite[Chapter 5]{Reut93}.  
\end{ex}

While \Cref{formal:powerseries} seems to be only a notational trick to write a functional as a series this idea leads to a rich source of examples. The idea one should have when looking at the series expansions \eqref{formal:power} and \eqref{reduced:formalpower} is that these series represent some kind of Taylor expansion.\footnote{For rough paths this is most easily seen in the context of controlled rough paths, cf.\ \cite[Section 4.6]{FaH20}.} It is then the role of the Hopf algebra $\mathcal{H}$ to describe the combinatorics for the objects which define the Taylor expansion. Groups of (formal) power series modelled as characters on suitable Hopf algebras arise in a variety of contexts. Beyond rough paths here one should mention the following applications: Hairers's regularity structures for stochastic partial differential equations (see \cite[Chapter 13]{FaH20}), word series in numerical analysis \cite{MaSZ17}, Chen-Fliess series in control theory \cite{gray2021continuity} and Connes-Kreimers approach to the renormalisation of quantum field theories, \cite{Man08}. In these applications, the differential geometry of the character group is of interest. As a concrete example, recall the connection to numerical integrators.

\begin{ex}
 Consider the (time independent) ordinary differential equation 
 \begin{align}\label{ODE:develop}
  y'(t) = F(y), \text{ where } F \colon \R^d \rightarrow \R^d \text{ is a vector field.}
 \end{align}
 Assume that we are trying to compute a power series solution to \eqref{ODE:develop}. Then we compute derivatives of $y$ via the chain rule: 
 $$y'=F(y), y''= dF(y;F(y)), y''' = d^2 F (y;F(y),F(y))+ dF(y;dF(y;F(y))),\ldots.$$ 
 Fixing a starting point $y_0$ we only need iterated differentials of $F$ and information on where derivatives were inserted in the arguments of the differential. The combinatorics can be handled by encoding the information via rooted trees, i.e.\ as finite graphs with special node called the root (in the following displayed as the nethermost node):
 \tikzstyle dtree=[grow'=up,sibling distance=2mm,level distance=2mm,thick]
\tikzstyle dtree node=[scale=0.3,shape=circle,very thin,draw]
\tikzstyle dtree black node=[style=dtree node,fill=black]
 $$\begin{tikzpicture}[dtree]
    \node[dtree black node] {}
    ;
  \end{tikzpicture}, \quad \begin{tikzpicture}[dtree]
  \node[dtree black node] {}
  child { node[dtree black node] {} }
  ;
\end{tikzpicture}, \quad 
\begin{tikzpicture}[dtree]
  \node[dtree black node] {}
  child { node[dtree black node] {} }
  child { node[dtree black node] {} }
  ;
\end{tikzpicture},
\quad 
\begin{tikzpicture}[dtree]
  \node[dtree black node] {}
  child { node[dtree black node] {} child { node[dtree black node] {} }}
  ;
\end{tikzpicture}, \quad 
\begin{tikzpicture}[dtree]
  \node[dtree black node] {}
  child { node[dtree black node] {} }
  child { node[dtree black node] {} child { node[dtree black node] {} }}
  ;
\end{tikzpicture},\quad
\begin{tikzpicture}[dtree]
  \node[dtree black node] {}
  child { node[dtree black node] {} }
  child { node[dtree black node] {} }
  child { node[dtree black node] {} }
  ;
\end{tikzpicture}, \ldots
$$
 Formally, we write $\emptyset$ for the empty tree. Every rooted tree $\tau$ can be written recursively as $\tau = [\tau_1 ,\ldots,\tau_m]$, where the $\tau_i$ are trees whose roots are graftet to a common new root. For example $\left[\begin{tikzpicture}[dtree]
    \node[dtree black node] {}
    ;
  \end{tikzpicture},\begin{tikzpicture}[dtree]
    \node[dtree black node] {}
    ;
  \end{tikzpicture}\right] = \begin{tikzpicture}[dtree]
  \node[dtree black node] {}
  child { node[dtree black node] {} }
  child { node[dtree black node] {} }
  ;
\end{tikzpicture}$. Then the iterated differentials of $F$ can recursively be encoded via the \emph{elementary differentials} defined as  $E_F (\bullet,y) \coloneq F(y)$ and for a rooted tree $\tau$ we set 
 $$E_F (\tau)(y) \coloneq d^n F (y;E_F (\tau_1,y),\ldots, E_F(\tau_n,y)) \text{ for } \tau = [\tau_1, \ldots, \tau_n]$$ 
 This leads to a formal power series, called a \emph{B-series} 
 \begin{align}\label{Bseries}
  B_F (\psi,y,h) \coloneq y + \sum_{\tau \text{ rooted tree}} \frac{h^{|\tau|}}{\sigma(\tau)} \psi (\tau) E_F(\tau,y),
 \end{align}
 where $h \in \R, a(\tau) \in \R$, $|\tau|$ is the number of nodes in the tree $\tau$ and $\sigma(\tau)$ is a certain symmetry factor. B-series model certain numerical solutions to \eqref{ODE:develop} where the parameter $h$ is interpreted as the step-size of the numerical method. One identifies the B-series \eqref{Bseries} with its coefficients $\{\psi(\tau)\}_{\tau \text{ rooted tree}}$. As the rooted trees generate the Butcher-Connes-Kreimer Hopf algebra, $\psi$ extends to a character of this Hopf algebra. The character group of the Butcher-Connes-Kreimer Hopf algebra is in numerical analysis known at the \emph{Butcher group},\index{Butcher group} \cite{BaS17}. Its elements encode numerical integration schemes such as Runge-Kutta methods and the group product models composition of schemes.  
\end{ex}
 From the perspective of numerical analysis, the Butcher group is a convenient tool as its algebraic and differential geometric structure is of interest in the analysis of numerical integrators. Unfortunately, it is also a very large group with many elements which do not correspond to any (locally) convergent integration scheme. This leads to subgroups which admit a stronger topology turning them into Lie groups while still containing all elements of interest, \cite{DaS20}. In the context of rough paths, a similar problem is the question whether there is a differentiable structure on the subgroup generated by all $\alpha$-rough paths for some fixed $\alpha \in ]0,1[$.

\appendix
\chapter{A primer on topological vector spaces and locally convex spaces}\label{AppA}\copyrightnotice

This section contains some auxiliary results on topological vector spaces and locally convex spaces in particular. Most of the results are standard and can be found in textbooks such as \cite{MaV97,Jar81}. Note that for some of the results (e.g.\ \Cref{prop:uniquetop}) in this Appendix, it is essential that we only consider Hausdorff topological vector spaces. Since we are only working with real vector spaces, some of the proofs simplify substantially (compare to the general proofs for $\R$ and $\mathbb{C}$, see \cite[Chapter 1]{Rudin}). 

\section{Basic material on topological vector spaces}
A vector space with a Hausdorff topology making vector addition and scalar multiplication continuous is called a topological vector space or TVS (cf.\ \Cref{defn:tvs}).\index{space!topological vector space}
Note that a morphism of TVS is a continuous linear map. In particular, two TVS are isomorphic (as TVS) if they are isomorphic as vector spaces and the isomorphism is a homeomorphism. 
\begin{tcolorbox}[colback=white,colframe=blue!75!black,title=Conventions]
 Let $U,V$ be subsets of a (topological) vector space $E$, $s \in \R$ and $I \subseteq \R$. Then we define 
 \begin{align*}
  U+V \coloneq \{z = u+v\mid u\in U, v\in V\}, \quad sU\coloneq \{z=su\mid u\in U\}, \quad I\cdot U \coloneq \bigcup_{s\in I } sU
 \end{align*}
\end{tcolorbox}

\begin{defn}\label{def:TVS}
 Let $(E,\mathcal{T})$ be a TVS and $U$ a subset of $E$. We say that $U$ is
 \begin{enumerate}
 \item a \emph{$0$-neighbourhood} if $U$ is a neighbourhood of $0$.
\item \emph{bounded} if for every $0$-neighbourhood $V$ there is $s>0$ with $U \subseteq sV$.
\end{enumerate}
In general topological vector spaces sequences are not sufficient to test for example continuity. Instead one would need nets to test for continuity and a complete topological vector space should be defined in terms of convergence of Cauchy nets (see \cite[p. 258]{MaV97}). However, for our calculus we usually do not need complete spaces and the limits we consider can always be described in terms of sequential limits. Thus we do not go into details here and stay in the realm of the more familiar sequences.
A sequence $(x_n)_{n\in\N} \subseteq E$ is called
\begin{enumerate}
\item[(c)]  \emph{Cauchy sequence} if for every $0$-neighbourhood $V \subseteq E$ there exists $N \in \N$ such that 
 $$x_n - x_m \in V \quad \forall n,m \geq N$$ 
 \item[(d)] \emph{Mackey-Cauchy sequence} if there exists a bounded subset $B \subseteq E$ and a family $m_{k,l} \in \N $ for $k,l \in \N$ such that 
 $$m_{k,l} (x_k-x_l) \in B, \quad  \forall k,l \in \N$$
 and such that for every $R >0$ there is $N \in \N$ with $m_{k,l} > R$ if $k,l > N$ (i.e.~$m_{k,l}\rightarrow \infty$). Note that every Mackey-Cauchy sequence is a Cauchy sequence.
\end{enumerate}
Now we say that the topological vector space $(E,\mathcal{T})$ is 
\begin{enumerate}
 \item[(e)] \emph{sequentially complete} if every Cauchy sequence in $E$ converges.  
 \item[(f)] \emph{Mackey complete} if every Mackey-Cauchy sequence in $E$ converges.\index{Mackey complete}
 \end{enumerate} 
 Mackey completeness as per (f) can be shown (see \cite[Theorem 2.14]{KM97}) to be equivalent to the notion from \Cref{Mackeycomplete}.
\end{defn}

\begin{lem}\label{lem:spec_subsets}
 Let $E$ be a topological vector space and $U \subseteq E$ a $0$-neighbourhood. Then the following holds
 \begin{enumerate}
  \item[(a)] For each $x \in E$ the translation $\lambda_x \colon E \rightarrow E, y \mapsto x+y$ is a homeomorphism.
  \item[(b)] for each $r \in \R \setminus \{0\}$, scaling $s_r \colon E \rightarrow E, x \mapsto rx$ is a homeomorphisms.
  \item[(c)] $U$ contains a \emph{balanced} $0$-neighbourhood $V$, i.e.\ $tV \subseteq V$ for each $|t|\leq 1$.
  \item[(d)] $U$ contains a $0$-neighbourhood $W$ such that $W+W \subseteq U$.
  \item[(e)] If $\mathcal{B}$ is a basis of $0$-neighbourhoods, then for each $x \in E$ the set $\{x+W \mid W \in \mathcal{B} \}$ is a basis of $x$-neighbourhoods.
  \item[(f)] Each $0$-neighbourhood contains a closed $0$-neighbourhood.
  \item[(g)] If $K \subseteq E$ is compact and $U \opn E$ with $K \subseteq U$, then there exists $0 \in W \opn E$ such that $K + W \subseteq U$.
 \end{enumerate}
 \end{lem}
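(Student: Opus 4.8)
The plan is to derive everything from the two continuity hypotheses (of $+$ and of $\cdot$) together with the group structure of $E$. First I would dispatch (a) and (b): the translation $\lambda_x$ is the composite of $y \mapsto (x,y)$ with addition, hence continuous, and it has continuous inverse $\lambda_{-x}$, so it is a homeomorphism; likewise $s_r$ for $r \neq 0$ is continuous as a partial map of scalar multiplication, with continuous inverse $s_{1/r}$. Part (e) is then immediate: since $\lambda_x$ is a homeomorphism fixing no structure beyond the topology, it carries a neighbourhood basis of $0$ onto a neighbourhood basis of $\lambda_x(0) = x$, and $\lambda_x(W) = x + W$.

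Next I would treat (c) and (d), which both rest on continuity of the operations \emph{at the origin}. For (d), continuity of $+ \colon E \times E \to E$ at $(0,0)$ (with $0 + 0 = 0$) yields $0$-neighbourhoods $W_1, W_2$ with $W_1 + W_2 \subseteq U$; setting $W \coloneq W_1 \cap W_2$ gives $W + W \subseteq U$. For (c), continuity of $\cdot \colon \R \times E \to E$ at $(0,0)$ produces $\delta > 0$ and a $0$-neighbourhood $W$ with $\lambda W \subseteq U$ whenever $|\lambda| \leq \delta$; then $V \coloneq \bigcup_{|\lambda| \leq \delta} \lambda W$ lies in $U$, contains the $0$-neighbourhood $\delta W$ (by (b)), and is balanced, since for $|t| \leq 1$ one has $tV = \bigcup_{|\lambda| \leq \delta} (t\lambda) W \subseteq V$ because $|t\lambda| \leq \delta$.

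For (f) I would combine the previous two parts with a closure argument. Given a $0$-neighbourhood $U$, choose by (c) a balanced (in particular symmetric, $W = -W$) $0$-neighbourhood $W$, and by (d) arrange $W + W \subseteq U$ after shrinking. I then claim $\overline{W} \subseteq U$: if $x \in \overline{W}$, the $x$-neighbourhood $x + W$ meets $W$, so $w_1 = x + w_2$ for some $w_1, w_2 \in W$, whence $x = w_1 - w_2 \in W + W \subseteq U$ using $-W = W$. Since $\overline{W} \supseteq W$ is a $0$-neighbourhood, this is the desired closed $0$-neighbourhood.

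Finally (g), which I expect to be the main obstacle, is a compactness covering argument fed by (d). For each $x \in K$ the set $U - x$ is a $0$-neighbourhood (by (a)), so (d) gives a $0$-neighbourhood $W_x$ with $x + W_x + W_x \subseteq U$. The open sets $x + \operatorname{int}(W_x)$ cover $K$; extracting a finite subcover indexed by $x_1, \ldots, x_n$ and setting $W \coloneq \bigcap_{i=1}^n \operatorname{int}(W_{x_i})$, any $k \in K$ lies in some $x_i + W_{x_i}$, so $k + W \subseteq x_i + W_{x_i} + W_{x_i} \subseteq U$, giving $K + W \subseteq U$ with $W$ open. The only point demanding care here is passing to open neighbourhoods so that the finite subcover is genuine; everything else is bookkeeping with the inclusions established in (d).
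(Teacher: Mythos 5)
Your proposal is correct and follows essentially the same route as the paper: translations and scalings are homeomorphisms via their explicit inverses, (c) and (d) come from continuity of the operations at the origin, (f) uses a symmetric neighbourhood $W$ with $W+W\subseteq U$ and the observation that $\overline{W}\subseteq W+W$, and (g) is the same finite-subcover argument fed by (d). No substantive differences to report.
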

\begin{proof}
 \textbf{(a-b) and (e)} The maps $\lambda_x$ and $s_r$ have inverses $\lambda_{-x}$ and $s_{1/r}$. Thus the claim is clear from the definition of topological vector spaces. Since translations are homeomorphisms (a) implies (e).\\
 \textbf{(c)} By continuity of the scalar multiplication $\mu \colon \R \times E \rightarrow E$, the set $\mu^{-1}(U)$ is open. Thus we can find $(-\varepsilon,\varepsilon) \times W \subseteq \mu^{-1}(U)$ and thus $V\coloneq (-\varepsilon ,\varepsilon)W = \mu((-\varepsilon, \varepsilon)\times W) \subseteq U$. Then $[-1,1]V=V$ and $V$ is a balanced $0$-neighbourhood contained in $U$.\\
 \textbf{(d)} As addition $\alpha \colon E \times E \rightarrow E$ is continuous with $\alpha(0,0)=0$, the preimage $\alpha^{-1}(U\times U)$ is a $(0,0)$-neighbourhood. We can thus find $W_1,W_2 \subseteq E$ $0$-neighbourhoods in $E$ such that $W_1 \times W_2 \subseteq \alpha^{-1}(U)$. Then $W\coloneq W_1 \cap W_2 \subseteq U$ satisfies $W+W \subseteq U$.\\
 \textbf{(f)} We conclude from (c) and (d) that there is a $0$-neighbourhood $V$ with $V-V \subseteq U$. For $w \in \overline{V}$ (closure), the set $w + V$ is a $w$-neighbourhood (by (e)). Hence we can pick $v_1\in V$ such that $v_1 \in w+V$, i.e.\ $v_1 = w+v_2$ for some $v_2 \in V$. But then $w = v_1-v_2 \in U$, whence $\overline{V} \subseteq U$.\\
 \textbf{(g)} For every $x \in K$ we can pick by (e) a $0$-neighbourhood $V_x$ such that $x + V_x \subseteq U$. By (d) there is $0 \in W_x \opn E$ with $W_x + W_x \subseteq V_x$. Then $(x+W_x)_{x \in K}$ is an open cover of $K$ and by compactness we can choose a finite subset $F\subseteq K$ with $K \subseteq \bigcup_{x \in F} (x+W_x)$. Then $W\coloneq \bigcap_{x \in F} W_x$ is an open $0$-neighbourhood. For $y \in K$, there exists $x \in F$ such that $y \in x+W_x$. Then $y+W\subseteq x+W_x+w \subseteq x + V_x \subseteq U$. As $y$ was arbitrary $K +W \subseteq U$. 
\end{proof}

\begin{prop}[{\cite[I Theorem 1.22]{Rudin}}]\label{prop:compfindim}
 If $E$ is a topological vector space which contain a compact $0$-neighbourhood, then $E$ is finite-dimensional.
\end{prop}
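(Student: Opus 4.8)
```latex
\textbf{Plan of proof.} The statement is the classical Riesz theorem: a topological vector space with a compact $0$-neighbourhood is finite-dimensional. The plan is to exploit the compactness of a suitable $0$-neighbourhood $U$ to produce a finite set that ``spans modulo a scaled copy of $U$'', and then to iterate this covering to squeeze the whole space into a finite-dimensional subspace together with vanishingly small neighbourhoods, forcing finite-dimensionality.

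First I would fix a compact $0$-neighbourhood $U$. Using \Cref{lem:spec_subsets}(d) we know $U$ contains a $0$-neighbourhood $W$ with $W+W\subseteq U$; more usefully, scaling (\Cref{lem:spec_subsets}(b)) shows that the open sets $\tfrac{1}{2}U^\circ$ (where $U^\circ$ is the interior, a genuine $0$-neighbourhood) translate to an open cover $\{x+\tfrac12 U^\circ\}_{x\in U}$ of the compact set $U$. By compactness extract finitely many points $x_1,\dots,x_n$ with $U\subseteq \bigcup_{i=1}^n \bigl(x_i+\tfrac12 U\bigr)$. Let $F\coloneq \operatorname{span}\{x_1,\dots,x_n\}$, a finite-dimensional (hence closed, since we are over $\R$ with a Hausdorff vector topology) subspace. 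The covering relation gives $U\subseteq F+\tfrac12 U$.

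The key step is then an iteration argument: since $F$ is a subspace, $F+\tfrac12 U = \tfrac12(F + U)$ and more generally one shows by induction that $U\subseteq F+\tfrac{1}{2^k}U$ for every $k\in\N$. Indeed, feeding $U\subseteq F+\tfrac12 U$ into itself, $U\subseteq F+\tfrac12(F+\tfrac12 U)=F+\tfrac{1}{4}U$, and so on. Because $U$ is compact it is in particular bounded, so the sets $\tfrac{1}{2^k}U$ shrink into every $0$-neighbourhood; this lets me conclude $U\subseteq \overline{F}=F$. Since $U$ is a $0$-neighbourhood and $F$ is a linear subspace containing it, absorbing scalar multiples (\Cref{lem:spec_subsets}(b) again) gives $E=\bigcup_{r>0} rU\subseteq F$, so $E=F$ is finite-dimensional.

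The main obstacle I expect is making the shrinking argument rigorous, i.e.\ verifying that $\bigcap_{k}\bigl(F+\tfrac{1}{2^k}U\bigr)=F$. This requires boundedness of $U$ together with the fact that $F$ is closed: for $x\in U\setminus F$ one uses that $F$ closed and $x\notin F$ yields a balanced $0$-neighbourhood (\Cref{lem:spec_subsets}(c)) separating $x$ from $F$, which eventually contains $\tfrac{1}{2^k}U$ once $k$ is large by boundedness, contradicting $x\in F+\tfrac{1}{2^k}U$. Since this is the delicate topological point (and is exactly where Hausdorffness and boundedness are used), I would present it carefully; the rest is a routine compactness-plus-induction bookkeeping. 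As the statement is cited to \cite[I Theorem 1.22]{Rudin}, I would either reproduce this argument or simply refer to that source.
```
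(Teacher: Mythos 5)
Your proof is correct and is precisely the classical Riesz-type argument of the cited source \cite[I Theorem 1.22]{Rudin}; the paper itself gives no proof and simply refers to that theorem. All the delicate points you flag (closedness of the finite-dimensional subspace $F$, boundedness of the compact set $U$ to shrink $2^{-k}U$ into arbitrary $0$-neighbourhoods, and absorbency of $U$ via \Cref{lem:nbhd:abs}) are exactly the right ones, so nothing is missing.
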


\begin{prop}[{Uniqueness of topology, \cite[Theorem 9.1]{trev06}}]\label{prop:uniquetop}
 If $E$ is a finite dimensional topological vector space of dimension $d$, then $E \cong \R^d$ as topological vector spaces, where $\R^d$ carries the usual norm topology.
\end{prop}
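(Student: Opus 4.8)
<br>

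The plan is to prove that any $d$-dimensional topological vector space $E$ is isomorphic as a TVS to $\R^d$ with its norm topology. The strategy is to fix a basis $e_1,\ldots,e_d$ of $E$ and consider the canonical linear bijection $\Phi \colon \R^d \rightarrow E$, $(x_1,\ldots,x_d) \mapsto \sum_{i=1}^d x_i e_i$. Since $\Phi$ is linear and addition and scalar multiplication on $E$ are continuous (by the TVS axioms), $\Phi$ is automatically continuous: it is a finite linear combination of the continuous operations applied to the continuous coordinate projections. The entire content of the theorem therefore lies in showing that $\Phi^{-1}$ is also continuous, equivalently that $\Phi$ is an open map (or a homeomorphism onto $E$).

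First I would reduce the continuity of $\Phi^{-1}$ to a statement about the unit sphere. Let $S \coloneq \{x \in \R^d \mid \lVert x \rVert = 1\}$ be the Euclidean unit sphere, which is compact. Since $\Phi$ is continuous and injective and $0 \notin S$, the image $\Phi(S)$ is a compact subset of $E$ not containing $0$. Because $E$ is Hausdorff (which we assume throughout for TVS), singletons are closed and one can separate the compact set $\Phi(S)$ from $0$: using \Cref{lem:spec_subsets} (g) applied to the compact set $\{0\}$ and an open set avoiding $\Phi(S)$, or directly separating $0$ from the compact $\Phi(S)$, there is a balanced open $0$-neighbourhood $V$ (via \Cref{lem:spec_subsets} (c)) with $V \cap \Phi(S) = \emptyset$. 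The key point is then that $V$ being balanced forces $\Phi^{-1}(V)$ to be a bounded subset of $\R^d$: if some $x$ with $\lVert x \rVert = r \geq 1$ had $\Phi(x) \in V$, then by balancedness $\Phi(x/r) = (1/r)\Phi(x) \in V$ while $x/r \in S$, contradicting $V \cap \Phi(S) = \emptyset$. Hence $\Phi^{-1}(V) \subseteq \{x \mid \lVert x \rVert < 1\}$, so $V$ maps into the open unit ball under $\Phi^{-1}$.

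From here I would conclude openness of $\Phi$ at $0$: the balanced $0$-neighbourhood $V$ satisfies $\Phi^{-1}(V) \subseteq B_1(0)$, and by scaling (using that scalings $s_r$ are homeomorphisms on both $E$ and $\R^d$, \Cref{lem:spec_subsets} (b)) we get $\Phi^{-1}(rV) \subseteq B_r(0)$ for all $r > 0$. This shows that $\Phi^{-1}$ is continuous at $0$, since the sets $rV$ form a neighbourhood basis of $0$ in $E$ whose preimages shrink to $0$. Finally, continuity of $\Phi^{-1}$ at an arbitrary point follows from continuity at $0$ together with the fact that translations are homeomorphisms on both spaces (\Cref{lem:spec_subsets} (a)): $\Phi^{-1} = \lambda_{-x_0} \circ \Phi^{-1} \circ \lambda_{\Phi(x_0)}$ locally. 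Thus $\Phi$ is a homeomorphism and hence an isomorphism of topological vector spaces.

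The main obstacle I anticipate is the separation step — producing a balanced $0$-neighbourhood $V$ disjoint from the compact sphere image $\Phi(S)$. This is exactly where the Hausdorff assumption is indispensable: one needs $0$ and the compact set $\Phi(S)$ to be separable by a $0$-neighbourhood, which in a non-Hausdorff TVS could fail. Concretely, since $E$ is Hausdorff and $\Phi(S)$ is compact with $0 \notin \Phi(S)$, there is an open set $U \ni 0$ with $U \cap \Phi(S) = \emptyset$ (compactness lets one patch together finitely many separating neighbourhoods), and then \Cref{lem:spec_subsets} (c) shrinks $U$ to a balanced $V$. Once this balanced neighbourhood is in hand, the boundedness argument via the balancedness property is the elegant crux that converts a purely topological separation into the quantitative control needed for continuity of the inverse, and the remaining steps are routine applications of the homeomorphism properties of translations and scalings recorded in \Cref{lem:spec_subsets}.
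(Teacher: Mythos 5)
The paper does not actually prove this proposition --- it is imported from the literature (Tr\`{e}ves, Theorem 9.1) with no in-text argument --- so there is nothing internal to compare against. Your proof is correct and is the standard ``compact unit sphere'' argument: continuity of the canonical bijection $\Phi$ from continuity of the vector operations; compactness (hence closedness, by the Hausdorff convention) of $\Phi(S)$ with $0\notin\Phi(S)$; a balanced $0$-neighbourhood $V\subseteq E\setminus\Phi(S)$ via \Cref{lem:spec_subsets}~(c); and the observation that the balanced set $\Phi^{-1}(V)$, being disjoint from $S$ and containing $0$, must lie in the open unit ball. This is arguably cleaner than the classical textbook proofs (e.g.\ Rudin's Theorem 1.21), which run an induction on $d$, using continuity of linear functionals with closed kernel and the inductive hypothesis that $(d-1)$-dimensional subspaces are closed; your route needs no induction and no statement about subspaces.

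Two small points of hygiene. First, the separating neighbourhood is obtained most directly by noting that $\Phi(S)$ is compact, hence closed in the Hausdorff space $E$, so $E\setminus\Phi(S)$ is an open $0$-neighbourhood to which \Cref{lem:spec_subsets}~(c) applies; your appeal to part~(g) is a detour, though your fallback (finitely many separating neighbourhoods) is exactly the compact-implies-closed fact. Second, the clause ``the sets $rV$ form a neighbourhood basis of $0$ in $E$'' is neither needed nor justified at that stage (it amounts to boundedness of $V$, which is only known a posteriori). What your argument actually uses is that the balls $B_r(0)$ form a neighbourhood basis of $0$ in $\R^d$ and that for each $r>0$ the set $rV$ is \emph{some} $0$-neighbourhood of $E$ with $\Phi^{-1}(rV)=r\,\Phi^{-1}(V)\subseteq B_r(0)$; together with \Cref{lem:linmapzero} this already yields continuity of $\Phi^{-1}$ everywhere.
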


\begin{lem}\label{lem:linmapzero}
 Let $f \colon E \rightarrow F$ be a linear map between topological vector spaces. Then $f$ is continuous (open) if and only if it is continuous (open) in $0$.
\end{lem}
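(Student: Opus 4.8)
The plan is to reduce both equivalences to the behaviour of $f$ at $0$ by exploiting that a linear map intertwines translations: for every $x \in E$ one has $f(x+y) = f(x) + f(y)$, i.e.\ $f \circ \lambda_x = \lambda_{f(x)} \circ f$, and translations are homeomorphisms by \Cref{lem:spec_subsets}(a). This equivariance lets me transport continuity (respectively openness) from the distinguished point $0$ to an arbitrary point. Throughout I use $f(x+U) = f(x) + f(U)$ and $f(O-x) = f(O) - f(x)$, which are immediate from linearity.

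For the continuity equivalence, one direction is trivial since continuity everywhere entails continuity at $0$. For the converse, I would assume $f$ is continuous at $0$ and fix $x \in E$. Given a neighbourhood $V$ of $f(x)$, the set $V - f(x)$ is a $0$-neighbourhood by \Cref{lem:spec_subsets}(a), so continuity at $0$ supplies a $0$-neighbourhood $U$ with $f(U) \subseteq V - f(x)$. Then $x + U$ is a neighbourhood of $x$ and $f(x+U) = f(x) + f(U) \subseteq V$, so $f^{-1}(V)$ is a neighbourhood of $x$. As $x$ and $V$ were arbitrary, $f$ is continuous.

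For the openness equivalence I read ``open at $0$'' as: the image of every $0$-neighbourhood is a $0$-neighbourhood in $F$. If $f$ is open, then a $0$-neighbourhood $U$ contains an open $0$-neighbourhood $U_0$, whose image $f(U_0)$ is open and contains $f(0)=0$; hence $f(U) \supseteq f(U_0)$ is a $0$-neighbourhood, so $f$ is open at $0$. Conversely, assuming $f$ is open at $0$, let $O \opn E$ and pick $y = f(x) \in f(O)$ with $x \in O$. Then $O - x$ is an open $0$-neighbourhood, so $f(O-x) = f(O) - y$ is a $0$-neighbourhood; by \Cref{lem:spec_subsets}(a) this means $f(O)$ is a neighbourhood of $y$. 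Since $y$ ranges over all of $f(O)$, the set $f(O)$ is open.

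The argument is entirely routine and no completeness or local convexity is invoked, so the result holds for arbitrary topological vector spaces. The only point needing care is fixing the correct reading of ``continuous/open at $0$'' and applying the translation-equivariance consistently; I do not expect a genuine obstacle beyond this bookkeeping.
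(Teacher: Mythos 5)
Your proof is correct and follows essentially the same route as the paper's: both exploit that a linear map satisfies $f\circ\lambda_x=\lambda_{f(x)}\circ f$ and that translations are homeomorphisms, so continuity and openness transport from $0$ to every point. Your version merely spells out the neighbourhood bookkeeping (and makes the reading of ``open at $0$'' explicit) where the paper writes the single identity $f=\tau_{f(x)}\circ f\circ\tau_{-x}$.
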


\begin{proof}
 Clearly the conditions are necessary. To prove sufficiency, we assume that $f$ is continuous in $0$. Pick $x \in E$ and observe that the translations $\tau_{-x}(y)=y-x$ and $\tau_{f(x)}(z)=f(x)$ are continuous. Thus $f(y)=\tau_{f(x)}\circ f\circ \tau_{-x}$ is continuous in $x$. since $x$ was arbitrary, $f$ is continuous in every point.
 
 Let now $f$ be open in $0$ and $U$ an open set. For $x \in U$, $\tau_{-x}(U)$ is an open $0$-neighbourhood, whence $\tau_{f(x)} \circ f \circ \tau_{-x}(U)=f(U)$ is open.
\end{proof}

En route towards locally convex spaces let us first recall some results on convex sets.

		\begin{defn}\label{defn:convx}
			A subset $S$ of a topological vector space $E$ is said to be
		\begin{itemize}
		    \item \emph{absorbent} if $E= \bigcup_{t >0} tS$.
			\item \emph{convex} if for all $x,y \in S$ and $t \in [0,1]$, the linear combination $t x + (1-t)y \in S$
			\item a \emph{disc} if it is convex and balanced, i.e.\ for all $x \in S$, $|\lambda|\leq 1, \lambda \in \R$, $\lambda x \in S$.
		\end{itemize}
		\end{defn} 
			\begin{lem}\label{lem:nbhd:abs}
			Every $0$-neighbourhood of a topological vector space is absorbent.
		\end{lem}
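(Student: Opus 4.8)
The claim is that every $0$-neighbourhood $U$ in a topological vector space $E$ is absorbent, i.e.\ $E = \bigcup_{t>0} tU$. The plan is to fix an arbitrary $x \in E$ and produce a single scalar $t>0$ with $x \in tU$, equivalently $t^{-1}x \in U$. The natural tool is the continuity of scalar multiplication restricted to the line through $x$.

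First I would consider the map $c_x \colon \R \rightarrow E$, $s \mapsto sx$, which is continuous because scalar multiplication $\cdot \colon \R \times E \rightarrow E$ is continuous and $c_x$ is its composition with the continuous inclusion $s \mapsto (s,x)$. Since $c_x(0) = 0 \in U$ and $U$ is a $0$-neighbourhood, the preimage $c_x^{-1}(U)$ is a neighbourhood of $0$ in $\R$. Hence there exists $\varepsilon > 0$ with $(-\varepsilon,\varepsilon) \subseteq c_x^{-1}(U)$, which means $sx \in U$ for all $|s| < \varepsilon$.

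Now I would simply pick any $s_0$ with $0 < s_0 < \varepsilon$, so that $s_0 x \in U$. Setting $t \coloneq s_0^{-1} > 0$ we obtain $x = t(s_0 x) \in tU$, and therefore $x \in \bigcup_{t>0} tU$. Since $x \in E$ was arbitrary, $E \subseteq \bigcup_{t>0} tU$; the reverse inclusion is trivial as each $tU \subseteq E$. This establishes that $U$ is absorbent.

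I do not anticipate a genuine obstacle here: the entire argument rests on the continuity of the restriction of scalar multiplication to a line, which is immediate from the TVS axioms, together with the observation that $0 \in U$. The only point requiring the slightest care is to phrase the conclusion in terms of $t>0$ (as in \Cref{defn:convx}) rather than a balanced or symmetric version, but choosing a positive $s_0 < \varepsilon$ handles this cleanly.
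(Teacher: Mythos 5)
Your proof is correct and uses essentially the same argument as the paper: both exploit continuity of scalar multiplication at $(0,x)$ to find $\varepsilon>0$ with $sx\in U$ for $|s|<\varepsilon$, then rescale. Your restriction to the line $s\mapsto sx$ is just a slightly leaner packaging of the paper's use of a product neighbourhood $(-\delta,\delta)\times V$ of $(0,x_0)$.
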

		\begin{proof}
			Let $U$ be a $0$-neighbourhood of the topological vector space $E$, and let $x_0 \in E$. Since scalar multiplication is continuous there exists some neighbourhood $V$ of $x_0$ and $\delta > 0$ such that for all $x \in V$, $\lambda x \in U$ when $|\lambda| < \delta \, , \, \lambda \in \R$. Especially $\lambda x_0 \in U$, and thus $\{x_0\} \subset \lambda^{-1}U $  when $|\lambda|  \leq \delta /2$ that is $|\lambda^{-1} |\geq 2/\delta$. 
		\end{proof}
	\begin{ex}	
	 The ball $B_r (x) \coloneq \{y\in E \mid \lVert x-y\rVert < r\}$ in a normed space $(E,\|\cdot\|)$ is convex (and a disc if $x=0$).
	\end{ex}

	\begin{lem}\label{lem:intcvx}
		The interior $A^\circ$ of a convex set $A$ is convex. 
	\end{lem}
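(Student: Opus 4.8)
The plan is to show that if $x,y \in A^\circ$ and $t \in [0,1]$, then $z := tx + (1-t)y$ again lies in $A^\circ$, i.e.\ that $z$ has a neighbourhood contained in $A$. The endpoints $t=0$ and $t=1$ are trivial, so I would assume $t \in {]0,1[}$. The key idea is that since $x$ is interior, there is a $0$-neighbourhood $U$ with $x + U \subseteq A$, and similarly $y + V \subseteq A$ for a $0$-neighbourhood $V$; combining these via convexity should produce an open neighbourhood of $z$ inside $A$.

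First I would use \Cref{lem:spec_subsets}(e): it suffices to find a single $0$-neighbourhood $W$ such that $z + W \subseteq A$. Pick $0$-neighbourhoods $U,V$ with $x+U\subseteq A$ and $y+V\subseteq A$. Set $W := tU \cap (1-t)V$; since $t,1-t \neq 0$, both $tU$ and $(1-t)V$ are $0$-neighbourhoods by \Cref{lem:spec_subsets}(b) (scaling is a homeomorphism), and hence so is their intersection $W$. Now for any $w \in W$ I would write $w = tu = (1-t)v$ with $u \in U$ and $v \in V$, so that
\begin{align*}
 z + w = tx + (1-t)y + tw\cdot\tfrac{1}{1} = t(x+u) + (1-t)(y+v).
\end{align*}
Here I should be slightly more careful with the bookkeeping: choosing $u := w/t \in U$ and $v := w/(1-t) \in V$ (which is exactly what $w \in tU \cap (1-t)V$ guarantees), we get $z + w = t(x+u) + (1-t)(y+v)$ with $x+u \in A$ and $y+v \in A$.

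Then convexity of $A$ applied to the two points $x+u, y+v \in A$ and the coefficient $t \in [0,1]$ yields $z+w \in A$. Since $w \in W$ was arbitrary, $z + W \subseteq A$, so $z \in A^\circ$. This establishes convexity of $A^\circ$. I do not expect a genuine obstacle here; the only point requiring mild care is the algebraic rewriting of $z+w$ as a convex combination of two interior-shifted points, which hinges on choosing the single neighbourhood $W$ as the intersection of the two appropriately scaled neighbourhoods rather than trying to add neighbourhoods directly. The homeomorphism properties from \Cref{lem:spec_subsets} do all the topological work.
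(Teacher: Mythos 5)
Your strategy is the right one and matches the paper's in spirit, but the key displayed identity is false as written. If $w\in W = tU\cap(1-t)V$ and you choose $u=w/t\in U$ and $v=w/(1-t)\in V$, then
\begin{align*}
 t(x+u)+(1-t)(y+v) = tx+(1-t)y+tu+(1-t)v = z + w + w = z+2w,
\end{align*}
not $z+w$: you have distributed the full increment $w$ to \emph{both} summands, so it gets counted twice. Consequently the step ``hence $z+w\in A$'' does not follow from the computation you actually performed. The slip is harmless in the end — what you have really shown is $z+2W\subseteq A$, and $2W$ is still a $0$-neighbourhood by \Cref{lem:spec_subsets}(b), so $z\in A^\circ$ — but as written the proof asserts an equation that is off by a factor and then draws the wrong conclusion from it.

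The clean repair is the one the paper uses: take a \emph{single} $0$-neighbourhood $U$ with $x+U\subseteq A$ and $y+U\subseteq A$ (e.g.\ the intersection of your two), and split the increment as $u = tu+(1-t)u$, giving
\begin{align*}
 z+u = t(x+u)+(1-t)(y+u),
\end{align*}
which is a genuine convex combination of two points of $A$. Alternatively, you can perturb only one endpoint: $z+w = t(x+w/t)+(1-t)y$ with $w\in tU$, using $y\in A$ directly; then your second neighbourhood $V$ is not needed at all. Either fix is one line, but the bookkeeping you flagged as ``requiring mild care'' is exactly where the argument currently breaks.
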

	\begin{proof}
		Let $x, y \in A^\circ$. By \Cref{lem:spec_subsets} (c) there is some balanced $0$-neighbourhood $U$, such that $x + U \opn A^\circ$ and $y + U \opn A^\circ$ are neighbourhoods of $x$ and $y$ contained in $A$. For any $z = tx + (1-t)y,\ t \in [0,1]$ and $u \in U$ we have
		$$
		z + u = tx + (1-t)y + tu + (1-t)u = t (x + u) + (1-t)(y + u). 
		$$
		As $A$ is convex, $z+u \in A$ and thus $z+U \opn A$. Hence $tx+(1-t)y \in A^\circ, \forall t \in [0,1]$.
	\end{proof}

    \begin{lem}\label{lem:discnbhd}
     If $N$ is a convex $0$-neighbourhood in a topological vector space, then $N$ contains an open disc.
    \end{lem}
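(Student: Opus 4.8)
The statement to prove is: if $N$ is a convex $0$-neighbourhood in a topological vector space $E$, then $N$ contains an open disc. A disc is a convex and balanced set (\Cref{defn:convx}), so the plan is to extract from $N$ an open set that is both balanced and convex. The natural candidate is built by symmetrising and then balancing the interior of $N$.

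First I would use \Cref{lem:spec_subsets}~(c): every $0$-neighbourhood contains a balanced $0$-neighbourhood $V$, so there is a balanced $0$-neighbourhood $V \subseteq N$. The issue is that this $V$ need not be convex nor open. To fix openness, I would pass to the interior: since $N$ is convex, \Cref{lem:intcvx} tells us $N^\circ$ is convex, and $0 \in N^\circ$ because $N$ is a $0$-neighbourhood. Thus $N^\circ$ is an \emph{open convex} $0$-neighbourhood contained in $N$, and it suffices to find an open disc inside $N^\circ$; so without loss of generality I may assume $N$ is open and convex.

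The key step is then to balance the open convex set $N$. Consider the set
$$
D \coloneq \bigcup_{|\lambda| \leq 1,\ \lambda \neq 0} \lambda N = \bigcap_{|\lambda| = 1} \lambda N,
$$
or more concretely $D \coloneq \bigcap_{|\lambda|=1}\lambda N$ (here $\lambda$ ranges over real scalars, so $\lambda \in \{1,-1\}$, giving $D = N \cap (-N)$). I would verify: (i) $D$ is open, being a finite intersection of open sets (each $\lambda N$ is open since scaling $s_\lambda$ is a homeomorphism by \Cref{lem:spec_subsets}~(b)); (ii) $0 \in D$, so $D$ is a $0$-neighbourhood; (iii) $D$ is convex as an intersection of the convex sets $N$ and $-N$ (the image $-N$ of a convex set under the linear map $s_{-1}$ is convex); and (iv) $D$ is balanced, i.e.\ $tD \subseteq D$ for all $|t|\leq 1$. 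For balancedness in a real TVS, I would check that $D = N \cap (-N)$ is symmetric ($-D = D$) and convex, and that a symmetric convex $0$-containing set is automatically balanced: for $|t|\leq 1$ and $x \in D$, write $tx = t x + (1-t)\cdot 0$ if $t \geq 0$ (convexity, using $0 \in D$), and use symmetry $-x \in D$ together with convexity for $t < 0$. This yields $D$ open, convex, balanced, hence an open disc, with $D \subseteq N$.

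\textbf{Main obstacle.} The only genuinely delicate point is verifying balancedness cleanly, and in particular making sure I invoke the real-scalar convention consistently (the excerpt works exclusively over $\R$, so ``$|\lambda|=1$'' means $\lambda \in \{\pm 1\}$, which keeps the intersection finite and hence preserves openness — over $\mathbb{C}$ this intersection would be infinite and openness would require a separate argument). I would be careful to reduce first to the open convex case via $N^\circ$, since $N$ itself is merely assumed to be a convex $0$-neighbourhood and need not be open; this reduction, combined with the elementary fact that a symmetric convex set containing $0$ is balanced, is what makes the argument go through without any completeness or local-convexity hypotheses beyond what is given.
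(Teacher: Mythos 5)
Your proof is correct and is essentially the paper's own argument: both build the disc as (the interior of) $N\cap(-N)$, checking convexity, openness and balancedness, with your version merely taking interiors at the start rather than at the end — which also lets you skip the paper's appeal to \Cref{lem:spec_subsets}~(c) to see that $N\cap(-N)$ is a $0$-neighbourhood, since $N^\circ\cap(-N^\circ)$ is open and contains $0$ by construction. The only blemish is the first displayed formula, where $\bigcup_{|\lambda|\leq 1,\ \lambda\neq 0}\lambda N$ is certainly not equal to $\bigcap_{|\lambda|=1}\lambda N$; since you immediately discard that in favour of $D=N\cap(-N)$, the argument itself is unaffected.
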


    \begin{proof}
     Consider first the set $M \coloneq -N \cap N$. If $|\lambda| \leq 1$ we see that $\lambda M = (-\lambda)N \cap \lambda N$. Now as $0 \in N$ and $N$ is convex, we have that $-\lambda N, \lambda N \subseteq N$. In particular, $\lambda M \subseteq M$ for all $|\lambda|\leq 1$, i.e.~$M$ is balanced. By \Cref{lem:spec_subsets} (c), we can find a balanced $0$-neighbourhood $U \subseteq N$. As $U$ is balanced we see that $U \subseteq -N \cap N$. Hence the interior $V$ of $M=-N\cap N$ is a convex $0$-neighbourhood (by \Cref{lem:intcvx} as it is the interior of an intersection of convex sets, Exercise \ref{Ex:TVS} 3.). Now the interior of a balanced set is again balanced (Exercise \ref{Ex:TVS} 1.), whence $V$ is a disc.
    \end{proof}

    \begin{Exercise}\label{Ex:TVS}  \vspace{-\baselineskip}
 \Question Let $B$ be a balanced subset of a topological vector space. Show that then also the interior of $B$ is balanced.
  \Question Let $U \opn E$ be a bounded $0$-neighbourhood in a topological vector space $E$. Show that every $0$-neighbourhood contains a set of the form $\{rU\}_{r\in ]0,\infty[}$.\\ 
 {\tiny \textbf{Hint:} Use \Cref{lem:nbhd:abs} together with the fact that $U$ is bounded.} 
 \Question Show that the intersection $C\cap D$ of two convex sets $C$ and $D$ is convex.
  \end{Exercise}
    \section{Seminorms and convex sets}
    
    In the main text we have defined locally convex spaces using seminorms. In this section we shall review seminorms and in particular their connection to convex sets (thus justifying the name ``locally convex space''). 
        \begin{defn}
				A family $\cal P$ of seminorms on a vector space $E$ is said to be \emph{separating} if for each $x \in E$, $p(x) \neq 0$ for at least one $p\in \cal P$. \index{seminorm!separating family of}
			\end{defn}
    
	\begin{prop}\label{prop:semi-norm}
		Let $E$ be a vector space and $\big(p_i \big)_{i\in I}$ a separating family of seminorms on $E$. Then a Hausdorff vector topology is generated by the subbase
			\begin{equation}\label{subbase}
				B_{i, \epsilon}(x_0)  \coloneq \{x \in E : p_i(x- x_0) < \epsilon \} , \quad   i \in I, \, \epsilon > 0, \, x_0 \in E.
			\end{equation}
        Thus $(E,\{p_i\}_I)$ is a locally convex space and the topology contains a 0-neighbourhood basis of convex sets. Finally, each  $p_i$ is continuous with respect to the locally convex topology.
	\end{prop}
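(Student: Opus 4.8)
I would prove \Cref{prop:semi-norm} in several independent steps: (1) show the collection \eqref{subbase} forms a subbase for \emph{some} topology $\mathcal{T}$, (2) verify that each $p_i$ is continuous for $\mathcal{T}$, (3) check that addition and scalar multiplication are continuous, so $\mathcal{T}$ is a vector topology, (4) exhibit a convex $0$-neighbourhood basis, and (5) use the separating hypothesis to conclude the Hausdorff property. The main obstacle is step (3): verifying continuity of the vector space operations directly from the subbase, since one must pass through finite intersections and use the seminorm inequalities carefully.

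\textbf{First steps.} Any collection of subsets of $E$ generates a topology by declaring the collection a subbase (finite intersections form a base, arbitrary unions are open); so step (1) is immediate. For step (2), fix $i \in I$, $x_0 \in E$ and $\varepsilon > 0$. For $x$ with $p_i(x - x_0) < \varepsilon$, I would use the reverse triangle inequality $|p_i(x) - p_i(x_0)| \leq p_i(x - x_0)$ (which follows from subadditivity applied both ways) to conclude that $p_i$ maps $B_{i,\varepsilon}(x_0)$ into the open interval $]p_i(x_0) - \varepsilon, p_i(x_0) + \varepsilon[$. This shows the preimage under $p_i$ of any open ball is a union of sets of the form \eqref{subbase}, hence open, so $p_i$ is continuous.

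\textbf{Vector topology and convexity.} For step (3) I would work at a point. To check continuity of addition $+$ at $(x,y)$, given a basic neighbourhood $\bigcap_{k=1}^n B_{i_k,\varepsilon_k}(x+y)$ of $x+y$, I would choose the neighbourhoods $\bigcap_k B_{i_k,\varepsilon_k/2}(x)$ and $\bigcap_k B_{i_k,\varepsilon_k/2}(y)$ of $x$ and $y$; then for $x'$, $y'$ in these sets, subadditivity gives $p_{i_k}(x'+y'-x-y) \leq p_{i_k}(x'-x) + p_{i_k}(y'-y) < \varepsilon_k$, as required. For scalar multiplication at $(\lambda,x)$, given the target neighbourhood I would use the estimate $p_i(\lambda' x' - \lambda x) \leq |\lambda'-\lambda|\,p_i(x') + |\lambda|\,p_i(x'-x)$ together with the homogeneity $p_i(\lambda x) = |\lambda|p_i(x)$, and bound each term by choosing $|\lambda'-\lambda|$ and $p_i(x'-x)$ small enough (absorbing the factor $p_i(x')$ via a preliminary bound on $p_i(x')$). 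This establishes that $\mathcal{T}$ is a vector topology. For step (4), I would observe that each $B_{i,\varepsilon}(x_0)$ is convex (if $p_i(x-x_0), p_i(y-x_0) < \varepsilon$ then $p_i(tx+(1-t)y - x_0) \leq t\,p_i(x-x_0) + (1-t)\,p_i(y-x_0) < \varepsilon$ by homogeneity and subadditivity), and finite intersections of convex sets are convex by Exercise \ref{Ex:TVS} 3.; these finite intersections form a convex $0$-neighbourhood basis.

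\textbf{Hausdorff property.} Finally, for step (5), let $x \neq y$ in $E$. Since the family is separating, there is $i \in I$ with $p_i(x-y) \neq 0$; set $\varepsilon \coloneq \tfrac{1}{2}p_i(x-y) > 0$. Then $B_{i,\varepsilon}(x)$ and $B_{i,\varepsilon}(y)$ are disjoint, for if $z$ lay in both, subadditivity would give $p_i(x-y) \leq p_i(x-z) + p_i(z-y) < 2\varepsilon = p_i(x-y)$, a contradiction. Hence $\mathcal{T}$ is Hausdorff, and together with steps (1)--(4) this shows $(E,\{p_i\}_I)$ is a locally convex space whose topology admits a convex $0$-neighbourhood basis, with each $p_i$ continuous.
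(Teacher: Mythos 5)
Your proposal is correct and follows essentially the same route as the paper's proof: direct verification of the vector-topology axioms via the subadditivity and homogeneity of the seminorms (half-epsilons for addition, the split $p_i(\lambda x-\lambda_0x_0)\leq|\lambda-\lambda_0|p_i(x)+|\lambda_0|p_i(x-x_0)$ for scalar multiplication), convexity of the seminorm balls for the $0$-neighbourhood basis, and the separating hypothesis for Hausdorff. The only cosmetic differences are that the paper additionally remarks that the subbase generates the initial topology with respect to the quotient maps $q_i\colon E\to E/p_i^{-1}(0)$ (tying the result to Definition \ref{defn:lcvx}(a)) and proves continuity of $p_i$ by writing $p_i^{-1}(]a,b[)$ explicitly as an intersection of an open ball with the complement of a closed ball, where you use the reverse triangle inequality; both are sound.
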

		\begin{proof}
			Let us first note that the subbase \eqref{subbase} generates the initial topology induced by the family $\{q_i \colon E \rightarrow E/\text{ker}p_i\}_{i\in I}$, where the right hand side is endowed with the normed topology induced by $p_i$ (cf. Exercise \ref{Ex:AppA} 1.)
			Let $U$ be 0-neighbourhood. Then $U$ contains some finite intersection $\bigcap B_{i, \epsilon}(x)$, which is convex since the seminorm balls are convex and intersections preserve convexity, Exercise \ref{Ex:TVS} 3. Thus every $0$-neighbourhood contains a convex $0$-neighbourhood. 
	     	For the Hausdorff property we choose for $x_1,x_2\in E$ a seminorm $p_i$ such that $0 < p_i(x_1 - x_2)$. Set $\delta =  p_i(x_1 - x_2)/3$. Now if 
			$z \in B_{i, \delta}(x_2) \cap B_{i, \delta}(x_1)$ were non-empty we must have 
			\begin{equation*}
				0 < \delta= p_i(x_1 - x_2) \leq p_i(x_1- z) + p_i(z -x_2) \leq \frac{2}{3} \delta
			\end{equation*}
			which is absurd. Therefore the intersection must be empty and $E$ is Hausdorff.
			
			We have continuity of addition since for each
			\begin{equation*}
			U_{x + y} = B_{i_1, \epsilon_1}(x_0 + y_0) \cap B_{i_2, \epsilon_2}(x_0 + y_0)\cap \dots \cap B_{i_n, \epsilon_n}(x_0 + y_0)
			\end{equation*}
			the neighbourhoods 
			\begin{equation*}
				U_{x_0} = B_{i_1, \epsilon_1/2}(x_0) \cap B_{i_2, \epsilon_2/2}(x_0)\cap \dots \cap B_{i_n, \epsilon_n/2}(x_0)
			\end{equation*}
			and
			\begin{equation*}
			U_{y_0} = B_{i_1, \epsilon_1/2}(y_0) \cap B_{i_2, \epsilon_2/2}(y_0) \cap \dots  \cap B_{i_n, \epsilon_n/2}(y_0)
			\end{equation*}
			satisfies $U_x + U_y \subset U_{x + y}$ since for any $x +y \in B_{i, \epsilon/2}(x) + B_{i,\epsilon/2}(y)$
			\begin{equation*}
				p_i(x +y) \leq p_i(x) + p_i(y) < \epsilon/ 2 + \epsilon/2 = \epsilon
			\end{equation*}
			For the continuity of scalar multiplication consider the neighbourhood
			\begin{equation*}
				U_{\lambda_0 x_0} = B_{i_1, \epsilon_1}(\lambda_0 x_0) \cap B_{i_2, \epsilon_2}(\lambda_0 x_0)\cap \dots \cap B_{i_n, \epsilon_n}(\lambda_0 x_0)
			\end{equation*}
			 Let $|\lambda - \lambda_0| < \delta$, and $x \in B_{i, \delta}(x_0)$ then
			\begin{align*}
			p_i(\lambda x - \lambda_0x_o) & = 
			p_i((\lambda - \lambda_0)x + \lambda_0(x - x_0)) \\ & \leq 
			|\lambda - \lambda_0| p_i(x) + |\lambda_0| p_i(x- x_0) \\ & <
			\delta(p_i(x-x_0) + p_i(x_0)) +   |\lambda_0|p_i(x- x_0) \\ & <
			\delta (\delta + p_i(x_0)+ |\lambda_0|) < \epsilon
			\end{align*}
			if $\delta$ is small enough. So we can find $\delta_1, \dots , \delta _n$ and $\delta = \min\{\delta_1, \dots , \delta _n \}$ such that
			\begin{equation*}
			(\lambda, x) \in (\lambda_0-\delta,\lambda_0+ \delta) \times U_{x_0} \implies \lambda x\in U_{\lambda_0 x_0} 
			\end{equation*}
			To see that $p_i$ is continuous, let $(a,b)$ be an open interval in $[0,\infty)$, then $p_i^{-1} (a, b) = (E\setminus \overline B _{i, a}(0)) \cap B_{i, b}(0)$ is open being a finite intersection of open sets, when 
			$
				\overline B _{i, a}(0) \coloneqq  \{ x \in E : p_i(x) \leq a \} \, . 
			$
			Since $p_i^{-1}([0,b[) =B_{i,b}(0)$ we deduce that $p_i$ is continuous. 
			\end{proof}
		
		\begin{defn}
		 	Let $\big(p_i)_{i \in I}$ be a family of seminorms. We say the family \index{seminorm!basis condition}
		 	\begin{itemize}
		 	 \item satisfies the \emph{basis condition} if for each two seminorms $p_i$ and $p_j$, there exists a third seminorm $p_k$ and $C>0$ such that 
		 	\begin{equation*}
		 	\max\{p_i(x) , p_j(x)\} \leq C p_k(x), \quad \forall x \in E . 
		 	\end{equation*} 
		 	\item is called a \emph{fundamental system} of seminorms, if it generates the topology on $E$ and satisfies the basis condition.\index{seminorm!fundamental system of}
		 	\end{itemize}
		 \end{defn}
			Note that for a fundamental system of seminorms the subbase \eqref{subbase} is a basis for the topology it generates.
	
	\begin{ex}\label{ex:co_seminorm}
	 Consider again the space of smooth functions $C^\infty ([0,1],\R)$ with the \Frechet\ topology induced by the seminorms $\lVert f\rVert_n \coloneq \sup_{0\leq k\leq n} \sup_{x \in [0,1]} \left|\frac{\mathrm{d}^k}{\mathrm{d}x^k}f(x)\right|$. For two of these seminorms we obviously have 
     $$\max\{\lVert f \rVert_n,\lVert f \rVert_m\} \leq \lVert, \quad f \rVert_{\max \{n,m\}} \forall f \in C^\infty ([0,1],\R).$$
     Hence these seminorms form a fundamental system of seminorms and their $r$-balls form a basis of the topology called the compact open $C^\infty$-topology.\index{compact open $C^\infty$-topology}
	\end{ex}

	Let us associate now to every disc a seminorm. The upshot will be that one can equivalently define a locally convex space as a topological vector space with a $0$-neighbourhood base consisting of convex sets.
	
	\begin{defn}
		For a vector space $E$ and a disc $A$ in $E$, define the \emph{Minkowski functional},\index{Minkowski functional} $p_{A}\colon E \to \R, p_{A}(x) \coloneq \inf \{t > 0 \mid x \in tA\} \ , $
		where $\inf \emptyset \coloneq \infty$
	\end{defn}
	\begin{lem}\label{Mink:disc0nbhd}
		If $U$ is a disc $0$-neighbourhood in the locally convex space $E$, then the Minkowski functional $p_U$ is a continuous seminorm on $E$.
	\end{lem}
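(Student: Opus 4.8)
The plan is to verify the three seminorm axioms plus continuity, exploiting the fact that $U$ is a disc (convex and balanced) which is absorbent by \Cref{lem:nbhd:abs}. Since $U$ is absorbent, for every $x \in E$ the set $\{t > 0 \mid x \in tU\}$ is non-empty, so $p_U(x) < \infty$ and $p_U$ is genuinely $\R$-valued (in fact $[0,\infty[$-valued). First I would establish \textbf{positive homogeneity}: for $\lambda \in \R$, $\lambda \neq 0$, I claim $p_U(\lambda x) = |\lambda| p_U(x)$. The key observation is that $x \in tU \Leftrightarrow \lambda x \in |\lambda| t U$, which uses that $U$ is balanced (so that $\tfrac{\lambda}{|\lambda|} U = U$). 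Rewriting the infimum under the substitution $s = |\lambda| t$ then gives the claim; the case $\lambda = 0$ is immediate since $0 \in tU$ for all $t>0$ (as $0 \in U$), giving $p_U(0) = 0 = |0| p_U(x)$.

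Next I would prove \textbf{subadditivity} $p_U(x+y) \leq p_U(x) + p_U(y)$, which is where convexity enters. Fix $\varepsilon > 0$ and choose $s, t > 0$ with $x \in sU$, $y \in tU$, $s < p_U(x) + \varepsilon$, $t < p_U(y) + \varepsilon$. Writing $x = s u_1$, $y = t u_2$ with $u_1, u_2 \in U$, I would express $x + y = (s+t)\big(\tfrac{s}{s+t} u_1 + \tfrac{t}{s+t} u_2\big)$ and note that the bracketed term is a convex combination of $u_1, u_2 \in U$, hence in $U$ by convexity. Therefore $x + y \in (s+t) U$, so $p_U(x+y) \leq s + t < p_U(x) + p_U(y) + 2\varepsilon$; letting $\varepsilon \to 0$ yields subadditivity. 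Together with homogeneity this establishes that $p_U$ is a seminorm.

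Finally, \textbf{continuity}: by \Cref{lem:linmapzero}-style reasoning (or directly, since seminorms are translation-controlled via subadditivity), it suffices to check continuity at $0$. Given $\varepsilon > 0$, I would show that the $0$-neighbourhood $\varepsilon U$ is mapped into $[0,\varepsilon]$ — indeed if $x \in \varepsilon U$ then $p_U(x) \leq \varepsilon$ by definition of the infimum. Combined with subadditivity, for arbitrary $x_0$ and $x \in x_0 + \varepsilon U$ one gets $|p_U(x) - p_U(x_0)| \leq p_U(x - x_0) \leq \varepsilon$ (using the reverse triangle inequality for seminorms, which follows from subadditivity), so $p_U$ is continuous everywhere. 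The main obstacle, though it is genuinely mild here, is the subadditivity step: it is the only place requiring the convex-combination argument, and one must be careful that the representatives $u_1, u_2$ can be chosen in $U$ itself (rather than its closure), which is guaranteed by taking $s, t$ strictly larger than the respective infima so that $x \in sU$ exactly. Everything else reduces to unwinding the definition of the Minkowski functional and invoking that $U$ is a disc.
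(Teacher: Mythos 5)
Your proof is correct and follows essentially the same route as the paper: finiteness from absorbency, homogeneity from the balanced property, subadditivity from the convex-combination trick, and continuity from the fact that $\varepsilon U$ is contained in $p_U^{-1}([0,\varepsilon])$. The only cosmetic difference is in the last step, where the paper checks openness of preimages $p_U^{-1}(]a,b[)$ directly while you argue via continuity at $0$ plus the reverse triangle inequality; both are routine, and your $\varepsilon$-bookkeeping in the subadditivity step is if anything slightly more careful than the paper's.
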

	\begin{proof}
		By \Cref{lem:nbhd:abs}, $p_{U}(x) \in [0,\infty)$ for all $x \in E$.  
		For the triangle inequality, let $x, y \in E$, then if $x \in tU$ and $y \in sU$
		\begin{equation*}
			\frac{1}{t + s} (x +y) = \frac{t}{t + s}\frac{x}{t} + \frac{s}{t + s}\frac{y}{s} \in U
		\end{equation*}
		or rather $x + y \in (t+s)U$. Therefore $p_U({x+y})\leq  t + s = p_U({x}) + p_U({y})$. Multiplicative scalars can be taken out of the seminorm due to $U$ being balanced: If $\lambda \in \R \setminus \{0\}$,
		$\lambda x = \frac{\lambda}{|\lambda|}|\lambda| x \in tU$  if and only if $|\lambda|x = \frac{|\lambda|}{\lambda} \lambda x \in tU$. Therefore $p_U({\lambda x }) = \inf\{ t > 0 : \lambda x \in tU \} = \inf\{t >0 : |\lambda| x \in t U \} = |\lambda|p_U(x)$. Continuity of the seminorm follows from $p_U^{-1}(]a,b[)=(E\setminus(\overline{aU}))\cap \bigcup_{0\leq t \leq b} tU$ is open for all $a,b \in [0,\infty[$.
\end{proof}
As \Cref{lem:discnbhd} shows, every convex $0$-neighbourhood gives rise to a disc and these give rise to seminorms by \Cref{Mink:disc0nbhd}. Thus an equivalent definition of a locally convex space (fitting the name better, cf.~\cite[Theorem 1.34 and Remark 1.38]{Rudin}) is:
\begin{defn}
 A Hausdorff topological vector space is a locally convex space if it contains a $0$-neighbourhood basis of convex sets.\index{space!locally convex}
\end{defn}

Finally, let us recall Kolmogorov`s normability criterion, which gives a (necessary and sufficient!) condition for a topological vector space to be normable, i.e.~the vector topology coincides with the topology induced by some norm.

\begin{thm}[Kolmogorov`s normability criterion]\label{thm:Kolm_crit} \index{Kolmogorov's normability criterion}
 A topological vector space $E$ is normable if and only if $E$ has a bounded and convex $0$-neighbourhood.
\end{thm}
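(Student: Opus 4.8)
The plan is to prove the two implications separately, with the reverse direction carrying essentially all of the work. For the easy direction, suppose the topology of $E$ is induced by a norm $\lVert \cdot \rVert$. Then the open unit ball $B_1(0)$ is a convex (by the triangle inequality) $0$-neighbourhood, and I would check it is bounded in the sense of \Cref{def:TVS} as follows: any $0$-neighbourhood $W$ contains some $\varepsilon$-ball $B_\varepsilon(0)$, and since $B_1(0) = \varepsilon^{-1} B_\varepsilon(0) \subseteq \varepsilon^{-1} W$, the set $B_1(0)$ satisfies $B_1(0) \subseteq sW$ with $s = \varepsilon^{-1}$. Thus $B_1(0)$ is the required bounded convex $0$-neighbourhood.

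For the converse, assume $E$ possesses a bounded convex $0$-neighbourhood $U$. First I would replace $U$ by a more convenient set: by \Cref{lem:discnbhd} the convex $0$-neighbourhood $U$ contains an open disc $V$, and since $V \subseteq U$ with $U$ bounded, $V$ is again a bounded open disc $0$-neighbourhood. By \Cref{Mink:disc0nbhd} its Minkowski functional $p_V$ is a continuous seminorm on $E$. The goal is then to establish (i) that $p_V$ is in fact a norm and (ii) that the norm topology it induces coincides with the original vector topology $\mathcal{T}$.

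The main obstacle is step (i), the definiteness of $p_V$, and this is exactly where boundedness together with the Hausdorff property enter. Given $x \neq 0$, the Hausdorff property yields a $0$-neighbourhood $W$ with $x \notin W$; boundedness of $V$ then gives $\lambda > 0$ with $V \subseteq \lambda W$, hence $\lambda^{-1} V \subseteq W$. Writing $0 < s \leq \lambda^{-1}$ as $s = r\lambda^{-1}$ with $0 < r \leq 1$ and using that $V$ is balanced (so $rV \subseteq V$), I get $sV = \lambda^{-1}(rV) \subseteq \lambda^{-1} V \subseteq W$ for all such $s$. Consequently $x \notin sV$ for every $0 < s \leq \lambda^{-1}$, which forces $p_V(x) \geq \lambda^{-1} > 0$. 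Thus $p_V(x) = 0$ only for $x = 0$, and $p_V$ is a norm.

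For step (ii) I would prove both inclusions of topologies. Since $p_V$ is continuous, each norm ball is $\mathcal{T}$-open, so the norm topology is coarser than $\mathcal{T}$. For the reverse inclusion I would invoke Exercise \ref{Ex:TVS} 2: because $V$ is a bounded $0$-neighbourhood, its scalings $\{rV\}_{r>0}$ form a neighbourhood basis at $0$, so every $\mathcal{T}$-neighbourhood of $0$ contains some $rV$; and balancedness of $V$ gives $\{x \mid p_V(x) < r\} \subseteq rV$, so every $\mathcal{T}$-neighbourhood of $0$ contains a norm ball. Translating by \Cref{lem:spec_subsets} then shows every $\mathcal{T}$-open set is norm-open, whence $\mathcal{T}$ equals the norm topology and $E$ is normable.
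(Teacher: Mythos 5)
Your proof is correct and takes essentially the same route as the paper's: both extract an open disc from the convex neighbourhood via \Cref{lem:discnbhd}, take its Minkowski functional, use boundedness together with the Hausdorff property to get definiteness, and use the scaled discs as a $0$-neighbourhood basis to identify the two topologies. The only cosmetic difference is that you inline the content of Exercise \ref{Ex:TVS} 2 (that the sets $rV$ form a neighbourhood basis of $0$) instead of citing it.
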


\begin{proof}
 The criterion is necessary as in a normed space, the unit ball is a convex bounded $0$-neighbourhood. 
 
 Let now $E$ be a topological vector space with a bounded and convex $0$-neighbourhood $N$. By \Cref{lem:discnbhd} we can pick an open disc $U \subseteq N$. Note that $U$ is also bounded and define $\lVert x\rVert \coloneq p_U (x)$ for $x \in E$, where $p_U$ is the Minkowski functional associated to $U$. Now due to Exercise \ref{Ex:TVS} 2.~the sets $sU,\ s \in ]0,\infty[$ form a neighbourhood base of $0$ in $E$. If $x \neq 0$ is an element of $E$, the Hausdorff property implies that there exists $s>0$ with $x \not \in sU$, i.e.~$\lVert x\rVert \geq s >0$. We deduce from \Cref{Mink:disc0nbhd} that $\lVert \cdot \rVert$ is a (continuous) norm on $E$. Hence the norm topology induced by $\lVert \cdot \rVert$ is coarser then the original topology. Conversely, recall that since $U$ is open, we have $\{x\in E \mid \lVert x\rVert <s \} = sU$. As the $sU$ form a neighbourhood base, this shows that the norm topology coincides with the original topology, whence $E$ is normable.  
\end{proof} 

The Kolmogorov normability criterion allows us to describe the pathology occurring for dual space of topological vector spaces beyond Banach spaces.

\begin{prop}\label{prop:dual_norm}
 Let $E$ be a locally convex space and 
 $$E' \coloneq \{\lambda \colon E \rightarrow \R\mid \lambda \text{ is continuous and linear}\}$$ be its dual space.\index{dual space}
 If $E'$ is a topological vector space such that the evaluation map\\ $\ev \colon E \times E'\rightarrow \R , (x,\lambda) \mapsto \lambda (x)$ is continuous, then $E$ is normable. 
\end{prop}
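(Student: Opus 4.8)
The plan is to reduce the statement to Kolmogorov's normability criterion \Cref{thm:Kolm_crit}: it suffices to produce a \emph{bounded convex} $0$-neighbourhood in $E$. The continuity of $\ev$ at the origin will hand us the neighbourhood to work with, and the only genuinely non-elementary ingredient is the passage from weak boundedness to boundedness, which I expect to be the main obstacle.

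First I would exploit continuity of $\ev$ at $(0,0)$. Since $\ev(0,0)=0$, there exist $0$-neighbourhoods $U\opn E$ and $V\opn E'$ with $|\lambda(x)|=|\ev(x,\lambda)|<1$ for all $(x,\lambda)\in U\times V$. Because $E$ is locally convex, it has a $0$-neighbourhood basis of convex sets, so I may shrink $U$ to a convex $0$-neighbourhood (if desired, by \Cref{lem:discnbhd} even to a disc) without destroying this estimate; I continue to call the result $U$.

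Next I would show that $U$ is \emph{weakly bounded}, i.e.\ that every $\lambda\in E'$ is bounded on $U$. Fix $\lambda\in E'$. Since $E'$ is a topological vector space, its $0$-neighbourhood $V$ is absorbent by \Cref{lem:nbhd:abs}, so there is $t>0$ with $\lambda\in tV$, that is $t^{-1}\lambda\in V$. Then for every $x\in U$,
$$|\lambda(x)| = t\,|(t^{-1}\lambda)(x)| < t,$$
whence $\sup_{x\in U}|\lambda(x)|\le t<\infty$. As $\lambda\in E'$ was arbitrary, $U$ is bounded in the weak topology $\sigma(E,E')$.

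Finally I would invoke the theorem that in a locally convex space a set is bounded if and only if it is weakly bounded (Mackey's theorem, \cite[Theorem 3.18]{Rudin}, as already used in Step~3 of the proof of \Cref{prop:strongmetric}). This is the one import that is not elementary, and hence the crux of the argument. It yields that $U$ is a bounded convex $0$-neighbourhood of $E$, and Kolmogorov's criterion \Cref{thm:Kolm_crit} then gives at once that $E$ is normable.
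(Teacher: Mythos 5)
Your argument is correct and is essentially the paper's own proof: continuity of $\ev$ yields $U\times V$ with $|\lambda(x)|<1$, absorbency of $V$ gives weak boundedness of $U$, \cite[Theorem 3.18]{Rudin} upgrades this to boundedness, and Kolmogorov's criterion \Cref{thm:Kolm_crit} concludes. The only (immaterial) difference is that you shrink $U$ to a convex set before the boundedness argument rather than after.
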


\begin{proof}
 Assume that $E`$ is a topological vector space such that $\ev$ is continuous. Then there are $0$-neighbourhoods $U \opn E$ and $V\opn E'$ such that $\ev (U \times V) \subseteq [-1,1]$. Since $V$ is absorbent by \Cref{lem:nbhd:abs} this implies that every continuous linear functional is bounded on $U$. Now \cite[Theorem 3.18]{Rudin} yields that $U$ is already bounded. Shrinking $U$, we may assume that $U$ is convex and bounded. Hence Kolmogorov`s criterion \Cref{thm:Kolm_crit} shows that $E$ must be normable. 
\end{proof}

Recall from Exercise \ref{Ex:BFrecht} 1. that if $E$ is normable, the evaluation map on the dual space is indeed continuous with respect to the operator norm on the dual space.
\begin{Exercise}\label{Ex:AppA}  \vspace{-\baselineskip}

\Question Let $E$ be a vector space and $p$ a seminorm on $E$.
 \begin{enumerate}
  \item[(a)] Show that $\text{ker} p \coloneq \{x \in E \mid p(x)=0\}$ is a vector subspace of $E$.
  \item[(b)] Prove that $\lVert q(x)\rVert \coloneq \inf_{y \in \text{ker} p} p(x+y)$ defines a norm on the quotient, where $q$ is the (surjective!) quotient map $q\colon E \rightarrow E/\text{ker}p$.
 \end{enumerate}
 \Question Let $E$ be a locally convex space whose topology $\mathcal{T}$ is generated by a family $\mathcal{P}$ of seminorms. Show that if $q$ is a continuous seminorm on $E$, then the topology generated by $\cal P \cup \{ \mathrm{q}\}$ is equal to $\mathcal{T}$.
 \Question Show that every locally convex space admits a fundamental system of seminorms.\\ 
 {\tiny \textbf{Hint:} Show that $\max \{p,q\}$ is a continuous seminorm and use the previous exercise.} 
 \end{Exercise}
\section{Subspaces of locally convex spaces}\label{App:complemented}

In this section we recall some material on subspaces of locally convex spaces.

\begin{defn}\index{space!complemented subspace}
 A vector subspace $F \subseteq E$ of a locally convex space is called \emph{complemented} if there exists a locally convex space $X$ such that $E \cong E\times X$ (isomorphic as locally convex spaces).
\end{defn}

\begin{lem}\label{lem:complemented}
 A subspace $F \subseteq E$ is complemented if and only if there exists a continuous linear map $\pi \colon E \rightarrow E$ with $\pi(E)=F$ and $\pi \circ \pi = \pi$. Further, a complemented subspace is always closed. We call $\pi$ a \emph{continuous projection}.\index{continuous projection}
\end{lem}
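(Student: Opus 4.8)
The plan is to prove the two implications of the equivalence separately and to obtain the closedness assertion as a by-product of the construction. Throughout I read the definition of \emph{complemented} in the natural way: there is an isomorphism of locally convex spaces $\phi \colon E \to F \times X$ whose restriction to $F$ is the canonical inclusion $f \mapsto (f,0)$; equivalently, $E$ is the topological direct sum of $F$ and a closed complement. The workhorses will be that subspaces of locally convex spaces carry a locally convex (subspace) topology, that addition on $E$ is continuous, and that $E$ is Hausdorff.

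For the implication ``projection $\Rightarrow$ complemented'', suppose $\pi \colon E \to E$ is continuous, linear, with $\pi(E) = F$ and $\pi \circ \pi = \pi$. First I would set $G \coloneq \ker \pi$, a closed (hence locally convex) subspace, and record the two bookkeeping identities $\pi(f) = f$ for $f \in F$ (every $f \in F$ has the form $\pi(y)$ and $\pi$ is idempotent) and $x - \pi(x) \in G$ for all $x \in E$. Then I would introduce
\[
\psi \colon E \to F \times G, \qquad x \mapsto (\pi(x),\, x - \pi(x)),
\]
which is continuous and linear because $\pi$ and $\id_E - \pi$ are. It is a bijection: injectivity follows from $x = \pi(x) + (x - \pi(x))$, and surjectivity from $\psi(f+g) = (f,g)$ for $(f,g) \in F \times G$, using $\pi(f)=f$ and $\pi(g)=0$. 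Its inverse is the addition map $(f,g) \mapsto f + g$, continuous since the vector space operations are. Hence $\psi$ is an isomorphism of locally convex spaces, and as $\psi(f) = (f,0)$ for $f \in F$ it exhibits $F$ as a complemented subspace.

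For the converse, let $\phi \colon E \to F \times X$ be an isomorphism with $\phi|_F(f) = (f,0)$. I would define
\[
\pi \coloneq \phi^{-1} \circ q \circ \phi, \qquad q \colon F \times X \to F \times X,\ (f,x) \mapsto (f,0).
\]
As a composition of continuous linear maps, $\pi$ is continuous and linear; it is idempotent because $q \circ q = q$; and its image is $\phi^{-1}(F \times \{0\}) = F$. Thus $\pi$ is the required continuous projection.

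Finally, closedness comes for free: if $F$ is complemented, the second implication produces a continuous projection $\pi$ with $F = \pi(E) = \ker(\id_E - \pi)$, which is the preimage of the closed set $\{0\}$ (closed since $E$ is Hausdorff) under the continuous map $\id_E - \pi$, hence closed. I expect the only delicate point to be bookkeeping rather than depth: one must check that $\psi$ is a homeomorphism and not merely an algebraic isomorphism, which rests entirely on continuity of $\pi$ and of addition, and one must track the identification of $F$ with $F \times \{0\}$ so that the isomorphism genuinely witnesses $F$ itself (and not merely an abstract copy of it) as a complemented subspace.
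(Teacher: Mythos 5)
Your proof is correct and follows essentially the same route as the paper's: the same map $x \mapsto (\pi(x), x-\pi(x))$ with inverse given by addition for one direction, the conjugated projection $\phi^{-1}\circ q\circ \phi$ for the other, and closedness via $F=\ker(\id_E-\pi)$. The only difference is cosmetic — you spell out the (necessary) reading of the definition that the isomorphism $E\cong F\times X$ identifies $F$ with $F\times\{0\}$, which the paper leaves implicit.
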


\begin{proof}
 Let $F$ be complemented with isomorphism $\varphi \colon E \rightarrow F \times X$, then $\pi \coloneq \varphi^{-1}\circ p_1\circ \varphi$ with $p_1 \colon F \times X \rightarrow F \times X, (f,x) \mapsto (f,0)$ is a continuous projection.
 
 Conversely, let $\pi \colon E \rightarrow E$ be a continuous projection with $\pi (E)=F$. Then $X \coloneq\text{ker} \pi$ is a closed subspace of $E$ and $F \times X \rightarrow E, (f,x) \mapsto f+x$ is a continuous linear map with continuous inverse $e \mapsto (\pi(e),e-\pi(e))$.
 
 If $F$ is complemented, we have an associated continuous projection and see that $F = \text{ker} (\id_E-\pi)$ is closed.
\end{proof}

\begin{ex}
 Finite-dimensional subspaces of locally convex spaces are always complemented, \cite[Lemma 4.21]{Rudin} (thus all subspaces of a finite-dimensional locally convex space are complemented). More general, every closed subspace of a Hilbert space is complemented, \cite[Theorem 12.4]{Rudin}. 
 
 Note however, that complemented subspaces (e.g. of Banach spaces) may be rare. Indeed one can prove that a Banach space for which every closed subspace is complemented, must already be a Hilbert space (see \cite{LaT71}). Moreover, there are examples of infinite-dimensional Banach spaces whose only complemented subspaces are finite dimensional.
\end{ex}

\begin{ex}\label{ex: notcomplemented}
 Consider the Banach space $c_0$ of all (real) sequences converging to $0$ as a subspace of the Banach space $\ell^\infty$ of all bounded real sequences, with the norm 
 $$\lVert (x_1,x_2,\ldots)\rVert_\infty = \sup_{n \in \N} |x_n|.$$ Then $c_0$ is not complemented in $\ell^\infty$. The proof is however more involved, whence we defer to \cite[Satz IV.6.5]{MR1787146}.
\end{ex}

\begin{Exercise}  \vspace{-\baselineskip}
 \Question Let $(E_i)_{i \in I}$ be a family of locally convex spaces. Prove that $\prod_{i \in I}E_i \coloneq \{(x_i)_I \mid x_i \in E_i\}$ with componentwise addition and scalar multiplication and endowed with the product topology is a locally convex space.
 \Question Show that $F \subseteq E$ is complemented if and only if the projection $q\colon E \rightarrow E/F$ has a continuous linear right inverse $\sigma \colon E/F \rightarrow E$ (i.e.\ $q\circ \sigma = \id_{E/F}$).
\end{Exercise}

\section{On smooth bump functions}\label{smooth:bumpfun}

In finite-dimensional differential geometry one uses commonly local to global arguments employing smooth bump functions (also sometime called cut-off functions) and partitions of unity. This strategy fails in general due to a lack of bump functions. We briefly discuss the problem and refer to \cite[Chapter III]{KM97} for more information. 
\begin{defn}
 For a map  $f \colon V \rightarrow F$ with $V \opn E$ and $E,F$ locally convex spaces the \emph{carrier}\index{carrier of a function} of $f$ is the set $\text{carr}(f)\coloneq \{x \in V \mid f(x) \neq 0\}$. As usual the \emph{support}\index{support of a function} of $f$ is defined to be the closure of the carrier.
\end{defn}
\begin{defn}
 Let $E$ be a locally convex space, $x \in E$ and $U \subseteq E$ an $x$-neighbourhood.
 \begin{enumerate}
  \item A $C^k$-map $f \colon E \rightarrow [0,\infty[$ for $k \in \N_0 \cup \{\infty\}$ is a $C^k$-\emph{bump function}\index{bump function} with carrier in $U$ if $\text{carr}(f) \subseteq U$ and $f(x)=1$. 
  \item if $(U_i)_{i\in I}$ is an open cover of $E$, we say that a family $(f_i \colon E \rightarrow [0,\infty[)_{i\in I}$ is a $C^k$\emph{-partition of unity}\index{partition of unity} (subordinate to the cover) if every $f_i$ is $C^k$ with carrier in $U_i$ and $\sum_i f_i(x) = 1, \forall x \in E$. 
 \end{enumerate}
\end{defn}

\begin{defn}
Let $E$ be a locally convex space and $k \in \N_0\cup \{\infty\}$. We say that $E$ is
\begin{enumerate}
 \item  \emph{$C^k$-regular},\index{space!$C^k$-regular} if for any neighbourhood $U$ of a point $x$ there exists a $C^k$-bump function with carrier in $U$ and $f(x)=1$.
 \item \emph{$C^k$-paracompact},\index{space!$C^k$-paracompact}\footnote{$C^0$-paracompact is equivalent to the usual notion of paracompactness due to \cite[Theorem 5.1.9]{Eng89}.} if every open cover admits a $C^k$-partitions of unity.  
\end{enumerate}
\end{defn}
Obviously similar definitions make sense if we consider instead of a locally convex space a manifold modelled on locally convex spaces. While the existence of partitions of unity hinges on topological properties of the manifold (paracompactness!), the smoothness of these partitions depends only on the availability of bump functions on the model space. 

\begin{setup}[Typical Local-to-global-argument]\index{local-to-global-argument}
 Let $M$ be a manifold which admits smooth partitions of unity subordinate to open covers. Assume we have an object defined for every chart, to illustrate this we choose smooth Riemannian metrics on $TU \cong U \times E$, i.e. $g_U \colon U\times H \times H \rightarrow \R, (u,h,k) \mapsto \langle h,k\rangle_u$. Using the chart we transport it back to the manifold, i.e.\ $g_\varphi \colon TU\oplus TU \rightarrow \R, (v,w) \mapsto \langle T \varphi(v),T\varphi(w)\rangle_{\varphi (\pi(v))}$. Now choose a smooth partition of unity $p_\varphi$ subordinate to the open covering $(U,\varphi)_{\varphi \in \mathcal{A}}$. Then 
 $$g \colon TM \oplus TM \rightarrow \R ,\quad (v,w) \mapsto \sum_{\varphi \in \mathcal{A}} p_\varphi (\pi_M(v)) g_\varphi(v,w)$$
 is a Riemannian metric on $M$. Note that if $g_\varphi (v,w)$ is not defined, $p_\varphi(\pi_M(v))$ is zero so the definition makes sense. 
\end{setup}

Recall (e.g.\, from \cite[Section 2.2]{Hir76}) that every finite-dimensional space is $C^\infty$-regular. It is also $C^\infty$-paracompact as a result by Toru\'{n}czyk (see \cite[Corollary 16.16]{KM97}) shows that every Hilbert space is $C^\infty$-paracompact. We shall now recall some results about $C^k$-regularity of locally convex (and in particular Banach spaces).

\begin{prop}[{\cite{MR0198492}}]
 A locally convex space is $C^k$-regular if and only if the topology is initial with respect to the functions $C^k(E,\R)$. 
\end{prop}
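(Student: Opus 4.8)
The plan is to prove both implications by comparing the given locally convex topology $\mathcal{T}$ on $E$ with the initial topology $\mathcal{T}_{C^k}$ determined by the family $C^k(E,\R)$. First I would record the trivial half: every $f \in C^k(E,\R)$ is in particular a $C^0$-map and hence $\mathcal{T}$-continuous, so $\mathcal{T}_{C^k} \subseteq \mathcal{T}$ always holds. Thus in both directions the only real content is the reverse inclusion $\mathcal{T} \subseteq \mathcal{T}_{C^k}$, i.e.\ that the two topologies coincide.

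For the implication that $C^k$-regularity forces the topology to be initial, I would fix $U \in \mathcal{T}$ and a point $x \in U$. By $C^k$-regularity there is a $C^k$-bump function $f \colon E \to [0,\infty[$ with $f(x)=1$ and $\text{carr}(f) \subseteq U$. Since $f \geq 0$, its carrier is exactly $f^{-1}(\,]0,\infty[\,)$, which is $\mathcal{T}_{C^k}$-open as the preimage of an open set under a member of $C^k(E,\R)$. As $x$ lies in this set and it is contained in $U$, the point $x$ has a $\mathcal{T}_{C^k}$-neighbourhood inside $U$; since $x \in U$ was arbitrary, $U \in \mathcal{T}_{C^k}$, giving $\mathcal{T} \subseteq \mathcal{T}_{C^k}$.

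For the converse I would assume $\mathcal{T} = \mathcal{T}_{C^k}$ and let $U$ be a neighbourhood of $x$. Since the subbasis of $\mathcal{T}_{C^k}$ consists of sets $g^{-1}(V)$ with $g \in C^k(E,\R)$ and $V \opn \R$, I can choose finitely many $g_1,\dots,g_n \in C^k(E,\R)$ and $\varepsilon > 0$ with $x \in \bigcap_{i=1}^n g_i^{-1}(\,]g_i(x)-\varepsilon, g_i(x)+\varepsilon[\,) \subseteq U$. Using that $\R$ is $C^\infty$-regular (being finite-dimensional), I pick a $C^\infty$-function $\phi \colon \R \to [0,1]$ with $\phi(0)=1$ and $\text{carr}(\phi) \subseteq \,]-\varepsilon,\varepsilon[$, and set $f_i \coloneq \phi \circ (g_i - g_i(x))$ and $f \coloneq \prod_{i=1}^n f_i$. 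Each $f_i$ is $C^k$ by the chain rule \Cref{chainrule}, and $f$ is $C^k$ because it is the composition of the $C^k$-map $(f_1,\dots,f_n)$ (\Cref{lem:prodspace}) with the smooth multiplication $\R^n \to \R$. By construction $f \geq 0$, $f(x) = \phi(0)^n = 1$, and $\text{carr}(f) = \bigcap_i \text{carr}(f_i) \subseteq \bigcap_i g_i^{-1}(\,]g_i(x)-\varepsilon, g_i(x)+\varepsilon[\,) \subseteq U$, so $f$ is the required bump function and $E$ is $C^k$-regular.

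The hard part will be precisely this converse construction: one must manufacture a genuine $C^k$-bump function on $E$ out of the purely topological information that $\mathcal{T}$ is initial. The three ingredients that make it go through are that the subbasic open sets are already cut out by $C^k$-functions, that $\R$ itself carries smooth \emph{nonnegative} bump functions, and that the class of $C^k$-maps is stable under composition (\Cref{chainrule}) and finite products (via \Cref{lem:prodspace}); the nonnegativity of $\phi$ is exactly what lets the carrier of the product equal the intersection of the individual carriers, so that shrinking to a basic neighbourhood of $x$ is enough.
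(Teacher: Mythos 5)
Your proof is correct and follows essentially the same route as the paper: the nontrivial direction is handled identically (pull back a smooth nonnegative bump on $\R$ along the finitely many $C^k$-functions cutting out a subbasic neighbourhood and multiply), and your carrier argument for the forward implication just supplies the detail the paper dismisses as "clear". No gaps.
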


\begin{proof}
 The initial topology with respect to the $C^k$-functions is generated by the subbase
 $$f^{-1}(]a,b[), \qquad f \in C^{k}(E,\R), a,b \in \R \cup\{\pm \infty\}.$$
 Hence it is clear that if $E$ is $C^k$-regular that the topology is initial with respect to the $C^k$ functions. 
 For the converse consider $x \in U$ where $U$ is open in the initial topology. Then we find $a_1,\ldots, a_n, b_1,\ldots ,b_n$ and $f_1,\ldots ,f_n \in C^k (E,\R)$ for $n \in \N$ such that $x \in \bigcap_{1\leq i \leq n} f_i^{-1}(]a_i,b_i[)$. Adjusting choices of the $f_i$, we may assume without loss of generality that $a_i = - \varepsilon_i$ and $b_i = \varepsilon_i$ for some $\varepsilon_i >0$. Since $\R$ is $C^\infty$-regular, we pick $h \in C^\infty (\R,\R)$ with $h(0)=1$ and $h(t)=0,\ \forall |t|\geq 1$. Then $f \colon E \rightarrow \R, y\mapsto \prod_{1 \leq i \leq n} h(f_i(x)/\varepsilon_i)$ is a bump function with carrier in $U$.
\end{proof}

For a Banach space the existence of $C^k$-bump functions is tied to differentiability of the norm.
\begin{defn}\index{rough norm}
Let $(E,\lVert \cdot \rVert)$ be a normed space. The norm $\lVert \cdot \rVert \colon E \rightarrow \R$ is \emph{rough} if there
exists an $\varepsilon > 0$ such that for every $x \in E$ with $\lVert x\rVert = 1$ there exists $v \in E$ with $\lVert v\rVert =1$ and 
$$\limsup_{t \searrow 0} \frac{\lVert x + tv\rVert + \lVert x - tv\rVert -2}{t} \geq \varepsilon.$$ 
If a Banach space is $C^1$-regular then it does not admit a rough norm (see \cite[14.11]{KM97}).
\end{defn}

One can prove that the Banach spaces $(C([0,1],\R),\lVert \cdot \rVert_{\infty})$ (i.e.\, the continuous functions with the compact open topology) and $(\ell_1, \lVert \cdot \rVert_1)$ (see \cite[13.11 and 13.12]{KM97}) have rough norms, whence they are not even $C^1$-regular. 
On the other hand, since nuclear \Frechet\, spaces are $C^\infty$-regular, the space $C^\infty ([0,1],\R)$ is $C^\infty$-regular. Similar statements then hold for spaces of smooth sections into bundles. Again we refer to \cite{KM97}.

\section{Inverse function theorem beyond Banach spaces}\label{invfunction}
Before we conclude this chapter on locally convex spaces, let us briefly discuss (the lack of) an important tool from calculus which is driving many basic results in (finite dimensional) differential geometry.  Many basic existence results and constructions in finite dimensional differential geometry are more or less direct consequences of the inverse function theorem, the constant rank theorem and its cousin the implicit function theorem. Note that the inverse function theorem and its cousin the implicit function theorem still hold in Banach spaces, \cite[I, \S 5]{Lang} (while the constant rank theorem is already more delicate, see \cite{MR1173211}). Beyond Banach spaces this breaks down as the following example due to Hamilton \cite[I. 5.5.1]{MR656198} shows (cf.\ also \Cref{setup:naiveimmersion}):
 
 \begin{ex}\label{example:diffop}
 Consider the \Frechet space $C^\infty([-1,1],\R)$\index{Fr\'{e}chet space} of all smooth functions from $[-1,1]$ to $\R$.\index{compact open $C^\infty$-topology}\footnote{The \Frechet space structure is given pointwise operations with the compact open $C^\infty$-topology. It is defined via the metric $d(f,g) \coloneq \sum_{i=0}^\infty \frac{\lVert f-g\rVert_i}{2^i}$, where $\lVert f\rVert_i$ is the supremum norm of the $i$th derivative.} together with the differential operator 
 $$P \colon C^\infty ([-1,1],\R) \rightarrow C^\infty ([-1,1],\R),\quad P(c)(x) \coloneq c(x)-xc(x)c'(x)$$
 A computation shows that $P$ is $C^\infty$ with derivative $dP(c,g)(x)= g(x)-xg(x)c'(x)-xc(x)g'(x)$, i.e.\ for $c\equiv0$ the derivative is the identity. However, the image of $P$ is no zero-neighbourhood in $C^\infty ([-1,1],\R)$ as it does not contain any of the functions $g_n (x) \coloneq \tfrac{1}{n}+\tfrac{x^n}{n!}$ for $n\in \N$ (but $\lim_{n\rightarrow \infty} g_n = 0$ in  $C^\infty ([-1,1],\R)$). In conclusion, the inverse function theorem does not hold for $P$.
 \end{ex}  

To give a more geometric example, the exponential map of a Lie group (modelled e.g.\ on a \Frechet space) might fail to even be a local diffeomorphism around the identity. For example, this happens for diffeomorphism groups, cf.\ \Cref{ex:badexponential}.

\begin{setup}[How to recover an inverse function theorem]
The calculi discussed so far are too weak to provide an inverse function theorem on their own. If one has more information (such as metric estimates in the \Frechet setting) there are inverse function theorems which can apply in more general situations.
The most famous one is certainly the Nash-Moser inverse function theorem \cite{MR656198} which works with so called tame maps on tame spaces.
Further generalised theorems are available in the framework of M{\"u}llers bounded geometry \cite{MR2463806} and Gl{\"o}ckners inverse function theorems, cf.\ \cite{Glo07,MR2269430} and the references therein.
To keep the exposition short we do not provide details here. Note though that the generalisations mentioned require specific settings or certain estimates which are often hard to check in applications.
\end{setup}  
 
 A consequence of the lack of an inverse function theorem is that in infinite-dimensional differential geometry one needs to be careful when considering the notions of immersion and submersion (cf.\ \Cref{sect:subm}).
 Further, there is no general solution theory for ordinary differential equations beyond Banach spaces (even for linear differential equations!).
\begin{Exercise}\label{Ex:diffop}  \vspace{-\baselineskip}
 \Question Fill in the missing details for \Cref{example:diffop}: Show that the differential operator $P$ is differentiable and compute its derivative. For $g_n (x) = \tfrac{1}{n}+\tfrac{x^n}{n!}$ show that $g_n \rightarrow 0$ in the compact open $C^\infty$-topology. Let $f$ be a smooth map on $[-1,1]$. Develop $f$ and $P(f)$ into their Taylor series at $0$. Then show that $P(f)=g_n$ is impossible. 
\end{Exercise}

\section{Differential equations beyond Banach spaces}\label{Diffeq:beyond}
Here we exhibit several examples of ill posed differential and integral equation on locally convex spaces. 

\begin{ex}[No solutions in incomplete spaces (Dahmen)]\label{Dahmensexample}
 Consider the mapping 
 $$h \colon [0,1] \rightarrow C^\infty ([1,2],\R),\quad t \mapsto (x \mapsto x^t).$$
 We apply the exponential law to $h$. Note however that \Cref{thm:explaw} is not quite sufficient as $[0,1]$ and $[1,2]$ are manifolds with boundary, whence we refer to the more general exponential law \cite[Theorem A]{MR3342623}. Now $h$ is smooth if and only if $h^\wedge \colon [0,1] \times [1,2] \rightarrow \R , (t,x) \mapsto x^t = \exp (\ln (x) t)$ is smooth, i.e.~$h^\wedge$ is smooth on the interior of the square $[0,1] \times [1,2]$ such that the derivatives extend continuously onto the boundary. This is a trivial calculation. Since the derivative of $h$ corresponds via the exponential law to the partial derivative of $h^\wedge$ (cf.\ \Cref{lem:fwedge_vector}), we see that $dh(t;y)(x) = (d_1 h^\wedge(t,\cdot;y))^\vee (x) = y\ln(x)x^t$. 
 Let us define now two subspaces of $C^\infty ([1,2],\R)$ as the locally convex spaces generated by the image of $h$ and $h' = dh(\cdot ; 1)$: 
 $$E \coloneq \text{span} \{h(t) \mid t \in [0,1]\}, \qquad F \coloneq \text{span} \{h'(t) \mid t \in [0,1]\}.$$ 
 By construction $h(t) \not \in F$ and $h'(t) \not \in E$. This entails that  $h|^{E} \colon [0,1] \rightarrow E \subseteq C^\infty ([1,2],\R)$ is not differentiable and shows that sequential closedness is indispensable in \Cref{lem:seq-closed}. As a consequence both subspaces are not closed and in particular not (Mackey) complete.  
 We see that the (trivial) differential equation $\gamma'(t) = h'(t)$ or equivalently the (weak) integral equation $\gamma (t) = \int_0^1 h'(t
 )\mathrm{d}t$ does not admit a solution in $F$. 
\end{ex}

It should not come as a surprise that in the absence of suitable completeness properties differential and integral equations may be ill posed. However, even in complete spaces relative benign (e.g.~linear) differential equations do not admit solutions.

\begin{ex}[{\cite[I.5.6.1]{MR656198}}]
Consider the \Frechet space $C^\infty ([0,1],\R)$ of smooth functions endowed with the compact open $C^\infty$-topology. Recall that the differential operator $D \colon C^\infty ([0,1],\R) \rightarrow C^\infty ([0,1],\R) ,\ f \mapsto f' = df(\cdot;1)$ is continuous and linear. Consider a solution $f \colon ]-\varepsilon,\varepsilon[ \rightarrow C^\infty ([0,1],\R)$ of the linear differential equation
\begin{align}\label{eq:lindiff}
 \begin{cases}\frac{\mathrm{d}}{\mathrm{d} t} f(t) &= D(f) = f',\\
  f(0)&=f_0
 \end{cases}
\end{align}
Apply the exponential law \cite[Theorem A]{MR3342623} to the smooth map $f$. We obtain a smooth map $f^\wedge \colon ]\varepsilon , \varepsilon[ \times [0,1] \rightarrow \R, (t,x) \mapsto f(t)(x)$ such that \eqref{eq:lindiff} is equivalent to the partial differential equation
\begin{align}\label{assocPDE}
 \partial_t f^\wedge(t,x) = \partial_x f^\wedge (t,x) \quad f(0,x)=f_0(x)
\end{align}
By the Whitney extension theorem \cite{Whi34} there is a (non unique) extension $F_0 \in C^\infty ([0,2],\R)$ of $f_0$ and it is easy to see that the function $f(t,x) \coloneq F_0 (x+t), t,x \in [0,1]$ solves \eqref{assocPDE}. Since the extension $F_0$ is non unique, the solution to \eqref{eq:lindiff} is non unique (albeit we study a linear ordinary differential equation with smooth right hand side!).
\end{ex}

A related example is given by the heat equation on the circle
\begin{ex}[{Heat equation on $\SSS^1$, \cite[Example 6.3]{Mil82}}]\label{ex:heat}
 The heat equation on the circle $\SSS^1$ is given by 
 \begin{align*}
  \partial_t f(t,\theta) = \partial_\theta^2 f(t,\theta),
 \end{align*}
 where $\partial_\theta$ is the derivative on $\SSS^1$. Again the derivative induces a continuous linear derivative operator $D \colon C^\infty (\SSS^1,\R) \rightarrow C^\infty (\SSS^1,\R), f \mapsto \frac{\mathrm{d}}{\mathrm{d}\theta} f$, whence the heat equation can be understood as an ordinary differential equation on $C^\infty (\SSS^1,\R)$
 We do not go into the details concerning solutions of this equation. However, the reader may want to refer to \Cref{sect:EAtheory} for examples of partial differential equations which are treated by similar techniques as ordinary differential equations on infinite-dimensional manifolds.
\end{ex}

The following examples are due to Milnor (see \cite[Example 6.1 and 6.2]{Mil82}):
\begin{ex}[Too many solutions]
 Let $\R^\N$ be the \Frechet space of real valued sequences (with the topology induced by identifying $\R^\N \cong \prod_{n \in \N} \R$) and define the \emph{left shift}\begin{align*}
    \Lambda \colon \R^\N \rightarrow \R^\N ,\quad  (x_1,x_2,x_3,\ldots )   \mapsto (x_2,x_3,\ldots).
\end{align*}
 Then $\Lambda$ is continuous and linear and the differential equation $\mathbf{y}'(t) = \Lambda(\mathbf{y})(t)$ reduces to the system of equations $y_i'(t)=y_{i+1} (t), i\in \N$. For every initial value this system has infinitely many solutions. 
 
 To see this consider the initial condition $\mathbf{y}(0)=\mathbf{0}\in \R^\N$. If $\varphi \in C^\infty (\R,\R)$ vanishes in a neighbourhood of $0$, then we set $y_1(t)=\varphi(t)$ and $y_{n}(t) = \frac{\mathrm{d}^{n-1}}{\mathrm{d}t^{n-1}}\varphi (t)$ for $n \geq 2$ to obtain a solution $\mathbf{y}(t)=(y_1(t),y_2(t),\ldots)$ of the equation with $\mathbf{y}(0)=\mathbf{0}$.
\end{ex}

\begin{ex}[No solutions]\label{ex:boxtop}
 The space $E\coloneq \{(x_n)_{n \in \N} \in \R^\N \mid \text{ almost all } x_n = 0\}$ is locally convex with respect to the box topology (i.e.\ the topology whose basis is given by the sets $\prod_{n\in \N} U_n, U_n \opn \R$, cf.\ Exercise \ref{Ex:boxtop} 2.). Define the \emph{right shift}
 \begin{align*}
   R\colon E\rightarrow E ,\quad  (x_1,x_2,x_3,\ldots )   \mapsto (0, x_1,x_2,x_3,\ldots )
\end{align*}
Then it is not hard to see that $R$ is continuous linear and we consider the initial value problem 
$$\begin{cases}
   \mathbf{y}' &= R(\mathbf{y})(t)\\
   \mathbf{y}(0) &= (1,0,0\ldots)
  \end{cases}
$$
Note that for a prospective solution $\mathbf{y}(t)=(y_1(t),y_2(t),\ldots)$ the differential equation yields $y_1'(t) = 0$, whence $y_1 (t) \equiv 1$ by the initial condition. Then $y_2'(t) = y_1(t) = 1$. Integrating, we see that inductively $y$ is a solution if $y_i(t) = \frac{t^{i-1}}{(i-1)!}$. However, for $t\neq 0$ this sequence has infinitely many terms not equal to $0$ and thus does not exist in $E$, i.e. the initial value problem (given by a linear differential equation with smooth right hand side!) does not have any solution in $E$.
\end{ex}

\begin{Exercise}\label{Ex:boxtop}  \vspace{-\baselineskip}
 \Question Review \Cref{sect:explaw} and \Cref{thm:explaw} to work out the details of the identification of the PDE heat equation with the ODE on $C^\infty (\SSS^1,\R)$ in \Cref{ex:heat}.
 \Question Consider the space $E\coloneq \{(x_n)_{n \in \N} \in \R^\N \mid \text{ almost all } x_n = 0\}$ with the box topology (i.e.\ the topology induced by $E \cap \prod_{n\in \N} U_i$, where $U_i \opn \R$. Show that 
 \subQuestion $E$ is a locally convex space and the box topology is properly finer then the subspace topology induced by $E \subseteq \R^\N$. Then show that $R$ is continuous.
 \subQuestion every base of $0$-neighbourhoods in $E$ is necessarily uncountable, so $E$ can not be a metrisable space. 
 \end{Exercise}
\setboolean{firstanswerofthechapter}{true}
\begin{Answer}[number={\ref{Ex:boxtop} 3.~a)}] 
 \emph{We show that the box topology turns  $E\coloneq \{(x_n)_{n \in \N} \in \R^\N \mid \text{ almost all } x_n = 0\}$ into a locally convex space and makes the right shift continuous}\\[.5em]
 
 Note that we have $\lambda(\prod_n U_n) +(\prod_n V_n) = \prod_{n \in \N}(\lambda U_n + V_n)$ for $U_n , V_n \opn \R$ and $\lambda \in \R$. If $(x_n)+(y_n) \in \prod_n U_n$ we exploit that $\R$ is a topological vector space to find for every $n \in \N$ $x_n \in A_n \opn \R$ and $y_n \in B_n \opn \R$ with $A_n + B_n \subseteq U_n$. We conclude that $\left(\prod_n A_n\right) + \left(\prod_n B_n\right) \subseteq \left(\prod_n U_n\right)$ and vector addition is continuous. For scalar multiplication we note that if $\lambda \cdot (x_n) \in \prod_n U_n$ we can exploit that $x_n \neq 0$ for only finitely many $n$, to construct an open $\lambda$-neighbourhood $\lambda \in O \opn \R$ together with $(x_n) \in \prod_n V_n$ such that $O \cdot \prod_{n} V_n \subseteq \prod_n U_n$. Hence scalar multiplication is continuous and $E$ is a TVS. Let now $\prod_n U_n$ be a $0$-neighbourhood. This implies that every $U_n$ is a zero neighbourhood in $\R$. Since $\R$ is locally convex, for every $n$ there is a convex $0$-neighbourhood $C_n \subseteq U_n$. Then $\prod_n C_n \subseteq \prod_n U_n$ is a convex $0$-neighbourhood and thus $E$ is locally convex. The right shift is continuous as $R^{-1}(\prod_n U_n)=\prod_n U_{n+1}$ if $0\in U_1$ and $\emptyset$ otherwise. 
 \end{Answer}
 \setboolean{firstanswerofthechapter}{false}
 
 \section{Another approach to calculus: Convenient calculus}\label{convenient:calculus}

Convenient calculus was introduced by Fr\"ohlicher and Kriegl \cite{MR961256} (see \cite{KM97} for an introduction). To understand the basic idea, we recall a theorem by Boman:
 
 \begin{thm}[Boman's theorem, {\cite{MR0237728}}] \index{Boman's theorem} A map $f \colon \R^d \rightarrow \R$, $d\in \N$ is smooth if and only if for each smooth curve $c \colon \R \rightarrow \R^d$ the curve $f \circ c$ is smooth.\footnote{We emphasise here that smoothness can be tested against smooth curves, while this becomes false for finite orders of differentiability.} 
 \end{thm}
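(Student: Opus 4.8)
The plan is to prove the nontrivial implication; the forward direction is immediate. If $f$ is smooth in the usual multivariable sense and $c\colon \R \to \R^d$ is a smooth curve, then the classical finite-dimensional chain rule gives that $f\circ c$ is smooth, so nothing is needed there. The content is the converse: assuming $f\circ c \in C^\infty(\R,\R)$ for every smooth curve $c$, one must deduce that $f$ is $C^\infty$. Since smoothness is a local property and $\R^d$ is metrizable and locally compact, I would work near an arbitrary point, which may be taken to be $0$, and freely use sequential characterisations of continuity and convergence.

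The engine of the whole argument is the \emph{special curve lemma} (see \cite[Lemma 2.8]{KM97}): if a sequence $(x_n)$ in $\R^d$ converges to $0$ fast enough (say $n^k x_n \to 0$ for every $k$), then there is a single smooth curve $c$ and parameters $t_n \searrow 0$ with $c(t_n)=x_n$ and $c(0)=0$. First I would use this to prove that $f$ is continuous: given $x_n \to x$, every subsequence admits a further subsequence converging fast to $x$; interpolating it by a smooth curve $c$ and using continuity of the smooth function $f\circ c$ yields $f(x_{n_{k_j}}) = (f\circ c)(t_j) \to (f\circ c)(0) = f(x)$, and the subsequence principle upgrades this to $f(x_n)\to f(x)$. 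Next, applying the hypothesis to the affine curves $t \mapsto x + tv$ shows that all iterated directional derivatives $d^k f(x;v,\dots,v) = \frac{d^k}{dt^k}\big|_{t=0} f(x+tv)$ exist. The crucial feature -- the reason the classical counterexamples (functions smooth along every affine line yet not even continuous) do not intervene -- is that the hypothesis ranges over \emph{all} smooth curves, and the special curve lemma converts any fast-converging test data into admissible probing curves.

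From here the strategy is a bootstrap on the order of differentiability. Having continuity and first partials $\partial_i f$, I would show that each $\partial_i f$ \emph{again} sends smooth curves to smooth curves; granting this, the continuity argument above applied to $\partial_i f$ makes the partials continuous, whence $f$ is genuinely $C^1$ by the standard criterion ``continuous partials imply $C^1$'', and iterating the same reasoning for the higher partials $\partial^\alpha f$ yields $C^k$ for every $k$, i.e.\ $C^\infty$. The main obstacle is precisely the claim that $\partial_i f \circ c$ is smooth: the naive route writes $\partial_i f(c(t)) = \partial_s\big|_{s=0} f(c(t)+se_i)$ and wants the two-parameter map $(t,s)\mapsto f(c(t)+se_i)$ to be jointly smooth, which is a $2$-dimensional instance of the very theorem being proved, hence circular. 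I would break the circularity by working with the difference-quotient function extended across $s=0$ by $\partial_i f$, and testing its smoothness against an arbitrary smooth curve $u \mapsto (t(u),s(u))$ in the source: composition then reduces to the one-dimensional hypothesis applied to the genuine smooth curve $u \mapsto c(t(u)) + s(u)e_i$, while the passage across the singular locus $s=0$ is handled by Hadamard's lemma (Taylor's formula with integral remainder), using that $s\mapsto f(c(t)+se_i)$ is smooth to all orders. Establishing that these difference quotients are controlled uniformly enough to feed into the special curve lemma is the delicate point, and is where the bulk of the technical estimates reside.
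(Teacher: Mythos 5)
The paper itself offers no proof of this statement: Boman's theorem is quoted with a bare citation to \cite{MR0237728}, so there is no argument of the paper's own to compare yours against. Judged on its merits, your outline is the standard proof strategy (it is essentially the argument of \cite[\S 3]{KM97}), and its architecture is sound: the special curve lemma correctly converts fast-converging sequences into admissible test curves, the subsequence principle correctly upgrades this to continuity, and you rightly identify that the only real danger is circularity in showing that $\partial_i f$ again maps smooth curves to smooth curves, which you avoid by reducing to the one-variable hypothesis along the curves $u \mapsto c(t(u))+s(u)e_i$ and handling the diagonal $s=0$ via the extended difference quotient and Hadamard's lemma. What keeps this an outline rather than a proof is precisely the step you flag at the end: the claim that the extended difference quotient is itself smooth along smooth curves rests on the boundedness characterisation of the Lipschitz classes $\mathrm{Lip}^k$ in terms of iterated difference quotients evaluated on fast-falling sequences, and without carrying out those estimates the bootstrap does not close. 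Two smaller points worth making explicit if you write this up: the existence of the partial derivatives $\partial_i f(x)$ must be recorded before the bootstrap begins (it follows from smoothness of $t\mapsto f(x+te_i)$, as you note in passing), and the induction should be phrased as ``every partial derivative of every order exists and again takes smooth curves to smooth curves,'' since continuity of the partials at each stage is what feeds the classical criterion that continuous partials imply $C^1$.
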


Now smoothness of curves $c \colon \R \rightarrow E$ with values in a locally convex space $E$ is canonically defined (e.g.\ via Definition \ref{defn: smoothcurve}). Hence smooth curves can be used to define (conveniently) smooth maps on locally convex spaces which are Mackey complete\footnote{See \Cref{Mackeycomplete}. Mackey completeness is weaker then sequential completeness.}. Following the usual lingo of convenient calculus, a locally convex space which is Mackey-complete is called \emph{convenient vector space}.\index{space!convenient vector space}\index{Mackey complete}
 
 \begin{defn}\index{convenient calculus} \index{convenient smooth map}
 Let $E,F$ be convenient vector spaces.
 \begin{enumerate}
 \item Write $c^\infty E$ for $E$ endowed with the final topology with respect to all smooth curves $\R \rightarrow E$. We call this topology the $c^\infty$\emph{-topology} and its open sets $c^\infty$\emph{-open}.
 \item Let $U \subseteq E$ be $c^\infty$-open,
$f \colon U \rightarrow F$ a map. Then $f$ is \emph{convenient smooth} or $C^\infty_{\text{conv}}$ if $f\circ c \colon \R \rightarrow F$ is a smooth curve for every smooth curve $c\colon \R \rightarrow U$.
\end{enumerate} 
 \end{defn}

Obviously, the chain rule holds for conveniently smooth mappings and one can define and study derivatives. Once again manifolds, tangent spaces etc.\ make sense. Further, Boman's theorem asserts that between finite dimensional spaces convenient smooth coincides with \Frechet smooth (more on this in \ref{setup: conv:bas} below).
 
 \begin{rem}[Bornology vs.\ topology]
 By now, the reader will have wondered why one defines $C^\infty_{\text{conv}}$-maps on $c^\infty$-open subsets instead of using the native topology on $E$. 
 The reason for this is that differentiability of curves into a locally convex space does not depend on the topology of $E$, but rather on the bounded sets, the \emph{bornology} of $E$.\index{bornology} One can show that smoothness of curves is a bornological concept, and this is captured by the $c^\infty$-topology (which is finer then the native topology but induces the same bornology).
 
The $c^\infty$-topology is somewhat delicate to handle. For example, $c^\infty (E\times F) \neq c^\infty E \times c^\infty F$ (in general) and $c^\infty E$ will not be a topological vector space. However, it can be shown that for a \Frechet space the $c^\infty$-topology coincides with the \Frechet space topology, cf.\ \cite[Section I.4]{KM97}. 
 \end{rem}

By definition a conveniently smooth map is continuous with respect to the $c^\infty$-topology on $E$. However, as the $c^\infty$-topology is finer then the native locally convex topology conveniently smooth maps may fail to be continuous with respect to the native topologies.
To stress it once more: \textbf{Differentiability in the convenient calculus is \emph{not built on top of continuity}}.  
Having now defined two notions of calculus we will clarify their relation in the next section and discuss some of their properties.

\subsection*{Bastiani vs.\ convenient calculus}

 We have already seen in the last sections that both the convenient and the Bastiani calculus yield the well known concept of (\Frechet) smooth maps between finite dimensional vector spaces. To clarify the relation between Bastiani and convenient calculus observe that the definition of smooth curves into a locally convex space coincides in both calculi. The Bastiani chain rule, \Cref{chainrule}, yields the following.\index{Bastiani calculus}
 
 \begin{lem}
 Let $E,F$ be convenient vector spaces, $U \opn E$ and $f\colon E \supseteq U \rightarrow F$ a $C^\infty$-map. Then $f$ is $C^\infty_{\text{conv}}$.
 \end{lem}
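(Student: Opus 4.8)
The plan is to reduce the statement entirely to the Bastiani chain rule \Cref{chainrule}, exploiting that the notion of \emph{smooth curve} is literally the same object in both calculi. First I would recall from \Cref{defn: der:Bast} that for a map defined on an open subset of $\R$, being $C^\infty$ in the Bastiani sense coincides with being a smooth curve in the sense of \Cref{defn: smoothcurve}: the only available direction is $1 \in \R$, so $c'(t) = dc(t;1)$ and the iterated Bastiani derivatives $d^kc(t;1,\ldots,1)$ are exactly the higher curve derivatives $\tfrac{\mathrm{d}^k}{\mathrm{d}t^k}c$. Thus ``Bastiani $C^\infty$ curve'' and ``smooth curve'' are interchangeable, both for curves into $E$ and for curves into $F$.

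Next, given an arbitrary smooth curve $c\colon \R \rightarrow U$, I would check that the hypotheses of the chain rule are met. Since $f$ is defined on $U \opn E$ in the \emph{native} locally convex topology, and the $c^\infty$-topology is finer than the native one, $U$ is in particular $c^\infty$-open, so the convenient-smoothness test applies to it; moreover $c$ is continuous (into the native topology) with $c(\R) \subseteq U$, hence by the previous paragraph $c$ is a Bastiani $C^\infty$-map $\R \rightarrow E$ with image in the open set $U$. Applying \Cref{chainrule} to the composable pair $c$ and $f$ (both Bastiani $C^\infty$, with $c(\R)\subseteq U = \operatorname{dom} f$) yields that $f \circ c\colon \R \rightarrow F$ is Bastiani $C^\infty$, i.e.\ a smooth curve in $F$. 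As $c$ was an arbitrary smooth curve into $U$, this is precisely the defining condition for $f$ to be $C^\infty_{\text{conv}}$.

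The proof is therefore essentially a one-line invocation of the chain rule, and the ``hard part'' is only conceptual bookkeeping rather than analysis: one must be careful that the domain hypothesis matches (native-open $\Rightarrow$ $c^\infty$-open, so no issue arises from the mismatch between the two topologies on $U$) and that the curve notions genuinely agree on both ends of the composition. I would emphasise in a closing remark that only this implication holds in general: the converse is false, since a $C^\infty_{\text{conv}}$-map need not even be continuous for the native topologies (convenient smoothness is \emph{not} built on top of continuity), whence it need not be Bastiani $C^1$.
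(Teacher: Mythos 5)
Your proposal is correct and is exactly the paper's argument: the paper derives the lemma in one line from the Bastiani chain rule \Cref{chainrule}, precomposing $f$ with an arbitrary smooth curve into $U$. Your additional bookkeeping (smooth curves agree in both calculi; native-open implies $c^\infty$-open) is sound and merely makes explicit what the paper leaves implicit.
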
 
 
 Thus (completeness properties aside) Bastiani smoothness is the stronger and more restrictive concept which enforces continuity with respect to the native topologies. However, on \Frechet spaces both calculi coincide \cite[Theorem 12.4]{MR2069671}.
 
 \begin{setup}\label{setup: conv:bas}
 Let $E$, $F$ be convenient spaces, $U \opn E$, then the differentiability classes of a map $f \colon U \rightarrow F$ are related as follows:
\begin{equation}\label{diag: rel:conv:Bas}
 \begin{tikzcd}
C^\infty_{\text{conv}} \arrow[rr, bend right, Rightarrow, "E \text{ \Frechet}"'] \arrow[rr, Leftarrow]
& & C^\infty \arrow[rr, Leftrightarrow,"E {,} F\text{ normed}"'] & &  FC^\infty 
\end{tikzcd} 
\end{equation}
The dividing line between convenient calculus and Bastiani calculus is continuity, see \cite{MR2213538} for examples of discontinuous conveniently smooth maps. Also cf.\ \cite[Theorem 4.11]{KM97} for more information on spaces on which the concepts coincide.
 \end{setup}   
One may ask oneself now if there is one calculus which is preferable over the other.

\begin{setup}[Bastiani calculus is more convenient than convenient calculus]
A major difference between Bastiani and convenient calculus is continuity.
Arguably continuity with respect to the native topologies, as in the Bastiani calculus is desirable for smooth maps. In particular, the infinite-dimensional spaces and manifolds often have an intrinsic topology one would rather like to preserve instead of having to deal with the somewhat delicate $c^\infty$-topology.
This is one reason why in infinite-dimensional Lie theory, Bastiani calculus is prevalent (continuity allows one to use techniques from topological group theory, such as the local to global result for Lie groups \Cref{loc:Liegrp}). 

In addition one can often conveniently establish Bastiani smoothness using induction arguments over the order of differentiability\footnote{Here one should mention that a similar but somewhat more involved notion of $k$-times Lipschitz differentiable mappings exists in the convenient setting.} and one can interpret this (together with the continuity) as an argument for the naturality of Bastiani calculus.
\end{setup} 

\begin{setup}[Convenient calculus is more convenient than Bastiani calculus]
Discarding continuity for smooth maps might not be as exotic after all since smoothness of curves is a bornological and not a topological property. Further, it leads to a convenient category of spaces with smooth maps:

To explain this, consider sets $X,Y,Z$ and denote by $Z^X$ the set of all maps from $X$ to $Z$. Then $f \in Z^{X \times Y}$ induces a map $f^\vee \in (Z^{Y})^X$ via $f^\vee (x)(y)= f(x,y)$. The resulting bijection $Z^{X \times Y} \cong (Z^Y)^X$ is known as the exponential law.\smallskip

\textbf{Exponential law for convenient smooth maps}\emph{
Let $E,F,G$ be locally convex vector spaces, $U \subseteq E, V\subseteq F$ $c^\infty$-open. Then the spaces of convenient smooth maps admit a locally convex topology such that there is a linear convenient smooth diffeomorphism}
$$C^\infty_{\text{conv}} (U\times V , G) \cong C^\infty_{\text{conv}} (U, C^\infty_\text{conv} (V,G)),\quad f \mapsto f^\vee$$ 
\emph{In particular, $f^\vee \colon U \rightarrow C^\infty_\mathrm{conv} (V,G)$ is $C^\infty_{\mathrm{conv}}$ if and only if $f\colon U\times V \rightarrow G$ is} $C^\infty_{\text{conv}}$.\smallskip

The Exponential law is an immensely important tool, simplifying many proofs (e.g.\ that the diffeomorphism group is a Lie group). It also establishes cartesian closedness of the category of convenient vector spaces with convenient smooth maps.\footnote{The "convenient" in convenient calculus, references Steenrod's "A convenient category of topological spaces", \cite{MR0210075} in which a cartesian closed category of topological spaces is built.} Note that the Exponential law and cartesian closedness do not hold in the Bastiani setting.\footnote{Albeit rudiments of an Exponential law exist in the Bastiani setting as \Cref{sect:explaw} shows. See also \cite{MR3342623} for a stronger version.} We remark though that cartesian closedness in the convenient setting is a statement about the category of convenient vector spaces. The result does not carry over to the category of manifolds modelled on convenient vector spaces which turns out to be \emph{not cartesian closed}. To obtain a cartesian closed category of manifolds it seems to be unavoidable to pass to even more general concepts such as diffeological spaces (see \cite{MR3025051}). 
\end{setup}
 
 Summing up, there is no clear cut answer to the question which calculus is preferable. It will depend on the application or use one has in mind whether one goes with the stronger notion of Bastiani calculus or the weaker convenient calculus (which often has more convenient tools).

\chapter{Basics from topology}\label{app:topo}\copyrightnotice

We are assuming that the reader is familiar with the basic concepts of topology, we review some concepts used for the readers convenience. All of the concepts in this appendix are covered in standard textbooks on topology, e.g. \cite{Eng89}. We mention again that we write $U \opn X$ if $U$ is an open subset of a topological space $X$.

\section{Initial and final topologies}
Let $X$ be a set and $(f_i)_{i\in I}$ be a family of mappings $f_i \colon X \rightarrow Y_i$ to topological spaces $Y_i$. The coarsest topology on $X$ making each $f_i$ continuous is called the \emph{initial topology on $X$ with respect to the mappings $(f_i)_{i\in I}$.}\index{initial topology}
Dually, if $\{g_i \colon Y_i \rightarrow X\}_{i \in I}$ is a family of mappings from topological spaces to $X$ there is a finest topology making all $g_i$ continuous. This topology is called the \emph{final topology}\index{final topology} with respect to the $g_i$.

\begin{rem}\label{rem:inittop}
 Note that the sets $\bigcap_{i \in F} f_i^{-1} (U_i),$ where $F \subseteq I$ is finite and $U_i \opn Y_i$ form a basis of the initial topology topology (even if we restrict our choice of $U_i$ to a basis of the topology of $Y_i$).
\end{rem}

\begin{lem}
The initial topology on $X$ with respect to a family $(f_i)_{i\in I}$ is the unique topology which satisfies:
$g\colon Z \rightarrow X$ is continuous if and only if $f_i \circ g \colon Z \rightarrow Y_i$ is continuous for every $i\in I$
\end{lem}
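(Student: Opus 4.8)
The plan is to prove the characterisation in two directions, using the description of the initial topology as the coarsest topology making all the $f_i$ continuous, together with the basis description recorded in \Cref{rem:inittop}. Throughout, let $X$ carry the initial topology with respect to $(f_i)_{i \in I}$ and let $Z$ be an arbitrary topological space with a map $g \colon Z \rightarrow X$.

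First I would prove the forward implication. If $g$ is continuous, then since each $f_i \colon X \rightarrow Y_i$ is continuous (by definition of the initial topology), the composition $f_i \circ g$ is continuous as a composition of continuous maps, for every $i \in I$. This direction is immediate and requires no work beyond invoking continuity of the $f_i$.

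The substantive direction is the converse. Assume that $f_i \circ g$ is continuous for every $i \in I$; I want to conclude that $g$ is continuous. The key step is to test continuity on a basis of the topology of $X$. By \Cref{rem:inittop}, sets of the form $\bigcap_{i \in F} f_i^{-1}(U_i)$, with $F \subseteq I$ finite and $U_i \opn Y_i$, form a basis of the initial topology. Since preimages commute with intersections and preimages, I would compute
$$g^{-1}\left(\bigcap_{i \in F} f_i^{-1}(U_i)\right) = \bigcap_{i \in F} g^{-1}(f_i^{-1}(U_i)) = \bigcap_{i \in F} (f_i \circ g)^{-1}(U_i).$$
Each set $(f_i \circ g)^{-1}(U_i)$ is open in $Z$ by the hypothesis that $f_i \circ g$ is continuous, and a finite intersection of open sets is open. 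Hence the preimage under $g$ of every basic open set is open, and since preimages commute with arbitrary unions, the preimage of every open set is open; thus $g$ is continuous.

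Finally I would address uniqueness, which is the part most likely to be overlooked rather than genuinely hard. Suppose $\mathcal{T}$ is any topology on $X$ satisfying the stated universal property. Applying the property with $Z = (X,\mathcal{T})$ and $g = \id_X$ shows that each $f_i = f_i \circ \id_X$ is continuous for $\mathcal{T}$, so $\mathcal{T}$ makes all $f_i$ continuous. To pin down $\mathcal{T}$ exactly, I would compare it with the initial topology $\mathcal{T}_{\mathrm{init}}$ by feeding the two identity maps $\id \colon (X,\mathcal{T}) \rightarrow (X,\mathcal{T}_{\mathrm{init}})$ and $\id \colon (X,\mathcal{T}_{\mathrm{init}}) \rightarrow (X,\mathcal{T})$ through the universal properties of both topologies: each composition $f_i \circ \id$ is continuous in the relevant sense, so both identity maps are continuous, whence $\mathcal{T} = \mathcal{T}_{\mathrm{init}}$. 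The only mild obstacle is being careful to apply each universal property to the correct source space; no estimates or deeper topology are involved.
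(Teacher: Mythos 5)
Your proof is correct and follows essentially the same route as the paper: the forward direction by composing with the continuous $f_i$, the converse by testing $g^{-1}$ on the (sub)basic sets $f_i^{-1}(U_i)$ from \Cref{rem:inittop}, and uniqueness by running the identity maps in both directions through the universal property. The only cosmetic difference is that you work with the basis of finite intersections while the paper tests directly on the subbase, which amounts to the same thing.
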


\begin{proof}
 Assume first that $X$ carries the initial topology $\mathcal{I}$. Then clearly if $g$ is continuous $f_i \circ g$ is continuous for every $i\in I$. Conversely, since $\mathcal{S}\coloneq \{f_i^{-1}(V_i) \mid V_i \opn Y_i\}$ is a subbase for $\mathcal{I}$ by \Cref{rem:inittop}, we see that $f_i \circ g$ continuous implies that $g^{-1}(W)$ is open in $Z$ for each $W \in \mathcal{S}$. Thus $g$ is continuous. We conclude that $\mathcal{I}$ has the claimed property.

 Let now $\mathcal{T}$ be another topology which satisfies the above property. Then the identity maps $\id \colon (X,\mathcal{I}) \rightarrow (X,\mathcal{T})$ and $\id \colon (X,\mathcal{T}) \rightarrow (X,\mathcal{I})$ are continuous, whence both topologies coincide.
\end{proof}

\begin{ex}
 If $X$ is a subset of a topological space $Y$, the \emph{induced}, or \emph{subspace topology} is the initial topology with respect to the inclusion $\iota \colon X \rightarrow Y, x\mapsto x$.
\end{ex}

\begin{ex}
 We always endow the cartesian product $X \coloneq \prod_{i\in I} X_i$ of a family $(X_i)_{i\in I}$ of topological spaces with the \emph{product topology}. This topology is the initial topology with respect to the family $(\text{pr}_i)_{i\in I}$ of projections $\text{pr}_j ((x_i)_I) \coloneq x_j$. 
\end{ex}

  \begin{Exercise} \label{Ex:final} \vspace{-\baselineskip}
   \Question Let $X$ be a topological space endowed with the final topology with respect to a family of mappings $f_i \colon X_i \rightarrow X$. Show that $g\colon X \rightarrow Y$ is continuous if and only if $g\circ f_i$ is continuous for all $i\in I$.
   \Question Let $(E_i)_{i\in I}$ be locally convex spaces and $E \coloneq \{(x_i)_{i\in I} \in \prod_{i \in I} E_i\mid \text{almost all } x_i =0\}$. Endow $E$ with the final topology (see \Cref{app:topo}) with respect to the family of inclusions $\iota_j\colon E_j \rightarrow E, x \mapsto (x_i)_{i \in I}$ with $x_j = x$ and $x_i= 0$ if $i\neq j$. Show that 
   \subQuestion the resulting topology is the box topology, i.e.\ the topology generated by the base of sets $E \cap \prod_{i \in I} U_i$, where for every $i\in I$, $U_i$ runs through a topological base of $E_i$.
   \subQuestion if $I$ is a \textbf{countable} set and $f_i \colon E_i \rightarrow F$ is a continuous linear map to a locally convex space $F$, then there exists a unique continuous linear map $f \colon E \rightarrow F$ with $f \circ \iota_j = f_j$. This proves in particular that the box topology turns $E$ into the direct locally convex sum of the spaces $E_i$.\\ 
   {\footnotesize \textbf{Hint:} For continuity of $f$ consider the preimage of a $0$-neighborhood $V$ in $F$. Construct inductively a sequence $(V_n)_{n \in \N}$ of $0$-neighborhoods with $V_n + V_n \subseteq V_{n+1}$. }
   \subQuestion Let $I$ be uncountable and $E_i = \R$ for all $i\in I$ Show that the summation map $s \colon \{(x_i)_i \in \R^I \mid x_i = 0 \text{ for almost all }i\} \rightarrow \R, (x_i) \mapsto \sum_{i\in I} x_i$ is discontinuous in the box topology. Thus the box topology is properly coarser than the direct sum topology in this case.
  \end{Exercise}

\section{The compact open topology}\label{sect:cptopntop}

Let $X,Y$ be (Hausdorff) topological spaces and $C(X,Y)$ the set of all continuous mappings from $X$ to $Y$.
We define a topology on $C(X,Y)$ by declaring a subbase consisting of the following sets
$$\lfloor K , U \rfloor \coloneq \{f \in C(X,Y) \mid f(K) \subseteq U\},\qquad K \subseteq X \text{compact}, U \opn Y.$$
The resulting topology is called the \emph{compact open topology}\index{compact open topology} and we write $C(X,Y)_{\text{c.o.}}$ for the set of continuous mappings with this topology.\footnote{If you prefer a video walkthrough covering (parts of ) the material in this appendix, then take a look at \url{https://www.youtube.com/watch?v=vGs-C9eEdJ0}.}

\begin{rem}\label{rem:pointeval}
 As singletons are compact, we see that the inclusion map 
 $$C(X,Y)_{\text{c.o.}} \rightarrow Y^X \coloneq \prod_{x \in X} Y$$
 is continuous if we equip the right hand side with the product topology. Thus $C(X,Y)_{\text{c.o.}}$ will again be Hausdorff and the point evaluations $\ev_x \colon C(X,Y)_{\text{c.o.}} \rightarrow Y, \ev_x (f) \coloneq f(x)$ are continuous for every $x \in X$.
\end{rem}

We will now show that if the target is a locally convex spaces the compact open coincides with the topology of compact convergence.

\begin{setup}\label{setup:compact_convergence}
 Let $(E,\{p_i\}_{i\in I})$ be a locally convex space and $K \subseteq X$ be a compact subset of a topological space. Then we define a seminorm on $C(X,E)_{\text{c.o.}}$ via
 $$\lVert f \rVert_{p_i,K} (f) \coloneq \sup_{x \in K} p_i (f(x)).$$
 Note that these seminorms are separating, since for $\gamma \in C(X,E)$ with $\gamma \neq 0$ we find $x \in X$ with $\gamma(x)\neq 0$ and thus $\lVert \gamma \rVert_{p_i , \{x\}} \neq 0$ for some $i\in I$.
 The locally convex topology generated by all seminorms $(\lVert \cdot \rVert_{p_i,K})_{i\in I , K \subseteq X \text{compact}}$ is called \emph{topology of compact convergence}.\index{topology of compact convergence}
 \end{setup}

 \begin{lem}\label{lem:co_vs_uniform}
 Let $X$ be a topological space and $E$ a locally convex space, then the compact-open topology coincides with the topology of compact convergence on $C(X,E)$. 
 As a consequence, $C(X,E)_{\text{c.o.}}$ is again a locally convex space if $E$ is locally convex.
 \end{lem}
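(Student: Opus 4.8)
The plan is to show that the two families of open sets generate the same topology by proving mutual inclusion of subbasic neighbourhoods at an arbitrary point. Since both topologies are locally convex (the compact-open one is locally convex because it agrees, as we shall prove, with the topology of compact convergence, which is manifestly generated by seminorms), it suffices by \Cref{lem:linmapzero} and translation-invariance to compare basic neighbourhoods of a fixed $f \in C(X,E)$. So first I would fix $f$ and unravel what a subbasic compact-open neighbourhood $\lfloor K, U \rfloor$ looks like near $f$, and separately what a basic compact-convergence neighbourhood $\{g \mid \lVert g - f\rVert_{p_i,K} < \varepsilon\}$ looks like.

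First I would prove that the topology of compact convergence is finer than (or equal to) the compact-open topology, i.e.\ every $\lfloor K, U\rfloor$ containing $f$ is a compact-convergence neighbourhood of $f$. Given $f \in \lfloor K,U\rfloor$, the image $f(K)$ is compact and contained in the open set $U$. Using \Cref{lem:spec_subsets} (g) (applied in the locally convex space $E$, where $f(K)$ is compact and $U$ is open with $f(K) \subseteq U$), I obtain a $0$-neighbourhood $W$ with $f(K) + W \subseteq U$. Shrinking $W$, I may take it to be a basic $0$-neighbourhood, i.e.\ a finite intersection of balls $B_{i,\varepsilon}(0) = \{x \mid p_i(x) < \varepsilon\}$. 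Then any $g$ with $\sup_{x\in K} p_i(g(x)-f(x)) < \varepsilon$ for the finitely many relevant indices $i$ satisfies $g(x) \in f(x) + W \subseteq f(K)+W \subseteq U$ for all $x \in K$, hence $g \in \lfloor K,U\rfloor$. This exhibits a compact-convergence neighbourhood of $f$ inside $\lfloor K,U\rfloor$.

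Next I would prove the reverse inclusion: every basic compact-convergence neighbourhood of $f$ contains a compact-open neighbourhood of $f$. Fix a compact $K$, an index $i$, and $\varepsilon > 0$; I want $g \in \lfloor K_1, U_1\rfloor \cap \cdots \cap \lfloor K_n, U_n\rfloor$ to force $\lVert g - f\rVert_{p_i,K} < \varepsilon$. The idea is to cover $K$ finely: by continuity of $f$ and of $p_i$, each point $x \in K$ has an open neighbourhood $O_x$ in $X$ on which $p_i(f(\cdot) - f(x)) < \varepsilon/4$; by compactness, choose a finite subcover $O_{x_1},\dots,O_{x_n}$ and set $K_j \coloneq \overline{O_{x_j}} \cap K$ (compact) and $U_j \coloneq \{y \in E \mid p_i(y - f(x_j)) < \varepsilon/4\}$ (open). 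Then $f \in \bigcap_j \lfloor K_j, U_j\rfloor$, and for any $g$ in this intersection and any $x \in K$, picking $j$ with $x \in O_{x_j}$ gives $p_i(g(x) - f(x)) \le p_i(g(x) - f(x_j)) + p_i(f(x_j) - f(x)) < \varepsilon/4 + \varepsilon/4 < \varepsilon$, so $\lVert g - f\rVert_{p_i,K} < \varepsilon$. This handles a single seminorm; for a finite intersection of compact-convergence conditions one repeats the construction and intersects the resulting compact-open sets.

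The main obstacle is the second inclusion, specifically the bookkeeping in the covering argument: one must be careful that the sets $K_j$ are genuinely compact (taking closures intersected with $K$) and that the triangle-inequality estimate uses the correct constant ($\varepsilon/4$ twice, leaving room to spare). Everything else is routine: once both inclusions of subbasic/basic neighbourhoods are established, the two topologies coincide, and since the topology of compact convergence is by \Cref{setup:compact_convergence} generated by a separating family of seminorms, \Cref{prop:semi-norm} shows $C(X,E)_{\text{c.o.}}$ is a locally convex space, giving the final ``as a consequence'' clause for free.
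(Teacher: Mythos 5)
Your proof follows essentially the same route as the paper's: the identification of seminorm balls with sets $\lfloor K,\{p<r\}\rfloor$, an appeal to \Cref{lem:spec_subsets} (g) for the inclusion of a compact-convergence neighbourhood inside $\lfloor K,U\rfloor$, and a finite-cover plus triangle-inequality argument for the reverse inclusion. The only blemish is a constant in the second step: with $O_x$ chosen so that $p_i(f(\cdot)-f(x))<\varepsilon/4$, on the closure $K_j=\overline{O_{x_j}}\cap K$ you only get $p_i(f(\cdot)-f(x_j))\le\varepsilon/4$, so $f\in\lfloor K_j,U_j\rfloor$ may fail at boundary points; shrinking to $\varepsilon/8$ in the definition of $O_x$ (or enlarging $U_j$ to radius $\varepsilon/3$) repairs this and the rest goes through.
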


 \begin{proof}
  First note that every seminorm $\lVert \cdot \rVert_{K,p}$ for $K \subseteq X$ compact and $p$ a continuous seminorm on $E$ we have
  \begin{align}\label{identity}
  \{f \in C(X,E)\mid \lVert f\rVert_{K,p} < r\} = \lfloor K , \{ x \in E \mid p(x)<r\}\rfloor.
  \end{align}
  To see that the topology of compact convergence is finer then the compact open topology, it suffices to prove that all sets $\lfloor K,U\rfloor$ with $K \subseteq X$ compact and $U \opn E$ are open in that topology.
  Let now $\gamma \in \lfloor K , U \rfloor$. Then $\gamma(K) \subseteq U \opn E$ is compact and by \Cref{lem:spec_subsets} (g) there is an open $0$-neighborhood $W \opn E$ such that $\gamma(K) + W \subseteq U$. We choose a seminorm $p$ such that $V \coloneq \{ x \in E \mid p(x)<r\} \subseteq W$. Then $\gamma + \lfloor K, V\rfloor$ is an open neighborhood of $\gamma$ in the topology of compact convergence by \eqref{identity}. Moreover, $\gamma + \lfloor K, V\rfloor$ is contained in $\lfloor K,U\rfloor$ since $\gamma(x)+\eta(x) \in \gamma(K) + V \subseteq U$ for all $\eta \in \lfloor K , V\rfloor$ and $x \in K$. This shows that $\lfloor K,U\rfloor$ is a $\gamma$-neighborhood in the topology of compact convergence, whence open.
  
  Conversely, let $\gamma \in C(X,E)$ due to \Cref{lem:spec_subsets} (e) and \eqref{identity}, we see that the sets $\gamma + \lfloor K, \{y\in E \mid p(y)<r\}\rfloor$ form a basis of open $\gamma$-neighborhoods in the topology of compact convergence. Thus it suffices to prove that these sets are open in the compact open topology. For this choose $0 \in V \opn E$ such that $V - V \subseteq \{y\in E \mid p(y)<r\}$. As $\gamma|_K$ is continuous, we find for every $x \in K$ an $x$-neighborhood $K_x \opn K$ with $\gamma(K_x) \subseteq \gamma(x)+V$. Using compactness we choose a finite set $F\subseteq K$ with $K = \bigcup_{x\in F}K_x$.
  We will now show that 
  $$\Omega_\gamma \coloneq \bigcap_{x \in F} \lfloor K_x , \gamma(x) + V\rfloor \subseteq \gamma + \lfloor K, \{y\in E \mid p(y)<r\}\rfloor.$$
  If $\eta \in \Omega_\gamma$, then we find for every $y \in K$ a $x \in F$ with $y \in K_x$. Thus $(\gamma-\eta)(y) \in \gamma(x) + V - (\gamma(x) +V) \subseteq V -V \subseteq U$ and thus $\gamma - \eta \in \lfloor K, \{y\in E \mid p(y)<r\}\rfloor$. Now $\Omega_\gamma$ is open in the compact-open topology and contains $\gamma$ by choice of the $K_x$. Thus both topologies coincide.
  \end{proof}

  \begin{lem}\label{lem:pushforward}
   Let $A, X,Y,Z$ be topological spaces and $h \colon A \rightarrow X, f \colon Y \rightarrow Z$ be continuous maps. Then the \emph{pushforward} and the \emph{pullback} map \index{pushforward} \index{pullback} 
   \begin{align*}
    f_* \colon C(X,Y)_{\text{c.o.}} \rightarrow C(X,Z)_{\text{c.o.}},\quad g \mapsto f\circ g\\
    h^* \colon C(X,Y)_{\text{c.o.}} \rightarrow C(A,Y)_{\text{c.o.}},\quad g \mapsto g\circ h
   \end{align*}
   are continuous.
  \end{lem}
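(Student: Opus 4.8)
The plan is to verify continuity of both maps by reducing to the definition of the compact open topology via subbasic sets $\lfloor K, U\rfloor$, and then checking that the preimage of each such set under $f_*$ (respectively $h^*$) is open. Since continuity is tested on a subbase (a map is continuous if and only if the preimage of each subbasic open set is open), this reduction is legitimate and handles everything at once.

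For the pushforward $f_*$, I would take a subbasic open set $\lfloor K, U\rfloor \subseteq C(X,Z)_{\text{c.o.}}$ with $K \subseteq X$ compact and $U \opn Z$, and compute its preimage. A map $g \in C(X,Y)$ lands in $f_*^{-1}(\lfloor K,U\rfloor)$ precisely when $f \circ g (K) \subseteq U$, that is, when $g(K) \subseteq f^{-1}(U)$. Since $f$ is continuous, $f^{-1}(U) \opn Y$, so this condition is exactly $g \in \lfloor K, f^{-1}(U)\rfloor$. Hence
\begin{align*}
 f_*^{-1}(\lfloor K,U\rfloor) = \lfloor K, f^{-1}(U)\rfloor,
\end{align*}
which is a subbasic open set in $C(X,Y)_{\text{c.o.}}$. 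Thus $f_*$ is continuous.

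For the pullback $h^*$, I would similarly take a subbasic set $\lfloor L, U\rfloor \subseteq C(A,Y)_{\text{c.o.}}$ with $L \subseteq A$ compact and $U \opn Y$. A map $g \in C(X,Y)$ lies in $(h^*)^{-1}(\lfloor L,U\rfloor)$ exactly when $(g\circ h)(L) \subseteq U$, i.e.\ $g(h(L)) \subseteq U$. The key observation is that $h(L)$ is the continuous image of a compact set, hence compact in $X$, so $\lfloor h(L), U\rfloor$ is a genuine subbasic open set in $C(X,Y)_{\text{c.o.}}$, and we obtain
\begin{align*}
 (h^*)^{-1}(\lfloor L,U\rfloor) = \lfloor h(L), U\rfloor.
\end{align*}
This is open, so $h^*$ is continuous. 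The only slightly non-trivial point — and the closest thing to an obstacle — is remembering that continuity is preserved under these operations because the defining subbasic sets pull back to subbasic sets, together with the elementary but essential fact that $h(L)$ is compact; everything else is a direct set-theoretic identity. No completeness or local convexity of any space is needed for this statement, so the proof is purely topological and short.
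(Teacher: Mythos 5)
Your proof is correct and follows essentially the same route as the paper's: both compute the preimages of subbasic sets, obtaining $f_*^{-1}(\lfloor K,U\rfloor)=\lfloor K,f^{-1}(U)\rfloor$ and $(h^*)^{-1}(\lfloor L,U\rfloor)=\lfloor h(L),U\rfloor$, and both rely on the compactness of $h(L)$. Nothing is missing.
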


  \begin{proof}
  We begin with the pushforward $f_*$ and show that $f_*^{-1}(\lfloor K,U\rfloor)$ is open for every $K \subseteq X$ compact and $U \opn Y$. For $\gamma \in C(X,Y)$ we see that 
   \begin{align*}
   f_*(\gamma) = f\circ \gamma \in \lfloor K,U\rfloor \Leftrightarrow f(\gamma(K))\subseteq U \Leftrightarrow \gamma(K) \subseteq f^{-1}(U) \Leftrightarrow \gamma \in \lfloor K, f^{-1}(U)\rfloor
   \end{align*}
  Thus $f_*^{-1}(\lfloor K,U\rfloor) =\lfloor K,f^{-1}(U)\rfloor \opn C(X,Y)_{\text{c.o}}$ by continuity of $f$.
  
  Now for the continuity of the pullback $h^*$, pick $L \subseteq A$ compact and $V \opn X$. Then for $\gamma \in C(X,Y)$, we have $h^*(\gamma)= \gamma \circ h \in \lfloor L,V\rfloor$ if and only if $\gamma(h(L))\subseteq V$. In other word, if and only if $\gamma \in \lfloor h(L),V\rfloor$. By continuity of $h$, $h(L)$ is compact and thus $(f^*)^{-1}(\lfloor L,V\rfloor)=\lfloor h(L),V\rfloor \opn C(X,Y)_{\text{c.o.}}$ and the pullback is continuous. 
  \end{proof}

  \begin{lem}
   If $X,Y,Z$ are topological spaces and $\text{pr}_Y \colon Y\times Z \rightarrow Y$ and $\mathrm{pr}_Z \colon Y\times Z \rightarrow Z$ the canonical projections then the map 
   $$\Theta \colon C(X,Y\times Z)_{\text{c.o.}} \rightarrow C(X,Y)_{\text{c.o.}}\times C(X,Z)_{\text{c.o.}},\quad \gamma \mapsto \left((\mathrm{pr}_Y)_*(\gamma),(\mathrm{pr}_Z)_*(\gamma)\right),$$
   is a homeomorphism, whence we can identify $C(X,Y\times Z)_{\text{c.o.}} \cong C(X,Y)_{\text{c.o.}} \times C(X,Z)_{\text{c.o.}}$. 
  \end{lem}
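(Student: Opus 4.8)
The plan is to show that $\Theta$ is a continuous bijection whose inverse is also continuous, by exploiting the universal property of the product and the continuity results for pushforwards already established. The natural first step is to write down the candidate inverse explicitly. Given a pair $(\alpha,\beta) \in C(X,Y)_{\text{c.o.}} \times C(X,Z)_{\text{c.o.}}$, the map $x \mapsto (\alpha(x),\beta(x))$ is continuous from $X$ to $Y \times Z$ (by the universal property of the product topology), so we may define $\Theta^{-1}(\alpha,\beta) \coloneq (\alpha,\beta)$ understood as this single map into $Y \times Z$. A direct computation shows $\Theta \circ \Theta^{-1} = \id$ and $\Theta^{-1}\circ \Theta = \id$, since $(\mathrm{pr}_Y)_*$ and $(\mathrm{pr}_Z)_*$ just recover the two components of a map into $Y \times Z$. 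Thus $\Theta$ is a bijection on the level of sets and it remains to verify continuity in both directions.

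For continuity of $\Theta$ itself, I would note that $\Theta = ((\mathrm{pr}_Y)_*, (\mathrm{pr}_Z)_*)$ is built from the two pushforwards $(\mathrm{pr}_Y)_*$ and $(\mathrm{pr}_Z)_*$, both of which are continuous by \Cref{lem:pushforward}. Since a map into a product is continuous if and only if its components are continuous (the universal property of the product, as recorded implicitly in the discussion of initial topologies in \Cref{app:topo}), $\Theta$ is continuous. This is the easy half.

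The slightly more delicate half is continuity of $\Theta^{-1}$, and this is where I expect the main (though still routine) work to lie. Here I would argue directly with the subbasic open sets of the compact open topology on $C(X,Y\times Z)_{\text{c.o.}}$. It suffices to show that the preimage under $\Theta$ of each subbasic set $\lfloor K, W\rfloor$ with $K \subseteq X$ compact and $W \opn Y\times Z$ is open in the product $C(X,Y)_{\text{c.o.}}\times C(X,Z)_{\text{c.o.}}$. The subtlety is that a general open $W \subseteq Y\times Z$ is \emph{not} a product $U \times V$ of open sets, so one cannot simply factor $\lfloor K, W\rfloor$ as a product of compact-open sets. The standard fix is to cover: for each point $(y,z)$ in the compact set $\gamma(K)$ one finds a basic open box $U \times V \subseteq W$ around it, extracts a finite subcover by compactness of $\gamma(K)$, and then expresses the neighbourhood condition as a finite union/intersection of conditions of the form $\lfloor K', U\rfloor \times \lfloor K', V\rfloor$ on suitably chosen compact pieces $K' \subseteq K$. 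Concretely, I would show that the sets $\lfloor K, U\rfloor \times \lfloor K, V\rfloor$ (with $U \opn Y$, $V \opn Z$) lie in the image topology and generate it, and that their $\Theta$-images are exactly $\lfloor K, U\times V\rfloor$, which are open; then a covering argument promotes this from product boxes $U\times V$ to arbitrary open $W$. Assembling these finitely many conditions into an open neighbourhood of the given map, contained in $\Theta(\lfloor K,W\rfloor)$, completes the proof that $\Theta$ is open (equivalently that $\Theta^{-1}$ is continuous), and hence that $\Theta$ is a homeomorphism.
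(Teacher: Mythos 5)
Your proof is correct and follows essentially the same route as the paper: continuity of $\Theta$ from \Cref{lem:pushforward} together with the universal property of the product, bijectivity via the obvious inverse $(\alpha,\beta)\mapsto(x\mapsto(\alpha(x),\beta(x)))$, and openness via the identity $\Theta(\lfloor K, U\times V\rfloor) = \lfloor K, U\rfloor \times \lfloor K, V\rfloor$. The only cosmetic differences are that the paper delegates the reduction from arbitrary open $W \opn Y\times Z$ to product boxes to Exercise \ref{Ex:cptopn} 2.\ (a subbase of the target yields a subbase of the compact-open topology), whereas you inline the corresponding covering argument, and that the phrase ``preimage under $\Theta$'' early in your last paragraph should read ``image under $\Theta$'', as your subsequent computation makes clear.
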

  
  \begin{proof}
   By \Cref{lem:pushforward} the map $\Theta$ is continuous. Clearly $\Theta$ is a bijection and thus we only need to show that it takes open sets in a subbase to open sets to see that $\Theta$ is a homeomorphism. 
   Now open rectangles $U\times V, U \opn Y, V\opn Z$ form a subbase of the product topology, whence Exercise \ref{Ex:cptopn} shows that the sets $\lfloor K, U\times V\rfloor$, with $K \subseteq X$ compact form a subbase of the topology on $C(X,Y\times Z)$. Now $\Theta (\lfloor K, U\times V\rfloor) = \lfloor K, U\rfloor \times \lfloor K,V\rfloor$ is open in $C(X,Y)_{\text{c.o.}} \times C(X,Z)_{\text{c.o.}}$, whence $\Theta$ is open. 
  \end{proof}
   
\begin{lem}\label{lem:eval} \index{evaluation map}
 Let $X,Y$ be topological spaces. If $X$ is locally compact, the evaluation map 
 $$\ev \colon C(X,Y)_{\text{c.o.}} \times X \rightarrow Y, \quad (f,x)\mapsto f(x)$$
 is continuous
\end{lem}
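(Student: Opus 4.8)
The plan is to show continuity at an arbitrary point $(f_0,x_0) \in C(X,Y)_{\text{c.o.}} \times X$ by producing, for each open neighbourhood $U \opn Y$ of $f_0(x_0)$, a basic open neighbourhood of $(f_0,x_0)$ that $\ev$ maps into $U$. Since $f_0$ is continuous, $f_0^{-1}(U)$ is an open neighbourhood of $x_0$ in $X$. This is the point where local compactness enters: because $X$ is locally compact (and Hausdorff, by our standing convention), $x_0$ admits a compact neighbourhood contained in the open set $f_0^{-1}(U)$. That is, I can choose an open set $V$ and a compact set $K$ with $x_0 \in V \subseteq K \subseteq f_0^{-1}(U)$.

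With such $K$ and $V$ in hand, I would consider the subbasic open set $\lfloor K, U\rfloor \subseteq C(X,Y)_{\text{c.o.}}$ together with $V \opn X$, and claim that $\lfloor K,U\rfloor \times V$ is the desired neighbourhood of $(f_0,x_0)$. First I check it actually contains the point: $f_0(K) \subseteq f_0(f_0^{-1}(U)) \subseteq U$ gives $f_0 \in \lfloor K,U\rfloor$, and $x_0 \in V$ by construction, so $(f_0,x_0) \in \lfloor K,U\rfloor \times V$. Then I verify the image lands in $U$: for any $(g,x) \in \lfloor K,U\rfloor \times V$ we have $x \in V \subseteq K$, and since $g \in \lfloor K,U\rfloor$ means $g(K)\subseteq U$, it follows that $\ev(g,x) = g(x) \in U$. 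Hence $\ev(\lfloor K,U\rfloor \times V) \subseteq U$, which establishes continuity at $(f_0,x_0)$.

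The main obstacle, and really the only substantive point, is the existence of the compact neighbourhood $K$ squeezed between $x_0$ and the open set $f_0^{-1}(U)$; everything else is a direct unwinding of the definition of the compact open topology. This interpolation step is exactly what local compactness buys us: in a locally compact Hausdorff space, every point has a neighbourhood basis of compact sets, so one can find an open $V$ with compact closure $\overline{V}$ satisfying $x_0 \in V \subseteq \overline{V} \subseteq f_0^{-1}(U)$ and take $K = \overline{V}$. Since $x_0$ and the open neighbourhood $f_0^{-1}(U)$ were arbitrary, and the point $(f_0,x_0)$ was arbitrary, this shows $\ev$ is continuous everywhere.
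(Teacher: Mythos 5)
Your proof is correct and follows essentially the same route as the paper: given $(f_0,x_0)$ with $f_0(x_0)\in U$, use local compactness to interpolate a compact neighbourhood $K$ of $x_0$ inside $f_0^{-1}(U)$ and observe that $\lfloor K,U\rfloor$ times a suitable neighbourhood of $x_0$ is mapped into $U$. The only cosmetic difference is that the paper uses $\lfloor K,U\rfloor\times K$ directly (with $K$ a compact neighbourhood), while you split off an open $V\subseteq K$; both are fine.
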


\begin{proof}
 For $U \opn Y$ we will show that $\ev^{-1}(U)$ is open in $C(X,Y)_{\text{c.o.}} \times X$.
 To this end, let $(\gamma,x) \in \ev^{-1}(U)$, i.e. $\gamma(x) \in U$. By continuity of $\gamma$ and local compactness of $X$ there is a compact $x$-neighborhood $K$ such that $\gamma(K)\subseteq U$. Thus $\lfloor K,U\rfloor \times K$ is a $(\gamma,x)$-neighborhood such that $\ev (\lfloor K,U\rfloor \times K) \subseteq U$. We see that $\ev^{-1}(U)$ is open and the evaluation is continuous. 
\end{proof}

\begin{prop}
 Let $X,Y,Z$ be topological spaces such that $Y$ is locally compact. Then the composition map 
 $$\Comp \colon C(X,Y)_{\text{c.o.}} \times C(Y,Z)_{\text{c.o.}} \rightarrow C(X,Z)_{\text{c.o.}},\quad (f,g) \mapsto g\circ f$$
 is continuous
\end{prop}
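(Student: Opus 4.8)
The plan is to prove continuity of $\Comp$ by testing it against the subbasic open sets $\lfloor K, W\rfloor$ of the target $C(X,Z)_{\text{c.o.}}$, where $K \subseteq X$ is compact and $W \opn Z$. Since we want to show $\Comp^{-1}(\lfloor K, W\rfloor)$ is open, I would fix a pair $(f,g)$ with $g \circ f \in \lfloor K, W\rfloor$, i.e.\ $g(f(K)) \subseteq W$, and construct an open neighbourhood of $(f,g)$ inside the preimage. The key geometric input is that $L \coloneq f(K)$ is a compact subset of $Y$ with $g(L) \subseteq W$, and that $Y$ is locally compact.

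First I would use local compactness of $Y$ together with $g(L) \subseteq W$ to interpose a compact set with open interior. Concretely, since $g$ is continuous, $g^{-1}(W)$ is open and contains the compact set $L$. By local compactness of $Y$ (and the standard argument from \Cref{lem:spec_subsets}(g) adapted to the Hausdorff locally compact setting, covering $L$ by finitely many relatively compact neighbourhoods) I would produce an open set $V$ and a compact set $C$ with
\begin{displaymath}
 L \subseteq V \subseteq C \subseteq g^{-1}(W).
\end{displaymath}
Then I claim that $\lfloor K, V\rfloor \times \lfloor C, W\rfloor$ is the desired neighbourhood of $(f,g)$. Indeed $f \in \lfloor K, V\rfloor$ because $f(K) = L \subseteq V$, and $g \in \lfloor C, W\rfloor$ because $g(C) \subseteq g(g^{-1}(W)) \subseteq W$, so the pair lies in the product set.

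Next I would verify this product set is contained in $\Comp^{-1}(\lfloor K, W\rfloor)$. Take any $(\tilde f, \tilde g) \in \lfloor K, V\rfloor \times \lfloor C, W\rfloor$. Then $\tilde f(K) \subseteq V \subseteq C$, whence $\tilde g(\tilde f(K)) \subseteq \tilde g(C) \subseteq W$, so $\tilde g \circ \tilde f \in \lfloor K, W\rfloor$ as required. Since $\lfloor K, V\rfloor \times \lfloor C, W\rfloor$ is open in $C(X,Y)_{\text{c.o.}} \times C(Y,Z)_{\text{c.o.}}$ and contains $(f,g)$, this shows $\Comp^{-1}(\lfloor K, W\rfloor)$ is a neighbourhood of each of its points, hence open.

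The main obstacle, and the place where local compactness of $Y$ is genuinely used, is the interposition step: producing the compact $C$ squeezed between $V \supseteq L$ and $g^{-1}(W)$. Without local compactness one cannot in general find a compact set with nonempty interior around $L$ sitting inside the open set $g^{-1}(W)$, and the argument collapses since $\lfloor C, W\rfloor$ needs $C$ compact to be subbasic while $V$ needs to be open to control the variable map $\tilde f$. I expect the Hausdorff hypothesis (standing throughout the paper) plus local compactness to give exactly the ``compact neighbourhood inside an open set'' property needed; the rest is the routine set-chasing through the definition of $\lfloor \cdot, \cdot\rfloor$ sketched above.
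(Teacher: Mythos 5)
Your proof is correct and follows essentially the same route as the paper: both fix a subbasic set $\lfloor K, W\rfloor$, use local compactness of $Y$ to interpose a compact neighbourhood $C$ with open interior $V$ between $f(K)$ and $g^{-1}(W)$ (the paper delegates this to Exercise \ref{Ex:cptopn} 3.), and then verify that $\lfloor K, V\rfloor \times \lfloor C, W\rfloor$ lands in the preimage. No substantive differences.
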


\begin{proof}
 Let $K \subseteq X$ be compact and $U \opn Z$. Pick $(\gamma,\eta)$ with $\text{Comp}(\gamma,\eta) \in \lfloor K,U\rfloor$. We have $\eta(\gamma(K))\subseteq U$. By Exercise \ref{Ex:cptopn} 2. we can pick a compact neighborhood $L$ of $\gamma(K)$ in $\eta^{-1}(U)$ and set $W \coloneq L^\circ$ (interior). Then $\lfloor K,V\rfloor \times \lfloor L,U\rfloor$ is a neighborhood of $(\gamma,\eta)$ which is contained in $\text{Comp}^{-1}(\lfloor K,U\rfloor)$ by construction. Thus the composition is continuous.
\end{proof}

We will now consider continuous mappings on cartesian products. If $f \colon X \times Y \rightarrow Z$ is continuous we can form for every $x \in X$ a mapping $f(x,\cdot) \colon Y \rightarrow Z y \mapsto f(x,y)$. Since $f$ is continuous every partial map $f(x,\cdot)$ is continuous. It turns out that the mapping which assigns to each $x \in X$ the partial map $f(x,\cdot)$ is continuous as a mapping into the space of continuous functions.

 \begin{prop}\label{prop:C0expohalf}
  Let $X,Y,Z$ be topological spaces and $f\colon X \times Y \rightarrow Z$ be continuous. Then $f^\vee \colon X \rightarrow C(Y,Z)_{\text{c.o.}}, f^\vee(x) \coloneq f(x,\cdot)$ is continuous.
 \end{prop}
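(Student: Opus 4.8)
The plan is to show continuity of $f^\vee$ directly from the definition of the compact open topology. Since the sets $\lfloor K,U\rfloor$ with $K\subseteq Y$ compact and $U\opn Z$ form a subbase for the topology on $C(Y,Z)_{\text{c.o.}}$, it suffices (continuity can be tested on a subbase) to prove that $(f^\vee)^{-1}(\lfloor K,U\rfloor)$ is open in $X$ for every such $K$ and $U$.

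First I would unwind the meaning of membership in this preimage. Fix a compact $K\subseteq Y$ and an open $U\opn Z$. By definition $x\in (f^\vee)^{-1}(\lfloor K,U\rfloor)$ precisely when $f^\vee(x)=f(x,\cdot)$ maps $K$ into $U$, i.e.\ when $f(\{x\}\times K)\subseteq U$. So I want to show that the set $V\coloneq \{x\in X\mid f(\{x\}\times K)\subseteq U\}$ is open. This reformulation turns the problem into a statement about the continuous map $f$ on the product $X\times Y$.

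Now I would fix $x_0\in V$ and produce an open $x_0$-neighbourhood contained in $V$. Since $f$ is continuous and $f(\{x_0\}\times K)\subseteq U$, the open set $f^{-1}(U)$ contains the ``tube'' $\{x_0\}\times K$. The key step is a tube-lemma type argument: for each $k\in K$ continuity gives open sets $x_0\in A_k\opn X$ and $k\in B_k\opn Y$ with $A_k\times B_k\subseteq f^{-1}(U)$; the sets $(B_k)_{k\in K}$ cover the compact set $K$, so finitely many $B_{k_1},\dots,B_{k_n}$ suffice, and then $W\coloneq \bigcap_{i=1}^n A_{k_i}$ is an open $x_0$-neighbourhood with $W\times K\subseteq f^{-1}(U)$. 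This exactly says $W\subseteq V$, so $V$ is open and $f^\vee$ is continuous.

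The main (and essentially only) obstacle here is the compactness argument packaged in the tube lemma; everything else is bookkeeping with the definitions. Note in particular that this direction requires no local compactness or other hypotheses on $X$ or $Z$, and uses compactness of $K$ only through the finite subcover. I would present the proof in the order above: reduce to subbasic sets, rewrite the preimage as the tube condition, and then carry out the tube-lemma neighbourhood construction.
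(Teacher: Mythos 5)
Your proof is correct and follows essentially the same route as the paper: reduce to subbasic sets $\lfloor K,U\rfloor$, rewrite the preimage as the tube condition $\{x\}\times K\subseteq f^{-1}(U)$, and conclude by the tube lemma. The only difference is that you spell out the tube-lemma argument explicitly, whereas the paper cites it from Engelking; that is a matter of presentation, not of substance.
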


\begin{proof}
 Consider $\lfloor K,U\rfloor \subseteq C(Y,Z)_{\text{c.o.}}$. We then compute
 \begin{align*}
  (f^\vee)^{-1}(\lfloor K,U\rfloor) &= \{x \in X \mid f(x,y) \in U \forall y \in K\} = \{x \in X \mid f(\{x\} \times K ) \subseteq U\}\\
  & = \{x \in X \mid \{x\}\times K \subseteq f^{-1}(U)\}
 \end{align*}
 Now $f^{-1}(U)$ is an open neighborhood of the compact set $\{x\} \times K$ in the cartesian product $X \times Y$. From \cite[Lemma 3.1.15]{Eng89} we deduce that there are $A \opn X$ and $B \opn Y$ such that $\{x\}\times K \subseteq A\times B \subseteq f^{-1}(U)$. This shows that $(f^\vee)^{-1}(\lfloor K, U \rfloor)$ is a neighborhood for every $x$ contained in it and thus an open set. Since sets of the form $\lfloor K,U\rfloor$ form a subbase of the compact open topology, $f^\vee$ is continuous.
 \end{proof}

 \begin{prop}[Exponential law for the compact open topology]\index{exponential law}\label{prop:C0expo}
  Let $X,Y,Z$ be topological spaces and $Y$ be locally compact. Then a mapping $f \colon X \rightarrow C(Y,Z)_{\text{c.o.}}$ is continuous if and only if the map
  $$f^\wedge \colon X \times Y \rightarrow Z, \quad (x,y) \mapsto f(x)(y)$$
  is continuous.
 \end{prop}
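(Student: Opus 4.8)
The plan is to prove both directions of the equivalence, using the two propositions already at hand as the main engines. For the forward direction (continuity of $f^\wedge$ from continuity of $f$), I would realise $f^\wedge$ as a composition of continuous maps. Observe that
\begin{equation*}
f^\wedge = \ev \circ (f \times \id_Y),
\end{equation*}
where $\ev \colon C(Y,Z)_{\text{c.o.}} \times Y \rightarrow Z$ is the evaluation map. Since $f \colon X \rightarrow C(Y,Z)_{\text{c.o.}}$ is assumed continuous, the product map $f \times \id_Y \colon X \times Y \rightarrow C(Y,Z)_{\text{c.o.}} \times Y$ is continuous. Because $Y$ is locally compact, \Cref{lem:eval} guarantees that $\ev$ is continuous. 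Hence $f^\wedge$ is a composition of continuous maps, thus continuous. This direction is where local compactness of $Y$ is genuinely used.

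For the converse direction (continuity of $f$ from continuity of $f^\wedge$), I would appeal directly to \Cref{prop:C0expohalf}. Given that $f^\wedge \colon X \times Y \rightarrow Z$ is continuous, that proposition (applied with the roles $X \times Y$ as source and the variables split as $X$ and $Y$) yields that $(f^\wedge)^\vee \colon X \rightarrow C(Y,Z)_{\text{c.o.}}$ is continuous. The only thing to check is the bookkeeping identity $(f^\wedge)^\vee = f$, which is immediate from the definitions: for $x \in X$ and $y \in Y$ we have $(f^\wedge)^\vee(x)(y) = f^\wedge(x,y) = f(x)(y)$, so $(f^\wedge)^\vee(x) = f(x)$ as elements of $C(Y,Z)$, and thus $(f^\wedge)^\vee = f$. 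Notably, this direction does not require local compactness of $Y$, matching the hypotheses of \Cref{prop:C0expohalf}.

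There is really no main obstacle here, since the two hard technical results — continuity of the evaluation map on locally compact source (\Cref{lem:eval}) and the ``one half'' of the exponential law (\Cref{prop:C0expohalf}) — have already been established; the present statement is essentially their packaging into a clean equivalence. The one point that deserves a line of care is verifying that $f^\wedge$ and $f^\vee$ are mutually inverse passages, i.e.\ that the maps $f \mapsto f^\wedge$ and $g \mapsto g^\vee$ are inverse bijections between the relevant function sets, so that applying $\vee$ to $f^\wedge$ recovers $f$ exactly. This is the standard set-theoretic exponential law $Z^{X\times Y} \cong (Z^Y)^X$ restricted to the continuous maps, and I would state it in one sentence rather than belabour it. I would close by remarking that the forward implication is the one place local compactness of $Y$ is indispensable, explaining why it appears in the hypothesis.
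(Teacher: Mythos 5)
Your proof is correct and follows essentially the same route as the paper: the forward direction writes $f^\wedge = \ev \circ (f \times \id_Y)$ and invokes the continuity of the evaluation map (which is where local compactness of $Y$ enters), and the converse applies the ``one half'' exponential law to $f^\wedge$ after observing $f = (f^\wedge)^\vee$. Your additional remarks on where local compactness is used are accurate but not needed for the argument.
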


 \begin{proof}
  If $f$ is continuous, then $f^\wedge (x,y) = \ev (f(x),y)$ is continuous by \Cref{lem:eval}. Conversely, assume that $f^\wedge$ is continuous, then a quick calculation shows that $f= (f^\wedge)^\vee$, whence $f$ is continuous by \Cref{prop:C0expohalf}
 \end{proof}
 
 It is worth noting that local compactness is a crucial ingredient to obtain the exponential law. Indeed one can prove that under some requirement to the topological spaces involved, the exponential law can only hold if the topological space $Y$ is locally compact. See \cite[Exercise 3.4.A]{Eng89} for details.
  \begin{Exercise} \label{Ex:cptopn} \vspace{-\baselineskip}
   \Question Prove that if $X$ carries the final topology with respect to a family $g_i \colon Y_i \rightarrow X, i\in I$, then $f \colon X \rightarrow Z$ is continuous if and only if $f\circ g_i$ is continuous for every $i\in I$.
   \Question Let $X,Y$ be topological spaces and $\mathcal{S}$ be a subbase for the topology on $Y$. Show that the sets $\lfloor K,V\rfloor$ with $K \subseteq X$ compact and $V \in \mathcal{S}$ form a subbase for the compact open topology on $C(X,Y)$.
  \Question Let $X$ be a locally compact topological space and $K \subseteq X$ compact with $K \subseteq U \opn X$. Show that there is a compact set $L \subseteq U$ whose interior contains $K$.
   \Question Let $K$ be a compact topological space and $\Omega \opn K \times Y$. Prove that the set 
   $$\Omega' \coloneq \{f \in C(K,Y) \mid \text{graph} (f) \subseteq \Omega \}$$
   is open in $C(K,Y)_{\text{c.o}}$. Here $\text{graph}(f) = \{(x,f(x)) \mid x\in K\} \subseteq K \times Y$. \\
   {\tiny \textbf{Hint:} If $f \in \Omega'$, then for every $x \in K$ there are open $U_x \opn K, V_x \opn Y$ with $(x,f(x)) \in U_x \times V_x \opn \Omega$.} 
  \end{Exercise}
  \setboolean{firstanswerofthechapter}{true}
 \begin{Answer}[number={\ref{Ex:cptopn} 4.}]
  \emph{We show that the set} $\Omega' \coloneq \{f \in C(K,Y) \mid \text{graph} (f) \subseteq \Omega \}$\emph{ is open in the compact-open topology if $K$ is compact and $\Omega \opn K \times Y$.} \\[.15em]
  
  Let us show that $\Omega'$ is a neighborhood for each $f \in \Omega'$. Since $\Omega$ is open in $K\times Y$ with the product topology, we find for each $x \in K$ open subsets $U_x \opn K, V_x \opn Y$ with $(x,f(x)) \in U_x \times V_x \opn \Omega$. Shrinking the $U_x$ we may assume that also $\overline{U}_x \times V_x \subseteq \Omega$ and $f(\overline{U}_x) \subseteq V_x$.
  By compactness $\overline{U}_x$ is compact and we can cover $K$ with finitely many of the $U_x$, say $K = \bigcup_{1\leq k \leq n} U_{x_k}$. Then by construction $N_f \coloneq \bigcap_{1\leq k \leq n} \lfloor \overline{U}_{x_k},V_{x_k}\rfloor$ is open in the compact open topology and $f \in N_f$. Moreover, if $h \in N_f$, then we have for $x \in U_{x_k}$ that $(x,h(x)) \in U_x \times V_x \subseteq \Omega$. Since the $U_x$ cover $K$, we deduce that $h \in \Omega'$ and thus $N_f \subseteq \Omega'$.
 \end{Answer}
\setboolean{firstanswerofthechapter}{false}

\chapter{Canonical manifold of mappings}\label{App:canmfdmap}\copyrightnotice
This appendix sketches the construction of a canonical manifold of mappings structure for smooth mappings between (finite-dimensional) manifolds (for details see \cite[Appendix A]{AaGaS18}). Before we begin, let us reconsider for a moment the locally convex space $C^\infty (M,E)$ ($M$ a manifold and $E$ a locally convex space, cf.\ \Cref{sect:top:smoothmaps}). The topology and vector space structure allow us to compare two smooth maps $f$ and $g$ by measuring their difference $f-g$ on compact sets. As a general manifold $N$ lacks an addition we can not mimic this construction (albeit the topology still makes sense!). On first sight it might be tempting to think that one could use the charts of $N$ to construct charts for $C^\infty (M,N)$. However, if $N$ does not admit an atlas with only one chart, there will be smooth mappings whose image is not contained in one chart. Thus the charts of $N$ turn out to be not very useful. Instead one needs to find a replacement of the vector space addition to construct a way in which ``charts vary smoothly'' over $N$. 

\section{Local additions}

In this section we define first a replacement for vector additions which allow us to construct a canonical manifold structure on spaces of mappings.

\begin{defn}\label{def-loa}
Let $M$ be a smooth manifold. A \emph{local addition on $M$}\index{local addition} is a smooth map
\begin{displaymath}
\Sigma \colon TM \subseteq U \rightarrow M,
\end{displaymath}
defined on an open neighborhood $U \opn TM$ of the \emph{zero-section}\index{zero-section} of the tangent bundle
$0_M\coloneq \{0_p\in T_pM \mid p\in M\}$
such that $\Sigma(0_p)=p$ for all $p\in M$,
$$
U'\coloneq \{(\pi_{M}(v),\Sigma(v))\mid v\in U\}
$$
is open in $M\times M$ (where $\pi_{M}\colon TM\to M$ is the bundle projection) and the mapping $\theta \coloneq(\pi_{M},\Sigma)\colon U \to U'$ is a $C^\infty$-diffeomorphism.
\end{defn}

 \begin{setup}\label{setup:locadd:Lie}
   Let $G$ be a Lie group, then $G$ admits a local addition. To see this, let $\varphi \colon U \rightarrow V$ be a chart of $G$ with $\one \in U$ and $\varphi(\one)=0$. Then $\tilde{U} \coloneq T_{0}\varphi^{-1}(V)$ is open in $T_{\one} G$ and we define $\alpha_{\one} \colon \tilde{U} \rightarrow U, v \mapsto \varphi^{-1}(T_{\one} \varphi (v))$. Note that $\alpha_{\one}$ is a diffeomorphism. Now the tangent bundle of $G$ is trivial, i.e.\, $TG \cong G \times T_{\one} G$, \Cref{tangentLie}, and we obtain an open set $\Omega \coloneq \bigcup_{g \in G} T\lambda_g (\tilde{U}) \cong G \times \tilde{U} \opn G \times T_{\one} G$.
   Define the smooth map
   \begin{align*}
    \Sigma \colon \Omega \rightarrow G, v \mapsto \pi_G (v)\cdot \alpha_{\one} (T\lambda_{\pi_G(v)^{-1}}(v)).
   \end{align*}
  We note that $\Omega$ is a neighborhood of the zero section and $\Sigma (0_g) = g\cdot \alpha_{\one} (0_{\one}) = g\cdot \one =g$. Finally, $(\pi_G , \Sigma) \colon \Omega \rightarrow G \times G$ is a diffeomorphism (onto an open set) with inverse $(\pi_G , \Sigma)^{-1} (g,h) = T\lambda_{g}(\alpha_{\one}^{-1}(g^{-1}h))$.
  \end{setup}
  
  \begin{rem}\label{Riemlocadd}
   If $(M,g)$ is a strong Riemannian manifold (see \Cref{RiemGeo}), then the Riemannian exponential map $\exp_g$ of $g$ induces a local addition on $M$, \cite[Lemma 10.1]{Mic}.
  \end{rem}

  We leave the proof of the following statement as Exercise \ref{Ex:locadd} 1.
  
\begin{lem}[{\cite[Lemma 7.5]{SaW} or \cite[10.11]{Mic}}] \label{lem:newlocadd}
 Let $M$ be a manifold which admits a local addition $\Sigma$. Then $TM$ admits a local addition (it is $T\Sigma \circ \kappa$, where $\kappa$ is the flip of the double tangent bundle, Exercise \ref{Ex:locadd} 1.).
\end{lem}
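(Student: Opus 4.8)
The plan is to construct the local addition on $TM$ explicitly from the given local addition $\Sigma$ on $M$ and the canonical involution (flip) $\kappa \colon T^2M \rightarrow T^2M$ of the double tangent bundle, and then verify the defining conditions of \Cref{def-loa}. Recall that $\kappa$ is the natural diffeomorphism swapping the two roles of the tangent directions in $T^2M = T(TM)$; in a chart where $T^2U \cong (U\times E)\times(E\times E)$ it sends $(x,a,b,c) \mapsto (x,b,a,c)$, and it satisfies $\pi_{TM}\circ \kappa = T\pi_M$ and $T\pi_M \circ \kappa = \pi_{TM}$. The candidate local addition is $\Sigma_{TM} \coloneq T\Sigma \circ \kappa \colon T(TM) \supseteq \kappa^{-1}(TU) \rightarrow TM$, where $U \opn TM$ is the domain of $\Sigma$ and $T\Sigma \colon T(TM) \supseteq TU \rightarrow TM$ is its tangent map.

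First I would identify the correct domain: set $\widetilde U \coloneq \kappa^{-1}(TU) \opn T^2M$ and check that it is an open neighborhood of the zero-section $0_{TM}$ of $T(TM) \rightarrow TM$. This follows because $U$ contains the zero-section $0_M$ of $TM$, hence $TU$ contains the image under the zero-section of $TM$ of all of $TM$ (since $U$ is open and contains $0_M$), and applying $\kappa^{-1}$ (a diffeomorphism) keeps the relevant section inside $\widetilde U$; this is the point where one must be careful about which of the two bundle projections $T^2M \rightrightarrows TM$ one is using, and the flip $\kappa$ is precisely what reconciles them. Next I would verify the base-point condition $\Sigma_{TM}(0_v) = v$ for every $v \in TM$: unravelling, $\kappa(0_v)$ lands in the zero-section of $TM$ inside $TU$, and $T\Sigma$ applied there returns $v$ because $\Sigma \circ (\text{zero-section}) = \id_M$ differentiates to $T\Sigma$ restricted to the zero-section being (under $\kappa$) the identity on $TM$. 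This is most cleanly checked in a submersion/product chart using the local formula for $\Sigma$ and its derivative.

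The main verification is that $\theta_{TM} \coloneq (\pi_{TM}, \Sigma_{TM}) \colon \widetilde U \rightarrow TM \times TM$ is a diffeomorphism onto an open subset. The key observation is that $\theta_{TM}$ is conjugate, via the flip $\kappa$ and the canonical identification $T(M\times M)\cong TM\times TM$, to the tangent map $T\theta$ of the original diffeomorphism $\theta = (\pi_M,\Sigma) \colon U \rightarrow U'$. Concretely, one shows $\theta_{TM} = (\text{can. iso}) \circ T\theta \circ \kappa$ up to reordering of components, where $T\theta \colon TU \rightarrow T(U') \opn T(M\times M)$ is a diffeomorphism onto an open set because $\theta$ is a diffeomorphism and the tangent functor preserves diffeomorphisms (\Cref{lem:chainrulemfd} gives $T(\theta^{-1}) = (T\theta)^{-1}$). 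Since $\kappa$ and the canonical isomorphism $T(M\times M)\cong TM\times TM$ are themselves diffeomorphisms, $\theta_{TM}$ is a diffeomorphism onto its (open) image; in particular the image set $U'' \coloneq \{(\pi_{TM}(w),\Sigma_{TM}(w))\mid w \in \widetilde U\}$ is open in $TM\times TM$.

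The main obstacle I anticipate is purely bookkeeping rather than conceptual: keeping straight the two distinct vector bundle structures on $T^2M$ (the projections $\pi_{TM}$ and $T\pi_M$), verifying that $\kappa$ intertwines them correctly so that $\Sigma_{TM}$ is fibered over the right base, and matching the component orderings in the identification $T(M\times M)\cong TM\times TM$ so that $\theta_{TM}$ really does correspond to $T\theta$. I would handle this by fixing a chart $\varphi$ of $M$, writing $\Sigma$ in the induced chart, computing $T\Sigma$ and $\kappa$ explicitly in coordinates on $(U\times E)\times(E\times E)$, and then reading off that $\theta_{TM}$ in coordinates is exactly $T\theta$ post-composed with a permutation of the $E$-factors — from which openness of the image and smooth invertibility of the inverse are immediate. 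The cleaner, coordinate-free route is to cite the naturality of $\kappa$ with respect to $\theta$, i.e. $T\theta \circ \kappa_M = \kappa_{M\times M}\circ T\theta$ appropriately interpreted, but I would expect to fall back on the chart computation to nail down the precise statement.
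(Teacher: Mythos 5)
Your proposal is correct and follows exactly the route the paper intends: the lemma itself names the candidate $T\Sigma\circ\kappa$ and defers the verification to Exercise \ref{Ex:locadd} 1.\ (and to the cited references), and your check — domain via $\kappa^{-1}(TU)$, base-point condition from differentiating $\Sigma\circ 0_M=\id_M$, and the crux identity $\theta_{TM}=(\pi_{TM},T\Sigma\circ\kappa)=T\theta\circ\kappa$ using $T\pi_M\circ\kappa=\pi_{TM}$ — is precisely the intended argument. The only cosmetic wobble is the appeal to ``naturality of $\kappa$'' for $\theta$ (which is not a second tangent map, so that statement needs the reinterpretation you already flag), but your chart computation fallback settles it.
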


With the help of a local addition we can pull back functions near a given function:
\begin{setup}
 Let $f \in C^\infty (K, M)$ and assume that $M$ admits a local addition $\Sigma \colon  TM \supseteq \Omega \rightarrow M$. If $g \in C^\infty (K,M)$ such that $$(f,g) (K) \subseteq (\pi_M,\Sigma)(\Omega) \opn M \times M,$$ we can define the mapping 
 $$\Phi_f (g) \coloneq (\pi_M,\Sigma)^{-1}\circ (f,g) \in C^\infty (K,TM).$$
 A quick computation shows that $\pi_M \circ \Phi_f(g) = f$. The idea is that $\Phi_f (g)$ yields at every $x \in K$ a vector in $T_{f(x)}M$ which gets mapped by the local addition to $g(x)$. Thus the mapping $\Phi_f(g)$ measures the difference between $f$ and $g$ (similar to $f-g$ in the vector space case). Note that $\Phi_f$ takes its values in the subspace
 $$C^\infty_f (K,TM) \coloneq \left\{h \in C^\infty (K,TM) \mid \pi_M \circ h =f\right\}$$
 of mappings over the given map $f$.
\end{setup}
  
  To make mappings of the form $\Phi_f$ into charts for $C^\infty (K,M)$ we need to study first spaces of the form $C^\infty_f(K,TM)$. In particular, we need these spaces to be locally convex spaces and will showcase this in the next section. Relating these sets to spaces of sections of certain vector bundles. Note however, that the space of functions $C^\infty_f (K,TM)$ depends heavily on the function $f$. In general, these spaces will NOT be isomorphic if we change the base function $f$. 

\begin{Exercise} \vspace{-\baselineskip} \label{Ex:locadd}
 \Question Let $M$ be a manifold, $TM$ its tangent bundle and $T^2M=T(TM)$ the double tangent bundle. Assume that $M$ admits a local addition $\Sigma \colon TM \supseteq U \rightarrow M$.
 \subQuestion Show that there is a bundle isomorphism $\kappa \colon T^2M\rightarrow T^2M$ over the identity such that in local coordinates we have (up to identification) the identity $\kappa (x,v,u,w) = (x,u,v,w)$. We call $\kappa$ the \emph{(canonical) flip} of the double tangent bundle.\index{flip of the double tangent bundle}
 \subQuestion Prove that $T\Sigma \circ \kappa$ is a local addition for $TM$. Explain then why $T\Sigma$ is not a local addition.
\end{Exercise}

\section{Vector bundles and their sections}\label{sect:VB}

Recall that a \emph{vector bundle}\index{vector bundle} is a pair of manifolds $E$ (\emph{total space}) and $M$ (\emph{base (space)}) together with a surjective submersion $p \colon E \rightarrow M$ such that for every $x \in M$ the \emph{fibre} $E_x \coloneq p^{-1}(x) \subseteq E$ is a real vector space and there is $x \in U_x \opn M$ and a diffeomorphism $\kappa_x \colon U_x \times F_x \rightarrow p^{-1}(U_x)$, a \emph{bundle trivialisation} such that
\begin{itemize}
 \item $p\circ \kappa_x (y,v) = y, \forall y \in U_x$
 \item for each $y \in U_x$, the map $\kappa_x (y,\cdot) \colon F_x \rightarrow p^{-1}(y)$ is a vector space isomorphism.
\end{itemize}
 Finally, for two vector bundles $p_i \colon E_i \rightarrow M_i, i=1,2$, a pair of smooth maps $F \colon E_1 \rightarrow E_2$ and $f \colon M_1 \rightarrow M_2$ is called \emph{vector bundle morphism}\index{vector bundle morphism} if $p_2 \circ F = f\circ p_1$ and $F|_{p_1^{-1}(x)}^{p_2^{-1}(f(x))}$ is linear for every $x \in M_1$ (we also say that $F$ is a bundle morphism over $f$).
 
 \begin{setup}[Sections of vector bundles]
  For a vector bundle $p \colon E \rightarrow M$, a smooth map $f \in C^\infty (M,E)$ is called a \emph{smooth section}\index{vector bundle!smooth section} if $p\circ f = \id_M$. We denote the \emph{set of all sections}\index{vector bundle!set of all sections} by
  $$\Gamma(E) \coloneq \{f \in C^\infty (M,E) \mid p\circ f = \id_M\}.$$
 \end{setup}
 
 Since every fibre of a vector bundle is a vector space, pointwise addition and scalar multiplication of sections makes sense and $\Gamma(E)$ becomes a vector space. This space can be topologised as a locally convex space as follows.
 
 \begin{setup}\label{topo:sect}
   Let $p\colon E \rightarrow M$ be a vector bundle. We pick a family of bundle trivialisations $\kappa_i \colon p^{-1}(U_i) \rightarrow U_i \times F_i, i\in  I$ which cover $E$ (i.e.\, the $U_i$ cover $M$). If we denote by $\text{pr}_{F_i} \colon U_\varphi \times F_i \rightarrow F_i$ the canonical projection, then the \emph{principal part} of the section $X$ in the trivialisation $\kappa_i$ is defined as
   $$X_{\kappa_i} \coloneq \text{pr}_{F_i} \circ \kappa_i \circ X|_{U_i} \in C^\infty (U_i,F_i).$$
   Note that this entails that with respect to the bundle trivialisation we have 
   $$\kappa_i \circ X = (\id_{U_i}, X_{\kappa_i}), \quad \forall X \in \Gamma(E).$$
   We declare a topology on $\Gamma(E)$ as the initial topology with respect to the map 
   $$\mathcal{I}_E \colon \Gamma(E) \rightarrow \prod_{i \in I} C^\infty (U_i, F_i),\quad  X \mapsto (X_{\kappa_i})_{i \in I},$$
 where the factors on the right hand side carry the compact open $C^\infty$-topology. Now by definition of the vector space operations we have $\mathcal{I}_E(X+rU) = \mathcal{I}_E (X) + r\mathcal{I}_E(U)$. Note that since the right hand side is a locally convex space by \Cref{prop:lcvx_mappingsp}, this shows that also $\Gamma(E)$ is a locally convex space. We shall see that this structure does not depend on the choice of trivialisations in Exercise \ref{Ex:bundles} below.
 \end{setup}

 Now to model spaces of smooth mappings, we need a certain type of bundles constructed from a map and a vector bundle. While we study this situation we recall two ways to construct new vector bundles from given vector bundles.
 
\begin{setup}[Pullback bundles and their sections]\label{bundleconv}
Let $M$ be a smooth manifold, and $K$ be a manifold.\index{vector bundle!pullback}
If $p\colon E\to M$ is a smooth vector bundle over~$M$
and $f\colon K\to M$ is a smooth map, then
$$
f^*(E)\coloneq\bigcup_{x\in K}\{x\}\times E_{f(x)}
$$
is a split submanifold of $K\times E$ (as it locally looks like $\mbox{graph}(f)\times E_x$ inside $K\times M\times E_x$ around points in $\{x\}\times E_x$). We endow $f^*(E)$ with this submanifold structure.
Together with the natural vector space structure on $\{x\}\times E_{f(x)}\cong E_{f(x)}$ and the map $p_f\colon f^*(E)\rightarrow K$, $(x,y)\mapsto x$, 
we obtain a vector bundle $f^*(E)$ over~$K$, the so-called \emph{pullback of $E$ along~$f$}. For each local trivialization $\theta=(p|_{E|_U},\theta_2) \colon E|_U\to U\times F$ of~$E$ and $W:=f^{-1}(U)$, the map
$$
f^*(E)|_W \to W\times F,\quad (x,y)\mapsto (x,\theta_2(y))
$$
is a local trivialization of $f^*(E)$. We endow
$$
C^\infty_f(K,E)\coloneq\{\tau\in C^\infty(K,E)\mid \pi\circ \tau=f\}
$$
with the topology induced by $C^\infty(K,E)$. With pointwise operations,
$C^\infty_f(K,E)$ is a vector space and the map
$$
\Psi\colon \Gamma(f^*(E))\to C^\infty_f(K,E),\;\, \sigma\mapsto \text{pr}_2\circ \, \sigma
$$
is a bijection with inverse $\tau\mapsto (\id_K,\tau)$.
As $(\text{pr}_2)_*\colon C^\infty(K,K\times E)\to C^\infty(K,E)$ is a continuous map and also $\tau\mapsto (\id_K,\tau)\in
C^\infty(K,K) \times C^\infty(K,E)\cong C^\infty(K,K\times E)$ is continuous,
we deduce that $C^\infty_f(K,E)$ is a locally convex topological vector space and
$\Psi$ is an isomorphism of topological vector spaces.
If we wish to emphasize the dependence on~$E$, we also write $C^\infty_f(K,E)(E)$
instead of $C^\infty_f(K,E)$.
\end{setup}

\begin{setup}[Whitney sum of bundles]\label{WSum} \index{vector bundle!Whitney sum}
Let $p_i \colon E_i \rightarrow M, i=1,2$ be vector bundles over the same base $M$. Then we can form the \emph{direct product}\index{vector bundle!direct product} of these vector bundles, which is the vector bundle 
$$p_1 \times p_2 \colon E_1 \times E_2 \rightarrow M \times M$$
over the product manifold $M\times M$. Consider the diagonal map $\Delta \colon M \rightarrow M \times M, m \mapsto (m,m)$. 
Then the \emph{Whitney sum} of $p_1$ and $p_2$ is defined as the bundle
$$p_1 \oplus p_2 \colon E_1 \oplus E_2 \coloneq \Delta^* (E_1\times E_2) \rightarrow M.$$
Note that by construction we have as fibres $(E_1 \oplus E_2)_x = (E_1)_x \times (E_2)_x$ and if $\kappa_i$ is a bundle trivialisation of $E_i$ over a common open set $U \opn M$, then the restriction of $\kappa_1 \times \kappa_2$ to $(p_1 \oplus p_2)^{-1}(U)$ is a bundle trivialisation of the Whitney sum. 
\end{setup}

We just mention that there is a version of the exponential law for spaces of sections, cf.\, \cite[Appendix A]{AaGaS18}.

\begin{Exercise} \vspace{-\baselineskip} \label{Ex:bundles}
 \Question Show that the mapping $\mathcal{I}_E \colon \Gamma(E) \rightarrow \prod_{i \in I} C^\infty (U_i,F)$ from  \Cref{topo:sect} has a closed image. Then prove that the topology does not depend on the choice of local trivialisations, i.e.\, if $\mathcal{B} \coloneq \{\nu_j\}_{j \in J}$ is another family of trivialisations covering $E$, then the topologies induced by $\mathcal{I}_E$ and $\mathcal{I}_{E,\mathcal{B}}$ coincide.
 \\
 {\tiny \textbf{Hint:} If $\mathcal{A},\mathcal{B}$ are families of trivialisations, then also $\mathcal{A} \cup \mathcal{B}$ is such a family. Thus we may assume without loss of generality that $\mathcal{A} \subseteq \mathcal{B}$ and it suffices to prove that the topology induced by $\mathcal{B}$ can not be finer then the one induced by $\mathcal{A}$. To prove this adapt \Cref{lem:scattering}.}
 \Question Verify that the pullback bundle in \Cref{bundleconv} forms a split submanifold of $K \times E$.\\ 
 {\tiny \textbf{Hint:} Construct submanifold charts by hand as for the graph of a function.}
 \Question Show that for two vector bundles $p_i \colon E_i \rightarrow M$ there is a canonical isomorphism of locally convex spaces $\Gamma(E_1) \times \Gamma(E_2) \cong \Gamma(E_1 \oplus E_2)$.
\end{Exercise}

\setboolean{firstanswerofthechapter}{true}
\begin{Answer}[number= {\ref{Ex:bundles} 2.}]
 We show that the pullback bundle $f^*(E)$ is a split submanifold of $K \times E$. The idea is similar to the proof that the graph of a smooth function is a split submanifold. 
 The proof is essentially \Cref{lem:pullbacksubm}: By definition of a fibre bundle $p \colon E \rightarrow M$ is a submersion. In the notation of \Cref{lem:pullbacksubm}, we have $f^*(E) = K \times_M E$ is a split submanifold of $K \times E$ for each $f \in C^\infty (K,M)$.
\end{Answer}
\setboolean{firstanswerofthechapter}{false}
\section{Construction of the manifold structure}\label{app:constmfd}
\begin{tcolorbox}[colback=white,colframe=blue!75!black,title=General assumptions]
We let $K$ be a compact manifold and $M$ be a smooth manifold which admits a local addition $\Sigma \colon TM\supseteq U \rightarrow M$ such that 
\begin{itemize} 
 \item $\Sigma$ is \emph{normalised}\index{local addition!normalised}, i.e.\ $T_{0_p}(\Sigma|_{T_pM}) = \id_{T_pM}$ for all $p \in M$, (a manifold with local addition has a normalised local addition, see \cite[Lemma A.14]{AaGaS18}),
 \item $\theta \coloneq (\pi_M, \Sigma) \colon U \rightarrow U'$ is a diffeomorphism
\end{itemize}
\end{tcolorbox}

\begin{setup}[Manifold structure on $C^\infty (K,M)$]\label{setup:mfdstruct}
For $f\in C^\infty(K,M)$, the locally convex space of $C^\infty$-sections
of $f^*(TM)$ can be identified with
\[
C^\infty_f(K,E)=\{\tau\in C^\infty(K,TM)\mid \pi_M\circ \tau=f\},
\]
with the topology induced by $C^\infty (K,TM)$. 
Use notation as in Definition~\ref{def-loa}. Then $O_f:=C^\infty_f(K,E)\cap C^\infty(K,U)$
is an open subset of~$C^\infty_f(K,E)$,
\[
O_f':=\{g\in C^\infty(K,M)\mid (f,g)(K)\subseteq U'\}
\]
is an open subset\footnote{The proof is similar to Exercise \ref{Ex:cptopn} 4.} of $C^\infty(K,M)$ and the map
$
\phi_f\colon O_f\to O'_f,\quad\tau\mapsto \Sigma\circ \tau
$
is a homeomorphism with inverse $g\mapsto \theta^{-1}\circ (f,g)$.
By the preceding, if also $h\in C^\infty(K,M)$, then $\phi_h^{-1}\circ \phi_f$
has an open domain; it is smooth there, since (by the exponential law \Cref{thm:explaw}),
we only need to observe that the map
\begin{equation}\label{eq1}
(\tau,x)  \mapsto  (\phi_h^{-1}\circ\phi_f)(\tau)(x)=\theta^{-1}(h(x),\Sigma(\tau(x)))
\end{equation}
is $C^{\infty}$. Hence $C^\infty(K,M)$ has a smooth manifold structure
such that each of the maps $\phi_f^{-1}$ is a local chart.
\end{setup}

We prove that the manifold structure on $C^\infty (K,M)$ is canonical and thus by Lemma \ref{base-cano} (b) the construction \ref{setup:mfdstruct} is independent of the choice of local addition.

\begin{lem}\label{la:cano:mfdmap}
The manifold structure on $C^\infty (K,M)$ constructed in \ref{setup:mfdstruct} is canonical.\index{canonical manifold of mappings}
\end{lem}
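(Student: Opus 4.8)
The statement to prove is that the manifold structure on $C^\infty(K,M)$ built in \Cref{setup:mfdstruct} satisfies both clauses of \Cref{defn:canonicalmfd}. The first clause is essentially free: by construction each chart $\phi_f\colon O_f\to O_f'$ is a homeomorphism onto an open subset $O_f'\subseteq C^\infty(K,M)$ \emph{with its compact open $C^\infty$-topology}, and these $O_f'$ cover $C^\infty(K,M)$ as $f$ ranges over all smooth maps. Hence the manifold topology agrees with the compact open $C^\infty$-topology, and only the exponential-law clause requires work. So I would fix an arbitrary $C^\infty$-manifold $N$ and a map $f\colon N\to C^\infty(K,M)$ and show $f$ is $C^\infty$ if and only if $f^\wedge\colon N\times K\to M$ is $C^\infty$.

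Smoothness being local, I would work near an arbitrary point $x_0\in N$, set $g_0:=f(x_0)$, and use the chart $\phi_{g_0}$ around $g_0$. Here the zero section lies in $U$, so $(g_0,g_0)(K)\subseteq U'$ and $x_0\in f^{-1}(O_{g_0}')$. First I would check that $f^{-1}(O_{g_0}')$ is a neighbourhood of $x_0$: in the direction where $f^\wedge$ is assumed $C^\infty$ (hence continuous), the map $N\times K\to M\times M,\ (x,k)\mapsto (g_0(k),f^\wedge(x,k))$ is continuous, the set where it lands in $U'$ is open and contains $\{x_0\}\times K$, and since $K$ is compact the tube lemma yields a neighbourhood $W\ni x_0$ with $f(W)\subseteq O_{g_0}'$; in the other direction continuity of $f$ gives the same. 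On $W$ the local representative $h:=\phi_{g_0}^{-1}\circ f=\theta^{-1}\circ(g_0,f)$ takes values in $O_{g_0}\subseteq C^\infty_{g_0}(K,TM)$, which by \Cref{bundleconv} is (isomorphic to) the locally convex space $\Gamma(g_0^*TM)$.

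The core of the argument is the equivalence $h$ is $C^\infty\iff h^\wedge\colon W\times K\to TM$ is $C^\infty\iff f^\wedge|_{W\times K}$ is $C^\infty$. For the second equivalence I would use $\phi_{g_0}(\tau)=\Sigma\circ\tau$ and $\theta=(\pi_M,\Sigma)$ pointwise, giving
\[
f^\wedge(x,k)=\Sigma\bigl(h^\wedge(x,k)\bigr),\qquad h^\wedge(x,k)=\theta^{-1}\bigl(g_0(k),f^\wedge(x,k)\bigr),
\]
so that smoothness of $f^\wedge$ and of $h^\wedge$ are equivalent, $\Sigma$, $\theta^{-1}$ and $g_0$ all being smooth. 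For the first equivalence I would invoke the exponential law for spaces of sections (the version mentioned after \Cref{WSum}); concretely, one may reduce to the vector-valued case by covering the compact $K$ with finitely many bundle trivialisations of $g_0^*TM$, applying \Cref{lem:scattering} to realise $\Gamma(g_0^*TM)$ as a closed subspace of a finite product of spaces $C^\infty(U_i,\mathbb{R}^n)$, and then combining \Cref{thm:explaw} with \Cref{lem:seq-closed} to test smoothness of $h$ on that closed subspace via its adjoint. Chaining the equivalences and letting $x_0$ range over $N$ (using again compactness of $K$ to cover $N\times K$) gives the claim.

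The main obstacle is precisely this first equivalence: turning ``smoothness of a curve/map into the chart space $\Gamma(g_0^*TM)$'' into ``smoothness of the adjoint into $TM$''. The basic exponential law \Cref{thm:explaw} is stated for targets that are open subsets of locally convex spaces, whereas here the target is the manifold $TM$, so the reduction through local trivialisations and \Cref{lem:scattering}/\Cref{lem:seq-closed} (or a direct appeal to the sections exponential law) is the technical heart; the tube-lemma step guaranteeing that $f$ lands in the chart domain is the other point demanding care. Finally, once canonicity is established, \Cref{base-cano}(b) shows the manifold structure is independent of the chosen normalised local addition, so the construction in \Cref{setup:mfdstruct} is well defined up to canonical diffeomorphism.
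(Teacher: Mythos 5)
Your proof is correct and follows essentially the same route as the paper's: localise at $f(x_0)$ in a chart $\phi_{g_0}$, note that the adjoint of the local representative is $(y,z)\mapsto\theta^{-1}\bigl(g_0(z),f^\wedge(y,z)\bigr)$, and reduce to the exponential law for section spaces of $g_0^*(TM)$ via local trivialisations. The only (cosmetic) difference is that the paper first proves smoothness of $\ev\colon C^\infty(K,M)\times K\to M$ and uses it for the easy direction via $h^\wedge=\ev\circ(h\times\id_K)$, whereas you run both directions through the chart identities; the same section-space exponential law underlies both.
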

\begin{proof}
We first show that the evaluation map $\ev\colon C^\infty(K,M)\times K\to M$
is $C^{\infty}$. It suffices to show that $\ev(\phi_f(\sigma),x)$ is $C^{\infty}$
in $(\sigma,x)\in O_f\times K$ for all $f\in C^\infty(K,M)$.
But
\[
\ev(\phi_f(\sigma),x)=\Sigma(\sigma(x))=\Sigma(\ev(\sigma,x)),
\]
where $\ev\colon C^\infty_f(K,E)\times K\to f^*(TM)$, $(\sigma,x)\mapsto\sigma(x)$
is $C^{\infty}$ (cf.\ \cite[Proposition 3.20]{MR3342623}).
Now let $h\colon L\to C^\infty(K,M)$ be a map, where $L$ is a manifold. If $h$ is~$C^\infty$, then $h^\wedge=\ev\circ (h\times\id_K)$
is~$C^{\infty}$.
If, conversely, $h^\wedge$ is $C^{\infty}$,
then $h$ is continuous as a map to $C(K,M)$ with the compact open topology (see \Cref{prop:C0expohalf}) and $h(x)=h^\wedge(x,\cdot)\in C^\infty(K,M)$ for each $x\in L$.
Given $x\in L$, let $f:=h(x)$. Then 
$$\psi_f \colon C(K,M)\to C(K,M)\times C(K,M)\cong C(K,M\times M),\quad  g\mapsto (f,g)$$
 is a continuous map and $W\coloneq h^{-1}(O_f')=(\psi_f\circ h)^{-1}(C(K,U'))$ is an open $x$-neighbourhood in~$L$. Since
\begin{align*}
((\phi_f)^{-1}\circ h|_W)^\wedge(y,z)=(\theta^{-1}\circ (f,h(y)))(z)=\theta^{-1}(f(z),h^\wedge(y,z))
\end{align*}
is $C^{\infty}$, the map $\phi_f^{-1}\circ h|_W$ (and hence also $h|_W$)
is~$C^\infty$ (apply \Cref{thm:explaw} to the spaces $C^\infty (U_i,F)$ containing the principal parts of the sections).
\end{proof}

\begin{setup}\label{setup:curves:tan}
Let $\gamma \in C^\infty (K,M)$ and view $T_\gamma C^\infty (K,M)$ as a set of equivalence classes of smooth curves $c \colon ]-\varepsilon, \varepsilon[ \rightarrow C^\infty (K,M), c(0)=\gamma$.  As the manifold structure is canonical, $c$ is smooth if and only if $c^\wedge \colon ]-\varepsilon,\varepsilon[ \times K \rightarrow M$ is smooth.
Hence for the canonical chart $\phi_\gamma\colon O_\gamma\to O_\gamma'\subseteq C^\infty(K,M)$, the map $T_0\phi_\gamma \colon C^\infty_\gamma (K,TM) \rightarrow \to T_\gamma C^\infty (K,M)$ is an isomorphism of TVS. For $x\in K$ denote by $\varepsilon_x$ be the point evaluation in $x$. Since $\Sigma$ is normalised we obtain 
\begin{align*}
T\varepsilon_xT\phi_f(0,\tau)&=T\varepsilon_x([t\mapsto \Sigma\circ (t\tau)]) =[t\mapsto \Sigma(t\tau(x))]\\
&= [t\mapsto\Sigma|_{T_{f(x)}M}(t\tau(x))]
=T\Sigma|_{T_{f(x)}M}(\tau(x))=\tau(x),
\end{align*} 
Summing up, this implies that for each fibre there is a linear bijection
\begin{equation}\label{eq:PHI}
\Phi_\gamma \colon T_\gamma C^\infty (K,M) \rightarrow C^\infty_\gamma (K,TM),\quad  [c] \mapsto (k \mapsto [t \mapsto c^\wedge (t,k)]).
\end{equation}
We will now sketch the proof that the fibre maps \eqref{eq:PHI} induce a bundle isomorphism
$$\Phi_M\colon TC^\infty (K,M) \rightarrow  C^\infty (K,TM),\quad   T_\gamma C^\infty (K,M) \ni V \mapsto \Phi_\gamma (V),$$
such that the following diagram commutes  
\begin{displaymath}
\begin{tikzcd}
TC^\infty (K,M) \ar[rr,"\Phi_M"] \arrow[rd, swap,"\pi_{C^\infty (K,M)}"] & & C^\infty (K,TM) \ar[ld,"(\pi_M)_*"]  \\ 
& C^\infty (K,M) &
\end{tikzcd} 
\end{displaymath}
\textbf{(Sketch of the proof)} (cf.\ \cite[Appendix A]{AaGaS18}) If $\lambda_p \colon T_p M \rightarrow TM$ is the inclusion and $\kappa \colon T^2M \rightarrow T^2M$ the canonical flip, then $\Theta \colon TM \oplus TM \rightarrow \pi_{TM}^{-1} (0_M) \subseteq T^2M, \Theta(v,w) = \kappa (T\lambda_{\pi(v)}(v,w))$ is a bundle isomorphism. Let $0\colon M \rightarrow TM$ be the zero-section\index{zero-section}, then $\Theta$ induces a diffeomorphism $$\Theta_\gamma\colon O_\gamma\to O_{0\circ \gamma},\quad \eta \mapsto \Theta \circ (0\circ \gamma, \eta).$$
From the local addition $\Sigma$ we construct a local addition $\widetilde{\Sigma}$ on $TM$ and consider the charts $\phi_{0\circ \gamma}$ on $C^\infty (K,TM)$. Then the sets $S_\gamma \coloneq T\phi_\gamma (O_\gamma\times C^\infty_\gamma (K,TM))$ form an open cover of $T(C^\infty(K,M))$ for $\gamma\in C^\infty(K,M)$. We deduce that the sets $\Phi_M(S_\gamma)$ form a cover of $C^\infty(K,TM)$ by sets which are open as $\Phi_M(S_\gamma) =(\phi_{0\circ \gamma}\circ \phi_\gamma)(O_\gamma\times C^\infty_\gamma (M,TM))=\phi_{0\circ \gamma}(O_{0\circ \gamma})$. Hence we can check that $\Phi_M$ restricts to a $C^\infty$-diffeomorphism on these open sets, i.e.
\begin{displaymath}
\Phi\circ T\phi_\gamma=\phi_{0\circ \gamma}\circ \Theta_\gamma
\end{displaymath}
for each $\gamma\in C^\infty(K,M)$ (as all other mappings in the formula are smooth diffeomorphisms). Now we can rewrite $\Phi_M(T\phi_\gamma(\sigma,\tau))$ as
\begin{align*}
& ([t\mapsto \Sigma(\sigma(x)+t\tau(x))])_{x\in K}
= ([t\mapsto (\Sigma\circ \lambda_{\gamma(x)})(\sigma(x)+t\tau(x))])_{x\in K}\\
=& (T(\Sigma\circ \lambda_{\gamma(x)})(\sigma(x),\tau(x)))_{x\in K}
= (\Sigma_{TM}((\kappa \circ T\lambda_{\gamma(x)})(\sigma(x),\tau(x))))_{x\in K}\\
=& ((\Sigma_{TM}\circ \Theta_\gamma)(\sigma,\tau)(x))_{x\in K}
=(\phi_{0\circ \gamma}\circ \Theta_\gamma)(\sigma,\tau). 
\end{align*}
Thus $\Phi_M$ is a $C^\infty$-diffeomorphism.
\end{setup}

\begin{setup}[Smooth maps into the Whitney sum over strong Riemannian manifolds]\label{Whitneyiso}\index{vector bundle!Whitney sum} \label{Whitneysum}
 By \Cref{lem:newlocadd}, $TM$ admits a local addition whose product yields a local addition on the product manifold. Thus $C^\infty (K,M), C^\infty (K,TM), C^\infty (K,TM \times TM)$ are canonical manifolds. Taking the Whitney sum of $(\pi_M)_* \colon C^\infty (K,TM) \rightarrow C^\infty (K,M)$ (\Cref{setup:curves:tan}) with itself, we obtain the bundle $C^\infty (K,TM) \oplus C^\infty (K,TM)$. Our aim is to identify it with the bundle $C^\infty (K,TM\oplus TM)$. 
 Observe that $C^\infty (K,TM)\oplus C^\infty (K,TM)$ is a split submanifold of $C^\infty (K,M) \times C^\infty (K,TM)^2$. Now the factors of the product are canonical manifolds. Thus \Cref{base-cano} yields a diffeomorphism 
 $$C^\infty (K,M) \times C^\infty (K,TM)^2 \cong C^\infty(K,M\times (TM)^2),$$
 which takes the split submanifold $C^\infty (K,TM)\oplus C^\infty(K,TM)$ to $C^\infty (K,TM\oplus TM)$. As diffeomorphisms preserve split submanifolds, see Exercise \ref{ex:HR} 2., $C^\infty (K,TM\oplus TM)$ must be a split submanifold of $C^\infty (K,M\times (TM)^2)$. Finally, \Cref{base-cano} (c) shows that $C^\infty (K,TM\oplus TM)$ is a canonical manifold diffeomorphic to $TC^\infty (K,M)\oplus TC^\infty (K,M)$.
  \end{setup}
  \begin{rem}
By uniqueness of canonical manifolds, $C^\infty (K,TM\oplus TM)$ from \Cref{Whitneysum} coincides with the manifold structure we could have obtained via a local addition on $TM\oplus TM$.
The same proof works if we only assume that $C^\infty (K,M)$ and $C^\infty (K,TM)$ are canonical manifolds (without assuming that $M$ has a local addition).
 \end{rem}
\begin{Exercise}\label{Ex:tangent}
 \Question Work out the missing details in the sketch of the proof in \Cref{setup:curves:tan}.
\end{Exercise}

\section{Manifolds of curves and the energy of a curve}\label{sect:mfdofcurves}

In this appendix we consider the manifold structure on spaces of curves on a compact interval. The reason for this is that we defined for a (weak) Riemannian manifold $(M,g)$ the energy $\text{En} \colon C^\infty ([0,1] , M) \rightarrow \R$ of a curve and would like to differentiate $\text{En}$ to find geodesics. Hence a manifold structure on the space of curves $C^\infty ([0,1],M)$ is needed. Many details of the construction will be left to the reader as Exercise \ref{Ex:curvemanifolds}. Moreover, we will not systematically introduce tangent bundles for manifolds with boundary such as $[0,1]$ (see e.g.\ \cite{Mic}). Thus the compact open $C^\infty$-topology needs to be defined without recourse to tangent bundles.

\begin{setup}\label{curves:coCinfty}
 Let $M$ be a (possibly infinite-dimensional) manifold. Let $c \colon [0,1] \rightarrow M$ be a smooth curve and $K \subseteq [0,1]$ compact such that $c(K) \subseteq U$, where $(U,\varphi)$ is a chart of $M$. If $\varphi (U) \opn E$ for the locally convex space $E$, we pick a seminorm $\lVert \cdot \rVert$ on $E$ and define a $C^k$-neighborhood $N^k(c,K,(U,\varphi),\lVert \cdot\rVert,\varepsilon)$ as the set
 $$\left\{g \in C^\infty ([0,1],M) \middle| g(K) \subseteq U, \sup_{0\leq \ell \leq k} \sup_{x\in K} \left\lVert \frac{\mathrm{d}^\ell}{\mathrm{d} x^k} (\varphi \circ g - \varphi \circ c)(x)\right\rVert < \varepsilon\right\}.$$
 Then the family $N^k (c,K,(U,\varphi),\lVert \cdot \rVert, \varepsilon)$, where $c \in C^\infty ([0,1] ,M)$, $K \subseteq [0,1]$ compact, $\lVert \cdot \rVert$ is a continuous seminorm of $E$ and $\varepsilon >0$, forms the base of a topology on $C^\infty ([0,1],M)$ called the \emph{compact open $C^\infty$-topology}.\index{compact open $C^\infty$-topology}  If $M=E$ is a locally convex space, then also $C^\infty ([0,1],E)$ is a locally convex space\footnote{For $M = \R$, we have described this structure already in \Cref{example:lcs}.} and the exponential law \Cref{thm:explaw} carries over to $C^\infty ([0,1],O)$, $O \opn E$.
\end{setup}

\begin{prop}\label{prop:curvesmfd}
 Let $(M,\Sigma)$ be a manifold with local addition $\Sigma$ and topologise the space $C^\infty([0,1],M)$ with the compact open $C^\infty$-topology. Then $C^\infty ([0,1],M)$ is a canonical manifold and we have $T C^\infty ([0,1],M) \cong C^\infty ([0,1],TM)$. 
\end{prop}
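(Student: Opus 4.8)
The statement to prove, Proposition~\ref{prop:curvesmfd}, is the analogue of \Cref{prop: can:locadd} and \Cref{la:cano:mfdmap} with the compact source manifold $K$ replaced by the compact interval $[0,1]$ (a manifold with boundary). The plan is to follow the construction in \Cref{setup:mfdstruct} almost verbatim, verifying at each stage that the only place where compactness of the source was genuinely used is replaced by compactness of $[0,1]$, and that the exponential law which is the real engine of the argument remains available in the boundary setting. First I would recall from \Cref{curves:coCinfty} that $C^\infty([0,1],E)$ is a locally convex space for $E$ locally convex, that $C^\infty([0,1],O)$ is an open subset of it for $O \opn E$, and that the exponential law \Cref{thm:explaw} carries over to this setting (this is asserted at the end of \Cref{curves:coCinfty}, and I would invoke it as a black box). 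This last point is the crucial enabling fact: without it the whole machinery of canonical manifolds collapses.

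Next I would construct the charts exactly as in \Cref{setup:mfdstruct}. Given a normalised local addition $\Sigma\colon TM\supseteq U\to M$ with $\theta=(\pi_M,\Sigma)\colon U\to U'$ a diffeomorphism, and given $f\in C^\infty([0,1],M)$, I form the locally convex space $C^\infty_f([0,1],TM)=\{\tau\in C^\infty([0,1],TM)\mid \pi_M\circ\tau=f\}$ (which is a space of sections of the pullback bundle $f^*(TM)$ over $[0,1]$, topologised as in \Cref{topo:sect}), set $O_f\coloneq C^\infty_f([0,1],TM)\cap C^\infty([0,1],U)$, and define $\phi_f\colon O_f\to O_f'$, $\tau\mapsto\Sigma\circ\tau$, with inverse $g\mapsto\theta^{-1}\circ(f,g)$. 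The smoothness of the change of charts $\phi_h^{-1}\circ\phi_f$ reduces, via the exponential law, to smoothness of the map $(\tau,x)\mapsto\theta^{-1}(h(x),\Sigma(\tau(x)))$, exactly as in \eqref{eq1}. Here I would check that the openness of $O_f'$ in $C^\infty([0,1],M)$ follows from an argument analogous to Exercise~\ref{Ex:cptopn} 4., using compactness of $[0,1]$ in place of compactness of $K$. This gives the smooth manifold structure.

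I would then establish that the structure is canonical by repeating the proof of \Cref{la:cano:mfdmap}: show first that $\ev\colon C^\infty([0,1],M)\times[0,1]\to M$ is smooth (using that the evaluation on sections of a pullback bundle is smooth, cf.\ \cite[Proposition 3.20]{MR3342623}), and then that for a map $h\colon L\to C^\infty([0,1],M)$ from a manifold $L$, smoothness of $h$ is equivalent to smoothness of $h^\wedge$, via the local argument with $\phi_f^{-1}\circ h|_W$ and the exponential law. Finally, the identification $TC^\infty([0,1],M)\cong C^\infty([0,1],TM)$ follows as in \Cref{setup:curves:tan}: the normalisation of $\Sigma$ yields the fibrewise linear bijection $\Phi_\gamma$ via point evaluations $T\varepsilon_x T\phi_f(0,\tau)=\tau(x)$, and these assemble into a bundle isomorphism $\Phi_M$ using the canonical flip $\kappa$ of the double tangent bundle and the induced local addition $\widetilde\Sigma$ on $TM$ from \Cref{lem:newlocadd}.

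\textbf{Main obstacle.}
The genuine difficulty, and the one point where I would tread carefully rather than simply copying the closed-manifold proof, is that $[0,1]$ is a manifold \emph{with boundary}, so $T[0,1]$ and the various tangent-bundle constructions on the source are not literally the ones used in \Cref{sect:smoothmappingspaces}. The paper sidesteps this in \Cref{curves:coCinfty} by defining the compact open $C^\infty$-topology directly through derivatives in charts, without recourse to the tangent bundle of the source, and by asserting that the exponential law persists. So the real work is to confirm that every invocation of the exponential law and of smoothness of evaluation in my argument is one that holds for a \emph{boundary} source — which is precisely why the stronger boundary version of the exponential law \cite[Theorem A]{MR3342623} (already used elsewhere in the text, e.g.\ in \Cref{prop:reglift} and \Cref{Dahmensexample}) must be cited in place of \Cref{thm:explaw}. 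Once the exponential law is granted in this form, the argument is formally identical to the compact-manifold case, and I expect no further essential obstruction; the remaining verifications are the routine chart computations that I would leave to the reader as indicated in Exercise~\ref{Ex:curvemanifolds}.
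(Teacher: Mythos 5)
Your proposal is correct and follows essentially the same route the paper intends: the text relegates the proof to Exercise~\ref{Ex:curvemanifolds} (parts 3--5), which explicitly asks the reader to generalise \Cref{setup:mfdstruct}, \Cref{la:cano:mfdmap} and \Cref{setup:curves:tan} to the interval case, exactly as you do. You also correctly identify the one genuine subtlety — that $[0,1]$ has boundary, so the exponential law must be invoked in the stronger form of \cite[Theorem A]{MR3342623} rather than \Cref{thm:explaw} — which is precisely the caveat the paper itself makes in the footnote to \Cref{prop:reglift}.
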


\begin{lem}\label{lem:energydifferential}
 Let $(M,g)$ be a (weak) Riemannian manifold such that $M$ and $TM$ admit local additions. Then the \emph{energy} \index{curve!energy of a} 
 $$\mathrm{En} \colon C^\infty ([0,1],M) \rightarrow \R , \quad c \mapsto  \frac{1}{2} \int_0^1 g_{c(t)} (\dot{c}(t),\dot{c}(t))\mathrm{d}t$$
 is smooth. We can express its derivative in a local chart $(U,\varphi)$ (suppressing most identifications) as
 $$d\mathrm{En}(c;h) = \int_0^1 \frac{1}{2} d_1g_U(c,c'(t),c'(t);h)) - d_1g(c(t),h(t),c'(t);c'(t)))-g_U(c(t),h(t),c''(t)) \mathrm{d}t,$$
 where we view $g$ as a map of three arguments, $g_U(c,a,b)$ and $h \in T_c C^\infty ([0,1],M) \cong \{g \in C^\infty ([0,1],TM)\mid \pi\circ g = c\}$ with $h(0)=0_{c(0)}$ and $h(1)=0_{c(1)}$.
 \end{lem}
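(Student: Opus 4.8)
The statement claims two things: that the energy functional $\mathrm{En}$ is smooth, and that its directional derivative has the stated integral form. The overarching strategy is to reduce everything to the smooth structure on $C^\infty([0,1],M)$ established in \Cref{prop:curvesmfd}, exploit the exponential law \Cref{thm:explaw} (in its boundary version, since $[0,1]$ has boundary), and then differentiate under the integral sign using the rule on partial differentials \Cref{prop:rpd} together with the chain rule \Cref{chainrule}. Since $M$ and $TM$ both admit local additions, $C^\infty([0,1],M)$ and $C^\infty([0,1],TM)$ are canonical manifolds and we have the identification $TC^\infty([0,1],M)\cong C^\infty([0,1],TM)$ (from \Cref{prop:curvesmfd} and \Cref{setup:curves:tan}). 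This means a tangent vector $h$ at $c$ is exactly a section $h\in C^\infty([0,1],TM)$ with $\pi_M\circ h=c$, matching the hypothesis in the statement.

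\textbf{Smoothness.}
First I would decompose $\mathrm{En}$ as a composition of smooth maps. The map $c\mapsto \dot c$ from $C^\infty([0,1],M)$ to $C^\infty([0,1],TM)$ is smooth: under the canonical identifications it is (a restriction of) the tangent map construction, which is smooth because of the exponential law. Next, since $g\colon TM\oplus TM\rightarrow\R$ is smooth, the pushforward $g_*\colon C^\infty([0,1],TM\oplus TM)\rightarrow C^\infty([0,1],\R)$ is smooth by \Cref{la-reu}. Here I would use the identification $C^\infty([0,1],TM\oplus TM)\cong C^\infty([0,1],TM)\oplus C^\infty([0,1],TM)$ from \Cref{Whitneysum} (which applies precisely because $M$ and $TM$ carry local additions) to feed the pair $(\dot c,\dot c)$ into $g_*$. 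Finally, the integral operator $\int_0^1\colon C^\infty([0,1],\R)\rightarrow\R$ is continuous linear, hence smooth. Composing these, $\mathrm{En}=\tfrac12\,(\int_0^1)\circ g_*\circ(\mathrm{id},\mathrm{id})\circ(\,c\mapsto\dot c\,)$ is smooth.

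\textbf{The derivative formula.}
To compute $d\mathrm{En}(c;h)$ I would work locally in a chart $(U,\varphi)$, suppressing identifications as the statement invites. Writing $c$ for the local representative, $\dot c=c'$, and $h$ for the principal part of the tangent vector, the integrand becomes $\tfrac12\,g_U(c(t),c'(t),c'(t))$, a composite of the smooth maps $t\mapsto(c(t),c'(t),c'(t))$ and $g_U$. Differentiating $\mathrm{En}$ in the direction $h$ amounts (by smoothness, continuity of $\int_0^1$, and the ability to interchange $\tfrac{d}{ds}|_{s=0}$ with the integral) to integrating the pointwise derivative of the integrand. Since varying $c$ in direction $h$ varies the base-point argument by $h$ and both velocity arguments by $h'$, the rule on partial differentials \Cref{prop:rpd} gives
\begin{align*}
\tfrac{d}{ds}\Big|_{s=0}\tfrac12\,g_U(c+sh,(c+sh)',(c+sh)')
=\tfrac12 d_1g_U(c,c',c';h)+d_2g_U(c,c',c';h'),
\end{align*}
where $d_2$ collects the two velocity-slot contributions, which are equal by symmetry of $g_U$ in its last two arguments. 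Integrating the second term by parts (using $h(0)=0_{c(0)}$, $h(1)=0_{c(1)}$ to kill the boundary terms) converts $d_2g_U(c,c',c';h')$ into $-d_1g_U(c,h,c';c')-g_U(c,h,c'')$, which is exactly the combination in the claimed formula. The main obstacle is bookkeeping: one must justify differentiation under the integral (continuous dependence of parameter-dependent weak integrals, as flagged in the footnote to \Cref{prop:rpd}), keep track of which slot of $g_U$ each derivative hits, and verify that the integration-by-parts boundary terms vanish under the stated endpoint conditions on $h$. These are routine once the symmetry of $g_U$ and the product-rule structure are made explicit, so I would state them carefully but not belabor the algebra.
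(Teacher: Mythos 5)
Your proposal is correct and follows essentially the same route as the paper: smoothness via the factorisation $c\mapsto\dot c$, the Whitney-sum identification, the pushforward $g_*$, and the continuous linear integral operator; the derivative formula via a chart-local computation with the rule on partial differentials, bilinearity/symmetry of $g_U$ in its last two slots, and integration by parts using the vanishing of $h$ at the endpoints. The only cosmetic difference is that the paper phrases the directional derivative through an explicit smooth variation $q(s)$ with fixed endpoints and interchanges $\tfrac{d}{ds}$ with $\tfrac{d}{dt}$ via the exponential law, which is exactly what your chart-level shorthand $c+sh$ encodes.
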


\begin{proof}
 Since $M$ and $TM$ admit local additions, Exercise \ref{Ex:curvemanifolds} 4. implies that both $C^\infty ([0,1],M)$ and $C^\infty ([0,1],TM)$ are canonical manifolds. Applying the exponential law, $C^\infty ([0,1],M) \rightarrow C^\infty ([0,1],TM), c \mapsto \dot{c} = (t \mapsto T_t c (1))$ is smooth if and only if $C^\infty ([0,1],M) \times [0,1] \rightarrow TM, (c,t)\mapsto  T_t c(1)$ is smooth. We check this locally in a neighborhood of a $c$: Pick a chart $(U,\varphi)$ of $M$ and $[a,b] \subseteq [0,1]$ with $c([a,b]) \subseteq U$. As the topology on $C^\infty([0,1],M)$ is finer then the compact open topology, there exists a whole neighborhood of curves $g$ with $g([a,b])\subseteq U$. Cover $[0,1]$ by compact intervals which $c$ maps into a chart domain and work locally. To keep the notation simple, we will assume that $c([0,1]) \subseteq U$ or in other words, assume without loss of generality that $M \opn E$ for some locally convex vector space $E$. Thus we need to prove that $C^\infty ([0,1],M) \times [0,1] \rightarrow M \times E, (c,t) \mapsto (c(t),c'(t))=(\ev(c,t),\ev (c',t))$ is smooth. The evaluation map is smooth on canonical manifolds and the mapping $C^\infty ([0,1],M) \rightarrow C^\infty ([0,1],E), c \mapsto c'$ is the restriction of the continuous linear map $C^\infty ([0,1],E) \rightarrow C^\infty ([0,1],E), c \mapsto c'$ to the open subset $C^\infty ([0,1],M) \opn C^\infty ([0,1],E)$ (here we exploit the compact open $C^\infty$-topology). We conclude that the mapping $C^\infty ([0,1],M) \rightarrow C^\infty ([0,1],TM), c \mapsto \dot{c}$ is smooth.
 Since Exercise \ref{Ex:curvemanifolds} 6. identifies the Whitney sums, we deduce from $(\dot{c},\dot{c}) \in TM \oplus TM$ that $C^\infty ([0,1],M) \rightarrow C^\infty ([0,1],\R), c\mapsto g_* (\dot{c},\dot{c})$ is smooth (as pushforwards are smooth on canonical manifolds). However, $\tfrac{1}{2}\int_0^1 \colon C^\infty ([0,1],\R) \rightarrow \R$ is continuous linear, whence $\text{En}$ can be written as a composition of smooth mappings and is thus smooth.
 
 To compute the derivative of the energy, we work in a local chart $(U,\varphi)$ of $M$ (though we will only label $g$ and suppress the other identifications): Then the metric $g$ becomes a map of three arguments $g_U$ which is bilinear in the last two. Recall that the vector component of $\dot{c}$ is $c'$. Now by choice of $h$, there is a smooth curve $q \colon ]-\varepsilon,\varepsilon[ \rightarrow C^\infty ([0,1],M)$ such that $\left.\frac{\partial}{\partial s}\right|_{s=0} q(s) = h$ and $q(t)(0)=c(0)$ and $q(t)(1)=c(1)$ for all $t$ (a smooth variation, cf.\ also \Cref{defn:smoothvariation} for the meaning of the partial derivative). Then we compute with the help of the exponential law (Exercise \ref{Ex:curvemanifolds} 3.):
 \begin{align*}
  d\mathrm{En}(c;h) &= \left.\frac{d}{ds}\right|_{s=0} \mathrm{En}(q(s))= \frac{1}{2}\int_0^1 \left.\frac{d}{ds}\right|_{s=0} g_U\left(q(s)(t),\frac{d}{dt} q(s)(t),\frac{d}{dt} q(s)(t)\right)\mathrm{d}t \\
  &= \int_0^1 \frac{1}{2} d_1g_U (c(t),c'(t),c'(t);h(t))+g_U\left(c(t),\left. \frac{d}{d s}\right|_{s=0} \frac{d}{dt} q(s)(t),c'(t)\right)\mathrm{d}t \\
  &= \int_0^1 \frac{1}{2} d_1g_U (c(t),c'(t),c'(t);h(t)) - d_1g_U \left(c(t),\left. \frac{d}{d s}\right|_{s=0} q(s)(t),c'(t);c'(t)\right) \\ & \hspace{6cm}-g_U\left(c(t),\left. \frac{d}{d s}\right|_{s=0} q(s)(t),c''(t)\right) \mathrm{d}t\\
  &= \int_0^1 \frac{1}{2} d_1g_U (c(t),c'(t),c'(t);h(t)) - d_1g_U(c(t),h(t),c'(t);c'(t)) \\ & \hspace{6cm}-g_U(c(t),h(t),c''(t)) \mathrm{d}t
 \end{align*}
 In passing from the second to the third line we used integration by parts together with the fact that $\left. \frac{d}{d s}\right|_{s=0} q(s,t)$ vanishes at the endpoints of the interval. 
\end{proof}

Almost all of the terms in the formula for the derivative of the energy in \Cref{lem:energydifferential} can be globalised to the Riemannian manifold. Derivation exploits of course that we work locally and the second derivative of $c$ needs to be taken (from the perspective of the Riemannian manifold $M$) in the fibre over $c(t)$. This already hints at the connection of this formula to the covariant derivative (which however was not yet needed).

\begin{Exercise}\label{Ex:curvemanifolds}  \vspace{-\baselineskip}
 \Question Prove that the neighborhoods defined in \Cref{curves:coCinfty} form the base of a topology. 
 \Question Let $E$ be a locally convex space. Show that the compact open $C^\infty$-topology turns $C^\infty ([0,1],E)$ into a locally convex space. Show then that for $M= \R$ this topology coincides with the compact open $C^\infty$-topology from \Cref{example:lcs}.
 \Question Establish a variant of the exponential law \Cref{thm:explaw} for manifolds of smooth mappings on $[0,1]$ (with values in open sets of locally convex spaces).
 \Question Generalise \Cref{setup:mfdstruct} to prove \Cref{prop:curvesmfd}. Then show that $C^\infty ([0,1],M)$ is a canonical manifold.
 \Question Follow the argument in \Cref{setup:curves:tan} to prove that $TC^\infty ([0,1],M)$ can naturally be identified with $C^\infty ([0,1],TM)$.
 \Question Adapt the argument in \Cref{Whitneyiso} to establish an isomorphism $C^\infty([0,1],TM\oplus TM) \cong C^\infty([0,1],TM) \oplus C^\infty ([0,1],TM)$ if $M$ and $TM$ admit local additions.
\end{Exercise}

\chapter{Vector fields and their Lie bracket}\label{chapter:LA:VF}\copyrightnotice

In this section we recall the construction of the Lie algebra of vector fields.

\begin{ex}
 The tangent bundle $\pi_M \colon TM \rightarrow M$ is a vector bundle (bundle trivialisations are given by the canonical charts $T\varphi$). A smooth section of the tangent bundle is called (smooth) \emph{vector field}\index{vector field} and we write shorter $\mathcal{V}(M) \coloneq \Gamma (TM)$ for the vector space of all vector fields.\index{vector field!vector space of}
\end{ex}

\begin{ex}
 If $U \opn E$ in a locally convex space, we have $TU = U \times E$ and $\pi_U \colon U \times E \rightarrow U, (u,e)\mapsto u$. Thus a vector field of $U$ can be written as $X=(X_U,X_E) \colon U \rightarrow U \times E$ and we must have $X_U=\id_U$. Hence a vector field on $U$ is uniquely determined by the smooth map $X_E \in C^\infty (U,E)$.
\end{ex}

\begin{setup}\label{local derivation}
 If $M$ is a manifold and $(\phi,U_\phi)$ a manifold chart, then we have an analogue of $X_E$ on $U_\phi$ for $X \in \mathcal{V}(M)$: Clearly $T\phi \circ X \circ \phi^{-1} = (\id_{V_\phi}, X_\phi)$ for the smooth map $X_\phi \coloneq d\phi\circ X\circ \phi^{-1} \colon V_\phi \rightarrow E$. We call $X_\phi$ the \emph{local representative of} $X$ or \emph{the principal part of $X$}with respect to the chart $\phi$.\index{vector field!local representative} \index{vector field!principal part}
 
 For later use, consider a vector field $X \in \mathcal{V}(M)$ and a smooth function $f \colon M \rightarrow F$, where $F$ is a locally convex space. Then we define a function $X.f \in C^\infty (M,F)$ via
 \begin{align}\label{eq:Liederiv:smoo}
X.f (m) \coloneq df \circ X (m) = \text{pr}_2Tf \circ X (m).
\end{align}
\end{setup}

\begin{setup}\label{lcvx:space}
 Similar to \Cref{topo:sect} we topologise $\mathcal{V}(M)$: Pick an atlas $\mathcal{A}$ of $M$ whose charts we denote by $\varphi \colon U_\varphi \rightarrow V_\varphi \subseteq E_\varphi$. Then we declare the topology to be the initial topology with respect to the map 
 $$\kappa \colon \mathcal{V}(M) \rightarrow \prod_{\varphi \in \mathcal{A}} C^\infty (V_\varphi, E_\varphi), X \mapsto (X_\varphi)_{\varphi \in X},$$
 where the factors on the right hand side carry the compact open $C^\infty$-topology. In particular, this topology turns the vector fields into a locally convex space.
\end{setup}

We will use the notion of integral curves and flows for vector fields, whence we recall the definition of these objects.

\begin{setup}\label{VF:flow}
 Let $X \in \mathcal{V}(M)$, we say a $C^1$-curve $c \colon [a,b] \rightarrow M$ is an \emph{integral curve}\index{vector field!integral curve} for $X$ if for every $y \in [a,b]$ the curve satisfies $\dot{c}(t) = X(c(t))$.
 
 If $M$ is a Banach manifold, it follows from the theory of ordinary differential equations, \cite[IV.]{Lang},that for every $m \in M$ there exists an integral curve $c_m$ of $X$ on some open interval $J_m \coloneq ]-\varepsilon,\varepsilon[$ such that $c_m(0)=m$. Moreover, the \emph{flow}\index{vector field!flow}
 $$\text{Fl}^X \colon \bigcup_{m \in M}\{m\} \times J_m \rightarrow M, (m,t) \mapsto c_m(t),$$
 defines a continuous map on some open subset of $M \times \R$. If $M$ is modelled on a locally convex space, existence of integral curves and flows is not automatic, cf.\ \Cref{Diffeq:beyond}.
\end{setup}

\begin{defn}
 Let $f \colon M \rightarrow N$ be smooth. We call the vector fields $X \in \mathcal{V}(M), Y \in \mathcal{V}(N)$ \emph{$f$-related} if $Y \circ f = Tf \circ X$.\index{vector field!related}
\end{defn}

\begin{lem}\label{glueinglemma}
 Let $M$ be a manifold modelled on a locally convex space $E$ with atlas $\mathcal{A}$. Let $(X_\phi)_{\phi \in \mathcal{A}}$ be a family of smooth maps $X_\phi \colon V_\phi \rightarrow E$ such that every pair $X_\phi, X_\psi$ is $\psi \circ \phi^{-1}$ related on $\phi(U_\psi\cap U_\phi)$. Then there is a unique vector field $X \in \mathcal{V}(M)$ whose local representatives coincide with the $X_\phi$
\end{lem}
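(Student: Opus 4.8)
The plan is to construct the vector field $X$ chart-by-chart and then verify that the $\psi\circ\phi^{-1}$-relatedness hypothesis guarantees the local pieces agree on overlaps, so that they patch to a globally well-defined smooth section of $TM$. First I would define, for each chart $\phi\colon U_\phi \to V_\phi$, a map $X^{(\phi)}\colon U_\phi \to TM$ by setting $X^{(\phi)} \coloneq (T\phi)^{-1}\circ(\mathrm{id}_{V_\phi}, X_\phi)\circ\phi$, i.e.\ the unique section over $U_\phi$ whose principal part in the chart $\phi$ is exactly $X_\phi$ (cf.\ \Cref{local derivation}). Each $X^{(\phi)}$ is smooth as a composition of the smooth $X_\phi$ with the (smooth) chart maps $T\phi$ and $\phi$, and by construction $\pi_M\circ X^{(\phi)}=\mathrm{id}_{U_\phi}$.

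The key step is to show these agree on overlaps: for $\phi,\psi\in\mathcal{A}$ and $p\in U_\phi\cap U_\psi$ I would check $X^{(\phi)}(p)=X^{(\psi)}(p)$. This is precisely where the relatedness hypothesis enters. Writing $\tau\coloneq\psi\circ\phi^{-1}$, the relatedness of $X_\phi$ and $X_\psi$ means $X_\psi\circ\tau = d\tau\circ(\mathrm{id},X_\phi)$ on $\phi(U_\phi\cap U_\psi)$ (unravelling the definition of $\tau$-relatedness for the associated vector fields on the open subsets $V_\phi, V_\psi$, using that the tangent map in a chart is $T\tau=(\tau, d\tau)$). Since the transition map for the tangent bundle is $T\psi\circ(T\phi)^{-1}=T(\psi\circ\phi^{-1})=T\tau$, applying $T\phi$ to both candidate values and comparing principal parts reduces the equality $X^{(\phi)}(p)=X^{(\psi)}(p)$ to exactly the relatedness identity at the point $\phi(p)$. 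Hence the local sections glue to a single well-defined map $X\colon M\to TM$ with $X|_{U_\phi}=X^{(\phi)}$ and $\pi_M\circ X=\mathrm{id}_M$.

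It then remains to confirm that $X$ is genuinely a smooth vector field and that it is the unique one with the prescribed local representatives. Smoothness is a local property and can be tested in the charts of $\mathcal{A}$ (using the insertion-of-charts argument, \Cref{rem:insertchart}): in the chart $T\phi$ the map $X$ is conjugated to $(\mathrm{id}_{V_\phi}, X_\phi)$, which is smooth since $X_\phi$ is; as the $U_\phi$ cover $M$, this shows $X\in\mathcal{V}(M)$. By construction the principal part of $X$ with respect to $\phi$ is $X_\phi$ for every $\phi\in\mathcal{A}$. For uniqueness, any $\widetilde{X}\in\mathcal{V}(M)$ with the same local representatives satisfies $\widetilde{X}|_{U_\phi}=(T\phi)^{-1}\circ(\mathrm{id},X_\phi)\circ\phi=X|_{U_\phi}$ on each $U_\phi$, and these cover $M$, so $\widetilde{X}=X$.

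The main obstacle is the bookkeeping in the overlap step: correctly translating the abstract $\psi\circ\phi^{-1}$-relatedness of the \emph{principal parts} $X_\phi, X_\psi$ (which live on open subsets of locally convex spaces, where relatedness of the associated vector fields unwinds to the concrete identity $X_\psi(\tau(x))=d\tau(x;X_\phi(x))$) into the statement that the tangent-bundle transition map $T\tau$ carries the value of $X^{(\phi)}$ to that of $X^{(\psi)}$. Once one is careful that ``$f$-related'' for the sections over $V_\phi$ and $V_\psi$ means exactly $X_\psi\circ\tau=d\tau\circ(\mathrm{id},X_\phi)$, and that the tangent transition in a chart is $T\tau=(\tau,d\tau)$, the matching of principal parts is immediate and everything else is routine.
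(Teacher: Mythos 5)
Your proposal is correct and follows essentially the same route as the paper's own (much terser) proof: define $X(p)=T\phi^{-1}(\phi(p),X_\phi(\phi(p)))$ chart by chart, use the $\psi\circ\phi^{-1}$-relatedness to see the definition is independent of the chart on overlaps, and observe smoothness and uniqueness from the local description. Your additional unpacking of the relatedness identity $X_\psi\circ\tau=d\tau\circ(\mathrm{id},X_\phi)$ and of the tangent transition $T\tau=(\tau,d\tau)$ just makes explicit what the paper leaves to the reader.
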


\begin{proof}
 Define $X \colon M \rightarrow TM, p \mapsto T\phi^{-1} (\phi(p),X_\phi (\phi (p)))$ for $p\in U_\phi$. Since the maps $X_\phi, X_\psi$ are related by the change of charts on the overlap $U_\phi \cap U_\psi$, the mapping is well defined. By construction it is smooth and a vector field.
\end{proof}

\begin{setup}
For principal parts of vector fields $X,Y$ on $U \opn E$ write $X_E.Y_E(z)\coloneq dY_E \circ X \coloneq  dY_E (z;X_E(z))$. Define $$\LB[X,Y] \coloneq X.Y-Y.X. \qquad X,Y \in C^\infty (U,E).$$
We will see in the following that the bracket of principal parts of vector fields gives rise to a Lie bracket of vector fields.\index{vector field!Lie bracket} 
\end{setup}

\begin{lem}\label{lem:locrelated}
 Let $U \opn E$, $V \opn F$ be open in locally convex spaces and $f \in C^\infty (U,V)$, $X_1,X_2 \in C^\infty (U,E)$ and $Y_1,Y_2 \in C^\infty (V,F)$. Assume that $X_i$ is $f$-related to $Y_i$ for $i=1,2$, then $\LB[X_1,X_2]$ is $f$-related to $\LB[Y_1,Y_2]$.
\end{lem}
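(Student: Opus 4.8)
The claim (Lemma \ref{lem:locrelated}) is the local, coordinate-level version of the fact that relatedness of vector fields is inherited by the Lie bracket. We are given $f \in C^\infty(U,V)$ between open subsets of locally convex spaces, principal parts $X_1,X_2 \in C^\infty(U,E)$ and $Y_1,Y_2 \in C^\infty(V,F)$, with $X_i$ being $f$-related to $Y_i$. Recall that at the level of principal parts the $f$-relatedness condition $Y_i \circ f = Tf \circ X_i$ reads, after projecting to the second component of $TV = V \times F$, as the equation $df(x; X_i(x)) = Y_i(f(x))$ for all $x \in U$. The goal is to show the analogous identity with $X_i,Y_i$ replaced by $\LB[X_1,X_2]$ and $\LB[Y_1,Y_2]$.

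\textbf{The plan.} First I would write down the relatedness hypothesis explicitly as the two scalar-type identities
\begin{equation}\label{eq:relhyp}
 df(x;X_i(x)) = Y_i(f(x)), \qquad i=1,2,\ \forall x \in U.
\end{equation}
The strategy is then to differentiate \eqref{eq:relhyp} in a suitable direction and feed the result into the definition $\LB[X_1,X_2] = X_1.X_2 - X_2.X_1$ (where $X.Y(z) = dY(z;X(z))$). Concretely, I would compute the principal part $df(x; \LB[X_1,X_2](x))$ and show it equals $\LB[Y_1,Y_2](f(x))$. The key computational tool is the formula for the differential of a map of the form $df \circ (g,h)$ established in Exercise \ref{Ex:Bastiani} 5., namely \eqref{ident:doublederiv}: for a $C^2$-map $f$ and $C^1$-maps $g,h$ one has $d\phi(x;y) = d^2f(g(x);h(x),dg(x;y)) + df(g(x);dh(x;y))$ where $\phi(x) = df(g(x);h(x))$.

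\textbf{Key steps.} Apply \eqref{eq:relhyp} for $i=2$ and differentiate both sides in the direction $X_1(x)$. On the left, with $g = \id_U$, $h = X_2$, so that $\phi(x) = df(x;X_2(x)) = Y_2(f(x))$, the identity \eqref{ident:doublederiv} gives
\begin{equation}\label{eq:lhsdiff}
 d^2 f(x;X_2(x),X_1(x)) + df(x; dX_2(x;X_1(x))) = d\bigl(Y_2 \circ f\bigr)(x;X_1(x)).
\end{equation}
By the chain rule (Proposition \ref{chainrule}) the right-hand side of \eqref{eq:lhsdiff} equals $dY_2(f(x); df(x;X_1(x))) = dY_2(f(x); Y_1(f(x)))$, where the last equality uses \eqref{eq:relhyp} for $i=1$. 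Noting that $df(x;dX_2(x;X_1(x))) = df(x; X_1.X_2(x))$, I obtain
\begin{equation}\label{eq:symm2}
 df(x; (X_1.X_2)(x)) = (Y_1.Y_2)(f(x)) - d^2f(x;X_2(x),X_1(x)).
\end{equation}
Now repeat the whole computation with the roles of $1$ and $2$ exchanged, differentiating \eqref{eq:relhyp} for $i=1$ in the direction $X_2(x)$, to get
\begin{equation}\label{eq:symm1}
 df(x; (X_2.X_1)(x)) = (Y_2.Y_1)(f(x)) - d^2f(x;X_1(x),X_2(x)).
\end{equation}
Subtracting \eqref{eq:symm1} from \eqref{eq:symm2} and using linearity of $df(x;\cdot)$ (so that the left side becomes $df(x;\LB[X_1,X_2](x))$) yields $df(x;\LB[X_1,X_2](x)) = \LB[Y_1,Y_2](f(x)) - \bigl(d^2f(x;X_2(x),X_1(x)) - d^2f(x;X_1(x),X_2(x))\bigr)$.

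\textbf{The main obstacle.} The crux is the vanishing of the two second-order terms, i.e.\ the symmetry $d^2f(x;X_2(x),X_1(x)) = d^2f(x;X_1(x),X_2(x))$. This is precisely Schwarz' theorem (Exercise \ref{Ex:Bastiani} 3.), which guarantees that $d^2f(x;\cdot,\cdot)$ is symmetric in its two directional arguments for a $C^2$-map $f$. Invoking it, the difference of the Hessian terms vanishes, and I conclude $df(x;\LB[X_1,X_2](x)) = \LB[Y_1,Y_2](f(x))$ for all $x \in U$, which is exactly the statement that $\LB[X_1,X_2]$ is $f$-related to $\LB[Y_1,Y_2]$. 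I would also remark at the outset that the hypotheses implicitly require enough differentiability — $f$ should be $C^2$ and the $X_i,Y_i$ at least $C^1$ for the iterated derivatives to make sense — which is automatic in the smooth ($C^\infty$) setting in which we work. The only genuinely delicate point is keeping the bookkeeping of which variable is being differentiated straight; everything else reduces to the chain rule, Exercise \ref{Ex:Bastiani} 5., and Schwarz' theorem.
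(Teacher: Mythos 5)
Your proposal is correct and follows essentially the same route as the paper's own proof: both differentiate the relatedness identity $df(x;X_i(x))=Y_i(f(x))$ using \eqref{ident:doublederiv} and the chain rule to obtain $df(x;dX_i(x;v))=dY_i(f(x);df(x;v))-d^2f(x;X_i(x),v)$, substitute $v=X_j(x)$, subtract, and cancel the second-order terms by Schwarz' theorem. Nothing is missing.
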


\begin{proof}
 Using the chain rule, \eqref{ident:doublederiv} and relatedness we obtain
 \begin{align}\label{localrelation}
  df(x,dX_i(x;v))=dY_i(f(x),df(x;v))-d^2f(x;X_i(x),v) \quad i=1,2 (x,v)\in U\times E
 \end{align}
 We use this relation together with relatedness to obtain
 \begin{align*}
  df(x;\LB[X_1,X_2](x))=&df(x;dX_2 (x,X_1(x)))-df(x;dX_1(x;X_2(x))) \\ 
  =& dY_2 (f(x);df(x;X_1(x)))-d^2f(x;X_2(x),X_1(x))\\ 
  &-dY_1(f(x);df(x;X_2(x)))+d^2f(x;X_1(x),X_2(x))\\
  =& dY_2 (f(x);Y_1(f(x)))-dY_1(f(x);Y_2(f(x)))=\LB[Y_1,Y_2](f(x))
 \end{align*}
where the second order terms cancel by Schwartz' theorem.
\end{proof}

Before we establish now the Lie algebra properties, let us recall a general definition useful for our purpose.

\begin{defn}\label{defn:derivations}
 Let $(A,\cdot)$ be an associative algebra, then the linear mappings $L(A,A)$ form a Lie algebra under the commutator bracket $\LB[\phi,\psi] \coloneq \phi \circ \psi - \psi\circ \phi$, \Cref{ex:assocLie} (where ``$\circ$'' is the usual composition of linear maps). A mapping $\phi \in L(A,A)$ is called \emph{derivation}\index{derivation of an algebra} of the algebra $A$ if it satisfies the Leibniz rule
 $$\phi (a\cdot b) = \phi(a) \cdot b+a \cdot \phi(b) \quad \forall a,b \in A.$$
 We denote by $\text{der} (A)$ the \emph{set of all derivations} of $A$ and note that it forms a Lie subalgebra of $(L(A,A),\LB )$. (As no topology is involved this will in general not be a locally convex Lie algebra.)
\end{defn}
 For $E$ a locally convex space, $U \opn E$ and $X \in \mathcal{V}(U)$ define the \emph{Lie derivative}\index{derivative!Lie derivative}
 \begin{align}\label{Liederiv}
  \mathcal{L}_X(f)\coloneq df\circ X = df (\id_U,X_E) \text{ for } f \in C^\infty (U,\R).
 \end{align}
 By definition $\mathcal{L}_X (f) = X.f$ in the special case that $f$ is real valued. The reason for the new notation and name will become apparent from the following observations (cf.\ also \Cref{defn:Lie derivative}): The pointwise multiplication turns $C^\infty (U,\R)$ into an associative algebra. Then $\mathcal{L}_X$ is linear in $f$. Thus
\begin{align}\label{LD}
 \mathcal{L}_X (f\cdot g) = \mathcal{L}_X (f)\cdot g +f\cdot \mathcal{L}_X (g).
\end{align}
With other words, $\mathcal{L}_X$ is a derivation of the algebra $C^\infty (U,\R)$.

\begin{lem}\label{lem:locLie}
 Let $U \opn E$ in a locally convex space
 \begin{enumerate}
 \item $\mathcal{L}_{\LB[X,Y]} = \mathcal{L}_X \circ \mathcal{L}_Y - \mathcal{L}_Y \circ \mathcal{L}_X$
 \item The map $\mathcal{L} \colon C^\infty (U,E) \rightarrow \text{\textup{der}}(C^\infty (U,E)), X \mapsto \mathcal{L}_X$ is linear and injective
 \item The map $\LB \colon C^\infty (U,E) \times C^\infty (U,E) \rightarrow C^\infty (U,E), (X,Y) \mapsto \LB[X,Y] = X.Y-Y.X$ turns the space $C^\infty (U,E)$ into a Lie algebra.
 \end{enumerate}
\end{lem}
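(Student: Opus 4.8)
The statement to prove is \Cref{lem:locLie}, which has three parts: an identity relating the Lie derivative of a bracket to a commutator of Lie derivatives (a), the linearity and injectivity of the map $X \mapsto \mathcal{L}_X$ (b), and the claim that $\LB[\cdot,\cdot]$ turns $C^\infty(U,E)$ into a Lie algebra (c). The natural route is to prove (a) by a direct computation, deduce (c) from (a) via the embedding into the Lie algebra of derivations, and dispatch (b) with elementary observations. I would order the argument (a), then (b), then (c), since both (b) and (c) lean on (a).

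\textbf{Part (a).} Here I would compute $\mathcal{L}_X(\mathcal{L}_Y(f))$ for a scalar function $f \in C^\infty(U,\R)$. Recall that $\mathcal{L}_Y(f) = df \circ (\id_U, Y)$, so $\mathcal{L}_Y(f)(z) = df(z;Y(z))$. Applying $\mathcal{L}_X$ means differentiating in the direction $X$; by the rule on partial differentials (\Cref{prop:rpd}) and the identity \eqref{ident:doublederiv} from Exercise \ref{Ex:Bastiani} 5., we get
\begin{align*}
 \mathcal{L}_X(\mathcal{L}_Y(f))(z) = d^2f(z;Y(z),X(z)) + df(z;dY(z;X(z))).
\end{align*}
Subtracting the analogous expression with $X,Y$ swapped, the pure second-order terms $d^2f(z;Y(z),X(z))$ and $d^2f(z;X(z),Y(z))$ cancel by Schwarz' theorem (Exercise \ref{Ex:Bastiani} 3.), leaving exactly $df(z; dY(z;X(z)) - dX(z;Y(z))) = df(z; \LB[X,Y](z)) = \mathcal{L}_{\LB[X,Y]}(f)(z)$. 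This yields (a).

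\textbf{Parts (b) and (c).} For (b), linearity of $\mathcal{L}$ in $X$ is immediate from linearity of $df(z;\cdot)$ (the derivative is linear in its second argument by the lemma following \Cref{lem:altchar}). For injectivity, I would use coordinate projections: if $\mathcal{L}_X = 0$, then for each continuous linear functional $\lambda \in E'$ and the coordinate-type function $f_\lambda(z) := \lambda(z)$ we have $\mathcal{L}_X(f_\lambda)(z) = df_\lambda(z;X(z)) = \lambda(X(z)) = 0$ for all $z$; since the continuous linear functionals separate points by Hahn-Banach (\Cref{thm:HahnBanach}), $X(z) = 0$ for all $z$, so $X = 0$. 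For (c), I would observe that $\LB[\cdot,\cdot]$ is bilinear and clearly satisfies $\LB[X,X] = X.X - X.X = 0$; the Jacobi identity then follows formally from (a): since $\mathcal{L}$ is an injective linear map into the Lie algebra of derivations $(\operatorname{der}(C^\infty(U,\R)),\LB[\cdot,\cdot])$ (where the bracket is the commutator, \Cref{defn:derivations}) intertwining the two brackets by (a), the Jacobi identity for the commutator bracket transports back through injectivity of $\mathcal{L}$ to $\LB[\cdot,\cdot]$ on $C^\infty(U,E)$. The main (mild) obstacle is keeping the bookkeeping of partial derivatives correct in the computation of (a)—ensuring the second-order terms are genuinely symmetric so that Schwarz' theorem applies—but once that cancellation is secured, (b) and (c) are routine.
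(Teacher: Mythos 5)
Your proof is correct and follows essentially the same route as the paper: part (a) by the same second-derivative computation with cancellation via Schwarz' theorem, injectivity in (b) via Hahn--Banach applied to continuous linear functionals, and the Jacobi identity in (c) transported back through the injective bracket-preserving map $\mathcal{L}$ into the derivations. No gaps.
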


\begin{proof}
 \begin{enumerate}
  \item From \eqref{ident:doublederiv} we deduce 
  $$\mathcal{L}_X(\mathcal{L}_Y(f))=d^2f(x;Y(x),X(x))+df(x;dY(x;X(x))).$$
  Using the formula also for $X$ and $Y$ interchanged, we see that the second order terms cancel by Schwartz' theorem and thus 
  $$\LB[\mathcal{L}_X,\mathcal{L}_Y](f)(x) = \mathcal{L}_{\LB[X,Y]}(f)(x).$$
  \item $\mathcal{L}_X$ is linear in $X$ as $df(x;\cdot)$ is. Thus it suffices to prove that the kernel of $\mathcal{L}$ is trivial. Let $X \in C^\infty (U,E)$ be a map with $X(x)\neq 0$ for some $x \in U$. By the Hahn-Banach theorem \Cref{thm:HahnBanach}, we find $\lambda \in E'$ with $\lambda(X(x))\neq 0$. Then $\mathcal{L}_X(\lambda)(x)=d\lambda(x,X(x))=\lambda(X(x))\neq 0$ and thus $\mathcal{L}_X \neq 0$.
  \item Clearly $\LB$ is bilinear, whence $(C^\infty(U,E),\LB )$ is an algebra. Now $\LB[X,X]=X.X-X.X=0$. Recall that in the Jacobi identity, the entries of the iterated Lie bracket are cyclically permuted. We write shorter $\sum_{\text{cycl}} \LB[X,\LB[Y,Z]]$ for this and thus have to check that this expression vanishes for all $X,Y,Z \in C^\infty(U,E)$. However,
  \begin{align*}
   \mathcal{L}\left(\sum_{\text{cycl}} \LB[X,\LB[Y,Z]]\right) =\sum_{\text{cycl}} \LB[\mathcal{L}_X,\LB[\mathcal{L}_Y,\mathcal{L}_Z]] =0 
  \end{align*}
 where we have used linearity of $\mathcal{L}$, (a), (b) and the fact that the derivations form a Lie algebra. Since $\mathcal{L}$ is injective by (b), we see that the Jacobi identity holds. \qedhere
 \end{enumerate}
\end{proof}

Finally, we show that the Lie bracket of vector fields is continuous if the space $E$ is finite dimensional.

\begin{lem}\label{lem:loccvxLieFD}
 Let $E$ be a finite dimensional space and $U \opn E$. Then the Lie bracket 
 $$\LB \colon C^\infty (U,E) \times C^\infty (U,E) \rightarrow C^\infty (U,E)$$
 is continuous. Hence $(C^\infty (U,E),\LB )$ is a locally convex Lie algebra.\index{Lie algebra!of vector fields}
\end{lem}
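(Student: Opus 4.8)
The plan is to reduce the continuity of the bracket to the continuity of the individual bilinear operations that compose it, exploiting that the compact open $C^\infty$-topology on $C^\infty(U,E)$ is initial with respect to the derivative maps $d^k$ (cf.\ \Cref{rem:Cinftopology} and \Cref{prop:lcvx_mappingsp}). Since $\LB[X,Y] = X.Y - Y.X = dY\circ(\id_U,X) - dX\circ(\id_U,Y)$, and subtraction is continuous, it suffices to show that the single map $(X,Y) \mapsto X.Y$, i.e.\ $(X,Y)\mapsto \big(z\mapsto dY(z;X(z))\big)$, is continuous.

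First I would recall that the topology on $C^\infty(U,E)$ is initial with respect to the family $d^k \colon C^\infty(U,E)\to C(U\times E^k,E)_{\text{c.o.}}$, so a map into $C^\infty(U,E)$ is continuous if and only if each $d^k$ composed with it is continuous. Thus I must show that for each $k\in\N_0$ the assignment
\begin{displaymath}
(X,Y) \mapsto d^k\big(z\mapsto dY(z;X(z))\big)
\end{displaymath}
is continuous into $C(U\times E^k,E)_{\text{c.o.}}$. Using the rule on partial differentials \Cref{prop:rpd} together with the formula \eqref{ident:doublederiv} from Exercise \ref{Ex:Bastiani} 5., the higher derivatives of $z\mapsto dY(z;X(z))$ expand (by the Leibniz-type chain rule, applied inductively) into finite sums of terms of the shape $d^{a+1}Y(z;\,d^{b_1}X(z;\cdots),\ldots)$ evaluated against the tangent directions, where each summand is a continuous expression built by composing the maps $d^aX$, $d^bY$ and evaluation. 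Each of these building blocks is continuous by \Cref{lem:eval} (here finite-dimensionality of $E$ makes $U$ locally compact, so the evaluation $C(U\times E^\ell,E)_{\text{c.o.}}\times(U\times E^\ell)\to E$ is continuous) and by continuity of composition of continuous maps on locally compact spaces. Continuity of the linear maps $X\mapsto d^aX$ and $Y\mapsto d^bY$ is immediate from the definition of the initial topology.

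The main obstacle, and the place where local compactness is genuinely used, is converting these pointwise algebraic identities into a statement of joint continuity in the compact open $C^\infty$-topology: one must check that each composite of derivative-and-evaluation maps is continuous for the compact open topology, which requires that $U$ (hence $U\times E^\ell$) be locally compact so that \Cref{lem:eval} applies and multiplication/composition of the $E$-valued expressions stays continuous. This is exactly where the hypothesis $\dim E<\infty$ enters, via \Cref{prop:compfindim}. I would organise the induction on $k$ cleanly: the base case $k=0$ is the map $(X,Y)\mapsto\big(z\mapsto dY(z;X(z))\big)$ viewed in $C(U,E)_{\text{c.o.}}$, handled directly by \Cref{lem:eval}, and the inductive step uses \eqref{ident:doublederiv} to express $d^{k}$ of the product through $d^{k+1}Y$, lower derivatives of $X$, and evaluations, each continuous by the induction hypothesis and the evaluation lemma. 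Once all $d^k$ are continuous, the bracket is continuous; combined with \Cref{lem:locLie}(c) giving the Lie algebra structure, this establishes that $(C^\infty(U,E),\LB)$ is a locally convex Lie algebra.
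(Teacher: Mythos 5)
Your argument is correct, but it is organised differently from the paper's. The paper makes a single reduction via the exponential law: by \Cref{lem:fwedge_vector} it suffices to check that the adjoint map $p\colon C^\infty(U,E)^2\times U\to E$, $p(X,Y,u)=dY(u;X(u))$, is a composition of continuous maps, namely $p(X,Y,u)=\ev(dY,(u,\ev(X,u)))$, where $Y\mapsto dY$ is continuous linear for the initial topology and the evaluations are continuous by \Cref{lem:eval} because $U$ is locally compact. That one application of the exponential law disposes of all orders of differentiation simultaneously. You instead work level by level: you test continuity against each $d^k$ separately, expand $d^k\bigl(z\mapsto dY(z;X(z))\bigr)$ by iterating \eqref{ident:doublederiv} into finitely many terms built from $d^{a+1}Y$, lower derivatives of $X$, and evaluations, and then pass from joint continuity of each term to continuity into $C(U\times E^k,E)_{\text{c.o.}}$ (this last conversion is \Cref{prop:C0expohalf}, which you should cite explicitly rather than attribute to local compactness — local compactness is needed only for \Cref{lem:eval} and hence for the continuity of the evaluation building blocks). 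Both routes rest on the same two pillars, the initial topology with respect to the $d^k$ and the continuity of evaluation on locally compact domains, and both isolate $\dim E<\infty$ as entering exactly through local compactness of $U$. The paper's version is shorter and avoids the combinatorial bookkeeping of the higher-order Leibniz expansion; yours is more elementary in that it never invokes the smooth exponential law \Cref{lem:fwedge_vector}, at the price of an induction whose inductive step you should spell out more carefully if you want a complete write-up.
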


\begin{proof}
 Note that $C^\infty (U,E)$ is a locally convex space with respect to the compact open $C^\infty$-topology, \Cref{prop:lcvx_mappingsp}. 
 To establish continuity of the Lie bracket, we deduce from Lemma \Cref{lem:fwedge_vector} that it suffices to establish continuity of adjoint map 
 $$p \colon C^\infty (U,E) \times C^\infty (U,E) \times U\rightarrow E,\quad (X,Y,u) \mapsto dY(u;X(u)).$$
 Recall that the compact open $C^\infty$-topology is initial with respect to the mappings $d^k \colon C^\infty (U,E) \rightarrow C(U\times E^k,E)_{\text{c.o}}, f\mapsto d^kf$.
 Hence the map $d \colon C^\infty (U,E) \rightarrow C(U\times E,E)$ is continuous. We can thus write the adjoint map as a composition of continuous mappings (cf.\ \Cref{lem:eval}, which uses that $U$ is locally compact, i.e.\ $E$ finite dimensional)
 $p (f,g) = \ev(d(f),u,\ev(g,u))$, whence the Lie bracket is continuous.
\end{proof}

\begin{cor}\label{cor:VFlcvx:LA}
 Let $M$ be a finite dimensional manifold. Then $(\mathcal{V}(M),\LB )$ is a locally convex Lie algebra.
\end{cor}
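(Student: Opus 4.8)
The statement to prove is Corollary~\ref{cor:VFlcvx:LA}: for a finite-dimensional manifold $M$, the pair $(\mathcal{V}(M), \LB)$ is a locally convex Lie algebra. By the definition of a locally convex Lie algebra, I must verify two things: that $\mathcal{V}(M)$ is a locally convex space, and that the Lie bracket $\LB \colon \mathcal{V}(M) \times \mathcal{V}(M) \rightarrow \mathcal{V}(M)$ is a continuous bilinear map satisfying the Jacobi identity and skew-symmetry (equivalently $\LB[X,X]=0$). The algebraic part is essentially already granted: the bracket $\LB[X,Y]$ on $\mathcal{V}(M)$ was constructed in Exercise~\ref{Ex:LA}~3 (globalising the local formula via \Cref{glueinglemma}), and the Lie algebra axioms hold because they can be checked locally in charts, where \Cref{lem:locLie} establishes them. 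The locally convex space structure is provided by \Cref{lcvx:space}. So the genuine content here is \textbf{continuity of the bracket}, and everything should be reduced to the local statement \Cref{lem:loccvxLieFD}.

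\textbf{Key steps.} First I would recall from \Cref{lcvx:space} that the topology on $\mathcal{V}(M)$ is the initial topology with respect to the map $\kappa \colon \mathcal{V}(M) \to \prod_{\varphi \in \mathcal{A}} C^\infty(V_\varphi, E_\varphi)$, $X \mapsto (X_\varphi)_{\varphi \in \mathcal{A}}$, where each factor carries the compact open $C^\infty$-topology. Since the topology on $\mathcal{V}(M)$ is initial with respect to the maps $X \mapsto X_\varphi$ (the composition of $\kappa$ with the coordinate projections, which characterise the initial topology), a bilinear map into $\mathcal{V}(M)$ is continuous as soon as its composition with each such coordinate map is continuous. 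Thus it suffices to show that for every chart $\varphi \in \mathcal{A}$ the map
\begin{equation*}
 \mathcal{V}(M) \times \mathcal{V}(M) \rightarrow C^\infty(V_\varphi, E_\varphi), \quad (X,Y) \mapsto \LB[X,Y]_\varphi
\end{equation*}
is continuous. Here the crucial observation is that the local representative of the bracket is the bracket of the local representatives: by construction of the bracket in Exercise~\ref{Ex:LA}~3 (and by \Cref{glueinglemma}), one has $\LB[X,Y]_\varphi = \LB[X_\varphi, Y_\varphi]$ in the sense of the local formula $X_\varphi.Y_\varphi - Y_\varphi.X_\varphi$ on $V_\varphi \subseteq E_\varphi$. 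Since $M$ is finite-dimensional, $E_\varphi$ is finite-dimensional, and \Cref{lem:loccvxLieFD} asserts exactly that the local Lie bracket $\LB \colon C^\infty(V_\varphi, E_\varphi) \times C^\infty(V_\varphi, E_\varphi) \to C^\infty(V_\varphi, E_\varphi)$ is continuous. Composing with the continuous projections $\mathcal{V}(M) \to C^\infty(V_\varphi, E_\varphi)$, $X \mapsto X_\varphi$, the displayed map factors as a continuous bilinear map postcomposed with continuous linear coordinate maps, hence is continuous.

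\textbf{Main obstacle.} Most of the work has been front-loaded into the lemmas, so the argument is short. The one point requiring care is the interplay between the initial topology on $\mathcal{V}(M)$ and the reduction to charts: I must argue cleanly that checking continuity coordinatewise (against each $X \mapsto X_\varphi$) suffices for a bilinear map, which follows from the universal property of the initial topology applied to the map $\mathcal{V}(M) \times \mathcal{V}(M) \to \mathcal{V}(M)$, combined with the fact that the bracket respects the chart-wise decomposition. I would also make explicit that $\LB[X,Y]_\varphi$ depends continuously on $(X_\varphi, Y_\varphi)$ \emph{only} through the local data in the single chart $\varphi$ — no transition maps enter the continuity estimate, since the local representative in chart $\varphi$ is computed purely from $X_\varphi$ and $Y_\varphi$ via \Cref{lem:loccvxLieFD}. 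Finally I would note that finite-dimensionality of $M$ is used precisely so that each $E_\varphi$ is finite-dimensional, which is the hypothesis under which \Cref{lem:loccvxLieFD} guarantees continuity (via local compactness of $V_\varphi$ in the proof of that lemma). With the algebraic axioms already in hand from Exercise~\ref{Ex:LA}~3 and continuity so established, $(\mathcal{V}(M), \LB)$ is a locally convex Lie algebra, completing the proof.
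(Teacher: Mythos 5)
Your proposal is correct and follows essentially the same route as the paper: reduce continuity of the bracket to the chart-wise local formula via the initial topology from \Cref{lcvx:space}, and then invoke \Cref{lem:loccvxLieFD} for the finite-dimensional local model, with the algebraic axioms already supplied by Exercise \ref{Ex:LA} 3. The only cosmetic slip is the phrase ``postcomposed with continuous linear coordinate maps'' --- the bracket is \emph{pre}composed with the coordinate maps $(X,Y)\mapsto(X_\varphi,Y_\varphi)$ --- but the argument as described is the intended one.
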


\begin{proof}
That the vector fields form a Lie algebra is checked in Exercise \ref{Ex:LA} 3. Recall from \Cref{lcvx:space} that the vector fields were topologised as a subspace of a product of spaces of the form $C^\infty (U,E)$ where $U \opn M$. By construction of the Lie bracket of two vector fields, the bracket is given by a local formula on chart domains $U$. Hence it suffices to establish continuity of the local formula on the spaces $C^\infty (U,E)$. This was exactly the content of \Cref{lem:loccvxLieFD}
 \end{proof}

\begin{rem}\label{rem:noloccvxLA}
 In general, the Lie algebra of vector fields $\mathcal{V} (M)$ will not be a locally convex Lie algebra if $M$ is an infinite-dimensional manifold. Indeed, it can be shown that \Cref{lem:loccvxLieFD} becomes false beyond the realm of Banach spaces. To see this, let $U \opn E$ be an open subset of a non-normable space. We consider the subalgebra 
 $$\mathcal{A} = \{X_{A,b} \in C^\infty (U,E) \mid \forall v \in E,\ X_{A,b} (v) = Av+b, \text{ for } A \in L (E,E), b \in E\}$$
 of affine vector fields. By construction we can identify $\mathcal{A} \cong L (E,E) \times E$. Here the subspace topology on induced by the compact open $C^\infty$-topology of $C^\infty (U,E)$ on $\mathcal{A}$ is the product topology, where $E$ carries its natural locally convex topology and the space of continuous linear mappings $L(E,E)$ is endowed with the compact open topology (i.e.~the topology induced by the embedding $L(E,E) \subseteq C_{\text{c.o.}}(E,E)$). Indeed the latter fact is irrelevant for us, we are only interested in the fact that this topology turns $L(E,E)$ into a topological vector space. Now, the Lie bracket of $C^\infty (U,E)$ induces the Lie bracket
 \begin{align*}
  \LB[X_{A,b}, X_{C,d}](v) = (A \circ C - C \circ A)(v) + (A(d) - C(b))
 \end{align*}
on the affine vector fields (these facts are left as Exercise \ref{Ex:VFLA} 3.). To see that this Lie bracket is in general not continuous, it suffices to note that the evaluation map $L(E,E) \times E \rightarrow E,\ (A,v) \mapsto A(v)$ is discontinuous. For this we pick $0 \neq v \in E$ and consider the mapping
\begin{align}\label{map:jemb}
 j\colon E^\prime=L(E,\R) \rightarrow L(E,E) ,\quad \lambda \mapsto (x \mapsto \lambda(x)\cdot v)
\end{align} 
If we endow the dual space $E^\prime$ with the compact open topology (again the subspace topology of $E^\prime\subseteq C_{\text{c.o.}}(E,\R)$) then $E^\prime$ becomes a topological vector space and $j$ continuous. However,  \Cref{prop:dual_norm} shows that the evaluation map $E^\prime\times E \rightarrow \R$ is discontinuous for every topological vector space $E$ which is not normable. As $j$ and scalar multiplication in $E$ are continuous, this implies that the evaluation of $L(E,E)$ must be discontinuous if $E$ is not normable. We deduce that the Lie bracket on $C^\infty (U,E)$ must be discontinuous if $E$ is not normable.\footnote{Even stronger one can show that the evaluation must be discontinuous on $L(E,E)$ with the compact open topology for all infinite-dimensional spaces $E$, cf.\ \cite[Remark I.5.3]{Neeb06} for an exposition.}
\end{rem}

\begin{Exercise} \vspace{-\baselineskip} \label{Ex:VFLA}
 \Question Show that the construction of the topology for $\mathcal{V}(M)$ in \Cref{lcvx:space} is just a special case of \Cref{topo:sect}.
 \Question Let $A$ be an associative algebra. Show that the set of derivations $\text{der}(A)$ (see \Cref{defn:derivations}) forms a Lie subalgebra of $(L(A,A),\LB )$, where the bracket is given by the commutator bracket $\LB[f,g] = f\circ g - g\circ f$ of linear maps.\index{derivation of an algebra}
  \Question We provide the missing details in \Cref{rem:noloccvxLA}. To this end let $U \opn E$ in a locally convex space and endow $C^\infty (U,E)$ with the compact-open $C^\infty$-topology (i.e.~the topology induced by the embedding $L(E,E) \subseteq C_{\text{c.o.}}(E,E)$). We consider the affine vector fields  $$\mathcal{A} = \{X_{A,b} \in C^\infty (U,E) \mid \forall v \in E,\ X_{A,b} (v) = Av+b, \text{ for } A \in L (E,E), b \in E\}$$
  and identify $\mathcal{A} = L (E,E) \times E$ (where $L (E,E)$ denote continuous linear maps). Show that 
 \subQuestion the subspace topology on $\mathcal{A}$ is the product topology of the compact open topology on $L(E,E)$ and the locally convex topology of $E$, 
 \subQuestion the Lie bracket on $C^\infty (U,E)$ induces the Lie bracket $$\LB[X_{A,b}, X_{C,d}] = (A \circ C - C \circ A) + (A(d) - C(b))\text{ on }\mathcal{A}.$$
 \subQuestion if we endow the dual space $E^\prime$ with the compact open topology (i.e. the subspace topology of $E^\prime\subseteq C_{\text{c.o.}}(E,\R)$), then $E^\prime$ is a topological vector space and the map $j$ from \eqref{map:jemb} becomes continuous.
\end{Exercise}

\chapter{Differential forms on infinite-dimensional manifolds}\copyrightnotice \label{app:diff_form}

In this appendix we give a short introduction to differential forms on infinite-dimensional manifolds. For more information on differential forms on infinite-dimensional manifolds and their application we refer the interested reader to \cite{MR891612,GNprep}. The main difference between the finite-dimensional (or Banach) and our setting, is that it is in general impossible to interpret differential forms as (smooth) sections into certain bundles of linear forms. The reason for this is again that the topology on spaces of linear forms breaks down beyond the Banach setting (cf.\ \Cref{prop:dual_norm}). Even worse, the many equivalent ways to define differential forms in finite dimensions become inequivalent in the infinite-dimensional setting (see \cite[Section 33]{KM97} for a thorough discussion of this phenomenon). Most notably, there is no useful way to describe differential forms as a sum of differential forms coming from a local coordinate system. 

We begin with the definition of a differential form. This definition is geared towards avoiding any reference to topologies on spaces of linear mappings. This again is a continuity problem and in the inequivalent convenient setting of global analysis, differential forms can be described as sections in suitable bundles, cf.~\cite[33.22 Remark]{KM97}. Furthermore, we need to avoid arguments involving the existence of (smooth) bump functions (which in general do not exist, see \Cref{smooth:bumpfun}).

\begin{defn}\label{defn:differentialform}
 Let $M$ be a manifold and $E$ be a locally convex space and $p\in \N_0$. An \emph{$E$-valued $p$-form}\index{differential form} $\omega$ on $M$ is a function $\omega$ which associates to each $x \in M$ a $p$-linear alternating map $\omega_x \colon (T_x M)^p \rightarrow E$ such that for each chart $(U,\varphi)$ of $M$, the map 
 \begin{align}\label{defn:diffform}
  \omega_\varphi \colon V_\varphi \times F_\varphi^p \rightarrow E,\quad \omega_\varphi (x,v_1,\ldots, v_p) \coloneq \omega_{\varphi^{-1}(x)} (T_x \varphi^{-1}(v_1),\ldots , T_x \varphi^{-1}(v_p))
 \end{align}
 is smooth. We write $\Omega^p (M,E)$ for the \emph{space of smooth $E$-valued $p$-forms}\index{space!of smooth $p$-forms} on $M$. Note that $\Omega^0 (M,E) = C^\infty (M,E)$.
\end{defn}

\begin{ex}
 For $p=0$, we have already seen that smooth functions are differential forms. If $f \in C^\infty (M,E)$, then the derivative $df \colon TM \rightarrow E, v \mapsto \text{pr}_2 \circ Tf(v)$ is a smooth $E$-valued  $1$-form. 
\end{ex}

Constructing the derivative of a smooth function can be generalised to a differential on the space of $p$-forms, the so called \emph{exterior differential}\index{exterior differential} 
$$\mathrm{d} \colon \Omega^p(M,E) \rightarrow \Omega^{p+1} (M,E)$$ 
which we discuss now. On an infinite-dimensional manifold there is no generalisation of local coordinates in a vector basis. Hence the finite-dimensional approach defining the exterior differential in a local coordinate frame is not available. Recall some standard notation useful in the present context: 
In \eqref{eq:Liederiv:smoo} we defined a derivative $X.f$ of a smooth function $f$ in the direction of $X$. If $\omega \in \Omega^p (M,E)$ and $U \opn M$, we define for vector fields $X_1,\ldots X_p \in \mathcal{V} (U)$ a smooth map $$\omega (X_1,\ldots,x_p) \colon U \rightarrow E,\quad m \mapsto \omega_m (X_1 (m),\ldots X_p (m)).$$ 
Finally, we write $\omega (X_0, \ldots, \hat{X}_i, \ldots, X_p)$ to indicate that the $i$th component is to be omitted from the formula.

\begin{prop}\label{ext:deriv}
 For $\omega \in \Omega^p (M,E)$ there exists a smooth $p+1$-form $\mathrm{d}\omega \in \Omega^{p+1}(M,E)$ which for any $U \opn M$ and vector fields $X_1,\ldots , X_p \in \mathcal{V} (U)$ satisfies 
 \begin{align}
  \mathrm{d}\omega (X_0,X_1,\ldots , X_p) (m) =& \sum_{i=0}^p (-1)^{i} (X_i.\omega(X_0,\ldots,\hat{X}_i,\ldots,X_p))(m) \notag \\ &+ \sum_{i <j}(-1)^{i+j} \omega (\LB[X_i,X_j], X_0,\ldots,\hat{X}_i, \ldots \hat{X}_j , \ldots,X_p)(m) \label{ext:diff}
 \end{align}
\end{prop}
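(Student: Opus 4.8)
The plan is to build $\mathrm{d}\omega$ locally, verify that the local pieces assemble into a genuine $(p+1)$-form, and only afterwards check that the invariant formula \eqref{ext:diff} holds. The guiding observation is that on an open subset $V_\varphi \opn F_\varphi$ of a locally convex space one has the \emph{constant} vector fields at one's disposal, and for constant fields all the Lie brackets $\LB[X_i,X_j]$ in \eqref{ext:diff} vanish while the Lie derivatives $X_i.(\cdot)$ reduce to partial derivatives in the base point. This sidesteps any appeal to bump functions or to a coordinate basis, neither of which is available in our setting (\Cref{smooth:bumpfun}).

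First I would define, for the local representative $\omega_\varphi \colon V_\varphi \times F_\varphi^p \to E$ from \eqref{defn:diffform}, the candidate
\begin{align*}
(\mathrm{d}\omega)_\varphi(x; v_0, \ldots, v_p) \coloneq \sum_{i=0}^p (-1)^i\, d_1\omega_\varphi(x; v_i)(v_0, \ldots, \hat{v}_i, \ldots, v_p),
\end{align*}
where $d_1$ is the partial derivative in the base point (\Cref{prop:rpd}) and the inner evaluation is of the resulting $p$-linear map. Since $\omega_\varphi$ is smooth, so is $d_1\omega_\varphi$, whence $(\mathrm{d}\omega)_\varphi$ is smooth and $(p+1)$-linear in the $v_i$; that it is alternating follows from $\omega_\varphi$ being alternating together with the antisymmetric summation, checked by transposing two adjacent arguments and using Schwarz' theorem (\eqref{ident:doublederiv}, Exercise \ref{Ex:Bastiani} 3.) to symmetrise the second-order contributions. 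To cut down the bookkeeping one may compose throughout with a continuous functional $\lambda \in E'$ and argue for the scalar form $\lambda \circ \omega$, lifting back at the end since $E'$ separates points by Hahn--Banach (\Cref{thm:HahnBanach}).

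The central step is to prove that the right-hand side of \eqref{ext:diff}, evaluated on fields $X_0, \ldots, X_p \in \mathcal{V}(U)$, is \emph{tensorial}, i.e.\ $C^\infty(U,\R)$-linear in each slot and hence depends only on the pointwise values $X_i(m)$. Here I would substitute $f X_i$ for $X_i$ (with $f \in C^\infty(U,\R)$) and expand, using the Leibniz rule for $X.(\cdot)$ from \Cref{chapter:LA:VF} and the identity $\LB[fX_i, X_j] = f\LB[X_i,X_j] - (X_j.f)X_i$; the claim is that every term carrying a derivative of $f$ cancels. This is the same algebraic cancellation as in finite dimensions and goes through verbatim, because the bracket and the directional derivative obey the same Leibniz identities in our framework (\Cref{chapter:LA:VF}). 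Granting tensoriality, the right-hand side of \eqref{ext:diff} defines a pointwise $(p+1)$-linear alternating map $\mathrm{d}\omega_m$ on $(T_mM)^{p+1}$; evaluating it in a chart on constant vector fields makes the bracket terms drop out and turns the $X_i.(\cdot)$ terms into $d_1\omega_\varphi$, so its local representative coincides with $(\mathrm{d}\omega)_\varphi$ above. This simultaneously yields chart-independence (both charts compute the same invariantly defined object) and, via the explicit smooth formula, the smoothness requirement of \Cref{defn:differentialform}, whence $\mathrm{d}\omega \in \Omega^{p+1}(M,E)$.

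I expect the tensoriality computation to be the main obstacle---not because it is deep, but because the full expansion of \eqref{ext:diff} with $fX_i$ produces many terms, and one must track signs in both the Lie-derivative sum and the bracket sum and verify the precise pairwise cancellation of the $f$-derivative contributions. Once this is secured, the remaining points (smoothness, alternation, and the reduction to constant vector fields) follow routinely from the local formula and the calculus of \Cref{chapter:LA:VF} and \Cref{sect:calculus}.
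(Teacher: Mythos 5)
Your construction of the local representative and the reduction to constant vector fields in a chart match the paper's Step 2, but the central step of your argument has a genuine gap. You propose to prove that the right-hand side of \eqref{ext:diff} is $C^\infty(U,\R)$-linear in each slot and then conclude that it ``hence depends only on the pointwise values $X_i(m)$''. That inference is precisely what is unavailable here: passing from module-multilinearity to pointwise dependence requires writing a vector field vanishing at $m$ as a finite sum $\sum_j f_j Y_j$ with $f_j(m)=0$ (a local frame) and/or extending local data by bump functions, and neither tool exists for manifolds modelled on general locally convex spaces (cf.\ \Cref{smooth:bumpfun} and the warning at the start of this appendix that the finite-dimensional characterisations of differential forms become inequivalent in this setting). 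So tensoriality alone does not produce a well-defined pointwise $(p+1)$-linear map, and the chart-independence of your local formula is left unproved.

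The paper closes exactly this hole by a more hands-on argument: it \emph{defines} $(\mathrm{d}\omega)_m(v_0,\ldots,v_p)$ by \eqref{ext:diff} using any choice of local fields $X_i$ with $X_i(m)=v_i$ and then shows directly, by expanding the local representatives in a chart, that the whole expression vanishes at $m$ as soon as one field $X_k$ satisfies $X_k(m)=0$. The cancellation there is concrete: the only surviving derivative contributions from $\sum_i(-1)^iX_i.\omega(\ldots)$ are those containing $dX_k(m;X_i(m))$, and since $X_k(m)=0$ one has $\LB[X_i,X_k](m)=dX_k(m;X_i(m))$, so these terms cancel against the bracket sum using that $\omega$ is alternating. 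Your Leibniz-rule computation with $fX_i$ is algebraically close to this, but it establishes the wrong invariance statement; replace it by the ``vanishes when $X_k(m)=0$'' computation, after which well-definedness, chart-independence and (via constant fields) smoothness follow as you describe. A small additional remark: alternation of the local formula does not need Schwarz' theorem, since only first derivatives of $\omega_\varphi$ in the base point occur; it is a direct sign computation, left in the paper as Exercise \ref{Ex:differentialform} 1.
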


\begin{proof}
 Consider $m \in M$ and $v_1,\ldots, v_p \in T_mM$. To define $(\mathrm{d}\omega )_m (v_1,\ldots,v_p)$ we pick an open neighborhood $U$ of $m$ together with vector fields $X_i \in \mathcal{V}(U)$ such that $X_i (m)=v_i, i=0,1,2,\ldots, p$. Note that such vector fields always exist as we can take the constant vector fields in a chart neighborhood (in particular, the definition does not require us to globalise these fields, which would require bump functions which may not exist). Then
 \begin{align}\label{loc:defn}
  (\mathrm{d}\omega )_m (v_0,v_1,\ldots,v_p) \coloneq \mathrm{d}\omega (X_0,X_1,\ldots,X_p) (m),
 \end{align}
where the right hand side has been defined via \eqref{ext:diff} for our choice of vector fields. 

\textbf{Step 1:} $(\mathrm{d}\omega)_m (v_1,\ldots,v_p)$ does not depend on the choice of vector fields in \eqref{loc:defn}.
We have to show that the expression \eqref{ext:diff} becomes $0$ if $X_k (m) = 0$ for at least one $k$. Assuming that $X_k$ vanishes in $m$, we may without loss of generality assume that we are working in local coordinates. We will suppress the chart identification in the formulae and also identify each vector field $X_i$ with its principal part on some $U \opn F$ (where $F$ is a locally convex space). Exploit now that $\omega$ is alternating and linear to see that the contributions in \eqref{ext:diff} which do not directly vanish are 
\begin{align}
 \sum_{i \neq k}^p (-1)^i (X_i.\omega(X_0,\ldots,\hat{X}_i,\ldots,X_p))(m) \label{first:part} \\
 + \sum_{i < k} (-1)^{i+k} \omega (\LB[X_i,X_k],X_0,\ldots,\hat{X}_i,\ldots , \hat{X}_k, \ldots , X_p)(m) \label{higher:part} \\
 + \sum_{k < i} (-1)^{i+k} \omega (\LB[X_k,X_i],X_0,\ldots,\hat{X}_k,\ldots , \hat{X}_i, \ldots , X_p)(m) \label{lower:part}
\end{align}
Apply the definition of the differential form $\omega$ on the open subset of a locally convex space \eqref{defn:diffform}. In this presentation $\omega$ is a function of $p+1$-variables and $p$-linear in the last $p$-variables. Hence we can compute the derivative for a summand in \eqref{first:part} explicitly as:
\begin{align*}
 &X_i.\omega(X_0,\ldots,\hat{X}_i,\ldots,X_p)(m)\\ =& d_1\omega(m,X_1(m),\ldots \hat{X}_i (m),\ldots,X_p(m);X_i(m)) \\ &+ \sum_{j<i} \omega_m (X_0(m),\ldots, dX_j (m; X_i(m)),\ldots,\hat{X}_i (m),\ldots, X_p(m))\\
 &+ \sum_{i<j} \omega_m (X_0(m),\ldots, \hat{X}_i (m), dX_j (m; X_i(m)),\ldots,,\ldots, X_p(m))
\end{align*}
As $X_k$ vanishes, we see that for every $i>k$ only 
$$\omega_m (X_0(m),\ldots,dX_k(m;X_i(m)),\ldots ,\hat{X}_i(m),\ldots, X_p(m))$$
survives and as $X_k (m)=0$, we have 
$$dX_k (m,X_i(m))=dX_k (m,X_i(m))- dX_i (m,X_k(m)) = \LB[X_i,X_k](m)$$
Using that $\omega$ is alternating, we see that 
\begin{align*}
 &(-1)^k\omega(\LB[X_k,X_i], X_0,\ldots ,\hat{X}_k,\ldots, \hat{X}_i ,\ldots, X_p)(m)\\
 =& - \omega_m (X_0(m),\ldots, dX_k (m,X_i(m)),\ldots,\hat{X}_i (m),\ldots, X_p(m))
\end{align*}
hence the corresponding terms in \eqref{first:part} and \eqref{higher:part} cancel. Similar arguments show that this is also happens for the parts in \eqref{first:part} and \eqref{lower:part} if $i < k$. We conclude that \eqref{ext:diff} vanishes at a point if one of the vector fields vanishes at the point. This shows in particular that \eqref{loc:defn} is independent of the choices of vector fields.

\textbf{Step 2:} $\mathrm{d}\omega$ is a smooth $p+1$-form.
For smoothness we work again locally in a chart as in Step 1 and pick all vector fields $X_i$ to be constant. As the Lie bracket of constant vector fields vanishes, \eqref{loc:defn} reduces to 
\begin{align}\label{loc:form2}
 (\mathrm{d}\omega)_m(v_0,\ldots,v_p) = \sum_{i=0}^p (-1)^{i} d_1\omega(m,v_0,\ldots, \hat{v}_i,\ldots, v_p;v_i).
\end{align}
Now by definition $\omega$ induces a smooth function in all charts, whence we see that also $\mathrm{d}\omega$ is smooth in $(m,v_0,v_1,\ldots,v_o)$.
To see that $(\mathrm{d}\omega)_m$ is alternating, observe that the summands in \eqref{loc:form2} are alternating. Assume now that $v_i = v_j$ for some $i <j$ its easy to see that \eqref{loc:form2} vanishes (we leave this as Exercise \ref{Ex:differentialform} 1.).
\end{proof}

We thus obtain for every $p \geq 0$ an exterior differential on the space of $p$-forms. The usual proof (cf.~\cite[Theorem 33.18]{KM97} or \cite[V. Proposition 3.3]{Lang} then shows that $\mathrm{d}^2 = \mathrm{d}  \circ \mathrm{d} = 0$ in every degree. Hence as in the finite-dimensional (or the Banach) setting, the exterior differential gives rise to a cochain complex of differential forms 
$$C^\infty (M,E) = \Omega^0 (M,E) \xrightarrow{\mathrm{d}} \Omega^1(M,E) \xrightarrow{\mathrm{d}} \Omega^3 (M,E)  \xrightarrow{\mathrm{d}} \cdots.$$
Starting from this complex, one can define and study de Rham cohomology on the (infinite-dimensional) manifold $M$. We will not pursue this route here and refer instead to \cite{MR891612} or \cite[Chapter 34]{KM97} for more information.

Differential forms of higher order are typically constructed using the wedge product. The definition is as in the finite-dimensional setting (note however that there are several conventions as to the coefficients, we chose to follow \cite{Lang})

\begin{defn}\label{defn:wedge}
 Let $E_i, i=1,2,3$ be locally convex spaces and $\beta \colon E_1 \times E_2 \rightarrow E_3$ be a continuous bilinear map. Fix $p,q \in \N_0$ and denote by $S_{p+q}$ the symmetric group of all permutations of $\{1,2,\ldots p+q\}$. For $\omega \in \Omega^p (M,E_1)$ and $\eta \in \Omega^q (M,E_2)$ define the \emph{wedge product}\index{wedge product} $\omega \wedge \eta \in \Omega^{p+q}(M,E_3)$ via $(\omega \wedge \eta)_x \coloneq \omega_x \wedge \eta_x$ for $x \in M$, where 
 \begin{align*}
  (\omega_x\wedge \eta_x)(v_1,\ldots,v_{p+q}) \coloneq \frac{1}{p!q!} \sum_{\sigma \in S_{p+q}} \text{sgn} (\sigma) \beta(\omega_x (v_{\sigma (1)},\ldots,v_{\sigma (p)}) , \eta_x (v_{\sigma (p+1)}, \ldots , v_{\sigma (p+q)})
 \end{align*}
Then  $$\wedge \colon \Omega^p(M,E_1) \times \Omega^q(M,E_2) \rightarrow \Omega^{p+q}(M,E_3), \quad (\omega,\eta)\mapsto \omega \wedge \eta$$
is a bilinear map.
\end{defn}

\begin{ex}\label{ex:scalarmult}
 If $s \colon \R \times E \rightarrow E$ is the scalar multiplication and $f\in C^\infty (M,\R)$, then the wedge product of $f$ and $\omega \in \Omega^p(M,E)$ is given by
 $(f\wedge \omega)_x = f(x)\omega_x$. This is usually abbreviated by $f\omega \coloneq f\wedge \omega$ and its easy to see that $\Omega^p(M,E)$ becomes a $C^\infty (M,\R)$-module. 
\end{ex}

\begin{ex}\label{Lie:wedge}
 Let $(E,\LB )$ be a locally convex Lie algebra. Then $\LB \colon E \times E \rightarrow E$ is bilinear and we can construct the wedge product $\wedge$ with respect to the Lie bracket.
 For this special situation we define for $\omega \in \Omega^p (M,E)$ and $\eta \in \Omega^q (M,E)$ the bracket
 $$\LB[\omega,\eta]_\wedge \coloneq \omega \wedge \eta$$
\end{ex}

We will now define several standard operations on differential forms such as the pullback of $p$-forms by smooth mappings.

\begin{defn}\label{defn:pullback}
 Let $\varphi \colon M \rightarrow N$ be a smooth map between manifolds. Then we define for $\omega \in \Omega^p(N,E)$ a $p$-form $\varphi^\omega \in \Omega^p (M,E)$, the \emph{pullback of $\omega$ by $\varphi$}\index{differential form!pullback} via
 $$(\varphi^*\omega)_x (v_1,\ldots v_p) \coloneq \omega_{\varphi(x)} (T_x \varphi (v_1) ,\ldots T_x \varphi (v_p))$$
\end{defn}

Due to the chain rule we immediately have the following rules for the computation of pullbacks 
\begin{setup}
The following rules hold for smooth maps and $p$-forms
$$\id_M^* \omega = \omega, \quad \varphi_1^* (\varphi_2^* \omega) = (\varphi_2 \circ \varphi_1)^*, \quad \varphi^* (\omega \wedge \eta) = \varphi^*\omega \wedge \varphi^* \eta$$
 If $p=0$, i.e.~$\omega = f \in C^\infty (M,E)$, then $\varphi^* f = f\circ \varphi$ and we recover the pullback discussed in the context of manifolds of mappings.
\end{setup}

Finally, we define the Lie derivative of a differential form by a vector field. Before we begin, note that the definition of the Lie derivative has to diverge from the usual definition on finite-dimensional or Banach manifolds. 
This is due to the fact that the common description of the Lie derivative (see e.g.\ \cite[V. \S 2]{Lang}) uses the differential of a flow of a vector field. However, as flows of vector fields are the solutions to certain ordinary differential equations, it is unclear whether the flow of a vector field would exist on the more general manifolds we consider (see \Cref{Diffeq:beyond} for a discussion of this problem). Nevertheless, \cite[V.5 Proposition 5.1]{Lang} shows that for Banach manifolds the following definition coincides with the classical one involving flows. 

\begin{defn}\label{defn:Lie derivative}
 Let $M$ be a manifold and $E$ a locally convex space. For $X \in \mathcal{V} (M)$ and $\omega \in \Omega^p (M,E), p\in \N_0$ we define the \emph{Lie derivative}\index{derivative!Lie derivative}\index{differential form!Lie derivative} $\mathcal{L}_Y \omega \in \Omega^p (M,E)$ as follows
 \begin{align*}
  (\mathcal{L}_Y \omega)_m (v_1,\ldots,v_p) &= Y.\omega(X_1,\ldots,X_p)(m) -\sum_{j=1}^p \omega (X_1 ,\ldots, \LB[Y,X_j],\ldots X_p)(m)\\
  &= Y.\omega(X_1,\ldots,X_p)(m) +\sum_{j=1}^p (-1)^j\omega (\LB[Y,X_j], X_1 ,\ldots, \hat{X}_j,\ldots X_p)(m)
 \end{align*}
 where the $X_i$ are smooth vector fields defined in a neighborhood of $m$ such that $X_i (m) =v_i$. That the Lie derivative is well defined will be checked in Exercise \ref{Ex:differentialform} 4.
\end{defn}

Note that for $\omega \in \Omega^0 (M,E) = C^\infty (M,E)$ the formula of the Lie derivative reduces to $\mathcal{L}_Y\omega = d\omega \circ Y$. This was precisely the formula for the Lie derivative described in \Cref{defn:derivations} for functions.

\begin{Exercise}\label{Ex:differentialform} \vspace{-\baselineskip}%
 \Question Check that the exterior differential $\mathrm{d}\omega$ of a $p$-form is an alternating $p+1$-form.
 \Question Check the details from \Cref{defn:wedge}: Show that
 \subQuestion the wedge product of a $p$-form and a $q$-form is indeed a $p+q$-form,
 \subQuestion the wedge product defines a bilinear map between spaces of differential forms,
 \subQuestion that $\Omega^p(M,E)$ is a $C^\infty (M,\R)$-module (cf.~\Cref{ex:scalarmult}),
 \subQuestion for $p=q=1$ we have $\omega_x \wedge \eta_x (v_1,v_2) = \beta(\omega_x(v_1),\eta_x(v_2))-\beta(\omega_x (v_2),\eta_x(v_2))$.
 \Question Prove that for $\omega \in \Omega^p (M,E)$, $\eta \in \Omega^q (M,F)$ and any wedge product, the following formula holds 
 $$\mathrm{d} (\omega \wedge \eta) = (\mathrm{d}\omega) \wedge \eta + (-1)^p \omega \wedge (\mathrm{d}\eta).$$
 Furthermore, show that for $f\colon N \rightarrow M$ smooth, we have
  $$f^*\mathrm{d}\omega = \mathrm{d}f^*\omega.$$
 {\footnotesize \textbf{Hint:} The assertions are local, whence they can be solved using the local formula for the exterior differential.}
 \Question In this exercise we let $X \in \mathcal{V}(M)$ and $\omega \in \Omega^p (M,E), p \in \N_0$ for $M$ a manifold and $E$ a locally convex space.
 Show that the definition of the Lie derivative $\mathcal{L}_Y \omega$ does not depend on the choice of vector fields $X_i$ in \Cref{defn:Lie derivative}. Conclude that $\mathcal{L}_Y \omega$ is a smooth $p$-form.\\ 
 {\footnotesize \textbf{Hint:} It suffices to show that $\mathcal{L}_Y\omega$ vanishes if $X_i (m)=0$ and this can be checked locally}
 \end{Exercise}

\section{The Maurer-Cartan form on a Lie group}\label{App:MC}
For Lie groups there are two important differential forms induced by the Lie group structure:

\begin{ex}\label{ex:MCform}
 Let $G$ be a Lie group with Lie algebra $\Lf (G)$. then we define the \emph{right Maurer-Cartan form}\index{Maurer-Cartan form} $\kappa^r \in \Omega^1(G,\Lf(G))$ via 
 $$(\kappa^r)_g \colon T_g G \rightarrow \Lf (G), v \mapsto T_g \rho_{g^{-1}} (v)$$
 where $\rho_{g^{-1}} (h) =hg^{-1}$. 
 
 If now $f \in C^\infty (M,G)$, we can define its right logarithmic derivative via 
 $$\delta^{r} f \colon M \rightarrow \Lf (G),\quad \delta^r f \coloneq f^*\kappa^r.$$
 Similarly, one can define the \emph{left Maurer-Cartan form} $\kappa^\ell \in \Omega^1(G,\Lf (G))$ and a left logarithmic derivative by replacing right multiplication with left multiplication in the definition of $\kappa^r$.
 This generalises the construction of the logarithmic derivatives for curves from \Cref{setup:logderiv}. One can show (cf.~Exercise \ref{Ex:MC} 1.) that the left logarithmic derivative of any function satisfies the right Maurer-Cartan equation 
 \begin{align}\label{eq:MC}
  \mathrm{d}\delta^\ell f + \frac{1}{2}\LB[\delta^\ell f,\delta^\ell f]_\wedge = 0
 \end{align}
 where $\LB[\delta^\ell f,\delta^\ell f]_\wedge = \delta^\ell f \wedge \delta^\ell f$ for the wedge product induced by the Lie bracket.
\end{ex}

\begin{defn}
 Let $G$ be a Lie group with Lie algebra $\Lf(G)$ and $\omega \in \Omega^1 (M,\Lf (G))$ for some smooth manifold $M$.
 Then $\omega$ is called \begin{enumerate}
                          \item \emph{integrable}\index{differential form!(locally) integrable} if there exists $f \in C^\infty (M,G)$ with $\omega = \delta^\ell f$.
                          \item \emph{locally integrable} if for every $m \in M$ there is $m \in U \opn M$ such that $\omega|_U$ is integrable.
                         \end{enumerate}
                         
\end{defn}
  Equivalently we could have defined (local) integrability using the right logarithmic derivative.
  With this definition it is possible to formulate the fundamental theorem for Lie group-valued functions with values in regular Lie groups, see \Cref{sect:regularLie}.

 \begin{prop}[Fundamental theorem for Lie group-valued functions]\label{prop:funthm:Lie}
 Let $G$ be a regular Lie group with Lie algebra $\Lf (G)$ and $\omega \in \Omega^{1}(M,\Lf(G))$. If $\omega$ satisfies the right Maurer-Cartan equation \eqref{eq:MC}, then $\omega$ is locally integrable. If in addition, $M$ is simply connected, then $\omega$ is integrable. 
 \end{prop}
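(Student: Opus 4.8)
The plan is to construct the primitive $f$ by integrating $\omega$ along paths, using the Maurer--Cartan equation precisely as the integrability condition that makes this path integral well defined up to homotopy, and regularity of $G$ to guarantee that the relevant evolutions exist and depend smoothly on parameters. First I would reduce the local assertion to the simply connected case: around any point pick a convex chart domain $U$, which is contractible, hence simply connected, so proving integrability on such $U$ yields local integrability; the argument for simply connected $M$ specialises to exactly this situation, so it suffices to treat $M$ connected and simply connected. Fix a base point $x_0 \in M$ and decree $f(x_0)=\one$.

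Next I would pin down the only possible candidate. If $\omega = \delta^\ell f$ and $c\colon [0,1]\to M$ is a smooth path, then $\gamma \coloneq f\circ c$ has left logarithmic derivative $\delta^\ell\gamma(t)=\omega_{c(t)}(\dot c(t))\eqqcolon \eta_c(t)$ by the definition $\delta^\ell f = f^\ast \kappa^\ell$ (\Cref{ex:MCform}); by \Cref{setup:logderiv} and Exercise~\ref{ex:regular}~3 this forces $\gamma(t)=\gamma(0)\cdot \Evol(\eta_c)(t)$, whence $f(c(1))=\Evol(\eta_c)(1)$. I therefore turn this into a definition: for $x\in M$ choose a smooth path $c$ from $x_0$ to $x$, pull $\omega$ back to $\eta_c\in C^\infty([0,1],\Lf(G))$, and set $f(x)\coloneq \Evol(\eta_c)(1)=\evol(\eta_c)$. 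This makes sense because $G$ is semiregular.

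The main obstacle is showing that $f(x)$ does not depend on the chosen path, and this is where the Maurer--Cartan equation enters. I would take a smooth homotopy $H\colon [0,1]^2\to M$ with fixed endpoints $H(s,0)=x_0$, $H(s,1)=x$, put $\eta_s(t)=\omega_{H(s,t)}(\partial_t H)$, $\xi_s(t)=\omega_{H(s,t)}(\partial_s H)$, and $\gamma_s=\Evol(\eta_s)$, aiming at $\partial_s\gamma_s(1)=0$. Introducing the left-trivialised $s$-velocity $Y\coloneq T\lambda_{\gamma_s^{-1}}(\partial_s\gamma_s)$, which vanishes at $t=0$, I would combine the flatness identity for the left logarithmic derivative of the two-variable map $\gamma_s(t)$ (the left-handed analogue of \Cref{lem:commutator:smoothvar}) with the Maurer--Cartan equation $\mathrm d\omega+\tfrac12\LB[\omega,\omega]_\wedge=0$ pulled back by $H$ and evaluated on $(\partial_s,\partial_t)$ via the wedge formula of Exercise~\ref{Ex:differentialform}~2. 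A short computation then shows that $Z\coloneq Y-\xi_s$ solves the homogeneous linear equation $\partial_t Z=-\LB[\eta_s,Z]$ (signs depending on conventions) with $Z(s,0)=0$; differentiating $\Ad_{\gamma_s(t)}Z$ and using smoothness of $\Ad$ (\Cref{ex:adj:act}) gives $\Ad_{\gamma_s(t)}Z\equiv\Ad_{\one}(0)=0$, so $Z\equiv 0$ (alternatively invoke the uniqueness statement for Lie-type equations, \Cref{rem:regularity}(a)). Since the endpoints are fixed, $\xi_s(1)=0$, hence $Y(s,1)=0$ and $\partial_s\gamma_s(1)=0$. Simple connectivity makes any two paths with common endpoints smoothly homotopic rel endpoints, so $f$ is well defined; the genuinely technical point here is this homotopy computation together with the smooth dependence of $\evol$ on $s$, which I would obtain from smoothness of $\evol$ and the exponential law, treating $s\mapsto\eta_s$ as a smooth curve into $C^\infty([0,1],\Lf(G))$.

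Finally I would check smoothness of $f$ and the identity $\delta^\ell f=\omega$. Near any point, representing $f$ through radial paths in a chart exhibits $x\mapsto \eta_{c_x}$ as a smooth map into $C^\infty([0,1],\Lf(G))$, so $f=\evol\circ(x\mapsto\eta_{c_x})$ is a composite of smooth maps. For the derivative, along any path $c$ the construction gives $f\circ c=f(c(0))\cdot\Evol(\eta_c)$, whose left logarithmic derivative is $\eta_c$ by \Cref{setup:logderiv}; evaluating at a point and tangent direction $v=\dot c(0)$ yields $(\delta^\ell f)_x(v)=\omega_x(v)$, so $\delta^\ell f=\omega$. On simply connected $M$ this already produces a global primitive; in the purely local step one may instead glue the chartwise primitives, which on overlaps have equal left logarithmic derivatives and hence differ by constant left translations (Exercise~\ref{ex:regular}~3), simple connectivity ensuring the resulting translation cocycle is trivial.
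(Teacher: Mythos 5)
The paper does not actually prove this proposition: it explicitly defers to the literature, remarking that the proof ``needs concepts (e.g.\ connections on principal bundles)'' and pointing to \cite[3.\S 6--7]{sharpe97}, \cite[Theorem III.2.1]{Neeb06} and \cite[Theorem 40.2]{KM97}. Your argument supplies what is omitted, and it does so along the lines of the infinite-dimensional references rather than the principal-bundle route the paper alludes to: define $f$ by evolving the pulled-back control $\eta_c$ along paths, prove homotopy invariance from \eqref{eq:MC}, and read off $\delta^\ell f=\omega$ from the defining Lie-type equation. This buys a proof using only tools the book has developed (regularity, $\Evol$, \Cref{lem:commutator:smoothvar}, \Cref{lem:deriv:evol}), and your use of $\Ad_{\gamma_s(t)}$ to integrate the homogeneous equation for $Z=Y-\xi_s$ is exactly the right move in this generality, since it reduces uniqueness to \Cref{cor:loc_const} instead of an ODE uniqueness theorem that is unavailable beyond Banach spaces. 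The computation is consistent: with left logarithmic derivatives one gets $\partial_s\eta-\partial_t Y=\LB[\eta,Y]$, the pullback of \eqref{eq:MC} evaluated on $(\partial_s,\partial_t)$ gives $\partial_s\eta-\partial_t\xi=\LB[\eta,\xi]$, and subtracting yields $\partial_t Z=-\LB[\eta,Z]$ with $Z(s,0)=0$, whence $\Ad_{\gamma_s(t)}(Z)$ is constant equal to $0$ and $\partial_s\gamma_s(1)=T\lambda_{\gamma_s(1)}(\xi_s(1))=0$. Three points would still need to be written out carefully: (i) the left-trivialised analogue of \Cref{lem:commutator:smoothvar}, which the book only proves for right translations (same computation with $\lambda$ in place of $\rho$ and the sign flip you anticipate); (ii) joint smoothness of $(s,t)\mapsto\Evol(\eta_s)(t)$, needed before any mixed partials may be taken, which should be extracted from \Cref{lem:deriv:evol} together with the exponential law for the manifold with corners $[0,1]^2$; and (iii) the existence of a \emph{smooth} homotopy rel endpoints between two smooth paths in a simply connected manifold modelled on a locally convex space, a smoothing statement nowhere established in the book and the one genuine external input your argument still relies on -- it is precisely one of the things the cited sources provide.
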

 The proof of \Cref{prop:funthm:Lie} needs concepts (e.g.~connections on principal bundles) which we will not introduce here. Instead we refer the interested reader either to the classical proofs for the finite-dimensional setting, e.g.~\cite[3.\S 6-7]{sharpe97} or to the infinite-dimensional sources \cite[Theorem III.2.1]{Neeb06} or \cite[Theorem 40.2]{KM97}.
 
 \begin{lem}\label{lem:uniq_logderiv}
  Let $M$ be a connected manifold and $\varphi , \psi \in C^\infty (M,G)$, where $G$ is a Lie group. \index{derivative!left logarithmic}
  Then $\delta^\ell \varphi = \delta^\ell \psi$ is equivalent to the existence of $g \in G$ with $\varphi = g \cdot \psi$.
 \end{lem}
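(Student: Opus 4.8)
The plan is to prove the equivalence in both directions, with the forward direction ($\delta^\ell \varphi = \delta^\ell \psi \Rightarrow \varphi = g\psi$) being the substantive one. For the reverse direction, suppose $\varphi = \lambda_g \circ \psi$ for a fixed $g \in G$ (where $\lambda_g$ is left translation). Then $\varphi = \lambda_g \circ \psi$ gives, via the chain rule, $T\varphi = T\lambda_g \circ T\psi$. The key observation is that the left logarithmic derivative is built from the \emph{left} Maurer-Cartan form $\kappa^\ell$, which uses left translation: $(\kappa^\ell)_h(v) = T_h\lambda_{h^{-1}}(v)$. Since left translations commute in the relevant sense, a direct computation should show $\delta^\ell(\lambda_g \circ \psi) = \delta^\ell \psi$. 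Concretely, for $m \in M$ and $v \in T_mM$, one computes $\delta^\ell\varphi(m)(v) = T_{\varphi(m)}\lambda_{\varphi(m)^{-1}}T_m\varphi(v) = T\lambda_{\psi(m)^{-1}g^{-1}}T\lambda_g T_m\psi(v)$, and by functoriality of $T$ and the group law $\psi(m)^{-1}g^{-1} \cdot g = \psi(m)^{-1}$ the factor $g$ cancels, yielding $\delta^\ell\psi(m)(v)$. This is the routine direction.

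For the forward direction, the natural strategy is to consider the pointwise quotient $\eta \coloneq \varphi \cdot \psi^{-1} \colon M \to G$, i.e.\ $\eta(m) = \varphi(m)\psi(m)^{-1}$, and show that $\eta$ is locally constant, hence (by connectedness of $M$) globally equal to some fixed $g \in G$, which then gives $\varphi = g\psi$. The heart of the matter is to show $\delta^\ell\varphi = \delta^\ell\psi$ forces $d\eta \equiv 0$, or more precisely that the logarithmic derivative of $\eta$ vanishes. I would use the product rule for logarithmic derivatives: there is a standard formula (the analogue of the Leibniz rule, see the identities alluded to in \Cref{setup:logderiv}) expressing $\delta^\ell(\varphi \cdot \psi^{-1})$ in terms of $\delta^\ell\varphi$, $\delta^\ell\psi$, and the adjoint action. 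The cleanest route, however, is to reduce to the curve case already settled in Exercise \ref{ex:regular} 3.: for any smooth curve $c \colon [0,1] \to M$, the compositions $\varphi\circ c$ and $\psi\circ c$ are smooth curves in $G$ whose \emph{curve} logarithmic derivatives satisfy $\delta^\ell(\varphi\circ c) = (\delta^\ell\varphi)\circ \dot c = (\delta^\ell\psi)\circ\dot c = \delta^\ell(\psi\circ c)$; by that exercise this gives $\varphi(c(t)) = g_c\,\psi(c(t))$ for a constant $g_c \in G$ depending a priori on the curve.

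The main obstacle is then promoting this curve-wise constancy to a single global constant $g$. Since $M$ is connected and manifolds are locally path-connected, any two points can be joined by a finite chain of smooth curve segments; along each segment the constant $g_c$ is unambiguous, and at a shared endpoint the constants must agree because they are both determined by $g = \varphi(m)\psi(m)^{-1}$ at that point. I would make this precise by fixing a basepoint $m_0$, setting $g \coloneq \varphi(m_0)\psi(m_0)^{-1}$, and showing the set $\{m \in M : \varphi(m) = g\psi(m)\}$ is open (immediate from local path-connectedness and the curve argument applied to short curves within a chart), closed (by continuity of $\varphi$, $\psi$ and the group operations), and nonempty; connectedness of $M$ then forces it to be all of $M$. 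The only delicate point is verifying $\delta^\ell(\varphi\circ c) = (\delta^\ell\varphi)\circ\dot c$, which follows from the definition $\delta^\ell\varphi = \varphi^*\kappa^\ell$ together with the naturality of pullback under composition, $c^*\varphi^*\kappa^\ell = (\varphi\circ c)^*\kappa^\ell$; this lets us invoke the already-proven curve statement as a black box and keeps the argument short.
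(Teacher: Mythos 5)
Your proof is correct, but the route you actually develop is genuinely different from the paper's. The paper settles the forward direction in one stroke: it forms the pointwise quotient $\gamma \coloneq \varphi\cdot\psi^{-1}$ and applies the quotient rule for the left logarithmic derivative from Exercise \ref{Ex:MC} 3, namely $\delta^\ell(\varphi\cdot\psi^{-1}) = \Ad_{\psi}\bigl(\delta^\ell\varphi - \delta^\ell\psi\bigr)$, which vanishes by hypothesis; then \Cref{cor:loc_const} (applied in charts) makes $\gamma$ locally constant, and connectedness gives a single $g$. You name this option in passing but then choose instead to reduce to the curve case of Exercise \ref{ex:regular} 3 via the naturality $c^*\varphi^*\kappa^\ell = (\varphi\circ c)^*\kappa^\ell$, and to globalise the curve-wise constants by an open--closed argument. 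Both arguments are sound and both lean on an exercise as a black box (the paper on the quotient rule, you on the curve statement); your openness step is fine because every point of $M$ has a chart neighbourhood that is convex in the model space, so any two of its points are joined by a smooth curve, and closedness is just continuity of $m\mapsto \varphi(m)\psi(m)^{-1}$. What the paper's version buys is brevity: no connectedness bookkeeping beyond one appeal to \Cref{cor:loc_const}. What yours buys is that the quotient rule is never needed: the curve case can be established independently from uniqueness of solutions of Lie-type equations (cf.\ \Cref{rem:regularity}), since left translation by a constant does not change $\delta^\ell$. Your computation of the easy direction and of $\delta^\ell(\varphi\circ c) = c^*(\delta^\ell\varphi)$ is correct as written.
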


 \begin{proof}
  If there exists $g \in G$ with $\varphi = g\cdot \psi$, then a straight forward calculation shows that $\delta^\ell \varphi = \delta^\ell \psi$ holds. Assume conversely that $\delta^\ell \varphi =\delta^\ell \psi$. Then define the map $\Psi \coloneq \varphi \cdot \psi^{-1}$ (where the product and inverse are taken pointwise in $G$). Then Exercise \ref{Ex:MC} 3. yields
  $$\delta (\gamma) = T_e \lambda_{\psi} T_e \rho_{\psi^{-1}} (\delta^\ell \varphi - \delta^\ell \psi ) = 0.$$
  or in other words, the map $\gamma$ is locally constant by \Cref{cor:loc_const}. As $M$ is connected, we conclude that $g \coloneq \gamma (m) \in G$ (for any $m \in M$) satisfies $g\circ \psi = \varphi$.
 \end{proof}

 \begin{prop}[Lie II for regular Lie groups]\label{LieII:reg}
  Let $G,H$ be Lie groups with Lie algebras $\Lf(G)$ and $\Lf(H)$, respectively. Let $f\colon \Lf(G) \rightarrow \Lf(H)$ be a morphism of locally convex Lie algebras. If $H$ is a regular Lie group and $G$ is connected and simply connected, then there exists a unique morphism of Lie groups $\varphi \colon G\rightarrow H$ with $\Lf(\varphi) = f$.
 \end{prop}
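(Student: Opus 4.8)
The plan is to prove Lie's second theorem (Proposition~\ref{LieII:reg}) by integrating the Lie algebra morphism $f$ via its graph, which is a standard and robust strategy that translates the algebraic statement into an integrability question about a Maurer--Cartan form. First I would form the direct product Lie group $G \times H$ and observe that its Lie algebra is $\Lf(G) \times \Lf(H)$ with the componentwise bracket (Exercise~\ref{Ex:LA} 10.). The graph $\mathfrak{h} \coloneq \{(x,f(x)) \mid x \in \Lf(G)\}$ is a Lie subalgebra of $\Lf(G)\times \Lf(H)$ precisely because $f$ is a Lie algebra morphism: for $(x,f(x))$ and $(y,f(y))$ one has $\LB[(x,f(x)),(y,f(y))] = (\LB[x,y], \LB[f(x),f(y)]) = (\LB[x,y], f(\LB[x,y]))$, which again lies in $\mathfrak{h}$. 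This closedness is exactly where the morphism hypothesis is used.

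The key idea is then to build a $\Lf(H)$-valued $1$-form on $G$ whose logarithmic derivative yields the desired morphism. First I would consider the right (or left, matching the convention of \Cref{setup:logderiv}) Maurer--Cartan form $\kappa^\ell \in \Omega^1(G,\Lf(G))$ from \Cref{ex:MCform} and define $\omega \coloneq f \circ \kappa^\ell \in \Omega^1(G,\Lf(H))$. Since $f$ is continuous linear (whence smooth) it pushes the Maurer--Cartan equation forward: because $\kappa^\ell$ satisfies $\mathrm{d}\kappa^\ell + \tfrac{1}{2}\LB[\kappa^\ell,\kappa^\ell]_\wedge = 0$ and $f$ is a Lie algebra morphism, one computes $\mathrm{d}\omega + \tfrac{1}{2}\LB[\omega,\omega]_\wedge = f\circ(\mathrm{d}\kappa^\ell) + \tfrac{1}{2}\LB[f\circ\kappa^\ell, f\circ\kappa^\ell]_\wedge = f\circ\big(\mathrm{d}\kappa^\ell + \tfrac{1}{2}\LB[\kappa^\ell,\kappa^\ell]_\wedge\big) = 0$, using that $f$ commutes with the wedge-bracket precisely because it respects the Lie bracket. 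Thus $\omega$ satisfies the right Maurer--Cartan equation on $G$.

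Now I would invoke the Fundamental theorem for Lie group-valued functions, \Cref{prop:funthm:Lie}: since $H$ is regular and $G$ is connected and simply connected, the form $\omega$ is integrable, i.e.\ there exists $\varphi \in C^\infty(G,H)$ with $\delta^\ell \varphi = \omega$. Normalising by left translation (using \Cref{lem:uniq_logderiv}) I may arrange $\varphi(\one_G) = \one_H$. It then remains to check that $\varphi$ is a group homomorphism and that $\Lf(\varphi) = f$. For the homomorphism property I would fix $g \in G$ and compare the two smooth maps $h \mapsto \varphi(gh)$ and $h \mapsto \varphi(g)\varphi(h)$; a direct computation of logarithmic derivatives, using left-invariance of $\kappa^\ell$ and the defining relation $\delta^\ell \varphi = f\circ \kappa^\ell$, shows both maps have the same left logarithmic derivative, and since they agree at $h = \one_G$, \Cref{lem:uniq_logderiv} (with $G$ connected) forces them to coincide. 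That $\Lf(\varphi) = T_{\one}\varphi = f$ follows by evaluating $\delta^\ell\varphi = \omega = f\circ \kappa^\ell$ at the identity, where $\kappa^\ell_{\one} = \id_{\Lf(G)}$. Uniqueness of $\varphi$ again follows from \Cref{lem:uniq_logderiv} applied to two candidate morphisms agreeing at the identity.

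The main obstacle is verifying the homomorphism property cleanly, since this is where connectedness and the uniqueness lemma \Cref{lem:uniq_logderiv} must be combined carefully with the left-invariance of the Maurer--Cartan form; the surrounding bookkeeping of left versus right conventions (cf.\ \Cref{rem:LR} and \Cref{rem:regularity}(c)) is also a potential source of sign and side errors. The genuinely hard analytic content, namely the existence of the integrating function $\varphi$, is entirely outsourced to \Cref{prop:funthm:Lie}, whose own proof rests on the regularity of $H$ (the solvability of the relevant equations of Lie type) and on simple connectedness of $G$ to pass from local to global integrability. Everything else is formal manipulation with the chain rule and the functoriality of logarithmic derivatives established in Exercise~\ref{ex:regular} 5.
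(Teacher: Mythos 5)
Your proof is correct and follows essentially the same route as the paper: define $\omega = f\circ\kappa^\ell$, verify it satisfies the Maurer--Cartan equation using that $f$ is a continuous Lie algebra morphism, integrate via \Cref{prop:funthm:Lie} using regularity of $H$ and simple connectedness of $G$, and then establish the homomorphism property and uniqueness by comparing left logarithmic derivatives with \Cref{lem:uniq_logderiv}. The only difference is your opening paragraph on the graph subalgebra $\mathfrak{h}\subseteq\Lf(G)\times\Lf(H)$, which is never used in the rest of your argument and can simply be deleted.
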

 
 \begin{proof}
 Since $f$ is continuous we can consider the smooth $1$-form $\alpha \coloneq f \circ \kappa^\ell \in \Omega^{1}(G,\Lf (H)$, where $\kappa^\ell$ is the left Maurer-Cartan form on $G$. Moreover, since the Maurer-Cartan form is left invariant, so is $\alpha$. We consider the Maurer-Cartan equation on $H$ (to mark this we label the bracket operation $\LB^H\wedge$ by $H$). As the wedge is induced by the Lie bracket on $\Lf(H)$, we can exploit that $f$ is a Lie algebra morphism and compute with Exercise \ref{Ex:differentialform} 3. as follows: 
 \begin{align*}
  \mathrm{d} \alpha + \frac{1}{2}\LB[\alpha,\alpha]_\wedge^H = f\circ (\mathrm{d} \kappa^\ell + \frac{1}{2}\LB[\kappa^\ell,\kappa^\ell]_\wedge) = 0,
 \end{align*}
 where the unlabelled bracket is the one induced by the Lie bracket of $\Lf(G)$ and we exploited that the Maurer-Cartan form satisfies the Maurer-Cartan equation on $G$ by Exercise \ref{Ex:MC} 1. Now the fundamental theorem \Cref{prop:funthm:Lie} implies that there is a mapping $\varphi \colon G \rightarrow H$ with $\delta^\ell \varphi = \alpha$. Fixing $\varphi (e_G) = e_H$, this mapping is unique by \Cref{lem:uniq_logderiv}. Consider now $g \in G$ and $\varphi \circ \lambda_g$. Then we pick $v \in T_k G$ and evaluate the differential form
 $$\delta^\ell (\varphi \circ \lambda_g)(v) = T\lambda_{\varphi (gk)^{-1}} T_{gk}\varphi T \lambda_{g} (v) = \delta^{\ell} \varphi (T\lambda_g (v)) = \lambda_g^{\ast} \alpha = \alpha.$$
 Applying now again \Cref{lem:uniq_logderiv} the maps $\varphi \circ \lambda_g$ and $\varphi$ differ only by left translation with an element which we compute as $\varphi \circ \lambda (e_G) = \varphi (g)$. In other words $\varphi (gk) = \varphi (g) \varphi (k)$ for all $k \in G$. Since $g \in G$ was arbitrary, we see that $\varphi$ is indeed a morphism of Lie groups.
 \end{proof}

 \begin{Exercise}\label{Ex:MC} \vspace{-\baselineskip}%
 \Question Let $G$ be a Lie group with Lie algebra $\Lf(G)$ and left Maurer-Cartan form $\kappa^\ell$. Show that $\kappa^\ell$
 \subQuestion is an $\Lf (G)$-valued differential form on $G$ which is left invariant in the sense that $\lambda_g^*\kappa^\ell = \kappa^\ell$ for each $g \in G$ (where $\lambda_g (h) \coloneq gh$,
 \subQuestion satisfies the right Maurer-Cartan equation 
 $$\mathrm{d}\kappa^\ell + \frac{1}{2} \LB[\kappa^\ell, \kappa^\ell]_\wedge =0 .$$
 {\footnotesize \textbf{Hint:} Compute the exterior derivative locally using \eqref{loc:defn}. It suffices to prove the formula using left invariant vector fields (why?)\\ 
  \textbf{Remark:} The ``right Maurer-Cartan equation'' is related to the right principal action of $G$ on itself by multiplication. There is also a corresponding left Maurer-Cartan equation for the right Maurer-Cartan form, where the bracket in the equation gets a negative sign.}
 \subQuestion Deduce that for a smooth function $f$, also $\delta^\ell f$ satisfies the right Maurer-Cartan equation.
 \Question Let $\varphi \colon G \rightarrow H$ be a morphism of Lie groups and $\kappa^\ell$ the (left) Maurer-Cartan form on $G$. Shows that 
 \subQuestion $\delta^\ell \varphi = \Lf (\varphi) \circ \kappa^\ell$
 \subQuestion if $\psi \colon G \rightarrow H$ is another Lie group morphism with $\delta^\ell \varphi = \delta^\ell \psi$, then $\varphi = \psi$.
 \Question Let $f,g \in C^\infty (M,G)$ be smooth maps into a Lie group. Establish the following quotient rule for the left logarithmic derivative 
 $$\delta^{\ell} (f\cdot g^{-1})(m) = T_{\one_G} \lambda_{g(m)} T_{\one_G} \rho_{g^{-1}(m)} (\delta^\ell f(m) - \delta^\ell g(m)) = \Ad_{g(m)} (\delta^\ell f(m)- \delta^\ell g(m)),$$
 where $\one_G \in G$ is the identity element, products and inverses are taken pointwise and $\lambda$ (resp. $\rho$) denote left (resp.~right) multiplication in the Lie group.\\ 
 {\footnotesize \textbf{Hint:} Apply \Cref{tangentLie}.}
 \end{Exercise}

\section{Supplement: Volume form and classical differential operators}\label{supp:diff-findim}
 
In this short supplement we will record some well known facts on differential forms on finite-dimensional (compact) manifolds. Many of these notions are needed in the \Cref{sect:EAtheory} and we recall them for the readers convenience. Thus detailed proofs will in general be omitted in this section. However, all of these results are readily available in the standard finite-dimensional literature (which we will reference).

\begin{tcolorbox}[colback=white,colframe=blue!75!black,title=Conventions]
We fix $(M,g)$ a compact (thus finite-dimensional) and connected  manifold with Riemannian metric $g$. Furthermore, we denote by $d = \dim M$ the dimension of the model space of $M$.
\end{tcolorbox}

Let us first recall that there is another canonical way to define differential forms.
\begin{setup}[{Differential forms as sections, cf.\ \cite[1.4]{MR1330918}, \cite[Section 6]{MR960687}}]\label{setup:diffform:findim} \index{differential form}
 Starting with the tangent bundle $TM$ we can construct the bundle of alternating $k$-forms $\mathcal{A}^k (M) \rightarrow M$ for $k \geq 0$. 
 The fibre of $\mathcal{A}^k(M)$ over $x \in M$ is given by the space of alternating $k$-linear mappings $(T_x M)^k \rightarrow \R$ which we denote by $L^k_a (T_x M , \R)$ and topologise as a subspace of the $k$-linear mappings (which carry the usual norm topology induced by the operator norm for $k$-linear maps, cf.\ \cite[I. \S 2]{Lang}). Further, every chart $(U,\varphi)$ of $M$ induces a vector bundle trivialisation of $\mathcal{A}^k (M)$ over $U$ via $$\kappa_\varphi (x, \omega) \coloneq (\varphi (x), \omega \circ (T_x\varphi^{-1} \times T_x \varphi^{-1} \times \cdots \times T_x \varphi^{-1}).$$  
 Comparing the construction with \eqref{defn:diffform} it becomes clear that for the finite-dimensional manifold $M$ (indeed for any Banach manifold) differential $k$-forms are just smooth sections of $\mathcal{A}^k (M)$. In other words, we obtain
 $$\Omega^k (M) = \Gamma (\mathcal{A}^k (M)) , \quad \forall k \in \N_0.$$
 Moreover, this allows us to topologise the space of differential $k$-forms as a locally convex vector space via \Cref{topo:sect}. Namely if we pick a family of open sets $(U_i)_{i\in I}$ which covers $M$ such that on each $U_i$ there is a bundle trivialisation $\kappa_i \colon \mathcal{A}^k(M)|_{U_i} \rightarrow U_i \times \R^d, V \mapsto (\kappa^1_i (x),\kappa^2_i(x))$, then the mapping
 $$\Omega^k (M) \rightarrow \prod_{i\in I} C^\infty (U_i,L_a^k (\R^d,\R)),\quad \omega \mapsto \omega \circ (\kappa^1_i , \kappa^2_i , \ldots,\kappa^2_i)$$
 is an embedding of $\Omega^k (M)$ as a closed locally convex subspace of the product on the right hand side.  
\end{setup}

Recall that the dimension of the spaces $L_a^k (T_x M , \R)$ depends on the dimension $d= \dim T_x M$. In particular, $L_a^d (T_x M,\R)$ is one-dimensional and a differential form $\mu \in \Omega^d (M)$ which vanishes nowhere is called a \emph{volume form}. \index{differential form!volume form}
On Riemannian manifolds there is a convenient way to construct a volume form associated to the Riemannian metric. 
Recall that a manifold $M$ is orientable, if it admits an atlas $(U_i,\varphi_i)_{i\in I}$ such that the Jacobians of all change of charts are positive. One can prove, \cite[Theorem 1.127]{GaHaL04}, that a Riemannian manifold is orientable if and only if it admits a volume form $\mu$ (induced by the Riemannian metric). Volume forms are the tool of choice to define integration on manifolds, see \cite[Part III]{Lang} or \cite[Section 7]{MR960687}. In particular, we can define the $L^2$-metric on $\mathcal{V} (M)$ in the presence of a volume form as follows
\begin{align}\label{general:L2}
 g_{L^2} (X,Y) = \int_M g (X,Y) \mathrm{d} \mu \qquad X,Y \in \mathcal{V}(M).  
\end{align}
In previous chapters we often considered only integration on $\SSS^1$ since a global parametrisation allowed us to hide the dependence on a volume form and \eqref{general:L2} reduces for $M=\SSS^1$ to \eqref{l2metric}.

\subsection*{Classical differential operators on a Riemannian manifold}
We will now assume that there is a volume form $\mu$ associated to the Riemannian metric on $M$.
Let us then recall the following classical differential operators on $M$.

\begin{defn}[{\cite[Sections 6.5 and 7.5]{MR960687}}]\label{defn:classicdiffop} For a compact Riemannian manifold (orientable in case we need a volume form) we will consider the following differential operators.
\begin{itemize}
 \item For a vector field $X$, there is a unique smooth function $\divr X \colon M \rightarrow \R$, the \emph{divergence}\index{vector field!divergence} of $X$ such that 
$$\mathcal{L}_X \mu = (\divr X) \mu.$$
 \item If $f \in C^\infty (M,\R)$ we exploit that the Riemannian metric induces an isomorphism $TM \cong T^\ast M$ (see \Cref{prop:strongmetric}). Thus the following formula uniquely determines the \emph{gradient of $f$ with respect to} $g$:\index{gradient}
 $$g_m(\grad f(m), v_m) = df (v_m) \quad \forall m \in M, v_m \in T_m M.$$
 \item the \emph{(Hodge-)Laplacian} $\Delta = \mathrm{d} \mathrm{d}^\ast + \mathrm{d}^\ast \mathrm{d}$ associated to the metric (cf.~\cite[p.423]{Lang}). Here $\mathrm{d}$ is the exterior differential from \Cref{ext:deriv} and $\mathrm{d}^\ast$ is the codifferential defined via the Hodge star (this is a finite-dimensional construction which depends on the Riemannian metric $g$, we refer to \cite[p.~464]{Lee13} for more information).  
\end{itemize}
\end{defn}

Having defined the necessary differential operators we recall two decompositions which are for example relevant in geometric hydrodynamics:

\begin{prop}[{Helmholtz Decomposition, cf.\ \cite[Lemma 1.2]{modin2019geometric}}]\label{Helmholtz} \index{vector field!Helmholtz decomposition}
 Let $(M,g)$ be a compact oriented Riemannian manifold with volume form $\mu$ and $X \in \mathcal{V} (M)$. Then there exist $V \in \mathcal{V} (M)$ and $f \in C^\infty (M,\R)$ such that
 $$X = V + \grad f \qquad \text{and } \divr V =0.$$
 Moreover, $V$ and $\grad f$ are orthogonal with respect to the $L^2$-metric \eqref{general:L2}, i.e.\ 
 $$g_{L^2} (V,\grad f) = \int_M g(V,\grad f) \mathrm{d} \mu = 0$$
\end{prop}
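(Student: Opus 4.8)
The Helmholtz decomposition is a classical result in finite-dimensional geometry, so the plan is to reduce the statement to standard Hodge theory on a compact oriented Riemannian manifold. The key observation is that the Riemannian metric gives an isomorphism $\flat \colon TM \to T^*M$ (here $M$ is finite-dimensional, so $\flat$ is a vector bundle isomorphism by \Cref{prop:strongmetric}), under which vector fields correspond to $1$-forms. Thus the first step is to transport $X$ to a $1$-form $\alpha \coloneq \flat(X) = g(X,\cdot) \in \Omega^1(M)$, and translate the desired conclusion into the language of differential forms: I want to write $\alpha = \beta + \mathrm{d}f$ where $\beta$ is co-closed (i.e.\ $\mathrm{d}^\ast \beta = 0$) and $f \in C^\infty(M,\R)$. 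Setting $V \coloneq \sharp(\beta)$ then recovers a vector field, and the condition $\mathrm{d}^\ast \beta = 0$ should translate into $\divr V = 0$, since on a compact oriented Riemannian manifold the divergence of a vector field is (up to sign and the musical isomorphism) the codifferential of the corresponding $1$-form.

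The main tool is the classical Hodge decomposition theorem for compact oriented Riemannian manifolds, which states that $\Omega^1(M) = \mathrm{d}(\Omega^0(M)) \oplus \mathrm{d}^\ast(\Omega^2(M)) \oplus \mathcal{H}^1(M)$, where $\mathcal{H}^1(M)$ is the finite-dimensional space of harmonic $1$-forms and the sum is $L^2$-orthogonal. First I would invoke this to write $\alpha = \mathrm{d}f + \eta$ where $\eta \coloneq \mathrm{d}^\ast(\Omega^2) \oplus \mathcal{H}^1$ is the co-closed part (both $\mathrm{d}^\ast$-exact forms and harmonic forms are co-closed, so $\mathrm{d}^\ast \eta = 0$). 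Then I set $V \coloneq \sharp(\eta)$ and $f$ as above. The identity $\divr V = 0 \Leftrightarrow \mathrm{d}^\ast(\flat V) = 0$ needs to be verified from \Cref{defn:classicdiffop}; this is a standard fact relating the divergence (defined via $\mathcal{L}_X \mu = (\divr X)\mu$) to the codifferential, and I would cite it from the literature (e.g.\ \cite[Section 7.5]{MR960687} or \cite[p.~464]{Lee13}) rather than re-derive it.

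For the orthogonality statement I would compute directly: since $X = V + \grad f$ and the $L^2$-inner product is bilinear, it suffices to show $g_{L^2}(V, \grad f) = 0$. Using that $\grad f = \sharp(\mathrm{d}f)$ and $V = \sharp(\eta)$, and that $\sharp$ is a fibrewise isometry (it is the inverse of $\flat$, which is an isometry by construction of $\flat$ from $g$), the $L^2$-inner product of the vector fields equals the $L^2$-inner product of the corresponding $1$-forms:
\[
g_{L^2}(V,\grad f) = \int_M g(\sharp\eta, \sharp\,\mathrm{d}f)\,\mathrm{d}\mu = \langle \eta, \mathrm{d}f\rangle_{L^2}.
\]
This vanishes because $\eta$ is co-closed and $\mathrm{d}f$ is exact: the adjointness of $\mathrm{d}$ and $\mathrm{d}^\ast$ on a compact manifold gives $\langle \eta, \mathrm{d}f\rangle_{L^2} = \langle \mathrm{d}^\ast \eta, f\rangle_{L^2} = 0$. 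Alternatively, this orthogonality is already built into the Hodge decomposition, so I could simply read it off from there.

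The main obstacle, as I see it, is not the geometric content—which is entirely classical—but rather being careful about the interface between the two descriptions of differential forms and the precise relationship between the analyst's divergence and the codifferential $\mathrm{d}^\ast$. The only genuinely nontrivial input is the Hodge decomposition theorem itself, which rests on elliptic regularity for the Laplacian $\Delta = \mathrm{d}\mathrm{d}^\ast + \mathrm{d}^\ast\mathrm{d}$ on the compact manifold $M$; since this is a finite-dimensional result I would import it wholesale from the literature. The bookkeeping of signs in $\divr V = \pm\,\mathrm{d}^\ast(\flat V)$ and the verification that the co-closed part corresponds exactly to the divergence-free condition is where I would expect to spend the most care, but it is routine once the correspondence is set up.
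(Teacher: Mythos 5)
Your argument is correct, but there is nothing in the paper to compare it against: the text states \Cref{Helmholtz} with only a citation to \cite[Lemma 1.2]{modin2019geometric} and gives no proof, merely remarking afterwards that the Hodge decomposition of $k$-forms is the relevant ingredient. Your Hodge-theoretic route is exactly the standard one that the cited reference (and that remark) point to: transport $X$ to $\alpha=\flat(X)$, split off the exact part $\mathrm{d}f$ via the Hodge decomposition $\Omega^1(M)=\mathrm{d}\Omega^0(M)\oplus\mathrm{d}^\ast\Omega^2(M)\oplus\mathcal{H}^1(M)$, set $V=\sharp(\eta)$ for the co-closed remainder, and read off orthogonality from the adjointness $\langle\eta,\mathrm{d}f\rangle_{L^2}=\langle\mathrm{d}^\ast\eta,f\rangle_{L^2}=0$. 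The two points you flag as needing care are indeed the only ones: the identification $\divr V=-\mathrm{d}^\ast(\flat V)$ (which follows from Cartan's formula $\mathcal{L}_V\mu=\mathrm{d}(\iota_V\mu)$ together with $\iota_V\mu=\star(\flat V)$, matching the definition of $\divr$ in \Cref{defn:classicdiffop}), and the fact that the paper's $\grad f$ is precisely $\sharp(\mathrm{d}f)$, so the exact part of $\alpha$ really is $\flat(\grad f)$. One cosmetic slip: you write ``$\eta\coloneq\mathrm{d}^\ast(\Omega^2)\oplus\mathcal{H}^1$'' where you mean that $\eta$ is the \emph{component} of $\alpha$ in that subspace. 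With that read correctly, the proof is complete and is the argument the paper chose to import rather than write out.
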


Note that since the differential $\mathrm{d}$ and the codifferential $\mathrm{d}^\ast$ make sense for arbitrary $k$-forms, we can also extend the Hodge-Laplacian to $k$-forms. This induces the Hodge decomposition of $k$-forms, see e.g.\ \cite[Proposition 8.2]{MR2744150}. We will not recall it here, but would like to mention that it is an important ingredient to establish the Lie group structure of the groups of volume preserving diffeomorphisms and the symplectomorphism group, \Cref{ex:geom_subgroup}. As we have now the necessary notation in place, let us very briefly sketch the idea of the proof

\begin{setup}[{Submanifold structure of volume preserving diffeomorphisms (Sketch)}] \label{volumepreser} \index{diffeomorphism group!volume preserving}
 Define the map $\Psi_\mu \colon \Diff(M) \rightarrow \Omega^d (M), \phi \mapsto \phi^\ast \mu$. Since $M$ is compact, we endow $\Omega^d (M)$ via \Cref{setup:diffform:findim} with a locally convex vector space structure. With some work one can show that $\Psi_\mu$ is a smooth map with derivative $T_\phi \Psi_\mu (V_\phi) = \phi^\ast (\mathcal{L}_{V_\phi} \omega)$ (this follows somewhat similarly to the proof that the pullback with smooth functions is smooth). 
 Then one needs to prove that $\Psi_\mu$ is a submersion onto the cohomology class $[\mu] = \mu + \mathrm{d}\Omega^{d-1}(M) \subseteq \Omega^d(M)$ of $\mu$. The proof uses the Hodge decomposition of $d$-forms to construct a splitting of the kernel of $\Psi_\mu$. Further, one needs to work in a Sobolev completion of $\Diff (M)$, whence this is beyond the techniques we are developing in this book. We refer the interested reader to \cite{Smo07,EM70}. Then the volume preserving diffeomorphism group is simply the preimage $\Psi_\mu^{-1}(\mu)$ of the singleton $\mu$. In particular, $\Diff_\mu (M)$ is a submanifold and thus a Lie subgroup of $\Diff (M)$.
 
 Note that it is apparent from the derivative of $\Psi_\mu$ and Exercise \ref{Ex:submersion} 4.\ that the Lie algebra of $\Diff_\mu (K)$ is $\mathcal{V}_\mu (M) = \{X \in \mathcal{V} (M) \mid \divr X = 0\}$ the Lie algebra of divergence free vector fields.\index{Lie algebra!of divergence free vector fields}
\end{setup}

 \begin{Exercise} \vspace{-\baselineskip} \label{ex:classical}
  \Question Show that the structure described in \Cref{setup:diffform:findim} yields a vector bundle $\mathcal{A}^k(M) \rightarrow M$.
  \Question Prove that the characterisation of differential forms via the bundle in \Cref{setup:diffform:findim} coincides with the one from \Cref{defn:differentialform}.
  \Question Consider $\R^d$ as a Riemannian manifold with the standard euclidean metric. Convince yourself that $\divr, \grad$ and $\Delta$ are ``the usual'' differential operators from vector calculus in this case.
  \Question Work out the details for \Cref{volumepreser} (note that this requires the Hodge Decomposition Theorem \cite[Theorem 7.5.3]{MR960687}): Show that
  \subQuestion $\Psi_\mu$ is smooth with surjective derivative,
  \subQuestion show that the kernel of $T_\eta \Psi_\mu$ is a split subspace of $T_\eta \Diff (M)$.\\
  {\footnotesize \textbf{Remark:} If $\Diff (M)$ were a Banach manifold the above would imply that $\Psi_\mu$ is a submersion. This is one reason why manifolds of finitely often differentiable mappings enter the picture here: The same statements as in the $C^\infty$-case can be proven and these manifolds turn out to be Banach manifolds.}  
  \Question Let $(M,g)$ be a Riemannian manifold with metric derivative $\nabla$. Show that for vector fields $X,Y, Z \in \mathcal{V} (M)$ the following formula holds  
  $$g(Z,\grad g(X,Y)) = g(\nabla_Z X,Y) + g(X,\nabla_Z Y).$$
  Then deduce that this implies $g(\nabla_X Y , Y) = \frac{1}{2} g(X,\grad g(Y,Y))$.
 \end{Exercise}
\setboolean{firstanswerofthechapter}{true}
\begin{Answer}[number={\ref{ex:classical} 5.}] 
 \emph{For a Riemannian manifold $(M,g)$ with metric derivative $\nabla$, we establish the formula
 $$g(Z,\grad g(X,Y)) = g(\nabla_Z X,Y) + g(X,\nabla_Z Y) \text{ for } X,Y,Z \in \mathcal{V} (M).$$
}
From the definition of the gradient and the compatibility of the metric derivative with $g$ we deduce that
\begin{align*}
 g(Z,\grad g(X,Y)) = dg(X,Y)(\cdot;Z) = Z.g(X,Y) = g(\nabla_Z X,Y) + g(X,\nabla_Z Y)
\end{align*}
\end{Answer}
\setboolean{firstanswerofthechapter}{false}

\chapter{Solutions to selected Exercises}
\shipoutAnswer
\addcontentsline{toc}{chapter}{References}
\bibliography{Schmeding_ItidGeo}

\begin{thebibliography}{ACMM89}
\providecommand{\url}[1]{\texttt{#1}}
\providecommand{\urlprefix}{URL }
\expandafter\ifx\csname urlstyle\endcsname\relax
  \providecommand{\doi}[1]{doi:\discretionary{}{}{}#1}\else
  \providecommand{\doi}{doi:\discretionary{}{}{}\begingroup
  \urlstyle{rm}\Url}\fi
\providecommand{\eprint}[2][]{\url{#2}}

\bibitem[AC09]{zbMATH05649883}
{Agrachev}, A.~A. and {Caponigro}, M.
\newblock \emph{{Controllability on the group of diffeomorphisms}}.
\newblock {Ann. Inst. Henri Poincar\'e, Anal. Non Lin\'eaire} \textbf{26}
  (2009)(6):2503--2509.
\newblock \doi{10.1016/j.anihpc.2009.07.003}

\bibitem[ACMM89]{AaCaMaM89}
{Abbati}, M.~C., {Cirelli}, R., {Mania'}, A. and {Michor}, P.
\newblock \emph{{The Lie group of automorphisms of a principal bundle}}.
\newblock {J. Geom. Phys.} \textbf{6} (1989)(2):215--235

\bibitem[AGS20]{AaGaS18}
Amiri, H., Gl{\"o}ckner, H. and Schmeding, A.
\newblock \emph{{L}ie groupoids of mappings taking values in a {L}ie groupoid}.
\newblock Archivum Mathematicum \textbf{56} (2020):307--356.
\newblock \doi{10.5817/AM2020-5-307}

\bibitem[AMR88]{MR960687}
Abraham, R., Marsden, J.~E. and Ratiu, T.
\newblock \emph{Manifolds, tensor analysis, and applications}, \emph{Applied
  Mathematical Sciences}, vol.~75 (Springer-Verlag, New York, 1988), second
  edn.
\newblock \doi{10.1007/978-1-4612-1029-0}

\bibitem[Arn66]{MR202082}
Arnold, V.
\newblock \emph{Sur la g\'{e}om\'{e}trie diff\'{e}rentielle des groupes de
  {L}ie de dimension infinie et ses applications \`a l'hydrodynamique des
  fluides parfaits}.
\newblock Ann. Inst. Fourier (Grenoble) \textbf{16} (1966)(fasc., fasc.
  1):319--361

\bibitem[AS15]{MR3342623}
Alzaareer, H. and Schmeding, A.
\newblock \emph{Differentiable mappings on products with different degrees of
  differentiability in the two factors}.
\newblock Expo. Math. \textbf{33} (2015)(2):184--222.
\newblock \doi{10.1016/j.exmath.2014.07.002}

\bibitem[AS19a]{AaS19}
Amiri, H. and Schmeding, A.
\newblock \emph{A differentiable monoid of smooth maps on {L}ie groupoids}.
\newblock J. Lie Theory \textbf{29} (2019)(4):1167--1192.
\newblock \doi{10.1007/s10455-019-09650-3}

\bibitem[AS19b]{AaS19b}
{Amiri}, H. and {Schmeding}, A.
\newblock \emph{{Linking Lie groupoid representations and representations of
  infinite-dimensional Lie groups}}.
\newblock {Ann. Global Anal. Geom.} \textbf{55} (2019)(4):749--775

\bibitem[Atk75]{MR400283}
Atkin, C.~J.
\newblock \emph{The {H}opf-{R}inow theorem is false in infinite dimensions}.
\newblock Bull. London Math. Soc. \textbf{7} (1975)(3):261--266.
\newblock \doi{10.1112/blms/7.3.261}

\bibitem[Atk97]{MR1432537}
Atkin, C.~J.
\newblock \emph{Geodesic and metric completeness in infinite dimensions}.
\newblock Hokkaido Math. J. \textbf{26} (1997)(1):1--61.
\newblock \doi{10.14492/hokmj/1351257804}

\bibitem[{Bae}97]{Baez97}
{Baez}, J.~C.
\newblock \emph{{An introduction to \(n\)-categories}}.
\newblock In \emph{{Category theory and computer science. 7th international
  conference, CTCS '97, Santa Margherita Ligure, Italy, September 4--6, 1997.
  Proceedings}}, pp. 1--33 (Berlin: Springer, 1997)

\bibitem[{Ban}88]{Ban88}
{Banyaga}, A.
\newblock \emph{{On isomorphic classical diffeomorphism groups. II}}.
\newblock {J. Differ. Geom.} \textbf{28} (1988)(1):23--35

\bibitem[Bas64]{bastiani}
Bastiani, A.
\newblock \emph{Applications diff\'erentiables et vari\'et\'es
  diff\'erentiables de dimension infinie}.
\newblock J. Analyse Math. \textbf{13} (1964):1--114

\bibitem[{Bau}14]{Baum14}
{Baum}, H.
\newblock \emph{{Eichfeldtheorie. Eine Einf\"uhrung in die
  Differentialgeometrie auf Faserb\"undeln}} (Heidelberg: Springer Spektrum,
  2014), 2nd revised ed. edn.

\bibitem[BBM14a]{MR3265197}
Bauer, M., Bruveris, M. and Michor, P.~W.
\newblock \emph{Homogeneous {S}obolev metric of order one on diffeomorphism
  groups on real line}.
\newblock J. Nonlinear Sci. \textbf{24} (2014)(5):769--808.
\newblock \doi{10.1007/s00332-014-9204-y}

\bibitem[BBM14b]{BaBaM14}
{Bauer}, M., {Bruveris}, M. and {Michor}, P.~W.
\newblock \emph{{Overview of the geometries of shape spaces and diffeomorphism
  groups}}.
\newblock {J. Math. Imaging Vis.} \textbf{50} (2014)(1-2):60--97

\bibitem[BD95]{MR1410059}
Br\"{o}cker, T. and tom Dieck, T.
\newblock \emph{Representations of compact {L}ie groups}, \emph{Graduate Texts
  in Mathematics}, vol.~98 (Springer-Verlag, New York, 1995).
\newblock Translated from the German manuscript, Corrected reprint of the 1985
  translation

\bibitem[BDS16]{BaDaS16}
{Bogfjellmo}, G., {Dahmen}, R. and {Schmeding}, A.
\newblock \emph{{Character groups of Hopf algebras as infinite-dimensional Lie
  groups}}.
\newblock {Ann. Inst. Fourier} \textbf{66} (2016)(5):2101--2155.
\newblock \doi{10.5802/aif.3059}

\bibitem[Beg87]{MR891612}
Beggs, E.~J.
\newblock \emph{The de {R}ham complex on infinite-dimensional manifolds}.
\newblock Quart. J. Math. Oxford Ser. (2) \textbf{38} (1987)(150):131--154.
\newblock \doi{10.1093/qmath/38.2.131}

\bibitem[BF66]{MR0198492}
Bonic, R. and Frampton, J.
\newblock \emph{Smooth functions on {B}anach manifolds}.
\newblock J. Math. Mech. \textbf{15} (1966):877--898

\bibitem[BGJP19]{BaGaJaP19}
{Belti\c{t}\u{a}}, D., {Goli\'nski}, T., {Jakimowicz}, G. and {Pelletier}, F.
\newblock \emph{{Banach-Lie groupoids and generalized inversion}}.
\newblock {J. Funct. Anal.} \textbf{276} (2019)(5):1528--1574

\bibitem[BGN04]{MR2069671}
Bertram, W., Gl{\"o}ckner, H. and Neeb, K.-H.
\newblock \emph{Differential calculus over general base fields and rings}.
\newblock Expo. Math. \textbf{22} (2004)(3):213--282.
\newblock \doi{10.1016/S0723-0869(04)80006-9}

\bibitem[BGT18]{BaGaT18}
Beltiţă, D., Goliński, T. and Tumpach, A.-B.
\newblock \emph{Queer poisson brackets}.
\newblock Journal of Geometry and Physics \textbf{132} (2018):358--362.
\newblock \doi{https://doi.org/10.1016/j.geomphys.2018.06.013}

\bibitem[BH99]{MR1744486}
Bridson, M.~R. and Haefliger, A.
\newblock \emph{Metric spaces of non-positive curvature}, \emph{Grundlehren der
  Mathematischen Wissenschaften [Fundamental Principles of Mathematical
  Sciences]}, vol. 319 (Springer-Verlag, Berlin, 1999).
\newblock \doi{10.1007/978-3-662-12494-9}

\bibitem[BM20]{BaM20}
{Bauer}, M. and {Modin}, K.
\newblock \emph{{Semi-invariant Riemannian metrics in hydrodynamics}}.
\newblock {Calc. Var. Partial Differ. Equ.} \textbf{59} (2020)(2):25.
\newblock \doi{10.1007/s00526-020-1722-x}.
\newblock Id/No 65

\bibitem[Bom67]{MR0237728}
Boman, J.
\newblock \emph{Differentiability of a function and of its compositions with
  functions of one variable}.
\newblock Math. Scand. \textbf{20} (1967):249--268.
\newblock \doi{10.7146/math.scand.a-10835}

\bibitem[Bou98]{MR1728312}
Bourbaki, N.
\newblock \emph{Lie groups and {L}ie algebras. {C}hapters 1--3}.
\newblock Elements of Mathematics (Berlin) (Springer-Verlag, Berlin, 1998).
\newblock Translated from the French, Reprint of the 1989 English translation

\bibitem[Bru18]{BruL2}
Bruveris, M.
\newblock \emph{{The $L^2$-metric on $C^\infty (M,N)$}} 2018.
\newblock \eprint{arxiv:1804.00577.pdf}

\bibitem[Bru19]{BruSC}
Bruveris, M.
\newblock \emph{Riemannian Geometry for Shape Analysis and Computational
  Anatomy}, pp. 15--44.
\newblock Lecture Notes Series, Institute for Mathematical Sciences, National
  University of Singapore (2019).
\newblock \doi{10.1142/9789811200137_0002}.
\newblock
  \eprint{https://www.worldscientific.com/doi/pdf/10.1142/9789811200137_0002}

\bibitem[BS17]{BaS17}
{Bogfjellmo}, G. and {Schmeding}, A.
\newblock \emph{{The Lie group structure of the Butcher group}}.
\newblock {Found. Comput. Math.} \textbf{17} (2017)(1):127--159.
\newblock \doi{10.1007/s10208-015-9285-5}

\bibitem[BS18]{BaS18}
{Bogfjellmo}, G. and {Schmeding}, A.
\newblock \emph{{The geometry of characters of Hopf algebras}}.
\newblock In \emph{{Computation and combinatorics in dynamics, stochastics and
  control. The Abel symposium, Rosendal, Norway, August 16--19, 2016. Selected
  papers}}, pp. 159--185 (Cham: Springer, 2018)

\bibitem[CEMM20]{CaEFaMaMK20}
{Curry}, C., {Ebrahimi-Fard}, K., {Manchon}, D. and {Munthe-Kaas}, H.~Z.
\newblock \emph{{Planarly branched rough paths and rough differential equations
  on homogeneous spaces}}.
\newblock {J. Differ. Equations} \textbf{269} (2020)(11):9740--9782.
\newblock \doi{10.1016/j.jde.2020.06.058}

\bibitem[CES16]{CaEaS16}
Celledoni, E., Eslitzbichler, M. and Schmeding, A.
\newblock \emph{Shape analysis on {L}ie groups with applications in computer
  animation}.
\newblock J. Geom. Mech. \textbf{8} (2016)(3):273--304.
\newblock \doi{10.3934/jgm.2016008}

\bibitem[CES18]{CaEaS18}
{Celledoni}, E., {Eidnes}, S. and {Schmeding}, A.
\newblock \emph{{Shape analysis on homogeneous spaces: a generalised SRVT
  framework}}.
\newblock In \emph{{Computation and combinatorics in dynamics, stochastics and
  control. The Abel symposium, Rosendal, Norway, August 16--19, 2016. Selected
  papers}}, pp. 187--220 (Cham: Springer, 2018)

\bibitem[CK16]{chevyrev2016primer}
Chevyrev, I. and Kormilitzin, A.
\newblock \emph{A primer on the signature method in machine learning} 2016.
\newblock \eprint{arXiv:1603.03788}

\bibitem[dF18]{dHaF18}
{del Hoyo}, M. and {Fernandes}, R.~L.
\newblock \emph{{Riemannian metrics on Lie groupoids}}.
\newblock {J. Reine Angew. Math.} \textbf{735} (2018):143--173

\bibitem[DK00]{DaK00}
{Duistermaat}, J.~J. and {Kolk}, J. A.~C.
\newblock \emph{{Lie groups}} (Berlin: Springer, 2000)

\bibitem[{do }92]{doC92}
{do Carmo}, M.
\newblock \emph{{Riemannian geometry. Translated from the Portuguese by Francis
  Flaherty}} (Boston, MA etc.: Birkh\"auser, 1992)

\bibitem[{Dob}95]{Dob95}
{Dobrowolski}, T.
\newblock \emph{{Every infinite-dimensional Hilbert space is real-analytically
  isomorphic with its unit sphere}}.
\newblock {J. Funct. Anal.} \textbf{134} (1995)(2):350--362

\bibitem[DS20]{DaS20}
{Dahmen}, R. and {Schmeding}, A.
\newblock \emph{{Lie groups of controlled characters of combinatorial Hopf
  algebras}}.
\newblock {Ann. Inst. Henri Poincar\'e D, Comb. Phys. Interact. (AIHPD)}
  \textbf{7} (2020)(3):395--456.
\newblock \doi{10.4171/AIHPD/90}

\bibitem[{Ebi}15]{Ebin15}
{Ebin}, D.~G.
\newblock \emph{{Groups of diffeomorphisms and fluid motion: reprise}}.
\newblock In \emph{Geometry, mechanics, and dynamics. The legacy of Jerry
  Marsden. Selected papers presented at a focus program, Fields Institute for
  Research in Mathematical Sciences, Toronto, Canada, July 2012}, pp. 99--105
  (New York, NY: Springer, 2015).
\newblock \doi{10.1007/978-1-4939-2441-7_6}

\bibitem[Eel66]{Eel66}
Eells, Jr., J.
\newblock \emph{A setting for global analysis}.
\newblock Bull. Amer. Math. Soc. \textbf{72} (1966):751--807.
\newblock \doi{10.1090/S0002-9904-1966-11558-6}

\bibitem[EM70]{EM70}
Ebin, D.~G. and Marsden, J.
\newblock \emph{Groups of diffeomorphisms and the motion of an incompressible
  fluid}.
\newblock Ann. of Math. (2) \textbf{92} (1970):102--163.
\newblock \doi{10.2307/1970699}

\bibitem[Eng89]{Eng89}
Engelking, R.
\newblock \emph{General topology}, \emph{Sigma Series in Pure Mathematics},
  vol.~6 (Heldermann Verlag, Berlin, 1989), second edn.
\newblock Translated from the Polish by the author

\bibitem[EP20]{EaP2}
Eftekharinasab, K. and Petrusenko, V.
\newblock \emph{{Finslerian geodesics on Fr\'{e}chet manifolds}}.
\newblock Bulletin of the Transilvania University of Brasov, SERIES III -
  MATEMATICS, INFORMATICS, PHYSICS \textbf{13(62)} (2020)(1):129–152.
\newblock \doi{10.31926/but.mif.2020.13.62.1.11}

\bibitem[FH20]{FaH20}
{Friz}, P.~K. and {Hairer}, M.
\newblock \emph{{A course on rough paths. With an introduction to regularity
  structures. 2nd updated edition}} (Cham: Springer, 2020), 2nd updated edition
  edn.

\bibitem[Fil82]{MR693972}
Filipkiewicz, R.~P.
\newblock \emph{Isomorphisms between diffeomorphism groups}.
\newblock Ergodic Theory Dynam. Systems \textbf{2} (1982)(2):159--171 (1983).
\newblock \doi{10.1017/s0143385700001486}

\bibitem[FK88]{MR961256}
Fr{\"o}licher, A. and Kriegl, A.
\newblock \emph{Linear spaces and differentiation theory}.
\newblock Pure and Applied Mathematics (New York) (John Wiley \& Sons, Ltd.,
  Chichester, 1988).
\newblock A Wiley-Interscience Publication

\bibitem[FS82]{FaS82}
{Folland}, G.~B. and {Stein}, E.~M.
\newblock \emph{{Hardy spaces on homogeneous groups}}, vol.~28 (Princeton
  University Press, Princeton, NJ, 1982)

\bibitem[FV10]{FaV10}
{Friz}, P.~K. and {Victoir}, N.~B.
\newblock \emph{{Multidimensional stochastic processes as rough paths. Theory
  and applications.}}, vol. 120 (Cambridge: Cambridge University Press, 2010).
\newblock \doi{10.1017/CBO9780511845079}

\bibitem[GHL04]{GaHaL04}
{Gallot}, S., {Hulin}, D. and {Lafontaine}, J.
\newblock \emph{{Riemannian geometry. 3rd ed}} (Berlin: Springer, 2004), 3rd
  ed. edn.

\bibitem[Gl{\"o}02]{MR1911979}
Gl{\"o}ckner, H.
\newblock \emph{Infinite-dimensional {L}ie groups without completeness
  restrictions}.
\newblock In \emph{Geometry and analysis on finite- and infinite-dimensional
  {L}ie groups (Bedlewo, 2000)}, \emph{Banach Center Publ.}, vol.~55, pp.
  43--59 (Polish Acad. Sci. Inst. Math., Warsaw, 2002).
\newblock \doi{10.4064/bc55-0-3}

\bibitem[Gl{\"o}06a]{MR2213538}
Gl{\"o}ckner, H.
\newblock \emph{Discontinuous non-linear mappings on locally convex direct
  limits}.
\newblock Publ. Math. Debrecen \textbf{68} (2006)(1-2):1--13

\bibitem[Gl{\"o}06b]{MR2269430}
Gl{\"o}ckner, H.
\newblock \emph{Implicit functions from topological vector spaces to {B}anach
  spaces}.
\newblock Israel J. Math. \textbf{155} (2006):205--252.
\newblock \doi{10.1007/BF02773955}

\bibitem[Gl{\"o}07]{Glo07}
Gl{\"o}ckner, H.
\newblock \emph{{Implicit functions from topological vector spaces to Fréchet
  spaces in the presence of metric estimates}} 2007.
\newblock \urlprefix\url{http://arxiv.org/abs/math/0612673v5}.
\newblock \eprint{arXiv:math/0612673v5}

\bibitem[Gl{\"o}16]{Glofun}
Gl{\"o}ckner, H.
\newblock \emph{{Fundamentals of submersions and immersions between
  infinite-dimensional manifolds}} 2016.
\newblock \eprint{arXiv:1502.05795v4}

\bibitem[GMV15]{zbMATH06551456}
{Grong}, E., {Markina}, I. and {Vasil'ev}, A.
\newblock \emph{{Sub-Riemannian geometry on infinite-dimensional manifolds}}.
\newblock {J. Geom. Anal.} \textbf{25} (2015)(4):2474--2515.
\newblock \doi{10.1007/s12220-014-9523-0}

\bibitem[GN]{GNprep}
Gl{\"o}ckner, H. and Neeb, K.-H.
\newblock \emph{{Infinite-dimensional Lie groups}}.
\newblock In preparation

\bibitem[GN12]{MR2997582}
Gl\"{o}ckner, H. and Neeb, K.-H.
\newblock \emph{When unit groups of continuous inverse algebras are regular
  {L}ie groups}.
\newblock Studia Math. \textbf{211} (2012)(2):95--109.
\newblock \doi{10.4064/sm211-2-1}

\bibitem[GPS21]{gray2021continuity}
Gray, W.~S., Palmstr\o{}m, M. and Schmeding, A.
\newblock \emph{Continuity of formal power series products in nonlinear control
  theory} 2021.
\newblock \eprint{2102.05732}

\bibitem[GS21]{GaS21}
Gl\"{o}ckner, H. and Schmeding, A.
\newblock \emph{Manifolds of mappings on cartesian products} 2021.
\newblock \eprint{2109.01804}

\bibitem[{Gub}10]{Gub10}
{Gubinelli}, M.
\newblock \emph{{Ramification of rough paths}}.
\newblock {J. Differ. Equations} \textbf{248} (2010)(4):693--721.
\newblock \doi{10.1016/j.jde.2009.11.015}

\bibitem[Ham82]{MR656198}
Hamilton, R.~S.
\newblock \emph{The inverse function theorem of {N}ash and {M}oser}.
\newblock Bull. Amer. Math. Soc. (N.S.) \textbf{7} (1982)(1):65--222.
\newblock \doi{10.1090/S0273-0979-1982-15004-2}

\bibitem[Hir94]{Hir76}
Hirsch, M.~W.
\newblock \emph{Differential topology}, \emph{Graduate Texts in Mathematics},
  vol.~33 (Springer-Verlag, New York, 1994).
\newblock Corrected reprint of the 1976 original

\bibitem[HM07]{HaM07}
{Hofmann}, K.~H. and {Morris}, S.~A.
\newblock \emph{{The Lie theory of connected pro-Lie groups. A structure theory
  for pro-Lie algebras, pro-Lie groups, and connected locally compact groups}},
  vol.~2 (Z\"urich: European Mathematical Society (EMS), 2007).
\newblock \doi{10.4171/032}

\bibitem[HN09]{HaN09}
{Hofmann}, K.~H. and {Neeb}, K.-H.
\newblock \emph{{Pro-Lie groups which are infinite-dimensional Lie groups}}.
\newblock {Math. Proc. Camb. Philos. Soc.} \textbf{146} (2009)(2):351--378.
\newblock \doi{10.1017/S030500410800128X}

\bibitem[HN12]{HaN12}
Hilgert, J. and Neeb, K.-H.
\newblock \emph{Structure and geometry of {L}ie groups}.
\newblock Springer Monographs in Mathematics (Springer, New York, 2012).
\newblock \doi{10.1007/978-0-387-84794-8}

\bibitem[HS17]{HaS17}
{Hjelle}, E.~O. and {Schmeding}, A.
\newblock \emph{{Strong topologies for spaces of smooth maps with
  infinite-dimensional target}}.
\newblock {Expo. Math.} \textbf{35} (2017)(1):13--53.
\newblock \doi{10.1016/j.exmath.2016.07.004}

\bibitem[{Hus}93]{Hus93}
{Husemoller}, D.~H.
\newblock \emph{{Fibre bundles. 3rd ed}}, vol.~20 (Berlin: Springer-Verlag,
  1993), 3rd ed. edn.

\bibitem[IKT13]{IaKaT13}
{Inci}, H., {Kappeler}, T. and {Topalov}, P.
\newblock \emph{{On the regularity of the composition of diffeomorphisms}},
  vol. 1062 (Providence, RI: American Mathematical Society (AMS), 2013).
\newblock \doi{10.1090/S0065-9266-2013-00676-4}

\bibitem[IZ13]{MR3025051}
Iglesias-Zemmour, P.
\newblock \emph{Diffeology}, \emph{Mathematical Surveys and Monographs}, vol.
  185 (American Mathematical Society, Providence, RI, 2013).
\newblock \doi{10.1090/surv/185}

\bibitem[Jar81]{Jar81}
Jarchow, H.
\newblock \emph{Locally convex spaces} (B. G. Teubner, Stuttgart, 1981).
\newblock Mathematische Leitf\"{a}den. [Mathematical Textbooks]

\bibitem[Kel74]{MR0440592}
Keller, H.~H.
\newblock \emph{Differential calculus in locally convex spaces}.
\newblock Lecture Notes in Mathematics, Vol. 417 (Springer-Verlag, Berlin-New
  York, 1974)

\bibitem[Kli95]{MR1330918}
Klingenberg, W. P.~A.
\newblock \emph{Riemannian geometry}, \emph{De Gruyter Studies in Mathematics},
  vol.~1 (Walter de Gruyter \& Co., Berlin, 1995), second edn.
\newblock \doi{10.1515/9783110905120}

\bibitem[KM97]{KM97}
Kriegl, A. and Michor, P.~W.
\newblock \emph{The convenient setting of global analysis}, \emph{Mathematical
  Surveys and Monographs}, vol.~53 (American Mathematical Society, Providence,
  RI, 1997)

\bibitem[Kol08]{MR2434729}
Kolev, B.
\newblock \emph{Geometric differences between the {B}urgers and the
  {C}amassa-{H}olm equations}.
\newblock J. Nonlinear Math. Phys. \textbf{15} (2008)(suppl. 2):116--132.
\newblock \doi{10.2991/jnmp.2008.15.s2.9}

\bibitem[KW09]{MR2456522}
Khesin, B. and Wendt, R.
\newblock \emph{The geometry of infinite-dimensional groups}, \emph{Ergebnisse
  der Mathematik und ihrer Grenzgebiete. 3. Folge. A Series of Modern Surveys
  in Mathematics [Results in Mathematics and Related Areas. 3rd Series. A
  Series of Modern Surveys in Mathematics]}, vol.~51 (Springer-Verlag, Berlin,
  2009)

\bibitem[Lan99]{Lang}
Lang, S.
\newblock \emph{Fundamentals of differential geometry}, \emph{Graduate Texts in
  Mathematics}, vol. 191 (Springer-Verlag, New York, 1999).
\newblock \doi{10.1007/978-1-4612-0541-8}

\bibitem[{Lar}19]{Laro19}
{Larotonda}, G.
\newblock \emph{{Metric geometry of infinite-dimensional Lie groups and their
  homogeneous spaces}}.
\newblock {Forum Math.} \textbf{31} (2019)(6):1567--1605.
\newblock \doi{10.1515/forum-2019-0127}

\bibitem[Lee13]{Lee13}
Lee, J.~M.
\newblock \emph{Introduction to smooth manifolds}, \emph{Graduate Texts in
  Mathematics}, vol. 218 (Springer, New York, 2013), second edn.

\bibitem[{Len}07]{Len07}
{Lenells}, J.
\newblock \emph{{The Hunter-Saxton equation describes the geodesic flow on a
  sphere}}.
\newblock {J. Geom. Phys.} \textbf{57} (2007)(10):2049--2064.
\newblock \doi{10.1016/j.geomphys.2007.05.003}

\bibitem[{Len}08]{Len08}
{Lenells}, J.
\newblock \emph{{The Hunter-Saxton equation: A geometric approach}}.
\newblock {SIAM J. Math. Anal.} \textbf{40} (2008)(1):266--277.
\newblock \doi{10.1137/050647451}

\bibitem[LT71]{LaT71}
{Lindenstrauss}, J. and {Tzafriri}, L.
\newblock \emph{{On the complemented subspaces problem}}.
\newblock {Isr. J. Math.} \textbf{9} (1971):263--269.
\newblock \doi{10.1007/BF02771592}

\bibitem[LV07]{LaV07}
{Lyons}, T. and {Victoir}, N.
\newblock \emph{{An extension theorem to rough paths}}.
\newblock {Ann. Inst. Henri Poincar\'e, Anal. Non Lin\'eaire} \textbf{24}
  (2007)(5):835--847.
\newblock \doi{10.1016/j.anihpc.2006.07.004}

\bibitem[{Lyo}98]{Lyo98}
{Lyons}, T.~J.
\newblock \emph{{Differential equations driven by rough signals}}.
\newblock {Rev. Mat. Iberoam.} \textbf{14} (1998)(2):215--310.
\newblock \doi{10.4171/RMI/240}

\bibitem[LZ21]{LDaZ21}
{Le Donne}, E. and {Z\"ust}, R.
\newblock \emph{{Space of signatures as inverse limits of Carnot groups}}.
\newblock {ESAIM, Control Optim. Calc. Var.} \textbf{27} (2021):14.
\newblock \doi{10.1051/cocv/2021040}.
\newblock Id/No 37

\bibitem[Mac05]{Mackenzie05}
Mackenzie, K. C.~H.
\newblock \emph{General theory of {L}ie groupoids and {L}ie algebroids},
  \emph{London Mathematical Society Lecture Note Series}, vol. 213 (Cambridge
  University Press, Cambridge, 2005)

\bibitem[{Man}08]{Man08}
{Manchon}, D.
\newblock \emph{{Hopf algebras in renormalisation}}.
\newblock In \emph{Handbook of algebra. Volume 5}, pp. 365--427 (Amsterdam:
  Elsevier/Noth-Holland, 2008).
\newblock \doi{10.1016/S1570-7954(07)05007-3}

\bibitem[McA65]{McA65}
McAlpin, J.
\newblock \emph{Infinite Dimensional Manifolds and Morse theory}.
\newblock Ph.D. thesis, Columbia University 1965

\bibitem[Mei17]{Mein17}
Meinrencken, E.
\newblock \emph{{Lie groupoids and Lie algebroids}} 2017.
\newblock
  \urlprefix\url{www.math.toronto.edu/mein/teaching/MAT1341_LieGroupoids/Groupoids.pdf}

\bibitem[Mic80]{Mic}
Michor, P.~W.
\newblock \emph{Manifolds of differentiable mappings}, \emph{Shiva Mathematics
  Series}, vol.~3 (Shiva Publishing Ltd., Nantwich, 1980)

\bibitem[Mic20]{Mic20}
Michor, P.~W.
\newblock \emph{Manifolds of mappings for continuum mechanics}.
\newblock In R.~Segev and M.~Epstein (Eds.), \emph{Geometric Continuum
  Mechanics}, \emph{Advances in Continuum Mechanics}, vol.~42, pp. 3--75
  (Birkh\"auser, 2020).
\newblock \doi{10.1007/978-3-030-42683-5}

\bibitem[Mil82]{Mil82}
Milnor, J.
\newblock \emph{{On infinite dimensional Lie groups}} 1982

\bibitem[Mil84]{Mil84}
Milnor, J.
\newblock \emph{Remarks on infinite-dimensional {L}ie groups}.
\newblock In \emph{Relativity, groups and topology, {II} ({L}es {H}ouches,
  1983)}, pp. 1007--1057 (North-Holland, Amsterdam, 1984)

\bibitem[MM03]{MaM03}
{Moerdijk}, I. and {Mr\v{c}un}, J.
\newblock \emph{{Introduction to foliations and Lie groupoids}}, vol.~91
  (Cambridge: Cambridge University Press, 2003)

\bibitem[MM06]{MaM06}
Michor, P.~W. and Mumford, D.
\newblock \emph{Riemannian geometries on spaces of plane curves}.
\newblock J. Eur. Math. Soc. (JEMS) \textbf{8} (2006)(1):1--48.
\newblock \doi{10.4171/JEMS/37}

\bibitem[MMM13]{MR3098790}
Micheli, M., Michor, P.~W. and Mumford, D.
\newblock \emph{Sobolev metrics on diffeomorphism groups and the derived
  geometry of spaces of submanifolds}.
\newblock Izv. Ross. Akad. Nauk Ser. Mat. \textbf{77} (2013)(3):109--138.
\newblock \doi{10.4213/im7966}

\bibitem[MMS19]{MaMaS19}
Maurelli, M., Modin, K. and Schmeding, A.
\newblock \emph{Incompressible euler equations with stochastic forcing: a
  geometric approach} 2019.
\newblock \eprint{1909.09982}

\bibitem[MN18]{MaN18}
{Marquis}, T. and {Neeb}, K.-H.
\newblock \emph{{Half-Lie groups}}.
\newblock {Transform. Groups} \textbf{23} (2018)(3):801--840.
\newblock \doi{10.1007/s00031-018-9485-6}

\bibitem[MO92]{MR1173211}
{Margalef Roig}, J. and {Outerelo Dom{\'\i}nguez}, E.
\newblock \emph{Differential topology}, \emph{North-Holland Mathematics
  Studies}, vol. 173 (North-Holland Publishing Co., Amsterdam, 1992).
\newblock With a preface by Peter W. Michor

\bibitem[{Mod}19]{modin2019geometric}
{Modin}, K.
\newblock \emph{{Geometric hydrodynamics: from Euler, to Poincar\'e, to
  Arnold}}.
\newblock In \emph{{13th young researchers workshop on geometry, mechanics and
  control. Three mini-courses, Coimbra, Portugal, December 6--13, 2018}}, pp.
  71--91 (Coimbra: Universidade de Coimbra, Departamento de Matem\'atica, 2019)

\bibitem[MP97]{MaP97}
{Moerdijk}, I. and {Pronk}, D.~A.
\newblock \emph{{Orbifolds, sheaves and groupoids}}.
\newblock {\(K\)-Theory} \textbf{12} (1997)(1):3--21

\bibitem[MRT15]{YaRaT15}
{Maeda}, Y., {Rosenberg}, S. and {Torres-Ardila}, F.
\newblock \emph{{The geometry of loop spaces. I: \(H^{s}\)-Riemannian
  metrics}}.
\newblock {Int. J. Math.} \textbf{26} (2015)(4):26.
\newblock Id/No 1540002

\bibitem[MS17]{MaSZ17}
{Murua}, A. and {Sanz-Serna}, J.~M.
\newblock \emph{{Word series for dynamical systems and their numerical
  integrators}}.
\newblock {Found. Comput. Math.} \textbf{17} (2017)(3):675--712.
\newblock \doi{10.1007/s10208-015-9295-3}

\bibitem[MSJ07]{JaMaS07}
Mio, W., Srivastava, A. and Joshi, S.
\newblock \emph{On shape of plane elastic curves}.
\newblock International Journal of Computer Vision \textbf{73} (2007).
\newblock \doi{10.1007/s11263-006-9968-0}

\bibitem[MT20]{MaT19}
{Magnani}, V. and {Tiberio}, D.
\newblock \emph{{A remark on vanishing geodesic distances in infinite
  dimensions}}.
\newblock {Proc. Am. Math. Soc.} \textbf{148} (2020)(8):3653--3656

\bibitem[M{\"u}l08]{MR2463806}
M{\"u}ller, O.
\newblock \emph{A metric approach to {F}r\'echet geometry}.
\newblock J. Geom. Phys. \textbf{58} (2008)(11):1477--1500.
\newblock \doi{10.1016/j.geomphys.2008.06.004}

\bibitem[MV94]{MR1319441}
Michor, P.~W. and Vizman, C.
\newblock \emph{{$n$}-transitivity of certain diffeomorphism groups}.
\newblock Acta Math. Univ. Comenian. (N.S.) \textbf{63} (1994)(2):221--225

\bibitem[MV97]{MaV97}
Meise, R. and Vogt, D.
\newblock \emph{Introduction to functional analysis}, \emph{Oxford Graduate
  Texts in Mathematics}, vol.~2 (The Clarendon Press, Oxford University Press,
  New York, 1997).
\newblock Translated from the German by M. S. Ramanujan and revised by the
  authors

\bibitem[MZ15]{MaZ15}
{Meyer}, R. and {Zhu}, C.
\newblock \emph{{Groupoids in categories with pretopology}}.
\newblock {Theory Appl. Categ.} \textbf{30} (2015):1906--1998

\bibitem[Nee05]{NeeMon}
Neeb, K.-H.
\newblock \emph{{Monastir Summer School: Infinite-Dimensional Lie Groups}}
  2005.
\newblock 3rd cycle. Monastir (Tunisie)

\bibitem[Nee06]{Neeb06}
Neeb, K.-H.
\newblock \emph{Towards a {L}ie theory of locally convex groups}.
\newblock Jpn. J. Math. \textbf{1} (2006)(2):291--468.
\newblock \doi{10.1007/s11537-006-0606-y}

\bibitem[{Nee}10]{Neeb10}
{Neeb}, K.-H.
\newblock \emph{{On differentiable vectors for representations of infinite
  dimensional Lie groups}}.
\newblock {J. Funct. Anal.} \textbf{259} (2010)(11):2814--2855.
\newblock \doi{10.1016/j.jfa.2010.07.020}

\bibitem[{Omo}74]{Omo74}
{Omori}, H.
\newblock \emph{{Infinite dimensional Lie transformation groups}}, vol. 427
  (Springer, Cham, 1974)

\bibitem[Omo78]{MR0579603}
Omori, H.
\newblock \emph{On {B}anach-{L}ie groups acting on finite dimensional
  manifolds}.
\newblock T\^ohoku Math. J. (2) \textbf{30} (1978)(2):223--250.
\newblock \doi{10.2748/tmj/1178230027}

\bibitem[Omo81]{MR630634}
Omori, H.
\newblock \emph{A remark on nonenlargeable {L}ie algebras}.
\newblock J. Math. Soc. Japan \textbf{33} (1981)(4):707--710.
\newblock \doi{10.2969/jmsj/03340707}

\bibitem[{Pal}57]{Pal57}
{Palais}, R.~S.
\newblock \emph{{A global formulation of the Lie theory of transformation
  groups}}, vol.~22 (Providence, RI: American Mathematical Society (AMS),
  1957).
\newblock \doi{10.1090/memo/0022}

\bibitem[{Pal}68]{Pal68}
{Palais}, R.~S.
\newblock \emph{{Foundations of global non-linear analysis}}.
\newblock {Mathematics Lecture Note Series. New York-Amsterdam: W.A. Benjamin,
  Inc. VII, 131 p. (1968).} 1968

\bibitem[{Poi}01]{poin01}
{Poincar\'e}, H.
\newblock \emph{{Sur une forme nouvelle des \'equations de la m\'ecanique.}}
\newblock {C. R. Acad. Sci., Paris} \textbf{132} (1901):369--371

\bibitem[PS86]{MR900587}
Pressley, A. and Segal, G.
\newblock \emph{Loop groups}.
\newblock Oxford Mathematical Monographs (The Clarendon Press, Oxford
  University Press, New York, 1986).
\newblock Oxford Science Publications

\bibitem[QT11]{QaT11}
{Qian}, Z. and {Tudor}, J.
\newblock \emph{{Differential structure and flow equations on rough path
  space}}.
\newblock {Bull. Sci. Math.} \textbf{135} (2011)(6-7):695--732.
\newblock \doi{10.1016/j.bulsci.2011.07.011}

\bibitem[{Ree}58]{Ree58}
{Ree}, R.
\newblock \emph{{Lie elements and an algebra associated with shuffles}}.
\newblock {Ann. Math. (2)} \textbf{68} (1958):210--220.
\newblock \doi{10.2307/1970243}

\bibitem[{Reu}93]{Reut93}
{Reutenauer}, C.
\newblock \emph{{Free Lie algebras}}, vol.~7 (Oxford: Clarendon Press, 1993)

\bibitem[Rud91]{Rudin}
Rudin, W.
\newblock \emph{Functional analysis}.
\newblock International Series in Pure and Applied Mathematics (McGraw-Hill,
  Inc., New York, 1991), second edn.

\bibitem[{Ryb}95]{Ryb95}
{Rybicki}, T.
\newblock \emph{{Isomorphisms between groups of diffeomorphisms}}.
\newblock {Proc. Am. Math. Soc.} \textbf{123} (1995)(1):303--310

\bibitem[{Ryb}02]{Ryb02}
{Rybicki}, T.
\newblock \emph{{A Lie group structure on strict groups}}.
\newblock {Publ. Math.} \textbf{61} (2002)(3-4):533--548

\bibitem[Sch10]{Schm10}
Schmid, R.
\newblock \emph{Infinite-dimensional {L}ie groups and algebras in mathematical
  physics}.
\newblock Advances in Mathematical Physics  (2010).
\newblock \doi{10.1155/2010/280362}

\bibitem[{Sch}20]{Sch20}
{Schmeding}, A.
\newblock \emph{{The Lie group of vertical bisections of a regular Lie
  groupoid}}.
\newblock {Forum Math.} \textbf{32} (2020)(2):479--489

\bibitem[{Ser}20]{Serg20}
{Sergeev}, A.~G.
\newblock \emph{{In search of infinite-dimensional K\"ahler geometry}}.
\newblock {Russ. Math. Surv.} \textbf{75} (2020)(2):321--367

\bibitem[{Sha}97]{sharpe97}
{Sharpe}, R.~W.
\newblock \emph{{Differential geometry: Cartan's generalization of Klein's
  Erlangen program. Foreword by S. S. Chern}}, vol. 166 (Berlin: Springer,
  1997)

\bibitem[{Smo}07]{Smo07}
{Smolentsev}, N.~K.
\newblock \emph{{Diffeomorphism groups of compact manifolds}}.
\newblock {J. Math. Sci., New York} \textbf{146} (2007)(6):6213--6312

\bibitem[Sta08]{stacey2008construct}
Stacey, A.
\newblock \emph{How to construct a {D}irac operator in infinite dimensions}
  2008.
\newblock \eprint{arXiv:0809.3104}

\bibitem[Ste67]{MR0210075}
Steenrod, N.~E.
\newblock \emph{A convenient category of topological spaces}.
\newblock Michigan Math. J. \textbf{14} (1967):133--152

\bibitem[SW15]{SaW}
Schmeding, A. and Wockel, C.
\newblock \emph{The {L}ie group of bisections of a {L}ie groupoid}.
\newblock Ann. Global Anal. Geom. \textbf{48} (2015)(1):87--123.
\newblock \doi{10.1007/s10455-015-9459-z}

\bibitem[SW16]{SaW16}
Schmeding, A. and Wockel, C.
\newblock \emph{({R}e)constructing {L}ie groupoids from their bisections and
  applications to prequantisation}.
\newblock Differential Geom. Appl. \textbf{49} (2016):227--276.
\newblock \doi{10.1016/j.difgeo.2016.07.009}

\bibitem[{Tak}79]{takens79}
{Takens}, F.
\newblock \emph{{Characterization of a differentiable structure by its group of
  diffeomorphisms}}.
\newblock {Bol. Soc. Bras. Mat.} \textbf{10} (1979)(1):17--25

\bibitem[Tay11]{MR2744150}
Taylor, M.~E.
\newblock \emph{Partial differential equations {I}. {B}asic theory},
  \emph{Applied Mathematical Sciences}, vol. 115 (Springer, New York, 2011),
  second edn.
\newblock \doi{10.1007/978-1-4419-7055-8}

\bibitem[{Tho}42]{Tomp42}
{Thompson}, D.~W.
\newblock \emph{{On growth and form}}.
\newblock {New edition Cambridge University Press, Cambridge, England. X, 1116
  p. (1942).} 1942

\bibitem[Tre06]{trev06}
Treves, F.
\newblock \emph{Topological vector spaces, distributions and kernels} (Dover
  Publications, Inc., Mineola, NY, 2006).
\newblock Unabridged republication of the 1967 original

\bibitem[TZ20]{TaZ20}
{Tapia}, N. and {Zambotti}, L.
\newblock \emph{{The geometry of the space of branched rough paths}}.
\newblock {Proc. Lond. Math. Soc. (3)} \textbf{121} (2020)(2):220--251.
\newblock \doi{10.1112/plms.12311}

\bibitem[{Viz}08]{Viz08}
{Vizman}, C.
\newblock \emph{{Geodesic equations on diffeomorphism groups}}.
\newblock {SIGMA, Symmetry Integrability Geom. Methods Appl.} \textbf{4}
  (2008):paper 030, 22.
\newblock \doi{10.3842/SIGMA.2008.030}

\bibitem[{Voi}92]{Voi92}
{Voigt}, J.
\newblock \emph{{On the convex compactness property for the strong operator
  topology}}.
\newblock {Note Mat.} \textbf{12} (1992):259--269

\bibitem[Wal12]{MR2952176}
Walter, B.
\newblock \emph{Weighted diffeomorphism groups of {B}anach spaces and weighted
  mapping groups}.
\newblock Dissertationes Math. \textbf{484} (2012):128.
\newblock \doi{10.4064/dm484-0-1}

\bibitem[Wer00]{MR1787146}
Werner, D.
\newblock \emph{Funktionalanalysis} (Springer-Verlag, Berlin, 2000), extended
  edn.

\bibitem[Whi34]{Whi34}
Whitney, H.
\newblock \emph{Analytic extensions of differentiable functions defined in
  closed sets}.
\newblock Trans. Amer. Math. Soc. \textbf{36} (1934)(1):63--89.
\newblock \doi{10.2307/1989708}

\bibitem[Woc06]{wockel}
Wockel, C.
\newblock \emph{Infinite-Dimensional Lie Theory for Gauge Groups}.
\newblock Ph.D. thesis, TU Darmstadt 2006

\bibitem[Woc07]{MR2353707}
Wockel, C.
\newblock \emph{Lie group structures on symmetry groups of principal bundles}.
\newblock J. Funct. Anal. \textbf{251} (2007)(1):254--288.
\newblock \doi{10.1016/j.jfa.2007.05.016}

\bibitem[Woc14]{WocInf}
Wockel, C.
\newblock \emph{Infinite-dimensional and higher structures in differential
  geometry} 2014.
\newblock Unpublished lecture notes

\bibitem[{Wur}95]{Wurz95}
{Wurzbacher}, T.
\newblock \emph{{Symplectic geometry of the loop space of a Riemannian
  manifold}}.
\newblock {J. Geom. Phys.} \textbf{16} (1995)(4):345--384.
\newblock \doi{10.1016/0393-0440(94)00033-Z}

\bibitem[{You}36]{Young}
{Young}, L.~C.
\newblock \emph{{An inequality of the H\"older type, connected with Stieltjes
  integration}}.
\newblock {Acta Math.} \textbf{67} (1936):251--282.
\newblock \doi{10.1007/BF02401743}

\end{thebibliography}

\printindex
\end{document}